\documentclass[10pt]{article}
\usepackage[top=0.8in, bottom=1.2in, left=0.9in, right=0.9in]{geometry}

\usepackage[utf8]{inputenc}
\usepackage{amsthm,amsmath}
\usepackage{amssymb, mathrsfs, color}
\usepackage{enumerate}
\usepackage{verbatim}
\usepackage{tikz-cd}
\usepackage{enumitem}
\usepackage{indentfirst}
\usepackage{hyperref}
\hypersetup{
    colorlinks=true,
    linkcolor=blue,
    citecolor=brown,
    filecolor=magenta,
    urlcolor=blue,
}
\usepackage{titlesec}
\usepackage{fancyhdr}
\usepackage{mathtools}
\usepackage{CJKutf8}
\usepackage{enumitem}

\usetikzlibrary{decorations.pathmorphing}

\numberwithin{equation}{section}

\newtheorem{thm}{Theorem}[section]

\newtheorem{cor}[thm]{Corollary}
\newtheorem{lem}[thm]{Lemma}
\newtheorem*{lem*}{Lemma}
\newtheorem{prop}[thm]{Proposition}

\providecommand{\customgenericname}{}
\newcommand{\newcustomtheorem}[2]{%
  \newenvironment{#1}[1]
  {%
   \renewcommand\customgenericname{#2}%
   \renewcommand\theinnercustomgeneric{##1}%
   \innercustomgeneric
  }
  {\endinnercustomgeneric}
}

\newcustomtheorem{thm*}{Theorem}
\newcustomtheorem{prop*}{Proposition}
\newcustomtheorem{cor*}{Corollary}

\theoremstyle{definition}
\newtheorem{defn}[thm]{Definition}
\newtheorem{conv}[thm]{Convention}
\newtheorem{note}[thm]{Notation}

\newtheorem{remark}[thm]{Remark}
\newtheorem{example}[thm]{Example}

\newcommand{\1}{\mathbf{1}}
\newcommand{\A}{\mathscr{A}}
\newcommand{\B}{\mathbb{B}}
\newcommand{\Bs}{\mathscr{B}}
\newcommand{\Bf}{\mathfrak{B}}
\newcommand{\C}{\mathbb{C}}
\newcommand{\Cc}{\mathcal{C}}
\newcommand{\Co}{\mathscr{C}}
\newcommand{\D}{\mathscr{D}}
\newcommand{\Do}{\mathcal{D}}
\newcommand{\E}{\mathcal{E}}
\newcommand{\Ex}{\mathbf{E}}
\newcommand{\F}{\mathcal{F}}

\newcommand{\Ga}{\mathbf{\Gamma}}

\newcommand{\Hb}{\mathbb{H}}

\newcommand{\Ic}{\mathcal{I}}
\newcommand{\Mf}{\mathfrak{M}}
\newcommand{\Nf}{\mathfrak{N}}

\newcommand{\Oh}{\mathcal{O}}
\newcommand{\Pv}{\mathbf{P}}
\newcommand{\Pc}{\mathcal{P}}
\newcommand{\Pf}{\mathfrak{P}}

\newcommand{\R}{\mathbb{R}}
\newcommand{\Se}{\mathcal{S}}
\newcommand{\Sp}{\mathbb{S}}
\newcommand{\Sc}{\mathscr{S}}

\newcommand{\Tc}{\mathcal{T}}
\newcommand{\Tf}{\mathfrak{T}}
\newcommand{\U}{\mathcal{U}}

\newcommand{\V}{\mathcal{V}}

\newcommand{\X}{\mathfrak{X}}
\newcommand{\Xs}{\mathscr{X}}
\newcommand{\Y}{\mathscr{Y}}
\newcommand{\Z}{\mathbb{Z}}
\newcommand{\Zf}{\mathfrak{Z}}
\newcommand{\Gr}{\mathbf{Gr}}

\newcommand{\re}{\operatorname{Re}}
\newcommand{\im}{\operatorname{Im}}

\newcommand{\Inv}{\mathrm{Inv}}

\newcommand{\id}{\operatorname{id}}

\newcommand{\essup}{\mathop{\operatorname{essup}}}
\newcommand{\eps}{\varepsilon}

\newcommand{\Span}{\operatorname{Span}}

\newcommand{\vol}{\operatorname{vol}}
\newcommand{\loc}{\mathrm{loc}}

\newcommand{\supp}{\operatorname{supp}}
\newcommand{\dist}{\operatorname{dist}}

\newcommand{\divg}{\operatorname{div}}
\newcommand{\rank}{\operatorname{rank}}

\newcommand{\Coorvec}[1]{\frac\partial{\partial#1}}

\providecommand{\keywords}[1]
{
  {\small
  \textbf{\textit{Keywords ---}} #1}
}

\begin{document}
\author{Liding Yao}
\title{Sharp H\"older Regularity for Nirenberg's Complex Frobenius Theorem}
\date{}

\maketitle

\begin{abstract}
Nirenberg's famous complex Frobenius theorem gives necessary and sufficient conditions on a locally integrable structure for when the manifold is locally diffeomorphic to $\mathbb R^r\times\mathbb C^m\times \mathbb R^{N-r-2m}$ through a coordinate chart $F$ in such a way that the structure is locally spanned by $F^*\frac\partial{\partial t^1},\dots,F^*\frac\partial{\partial t^r},F^*\frac\partial{\partial z^1},\dots,F^*\frac\partial{\partial z^m}$, where we have given $\mathbb R^r\times\mathbb C^m \times\mathbb R^{N-r-2m}$ coordinates $(t,z,s)$.  In this paper, we give the optimal H\"older-Zygmund regularity for the coordinate charts which achieve this realization.  Namely, if the structure has H\"older-Zygmund regularity of order $\alpha>1$, then the coordinate chart $F$ that maps to $\mathbb R^r\times\mathbb C^m \times\mathbb R^{N-r-2m}$ may be taken to have H\"older-Zygmund regularity of order $\alpha$, and this is sharp.  Furthermore, we can choose this  $F$ in such a way that the vector fields $F^*\frac\partial{\partial t^1},\dots,F^*\frac\partial{\partial t^r},F^*\frac\partial{\partial z^1},\dots,F^*\frac\partial{\partial z^m}$ on the original manifold have H\"older-Zygmund regularity of order $\alpha-\varepsilon$ for every $\varepsilon>0$, and we give an example to show that the regularity for $F^*\frac\partial{\partial z}$ is optimal.
\end{abstract}
\keywords{Complex Frobenius structures, Newlander-Nirenberg theorem, Bi-parameter H\"older spaces, Malgrange's factorizations.}
{\small\tableofcontents}
\section{Introduction}

For $\alpha\in(0,\infty)$, let $\Co^\alpha$ be the class of H\"older-Zygmund functions\footnote{For non-integer exponents, the H\"older-Zygmund space agrees with the classical H\"older space. When $\alpha\in\Z_+$ we have $C^{\alpha-1,1}\subsetneq \Co^\alpha$. See Remark \ref{Rmk::Hold::RmkforLPHoldChar}.} of order $\alpha$ (See Definitions \ref{Defn::Intro::DefofHold}). Let $\Co^{\alpha-}=\bigcap_{\eps>0}\Co^{\alpha-\eps}$. And we denote $\Co^\infty=C^\infty$ as the class of smooth functions.

Given a smooth manifold $\Mf$, a \textbf{$\Co^\alpha$-complex Frobenius structure} on $\Mf$ is a $\Co^\alpha$-complex tangent subbundle\footnote{For (real or complex) linear spaces $V,W$, we use $V\le W$ as saying that $V$ is a (real or complex) subspace of $W$. And we use $\Se\le \C T\Mf$ as saying $\Se$ is a complex subbundle of $\C T\Mf$.} $\Se\le\C T\Mf$, such that $\Se+\bar\Se$ is also a complex tangential subbundle, and both $\Se$, $\Se+\bar\Se$ satisfy the \textbf{involutivity condition}, namely 
$$\forall X,Y\in C^1_\loc(\Mf;\Se)\Rightarrow[X,Y]\in C^0_\loc(\Mf;\Se),\quad\text{and } \forall X,Y\in C^1_\loc(\Mf;\Se+\bar\Se)\Rightarrow[X,Y]\in C^0_\loc(\Mf;\Se+\bar\Se).$$
See Definition \ref{Defn::ODE::CpxStr}.

The complex Frobenius theorem was first introduced by Louis Nirenberg. It is a generalization of the real Frobenius theorem and the Newlander-Nirenberg theorem.
\setcounter{thm}{-1}
\begin{prop}[L. Nirenberg \cite{Nirenberg}]\label{Prop::Intro::Nirenberg}
Let $r,m,N$ be nonnegative integers satisfying $r+2m\le N$. Let $\Mf$ be an $N$-dimensional smooth manifold and let $\Se$ be a smooth complex Frobenius structure on $\Mf$, such that
$\rank(\Se\cap\bar\Se)=r$ and $\rank\Se=r+m$.
Then for any $p\in \Mf$ there is a smooth coordinate system $F=(F',F'',F'''):U\subseteq \Mf\to\R^r_t\times\C^m_z\times\R^{N-r-2m}_s$ near $p$, such that $\Se|_{U}$ is spanned by the smooth vector fields $F^*\Coorvec{t^1},\dots,F^*\Coorvec{t^r},F^*\Coorvec{z^1},\dots,F^*\Coorvec{z^m}$.
\end{prop}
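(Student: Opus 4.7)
The plan is to apply the real (smooth) Frobenius theorem twice, exploiting the flag $\Se\cap\bar\Se\subseteq\Se+\bar\Se$ of involutive subbundles, and then to invoke the Newlander--Nirenberg theorem with smooth parameters on the transverse slice. Since $\Se+\bar\Se$ is closed under conjugation and involutive, it is the complexification of a real involutive subbundle of rank $r+2m$, so the classical real Frobenius theorem furnishes smooth coordinates $(y,s)\in\R^{r+2m}\times\R^{N-r-2m}$ near $p$ in which $\Se+\bar\Se$ is spanned over $\C$ by $\Coorvec{y^1},\dots,\Coorvec{y^{r+2m}}$. A second application of real Frobenius to $\Se\cap\bar\Se$ (the complexification of a real involutive rank-$r$ subbundle contained in $\Se+\bar\Se$), carried out inside each $\{s=\mathrm{const}\}$ leaf so that the first coordinates are preserved, refines these to $(t,w,s)\in\R^r\times\R^{2m}\times\R^{N-r-2m}$ with
\[
\Se\cap\bar\Se=\Span_\C(\Coorvec{t^1},\dots,\Coorvec{t^r}),\qquad \Se+\bar\Se=\Span_\C(\Coorvec{t^1},\dots,\Coorvec{t^r},\Coorvec{w^1},\dots,\Coorvec{w^{2m}}).
\]

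In these coordinates I choose generators $L_1,\dots,L_m$ of $\Se$ modulo $\Coorvec{t^1},\dots,\Coorvec{t^r}$ of the form $L_j=\sum_k a_j^k(t,w,s)\Coorvec{w^k}$; no $\Coorvec{s}$-components appear because $\Se\subseteq \Se+\bar\Se$, and the $\Coorvec{t}$-components can be absorbed into the first block of generators. The rank conditions $\rank(\Se\cap\bar\Se)=r$ and $\rank(\Se+\bar\Se)=r+2m$ imply that $L_1,\dots,L_m,\bar L_1,\dots,\bar L_m$ are pointwise linearly independent in the $w$-directions, so for each fixed $(t,s)$ they define an almost complex structure on the $2m$-dimensional $w$-slice. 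The involutivity of $\Se$ both (i) translates to formal integrability of this almost complex structure in $w$ for every fixed $(t,s)$, and (ii) forces $[\Coorvec{t^i},L_j]\in\Se$.

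The third step applies the Newlander--Nirenberg theorem with parameters. For each fixed $(t,s)$ I solve for holomorphic coordinates $z^1(t,w,s),\dots,z^m(t,w,s)$ on the $w$-slice in which $\Span_\C(L_j)=\Span_\C(\Coorvec{z^k})$, and, working in the smooth category, the construction can be arranged to depend smoothly on the parameters $(t,s)$. Replacing $w$ by $z$ gives coordinates $(t,z,s)$ in which $\Se$ is spanned by $\Coorvec{t^1},\dots,\Coorvec{t^r}$ together with the $\Coorvec{z^j}$ modulo possible residual cross-terms $b_j^i(t,z,s)\Coorvec{t^i}$ introduced by the $t$-dependence of the coordinate change. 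A final ODE normalization in $t$, which is feasible because $[\Coorvec{t^i},L_j]\in \Se$, adjusts $z$ so as to kill these cross-terms and yields the desired chart $F=(t,z,s)$ with $\Se|_U=\Span_\C(\Coorvec{t^i},\Coorvec{z^j})$.

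The main obstacle is the parametric Newlander--Nirenberg step together with the final $t$-normalization: one must produce holomorphic coordinates on each $w$-slice which retain smooth dependence on $(t,s)$, and then remove the $t$-coupling without losing control. In the smooth category this is handled by Nirenberg's original argument, while the analogous sharp Hölder--Zygmund refinements are presumably the technical core of the paper.
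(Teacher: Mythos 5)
Your skeleton is the same as the route this paper attributes to Nirenberg and uses for Theorem \ref{Thm::ThmCoor2}: straighten the flag $\Se\cap\bar\Se\subseteq\Se+\bar\Se$ by (a bi-layer version of) the real Frobenius theorem, then run Newlander--Nirenberg with parameters on the transverse slices (the special case $r=0$ of Theorem \ref{Thm::Key}). The gap is in your final step. After the two Frobenius reductions you work in coordinates $(t,w,s)$ with $\Se\cap\bar\Se=\Span_\C(\Coorvec{t})$ and $\Se+\bar\Se=\Span_\C(\Coorvec{t},\Coorvec{w})$, and you allow the slice almost complex structure (your $L_j=\sum_k a_j^k\Coorvec{w^k}$) to depend on $t$, planning to remove the resulting coupling afterwards by an ``ODE normalization in $t$.'' The key point you miss --- and the one the paper's proof of Theorem \ref{Thm::TrueThm2} extracts via the canonical local basis of Lemma \ref{Lem::ODE::GoodGen} --- is that no such normalization is needed: writing the canonical generators $X''=\partial_w+A''\partial_{\bar w}$ (with $B'=B''=0$ because $\Se\subseteq\Se+\bar\Se$, and $X'=\partial_t$, i.e.\ $A'=0$, because $\Coorvec{t^i}\in\Se\cap\bar\Se\subseteq\Se$ and the canonical basis is unique), involutivity forces $[\Coorvec{t^i},X''_k]=(\partial_{t^i}A''_k)\partial_{\bar w}=0$, hence $A''=A''(w,s)$ is independent of $t$. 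So the slice structures form a family parametrized by $s$ alone, the parametric Newlander--Nirenberg coordinates can be taken of the form $z=z(w,s)$, and no residual cross-terms ever appear.

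As written, your last step is both misdescribed and unjustified. If $z$ does depend on $t$, the pushforward of $L_j$ is still a pure combination of the $\Coorvec{z^k}$; the residual terms show up instead as $\Coorvec{\bar z}$-components on the pushed-forward $\Coorvec{t^i}$, i.e.\ the generators become $\Coorvec{z^k}$ together with $T_i=\Coorvec{t^i}+\sum_k c_i^k\Coorvec{\bar z^k}$, not ``$\Coorvec{z^j}$ plus $b_j^i\Coorvec{t^i}$.'' Killing the $c_i^k$ means finding a further change $z\mapsto\zeta(t,z,s)$, holomorphic in $z$ for each $(t,s)$, with $\bar T_i\zeta=0$; this is a coupled system of transport equations in $t$ constrained by holomorphicity in $z$, not a mere ODE normalization. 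It can be solved, but only after one extracts from involutivity that the $c_i^k$ are antiholomorphic in $z$ (from $[\Coorvec{z^l},T_i]\in\Se$ and $\Se\cap\Span(\Coorvec{\bar z})=0$) and that the fields $\bar T_i$ pairwise commute and commute with $\Coorvec{\bar z^k}$ --- exactly the kind of verification you do not supply, and which is subsumed by (indeed made unnecessary by) the $t$-independence of $A''$ noted above. So either carry out that commutation/holomorphy argument, or, better, observe the $t$-independence before invoking the parametric Newlander--Nirenberg step, as the paper does.
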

Here $(t,z,s)=(t^1,\dots,t^r,z^1,\dots,z^m,s^1,\dots,s^{N-r-2m})$ is the standard coordinate system for $\R^r\times\C^m\times\R^{N-r-2m}$. All the ranks and spans are in the sense of complex subspaces, not as real subspaces. 


\subsection{A first glimpse to the main results}
In this paper we find such a coordinate chart $F$ with optimal H\"older-Zygmund regularity, when the complex Frobenius structure $\Se$ has low smoothness. To obtain the best results, we compute the regularities $F'$, $F''$, $F'''$ separately, instead of merely giving a single regularity result for $F=(F',F'',F''')$.

We say a complex tangent subbundle $\Se\le\C T\Mf$ is $\Co^\alpha$, if locally it is spanned by $\Co^\alpha$-vector fields (Definitions \ref{Defn::ODE::CpxSubbd}).

\begin{thm}\label{Thm::ThmCoor1}
Let $\alpha\in(1,\infty)$, let $\Mf$ be an $N$-dimensional smooth manifold, and let $\Se$ be a $\Co^\alpha$-complex Frobenius structure such that $\rank(\Se\cap \bar\Se)=r$ and $\rank\Se=r+m$.

Then for any $p\in \Mf$ there is a $\Co^\alpha$-coordinate system $F=(F',F'',F'''):U\subseteq \Mf\to\R^r_t\times\C^m_z\times\R^{N-r-2m}_s$ near $p$, such that $\Se|_U$ is spanned by $F^*\Coorvec{t^1},\dots,F^*\Coorvec{t^r},F^*\Coorvec{z^1},\dots,F^*\Coorvec{z^m}$.

In fact $F'\in\Co^\infty$, $F''\in\Co^\alpha$, $F'''\in\Co^\alpha$. Moreover $F^*\Coorvec{t^1},\dots,F^*\Coorvec{t^r},F^*\Coorvec{z^1},\dots,F^*\Coorvec{z^m}$ are $\Co^{\alpha-}$-complex vector fields in $U$.
\end{thm}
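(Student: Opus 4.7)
The plan is to construct $F=(F',F'',F''')$ in three stages, peeling off one subbundle at a time from the nested chain $\Se\cap\bar\Se\subseteq\Se\subseteq\Se+\bar\Se$.

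First, since $\Se+\bar\Se$ is closed under complex conjugation, it is the complexification of a real involutive $\Co^\alpha$-distribution of rank $r+2m$. A sharp real Frobenius theorem then produces $N-r-2m$ real first integrals in $\Co^{\alpha+1}$ (and therefore in $\Co^\alpha$) whose differentials are independent; I would take these as the components of $F'''$. The leaves $\{F'''=\text{const}\}$ carry an induced elliptic structure (since $\Se|_{\text{leaf}}+\bar\Se|_{\text{leaf}}=\C T(\text{leaf})$) with $\Se\cap\bar\Se$ still of rank $r$, reducing the problem to a parametric complex Frobenius problem along the leaves.

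Next, I would solve this parametric problem to produce $m$ independent functions $F''$ annihilated by $\bar\Se$, with sharp $\Co^\alpha$-regularity in the complex direction and depending on $F'''$ in a way compatible with bi-parameter H\"older regularity. The tool of choice, reflecting the paper's keywords, is Malgrange's factorization carried out in a bi-parameter H\"older framework: isolate a $\bar\partial$-equation on a polydisk and apply sharp $\bar\partial$-estimates with parameter, separating the regularity along the complex $z$-direction (where only $\Co^\alpha$ is achievable) from the regularity transverse to the foliation. The output $F''\in\Co^\alpha$ satisfies $\bar\Se F''=0$ and, together with $F'''$, has full differential rank $m+(N-r-2m)$ near $p$.

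For the final stage, I would choose $F'$ as $r$ \emph{smooth} functions on the smooth manifold $\Mf$ whose differentials at $p$ complete $(dF'',dF''')$ to a frame and are calibrated so that the resulting pullback vector fields $F^*\Coorvec{t^j}$ at $p$ are real and lie in the real subbundle underlying $\Se\cap\bar\Se$. This last condition is purely linear-algebraic at the single point $p$, so smooth $F'$ with the required properties exist; the implicit function theorem then upgrades $F=(F',F'',F''')$ to a local $\Co^\alpha$-diffeomorphism near $p$ with $F^*\Coorvec{t^j}\in\Se\cap\bar\Se$ and $F^*\Coorvec{z^k}\in\Se$ on a neighborhood. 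To upgrade the pullback fields from the naive regularity $\Co^{\alpha-1}$ obtained by inverting $dF$ to the claimed $\Co^{\alpha-}$, I would expand them in a fixed $\Co^\alpha$-frame of $\Se$ and exploit the involutivity of $\Se$ through a commutator / interpolation argument in the bi-parameter H\"older setting, producing the $\eps$-loss but no more.

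The main obstacle is the middle stage. Even for $r=0$ with no parameter, achieving sharp $\Co^\alpha$-regularity in Newlander-Nirenberg requires delicate $\bar\partial$-estimates; with the parameter $F'''$ varying, one must control the $\bar\partial$-solution operator uniformly in bi-parameter H\"older spaces so that the complex-direction regularity is not degraded by interaction with the transverse direction. Running Malgrange's factorization in this bi-parameter framework, in a way that simultaneously attains the sharp $\Co^\alpha$-regularity of $F''$ and the $\Co^{\alpha-}$-regularity of the pullback fields (with the $\eps$-loss being sharp, as witnessed by the promised counterexample), is the core technical difficulty of the proof.
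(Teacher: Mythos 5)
Your outline follows the same route as the paper: apply the sharp real Frobenius theorem to the essentially real bundle $\Se+\bar\Se$ so as to reduce to a family of elliptic structures along the leaves, with the transverse variables as parameters, and then run Malgrange's factorization with parameters in bi-parameter H\"older spaces to produce $F''$, the $t$- and $s$-components coming from smooth data. Two points, however, keep this from being a proof. First, a small but genuine error: for a $\Co^\alpha$ (or $\Co^\beta$) involutive real bundle the real Frobenius theorem does \emph{not} give first integrals of class $\Co^{\alpha+1}$; the transverse coordinates are only $\Co^\alpha$, and this is itself sharp (see Proposition \ref{Prop::Final::RealFroisSharp}, which exhibits a $\Co^\beta$ line field whose transverse first integral is not $\Co^{\beta+}$). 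Your parenthetical ``and therefore in $\Co^\alpha$'' is what is actually available, and it is all you need, so the architecture survives, but the $\Co^{\alpha+1}$ claim must be dropped.

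Second, and more seriously, the two statements that carry all the content --- that the parametric factorization yields $F''\in\Co^\alpha$ (in fact $\Co^{\alpha+1}$ along the leaves) and that $F^*\Coorvec{t^j},F^*\Coorvec{z^k}\in\Co^{\alpha-}$ --- are asserted rather than proved. In the paper the second statement is not obtained by choosing $F'$ a posteriori by pointwise linear algebra at $p$ and then upgrading the pullback fields from $\Co^{\alpha-1}$ to $\Co^{\alpha-}$ by a ``commutator / interpolation'' argument; no such soft argument is given (or appears to work), since inverting $dF$ with $F''\in\Co^\alpha$ only yields $\Co^{\alpha-1}$ and involutivity alone does not recover the missing derivative. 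Instead, the regularity of the coordinate vector fields is read off from the explicit factorization $F''=\overline{G''}\circ H\circ L$ of Theorem \ref{Thm::Key}: $L$ is an adapted \emph{smooth} chart chosen at the outset (so the $t$-directions are built into the elliptic-with-parameter structure rather than appended at the end, and $F'=L'$), $H''$ solves a second-order elliptic system with parameters --- not a $\bar\partial$-equation --- in spaces of the form $\Co^{\alpha+1}L^\infty\cap\Co^{1}\Co^{\beta}$, which gives $\nabla H''\in\Co^{\alpha,\beta-}$ (the $\eps$-loss enters exactly here, because $\nabla_x$ does not map $\Co^{\alpha+1,\beta}$ into $\Co^{\alpha,\beta}$), and $G''$ comes from a holomorphic Frobenius theorem with parameters with $\nabla_{\sigma,\zeta}G''\in\Co^{\infty,\beta}$; the claimed regularity of $F^*\Coorvec t,F^*\Coorvec z$ then follows from quantitative composition and inverse-function lemmas in these bi-parameter spaces. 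As written, your middle stage (the paper's key estimate and the function-space machinery behind it) and the $\Co^{\alpha-}$ statement for the pullback fields are gaps, not steps.
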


In this theorem $F'\in\Co^\infty$, $F''\in\Co^\alpha$ and $F^*\Coorvec z\in\Co^{\alpha-}$ are all optimal, in the sense that given $\alpha>1$, we have examples of complex Frobenius structures $\Se$, such that if $F$ is a $C^1$-chart representing $\Se$, then $F''\notin\Co^{\alpha+\eps}$ for any $\eps>0$,  $F'''\notin\Co^{\alpha+\eps}$ for any $\eps>0$, and $F^*\Coorvec z\notin\Co^\alpha$, respectively. See Section \ref{Section::SharpGen}. As a corollary, the regularity statement of our coordinate chart $F\in\Co^\alpha$ is optimal.

\medskip
The subbundle $\Se+\bar\Se$ and $\Se\cap \bar\Se$ (if they have constant ranks, see Remark \ref{Rmk::ODE::RmkInvStr} \ref{Item::ODE::RmkInvStr::NotBund}) are always at least as regular as the subbundle $\Se$.  However, it is possible that $\Se+\bar\Se$ and $\Se\cap \bar\Se$ have better regularity than $\Se$. See Remark \ref{Rmk::ODE::RmkInvStr} \ref{Item::ODE::RmkInvStr::MoreReg}.
When $\Se\cap \bar\Se$ is more regular than $\Se$, we have an improvement of Theorem \ref{Thm::ThmCoor1}:

\begin{thm}\label{Thm::ThmCoor2}
Let $\alpha,\beta\in(1,\infty)$ satisfy $\alpha+1\le\beta$ and let $\Mf$ be an $N$-dimensional smooth manifold. Let $\Se$ be a $\Co^\alpha$-complex Frobenius structure, such that $\rank\Se=r+m$ and $\Se\cap\bar\Se$ is a $\Co^\beta$-complex tangential subbundle with complex rank $r$.

Then for any $p\in \Mf$ there is a $\Co^\alpha$-coordinate system $F:U\subseteq \Mf\to\R^r\times\C^m\times\R^{N-r-2m}$ near $p$, such that in addition to the consequences in Theorem \ref{Thm::ThmCoor1}, we have that $F^*\Coorvec{t^1},\dots,F^*\Coorvec{t^r}$ are $\Co^{\beta}$-vector fields on $U$.
\end{thm}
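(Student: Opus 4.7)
The plan is to apply Theorem \ref{Thm::ThmCoor1} directly to obtain a chart $F=(F',F'',F''')$ satisfying every structural conclusion of Theorem \ref{Thm::ThmCoor2}, and then to upgrade the regularity of the vector fields $F^*\Coorvec{t^j}$ from $\Co^{\alpha-}$ (given by Theorem \ref{Thm::ThmCoor1}) to $\Co^\beta$ as an almost-immediate consequence of the stronger regularity assumption on $\Se\cap\bar\Se$. Every other conclusion — $F'\in\Co^\infty$, $F'',F'''\in\Co^\alpha$, the spanning property of $\Se|_U$, and $F^*\Coorvec{z^k}\in\Co^{\alpha-}$ — is inherited from Theorem \ref{Thm::ThmCoor1}.

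For the upgrade step, apply Theorem \ref{Thm::ThmCoor1} to produce $F$. Since $\Coorvec{t^j}$ is real, each $F^*\Coorvec{t^j}$ is a real vector field lying in $\Se$, hence in $\Se\cap\bar\Se$. By hypothesis $\Se\cap\bar\Se$ admits a local $\Co^\beta$-frame $Y_1,\dots,Y_r$ near $p$; I write $F^*\Coorvec{t^j}=\sum_{i=1}^r c_{ij}\,Y_i$ for scalar functions $c_{ij}$. The defining chart identity $F^*\Coorvec{t^j}(F'^{,k})=\delta_{jk}$ becomes the linear system $M^T c = I$ with $M_{ik}:=Y_i(F'^{,k})$. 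Because $F'\in\Co^\infty$ while each $Y_i$ has $\Co^\beta$-coefficients in any smooth local frame on $\Mf$, every entry of $M$ is a $\Co^\beta$-function. The matrix $M(p)$ is invertible by transversality: in $F$-coordinates $\{Y_i(p)\}_i$ is a basis of $(\Se\cap\bar\Se)_p$ while $\{dF'^{,k}|_{(\Se\cap\bar\Se)_p}\}_k$ is its dual basis. Cramer's rule on a smaller neighborhood gives $c_{ij}\in\Co^\beta$, so $F^*\Coorvec{t^j}=\sum_i c_{ij} Y_i$ is a $\Co^\beta$-vector field.

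The heart of the argument is this simple bootstrap: once $\Se\cap\bar\Se$ carries a $\Co^\beta$-frame, the unique real vector field inside $\Se\cap\bar\Se$ whose pairing with the $\Co^\infty$ map $F'$ is $\delta_{jk}$ must inherit the $\Co^\beta$ regularity of the frame. I do not expect any serious obstacle beyond Theorem \ref{Thm::ThmCoor1}. The gap condition $\alpha+1\le\beta$ does not seem essential for this post-processing step itself; presumably it is imposed either for compatibility with the bi-parameter H\"older estimates underlying the proof of Theorem \ref{Thm::ThmCoor1} or simply to ensure that the extra regularity of $\Se\cap\bar\Se$ is large enough to make Theorem \ref{Thm::ThmCoor2} a genuine strengthening.
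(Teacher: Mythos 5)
Your proposal is correct as far as I can check, but it is a genuinely different (and much lighter) route than the paper's. The paper does not post-process the chart of Theorem \ref{Thm::ThmCoor1}: its proof of the full statement (Theorem \ref{Thm::TrueThm2}) rebuilds the chart from scratch, applying the bi-layer Frobenius theorem (Theorem \ref{Thm::ODE::BLFro}) to the pair $\V=\Se\cap\bar\Se\le\E=(\Se+\bar\Se)\cap T\Mf$ and then the key elliptic-with-parameter theorem (Theorem \ref{Thm::Key}) with $r=0$; the hypothesis $\alpha+1\le\beta$ is exactly what makes Lemma \ref{Lem::ODE::PullBackReg} applicable there (one needs $\min(\beta-1,\,\mathrm{reg}(\Se+\bar\Se))\ge\alpha$ so the pulled-back structure stays $\Co^\alpha$), and the payoff of that heavier construction is the extra mixed-regularity information on the parameterization ($\Phi\in\Co^{\beta+1,\alpha+1,\alpha}$, $\partial\Phi/\partial t\in\Co^{\beta,\alpha+1,\alpha}$ in Theorem \ref{Thm::TrueThm2}\,\ref{Item::TrueThm::ImprovePhi}--\ref{Item::TrueThm::ImproveDPhi}), which your post-processing cannot recover. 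Your argument, by contrast, is a pointwise linear-algebra bootstrap on the Theorem \ref{Thm::ThmCoor1} chart: each $F^*\Coorvec{t^j}$ is a real section of $\Se$, hence of $\Se\cap\bar\Se$, and pairing with the $\Co^\infty$ components $F'^k$ forces its coefficients in a $\Co^\beta$-frame $Y_i$ to equal entries of $\bigl(Y_iF'^k\bigr)^{-1}$, which are $\Co^\beta$ since $\Co^\beta\cdot\Co^\infty\subset\Co^\beta$ and the matrix is invertible (its transpose admits the left inverse given by the frame coefficients, so invertibility is automatic at every point, not only at $p$). I verified each step, including on the paper's own model structure $\Span(\partial_t,\partial_w+a(w,\theta)\partial_{\bar w})$, and found no gap; for the introduction-level statement of Theorem \ref{Thm::ThmCoor2} this suffices, given Theorem \ref{Thm::ThmCoor1}. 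Be aware, though, of what your argument additionally implies: it never uses $\alpha+1\le\beta$, so it would show that the Theorem \ref{Thm::ThmCoor1} chart itself satisfies $F^*\Coorvec{t^j}\in\Co^\gamma$ whenever $\Se\cap\bar\Se\in\Co^\gamma$, which bears directly on the question the paper explicitly leaves open in Section \ref{Section::SharpGen} (whether $F^*\Coorvec t\in\Co^\gamma$ and $F''\in\Co^\alpha$ can be achieved simultaneously when $\alpha\le\gamma<\alpha+1$). That tension is not a flaw I could locate in your reasoning, but it is the one point you should scrutinize before relying on the argument beyond the stated theorem.
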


There is another characterization for representing the complex Frobenius structure using the inverse of coordinate charts, the so-called ``regular parameterization''. Let\footnote{We use $f^\Inv$ instead of $f^{-1}$ to refer the inverse map of $f$, if it is bijective.} $\Phi=F^\Inv:F(U)\subseteq\R^N\to \Mf$ be a such parameterization. For $F$ we work with the regularities of $F'$, $F''$ and $F'''$, while for $\Phi$ we work with that of $[t\mapsto\Phi(t,z_0,s_0)]$, $[z\mapsto\Phi(t_0,z,s_0)]$ and $[s\mapsto\Phi(t_0,z_0,s)]$, for any fixed $t_0,z_0,s_0$. 

For a general version of Theorems \ref{Thm::ThmCoor1} and \ref{Thm::ThmCoor2}, which includes the estimates for $\Phi$, see Theorems \ref{Thm::TrueThm1} and \ref{Thm::TrueThm2}.

\medskip
To see that $F^*\Coorvec z\in\Co^{\alpha-}$ is sharp; in Section \ref{Section::Sharpddz} we construct an example that meets the following:
\begin{thm}\label{Thm::Intro::Sharpddz}
Let $\alpha>1$. There is a $\Co^\alpha$-complex Frobenius structure $\Se$ on $\R^3$, such that $\rank\Se =1$, $\rank(\Se+\bar\Se)=2$, and there does not exist a $C^1$-coordinate chart $F:U\subseteq\R^3\to\C^1_z\times\R^1_s$ near $0\in\R^3$ such that $F^*\Coorvec z$ spans $\Se|_U$ and $F^*\Coorvec z\in\Co^\alpha(U;\C^3)$.
\end{thm}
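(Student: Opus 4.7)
The plan is to realize $\Se$ on $\R^3=\C_w\times\R_s$ as the span of a single $\Co^\alpha$-vector field of Beltrami form
\[
L=\Coorvec{\bar w}-\nu(w,\bar w,s)\Coorvec{w},\qquad \nu\in\Co^\alpha,\ |\nu|<\tfrac{1}{2},
\]
where $\nu$ will be chosen with nontrivial $s$-dependence. Involutivity of the rank-$1$ bundle $\Se=\C L$ is automatic, and $\Se+\bar\Se=\mathrm{span}_\C(\Coorvec{w},\Coorvec{\bar w})$ is tangent to the $s$-slices, hence involutive. So the rank and regularity hypotheses of the theorem hold for any such $\nu$.

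Assume toward a contradiction that a $C^1$-chart $F=(F'',F''')$ exists with $F^*\Coorvec{z}$ spanning $\Se|_U$ and $F^*\Coorvec{z}\in\Co^\alpha$. Writing $F^*\Coorvec{z}=\mu L$, the defining identities $(F^*\Coorvec{z})F''=1$, $(F^*\Coorvec{z})\overline{F''}=0$, $(F^*\Coorvec{z})F'''=0$, together with $F$ being a diffeomorphism, are equivalent to $\bar LF''=0$, $LF'''=\bar LF'''=0$, and $\mu=1/LF''$. Hence $F'''$ depends only on $s$, and $F''$ is, on each slice, a homeomorphic solution of the Beltrami equation $\Coorvec{w}F''=\bar\nu(\cdot,s)\Coorvec{\bar w}F''$. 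A direct computation gives $LF''=(1-|\nu|^2)\Coorvec{\bar w}F''$, so
\[
F^*\Coorvec{z}=\frac{L}{(1-|\nu|^2)\,\Coorvec{\bar w}F''}.
\]
Since $L$ and $(1-|\nu|^2)^{-1}$ already lie in $\Co^\alpha$, the $\Co^\alpha$-regularity of $F^*\Coorvec{z}$ is equivalent to the joint $\Co^\alpha(\C_w\times\R_s)$-regularity of $\Coorvec{\bar w}F''$ for \emph{some} choice of slicewise-Beltrami solution $F''$.

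To produce a counterexample I would take $\nu(w,\bar w,s)=\eta(w,\bar w)\psi(s)$ with $\eta$ a smooth compactly supported bump, nonzero at $0$ and small enough that $|\nu|<\tfrac{1}{2}$, and $\psi$ a real $\Co^\alpha$-function whose $\Co^\alpha$-regularity is sharp at every point, e.g.\ a lacunary series $\psi(s)=\sum_{k\ge 0}2^{-k\alpha}\cos(2^k s)$. Fixing $w_0$ with $\eta(w_0)\ne 0$ and expanding $F''$ in powers of $\nu$ through the Beurling--Ahlfors transform, the leading-order contribution to the $s$-dependence of $\Coorvec{\bar w}F''(w_0,s)$ is a nonzero scalar multiple of $\psi(s)$, so it inherits exactly the sharp $\Co^\alpha$-regularity of $\psi$ in $s$ and no better.

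The principal obstacle, and the heart of the argument, is to show that the slicewise-holomorphic ambiguity in the choice of $F''$, namely replacing $F''(\cdot,s)$ by $h_s\circ F''(\cdot,s)$ for a slicewise holomorphic $h_s$, cannot absorb the sharp lacunary signature of $\psi$. This is handled by observing that such a replacement multiplies $\Coorvec{\bar w}F''(w_0,s)$ by a factor which depends smoothly on $s$ (being obtained from the coefficient $\bar\nu\in\Co^\alpha$ through the regularizing Beltrami solution operator) and so cannot cancel the high-frequency modes of $\psi$. Consequently $\Coorvec{\bar w}F''\notin\Co^\alpha(\C_w\times\R_s)$, contradicting the supposition and establishing Theorem~\ref{Thm::Intro::Sharpddz}.
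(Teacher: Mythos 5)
Your reduction step is sound in spirit and essentially reproduces the paper's Proposition \ref{Prop::Final::SharpddzRed}: a chart with $F^*\Coorvec z\in\Co^\alpha$ spanning $\Se$ exists iff a slicewise Beltrami solution $F''$ can be chosen with $\partial_{\bar w}F''$ jointly $\Co^\alpha$, equivalently iff an auxiliary equation of the form $f_w+af_{\bar w}=-a_{\bar w}$ has a $\Co^\alpha$ solution (the logarithm trick removes the slicewise-holomorphic ambiguity). The problem is the counterexample itself. With $\nu(w,\bar w,s)=\eta(w,\bar w)\psi(s)$, $\eta$ smooth and compactly supported and $\psi\in\Co^\alpha(\R)$ a scalar, the conclusion you want is false: the structure \emph{does} admit a chart with $F^*\Coorvec z\in\Co^\alpha$. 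Indeed, for this tensor-product coefficient the parameter enters only through the scalar $t=\psi(s)$, and the normalized solutions of the Beltrami equation with coefficient $t\eta$ depend analytically on $t$ with values in functions smooth in $w$ (alternatively, solve the reduced equation $f_w+af_{\bar w}=-a_{\bar w}$ by a contraction in a space of functions smooth in $w$ with all $w$-derivatives in $L^\infty_w\Co^\alpha_s$, using that $a_{\bar w}=\eta_{\bar w}\psi$ is smooth in $w$ and that the operator $\Tc=4\partial_{\bar w}\Pc$ of Lemma \ref{Lem::Final::InvDBar} gains one $w$-derivative). The resulting $\partial_{\bar w}F''$ is smooth in $w$, uniformly, and $\Co^\alpha$ in $s$, uniformly; by Lemma \ref{Lem::Hold::CharMixHold} this is exactly joint $\Co^{\alpha,\alpha}=\Co^\alpha$. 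Your heuristic ``the leading term inherits the sharp $\Co^\alpha$-regularity of $\psi$ in $s$ and no better'' therefore produces no contradiction: sharpness of the regularity in the parameter variable alone is perfectly compatible with $F^*\Coorvec z\in\Co^\alpha$, since joint $\Co^\alpha$ only requires $\Co^\alpha$ in each variable separately. (Your closing claim that the slicewise-holomorphic reparametrization factor ``depends smoothly on $s$'' is also unjustified — it is generically only as regular in $s$ as the data — but this is moot, because there is nothing for it to cancel.)

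The actual obstruction, which your construction cannot see, is the failure of the inverse-$\partial_w$ (Beurling-type) operator to preserve $L^\infty_w\Co^\beta_s$ — the $L^\infty$-unboundedness of the singular integral in the $w$ variable. To exploit it one must make $a$ genuinely singular in $w$ at $w=0$ and correlate that singularity with lacunary oscillations in $s$ at matched scales: in Proposition \ref{Prop::Final::Expa} the paper takes $b(w)=\partial_w\big(\bar w(-\log|w|)^{1/3}\big)$, for which $\Tc_0\partial_{\bar w}b\notin L^\infty$ near $0$, mollifies it at scale $2^{-\beta k/\alpha}$ to get $c_k$, and sets $A(w,\theta)=\sum_k c_k(w)e^{2\pi i 2^k\theta}2^{-k\beta}$, so that $\Tc\partial_{\bar w}A\notin L^\infty_w\Co^\beta_\theta$ while $A$, $w\partial_{\bar w}A$ remain $\Co^{\alpha,\beta}$. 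The nonexistence of a $\Co^{\alpha,\beta}$ solution $f$ (Theorem \ref{Thm::Final::ProofExampleddz}) is then not a leading-order expansion but a bootstrapping argument: one cuts off, shows $h=w\chi f$ gains Sobolev regularity by a fixed point, deduces $\Tc(a\,\partial_{\bar w}(\chi f))\in L^\infty\Co^\beta$, and rearranges the equation so that every term except $\Tc(a_{\bar w})$ lies in $L^\infty\Co^\beta$ — a contradiction. Without building this kind of $w$-singularity (and handling all solutions, not just a formal Neumann series), the argument does not go through.
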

See Theorem \ref{Thm::Final::SharpSum} for a general version.


\subsection{Historical remarks}
There are many previous discussions of complex Frobenius theorem in the nonsmooth setting.
Nijenhuis \& Woolf \cite{NijenhuisWoolf} studied Newlander-Nirenberg theorem with  nonsmooth parameters, which is  a special case of  nonsmooth complex Frobenius theorem.

Later Hill \& Taylor \cite{RoughComplex} studied nonsmooth complex Frobenius structures. They assumed the complex Frobenius structure to be at least $C^{1,1}$ in order to achieve the condition called ``Hypothesis V'' in their paper.





Recently, Gong \cite{Gong} proved a better estimate for complex Frobenius structure. He showed that for a non-integer $\alpha>1$ and a $\Co^\alpha$-complex Frobenius structure $\Se\le \C T\Mf$, locally one can find a $\Co^{\alpha-}$-coordinate chart $F:U\subseteq\Mf\to\R^r\times\C^m\times\R^{N-r-2m}$ such that $F^*\Coorvec{t^1},\dots,F^*\Coorvec{t^r},F^*\Coorvec{z^1},\dots,F^*\Coorvec{z^m}$ span $\Se$. Though he did not have the sharp result, the statement $F\in\Co^{\alpha-}$ is endpoint sharp (up to a loss of $\eps>0$ derivatives).
The proof technique in his paper is quite different for ours. He used homotopy formulae with Nash-Moser iterations.

Our result is stronger because we have not only the optimal regularity of the coordinate chart that $F\in\Co^\alpha$, but also the optimal regularity for other ingredients like the components of the coordinates and the coordinate vector fields. See Section \ref{Section::SharpGen}. 

\medskip
As the origin of the complex Frobenius theorem, the sharp estimate to Newlander-Nirenberg is due to Malgrange \cite{Malgrange}. He proved that for $\alpha>1$  one can always obtain a $\Co^{\alpha+1}$ local complex coordinate system for a $\Co^\alpha$ integrable almost complex structure. His argument also works for $\frac12<\alpha\le1$, see  \cite{RoughNN}. We remark that we cannot replace $(\Co^\alpha,\Co^{\alpha+1})$ by $(C^k,C^{k+1})$, see \cite{Liding}.

Later Street used Malgrange's method and obtained a sharp estimate for elliptic structure.
\begin{prop}[\cite{SharpElliptic}, Sharp estimate for elliptic structures]\label{Prop::Intro::SharpE}
	Let $\alpha>1$, let $\Mf$ be a $\Co^{\alpha+1}$-manifold, and let $r,m$ be integers satisfying $\dim \Mf=r+2m$. Endow $\R^r\times\C^m$ with standard coordinate system $(t,z)=(t^1,\dots,t^r,z^1,\dots,z^m)$.
	
	Let $\Se\le\C T\Mf$ be a $\Co^\alpha$-elliptic structure of rank $r+m$. Then for any $p\in \Mf$ there exists a $\Co^{\alpha+1}$-coordinate system $F:U\subseteq \Mf\to\R^r\times\C^m$ near $p$, such that $\Se|_U$ is spanned by $F^*\Coorvec{t^1},\dots,F^*\Coorvec{t^r},F^*\Coorvec{z^1},\dots,F^*\Coorvec{z^m}$.
\end{prop}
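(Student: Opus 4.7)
The plan is to deduce Proposition \ref{Prop::Intro::SharpE} from Malgrange's sharp Newlander--Nirenberg theorem via a preliminary real-Frobenius step. Since $\Se$ is elliptic with $\rank\Se=r+m$ and $\dim\Mf=r+2m$, we have $\rank(\Se\cap\bar\Se)=2(r+m)-(r+2m)=r$, and because $\Se\cap\bar\Se$ is conjugation-invariant it is the complexification of a $\Co^\alpha$-involutive real subbundle $\V\le T\Mf$ of real rank $r$. First I would apply the sharp real Frobenius theorem to $\V$ to obtain a $\Co^{\alpha+1}$-chart $(t,y)\colon U\to\R^r_t\times\R^{2m}_y$ near $p$ in which $\V$ is spanned by $\Coorvec{t^1},\dots,\Coorvec{t^r}$.

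In these coordinates $\Se$ is spanned by $\Coorvec{t^1},\dots,\Coorvec{t^r}$ together with $\Co^\alpha$-complex vector fields $L_1,\dots,L_m$; after subtracting their $\Coorvec{t^j}$-components (which already lie in $\Se$) I may assume each $L_j=\sum_{k=1}^{2m}a_j^k(t,y)\Coorvec{y^k}$ with $a_j^k\in\Co^\alpha$. The involutivity of $\Se$ together with the ellipticity $\Se+\bar\Se=\C T\Mf$ forces $L_1(t,\cdot),\dots,L_m(t,\cdot)$ to span the $(1,0)$-tangent space of an integrable almost complex structure $J_t$ on each $y$-slice, and $J_t$ depends on $t$ at class $\Co^\alpha$.

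Next I would apply Malgrange's sharp Newlander--Nirenberg theorem to this parametrised family to produce a map $z(t,y)\colon U\to\C^m$ that is $\Co^{\alpha+1}$ jointly in $(t,y)$ and whose restriction $z(t,\cdot)$ is a local $J_t$-holomorphic coordinate for each fixed $t$. Then $F(q):=(t(q),z(t(q),y(q)))$ will be the desired $\Co^{\alpha+1}$-chart: the pushforwards $F^*\Coorvec{t^j}$ and $F^*\Coorvec{z^k}$ are $\Co^\alpha$-linear combinations of $\Coorvec{t^j}$ and $L_1,\dots,L_m$, hence span $\Se|_U$.

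The principal obstacle is the joint regularity in the last step. On a single $y$-slice, Malgrange's argument recovers $z$ via a nonlinear equation solved by a Cauchy--Green--type homotopy operator, yielding the one-derivative gain $\Co^\alpha\!\to\Co^{\alpha+1}$ in $y$. Upgrading this to joint $\Co^{\alpha+1}$-regularity in $(t,y)$ requires verifying that the solution operator preserves a bi-parameter H\"older--Zygmund class uniformly in $t$, and that the underlying fixed-point iteration remains contractive in this larger space. This is the technical heart of \cite{SharpElliptic}, and is precisely the bi-parameter H\"older apparatus that will need to be developed further to attack the non-elliptic Theorems \ref{Thm::ThmCoor1} and \ref{Thm::ThmCoor2}.
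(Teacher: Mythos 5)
There is a genuine gap, and it sits in your very first step. The sharp real Frobenius theorem applied to the $\Co^\alpha$-subbundle $\V=\Se\cap\bar\Se$ does \emph{not} produce a $\Co^{\alpha+1}$-chart: what it gives (Corollary \ref{Cor::ODE::RealFroCor}) is a chart whose $t$-component is smooth but whose transverse component is only $\Co^\alpha$, together with a $\Co^{\alpha+1,\alpha}$ parameterization, and Proposition \ref{Prop::Final::RealFroisSharp} shows this is optimal — a $\Co^\alpha$ involutive distribution cannot in general be straightened by anything better than a $\Co^\alpha$-chart. (In the elliptic setting the foliation $\Se\cap\bar\Se$ does in the end admit a $\Co^{\alpha+1}$ transverse map, namely $F''$, but that is a \emph{consequence} of the proposition you are trying to prove, not something you may feed into its proof.) Once your preliminary chart is only $\Co^\alpha$, the derivative you need to gain is already lost: the slice data you hand to the parametrized Newlander--Nirenberg step is only $\Co^\alpha$ jointly (indeed the transverse coordinate vector fields are merely $\Co^{\alpha-1}$, which is exactly why Theorem \ref{Thm::ThmCoor2} has to assume $\Se\cap\bar\Se\in\Co^\gamma$ with $\gamma\ge\alpha+1$ before running this kind of two-step reduction), and composing the slice-wise charts back with a $\Co^\alpha$ chart can only yield $F\in\Co^\alpha$, not $\Co^{\alpha+1}$.

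The second step has the same defect independently: a sharp Newlander--Nirenberg theorem \emph{with H\"older parameters} gains the derivative only in the slice variables, not in the parameter. This is precisely the content of Theorem \ref{Thm::Key}, whose conclusions are $F''\in\Co^{\alpha+1,\beta}$ and $F^*\Coorvec z\in\Co^{\min(\alpha,\beta-)}$, and of the Riesz-transform obstruction discussed in the introduction; with $\beta=\alpha$ dependence on $t$ you get $z(t,y)\in\Co^{\alpha+1,\alpha}_{y,t}$ at best, never $\Co^{\alpha+1}$ jointly in $(t,y)$. The reason Street's result (and this paper's derivation of it, Remark \ref{Rmk::Key::KeyFirst}) achieves joint $\Co^{\alpha+1}$ is that the $t$-directions are \emph{not} split off as parameters: the Malgrange-type equation \eqref{Eqn::PDE::ExistenceH} is elliptic in all $r+2m$ variables via $\Delta_\tau+\Box_w$, so the gain of two derivatives (hence $H\in\Co^{\alpha+1}$, $F\in\Co^{\alpha+1}$) occurs in the $t$- and $z$-directions simultaneously. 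In other words, the correct proof treats the elliptic structure as a single object — the proposition follows from Theorem \ref{Thm::Key} by taking $q=0$ and $\alpha=\beta$ — whereas your decomposition into ``real Frobenius for $\Se\cap\bar\Se$, then a $t$-family of complex structures'' artificially converts $t$ into a transverse parameter and thereby forfeits exactly the regularity the statement asserts.
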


Note that in Proposition \ref{Prop::Intro::SharpE} we have $F^*\Coorvec{t^1},\dots,F^*\Coorvec{t^r},F^*\Coorvec{z^1},\dots,F^*\Coorvec{z^m}\in\Co^\alpha$ because $F\in\Co^{\alpha+1}$. However in a more general setting (Theorem \ref{Thm::ThmCoor1}), due to the influence of the parameters, we only have $F^*\Coorvec{t^1},\dots,F^*\Coorvec{t^r}$, $F^*\Coorvec{z^1},\dots,F^*\Coorvec{z^m}\in\Co^{\alpha-}$. Both results for $F$ and $F^*\Coorvec{z^1},\dots,F^*\Coorvec{z^m}$ are sharp.

\subsection{Idea of the proof}

Let $\Mf$ be a (smooth) manifold. In this section we use the following terminologies:
\begin{defn}\label{Defn::Intro::InvStr}
    We call a $C^1$ complex tangent subbundle $\Se\subseteq\C T\Mf$ is \textbf{involutive}, if for every $C^1$-vector fields $X, Y$ that are sections of $\Se$, the Lie bracket $[X,Y]$ is a $C^0$-section of $\Se$. (See also Definition \ref{Defn::ODE::InvMix}.)
\begin{itemize}[parsep=-0.3ex]
    \item An \textbf{elliptic structure} on $\Mf$ is an involutive complex subbundle $\Se\subseteq\C T\Mf$ such that $\C T\Mf=\Se+\bar\Se$. 

    \item A \textbf{complex structure}\footnote{In this paper we use ``complex structure'' to refer the almost complex structure satisfying involutivity condition. See also \cite[Section I.8]{Involutive}.} on $\Mf$ is an involutive complex subbundle $\Se\subseteq\C T\Mf$ such that $\C T\Mf=\Se\oplus\bar\Se$.
    \item An \textbf{essentially real structure} is a complex subbundle $\Se\subseteq\C T\Mf$ such that $\Se=\bar\Se$.
\end{itemize}
\end{defn}
Here $\bar\Se=\{(p,u-iv):p\in \Mf,\ u,v\in T_p\Mf,\ u+iv\in\Se_p\}$ is the complex conjugate subbundle of $\Se$.

\medskip
To prove Theorem \ref{Thm::ThmCoor1}, notice that $\Se+\bar\Se$ is an essentially real structure. By applying the real Frobenius theorem, we get a foliation of $\Co^{\alpha+1}$-manifolds, such that by restricted to each leaf $\Se$ becomes an elliptic structure. 
The theorem is proved once we have the estimate of the coordinate system and the coordinate vector fields for elliptic structures with parameters:
\begin{thm}\label{Thm::Intro::KeyOV}
Let $\Mf$ and $\Nf$ be two smooth manifolds such that $\dim\Mf=r+2m$ and $\dim\Nf=q$. Let $\Se\le(\C T\Mf)\times\Nf$ be a rank-$(r+m)$ $\Co^\alpha$-involutive subbundle such that $\Se+\bar\Se=(\C T\Mf)\times \Nf$. Then for any $(p_0,q_0)\in\Mf\times\Nf$ there is a $\Co^\alpha$-coordinate chart $F:U\times V\subseteq\Mf\times\Nf\to\R^r_t\times\C^m_z\times\R^q_s$ such that $\Se|_{U\times V}$ is spanned by $\Co^{\alpha-}$-vector fields $F^*\Coorvec{t^1},\dots,F^*\Coorvec{t^r},F^*\Coorvec{z^1},\dots,F^*\Coorvec{z^m}$.
\end{thm}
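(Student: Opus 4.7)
The plan is to decompose the problem into two pieces by straightening the essentially real part $\Se \cap \bar\Se$ first, and then applying a parameterized Newlander--Nirenberg result (of Malgrange type) to the remaining complex structure. A dimension count using $\Se + \bar\Se = \C T\Mf \times \Nf$ and $\rank\Se = r+m$ gives $\rank(\Se\cap\bar\Se) = r$. The intersection is invariant under conjugation, hence it is the complexification of a real $\Co^\alpha$-subbundle of $T\Mf\times\Nf$, and the involutivity of $\Se$ forces this real subbundle to be involutive on its own. Locally pick a real $\Co^\alpha$ frame $T_1,\dots,T_r$ for $\Re(\Se\cap\bar\Se)$ and complete with complex $\Co^\alpha$ vector fields $L_1,\dots,L_m$ to a local frame for $\Se$; by hypothesis all of these are tangent to the $\Mf$-factor.

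Step 1 is to apply a parameterized real Frobenius theorem to the $s$-dependent fields $T_1,\dots,T_r$ on $\Mf$. This produces a $\Co^\alpha$ change of coordinates on $\Mf\times\Nf$ of the form $(t,y,s)$ which straightens $T_j$ to $\partial/\partial t^j$ while leaving $s\in\Nf$ unchanged. After this change, $\Se$ is still a $\Co^\alpha$-involutive rank-$(r+m)$ subbundle with $\Se+\bar\Se$ spanning $\C T\Mf\times \Nf$, and now $\partial/\partial t^1,\dots,\partial/\partial t^r \in \Se\cap\bar\Se$. Reducing modulo $\operatorname{span}_\C\{\partial/\partial t^j\}$, the remaining piece of $\Se$ descends to an $s$-parameterized, involutive, rank-$m$ complex subbundle of $\C T\R^{2m}\times \Nf$ that intersects its conjugate trivially, i.e.\ at each fixed $s$ it is an integrable almost complex structure on $\R^{2m}$.

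Step 2 is to apply a parameterized, sharp Malgrange-type Newlander--Nirenberg theorem — the parameter-free case being Proposition \ref{Prop::Intro::SharpE} — to produce $\Co^\alpha$ complex coordinates $z = z(y,s)$ on $\R^{2m}\times \Nf$ whose coordinate vector fields span the projected complex structure. Concatenating this with Step 1 gives a $\Co^\alpha$ coordinate chart $F = (t,z,s)$ on $U\times V \subseteq \Mf\times \Nf$ such that $F^*\partial/\partial t^j$ and $F^*\partial/\partial z^k$ span $\Se|_{U\times V}$. The $\Co^{\alpha-}$ bound on the pushed-forward vector fields then comes from the fact that the transition matrix between $(L_1,\dots,L_m)$ and $(F^*\partial/\partial z^k)_k$ is built from $\Co^\alpha$ data plus first derivatives of $F$ in the $s$-direction, which in the parameterized setting only satisfy the weaker Hölder estimates.

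The main obstacle is Step 2: the parameterized, sharp Newlander--Nirenberg statement with bi-parameter Hölder regularity. In the parameter-free Malgrange argument one gains a full derivative (a $\Co^\alpha$-structure produces $\Co^{\alpha+1}$ coordinates), but this gain cannot propagate to the $s$-direction, so the natural function spaces are mixed-regularity classes having $\Co^{\alpha+1}$-behavior in $(t,y)$ and only $\Co^\alpha$-behavior in $s$. The heart of the proof is thus to run Malgrange's factorization/fixed-point scheme inside such bi-parameter Hölder--Zygmund spaces, verify that the resulting chart has full $\Co^\alpha$-regularity jointly in $(t,y,s)$, and then show that the pushed-forward frame $F^*\partial/\partial z^k$ loses only an arbitrarily small $\eps>0$ rather than a full derivative — precisely because the loss lives in the parameter direction and can be traded against the $+1$ gain in the spatial directions. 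This balancing is what motivates the bi-parameter Hölder machinery announced in the paper's keywords.
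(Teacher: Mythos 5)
There is a genuine gap, and it sits in your Step 1. In the setting of Theorem \ref{Thm::Intro::KeyOV} the bundle $\Se\cap\bar\Se$ is only $\Co^\alpha$ (nothing better is assumed), so the chart that straightens $T_1,\dots,T_r$ is only a $\Co^\alpha$ chart: its straightened coordinate vector fields stay $\Co^\alpha$, but the remaining coordinate vector fields, and hence the Jacobian of the coordinate change, are only $\Co^{\alpha-1}$ (this is exactly the phenomenon recorded in Corollary \ref{Cor::ODE::RealFroCor} and in the remark after Lemma \ref{Lem::ODE::PullBackReg}; that lemma preserves $\Co^\alpha$ only for the part of $\Se$ contained in the span of the straightened, $\Co^\alpha$, fields). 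Since the complex generators $L_1,\dots,L_m$ are \emph{not} contained in $\Span(T_1,\dots,T_r)$, rewriting them in the new coordinates multiplies by that $\Co^{\alpha-1}$ Jacobian, so the residual $s$-parameterized almost complex structure you hand to Step 2 is only $\Co^{\alpha-1}$ --- for $1<\alpha\le 2$ even below the $C^1$ threshold where involutivity and the quotient modulo $\Span_\C(\Coorvec{t^j})$ make classical sense. Even granting a sharp parameterized Newlander--Nirenberg theorem as a black box, a $\Co^{\alpha-1}$ input returns at best $F^*\Coorvec{z}\in\Co^{(\alpha-1)-}$ and a chart with a full derivative less than claimed, so the sharp conclusions $F\in\Co^\alpha$, $F^*\Coorvec{z}\in\Co^{\alpha-}$ are lost. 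This is precisely why the paper reserves the ``straighten $\Se\cap\bar\Se$ first'' reduction for Theorem \ref{Thm::ThmCoor2}, where the additional hypothesis $\Se\cap\bar\Se\in\Co^\beta$ with $\beta\ge\alpha+1$ makes the straightening chart $\Co^{\alpha+1}$ and absorbs exactly this loss.

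The paper's proof of Theorem \ref{Thm::Intro::KeyOV} (in its general form, Theorem \ref{Thm::Key}) avoids splitting off the $t$-directions at all: it keeps the elliptic structure intact, takes the canonical frame $T=\partial_\tau+A'\partial_{\bar w}$, $Z=\partial_w+A''\partial_{\bar w}$ of Lemma \ref{Lem::ODE::GoodGen}, and runs Malgrange's factorization $F''=\overline{G''}\circ H\circ L$ with the parameter carried along: $H$ is obtained by solving the elliptic system \eqref{Eqn::PDE::ExistenceH} in the bi-parameter spaces $\Co^{\alpha+1}L^\infty\cap\Co^1\Co^\beta$, the transformed coefficients become real-analytic in the spatial variables (Proposition \ref{Prop::PDE::AnalyticPDE}), and a holomorphic Frobenius theorem with parameters (Proposition \ref{Prop::ODE::ParaHolFro}) supplies $G''$. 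Your Step 2 essentially restates this entire machine (and only for the $r=0$ case) without proof, which could be acceptable in a sketch; the decisive defect is Step 1, which degrades the data by a full derivative before any such machine can be applied.
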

See Theorem \ref{Thm::Key} for a more general version, where we consider the case of mixed regularity $\Se\in\Co^{\alpha,\beta}$ with $0<\beta\le\alpha+1$ (see Definitions \ref{Defn::Hold::MixHold}, \ref{Defn::Hold::MoreMixHold}, \ref{Defn::ODE::MixHoldMaps},  \ref{Defn::ODE::CpxPaSubbd} and \ref{Defn::ODE::InvMix}). Theorem \ref{Thm::Key} implies Proposition \ref{Prop::Intro::SharpE} which can be viewed as the ``parameter free'' case. See also Remarks \ref{Rmk::Key::KeyFirst}, \ref{Rmk::Key::KeySpecial} and \ref{Rmk::Key::KeyImpliesGong}.

The idea is based on Malgrange's factorization method for the sharp Newlander-Nirenberg theorem, whose sketch can be found in, for example \cite[Page 47]{Involutive} and \cite[Section 3]{RoughNN}. Malgrange's idea is to find an intermediate coordinate change $H$ such that the generators for $H_*\Se$ satisfy an elliptic PDE system. Using the PDE we can show that $H_*\Se$ is a $\Co^\alpha$-family of real-analytic elliptic structures. Finally we construct another coordinate change $G$ using holomorphic Frobenius theorem, now with parameter, so that $G_*H_*\Se$ is the span of $\Coorvec{t^1},\dots,\Coorvec{t^r},\Coorvec{z^1},\dots,\Coorvec{z^m}$. Thus $F=G\circ H$ is the desired coordinate chart.

See Section \ref{Section::KeyProofOver} for a detailed sketch to the proof of Theorem \ref{Thm::Intro::KeyOV}.

Note that in Theorem \ref{Thm::Intro::KeyOV} we work on the elliptic structures rather than the complex structures. In this way we do not have to pick generators for $\Se$ that satisfy the ``Hypothesis V'' in \cite{RoughComplex}.

\medskip
For the proof of Theorem \ref{Thm::ThmCoor2}, we follow Nirenberg's original proof by applying real Frobenius theorem twice on the essentially real structures $\Se\cap\bar\Se$ and $\Se+\bar\Se$, see \cite[the proof of Theorem I.12.1]{Involutive}. This ends up with a family of complex structures. We complete the proof by applying Theorem \ref{Thm::Intro::KeyOV} with the special case $r=0$.

To get the optimal H\"older-Zygmund regularity during this reduction, we combine the two real Frobenius theorems into one step. We call it the \textbf{bi-layer Frobenius theorem}. 
\begin{thm}\label{Thm::Intro::BLFro}
Let $\Mf$ be a smooth manifold with dimension $(r+m+q)$. Let $\alpha,\beta>1$ and let $\V\le \E\le T\Mf$ be two involutive real subbundles such that $\V\in\Co^\alpha$, $\E\in\Co^\beta$, $\rank\V=r$ and $\rank \E=r+m$.

Then for any $p\in\Mf$ there is a $\Co^{\min(\alpha,\beta)}$-coordinate system $F=(F',F'',F'''):U\subseteq\Mf\to\R^r_t\times\R^m_x\times\R^q_s$ near $p$, such that $\V|_U$ is spanned by $F^*\Coorvec{t^1},\dots,F^*\Coorvec{t^r}$ and $\E|_U$ is spanned by $F^*\Coorvec{t^1},\dots,F^*\Coorvec{t^r},F^*\Coorvec{x^1},\dots,F^*\Coorvec{x^m}$.

Moreover for their regularities we have $F'\in\Co^\infty$, $F''\in\Co^\alpha$, $F'''\in\Co^\beta$, $F^*\Coorvec{t^1},\dots,F^*\Coorvec{t^r}\in\Co^\alpha$ and $F^*\Coorvec{x^1},\dots,F^*\Coorvec{x^m}\in\Co^{\min(\alpha-1,\beta)}$. 
\end{thm}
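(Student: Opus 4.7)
The plan is to apply the sharp real Frobenius theorem in two stages: first to the larger, more regular bundle $\E$, and then to $\V$ viewed as a parametrized family of involutive subbundles inside the leaves of $\E$.

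In the first stage, I apply the sharp real Frobenius theorem to $\E\in\Co^\beta$ (the real analog of Proposition \ref{Prop::Intro::SharpE}). This produces a local chart $\Psi=(\Psi_1,F'''):U\to\R^{r+m}_w\times\R^q_s$ with $\E|_U=\Span(\Psi^*\Coorvec{w^1},\dots,\Psi^*\Coorvec{w^{r+m}})$. The sharp version lets me split $\Psi$ so that the ``leaf parameter'' $\Psi_1$ is chosen in $\Co^\infty$ (fix a smooth transversal $q$-dimensional submanifold at $p$ and parametrize each $\E$-leaf smoothly from it), while the ``leaf label'' $F'''$ has regularity $\Co^\beta$.

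In the second stage, I work in the $(w,s)$-chart. Because $\V\subset\E=\Span(\Coorvec{w})$, sections of $\V$ have vanishing $\Coorvec{s}$-component, and the involutivity of $\V$ on $\Mf$ implies that for each fixed $s$ the slice $\V_s\subset T\R^{r+m}_w$ is a rank-$r$ involutive subbundle. The pullback of $\V$ has bi-parameter regularity $\Co^{\alpha,\beta}$ ($\Co^\alpha$ in $w$ and $\Co^\beta$ in $s$, in the sense of Definitions \ref{Defn::Hold::MixHold}, \ref{Defn::Hold::MoreMixHold} and \ref{Defn::ODE::CpxPaSubbd}). I then apply a sharp parametrized real Frobenius theorem to the family $(\V_s)_s$. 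For each $s$ this yields coordinates $(t,x)\in\R^r\times\R^m$ on $\R^{r+m}_w$ straightening $\V_s=\Span(\Coorvec{t})$, with $t$ smooth (by fixing a smooth transversal uniformly in $s$) and $x$ of bi-parameter regularity $\Co^{\alpha,\beta}$. Setting $F'=t$ and $F''=x$ (with $F'''$ from stage 1) produces the desired chart.

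The regularity bookkeeping then goes as follows: $F'\in\Co^\infty$ by construction, $F'''\in\Co^\beta$ from stage 1, and $F''\in\Co^\alpha$ on $\Mf$ because the $\Co^{\alpha,\beta}$-function $x(w,s)$ is composed with $w\in\Co^\infty$ and $s\in\Co^\beta$, so the weaker $\Co^\alpha$ dominates. For the pushforward vector fields, $F^*\Coorvec{t^i}$ lies in the $\Co^\alpha$-bundle $\V$ and can be written $\sum_j M_{ij}V_j$ with $V_j$ a local $\Co^\alpha$-frame of $\V$; the key point is that the transition matrix $M$ is the inverse of a matrix whose entries are built from the smooth Jacobian $dF'$ applied to the $\Co^\alpha$-frame $V_j$, hence $M\in\Co^\alpha$ and thus $F^*\Coorvec{t^i}\in\Co^\alpha$. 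Similarly $F^*\Coorvec{x^j}$ lies in $\E\in\Co^\beta$, but its computation involves $dF''\in\Co^{\alpha-1}$, giving regularity $\Co^{\min(\alpha-1,\beta)}$. The principal obstacle is establishing the sharp parametrized real Frobenius theorem used in stage 2 with the correct split-regularity statement (smooth in the leaf direction, $\Co^{\alpha,\beta}$ for the leaf labels); this is the real analog of the key technical result (Theorem \ref{Thm::Key}) underlying the whole paper.
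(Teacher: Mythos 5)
Your two–stage plan is essentially the classical ``apply the real Frobenius theorem twice'' reduction, and it breaks down exactly in the case the bi-layer theorem is designed to handle: when $\V$ is \emph{more} regular than $\E$ (the theorem allows arbitrary $\alpha,\beta>1$, and the application to Theorem \ref{Thm::ThmCoor2} needs $\alpha\ge\beta$, even $\alpha\ge\beta+1$). Two concrete steps fail. First, the claim that the pullback of $\V$ in the stage-1 chart has bi-parameter regularity $\Co^{\alpha,\beta}$ is not justified: the straightening of $\E$ gives a parameterization $\Phi_1=\Psi^\Inv$ which is only $\Co^{\beta+1,\beta}_{w,s}$ (Corollary \ref{Cor::ODE::RealFroCor}), so the coefficients of the pulled-back frame of $\V$ are of the form $(d\Psi_1\cdot V_i)\circ\Phi_1$ with $d\Psi_1\cdot V_i\in\Co^\alpha$, and composition (Lemma \ref{Lem::Hold::CompofMixHold}) only yields $\Co^{\min(\alpha,\beta+1),\min(\alpha,\beta)}$ — in particular merely $\Co^{\min(\alpha,\beta)}$, not $\Co^\alpha$ in the leaf variables and certainly not $\Co^\beta$ in $s$. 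Second, even granting a perfect parametrized Frobenius theorem in stage 2, your final formula $F''=x\circ(\Psi_1,F''')$ composes through $F'''\in\Co^\beta$, so when $\alpha>\beta$ you can only conclude $F''\in\Co^{\min(\alpha,\beta)}$; the assertion ``the weaker $\Co^\alpha$ dominates'' silently assumes $\alpha\le\beta$. The same cap hits $F^*\Coorvec{t^j}$: your transition-matrix argument (inverting $(V_jF'^k)$, which is a nice observation and does give $\Co^\alpha$ once a chart with smooth $F'$ and the spanning property exists) is conditional on first producing such a chart, which your construction does not when $\alpha>\beta$.

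The paper avoids this loss by refusing to straighten $\E$ first. It picks canonical commuting frames $T$ for $\V$ (regularity $\alpha$) and $(X',X'')$ for $\E$ (regularity $\beta$) via Lemma \ref{Lem::ODE::GoodGen}, and builds a single parameterization $\Phi(t,x,s)=e^{tT}e^{xX''}(0,0,s)$, with the flow of the more regular frame $T$ applied \emph{outermost}, together with the auxiliary map $\Psi(t,x,s)=e^{tX'+xX''}(0,0,s)$. Then $F^*\Coorvec{t^j}=T_j\in\Co^\alpha$ on the nose, $F''=\tilde v\circ e^{-u\cdot T}$ involves only the $\Co^\alpha$ flow $\exp_T$ and the smooth background chart (so $F''\in\Co^\alpha$ even when $\alpha\gg\beta$), while $F'''$ is identified with the transversal component of $\Psi^\Inv$, giving $F'''\in\Co^\beta$, and $F^*\Coorvec{x^j}=((e^{tT})_*X_j)\circ F\in\Co^{\min(\alpha-1,\beta)}$ since pushing $X_j$ forward by the $T$-flow costs one derivative of $T$. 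If you want to salvage a two-stage argument, you would have to restrict to $\alpha\le\beta$ (where your route, suitably carefulized, does recover the statement), but that excludes the regime the theorem is actually for.
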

See Theorem \ref{Thm::ODE::BLFro} for a more general version. In Theorem \ref{Thm::ThmCoor2} we take $\alpha+1\ge\beta$, $\V=\Se\cap T\Mf$ and $\E=(\Se+\bar\Se)\cap T\Mf$.

\medskip
Unlike the sharp Newlander-Nirenberg theorem or Street's result where $F^*\Coorvec{z^1},\dots,F^*\Coorvec{z^m}$ are $\Co^\alpha$-vector fields, in our result we show that $F^*\Coorvec{z^j}\in\Co^{\alpha-}$ is optimal. 

One reason for the regularity loss comes from solving elliptic pdes. Say we have a $\Co^\alpha$-subbundle for some $\alpha>0$. In the first step of Malgrange's proof of the sharp Newlander-Nirenberg theorem we need to solve an elliptic pde system for $f=f(x)$ that has the following form:

\begin{equation}\label{Eqn::Intro::ElliPDE}
    \Delta f=\phi(f,\nabla f,\nabla^2f,a,\nabla a).
\end{equation}
Here $a=a(x)\in\Co^\alpha$ is the vector-valued coefficient that comes from the subbundle, and $\phi$ is a rational function which is defined and small when $f$ and $a$ are both small.

Since Laplace's equation gains two derivatives in H\"older-Zygmund spaces, we can find a $f\in\Co^{\alpha+1}$ solving \eqref{Eqn::Intro::ElliPDE}. This is why we can find a $\Co^{\alpha+1}$-coordinate system for a $\Co^\alpha$-integrable almost complex structure. In particular the coordinate vector fields are $\Co^\alpha$ because $\nabla f\in\Co^\alpha$.

When we do estimate on the complex Frobenius structure, we need to solve a pde system that has similar expression to \eqref{Eqn::Intro::ElliPDE} but with a parameter: we need $f=f(x,s)$ for the system $\Delta_xf=\phi(f,\nabla_xf,a,\nabla_xa)$ with $\Co^\alpha$-coefficient $a=a(x,s)$. 

We need to introduce some suitable function spaces that capture the property that the pde system ``gains 2 derivatives on $x$-variable''. Note that by comparing to $a(x,s)$, the function $\phi(f,\nabla_xf,a,\nabla_xa)$ lose one derivative in $x$, and by taking inverse Laplacian, $f$ should gain 1 derivative in $x$. Thus we can ask what is the good function space $\Xs=\Xs_{x,s}$ to work on, such that if $a\in\Xs_{x,s}$ then $\nabla_xf\in\Xs_{x,s}$.

Unfortunately the space $\Co^\alpha_{x,s}$ does not satisfy our need. When $a\in\Co^\alpha_{x,s}$ we may not find a solution $f$ such that $\nabla f\in\Co^\alpha_{x,s}$. This is roughly because the $x$-variable Riesz transform (say $x\in\R^n$ and $s\in V\subseteq\R^m$)
$$R_jf(x,s):=\frac{\Gamma(\frac{n+1}2)}{\pi^\frac{n+1}2}\lim\limits_{\eps\to 0}\int_{\R^n\backslash B^n(0,\eps)}\frac{y_j-x_j}{|y-x|^{n+1}}f(y,s)dy, \quad j=1,\dots,n,$$
does not map $\Co^\alpha(\R^n\times V)$ into itself. That is why $F^*\Coorvec{z^j}\in\Co^{\alpha-}$ is the best one we can get.

To see that why $R_j:\Co^\alpha_{x,s}\not\to \Co^\alpha_{x,s}$ holds, we consider the bi-parameter decomposition  $\Co^\alpha_{x,s}=\Co^\alpha_xL^\infty_s\cap L^\infty_x\Co^\alpha_s$ (see Lemma \ref{Lem::Hold::CharMixHold}): we have $\|f\|_{\Co^\alpha_{x,s}}\approx\sup_s\|f(\cdot,s)\|_{\Co^\alpha_x}+\sup_x\|f(x,\cdot)\|_{\Co^\alpha_s}$. We see that $R_j:\Co^\alpha_xL^\infty_s\to \Co^\alpha_xL^\infty_s$ is still bounded because Riesz transforms are bounded in $\Co^\alpha$-spaces. However $R_j:L^\infty_x\Co^\alpha_s\not\to L^\infty_x\Co^\alpha_s$ is not bounded because Riesz transforms are unbounded on $L^\infty$-spaces.

To construct the example of complex Frobenius structure such that $F^*\Coorvec z\notin\Co^\alpha$, we follow a similar but more sophisticated construction to \cite{Liding}. See Propositions \ref{Prop::Final::SharpddzRed} and \ref{Prop::Final::Expa} in Section \ref{Section::Sharpddz}.

\subsection{Notations and organizations}\label{Section::Convention}

We use \textbf{distributions} for generalized functions, rather than tangential subbundles on manifolds.

Given a bijection $f:U\to V$, we use $f^\Inv:V\to U$ as the inverse function of $f$. We do not use the notion $f^{-1}$ in order to reduce confusion when $(\cdot)^{-1}$ is used for inverting a matrix.

We use $A\Subset B$ to denote that $A$ is a relatively compact subset of $B$.

By $A \lesssim B$ we mean that $A \leq CB$ where $C$ is a constant independent of $A,B$. We use $A \approx B$ for ``$A \lesssim B$ and $B \lesssim A$''. And we use $A\lesssim_xB$ to emphasize the constant dependence on quantity $x$.

We use $\B^n=B^n(0,1)$ as the unit ball in $\R^n$ and $r\B^n=B^n(0,r)$ for $r>0$. We use a complex cone $\Hb^n:=\{x+iy\in\C^n:x\in\B^n,4|y|<1-|x|\}\subseteq\C^n$.

For functions $f:U\to\R$, $g:V\to\R$, we use the tensor notation $f\otimes g:U\times V\to\R$ as $(f\otimes g)(x,y):=f(x)g(y)$.

For the index of H\"older regularity, we use the  convention $\infty=\infty+1=\infty-1=\infty+=\infty-=\frac\infty2$.

\medskip
We denote by $\Co^\alpha$ the class of H\"older-Zygmund functions:
\begin{defn}\label{Defn::Intro::DefofHold}
    Let $\alpha\in(0,\infty]$, let $\Omega\subset\R^n$ be a convex open subset and let $X=\R^m$ or $\C^m$. The space $\Co^\alpha(\Omega;X)$ of bounded ($X$ vector-valued) H\"older-Zygmund functions on $\Omega$, is given recursively as follows:
    \begin{enumerate}[parsep=-0.3ex,label=(\roman*)]
    \item For $0<\alpha<1$: $\Co^\alpha(\Omega;X)$ consists of those continuous functions $f:\Omega\to X$ such that:
    \begin{equation*}
         \textstyle\|f\|_{\Co^\alpha(\Omega;X)}:=\sup_{x\in \Omega} |f(x)|_X + \sup_{\substack{x,y\in\Omega;x\neq y}} |x-y|^{-\alpha}|f(x)-f(y)|_X<\infty.
    \end{equation*}
    \item For $\alpha=1$: $\Co^1(\Omega;X)$ consists of those continuous functions $f:\Omega\to X$ such that:
    \begin{equation*}
         \textstyle\|f\|_{\Co^1(\Omega;X)}:=\sup_{x\in \Omega} |f(x)|_X + \sup_{\substack{x,y\in\Omega;x\neq y}} |x-y|^{-1}|\frac{f(x)+f(y)}2-f(\frac{x+y}2)|_X<\infty.
    \end{equation*}
    \item\label{Item::Hold::DefofHold::>1} For $\alpha>1$: $\Co^\alpha(\Omega;X)$ consists of $f\in\Co^{\alpha-1}(\Omega;X)$ such that the following norm is finite: $$\textstyle\|f\|_{\Co^\alpha(\Omega;X)}:=\|f\|_{\Co^{\alpha-1}(\Omega;X)}+\sum_{j=1}^n\|\partial_jf\|_{\Co^{\alpha-1}(\Omega;X)}.$$
    \item We use $\Co^\infty(\Omega;X):=\bigcap_{\beta>0}\Co^\beta(\Omega;X)$ and $\Co^{\alpha-}(\Omega;X):=\bigcap_{\beta<\alpha}\Co^\beta(\Omega;X)$, both endowed with the standard limit Fr\'echet topologies. That is, we say $f_k\to f_0$ in $\Co^{\alpha-}$ if $f_k\to f_0$ in $\Co^\beta$ for all $\beta<\alpha$.
    \item We use $\Co^{\alpha+}(\Omega):=\bigcup_{\beta>\alpha}\Co^\beta(\Omega)$, whose topology is used in the paper.
    \end{enumerate}

    We use $\Co^\beta(\Omega)=\Co^\beta(\Omega;\R)$ for $\beta\in\{\alpha,\alpha-,\alpha+:\alpha>0\}\cup\{\infty\}$.
    
     We use $\|f\|_{\Co^\alpha(\Omega)}$ either when $f:\Omega\to\R$ is a function or when the codomain of $f$ is clear. And we use $\|f\|_{\Co^\alpha}$ when both the domain and the codomain are clear.
\end{defn}
We define $\Co^\alpha(\Omega;X)$ for $\alpha\le0$ in Definition \ref{Defn::Hold::NegHold} using the Besov space $\Bs_{\infty\infty}^\alpha$.

Let $U\subseteq\R^m_x$ and $V\subseteq\R^n_s$ be two convex open subsets. For $\alpha,\beta\in(0,\infty]$, we denote by $\Co^{\alpha,\beta}(U,V)$ the space of functions on $U\times V$ that is bounded $\Co^\alpha$ in $x$ and bounded $\Co^\beta$ in $s$ separately, see Definition \ref{Defn::Hold::MixHold}. For $\alpha,\beta\in(-\infty,\infty]$ we denote by $\Co^\alpha\Co^\beta(U,V)$ the space of functions on $U\times V$ that is bounded $\Co^\alpha$ in $x$ and bounded $\Co^\beta$ in $s$ simultaneously, see Definition \ref{Defn::Hold::BiHold}. In most of the cases we work on the spaces like $\Co^\alpha L^\infty\cap\Co^\beta\Co^\gamma(U,V)$.

Given a coordinate chart $x=(x^1,\dots,x^n)$, we write  $dx:=[dx^1,\dots,dx^n]$ as a row vector and $\Coorvec{x}:=\begin{bmatrix}\partial_{x^1}\\\vdots\\\partial_{x^n}\end{bmatrix}$ as a column vector. We may use $X$ sometimes as a single vector field, and sometimes as a (column) collection of vector fields $X=[X_1,\dots,X_r]^\top$.  We use these conventions when there is no ambiguity.

In a mixed real and complex domain $\Omega\subseteq\R^r\times\C^m$ with standard (real and complex) coordinate system $(t,z)=(t^1,\dots,t^r,z^1,\dots,z^m)$, 
for a function $f$ on $\Omega$, we use $\nabla f,\nabla_zf,\partial_zf$ as functions taking values in column vectors, which have rows $r+2m$, $2m$ and $m$ respectively as follows,
\begin{equation}\label{Eqn::Intro::ColumnNote}
    \begin{gathered}\textstyle
    \nabla f=\nabla_{t,z}f=\partial_{t,z,\bar z}f=\big[\frac{\partial f}{\partial t^1},\dots,\frac{\partial f}{\partial t^r},\frac{\partial f}{\partial z^1},\dots,\frac{\partial f}{\partial z^m},\frac{\partial f}{\partial \bar z^1},\dots,\frac{\partial f}{\partial \bar z^m}\big]^\top,
\\\textstyle
\nabla_zf=\partial_{z,\bar z}f=\big[\frac{\partial f}{\partial z^1},\dots,\frac{\partial f}{\partial z^m},\frac{\partial f}{\partial \bar z^1},\dots,\frac{\partial f}{\partial \bar z^m}\big]^\top,
\qquad \partial_zf=\frac{\partial f}{\partial z}=\big[\frac{\partial f}{\partial z^1},\dots,\frac{\partial f}{\partial z^m}\big]^\top.
\end{gathered}
\end{equation}

For a map $F:\Mf\to\R^r\times\C^m$, we usually use the notation $F=(F',F'')$ where $F':\Mf\to\R^r$ and $F'':\Mf\to\C^m$ are the corresponding components of $F$.

\medskip
The paper is organized as follows:

In Section \ref{Section::HoldSec} we give discussions of H\"older-Zygmund spaces in the bi-parameter setting.
We prove some results for multiplications and compositions of bi-parameter H\"older-Zygmund spaces in Sections \ref{Section::HoldMult} and \ref{Section::HoldComp} respectively. These results will be used in Section \ref{Section::SecPDE}.

In Section \ref{Section::SecODE} we review the involutive structures and ODE flows. We prove the bi-layer Frobenius theorem Theorem \ref{Thm::Intro::BLFro} (see Theorem \ref{Thm::ODE::BLFro}) and a version of holomorphic Frobenius theorem (Proposition \ref{Prop::ODE::ParaHolFro}) in Section \ref{Section::PfFro}. Proposition \ref{Prop::ODE::ParaHolFro} will be used in the last step of Malgrange's factorization in Section \ref{Section::KeyProofOver}.

In Section \ref{Section::SecPDE} we prove two results for the nonlinear elliptic PDE system with parameters: one for existence (Section \ref{Section::ExistPDE}) and one for interior real-analyticity (Section \ref{Section::AnalPDE}).

In Section \ref{Section::SecKey} we state and prove the general version of the key result Theorem \ref{Thm::Intro::KeyOV} (Theorem \ref{Thm::Key}). We first sketch the idea using Malgrange's method in Section \ref{Section::KeyProofOver} and then prove them in Section \ref{Section::KeyLems}. To get a desired coordinate chart, we first pick some good generators for the given family of elliptic structures. Then we use the existence result in Section \ref{Section::ExistPDE} to construct an coordinate change such that in the new coordinates the generators satisfy some extra conditions. By the real-analyticity result in Section \ref{Section::AnalPDE} such generators admit a holomorphic extension to a complex domain. Finally, we use the holomorphic Frobenius theorem to complete the proof of Theorem \ref{Thm::Intro::KeyOV}. We finish the proof of Theorem \ref{Thm::Key} in Section \ref{Section::KeyPf}.

In Section \ref{Section::PfThm} we complete the proof of Theorems \ref{Thm::ThmCoor1} and \ref{Thm::ThmCoor2} (see Theorems \ref{Thm::TrueThm1} and \ref{Thm::TrueThm2}). 

In Section \ref{Section::SharpGen} we explain which parts of the results in the main theorems are sharp. For the sharpness of $F^*\Coorvec z\in\Co^{\alpha-}$ we give a complete construction of the complex Frobenius structure (Propositions \ref{Prop::Final::SharpddzRed} and \ref{Prop::Final::Expa}) and give the proof of Theorem \ref{Thm::Intro::Sharpddz} in Section \ref{Section::Sharpddz} (see Theorems \ref{Thm::Final::ProofExampleddz} and \ref{Thm::Final::SharpSum}).

In Section \ref{Section::SecHolLap} we include a result of the holomorphic extension for a right inverse Laplacian from $\B^n\subset\R^n$ to $\Hb^n\subset\C^n$ (Proposition \ref{Prop::HolLap}) based on \cite{Analyticity}.

\subsection*{Acknowledgement}
I would like to express my great appreciation for my advisor Prof. Brian Street for his help, not only  giving the idea of the project but also for his essential guidance throughout the project.
I would also like to thanks to Prof. Xianghong Gong for the invaluable advise for the paper.

\section{The Bi-parameter H\"older-Zygmund Structures}\label{Section::HoldSec}

In this section we set up some bi-parameter H\"older-Zygmund spaces that are used to discuss the H\"older regularity of some geometric objects. The results in this section are mostly for technical purposes.
\subsection{H\"older-Zygmund spaces of single and bi-parameters}\label{BasicHold}
Recall the classical (single-parameter) H\"older-Zygmund spaces in Definition \ref{Defn::Intro::DefofHold}. 
In applications we require the Littlewood-Paley characterizations for H\"older-Zygmund spaces.
\begin{defn}\label{Defn::Hold::DyadicResolution}
A (Fourier) dyadic resolution of unity for  $\R^n$ is a sequence of real-valued Schwartz functions $\phi=(\phi_j)_{j=0}^\infty$ on $\R^n$ such that
\begin{enumerate}[nolistsep,label=(\alph*)]
    \item The Fourier transform $\hat\phi_0(\xi)=\int_{\R^n}\phi(x)e^{-2\pi ix\xi}dx$ satisfies $\supp\hat\phi_0\subset\{|\xi|<2\}$ and $\hat\phi_0\big|_{\{|\xi|<1\}}\equiv1$.
    \item For $j\ge1$, $\phi_j(x):=2^{nj}\phi_0(2^jx)-2^{n(j-1)}\phi_0(2^{j-1}x)$ for $x\in\R^n$.
\end{enumerate}
\end{defn}
Immediately we see that
\begin{equation}\label{Eqn::Hold::RmkDyaSupp}
    \supp\phi_0\subset\{|\xi|<2\},\quad\supp\phi_j\subset\{2^{j-1}<|\xi|<2^{j+1}\},\quad j\ge1.
\end{equation}

\begin{lem}\label{Lem::Hold::HoldChar}
    Let $\alpha>0$ and let $\phi=(\phi_j)_{j=0}^\infty$ be a dyadic resolution.
    \begin{enumerate}[parsep=-0.3ex,label=(\roman*)]
        \item\label{Item::Hold::HoldChar::LPHoldChar} $f\in\Co^\alpha(\R^n)$ if and only if $\phi_j\ast f\in C^0(\R^n)$ for every $j\ge0$ and $\sup_{j\ge0}2^{j\alpha}\|\phi_j\ast f\|_{L^\infty(\R^n)}<\infty$. Moreover there is a $C=C(\phi,\alpha)>0$ that does not depend on $f$ such that
        \begin{equation*}\label{Eqn::Hold::HoldChar::LPHoldChar}
            \textstyle C^{-1}\|f\|_{\Co^\alpha(\R^n)}\le\sup_{j\ge0}2^{j\alpha}\|\phi_j\ast f\|_{L^\infty(\R^n)}\le C\|f\|_{\Co^\alpha(\R^n)},\quad\forall f\in\Co^\alpha(\R^n).
        \end{equation*}
        \item\label{Item::Hold::HoldChar::DomainChar} Let $\Omega\subset\R^n$ be a bounded convex open set with smooth boundary. Then $\Co^\alpha(\Omega)$ has an equivalent norm
        \begin{equation*}
            f\mapsto\inf\{\|\tilde f\|_{\Co^\alpha(\R^n)}:\tilde f\in\Co^\alpha(\R^n),\tilde f\big|_\Omega=f\}.
        \end{equation*}
        \item\label{Item::Hold::HoldChar::02} Let $\Omega\subseteq\R^n$ be either the whole space or a bounded convex open set with smooth boundary. Suppose $0<\alpha<2$, then $\Co^\alpha(\Omega)$ has an equivalent norm
        \begin{equation*}
            \textstyle f\mapsto\sup_{x\in\Omega}|f(x)|+\sup_{x,y\in\Omega;x\neq y}\big|\frac{f(x)+f(y)}2-f\big(\frac{x+y}2\big)\big|\cdot|x-y|^{-\alpha}.
        \end{equation*}
    \end{enumerate}
\end{lem}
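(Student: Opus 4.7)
The plan is to build the three claims around the Littlewood--Paley characterization in (i), and then deduce (ii) from an extension operator and (iii) from frequency-localized estimates. For (i), I would argue by induction on $\lfloor\alpha\rfloor$, treating $0<\alpha<1$ and $\alpha=1$ directly as base cases. For the forward implication, when $j\ge1$ the support condition \eqref{Eqn::Hold::RmkDyaSupp} forces $\hat\phi_j(0)=0$, so $\int\phi_j=0$; hence one may write $\phi_j\ast f(x)=\int\phi_j(y)(f(x-y)-f(x))\,dy$ and combine the dyadic scaling $\|\phi_j\|_{L^1}\lesssim1$, $\int|y||\phi_j(y)|dy\lesssim2^{-j}$ with the modulus of continuity of $f$ (classical first difference for $\alpha<1$, second difference for $\alpha=1$ after exploiting a second vanishing moment and a symmetrization) to produce $\|\phi_j\ast f\|_{L^\infty}\lesssim2^{-j\alpha}\|f\|_{\Co^\alpha}$. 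The case $\alpha>1$ reduces to $\alpha-1$ via the inductive hypothesis applied to $\partial_k f$ together with $\partial_k(\phi_j\ast f)=\phi_j\ast\partial_k f$, which by Bernstein's inequality for frequency-annulus supported functions yields the additional factor $2^j$. The reverse implication then uses Bernstein's inequality and the telescoping sum $f=\sum_j\phi_j\ast f$ (in the sense of tempered distributions modulo polynomials), combining geometric summation with Taylor's formula to recover the first and second difference seminorms.

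For (ii), one direction is immediate from Definition \ref{Defn::Intro::DefofHold}: if $\tilde f\in\Co^\alpha(\R^n)$ satisfies $\tilde f|_\Omega=f$, then the supremum-type quantities defining $\|f\|_{\Co^\alpha(\Omega)}$ are bounded by the same quantities computed on $\R^n$, giving $\|f\|_{\Co^\alpha(\Omega)}\le\|\tilde f\|_{\Co^\alpha(\R^n)}$ and hence $\le$ the infimum. The reverse direction requires a bounded smooth extension operator $E:\Co^\alpha(\Omega)\to\Co^\alpha(\R^n)$; since $\Omega$ is bounded, convex, with smooth boundary, one may invoke the Stein total extension operator (or construct it via reflection plus a partition of unity adapted to a tubular neighborhood of $\partial\Omega$) to obtain an operator bounded in $\Co^\alpha$ for all $\alpha>0$ simultaneously, hence $\inf\|\tilde f\|_{\Co^\alpha(\R^n)}\le\|Ef\|_{\Co^\alpha(\R^n)}\lesssim\|f\|_{\Co^\alpha(\Omega)}$.

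For (iii), after using (ii) to reduce to $\Omega=\R^n$, I would proceed via the LP characterization. Let $Z(f):=\sup_{x\neq y}|\tfrac{f(x)+f(y)}2-f(\tfrac{x+y}2)|\cdot|x-y|^{-\alpha}$. For the bound $Z(f)\lesssim\|f\|_{\Co^\alpha}$, split $f=\sum_j\phi_j\ast f$ at $2^{j_0}\approx|x-y|^{-1}$: on the low-frequency range apply a second-order Taylor expansion at the midpoint and bound the remainder by $|x-y|^2\|\nabla^2(\phi_j\ast f)\|_{L^\infty}\lesssim|x-y|^2 2^{(2-\alpha)j}\|f\|_{\Co^\alpha}$ via Bernstein; on the high-frequency range bound the second difference trivially by $4\|\phi_j\ast f\|_{L^\infty}\lesssim 2^{-j\alpha}\|f\|_{\Co^\alpha}$; both geometric series converge precisely because $\alpha<2$. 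For the converse, $\int\phi_j=0$ and (for $j\ge1$) $\int y\phi_j(y)dy=0$ so after symmetrizing $\phi_j\ast f(x)=\tfrac12\int\phi_j(y)[f(x-y)+f(x+y)-2f(x)]dy$, and the hypothesis $Z(f)<\infty$ together with $\|f\|_{L^\infty}<\infty$ gives $|\phi_j\ast f(x)|\lesssim Z(f)\int|\phi_j(y)||y|^\alpha dy\lesssim 2^{-j\alpha}Z(f)$, again requiring $\alpha<2$ for the moment integral to converge.

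The main obstacle is the delicate $\alpha<2$ threshold in (iii), which relies essentially on the second vanishing moment of $\phi_j$ and convergence of $\int|\phi_j(y)||y|^\alpha dy$ at both endpoints of the split; this is the only place where the upper restriction $\alpha<2$ is actually used. Part (i) is a standard Littlewood--Paley computation and (ii) is routine given a smooth extension operator, so in the written proof I would cite Triebel's \emph{Theory of Function Spaces} for the bulk of (i)--(ii) and only supply the second-difference calculation for (iii) in full detail.
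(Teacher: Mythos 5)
The paper disposes of this lemma by citing Triebel (Chapters 2.2.2, 2.5.7 and 3.4.2), so your plan to cite \cite{Triebel1} for (i)--(ii) is in line with it; the substantive content of your proposal is the direct Littlewood--Paley argument for (iii), and that is where two concrete gaps sit. First, in the converse direction on $\R^n$ you claim that $\int\phi_j=0$ and $\int y\,\phi_j(y)\,dy=0$ yield $\phi_j\ast f(x)=\tfrac12\int\phi_j(y)\,[f(x-y)+f(x+y)-2f(x)]\,dy$. That identity is equivalent to $\phi_j$ being even: the discrepancy is $\tfrac12\int\phi_j(y)\,[f(x+y)-f(x-y)]\,dy$, and vanishing moments kill it only when $f$ is a polynomial, not for a general bounded function satisfying the midpoint condition (and for $1\le\alpha<2$ you have no first-difference control available to absorb this odd part). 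The gap is fixable within your own framework, but it must be said: since (i) holds for \emph{every} dyadic resolution, all the resulting norms are mutually equivalent, so you may take $\hat\phi_0$ radial, which makes each $\phi_j$ even; with that choice your bound $|\phi_j\ast f(x)|\lesssim Z(f)\int|\phi_j(y)||y|^\alpha dy\lesssim 2^{-j\alpha}Z(f)$ is correct. The forward direction (splitting at $2^{j_0}\approx|x-y|^{-1}$, Bernstein on the low frequencies, trivial bound on the high ones) is fine and correctly identifies where $0<\alpha<2$ enters.

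Second, ``using (ii) to reduce to $\Omega=\R^n$'' only works in one direction. If $f\in\Co^\alpha(\Omega)$, you can extend with controlled $\Co^\alpha(\R^n)$ norm and, since $\Omega$ is convex so midpoints stay in $\Omega$, conclude $\sup_\Omega|f|+Z_\Omega(f)\lesssim\|f\|_{\Co^\alpha(\Omega)}$. But in the opposite direction you are handed only $\sup_\Omega|f|+Z_\Omega(f)<\infty$, and the extension operator underlying (ii) is bounded with respect to the intrinsic $\Co^\alpha(\Omega)$ norm --- exactly the quantity you are trying to estimate --- so invoking (ii) there is circular. You need either an extension bounded with respect to the sup-plus-midpoint-difference norm on a bounded convex smooth domain, or an intrinsic argument on $\Omega$: trivial for $\alpha=1$ (it is the definition), a Zygmund-type midpoint iteration for $0<\alpha<1$, and a genuine argument producing the derivative and its H\"older bound for $1<\alpha<2$. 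This intrinsic characterization on domains is precisely what the paper's citation to \cite[Chapter 3.4.2]{Triebel1} covers, so either supply that argument or extend your citation to cover the domain case of (iii) as well.
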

See \cite[Chapters 2.2.2, 2.5.7 and 3.4.2]{Triebel1} for the proof.

\begin{remark}\label{Rmk::Hold::RmkforLPHoldChar}
    Using Lemma \ref{Lem::Hold::HoldChar} and Definition \ref{Defn::Intro::DefofHold} we know that for positive noninteger value, the H\"older-Zygmund spaces coincide with the classical H\"older spaces, that is $\Co^{m+s}(\Omega)=C^{m,s}(\Omega)$ for $m\ge0$, $0<s<1$. However for integer cases $C^m(\Omega)\subsetneq C^{m-1,1}(\Omega)\subsetneq \Co^m(\Omega)$.
\end{remark}

Following the notations in \cite[Definition 2.3.1/2(i)]{Triebel1} for $\alpha>0$ the H\"older-Zygmund spaces $\Co^\alpha(\Omega)$ are indeed the Besov spaces $\Bs_{\infty\infty}^\alpha(\Omega)$. Thus it is natural to define $\Co^\alpha$-spaces for $\alpha\le0$.

\begin{defn}\label{Defn::Hold::NegHold}Let $\alpha\le0$, let $X=\R^m$ or $\C^m$, and let $(\phi_j)_{j=0}^\infty$ be a dyadic resolution of unity in $\R^n$. We define $\Co^\alpha:=\Bs_{\infty\infty}^\alpha$. More precisely, $\Co^\alpha(\R^n;X)$ is the space of all temper distributions $f\in\Sc'(\R^n;X)$ such that
\begin{equation*}
\textstyle    \|f\|_{\Co^\alpha(\R^n;X)}:=\sup_{j\ge0}2^{j\alpha}\|\phi_j\ast f\|_{L^\infty(\R^n;X)}<\infty.
\end{equation*}
Let $\Omega\subset\R^n$ be a bounded convex domain, we define $\Co^\alpha(\Omega):=\{\tilde f|_\Omega:\tilde f\in\Co^\alpha(\R^n)\}$ with norm 
$$\|f\|_{\Co^\alpha(\Omega;X)}:=\inf\{\|\tilde f\|_{\Co^\alpha(\R^n;X)}:\tilde f|_\Omega=f\}.$$
\end{defn}
The $\Co^\alpha$-norms depend on  $\phi=(\phi_j)_{j=0}^\infty$, while the spaces themselves are not. See \cite[Chapter 2.3.2]{Triebel1}.

We use Littlewood-Paley decomposition along different variables to define bi-parameter Zygmund structures. 

\begin{note}\label{Note::Hold::ConvVar}
For functions $f(x,s)$ and $\rho(x)$, we use $\ast_x$ as the convolution acting on $x$-variable: $$\rho\ast_xf(x,s)=(\rho\ast f(\cdot,s))(x).$$ 

\end{note}
Note that if $f\in\Sc'(\R^n_x\times \R^q_s)$ and $\rho\in\Sc(\R^n)$, then $\rho\ast_xf$ is a well-defined tempered distribution in $\R^n_x\times\R^q_s$, since $\rho\ast_xf=(\rho\otimes\delta_{0^q})\ast f$. Here $\delta_{0^q}$ is the Direc delta measure of the origin in $\R^q$.
\begin{defn}\label{Defn::Hold::BiHold}Let $\alpha,\beta\in\R$. Let $x=(x^1,\dots,x^n)$ and $s=(s^1,\dots,s^q)$ be the coordinate system of $\R^n$ and $\R^q$ respectively. Let $\phi=(\phi_j)_{j=0}^\infty\subset\Sc(\R^n)$ and $\psi=(\psi_k)_{k=0}^\infty\subset\Sc(\R^q)$ be dyadic resolutions for $\R^n$ and $\R^q$. 
\begin{enumerate}[label=(\roman*),parsep=-0.3ex]
    \item\label{Item::Hold::BiHold::CalphaLLinfty} We define $\Co^\alpha_x L^\infty_s(\R^n,\R^q)=L^\infty_s\Co^\alpha_x(\R^q,\R^n)$ to be the set of all $f\in\Sc'(\R^n\times\R^q)$ such that $\phi_j\ast_xf\in L^\infty(\R^n\times\R^q)$ for every $j\ge0$ and 
    $$\textstyle\|f\|_{\Co^\alpha_x L^\infty_s(\R^n,\R^q)}=\|f\|_{\Co^\alpha_x L^\infty_s(\R^n,\R^q)_{\phi}}:=\sup_{j\ge0}2^{j\alpha}\|\phi_j\ast_xf\|_{L^\infty(\R^n\times\R^q)}<\infty.$$
    \item\label{Item::Hold::BiHold::CalphaLCbeta} We define $\Co^\alpha_x\Co^\beta_s(\R^m,\R^q)$ to be the set of all $f\in\Sc'(\R^n\times\R^q)$ such that $$\textstyle\|f\|_{\Co^\alpha_x\Co^\beta_s(\R^n,\R^q)}=\|f\|_{\Co^\alpha_x\Co^\beta_s(\R^n,\R^q)_{\phi,\psi}}:=\sup_{j,k\ge0}2^{j\alpha+k\beta}\|\phi_j\ast_x(\psi_k\ast_sf)\|_{L^\infty(\R^n\times\R^q)}<\infty.$$
    \item\label{Item::Hold::BiHold::Domain}  For $\Xs\in\{\Co^\alpha_xL^\infty_s,\Co^\alpha_x\Co^\beta_s\}$, let $U\subset\R^m$ and $V\subset\R^n$ be two open sets, define $\Xs(U_x,V_s)$ by
    $$\textstyle\Xs( U,V):=\{\tilde f|_{U\times V}:\tilde f\in \Xs(\R^n,\R^q)\},\quad\|f\|_{\Xs( U,V)}:=\inf_{\tilde f|_{U\times V}=f}\|\tilde f\|_{\Xs(\R^n,\R^q)}.$$
\end{enumerate}
    Let $\Xs,\Y$ be the function classes in \ref{Item::Hold::BiHold::CalphaLLinfty} or \ref{Item::Hold::BiHold::CalphaLCbeta}, we have convention $$\Xs\cap\Y( U,V):=\Xs( U, V)\cap \Y( U,V), \quad \|f\|_{\Xs\cap\Y(U,V)}:=\|f\|_{\Xs}+\|f\|_{\Y}.$$
    
    For vector valued functions, we use $\Xs\cap \Y(U,V;\R^m)$ as the spaces of all vector valued functions (or distributions) $f=(f^1,\dots,f^m):U\times V\to\R^m$ such that $f^j\in\Xs\cap\Y(U,V)$ for all $j=1,\dots,m$. And we use 
    $$\textstyle\|f\|_{\Xs\cap\Y(U,V;\R^m)}=\sup_{1\le j\le m}\|f^j\|_{\Xs\cap\Y(U,V)}.$$
    
    We may omit the variables, the domains and the codomains in the norms when they are clear.
    \end{defn}
\begin{remark}\label{Rmk::Hold::RmkforBiHold}
\begin{enumerate}[parsep=-0.3ex,label=(\roman*)]
    \item By Lemma \ref{Lem::Hold::HoldChar} \ref{Item::Hold::HoldChar::LPHoldChar} and \cite[Chapter 2.3.2]{Triebel1}, different choices of $(\phi_j)$ and $(\psi_k)$ result to the equivalent norms. We would use the norm notation $\|f\|_{\Xs(U,V)}$ through fixing a choice of $(\phi,\psi)$ implicitly. 
    \item\label{Item::Hold::RmkforBiHold::CxCs=CsCx} We have $\Co^\alpha_x\Co^\beta_s(U_x,V_s)=\Co^\beta_s\Co^\alpha_x(V_s,U_x)$. This is due to the fact that
    \begin{equation}\label{Eqn::Hold::RmkforBiHold::ConvComm}
        \phi_j\ast_x(\psi_k\ast_s f)=\psi_k\ast_s(\phi_j\ast_x f)=(\phi_j\otimes \psi_k)\ast f,\quad\text{where }(\phi_j\otimes\psi_k)(x,s)=\phi_j(x)\psi_k(s).
    \end{equation}
    \item When $\alpha>0$ and when $U,V$ are smooth domains, we have $\Co^\alpha\Co^\beta(U,V)=\Co^\alpha(U;\Co^\beta(V))$ in the sense that we take $X=\Co^\beta(V)$ in Definition \ref{Defn::Intro::DefofHold}. We leave the proof to readers.
    \item\label{Item::Hold::RmkforBiHold::notVectMeas} However $L^\infty\Co^\alpha(U,V)$ does not coincide with the vector-valued space $L^\infty(V;\Co^\alpha( U))$. In classical definitions $L^\infty(U;X)$ is the set of all $X$-valued strong measurable function (quotiented by almost everywhere) which is essentially bounded. Such functions in $L^\infty(U;\Co^\alpha(V))$ must be \textit{almost everywhere separable valued}. But $\Co^\alpha(U)$ is \textit{not} a separable space. So $L^\infty(U;\Co^\alpha(V))\subsetneq L^\infty\Co^\alpha(U,V)$ and the inclusion is strict. See for example \cite[Remark 1.2.16]{AnalBana}.
    \item\label{Item::Hold::RmkforBiHold::Extinthesametime}
    In the definition $f\in\Co^\alpha_xL^\infty_s\cap \Co^\beta_x\Co^\gamma_s( U,V)$ means there are extensions $\tilde f_1\in \Co^\alpha_xL^\infty_s(\R^n,\R^q)$ and $\tilde f_2\in \Co^\beta_x\Co^\gamma_s(\R^n,\R^q)$ such that $\tilde f_1|_{U\times V}=\tilde f_2|_{U\times V}=f$. When $U$ and $V$ are smooth domains, by Lemma \ref{Lem::Hold::CommuteExt} below we can choose a suitable extension $\tilde f_1=\tilde f_2$ by taking $\tilde f=E_xE_sf=E_sE_xf$ that extends $\Co^\alpha_xL^\infty_s$ and $L^\infty_x\Co^\beta_s$ simultaneously.
    \item\label{Item::Hold::RmkforBiHold::Interpo} Let $\alpha_0,\alpha_1,\beta_0,\beta_1\in\R$. Clearly $\min(2^{j\alpha_0+k\beta_0},2^{j\alpha_1+k\beta_1})\le2^{j((1-\theta)\alpha_0+\theta\alpha_1)+k((1-\theta)\beta_0+\theta\beta_1)}$ for $\theta\in[0,1]$. Thus we have inclusion: for $U,V$ either the total space or bounded smooth convex domains, 
    \begin{equation}\label{Eqn::Hold::RmkforBiHold::Interpo}
        \Co^{\alpha_0}\Co^{\beta_0}\cap\Co^{\alpha_1}\Co^{\beta_1}(U,V)\subseteq \Co^{\alpha_\theta}\Co^{\beta_\theta}(U,V),\quad\alpha_\theta:=(1-\theta)\alpha_0+\theta\alpha_1,\ \beta_\theta:=(1-\theta)\beta_0+\theta\beta_1.
    \end{equation}
\end{enumerate}
\end{remark}

\begin{lem}[Common extensions]\label{Lem::Hold::CommuteExt}
Let $R>0$, and let $U\subset\R^n$, $V\subset\R^q$ be two bounded domain with smooth boundaries.
Then there are extension operators $E_x:L^\infty(U)\to L^\infty(\R^n)$ and $E_s:L^\infty(V)\to L^\infty(\R^q)$ such that \begin{enumerate}[nolistsep,label=(\alph*)]
    \item\label{Item::Hold::CommuteExt::BddofExt} $E_x:\Co^\alpha(U)\to\Co^\alpha(\R^n)$ and $E_s:\Co^\alpha(V)\to\Co^\alpha(\R^q)$ are both bounded linear for all $-R<\alpha<R$.
    \item\label{Item::Hold::CommuteExt::Comm} $E_xE_s,E_sE_x:\Co^{(-R)+}\Co^{(-R)+}(U,V)\to \Co^{(-R)+}\Co^{(-R)+}(\R^n,\R^q)$ are well-defined and equal.
\end{enumerate}
Moreover $\tilde E:=E_xE_s$ is an extension operator for $U\times V$ that has boundedness 
\begin{equation}\label{Eqn::Hold::CommuteExt::Bdd}
    \tilde E: \Xs\Y(U,V)\to  \Xs\Y(\R^n,\R^q),\quad \Xs,\Y\in\{L^\infty,\Co^\alpha:-R<\alpha<R\}.
\end{equation}
\end{lem}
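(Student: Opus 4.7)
The plan is to take $E_x$ to be a universal extension operator on the smooth bounded domain $U$ (for instance Rychkov's universal extension for Besov spaces on Lipschitz domains, or Stein's reflection-based extension followed by real interpolation), and $E_s$ likewise on $V$. Such an operator is a single continuous linear map $E_x:\D'(U)\to\D'(\R^n)$ that restricts to a bounded map $\Co^\alpha(U)=\Bs^\alpha_{\infty\infty}(U)\to\Co^\alpha(\R^n)$ uniformly for $\alpha$ in any bounded range, so in particular for $\alpha\in(-R,R)$. This immediately gives (a).

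For the commutativity in (b), the conceptual reason is that $E_x$ acts on the $x$-variable alone and $E_s$ on the $s$-variable alone. Concretely, Rychkov's construction is an absolutely convergent series of convolutions against compactly supported smooth kernels in the $x$-variable, so as an operator on $\D'(\R^n\times\R^q)$ it factors as $E_x\otimes\id_s$; symmetrically $E_s=\id_x\otimes E_s$. Hence both $E_xE_s$ and $E_sE_x$ agree with the tensor product $E_x\otimes E_s$. The identity $E_xE_s(u\otimes v)=(E_xu)\otimes(E_sv)=E_sE_x(u\otimes v)$ on tensor products $u\otimes v\in C^\infty_c(U)\otimes C^\infty_c(V)$ is immediate, and extends by density to all $f\in\Co^{(-R)+}\Co^{(-R)+}(U,V)$ once the boundedness in the next paragraph is established.

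For the moreover bound $\tilde E:\Xs\Y(U,V)\to\Xs\Y(\R^n,\R^q)$, work via the Littlewood-Paley representation of the $\Xs\Y$-norms. The crucial observation is that $\phi_j\ast_x$ commutes with $E_s$ (disjoint-variable linearity) and $\psi_k\ast_s$ commutes with $E_x$. Combined with the fact that $E_x$ and $E_s$ tensorize, that is, act scalar-wise on Banach-space-valued functions, one can iterate the single-variable bounds $E_x:\Co^\alpha(U)\to\Co^\alpha(\R^n)$ and $E_s:\Co^\beta(V)\to\Co^\beta(\R^q)$ (or their $L^\infty$ versions) slice-wise to obtain the joint bi-parameter bound \eqref{Eqn::Hold::CommuteExt::Bdd}. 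The four cases $\Xs,\Y\in\{L^\infty,\Co^\alpha\}$ are handled uniformly by the same template.

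\textbf{Main obstacle.} The delicate point is the negative-regularity regime, where slice evaluations $f(x,\cdot)$ and $f(\cdot,s)$ need not be classically defined. The workaround is to perform all manipulations at the level of Littlewood-Paley projections, exploiting the commutations $\phi_j\ast_x\circ E_s=E_s\circ\phi_j\ast_x$ and $\psi_k\ast_s\circ E_x=E_x\circ\psi_k\ast_s$, which come for free from $E_x$, $E_s$ having explicit kernel representations supported in disjoint coordinate variables. This same structural feature is what allows the density argument in the previous paragraph to promote the tensor-product identity to all of $\Co^{(-R)+}\Co^{(-R)+}(U,V)$.
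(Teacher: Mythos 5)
Your proposal is correct and follows essentially the same route as the paper's proof: a single-variable extension operator bounded on $\Co^\alpha$ uniformly for $|\alpha|<R$ (the paper uses Triebel's multi-term reflection rather than Rychkov, which is immaterial), commutativity of $E_x$ and $E_s$ obtained from the tensor identity on nice functions and promoted to negative regularity via Littlewood--Paley approximation in a slightly weaker norm, and the bi-parameter bound \eqref{Eqn::Hold::CommuteExt::Bdd} obtained by commuting $\phi_j\ast_x$ past $E_s$ and applying the one-variable bounds slice-wise before taking the infimum over extensions of $f$. The only caveat is your parenthetical ``Stein's extension followed by real interpolation,'' which would not reach negative smoothness; Rychkov's universal operator (or the paper's higher-order reflection) is the choice that actually delivers part (a).
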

\begin{proof}
The construction $E_x$ and $E_s$ for \ref{Item::Hold::CommuteExt::BddofExt} can be found in \cite[Theorems 2.9.2, 2.9.4 (i) and 3.3.4 (i)]{Triebel1}, which are both $L^\infty$ and $\Co^\alpha$-bounded for $-R<\alpha<R$. 

For continuous functions $u\in C^0(U)$, $v\in C^0(V)$ we have equality $E_xE_s(u\otimes v)=E_xu\otimes E_sv=E_sE_x(u\otimes v)$. Thus by taking finite linear combinations of simple tensors and taking $C^0$-limit, we see that $E_xE_s=E_sE_x:C^0(U\times V)\to L^\infty (\R^n\times\R^q)$ is an extension operator for $U\times V$.

Let $\eps>0$ and let $(\phi_j),(\psi_k)$ be dyadic resolutions for $\R^n_x$, $\R^q_s$ respectively. Clearly for $\tilde f\in\Co^{-R+\eps}\Co^{-R+\eps}(\R^n,\R^q)$, the double sum $\sum_{j,k=0}^\infty(\phi_j\otimes\psi_k)\ast \tilde f$ converges to $\tilde f$ in $\Co^{-R+\frac\eps2}\Co^{-R+\frac\eps2}(\R^n,\R^q)$. Since $((\phi_j\otimes\psi_k)\ast\tilde f)|_{U\times V}\in C^0(U\times V)$ for each $j,k$, by Definition \ref{Defn::Hold::BiHold} \ref{Item::Hold::BiHold::Domain} we see that $E_xE_sf=E_sE_xf\in \Co^{-R+\frac\eps2}\Co^{-R+\frac\eps2}(\R^n,\R^q)$ whenever $f\in \Co^{-R+\eps}\Co^{-R+\eps}(\R^n,\R^q)$. Therefore $E_xE_s,E_sE_x\Co^{(-R)+}\Co^{(-R)+}(U,V)\to \Co^{(-R)+}\Co^{(-R)+}(\R^n,\R^q)$ are both defined  and equal. This proves \ref{Item::Hold::CommuteExt::Comm}.

Note that the same argument also shows that
\begin{equation}\label{Eqn::Hold::CommuteExt::LPandExt}
    \phi_j\ast_x(E_sg)(x,s)=E_s(\phi_j\ast_xg)(x,s),\quad\text{for }(x,s)\in \R^n\times\R^q,\quad j\ge0,\quad g\in\Co^{(-R)+}\Co^{(-R)+}(U,\R^q).
\end{equation}

To prove \eqref{Eqn::Hold::CommuteExt::Bdd}, we only prove the case $\Co^\alpha L^\infty$ for $-R<\alpha<R$. The argument for the rest of the cases are similar and we omit the details.

Using the boundedness $E_x:\Co^\alpha(U)\to\Co^\alpha(\R^n)$ and $E_s:L^\infty(V)\to L^\infty(\R^q)$ we have
\begin{align*}
    &\|\tilde E(\tilde f|_{U\times V})\|_{\Co^\alpha L^\infty(\R^n,\R^q)}=\sup_{j\ge0}2^{j\alpha}\|\phi_j\ast_x (E_sE_x(\tilde f|_{U\times V}))\|_{L^\infty(\R^n\times\R^q)}=\sup_{j\ge0}2^{j\alpha}\|E_s(\phi_j\ast_x E_x(\tilde f|_{U\times V}))\|_{L^\infty(\R^n\times\R^q)}
    \\
    \le&\|E_s\|_{L^\infty}\sup_{j\ge0}2^{j\alpha}\essup_{s\in V}\|\phi_j\ast_x E_x(\tilde f(\cdot,s)|_U)\|_{L^\infty(\R^n)}\le \|E_s\|_{L^\infty}\|E_x\|_{\Co^\alpha}\essup_{s\in \R^q}\|\tilde f(\cdot,s)\|_{\Co^\alpha(U)}\lesssim_{\tilde E}\|\tilde f\|_{\Co^\alpha L^\infty(\R^n,\R^q)}.
\end{align*}

For a function $f\in \Co^\alpha L^\infty(U,V)$, taking the infimum of all extensions $\tilde f\in \Co^\alpha L^\infty(\R^n,\R^q)$ over $f$, we get $\|\tilde Ef\|_{\Co^\alpha L^\infty(\R^n,\R^q)}\lesssim\|f\|_{\Co^\alpha L^\infty(U,V)}$ from the above inequality.
\end{proof}

More generally, for a linear operator $T=T_x$ acting on functions in $U\subseteq\R^n_x$, we have natural extension $\tilde T=T\otimes\id_s$ acting on functions in $U\times V\subseteq\R^n_x\times\R^q_s$.
\begin{lem}\label{Lem::Hold::TOtimesId}
Let $R_1<R_2$ and $r\in\R$. Let $U\subseteq\R^n$ and $V\subseteq\R^q$ be either the total spaces or bounded smooth domains. Suppose $T$ is a linear operator on functions of $U$ such that $T:\Co^\alpha(U)\to\Co^{\alpha+r}(U)$ for all $R_1<\alpha<R_2$. Then  $(\tilde T f)(x,s):=T(f(\cdot,s))(x)$ defines a linear operator that has boundedness $\tilde T:\Co^\alpha\Xs(U,V)\to\Co^\alpha\Xs(U,V)$ for all $R_1<\alpha<R_2$ and $\Xs\in\{L^\infty,\Co^\beta:\beta\in\R\}$.
\end{lem}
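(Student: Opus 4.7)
The plan is to exploit the Littlewood-Paley characterization of $\Co^\alpha\Xs$ from Definition \ref{Defn::Hold::BiHold} together with the elementary fact that $\tilde T=T\otimes\id_s$ commutes with any operator acting only on the $s$-variable. I would first establish the case $U=\R^n$, $V=\R^q$, and then transfer to bounded smooth domains via the common extensions provided by Lemma \ref{Lem::Hold::CommuteExt}. In fact the argument yields the sharper gain $\tilde T:\Co^\alpha\Xs\to\Co^{\alpha+r}\Xs$.

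For $\Xs=L^\infty$ on the total space, fix $R_1<\alpha<R_2$, $f\in\Co^\alpha L^\infty(\R^n,\R^q)$, and a dyadic resolution $(\phi_j)_{j\geq 0}$ on $\R^n$. Each $\phi_j\ast_x f$ lies in $L^\infty(\R^n\times\R^q)$ with norm at most $2^{-j\alpha}\|f\|_{\Co^\alpha L^\infty}$. Intersecting over $j$ the full-measure subsets of $\R^q$ on which the slice norm $\|(\phi_j\ast_x f)(\cdot,s)\|_{L^\infty_x}$ is controlled by the joint norm $\|\phi_j\ast_x f\|_{L^\infty_{x,s}}$, one obtains a full-measure $E\subseteq\R^q$ such that $f(\cdot,s)\in\Co^\alpha(\R^n)$ with $\|f(\cdot,s)\|_{\Co^\alpha}\leq\|f\|_{\Co^\alpha L^\infty}$ for every $s\in E$. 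The hypothesis on $T$ gives $T(f(\cdot,s))\in\Co^{\alpha+r}(\R^n)$ uniformly on $E$, so
\[
\|\phi_j\ast_x\tilde T f\|_{L^\infty_{x,s}}=\essup_{s\in E}\|\phi_j\ast T(f(\cdot,s))\|_{L^\infty_x}\lesssim 2^{-j(\alpha+r)}\|f\|_{\Co^\alpha L^\infty},
\]
yielding $\|\tilde T f\|_{\Co^{\alpha+r}L^\infty}\lesssim\|f\|_{\Co^\alpha L^\infty}$.

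Next I would bootstrap to $\Xs=\Co^\beta$ using the commutation $\psi_k\ast_s\tilde T f=\tilde T(\psi_k\ast_s f)$, which holds because $\tilde T$ and $\psi_k\ast_s$ act on disjoint variable groups. Applying the $L^\infty$ bound just proved to $\psi_k\ast_s f$ together with the inequality $\|\psi_k\ast_s f\|_{\Co^\alpha L^\infty}\leq 2^{-k\beta}\|f\|_{\Co^\alpha\Co^\beta}$ (immediate from Definition \ref{Defn::Hold::BiHold} and \eqref{Eqn::Hold::RmkforBiHold::ConvComm}) yields
\[
2^{j(\alpha+r)+k\beta}\|\phi_j\ast_x\psi_k\ast_s\tilde T f\|_{L^\infty}\lesssim\|f\|_{\Co^\alpha\Co^\beta},
\]
which is the Littlewood-Paley characterization of $\|\tilde T f\|_{\Co^{\alpha+r}\Co^\beta}\lesssim\|f\|_{\Co^\alpha\Co^\beta}$.

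For bounded smooth domains $U,V$: given $f\in\Co^\alpha\Xs(U,V)$, the same a.e.-slice argument applied to a nearly-optimal extension $\tilde f\in\Co^\alpha\Xs(\R^n,\R^q)$ shows $f(\cdot,s)\in\Co^\alpha(U)$ for almost every $s$, so $(\tilde T f)(\cdot,s)=T(f(\cdot,s))\in\Co^{\alpha+r}(U)$ uniformly. Extending each slice via $E_x$ from Lemma \ref{Lem::Hold::CommuteExt} and the result in $s$ via $E_s$ produces a tempered distribution on $\R^n\times\R^q$ that restricts to $\tilde T f$ on $U\times V$ and satisfies the LP bound by the same computation as above. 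The main obstacle here is the measurability of the slice-wise construction, since $\Co^{\alpha+r}$ is not separable (cf.\ Remark \ref{Rmk::Hold::RmkforBiHold} \ref{Item::Hold::RmkforBiHold::notVectMeas}); I would sidestep this by checking the LP bounds directly on the honest $L^\infty$ functions $\phi_j\ast_x(E_s F)$ with $F(x,s):=E_x T(f(\cdot,s))(x)$, for which only pointwise joint measurability of $\phi_j\ast_x F$ in $(x,s)$ is required rather than Bochner measurability of a Banach-valued map. This is the only delicate point; everything else is a direct consequence of the Littlewood-Paley calculus.
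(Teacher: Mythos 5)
Your overall skeleton is the same as the paper's (apply $T$ slice-wise, commute $\tilde T$ with $\psi_k\ast_s$, read off the bound from the Littlewood--Paley norms, and reduce domains to the total space via the extensions of Lemma \ref{Lem::Hold::CommuteExt}), and the quantitative estimates you write are fine, including the sharper gain $\Co^\alpha\Xs\to\Co^{\alpha+r}\Xs$. But there is a genuine gap at precisely the point you label "the only delicate point" and then sidestep: for an \emph{abstract} bounded operator $T$ on the non-separable spaces $\Co^\alpha$, it is not established that the slice-wise formula produces a measurable object at all. Your a.e.-slice construction gives, for a.e.\ $s$, an element $f_s\in\Co^\alpha(\R^n)$ (itself reconstructed from the LP pieces, since a tempered distribution on $\R^n\times\R^q$ has no canonical slices), but the map $s\mapsto f_s$ is in general only "weakly" measurable and not essentially separably valued; composing with an arbitrary bounded $T$ (which need not be continuous for any weak topology in which $s\mapsto f_s$ is measurable) gives no reason why $(x,s)\mapsto T(f_s)(x)$, or $F=E_xT(f(\cdot,s))(x)$, is jointly measurable. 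Saying that "only pointwise joint measurability of $\phi_j\ast_x F$ is required" does not help: convolving in $x$ presupposes measurability of $F$ in $(x,s)$, which is exactly what is in question. The same unproved measurability/integrability is hidden in your commutation step $\psi_k\ast_s\tilde Tf=\tilde T(\psi_k\ast_sf)$: with the slice-wise definition this is an interchange of $T$ with a vector-valued integral in $s'$, which for an abstract $T$ on a non-separable space requires justification, not just the remark that the operators act on disjoint variable groups.

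This is where the paper's proof takes a structurally different (and safer) route: $\tilde T$ is first defined on finite linear combinations of simple tensors $u\otimes v$ (where $\tilde T(u\otimes v)=Tu\otimes v$ and the commutation with $\psi_k\ast_s$ is trivial), and is then extended by a limiting argument in the weaker bi-parameter norms $\Co^{R_1+\eps}\Co^{-R}\to\Co^{R_1+r+\eps/2}\Co^{-R-\eps}$, exactly as in Lemma \ref{Lem::Hold::CommuteExt} \ref{Item::Hold::CommuteExt::Comm}; the identity $\psi_k\ast_s(\tilde Tf)=\tilde T(\psi_k\ast_sf)$ is then inherited from the limit, and the quantitative estimate proceeds as in your second display. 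To repair your argument you would either need to adopt this density-and-limit definition of $\tilde T$ (in which case your estimates go through verbatim), or add a genuine measurability argument, e.g.\ exploiting that $T$ is bounded on $\Co^{\alpha'}$ for a whole interval of $\alpha'$ to get continuity in a weaker norm and then approximating $f$ by functions that are continuous in $s$ with values in that weaker space; as written, the proposal leaves the well-definedness of $\tilde Tf$ as an element of $\Sc'(\R^n\times\R^q)$ unproved.
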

\begin{proof}
By passing to an extension $\tilde E=E_xE_s$ given in Lemma \ref{Lem::Hold::CommuteExt}, it suffices to consider the case $U=\R^n$ and $V=\R^q$. We only prove the case $\Xs=\Co^\beta$ since the case $\Xs=L^\infty$ is similar but simpler.

Clearly for $u\in \Co^{R_1+}(\R^n)$ and $v\in C^0(\R^q)$ we have $\tilde T(u\otimes v)=Tu\otimes v$ and $\psi_k\ast_s\tilde T(u\otimes v)=Tu\otimes(\psi_k\ast v)$ for all $k\ge0$. Taking finite linear combinations of simple tensors and then taking limit similar to the proof of Lemma \ref{Lem::Hold::CommuteExt} \ref{Item::Hold::CommuteExt::Comm}, we get $\tilde T:\Co^{R_1+\eps}\Co^{-R}(\R^n,\R^q)\to\Co^{R_1+r+\frac\eps2}\Co^{-R-\eps}(\R^n,\R^q)$ is defined and bounded for all $R>0$ and $0<\eps<R_2-R_1$, and we have similar to \eqref{Eqn::Hold::CommuteExt::LPandExt} that
\begin{equation*}
    \psi_k\ast_s(\tilde Tf)=\tilde T(\psi_k\ast_s f),\quad k\ge0,\quad f\in \Co^{R_1+}\Co^{-R}(\R^n,\R^q).
\end{equation*}

Therefore, for $R_1<\alpha<R_2$ and $\beta\in\R$,
\begin{align*}
    &\|\tilde Tf\|_{\Co^{\alpha+r}\Co^\beta(\R^n,\R^q)}=\sup_{j,k\ge0}2^{j(\alpha+r)+k\beta}\|(\phi_j\otimes\psi_k)\ast\tilde Tf\|_{L^\infty(\R^n\times \R^q)}=\sup_{j,k\ge0}2^{j(\alpha+r)+k\beta}\|\phi_j\ast_x\tilde T(\psi_k\ast_sf)\|_{L^\infty(\R^n\times \R^q)}
    \\
    \le&\|\tilde T\|_{\Co^\alpha\to\Co^{\alpha+r}}\sup_{j,k\ge0}2^{j(\alpha+r)+k\beta}\essup_{s\in\R^q}\|\phi_j\ast((\psi_k\ast_sf)(\cdot,s))\|_{L^\infty(\R^n)}=\|\tilde T\|_{\Co^\alpha\to\Co^{\alpha+r}}\|f\|_{\Co^\alpha\Co^\beta(\R^n,\R^q)}.
\end{align*}
This completes the proof.
\end{proof}






We define the mixed H\"older-Zygmund space $\Co^{\alpha,\beta}_{x,s}$ as the following
\begin{defn}\label{Defn::Hold::MixHold}
Let $\alpha,\beta>0$, and let $U\subseteq\R^n$, $V\subseteq\R^q$ be two convex open sets. We define $\Co^{\alpha,\beta}_{x,s}(U,V)$ to be the set of all continuous functions $f:U\times V\to\R$ satisfying $$\textstyle\|f\|_{\Co^{\alpha,\beta}_{x,s}(U,V)}:=\sup_{x\in U}\|f(x,\cdot)\|_{\Co^\beta(V)}+\sup_{s\in V}\|f(\cdot,s)\|_{\Co^\alpha(U)}<\infty.$$

We use abbreviation $f\in \Co^{\alpha,\beta}_{x,s}$ when the domain is clear. We use $f\in\Co^{\alpha,\beta}$ when the variables are also clear.
\end{defn}
\begin{remark}
    \begin{enumerate}[parsep=-0.3ex,label=(\roman*)]
        \item The norm definition is symmetric and we have $\Co^{\alpha,\beta}_{x,s}=\Co^{\beta,\alpha}_{s,x}$. But in applications we mostly refer the left variable as space variable, and the right variable as parameter. For the notation $\Co^{\alpha,\beta}$, we usually use $\alpha\ge\beta$.
        \item When $\alpha=\beta$, we have $\Co^{\alpha,\alpha}=\Co^\alpha$. See Lemma \ref{Lem::Hold::CharMixHold} \ref{Item::Hold::CharMixHold::HoldbyComp} below. So our $\Co^{\alpha,\beta}$-spaces is a generalization to the single variable cases. 
        \item We can define three variables mixed Zygmund spaces and the version on manifold analogously. See Definition \ref{Defn::Hold::MoreMixHold} \ref{Item::Hold::MoreMixHold::3Mix} and Definition \ref{Defn::ODE::MixHoldMaps}. 
    \end{enumerate}
\end{remark}
Also see Definition \ref{Defn::Hold::MoreMixHold} when $\alpha,\beta$ are not real numbers.

\begin{lem}\label{Lem::Hold::CharMixHold}
Let $\alpha,\beta>0$, and let $U\subset\R^m$, $V\subset\R^n$ be two bounded convex open sets with smooth boundaries.
\begin{enumerate}[nolistsep,label=(\roman*)]
    \item\label{Item::Hold::CharMixHold::Mix=Bi} $\Co^{\alpha,\beta}(U,V)=\Co^\alpha L^\infty\cap L^\infty\Co^\beta(U,V)$ with equivalent norms.
    \item\label{Item::Hold::CharMixHold::HoldbyComp} $\Co^{\alpha,\alpha}(U,V)=\Co^\alpha(U\times V)$ with  equivalent norms.
\end{enumerate}
\end{lem}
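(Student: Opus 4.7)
The plan for both parts relies on the Littlewood--Paley characterization of $\Co^\alpha$ from Lemma \ref{Lem::Hold::HoldChar}\ref{Item::Hold::HoldChar::LPHoldChar} and on reducing to the ambient space $\R^m\times\R^n$ via the common extensions from Lemma \ref{Lem::Hold::CommuteExt}. Throughout I fix dyadic resolutions $(\phi_j)_j$ of $\R^m$ and $(\psi_k)_k$ of $\R^n$.

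\textbf{Part (i).} For the inclusion $\Co^{\alpha,\beta}\hookrightarrow\Co^\alpha L^\infty\cap L^\infty\Co^\beta$ I would apply $E_x$ fiber-wise: the function $h(x,s):=(E_xf(\cdot,s))(x)$, defined on $\R^m\times V$, has $\|h(\cdot,s)\|_{\Co^\alpha(\R^m)}\lesssim\|f(\cdot,s)\|_{\Co^\alpha(U)}$ uniformly in $s$, and by interchanging the suprema over $s$ and $j$,
$$\|f\|_{\Co^\alpha L^\infty(U,V)}\le\|h\|_{\Co^\alpha L^\infty(\R^m,V)}=\sup_j 2^{j\alpha}\sup_{s\in V}\|\phi_j\ast h(\cdot,s)\|_{L^\infty(\R^m)}\lesssim\sup_s\|f(\cdot,s)\|_{\Co^\alpha(U)};$$
a symmetric argument with $E_s$ handles $L^\infty\Co^\beta$. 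For the reverse inclusion, let $\tilde f=E_xE_sf$. Fubini gives $(\phi_j\ast_x\tilde f)(x,s)=(\phi_j\ast\tilde f(\cdot,s))(x)$ for a.e.\ $s$, hence the LP characterization yields $\|\tilde f(\cdot,s)\|_{\Co^\alpha(\R^m)}\lesssim\|\tilde f\|_{\Co^\alpha L^\infty}$ for a.e.\ $s$. Since $\alpha,\beta>0$ force $f$ to admit a continuous representative on $U\times V$, a lower-semicontinuity argument promotes this bound to every $s\in V$.

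\textbf{Part (ii).} The inclusion $\Co^\alpha(U\times V)\hookrightarrow\Co^{\alpha,\alpha}(U,V)$ follows by slicing, using Lemma \ref{Lem::Hold::HoldChar}\ref{Item::Hold::HoldChar::02} when $\alpha<2$ and induction on derivatives for $\alpha\ge 2$. For the reverse, part (i) reduces the problem to showing $\Co^\alpha L^\infty\cap L^\infty\Co^\alpha\hookrightarrow\Co^\alpha(\R^{m+n})$. I would construct the tensor resolution
$$\Psi_L:=\sum_{\max(j,k)=L}\phi_j\otimes\psi_k=\phi_L\otimes\tilde\chi_{L+1}+\chi_L\otimes\psi_L,\qquad L\ge 1,\quad\Psi_0:=\phi_0\otimes\psi_0,$$
where $\chi_L:=\sum_{j<L}\phi_j$ and $\tilde\chi_L:=\sum_{k<L}\psi_k$ telescope into uniformly $L^1$-bounded low-pass filters. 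The Fourier supports place $\widehat{\Psi_L}$ in the annulus $\max(|\xi|,|\eta|)\in[2^{L-1},2^{L+1}]$, hence in a Euclidean dyadic annulus $|(\xi,\eta)|\sim 2^L$. Young's inequality then gives
$$\|\Psi_L\ast\tilde f\|_\infty\le\|\tilde\chi_{L+1}\|_{L^1}\|\phi_L\ast_x\tilde f\|_\infty+\|\chi_L\|_{L^1}\|\psi_L\ast_s\tilde f\|_\infty\lesssim 2^{-L\alpha}\|\tilde f\|_{\Co^\alpha L^\infty\cap L^\infty\Co^\alpha}.$$
To conclude I would compare $(\Psi_L)$ with a standard dyadic resolution $(\rho_L)$ on $\R^{m+n}$: Fourier supports force $\rho_L\ast\Psi_{L'}=0$ for $|L-L'|>C_0$, so $\rho_L\ast\tilde f=\sum_{|L'-L|\le C_0}\rho_L\ast\Psi_{L'}\ast\tilde f$ yields $\|\rho_L\ast\tilde f\|_\infty\lesssim 2^{-L\alpha}$, whence $\tilde f\in\Co^\alpha(\R^{m+n})$.

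The main technical obstacle is the bookkeeping in part (ii): one must carefully track Fourier supports to verify that the tensor resolution $(\Psi_L)$ really controls the standard LP norm on $\R^{m+n}$. Part (i) carries a subtler but standard concern, namely that $\Co^\alpha L^\infty$-distributions only have well-defined slices almost everywhere, which is resolved by passing to the continuous representative granted by $\alpha,\beta>0$.
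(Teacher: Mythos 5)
Your proof is correct and follows essentially the same route as the paper: part (i) is done via the extension operators of Lemma \ref{Lem::Hold::CommuteExt} plus the observation that functions in $\Co^\alpha L^\infty\cap L^\infty\Co^\beta$ are continuous (the paper justifies your "continuous representative" step through the interpolation inclusion of Remark \ref{Rmk::Hold::RmkforBiHold} \ref{Item::Hold::RmkforBiHold::Interpo}, namely $\Co^\alpha L^\infty\cap L^\infty\Co^\beta\subset\Co^{\alpha/2}\Co^{\beta/2}\subset C^0$, which you should cite), and part (ii) uses the tensor blocks $\sum_{\max(j,k)=L}\phi_j\otimes\psi_k$, which are exactly the paper's $\pi_l$. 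The only cosmetic difference is the bookkeeping in (ii): the paper makes $(\pi_l)$ itself an admissible dyadic resolution by additionally requiring $\supp\hat\phi_0,\supp\hat\psi_0\subset B(0,\sqrt2)$ and estimates $\pi_l\ast\tilde f$ through $\phi_l\otimes\delta+\delta\otimes\psi_l-\sum_{\min(j,k)=l}\phi_j\otimes\psi_k$ with geometric tail sums, whereas you telescope the low-pass pieces and then compare with a reference resolution via Fourier-support disjointness --- both versions are valid.
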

\begin{proof}
\ref{Item::Hold::CharMixHold::Mix=Bi}: By Lemma \ref{Lem::Hold::CommuteExt} we have an extension operator $\tilde E:=E_xE_s=E_sE_x:L^\infty\Co^\beta(U,V)\to L^\infty\Co^\beta(\R^n,\R^q)$, thus
\begin{equation}\label{Eqn::Hold::CharMixHold::Tmp}
    \textstyle\sup_{x\in U}\|f(x,\cdot)\|_{\Co^\beta(V)}\ge\essup_{x\in U}\|f(x,\cdot)\|_{\Co^\beta(V)}=\|E_sf\|_{L^\infty\Co^\beta(U,\R^q)}\approx\|\tilde Ef\|_{L^\infty\Co^\beta(\R^n,\R^q)}\approx\|f\|_{L^\infty\Co^\beta(U,V)}.
\end{equation}
Similarly $\sup_{s\in V}\|f(\cdot,s)\|_{\Co^\alpha(U)}\gtrsim\|f\|_{\Co^\alpha L^\infty(U,V)}$. Therefore $\|f\|_{\Co^{\alpha,\beta}(U,V)}\gtrsim\|f\|_{\Co^\alpha L^\infty\cap L^\infty\Co^\beta(U,V)}$.

For the converse direction, it suffices to show the first inequality in \eqref{Eqn::Hold::CharMixHold::Tmp} is indeed an equality.
By Remark \ref{Rmk::Hold::RmkforBiHold} \ref{Item::Hold::RmkforBiHold::Interpo} we have $\Co^\alpha L^\infty\cap L^\infty\Co^\beta(U,V)\subset\Co^\alpha \Co^0\cap \Co^0\Co^\beta(U,V)\subset\Co^{\frac\alpha2}\Co^\frac\beta2(U,V)\subset C^0(U,V)$. Thus $f\in \Co^\alpha L^\infty\cap L^\infty\Co^\beta(U,V)$ can be pointwise defined, and the essential supremum is indeed the supremum. This completes the proof.

\smallskip
\noindent\ref{Item::Hold::CharMixHold::HoldbyComp}: Clearly $\|f(x,\cdot)\|_{\Co^\alpha(V)}\le\|f\|_{\Co^\alpha(U\times V)}$ and $\|f(\cdot,s)\|_{\Co^\alpha(U)}\le\|f\|_{\Co^\alpha(U\times V)}$ for each $x\in U$ and $s\in V$. Therefore $\|f\|_{\Co^{\alpha,\alpha}(U,V)}\le 2\|f\|_{\Co^\alpha(U\times V)}$.

For the converse, we can choose $(\phi_j)_{j=0}^\infty$ and $(\psi_k)_{k=0}^\infty$ as dyadic resolutions for $\R^n_x$ and $\R^q_s$ respectively, such that $\supp\hat\phi_0\subset B^n(0,\sqrt2)$ and $\supp\hat\psi_0\subset B^q(0,\sqrt2)$ additionally. Define $(\pi_l)_{l=0}^\infty$ as $\pi_0=\phi_0\otimes\psi_0$ and $\pi_l=2^{l(n+q)}\pi_0(2^l\cdot)-2^{(l-1)(n+q)}\pi_0(2^{l-1}\cdot)$ for $l\ge1$. We see that $(\pi_l)_l$ is a dyadic resolutions for $\R^{n+q}$ and
\begin{equation*}
    \textstyle\pi_l=\sum_{\max(j,k)=l}\phi_j\otimes\psi_k=\phi_l\otimes\delta_{0\in\R^q}+\delta_{0\in\R^n}\otimes\psi_l-\sum_{\min(j,k)=l}\phi_j\otimes\psi_k,\quad\forall\ l\ge0.
\end{equation*}
Therefore for $\tilde f\in\Co^\alpha(\R^{n+q})$, by Lemma \ref{Lem::Hold::HoldChar} \ref{Item::Hold::HoldChar::LPHoldChar} and Remark \ref{Rmk::Hold::RmkforBiHold} \ref{Item::Hold::RmkforBiHold::CxCs=CsCx},
\begin{align*}
    &\textstyle\|\pi_l\ast \tilde f\|_{L^\infty}\le\|\phi_l\ast_x\tilde f\|_{L^\infty}+\|\psi_l\ast_s\tilde f\|_{L^\infty}+\sum_{j=l}^\infty\|\psi_l\ast_s(\phi_j\ast_x\tilde f)\|_{L^\infty}+\sum_{k=l+1}^\infty\|\phi_l\ast_x(\psi_k\ast_s\tilde f)\|_{L^\infty}
    \\
    \lesssim&_\alpha\|\tilde f\|_{\Co^\alpha_xL^\infty_s}2^{-l\alpha}+\|\tilde f\|_{L^\infty_x\Co^\alpha_s}2^{-l\alpha}+\|\psi_l\|_{L^1}\|\tilde f\|_{\Co^\alpha_xL^\infty_s}\sum_{j=l}^\infty2^{-j\alpha}+\|\phi_l\|_{L^1}\|\tilde f\|_{L^\infty_x\Co^\alpha_s}\sum_{k=l+1}^\infty2^{-k\alpha}\lesssim\|\tilde f\|_{\Co^\alpha_xL^\infty_s\cap L^\infty_x\Co^\alpha_s}2^{-l\alpha}.
\end{align*}
Thus $\|\tilde f\|_{\Co^\alpha(\R^n\times\R^q)}\lesssim_\alpha\|\tilde f\|_{\Co^\alpha L^\infty\cap L^\infty\Co^\alpha(\R^n,\R^q)}$. Taking restrictions to the domain $U\times V$ we finish the proof.
\end{proof}




\begin{remark}\label{Rmk::Hold::SimpleHoldbyCompFact}Let $\alpha,\beta>0$ and let $U,V$ be two bounded smooth domains. Since $\Co^0\subset L^\infty\subset\Co^1$, we have $\Co^\alpha L^\infty\cap\Co^1\Co^\beta(U,V)\subset \Co^{\alpha,\beta}(U,V)\subset \Co^\alpha L^\infty\cap\Co^0\Co^\beta(U,V)$.
\end{remark}

\begin{remark}[Comparison of different working spaces]\label{Rmk::Hold::ComparisonSpaces}
    The bi-parameter structures used in \cite[Section 2]{Gong} and \cite[Section 4.4]{NijenhuisWoolf} are the $\Cc^{\alpha,\beta}$-spaces, which are similar to our $\Co^{\alpha,\beta}$-spaces. We recall them here:
    
    \textit{Let $k\ge l\ge0$ be integers, let $r,s\in[0,1)$ and let $U\subseteq\R^n_x$, $V\subseteq\R^q_s$ be two open (bounded smooth) subsets. The space $\Cc^{k+r,l+s}(U,V)$ is the set of all continuous functions $f:U\times V\to\R$ whose norm below is finite:}
    \begin{equation*}
        \textstyle\|f\|_{\Cc^{k+r,l+s}(U,V)}:=\sum_{j=0}^l\sum_{i=0}^{k-j}\big(\sup_{s\in V}\|\nabla_x^i\nabla_s^jf(\cdot,s)\|_{C^{0,r}(U)}+\sup_{x\in U}\|\nabla_x^i\nabla_s^jf(x,\cdot)\|_{C^{0,s}(V)}\big).
    \end{equation*}
    \textit{Here $C^{0,\gamma}$ is the standard H\"older space $(0\le\gamma<1)$.}

    One can see that $\Cc^{k+r,l+s}(U,V)=\Co^{k+r,l+s}(U,V)$ when $r,s\neq0$ and $\Cc^{k+r,l+s}(U,V)\subsetneq\Co^{k+r,l+s}(U,V)$ when $r$ or $s=0$. We leave the proof to readers. 
    
    The $\Co^{\alpha,\beta}$-spaces is better than the $\Cc^{\alpha,\beta}$-spaces in some sense. On one hand $\Cc^{\alpha,\beta}$ are only defined for $\lfloor\alpha\rfloor\ge\lfloor\beta\rfloor$, while $\Co^{\alpha,\beta}$ are defined for all positive $\alpha,\beta$. On the other hand, the discussions of $\Cc^{\alpha,\beta}$ are always separated between the integer cases and non-integer cases, while for $\Co^{\alpha,\beta}$-spaces the discussion can be unified as we have complex interpolations,
    \begin{equation}\label{Eqn::Hold::CpxInt}
        [\Co^{\alpha_0,\beta_0}(U,V),\Co^{\alpha_1,\beta_1}(U,V)]_\theta=\Co^{(1-\theta)\alpha_0+\theta\alpha_1,(1-\theta)\beta_0+\theta\beta_1}(U,V),\quad\text{for all }\alpha_0,\alpha_1,\beta_0,\beta_1>0,\quad\theta\in[0,1].
    \end{equation}
    We also leave the proof of \eqref{Eqn::Hold::CpxInt} to readers.
\end{remark}

Here are more definitions.
\begin{defn}\label{Defn::Hold::MoreMixHold}
Let $\alpha,\beta>0$ and let $U\in\R^n$, $V\in\R^q$ be two open convex sets. We define the following spaces:
\begin{enumerate}[nolistsep,label=(\roman*)]
    \item $\Co^{\alpha,\beta}_{\loc}(U,V):=\{f\in C^0_\loc(U\times V):f\in\Co^{\alpha,\beta}(U',V'),\ \forall U'\Subset U,\ V'\Subset V\}$.
    \item $\Co^{\alpha,\beta-}(U,V)=\Co^{\beta-,\alpha}(U,V):=\bigcap_{0<\gamma<\beta}\Co^{\alpha,\gamma}(U,V)$.
    \item $\Co^{\infty,\beta}(U,V):=\bigcap_{\gamma>0}\Co^{\gamma,\beta}(U,V)$ and $\Co^\infty\Co^\beta(U,V):=\bigcap_{\gamma>0}\Co^\gamma\Co^\beta(U,V)$.
    \item\label{Item::Hold::MoreMixHold::3Mix} Let $\gamma>0$ and let $W\subset\R^m$ be as well an open convex set. We define $\Co^{\alpha,\beta,\gamma}(U,V,W)$ to be the set of all continuous functions $f:U\times V\times W\to\R$ satisfying
\begin{align*}
    \|f\|_{\Co^{\alpha,\beta,\gamma}_{x,s,t}(U,V,W)}:=&\sup_{x\in U,s\in V}\|f(x,s,\cdot)\|_{\Co^\gamma(W)}+\sup_{x\in U,t\in W}\|f(x,\cdot,t)\|_{\Co^\beta(V)}+\sup\limits_{s\in V,t\in W}\|f(\cdot,s,t)\|_{\Co^\alpha(U)}<\infty.
\end{align*}
\end{enumerate}
\end{defn}

\begin{remark}\label{Rmk::Hold::RmkCAlpBetGam}By Lemma \ref{Lem::Hold::CharMixHold} \ref{Item::Hold::CharMixHold::Mix=Bi} we can say that $\Co^{\alpha,\beta,\gamma}_{x,s,t}=\Co^\alpha_xL^\infty_{s,t}\cap\Co^\beta_sL^\infty_{x,t}\cap\Co^\gamma_tL^\infty_{x,s}=\Co^\alpha_xL^\infty_{s,t}\cap L^\infty_x\Co^{\beta,\gamma}_{s,t}$, where $\Co^\alpha_xL^\infty_{s,t}\cap\Co^\beta_sL^\infty_{x,t}\cap\Co^\gamma_tL^\infty_{x,s} $ and $\Co^\alpha_xL^\infty_{s,t}\cap L^\infty_x\Co^{\beta,\gamma}_{s,t}$ are defined analogously by Definitions \ref{Defn::Hold::BiHold} and \ref{Defn::Hold::MixHold}.
\end{remark}

For $0<\alpha\le\infty$, $0<\beta<\infty$, that $f\in\Co^{\alpha+1,\beta}_{x,s}$ does not imply $\nabla_x f\in\Co^{\alpha,\beta}_{x,s}$.
\begin{example}\label{Exam::Hold::GradMixHold}
Let $\alpha,\beta>0$. We consider the following $f:\R_x\times\R_s\to\C$:
$$f(x,s):=\sum_{l=1}^\infty 2^{-l(\alpha+1)}\exp(i2^{l}x+i2^{\frac{\alpha+1}\beta l}s).\quad\text{ We have }\frac{\partial f}{\partial x}(x,s)=\sum_{l=1}^\infty i2^{-l\alpha}\exp(i2^{l}x+i2^{\frac{\alpha+1}\beta l}s).$$

One can check that $f\in \Co^{\alpha+1}_xL^\infty_s$ and $f\in L^\infty_x\Co^\beta_s$. So $f\in\Co^{\alpha+1,\beta}_{x,s}$.

Let $\psi=(\psi_k)_{k=0}^\infty$ be a dyadic resolution. By \eqref{Eqn::Hold::RmkDyaSupp} $\supp\hat\psi_k=\supp2^{k-1}\hat\psi_1\subsetneq(-2^{k+1},-2^{k-1})\cup(2^{k-1},2^{k+1})$ for $k\ge1$, so there is an $\eps_0>0$ such that $\supp\hat\psi_k\subseteq 2^k\cdot\big([-2+\eps_0,-\frac12-\eps_0]\cup[\frac12+\eps_0,2-\eps_0]\big)$. Note that there are infinitely many integer pairs $(k,l)$ such that $|k-\frac{\alpha+1}\beta l|<\frac{\eps_0}2$, and for such a pair $(\phi_k\ast_sf)(x,s)=i2^{-l\alpha}\exp(i2^{l}x+i2^{\frac{\alpha+1}\beta l}s)$. 

So  $\|\psi_k\ast_sf\|_{L^\infty(\R^2)}\approx2^{-k\frac{\alpha\beta}{\alpha+1}}$ holds for infinite many $k\ge0$, which means $f\notin L^\infty_x\Co^{\frac{\alpha\beta}{\alpha+1}+\eps}_s$ for any $\eps>0$.
\end{example}
In general, for $0<\alpha,\beta<\infty$ and a function $f\in\Co^{\alpha+1,\beta}_{x,s}$, the best we can guarantee is $\nabla_xf\in\Co^{\alpha,\frac{\alpha\beta}{\alpha+1}}_{x,s}$, where the index $\frac{\alpha\beta}{\alpha+1}$ is optimal.
For applications we need the following endpoint optimal result.
\begin{lem}\label{Lem::Hold::GradMixHold}
Let $\alpha,\beta>0$ and let $U\subseteq\R^n_x$, $V\subseteq\R^q_s$ be two bounded convex smooth domains. Then $\nabla_x:\Co^{\alpha+1,\beta}(U,V)\to\Co^{\alpha,\frac{\alpha\beta}{\alpha+1}-}(U,V;\R^n)$ is bounded.
In particular, if $f\in\Co^{\infty,\beta}(U,V)$, then $\nabla_xf\in\Co^{\infty,\beta-}(U,V;\R^n)$.
\end{lem}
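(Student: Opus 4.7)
The plan is to split $\Co^{\alpha+1,\beta}$ into its two defining factors via Lemma~\ref{Lem::Hold::CharMixHold}~\ref{Item::Hold::CharMixHold::Mix=Bi}, estimate $\nabla_x$ on each separately, and then recombine via bi-parameter interpolation. For the $\Co^{\alpha+1}L^\infty$ factor I will invoke Lemma~\ref{Lem::Hold::TOtimesId} with $T=\nabla_x$, $r=-1$ and $\Xs=L^\infty$, starting from the classical one-variable bound $\nabla_x:\Co^{\alpha+1}(\R^n)\to\Co^\alpha(\R^n;\R^n)$; this immediately yields $\nabla_x:\Co^{\alpha+1}L^\infty(U,V)\to\Co^\alpha L^\infty(U,V;\R^n)$.

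For the $L^\infty\Co^\beta$ factor the same Lemma~\ref{Lem::Hold::TOtimesId}, now with $\Xs=\Co^\beta$ applied on any range of $x$-exponents containing $0$, gives $\nabla_x:\Co^0\Co^\beta(U,V)\to\Co^{-1}\Co^\beta(U,V;\R^n)$. The trivial Young-type bounds $\|\phi_j\ast_x\psi_k\ast_s f\|_{L^\infty}\lesssim\|\psi_k\ast_s f\|_{L^\infty}$ and $\|\phi_j\ast_x\psi_k\ast_s f\|_{L^\infty}\lesssim\|\phi_j\ast_x f\|_{L^\infty}$ provide the embeddings $L^\infty\Co^\beta\hookrightarrow\Co^0\Co^\beta$ and $\Co^\alpha L^\infty\hookrightarrow\Co^\alpha\Co^0$ respectively. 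So I will have $\nabla_x f\in\Co^\alpha\Co^0\cap\Co^{-1}\Co^\beta(U,V;\R^n)$.

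I then interpolate using Remark~\ref{Rmk::Hold::RmkforBiHold}~\ref{Item::Hold::RmkforBiHold::Interpo}: for each $\theta\in[0,1]$,
\[
\nabla_x f\in\Co^{(1-\theta)\alpha-\theta}\,\Co^{\theta\beta}(U,V;\R^n).
\]
Choosing $\theta=\tfrac{\alpha}{\alpha+1}-\eta$ for small $\eta>0$ makes the $x$-index $\eta(\alpha+1)>0$ and the $s$-index $\tfrac{\alpha\beta}{\alpha+1}-\eta\beta$. The closing step is the self-improvement $\Co^{\eta'}\Co^\gamma\hookrightarrow L^\infty\Co^\gamma$ valid whenever $\eta'>0$, obtained via $\|\psi_k\ast_s g\|_{L^\infty_{x,s}}\le\sum_j\|\phi_j\ast_x\psi_k\ast_s g\|_{L^\infty}\lesssim 2^{-k\gamma}\|g\|_{\Co^{\eta'}\Co^\gamma}$. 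Combined with the $\Co^\alpha L^\infty$ bound from the first paragraph and Lemma~\ref{Lem::Hold::CharMixHold}~\ref{Item::Hold::CharMixHold::Mix=Bi} applied in reverse, this places $\nabla_x f\in\Co^{\alpha,\frac{\alpha\beta}{\alpha+1}-\eta\beta}$ for every $\eta>0$, i.e.\ $\nabla_x f\in\Co^{\alpha,\frac{\alpha\beta}{\alpha+1}-}$. The case $\alpha=\infty$ then follows by letting $\alpha\to\infty$ at fixed $\beta$, since $\tfrac{\alpha\beta}{\alpha+1}\nearrow\beta$.

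The genuinely delicate point is that the interpolation must be taken strictly inside the segment $\theta\in[0,\tfrac{\alpha}{\alpha+1}]$: pushing $\theta$ up to the endpoint would land in $\Co^0\Co^{\frac{\alpha\beta}{\alpha+1}}$, from which there is no way to drop the outer $\Co^0$ down to $L^\infty$ without surrendering an arbitrarily small amount of $s$-regularity. This trade-off is what produces $\Co^{\frac{\alpha\beta}{\alpha+1}-}$ rather than $\Co^{\frac{\alpha\beta}{\alpha+1}}$, consistent with the sharpness shown by Example~\ref{Exam::Hold::GradMixHold}; once this balance is correctly set up, no further obstacle remains.
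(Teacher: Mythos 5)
Your proposal is correct and follows essentially the same route as the paper's proof: embed $\Co^{\alpha+1,\beta}\subset\Co^{\alpha+1}L^\infty\cap\Co^0\Co^\beta$, apply $\nabla_x$ factorwise to land in $\Co^\alpha L^\infty\cap\Co^{-1}\Co^\beta$, interpolate via Remark~\ref{Rmk::Hold::RmkforBiHold}~\ref{Item::Hold::RmkforBiHold::Interpo} with $\theta$ strictly below $\tfrac{\alpha}{\alpha+1}$, and let $\alpha\to\infty$ for the $\Co^{\infty,\beta}$ case. You merely spell out a few steps (Lemma~\ref{Lem::Hold::TOtimesId} for the gradient bounds, the embedding $\Co^{\eta'}\Co^\gamma\hookrightarrow L^\infty\Co^\gamma$, and the recombination via Lemma~\ref{Lem::Hold::CharMixHold}) that the paper leaves implicit.
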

\begin{proof}
Clearly $\nabla_x:\Co^{\alpha+1,\beta}_{x,s}\subset \Co^{\alpha+1}_xL^\infty_s\cap \Co^0_x\Co^\beta_s\to \Co^\alpha_xL^\infty_s\cap \Co^{-1}_x\Co^\beta_s$ is bounded. By Remark \ref{Rmk::Hold::RmkforBiHold} \ref{Item::Hold::RmkforBiHold::Interpo} we have embedding $\Co^\alpha_xL^\infty_s\cap \Co^{-1}_x\Co^\beta_s\subset\Co^{(1-\theta)\alpha-\theta}_x\Co^{\theta\beta}_s$ for all $\theta\in(0,1)$. Thus $\Co^\alpha_xL^\infty_s\cap \Co^{-1}_x\Co^\beta_s\subset\bigcap_{0<\theta<\frac\alpha{\alpha+1}}\Co^{(1-\theta)\alpha-\theta}_x\Co^{\theta\beta}_s\subset L^\infty_x\Co^{\frac{\alpha\beta}{\alpha+1}-}_s$. This proves the boundedness $\nabla_x:\Co^{\alpha+1,\beta}_{x,s}\to\Co^{\alpha,\frac{\alpha\beta}{\alpha+1}-}_{x,s}$.

Let $\alpha\to+\infty$ we get $\nabla_x:\Co^{\infty,\beta}_{x,s}\to\Co^{\infty,\beta-}_{x,s}$.
\end{proof}

On the other hand, if $f(x,s)$ is, say a harmonic function in $x$, then the assumption $f\in\Co^{\alpha+1,\beta}_{x,s}$ can imply $\nabla_xf\in\Co^{\alpha,\beta}_{x,s}$. In fact we have $f\in\Co^\infty_x\Co^\beta_s$ by the following.
\begin{lem}\label{Lem::Hold::NablaHarm}
Let $\beta>0$, and let $U\subset\R^n_x$ and $V\subset\R^q_s$ be two bounded open sets with smooth boundaries. Suppose $f\in \Co^{-1}\Co^\beta(U,V)$ satisfies $\Delta_xf=0$. Then $f\in\Co^\infty_\loc\Co^\beta(U,V)$. In particular $f\in\Co^{\beta+1,\beta}_\loc(U,V)$ and $\nabla_xf\in\Co^\beta_\loc(U\times V;\R^n)$.

As a corollary, let $\tilde U\subseteq\C^m_z$ be an open set. If $g\in \Co^\beta_\loc(\tilde U\times V;\C)$ is holomorphic in $z$, then $\partial_zg\in\Co^\beta_\loc(\tilde U\times V;\C^m)$ holds.
\end{lem}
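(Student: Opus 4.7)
My plan is to use the mean value property of harmonic functions in the $x$ direction to realize $f$ locally as a convolution $\eta\ast_x(\chi f)$ with a fixed smooth, compactly supported, radial kernel $\eta$ and a cutoff $\chi$. Once this representation is in hand, the infinite smoothing effect in $x$ of convolution with a $C_c^\infty$ function transfers automatically to the bi-parameter setting via Lemma \ref{Lem::Hold::TOtimesId}, upgrading the $x$-regularity to $\Co^\infty$ while keeping the $\Co^\beta$ regularity in $s$ intact.

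To set this up I would fix $U'\Subset U$ and $V'\Subset V$, choose $r>0$ with $B(x,r)\Subset U$ for every $x\in U'$, pick a radial $\eta\in C_c^\infty(B(0,r))$ with $\int\eta=1$, and a cutoff $\chi\in C_c^\infty(U)$ identically $1$ on $\overline{U'}+\overline{B(0,r)}$. The key analytic step is then the distributional mean value identity
$$f=\eta\ast_x(\chi f)\qquad\text{on }U'\times V,$$
derived from $\Delta_xf=0$ together with the standard fact that such an $\eta$ admits a representation $\eta-\delta_0=\Delta H$ with $H\in\mathcal{E}'(\overline{B(0,r)})$ — concretely one may take $H=N\ast\eta-N$, where $N$ is the Newtonian potential, and the radial mean value property for $N$ forces $H\equiv 0$ outside $\overline{B(0,r)}$. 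Differentiating the identity yields $\partial_x^\gamma f=(\partial_x^\gamma\eta)\ast_x(\chi f)$ on $U'\times V$ for every multi-index $\gamma$.

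With this representation, $\chi f$ extends (by zero in $x$, since its $x$-support is compact in $U$, and by Lemma \ref{Lem::Hold::CommuteExt} in $s$) to an element of $\Co^{-1}\Co^\beta(\R^n,\R^q)$. Since convolution with a fixed $C_c^\infty(\R^n)$ function is bounded $\Co^\alpha(\R^n)\to\Co^N(\R^n)$ for every $\alpha\in\R$ and $N>0$, Lemma \ref{Lem::Hold::TOtimesId} applied to $T=(\partial_x^\gamma\eta)\ast$ promotes this to boundedness $\Co^{-1}\Co^\beta\to\Co^N\Co^\beta$, giving $\partial_x^\gamma f\in\Co^N\Co^\beta(U',V')$ for every $\gamma$ and every $N>0$. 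This is exactly the statement $f\in\Co^\infty_\loc\Co^\beta(U,V)$.

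For the H\"older conclusions I would invoke the embedding $\Co^\beta\Co^\beta\subseteq\Co^\beta L^\infty\cap L^\infty\Co^\beta$ (valid for $\beta>0$ by summing the geometric series $\sum_k 2^{-k\beta}$ in the telescoping $\phi_j\ast_xf=\sum_k\phi_j\ast_x\psi_k\ast_sf$), and use Lemma \ref{Lem::Hold::CharMixHold} to identify the right-hand side with $\Co^\beta(U'\times V')$. Applied to $\nabla_xf\in\Co^\beta\Co^\beta(U',V')$ from the previous step, this gives $\nabla_xf\in\Co^\beta_\loc(U\times V;\R^n)$, and analogously $f\in\Co^{\beta+1}\Co^\beta\subseteq\Co^{\beta+1,\beta}_\loc(U,V)$. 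The corollary is immediate: a function $g$ holomorphic in $z$ satisfies $\Delta_{\re z,\im z}g=4\partial_z\partial_{\bar z}g=0$ distributionally, and $\partial_z$ is a constant-coefficient linear combination of the real partials in $z$, so the lemma applies with $(\re z,\im z)$ playing the role of $x$. The only delicate step I anticipate is the rigorous justification of the distributional mean value identity at this low regularity; once that is in place the rest of the argument is a direct assembly of the tools prepared in this section.
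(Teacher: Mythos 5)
Your proof is correct, but it takes a genuinely different route from the paper's. The paper extends $f$ in $s$ by $E_s$, takes a Littlewood--Paley decomposition $\psi_j\ast_s$ in the parameter, applies its Hardy--Littlewood type interior estimate for harmonic functions (Lemma \ref{Lem::SecHolLap::HLLem} \ref{Item::SecHolLap::HLLem::Harm}) to each piece $\psi_j\ast_sE_s\nabla_x^kf$, and sums, obtaining the quantitative bound $\sup_{|\nu|\le k}\|\partial_x^\nu f\|_{L^\infty\Co^\beta(r\B^n,V)}\lesssim(1-r)^{-k-1}\|f\|_{\Co^{-1}\Co^\beta}$. You instead use the fixed-scale reproducing identity $f=\eta\ast_x(\chi f)$ coming from the mean value property (justified distributionally via $\eta-\delta_0=\Delta H$ with $H=N\ast\eta-N\in\mathcal E'(\overline{B(0,r)})$, which is exactly right and closes the one step you flag as delicate), and then let the infinite smoothing of convolution with a fixed $C_c^\infty$ kernel do the work, transferred to the bi-parameter setting by Lemma \ref{Lem::Hold::TOtimesId}; the $\Co^\beta$ regularity in $s$ is never touched. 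The two arguments share the same ancestor --- indeed the paper's proof of Lemma \ref{Lem::SecHolLap::HLLem} \ref{Item::SecHolLap::HLLem::Harm} is itself built on the mollifier representation $g=\rho_k\ast g$ at scale comparable to the boundary distance --- so in effect you reprove the compact-subset case of that appendix estimate in a self-contained way, which suffices for the local statement. What your route buys is independence from the appendix lemma and from negative-exponent bookkeeping for harmonic functions; what it costs is reliance on two standard facts not explicitly recorded in the paper (the $\Co^\alpha\to\Co^N$ smoothing of convolution with a fixed smooth kernel, proved by rapid decay of $\hat\eta$ on the Littlewood--Paley annuli, and the compact support of $H$ via the radial mean value property of the Newtonian potential), and it yields uniform interior bounds on compact subsets rather than the paper's explicit blow-up rate in $\dist(x,\partial U)$ --- which is all the lemma needs. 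Your final embeddings $\Co^\beta\Co^\beta\subseteq\Co^\beta L^\infty\cap L^\infty\Co^\beta=\Co^{\beta,\beta}$ (via Lemma \ref{Lem::Hold::CharMixHold}) and the reduction of the holomorphic corollary to the real harmonic case match the paper's.
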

\begin{proof}Since the result is local in $x\in U$, it is enough to consider the case where $U=\B^n$ is the unit ball.

Let $E_s:\Co^\beta(V)\to \Co^\beta(\R^q)$ be an $s$-variable extension as in Lemma \ref{Lem::Hold::CommuteExt}. Clearly $\nabla_x^kE_sf\in\Co^{-k-1}\Co^\beta(U,\R^n;\R^{2^k})$ for every $k\ge0$.

Let $(\psi_j)_{j=0}^\infty$ be a dyadic resolution for $\R^q_s$. By Lemma \ref{Lem::SecHolLap::HLLem} \ref{Item::SecHolLap::HLLem::Harm} we have
\begin{equation*}
    2^{j\beta}|\psi_j\ast_sE_s\nabla_x^kf(x,s)|\lesssim_{n,k}(1-|x|)^{-k-1}2^{j\beta}\|\psi_j\ast_sE_s\nabla_x^kf\|_{\Co^{-k-1}(\B^n)}\lesssim_{\beta,E}\|\nabla_x^kf\|_{\Co^{-k-1}\Co^\beta(\B^n,V)}.
\end{equation*}
Taking supremum in $s\in\R^q$ and $j\ge0$, by Lemmas \ref{Lem::Hold::HoldChar} \ref{Item::Hold::HoldChar::LPHoldChar} we get 
$$\sup_{|\nu|\le k}\|\partial_x^\nu f\|_{L^\infty\Co^\beta(r\B^n,V)}\lesssim_{V,n,k,\beta}(1-r)^{-k-1}\|f\|_{\Co^{-1}\Co^\beta(\B^n,V)},\quad\forall 0<r<1.$$
Therefore $f\in\Co^k_\loc\Co^\beta(\B^n,V)$ for all $k$, which means $f\in\Co^\infty_\loc\Co^\beta(\B^n,V)$. The results $f\in\Co^{\beta+1,\beta}_\loc$ and $\nabla_xf\in\Co^\beta_\loc$ follow immediately.

When $g(z,s)$ is holomorphic in $z$, we know $g$ is complex harmonic in $z$-variable. So we reduce it to the real case by considering $\tilde U\subseteq\R^{2m}$.
\end{proof}

\subsection{A proposition for bi-parameter products}\label{Section::HoldMult}
In this subsection we are going to compute the mixed regularity of products using bi-parameter paraproducts. 

Recall in Definition \ref{Defn::Hold::BiHold}, we fix two dyadic resolutions $(\phi_j)_{j=0}^\infty\subset\Sc(\R^m)$ and $(\psi_k)_{k=0}^\infty\subset\Sc(\R^n)$ to define the $\Co^\alpha L^\infty $-norms and the $\Co^\alpha\Co^\beta $-norms.

\begin{prop}\label{Prop::Hold::Mult}
Let $\alpha,\beta>0$ and $\eta,\theta\in[0,1]$ satisfy $\eta+\theta<\alpha$. There is a $C=C_{\alpha,\beta,\eta,\theta,\phi,\psi}>0$ such that 
$$\|fg\|_{\Co^{\alpha-\theta}L^\infty\cap\Co^{-\eta-\theta}\Co^\beta(\R^n,\R^q)}\le C\|f\|_{\Co^{\alpha}L^\infty\cap\Co^{-\eta}\Co^\beta(\R^n,\R^q)}\|g\|_{\Co^{\alpha-\theta}L^\infty\cap\Co^{-\eta-\theta}\Co^\beta(\R^n,\R^q)},$$
for all $f\in \Co^{\alpha}L^\infty\cap\Co^{-\eta}\Co^\beta(\R^n,\R^q)$ and $g\in\Co^{\alpha-\theta}L^\infty\cap\Co^{-\eta-\theta}\Co^\beta(\R^n,\R^q)$.
\end{prop}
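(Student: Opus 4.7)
The plan is to apply the Bony paraproduct decomposition in the $x$-variable alone,
$$fg = T^x_f g + T^x_g f + R^x(f,g), \qquad T^x_u v := \sum_{k\ge 3} S^x_{k-3} u \cdot \Delta^x_k v, \qquad R^x(f,g) := \sum_{|j-k|\le 2} \Delta^x_j f \cdot \Delta^x_k g,$$
where $\Delta^x_j u := \phi_j \ast_x u$ and $S^x_l u := \sum_{j\le l}\Delta^x_j u$, and to treat the $s$-variable as a parameter carrying the pointwise product rule $\|u(x,\cdot)v(x,\cdot)\|_{\Co^\beta_s} \lesssim \|u(x,\cdot)\|_{\Co^\beta_s}\|v(x,\cdot)\|_{L^\infty_s} + \|u(x,\cdot)\|_{L^\infty_s}\|v(x,\cdot)\|_{\Co^\beta_s}$ available because $\beta>0$. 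Since $\alpha>0$ and $\alpha-\theta>0$, the embedding $\Co^\gamma L^\infty\hookrightarrow L^\infty(\R^n\times\R^q)$ used in Remark \ref{Rmk::Hold::RmkforBiHold} shows $f,g\in L^\infty$, so $fg$ is a bounded function and the paraproduct converges distributionally. I would then estimate the two target norms separately, piece by piece.

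For the $\Co^{\alpha-\theta}L^\infty$-norm, the three pieces behave exactly like the classical single-parameter product estimate in $x$ applied pointwise in $s$: using $\|\Delta^x_j u\|_{L^\infty}\lesssim 2^{-j\gamma}\|u\|_{\Co^\gamma L^\infty}$ together with $\|S^x_{l-3}u\|_{L^\infty}\lesssim \|u\|_{L^\infty}$, one obtains $2^{l(\alpha-\theta)}\|\phi_l\ast_x T^x_f g\|_{L^\infty}\lesssim \|f\|_{\Co^\alpha L^\infty}\|g\|_{\Co^{\alpha-\theta}L^\infty}$, an extra $2^{-l\theta}$ gain for $T^x_g f$, and a geometric sum $\sum_{j\ge l-O(1)}2^{-j(2\alpha-\theta)}\lesssim 2^{-l(2\alpha-\theta)}$ for $R^x(f,g)$ (convergent since $\alpha>\theta$). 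For the harder $\Co^{-\eta-\theta}\Co^\beta$-norm, the Littlewood--Paley inputs are
$$\|\Delta^x_j f\|_{L^\infty L^\infty}\lesssim 2^{-j\alpha}\|f\|_{\Co^\alpha L^\infty}, \quad \|\Delta^x_j f\|_{L^\infty\Co^\beta_s}\lesssim 2^{j\eta}\|f\|_{\Co^{-\eta}\Co^\beta},$$
with the analogous bounds $2^{-j(\alpha-\theta)}$ and $2^{j(\eta+\theta)}$ for $g$. Applying the $\Co^\beta_s$ product rule to the representative $\phi_l\ast_x T^x_f g \sim S^x_{l-3}f\cdot\Delta^x_l g$, factoring out the target weight $2^{l(\eta+\theta)}$, yields
$$2^{-l(\eta+\theta)}\|\phi_l\ast_x T^x_f g\|_{L^\infty\Co^\beta_s}\lesssim (l+1)\,2^{-l\alpha}\|f\|_{\Co^{-\eta}\Co^\beta}\|g\|_{\Co^{\alpha-\theta}L^\infty}+\|f\|_{\Co^\alpha L^\infty}\|g\|_{\Co^{-\eta-\theta}\Co^\beta}.$$
The $T^x_g f$ piece is analogous with the roles of the weights swapped, and for $R^x(f,g)$ the diagonal sum telescopes to $\sum_{j\ge l-O(1)}2^{j(\eta+\theta-\alpha)}\lesssim 2^{l(\eta+\theta-\alpha)}$, giving a $2^{-l\alpha}$ gain after factoring out $2^{l(\eta+\theta)}$.

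The main technical obstacle is the remainder $R^x(f,g)$: its estimate requires summing $2^{j(\eta+\theta-\alpha)}$ over $j\ge l-O(1)$, and this geometric series converges precisely because of the strict hypothesis $\eta+\theta<\alpha$. A secondary subtlety is that when $\eta=0$ one only has $\|S^x_{l-3}f\|_{L^\infty\Co^\beta_s}\lesssim (l+1)\|f\|_{\Co^0\Co^\beta}$ (logarithmic blow-up from $\sum_{j\le l}2^{j\cdot 0}$), and similarly for $g$ when $\eta+\theta=0$; such polynomial factors are harmless because they are paired with exponentially decaying factors $2^{-l\alpha}$ coming from the high-frequency piece. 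Taking $\sup_{l\ge 0}$ of the three contributions and combining with the $\Co^{\alpha-\theta}L^\infty$-bound gives the asserted estimate.
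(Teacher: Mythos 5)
Your argument is correct, and it reaches the estimate by a genuinely different route than the paper. You decompose the product only in the $x$-variable (Bony paraproduct $T^x_fg+T^x_gf+R^x(f,g)$) and absorb the entire $s$-dependence through the H\"older algebra inequality $\|uv\|_{\Co^\beta_s}\lesssim\|u\|_{\Co^\beta_s}\|v\|_{L^\infty_s}+\|u\|_{L^\infty_s}\|v\|_{\Co^\beta_s}$ (the paper's Lemma \ref{Lem::Hold::Product}\ref{Item::Hold::Product::Hold2}), whereas the paper performs a full bi-parameter decomposition in $x$ and $s$ simultaneously, producing the $3\times3$ array of terms $S^{\mu\nu}$ in \eqref{Eqn::Hold::BigSum} and estimating each $S^{\mu\nu}_{lq}$ via the piecewise envelopes $a_{jk},b_{jk}$ of \eqref{Eqn::Hold::PfMult::AB}, with nine separate case-by-case dyadic summations split along the line $j=\frac{\beta k}{\alpha+\eta}$. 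Your single-variable route is shorter and isolates the role of the hypothesis exactly where the paper also needs it: the remainder $R^x(f,g)$ forces the sum $\sum_{j\ge l-O(1)}2^{j(\eta+\theta-\alpha)}$, convergent precisely because $\eta+\theta<\alpha$; the logarithmic factors $(l+1)$ from $S^x_{l-3}$ at the endpoint $\eta=0$ (resp.\ $\eta+\theta=0$) are indeed harmless against $2^{-l\alpha}$, just as the analogous factors $l,l',q'$ are in the paper's computation. What the paper's heavier decomposition buys is complete symmetry and explicitness of the frequency interactions in both variables, but for the stated bilinear bound your argument suffices. In a full writeup you should record one point you use tacitly: the passage between the mixed Littlewood--Paley norm $\sup_{l,q}2^{-l(\eta+\theta)+q\beta}\|\phi_l\ast_x\psi_q\ast_s(\cdot)\|_{L^\infty}$ and the quantity $\sup_l2^{-l(\eta+\theta)}\sup_x\|\phi_l\ast_x(\cdot)(x,\cdot)\|_{\Co^\beta(\R^q)}$, which rests on the one-variable characterization in Lemma \ref{Lem::Hold::HoldChar}\ref{Item::Hold::HoldChar::LPHoldChar} applied for each fixed $x$ (legitimate here since $\beta>0$ and all blocks $\phi_l\ast_xf$, $\phi_l\ast_xg$, $\phi_l\ast_x(fg)$ are bounded continuous, $f$ and $g$ being bounded by the interpolation inclusion \eqref{Eqn::Hold::RmkforBiHold::Interpo}); this is the same identification the paper uses in Lemma \ref{Lem::Hold::CharMixHold} and in its own proof.
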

We postpone the proof to the end of this subsection.

For applications, we use the cases $(\eta,\theta)=(1,0)$ and $(\eta,\theta)=(0,0)$ to prove Proposition \ref{Prop::PDE::ExistPDE} that the left hand side of \eqref{Eqn::PDE::ExistenceH} behave well. And we use $(\eta,\theta)=(0,1)$ to prove Proposition \ref{Prop::Hold::CompThm} which is required in Proposition \ref{Prop::PDE::ExistPDE} \ref{Item::PDE::ExistPDE::Phi}.

\begin{cor}\label{Cor::Hold::CorMult}
Let $\alpha,\beta>0$ and $\eta,\theta\in[0,1]$ satisfy $\eta+\theta<\alpha$. Let $U\subset\R^m$ and $V\subset\R^n$ be two bounded open convex subsets with smooth boundaries. Then the product operation is bilinear bounded as the following:
\begin{equation}\label{Eqn::Hold::CorMult}
    [(f,g)\to fg]:\Co^\alpha L^\infty \cap\Co^{-\eta} \Co^\beta(U,V)\times\Co^{\alpha-\theta} L^\infty \cap\Co^{-\eta-\theta} \Co^\beta(U,V)\to \Co^{\alpha-\theta} L^\infty \cap\Co^{-\eta-\theta} \Co^\beta(U,V).
\end{equation}
In particular it is bilinearly bounded between the following spaces: for $\alpha>1$,
\begin{enumerate}[nolistsep,label=(\roman*)]
    \item\label{Item::Hold::CorMult::Prin} $\Co^\alpha L^\infty \cap\Co^0 \Co^\beta(U,V)\times\Co^{\alpha-1} L^\infty \cap\Co^{-1} \Co^\beta(U,V)\to \Co^{\alpha-1} L^\infty \cap\Co^{-1} \Co^\beta(U,V)$.
    \item\label{Item::Hold::CorMult::0} $\Co^\alpha L^\infty \cap\Co^0 \Co^\beta(U,V)\times \Co^\alpha L^\infty \cap\Co^0 \Co^\beta(U,V)\to \Co^\alpha L^\infty \cap\Co^0 \Co^\beta(U,V)$.
    \item\label{Item::Hold::CorMult::-1} $\Co^\alpha L^\infty \cap\Co^{-1} \Co^\beta(U,V)\times \Co^\alpha L^\infty \cap\Co^{-1} \Co^\beta(U,V)\to \Co^\alpha L^\infty \cap\Co^{-1} \Co^\beta(U,V)$.
\end{enumerate}
\end{cor}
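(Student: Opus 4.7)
The plan is to deduce this corollary directly from Proposition \ref{Prop::Hold::Mult} by passing from $\R^n \times \R^q$ to the domains $U \times V$ using the common extension operator of Lemma \ref{Lem::Hold::CommuteExt}. The general bilinear bound \eqref{Eqn::Hold::CorMult} follows once one exhibits extensions of $f$ and $g$ whose product, after restriction to $U \times V$, recovers $fg$ and whose global norms control the local norms on both $\Co^\alpha L^\infty$ and $\Co^{-\eta}\Co^\beta$ simultaneously.

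First I would apply Lemma \ref{Lem::Hold::CommuteExt} to obtain the common extension $\tilde E = E_x E_s = E_s E_x$, which is bounded $\Xs\Y(U,V) \to \Xs\Y(\R^n,\R^q)$ for the pairs $\Xs,\Y \in \{L^\infty, \Co^\gamma\}$ appearing in the statement (taking $R$ larger than $\max(\alpha,\beta,\eta+\theta)$). Setting $\tilde f := \tilde E f$ and $\tilde g := \tilde E g$, we obtain
$$\|\tilde f\|_{\Co^{\alpha}L^\infty \cap \Co^{-\eta}\Co^\beta(\R^n,\R^q)} \lesssim \|f\|_{\Co^{\alpha}L^\infty \cap \Co^{-\eta}\Co^\beta(U,V)}, \qquad \|\tilde g\|_{\Co^{\alpha-\theta}L^\infty \cap \Co^{-\eta-\theta}\Co^\beta(\R^n,\R^q)} \lesssim \|g\|_{\Co^{\alpha-\theta}L^\infty \cap \Co^{-\eta-\theta}\Co^\beta(U,V)}.$$
Since $\tilde f \tilde g$ is an extension of $fg$ to $\R^n \times \R^q$, the definition of the domain norms in Definition \ref{Defn::Hold::BiHold} \ref{Item::Hold::BiHold::Domain} gives
$$\|fg\|_{\Co^{\alpha-\theta}L^\infty \cap \Co^{-\eta-\theta}\Co^\beta(U,V)} \le \|\tilde f \tilde g\|_{\Co^{\alpha-\theta}L^\infty \cap \Co^{-\eta-\theta}\Co^\beta(\R^n,\R^q)},$$
and Proposition \ref{Prop::Hold::Mult} applied to $\tilde f, \tilde g$ bounds the right-hand side by the product of the global norms, which was already controlled by the domain norms of $f,g$.

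The three special cases are immediate by plugging in parameter values that meet the hypothesis $\eta + \theta < \alpha$: for \ref{Item::Hold::CorMult::Prin} take $(\eta,\theta) = (0,1)$, for \ref{Item::Hold::CorMult::0} take $(\eta,\theta) = (0,0)$, and for \ref{Item::Hold::CorMult::-1} take $(\eta,\theta) = (1,0)$, with $\alpha > 1$ ensuring admissibility for cases \ref{Item::Hold::CorMult::Prin} and \ref{Item::Hold::CorMult::-1}. There is no genuine obstacle here since Proposition \ref{Prop::Hold::Mult} does all the harmonic-analytic work; the only point that requires care is using the \emph{same} extension for both seminorms of $f$ (and likewise for $g$), which is precisely why Lemma \ref{Lem::Hold::CommuteExt} \ref{Item::Hold::CommuteExt::Comm} and the joint boundedness \eqref{Eqn::Hold::CommuteExt::Bdd} were established earlier.
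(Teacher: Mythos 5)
Your proposal is correct and follows essentially the same route as the paper: the same common extension $\tilde E$ from Lemma \ref{Lem::Hold::CommuteExt}, applied simultaneously to both seminorms, followed by Proposition \ref{Prop::Hold::Mult} on the extensions and restriction back to $U\times V$, with the identical parameter choices $(\eta,\theta)=(0,1),(0,0),(1,0)$ for the three special cases.
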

\begin{proof}
Indeed \ref{Item::Hold::CorMult::Prin}, \ref{Item::Hold::CorMult::0} and \ref{Item::Hold::CorMult::-1} are the cases where $(\eta,\theta)=(0,1)$, $(\eta,\theta)=(0,0)$ and $(\eta,\theta)=(1,0)$ in \eqref{Eqn::Hold::CorMult} respectively.

By Lemma \ref{Lem::Hold::CommuteExt} there is an extension operator such that $\tilde E:\Xs(U,V)\to\Xs(\R^n,\R^q)$ and  $\tilde E:\Y(U,V)\to\Y(\R^n,\R^q)$. Here we use $\Xs=\Co^\alpha L^\infty \cap\Co^{-\eta} \Co^\beta$ and $\Y=\Co^{\alpha-\theta} L^\infty \cap\Co^{-\eta-\theta} \Co^\beta$.
Applying Proposition \ref{Prop::Hold::Mult}, 
\begin{equation*}
    \|fg\|_{\Y(U,V)}\le\|\tilde Ef\tilde Eg\|_{\Y(\R^n,\R^q)}\lesssim\|\tilde Ef\|_{\Xs(\R^n,\R^q)}\|\tilde Eg\|_{\Y(\R^n,\R^q)}\lesssim\|f\|_{\Xs(U,V)}\|g\|_{\Y(U,V)}.
\end{equation*}
So we obtain \eqref{Eqn::Hold::CorMult}.
\end{proof}

\begin{cor}\label{Cor::Hold::CramerMixed}
Let $\alpha,\beta>0$ and $\eta\in[0,1]$ satisfy $\eta<\alpha$. Let $m,n,q\ge0$ and let $U\subset \R^n $, $V\subset\R^q $ be bounded convex open sets with smooth boundaries.
Then there is a $c_0=c_0(U,V,m,n,q,\alpha,\beta,\eta)>0$, such that for any matrix-valued function $A: U\times V\to\C^{m\times m}$ satisfying $\|A-I_m\|_{\Co^{\alpha} L^\infty \cap\Co^{-\eta} \Co^\beta(U,V;\C^{m\times m})}<c_0$, we have:
\begin{itemize}[nolistsep]
    \item  $A(x,s)\in\C^{m\times m}$ is an invertible matrix for every $(x,s)\in U\times V$.
    \item   $\|A^{-1}-I_m\|_{\Co^{\alpha} L^\infty \cap\Co^{-\eta} \Co^\beta(U,V;\C^{m\times m})}\le2\|A-I_m\|_{\Co^{\alpha} L^\infty \cap\Co^{-\eta} \Co^\beta(U,V;\C^{m\times m})}$.
\end{itemize}
Moreover, the map of inverting matrices
$$[A\mapsto A^{-1}]:\{A\in \Co^{\alpha} L^\infty \cap\Co^{-\eta} \Co^\beta(U,V;\C^{m\times m}):\|A-I_m\|_{\Co^{\alpha} L^\infty \cap\Co^{-\eta} \Co^\beta }<c_0\}\to \Co^{\alpha} L^\infty \cap\Co^{-\eta} \Co^\beta(U,V;\C^{m\times m}),$$ is real-analytic and in particular Fr\'echet differentiable.
\end{cor}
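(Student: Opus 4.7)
The plan is a routine Neumann-series argument, with the preceding product estimate furnishing the Banach-algebra structure I need on $\Xs := \Co^\alpha L^\infty \cap \Co^{-\eta}\Co^\beta(U,V)$. First I would invoke Proposition \ref{Prop::Hold::Mult} with $\theta=0$ (the hypothesis $\eta+\theta<\alpha$ reduces to $\eta<\alpha$, which holds) together with the common extension of Lemma \ref{Lem::Hold::CommuteExt}, exactly as in the derivation of Corollary \ref{Cor::Hold::CorMult}, to conclude that pointwise multiplication is bilinear continuous $\Xs\times\Xs\to\Xs$. Since the matrix product is entrywise a finite sum of scalar products, this immediately lifts to a constant $C_m=C_m(U,V,m,n,q,\alpha,\beta,\eta)>0$ with
$$\|AB\|_{\Xs(U,V;\C^{m\times m})}\le C_m\,\|A\|_{\Xs(U,V;\C^{m\times m})}\,\|B\|_{\Xs(U,V;\C^{m\times m})}.$$

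Next, I set $c_0:=(2C_m)^{-1}$ and, given $A$ with $\|A-I_m\|_{\Xs^{m\times m}}<c_0$, write $B:=A-I_m$. By the interpolation in Remark \ref{Rmk::Hold::RmkforBiHold} \ref{Item::Hold::RmkforBiHold::Interpo} applied between $(\alpha,0)$ and $(-\eta,\beta)$ with $\theta\in(0,\alpha/(\alpha+\eta))$ small, $\Xs\subset\Co^a\Co^b(U,V)$ for some $a,b>0$, which is contained in $C^0(\overline{U\times V})$ by (the easy half of) Lemma \ref{Lem::Hold::CharMixHold} \ref{Item::Hold::CharMixHold::Mix=Bi}. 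Thus the continuous representative of $A$ satisfies $\|A(x,s)-I_m\|_{\C^{m\times m}}$ uniformly small, so each $A(x,s)$ is pointwise invertible. The algebra bound gives $\|B^k\|_{\Xs^{m\times m}}\le C_m^{k-1}\|B\|_{\Xs^{m\times m}}^{k}$, so the Neumann series
$$S:=\sum_{k=0}^{\infty}(-B)^k$$
converges absolutely in $\Xs(U,V;\C^{m\times m})$. Pointwise evaluation identifies $S(x,s)=(I_m+B(x,s))^{-1}=A(x,s)^{-1}$, whence $A^{-1}=S\in\Xs^{m\times m}$ and
$$\|A^{-1}-I_m\|_{\Xs^{m\times m}}\le\frac{\|B\|_{\Xs^{m\times m}}}{1-C_m\|B\|_{\Xs^{m\times m}}}\le 2\|B\|_{\Xs^{m\times m}},$$
which is the two bullet-point conclusions of the corollary.

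Finally, the inversion map is represented on the open ball $\{\|A-I_m\|_{\Xs^{m\times m}}<c_0\}$ by the absolutely convergent power series $A\mapsto\sum_{k\ge 0}\bigl(-(A-I_m)\bigr)^{k}$; the $k$-th summand is the continuous homogeneous polynomial of degree $k$ obtained from the bounded symmetric-in-general $k$-linear form $(B_1,\dots,B_k)\mapsto(-1)^k B_1\cdots B_k$ evaluated on the diagonal, with operator norm bounded by $C_m^{k-1}$. A Banach-space valued map admitting a norm-convergent power series representation at each point of its domain is, by definition, real-analytic, and in particular Fr\'echet differentiable. No genuine obstacle appears in this plan; the two small items requiring care are the transfer of the Banach-algebra constant from scalar entries to $m\times m$ matrices and the identification of the $\Xs$-limit of the Neumann series with the pointwise inverse, both of which follow at once from the algebra structure and the embedding $\Xs\hookrightarrow C^0$.
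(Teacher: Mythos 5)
Your proposal is correct and follows essentially the same route as the paper: the product estimate with $\theta=0$ gives the Banach-algebra structure, the Neumann series $\sum_k(I_m-A)^k$ converges in $\Co^\alpha L^\infty\cap\Co^{-\eta}\Co^\beta$ and is identified with $A^{-1}$, and real-analyticity of inversion follows from the norm-convergent power series. The only difference is that you spell out the embedding into $C^0$ for the pointwise identification, which the paper leaves implicit.
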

\begin{proof}
Applying Corollary \ref{Cor::Hold::CorMult} to the components of the matrix product $(A-I_m)^k=(A-I_m)^{k-1}(A-I_m)$ recursively in $k$, with the parameter $\theta=0$,  we know there is a $C=C(U,V,m,n,q,\alpha,\beta,\eta)>0$ such that
\begin{align*}
    &\|(A-I_m)^k\|_{\Co^{\alpha} L^\infty \cap\Co^{-\eta} \Co^\beta}\le C^{k-1}\|A-I_m\|_{\Co^{\alpha} L^\infty \cap\Co^{-\eta} \Co^\beta }^k,\quad\forall\ k\ge1,\ A\in\Co^{\alpha} L^\infty \cap\Co^{-\eta} \Co^\beta(U,V;\C^{m\times m}).
\end{align*}

Now take $c_0=\frac12 C^{-1}$, we see that the sum $\sum_{k=0}^\infty(I_m-A)^k$ is norm convergent in $\Co^{\alpha} L^\infty \cap\Co^{-\eta} \Co^\beta(U,V;\C^{m\times m})$ whose limit equals to $(I_m-(I_m-A))^{-1}=A^{-1}$. So $A(x,s)$ is an invertible matrix for all $(x,s)\in U\times V$.

That $\Co^{\alpha} L^\infty \cap\Co^{-\eta} \Co^\beta $ is closed under products also implies that $[A\mapsto A^{-1}=I+\sum_{k=1}^\infty(I-A)^k]$ is a real-analytic map in the open set $\{A:\|A-I_m\|_{\Co^{\alpha} L^\infty \cap\Co^{-\eta} \Co^\beta}<c_0\}$ with respect to the norm-topology.
\end{proof}
\begin{remark}\label{Rmk::Hold::CramerMixedforSing}
    By taking $\eta=0$ and forgetting the $s$-variable, we get the following classical result:
    
    For any $\alpha>0$ and bounded convex smooth open $U\subset\R^n $, there is a $c_0'(U,m,\alpha)>0$, such that
    \begin{equation*}
        \forall A\in\Co^\alpha(U;\C^{m\times m}),\quad\|A-I_m\|_{\Co^\alpha(U;\C^{m\times m})}<c_0'\quad\Rightarrow\quad \|A^{-1}-I_m\|_{\Co^\alpha(U;\C^{m\times m})}\le 2\|A-I_m\|_{\Co^\alpha(U;\C^{m\times m})}.
    \end{equation*}
\end{remark}

The proof of Proposition \ref{Prop::Hold::Mult} use paraproduct decompositions. The single parameter case is the following:
\begin{lem}[The paraproduct decomposition]\label{Lem::Hold::LemParaProd}
Let $(\phi_j)_{j=0}^\infty\subset\Sc(\R^n)$ be a dyadic resolution. For bounded continuous functions $u,v$ defined on $\R^n$, consider the following decomposition
\begin{equation}\label{Eqn::Hold::LemParaProd::ParaDecomp1}
    uv=\sum_{j=0}^\infty\sum_{j'=0}^{j-3}(\phi_j\ast u)(\phi_{j'}\ast v)+\sum_{j'=0}^\infty\sum_{j=0}^{j'-3}(\phi_j\ast u)(\phi_{j'}\ast v)+\sum_{|j-j'|\le 2}(\phi_j\ast u)(\phi_{j'}\ast v).
\end{equation}

Then for every $l\ge0$, 
    \begin{equation}\label{Eqn::Hold::LemParaProd::ParaDecomp2}
    \phi_l\ast(uv)=\phi_l\ast\bigg(\sum_{j=l-2}^{l+2}\sum_{j'=0}^{j-3}(\phi_j\ast u)(\phi_{j'}\ast v)+\sum_{j'=l-2}^{l+2}\sum_{j=0}^{j'-3}(\phi_j\ast u)(\phi_{j'}\ast v)+\sum_{\substack{j,j'\ge l-3\\|j-j'|\le 2}}(\phi_j\ast u)(\phi_{j'}\ast v)\bigg).
\end{equation}
\end{lem}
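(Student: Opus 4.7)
The plan is to reduce both identities to Fourier-support bookkeeping, relying only on the dyadic localization \eqref{Eqn::Hold::RmkDyaSupp}.

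For \eqref{Eqn::Hold::LemParaProd::ParaDecomp1}, I would begin with the telescoping identity $\sum_{j=0}^N\hat\phi_j(\xi)=\hat\phi_0(2^{-N}\xi)$, which tends to $1$ pointwise as $N\to\infty$. This gives the Littlewood-Paley expansion $u=\sum_{j\ge0}\phi_j\ast u$ in $\Sc'(\R^n)$ for any bounded continuous $u$, and the analogous expansion for $v$. Formally multiplying the two series and partitioning the double index set $\Z_{\ge0}^2$ into the three disjoint pieces $\{j'\le j-3\}$, $\{j\le j'-3\}$, $\{|j-j'|\le 2\}$ yields \eqref{Eqn::Hold::LemParaProd::ParaDecomp1}. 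The manipulations are legitimate in $\Sc'$ because each $\phi_j\ast u$ is smooth with uniform $L^\infty$ bound, so the double series converges distributionally.

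For \eqref{Eqn::Hold::LemParaProd::ParaDecomp2}, the key input is that $\phi_l\ast f$ vanishes whenever the Fourier supports $\supp\hat f$ and $\supp\hat\phi_l$ meet in at most a measure-zero set. Applied to $f=(\phi_j\ast u)(\phi_{j'}\ast v)$, whose Fourier transform is the convolution $(\hat\phi_j\hat u)\ast(\hat\phi_{j'}\hat v)$ and hence supported in the Minkowski sum $\supp\hat\phi_j+\supp\hat\phi_{j'}$, this reduces matters to comparing annular regions. For the first (high-low) sum with $j'\le j-3$, the bound $2^{j'+1}\le 2^{j-2}$ squeezes the Minkowski sum into the annulus $\{2^{j-2}\le|\xi|\le 9\cdot 2^{j-2}\}$, which via $\supp\hat\phi_l\subset\{2^{l-1}\le|\xi|\le 2^{l+1}\}$ forces $l-2\le j\le l+2$. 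The second sum is symmetric.

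For the third (diagonal) sum with $|j-j'|\le 2$, the Minkowski sum has no useful lower bound, since two annuli of comparable radii can sum to arbitrarily small frequencies; it is however contained in the ball $\{|\xi|\le 2^{j+1}+2^{j'+1}\}\subset\{|\xi|\le 2^{\max(j,j')+2}\}$. Matching with $\supp\hat\phi_l$ then forces $\max(j,j')$ to be comparable to $l$, which yields the stated truncation on the third sum. The low-frequency boundary cases $j=0$ or $l=0$ require slightly separate treatment using $\supp\hat\phi_0\subset\{|\xi|\le 2\}$, but only finitely many indices are involved and a direct inspection suffices. I expect the main obstacle to be just organising the Fourier support arithmetic cleanly and tracking the half-integer fudge factors in the annular radii; no deeper analytic fact is needed.
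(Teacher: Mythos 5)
Your approach is exactly the paper's: the paper also proves \eqref{Eqn::Hold::LemParaProd::ParaDecomp2} by noting that $\phi_l\ast\big((\phi_j\ast u)(\phi_{j'}\ast v)\big)\neq0$ only when $\supp\hat\phi_l\cap(\supp\hat\phi_j+\supp\hat\phi_{j'})\neq\varnothing$ and then doing the same annulus arithmetic from \eqref{Eqn::Hold::RmkDyaSupp}, referring to Triebel/Grafakos/Bahouri--Chemin--Danchin for the details of \eqref{Eqn::Hold::LemParaProd::ParaDecomp1}. Your treatment of the two off-diagonal sums is correct and yields $l-2\le j\le l+2$ just as stated.

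Two small points deserve care. First, the vanishing criterion is disjointness of $\supp\hat\phi_l$ from the Minkowski sum, not ``intersection of measure zero'': for a general tempered distribution $\hat f$, multiplying by $\hat\phi_l$ need not kill a contribution concentrated on a null set (think of derivatives of point masses); fortunately only the disjoint case is ever used. Second, and more substantively, your claim that matching supports ``yields the stated truncation'' for the diagonal sum is the one step that does not go through literally. With $|j-j'|\le2$ the Minkowski sum lies in $\{|\xi|<2^{j+1}+2^{j'+1}\}$, and disjointness from $\{|\xi|>2^{l-1}\}$ is guaranteed only when $\max(j,j')\le l-3$; so the support argument keeps the pairs with $\max(j,j')\ge l-2$, i.e.\ $j,j'\ge l-4$, not $j,j'\ge l-3$ as written in \eqref{Eqn::Hold::LemParaProd::ParaDecomp2}. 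Indeed the excluded pair $(j,j')=(l-4,l-2)$ can genuinely contribute: test with $u=e^{2\pi i\xi_1\cdot x}$, $v=e^{2\pi i\xi_2\cdot x}$, $\xi_1,\xi_2$ aligned with $|\xi_1|$ just below $2^{l-3}$ and $|\xi_2|$ just below $2^{l-1}$, for a resolution with $\hat\phi_0>0$ on $\{|\xi|<2\}$ and $\hat\phi_0<1$ on $(1,2)$. So if you run your argument carefully you will land on the cutoff $j,j'\ge l-4$ (equivalently $\max(j,j')\ge l-2$); this one-step shift is exactly what the paper's own support bound gives as well, and it is harmless for every subsequent estimate (e.g.\ in Proposition \ref{Prop::Hold::Mult}), but you should state the diagonal range you can actually prove rather than assert the printed one. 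Also note that for merely bounded continuous $u,v$ the three series in \eqref{Eqn::Hold::LemParaProd::ParaDecomp1} need not converge separately; the identity should be read through symmetric partial sums (or after convolving with $\phi_l$ in the setting where $u,v$ have Besov decay), so ``uniform $L^\infty$ bounds of the blocks'' alone does not justify the distributional convergence you invoke.
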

\begin{remark}\label{Rmk::Hold::LemParaProd}
    In \cite[Chapter 2.8.1, (2.41)]{Chemin} \eqref{Eqn::Hold::LemParaProd::ParaDecomp1} is denoted by $uv=T(u,v)+T(v,u)+R(u,v)$ where $T$ is the paraproduct operator associated with $\phi$.
\end{remark}
\begin{proof}
\eqref{Eqn::Hold::LemParaProd::ParaDecomp2} can be obtained by investigating the Fourier support condition in Definition \ref{Defn::Hold::DyadicResolution}. Note that $\phi_l\ast\left((\phi_j\ast u)(\phi_{j'}\ast v)\right)\neq0$ only when $\supp\hat\phi_l\cap(\supp\hat\phi_j+\supp\hat\phi_{j'})\neq\varnothing$, and by \eqref{Eqn::Hold::RmkDyaSupp} we have on the following:
$$
\supp\hat\phi_l\subset\begin{cases}\{2^{l-1}<|\xi|<2^{l+1}\},&l\ge1,\\\{|\xi|<2\},&l=0;\end{cases}\quad\supp\hat\phi_j+\supp\hat\phi_{j'}\subset\begin{cases}\{2^{\max(j,j')-2}<|\xi|<2^{\max(j,j')+2}\},&|j-j'|\ge3,\\\{|\xi|<2^{\max(j,j')+2}\},&|j-j'|\le2.\end{cases}$$

For details, see  \cite[Proof of Theorem 2.8.2]{Triebel1}, \cite[Chapter 4.4]{Grafakos} or \cite[Proof of Theorem 2.52]{Chemin}. Note that their choice of decompositions \eqref{Eqn::Hold::LemParaProd::ParaDecomp1} may have slight differences.
\end{proof}
\begin{lem}\label{Lem::Hold::Product}
Let $\alpha>0$ and $\beta\in(-\alpha,\alpha]$. Let $U\subset\R^n$ be a bounded convex open set with smooth boundary.
\begin{enumerate}[nolistsep,label=(\roman*)]
    \item\label{Item::Hold::Product::Hold1} There is a $C=C(U,\alpha,\beta)\ge0$ such that $\|fg\|_{\Co^\beta(U)}\le C\|f\|_{\Co^\alpha(U)}\|g\|_{\Co^\beta(U)}$ for all $f\in\Co^\alpha(U)$, $g\in\Co^\beta(U)$.
    \item\label{Item::Hold::Product::Hold2} There is a $C=C(U,\alpha)\ge0$ such that $\|fg\|_{\Co^\alpha(U)}\le C(\|f\|_{\Co^\alpha(U)}\|g\|_{L^\infty(U)}+\|f\|_{L^\infty(U)}\|g\|_{\Co^\alpha(U)})$ for all $f,g\in\Co^\alpha(U)$.
\end{enumerate}
\end{lem}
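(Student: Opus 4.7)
The plan is to reduce both inequalities to the case $U=\R^n$ via the extension operator from Lemma \ref{Lem::Hold::CommuteExt} (part \ref{Item::Hold::CommuteExt::BddofExt}, applied with a single variable), then invoke the paraproduct decomposition from Lemma \ref{Lem::Hold::LemParaProd} together with the Littlewood-Paley characterization in Lemma \ref{Lem::Hold::HoldChar} \ref{Item::Hold::HoldChar::LPHoldChar} and Definition \ref{Defn::Hold::NegHold}. Fix a dyadic resolution $(\phi_l)_{l\ge 0}$ on $\R^n$, and write $fg = T(f,g) + T(g,f) + R(f,g)$ following \eqref{Eqn::Hold::LemParaProd::ParaDecomp1}. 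By \eqref{Eqn::Hold::LemParaProd::ParaDecomp2}, the task reduces to showing, for each $l \ge 0$, an $L^\infty$-estimate of the form $\|\phi_l\ast(fg)\|_{L^\infty} \lesssim 2^{-l\gamma}$ with the appropriate $\gamma$ and constants. Throughout I use the elementary bound $\|\phi_j\ast h\|_{L^\infty} \lesssim 2^{-j\delta}\|h\|_{\Co^\delta(\R^n)}$.

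For part \ref{Item::Hold::Product::Hold1} I estimate each of the three paraproduct pieces separately. The low-high paraproduct $T(f,g)$ gives a sum $\sum_{j\approx l}\sum_{j'<j-3}2^{-j\alpha}2^{-j'\beta}$; the inner sum over $j'$ is $O(1)$ when $\beta>0$, $O(j)$ when $\beta=0$, and $O(2^{-j\beta})$ when $\beta<0$. In all cases, combined with $2^{-j\alpha}$ and $\alpha\ge\beta$, the bound $2^{-l\beta}\|f\|_{\Co^\alpha}\|g\|_{\Co^\beta}$ follows because $\alpha>0$. The symmetric paraproduct $T(g,f)$ is easier: the inner sum $\sum_{j<j'-3}2^{-j\alpha}$ converges since $\alpha>0$, producing $2^{-l\beta}\|f\|_{\Co^\alpha}\|g\|_{\Co^\beta}$ directly. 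The remainder $R(f,g)$ yields $\sum_{j\ge l-3}2^{-j(\alpha+\beta)}$, which converges precisely because of the hypothesis $\beta>-\alpha$, and gives $2^{-l(\alpha+\beta)}\le 2^{-l\beta}$ times the product of norms, using $\alpha\ge 0$. Summing the three contributions and taking the supremum over $l$ gives the desired bound via the characterization $\|u\|_{\Co^\beta}\approx \sup_l 2^{l\beta}\|\phi_l\ast u\|_{L^\infty}$.

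For part \ref{Item::Hold::Product::Hold2}, with $\beta=\alpha>0$, I exploit the low-pass partial sum $S_{j-3}g := \sum_{j'=0}^{j-3}\phi_{j'}\ast g$, which telescopes to a mollifier at scale $2^{-(j-3)}$ and therefore satisfies $\|S_{j-3}g\|_{L^\infty}\lesssim\|g\|_{L^\infty}$ uniformly in $j$. Then $\phi_l\ast T(f,g)$ is bounded by $\sum_{j\approx l}2^{-j\alpha}\|f\|_{\Co^\alpha}\|g\|_{L^\infty}\lesssim 2^{-l\alpha}\|f\|_{\Co^\alpha}\|g\|_{L^\infty}$; the symmetric term $T(g,f)$ yields $2^{-l\alpha}\|g\|_{\Co^\alpha}\|f\|_{L^\infty}$ by the same argument with roles swapped; and the diagonal $R(f,g)$ is bounded by $\sum_{j\ge l-3}2^{-j\alpha}\|f\|_{\Co^\alpha}\|g\|_{L^\infty}\lesssim 2^{-l\alpha}\|f\|_{\Co^\alpha}\|g\|_{L^\infty}$ (or the symmetric version), using $\alpha>0$.

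The only real subtlety is handling the sign of $\beta$ uniformly in part \ref{Item::Hold::Product::Hold1}, since for $\beta\le 0$ the space $\Co^\beta$ is defined only via extension (Definition \ref{Defn::Hold::NegHold}) and one cannot pointwise manipulate $g$; working entirely through Littlewood-Paley pieces on $\R^n$ sidesteps this, and the geometric-series estimates above are exactly what forces the hypotheses $-\alpha<\beta\le\alpha$ with $\alpha>0$. The passage back to the bounded smooth domain $U$ is then immediate from Lemma \ref{Lem::Hold::HoldChar} \ref{Item::Hold::HoldChar::DomainChar} and Definition \ref{Defn::Hold::NegHold}: given $f,g$ on $U$, extend each to $\R^n$ with comparable norms, multiply, restrict back, and take the infimum.
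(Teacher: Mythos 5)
Your argument is correct, but it is not the route the paper takes: for this lemma the paper gives no paraproduct computation at all, it simply observes that $\Co^\alpha=\Bs^\alpha_{\infty\infty}$ (Lemma \ref{Lem::Hold::HoldChar} and Definition \ref{Defn::Hold::NegHold}) and cites the known product estimates \cite[Theorem 3.3.2(ii)]{Triebel1} for part \ref{Item::Hold::Product::Hold1} and \cite[Corollary 2.86]{Chemin} for part \ref{Item::Hold::Product::Hold2}. What you wrote is essentially a self-contained proof of those cited facts, and your three-piece bookkeeping is sound: the high-low term uses $\alpha\ge\beta$ (with the logarithmic case $\beta=0$ absorbed because $\alpha>0$), the low-high term uses $\alpha>0$, the remainder uses $\alpha+\beta>0$, and in (ii) the telescoping identity $\sum_{j'=0}^{m}\phi_{j'}=2^{nm}\phi_0(2^m\cdot)$ gives the uniform $L^\infty$ bound on the low-pass pieces. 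Your approach buys transparency about exactly where each hypothesis enters, and in fact it is the same scheme the paper deploys later for the genuinely new bi-parameter estimate (Proposition \ref{Prop::Hold::Mult}); the citation route buys brevity and dispenses with one technical point you only gesture at: when $\beta\le0$ the factor $g$ is merely a distribution, so one must first say what $fg$ means (e.g.\ define it as the sum of the three paraproduct series, or as $\lim_N f\,S_Ng$) and justify that each series — in particular $T(g,f)$, whose blocks grow like $2^{-j'\beta}$ in $L^\infty$ — converges in $\Sc'$ before commuting $\phi_l\ast$ with the sums; this follows from the standard almost-orthogonality lemma for series with Fourier support in dyadic annuli, using exactly the block estimates you already have, but it should be stated rather than left implicit, and the paper's Lemma \ref{Lem::Hold::LemParaProd} as written assumes $u,v$ bounded and continuous, so it does not literally cover that case.
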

\begin{proof}
See \cite[Theorem 3.3.2(ii)]{Triebel1} for \ref{Item::Hold::Product::Hold1} and \cite[Corollary 2.86]{Chemin} for \ref{Item::Hold::Product::Hold2}. Note that by Lemma \ref{Lem::Hold::HoldChar} and Definition \ref{Defn::Hold::NegHold} H\"older-Zygmund spaces are a special case to Besov spaces $\Co^\alpha(U)=\Bs_{\infty\infty}^\alpha(U)$ for  all $\alpha\in\R$.
\end{proof}

We now prove Proposition \ref{Prop::Hold::Mult}.

\begin{proof}[Proof of Proposition \ref{Prop::Hold::Mult}]We fix $(\phi_j)_{j=0}^\infty\subset\Sc(\R^n)$ and $(\psi_k)_{k=0}^\infty\subset\Sc(\R^q)$ as two dyadic resolutions for their spaces.
Note that by assumption $f,g$ are both $L^\infty$-functions, so their product is already defined in $L^\infty(U\times V)$. What we need is to show that $fg$ lays in the desired function class.

Since $\alpha-\theta>0$, by Lemma \ref{Lem::Hold::Product} \ref{Item::Hold::Product::Hold1} for almost every $s\in\R^q$, $\|f(\cdot,s)g(\cdot,s)\|_{\Co^{\alpha-\theta} (\R^n)}\lesssim_{\alpha-\theta}\|f(\cdot,s)\|_{\Co^{\alpha}}\|g(\cdot,s)\|_{\Co^{\alpha-\theta}}$,
with the implied constant being independent of $f$ and $g$. So by taking essential supremum of $s$, we get 
\begin{equation}\label{Eqn::Hold::Mult::CxLsControl}
    \begin{aligned}
    \|fg\|_{\Co^{\alpha-\theta} L^\infty (\R^n,\R^q)}=&\essup_{s\in\R^q}\|f(\cdot,s)g(\cdot,s)\|_{\Co^{\alpha-\theta} (\R^n)}\lesssim \|f\|_{\Co^\alpha L^\infty (\R^n,\R^q)}\|g\|_{\Co^{\alpha-\theta} L^\infty (\R^n,\R^q)}
    \\
    \le&\|f\|_{\Co^\alpha L^\infty \cap\Co^{-\eta} \Co^\beta(\R^n,\R^q)}\|g\|_{\Co^{\alpha-\theta} L^\infty \cap\Co^{-\eta-\theta} \Co^\beta(\R^n,\R^q)}.
\end{aligned}
\end{equation}

Our goal is to show the bilinear control $\|fg\|_{\Co^{-\eta-\theta} \Co^\beta}\lesssim\|f\|_{\Co^\alpha L^\infty \cap\Co^{-\eta} \Co^\beta}\|g\|_{\Co^{\alpha-\theta} L^\infty \cap\Co^{-\eta-\theta} \Co^\beta }$.

\medskip
Before we move into the computations, we first upgrade \eqref{Eqn::Hold::LemParaProd::ParaDecomp2} to the bi-parameter case.

For bounded functions $f$ and $g$ defined on $\R^n \times\R^q $, we denote
\begin{equation*}
    f_{jk}:=(\phi_j\otimes \psi_k)\ast f,\quad g_{jk}:=(\phi_j\otimes\psi_k)\ast g,\quad(fg)_{jk}:=(\phi_j\otimes\psi_k)\ast (fg),\quad j,k\ge0.
\end{equation*}

We can write $\displaystyle fg=\bigg(\sum\limits_{j'+3\le j}+\sum\limits_{j+3\le j'}+\sum\limits_{|j-j'|\le2}\bigg)\bigg(\sum\limits_{k'+3\le k}+\sum\limits_{k+3\le k'}+\sum\limits_{|k-k'|\le2}\bigg)(f_{jk}g_{j'k'})$ as $3\times 3$ sums:

\begin{equation}\label{Eqn::Hold::BigSum}
    \begin{aligned}
    fg=\sum_{\mu,\nu=1}^3 S^{\mu\nu}=&\sum\limits_{j=3}^\infty\sum\limits_{j'=0}^{j-3}\sum\limits_{k=3}^\infty\sum\limits_{k'=0}^{k-3}f_{jk}g_{j'k'}+\sum\limits_{j'=3}^\infty\sum\limits_{j=0}^{j'-3}\sum\limits_{k=3}^\infty\sum\limits_{k'=0}^{k-3}f_{jk}g_{j'k'}+\sum\limits_{|j-j'|\le2}\sum\limits_{k=3}^\infty\sum\limits_{k'=0}^{k-3}f_{jk}g_{j'k'}\\
    &+\sum\limits_{j=3}^\infty\sum\limits_{j'=0}^{j-3}\sum\limits_{k'=3}^\infty\sum\limits_{k=0}^{k'-3}f_{jk}g_{j'k'}+\sum\limits_{j'=3}^\infty\sum\limits_{j=0}^{j'-3}\sum\limits_{k'=3}^\infty\sum\limits_{k=0}^{k'-3}f_{jk}g_{j'k'}+\sum\limits_{|j-j'|\le2}\sum\limits_{k=3}^\infty\sum\limits_{k=0}^{k'-3}f_{jk}g_{j'k'}\\
    &+\sum\limits_{j=3}^\infty\sum\limits_{j'=0}^{j-3}\sum\limits_{|k-k'|\le2}f_{jk}g_{j'k'}+\sum\limits_{j'=3}^\infty\sum\limits_{j=0}^{j'-3}\sum\limits_{|k-k'|\le2}f_{jk}g_{j'k'}+\sum\limits_{|j-j'|\le2}\sum\limits_{|k-k'|\le2}f_{jk}g_{j'k'}.
\end{aligned}
\end{equation}
Here $S^{\mu\nu}$,  $1\le \mu,\nu\le 3$, is the $\mu$-th row $\nu$-th column term in \eqref{Eqn::Hold::BigSum}. We denote
\begin{equation*}
    S^{\mu\nu}_{lq}:=\|(\phi_j\otimes\psi_k)\ast S^{\mu\nu}\|_{L^\infty(\R^n\times\R^q)},\quad l,q\ge0.
\end{equation*}

Applying Lemma \ref{Lem::Hold::LemParaProd} on $x$-variable and $s$-variable separately we get
\begin{equation}\label{Eqn::Hold::BiParaProductDecomp}
    (fg)_{lq}=(\phi_l\otimes\psi_q)\ast\Bigg(\sum\limits_{j=l-2}^{l+2}\sum\limits_{j'=0}^{j-3}+\sum\limits_{j'=l-2}^{l+2}\sum\limits_{j=0}^{j'-3}+\sum\limits_{\substack{|j-j'|\le2\\j,j'\ge l-3}}\Bigg)\Bigg(\sum\limits_{k=q-2}^{q+2}\sum\limits_{k'=0}^{k-3}+\sum\limits_{k'=q-2}^{q+2}\sum\limits_{k=0}^{k'-3}+\sum\limits_{\substack{|k-k'|\le2\\k,k'\ge q-3}}\Bigg)(f_{jk}g_{j'k'}).
\end{equation}

The assumption $f\in\Co^\alpha L^\infty\subset \Co^\alpha \Co^0$ implies that $|f_{jk}|\lesssim_{\phi,\psi}\|f\|_{\Co^\alpha \Co^0}2^{-j\alpha}$, and the assumption $f\in\Co^{-\eta} \Co^\beta$ implies that $|f_{jk}|\lesssim_{\phi,\psi}\|f\|_{\Co^{-\eta} \Co^\beta}2^{j\eta-k\beta}$. Similar control works for $g$. To conclude this, we have
\begin{gather}
    |f_{jk}|\lesssim_{\phi,\psi}a_{jk}\cdot\|f\|_{\Co^\alpha L^\infty\cap\Co^{-\eta} \Co^\beta},\quad|g_{jk}|\lesssim_{\phi,\psi}b_{jk}\cdot\|g\|_{\Co^{\alpha-\theta} L^\infty\cap\Co^{-\eta-\theta} \Co^\beta},\notag
    \\\label{Eqn::Hold::PfMult::AB}
    \text{where }a_{jk}:=\begin{cases}2^{-j\alpha},&
    j\ge\frac{\beta k}{\alpha+\eta}\\2^{j\eta-k\beta},&j\le\frac{\beta k}{\alpha+\eta},\end{cases}\quad b_{jk}:=\begin{cases}2^{-j(\alpha-\theta)},&j\ge\frac{\beta k}{\alpha+\eta}\\2^{j(\eta+\theta)-k\beta},&j\le\frac{\beta k}{\alpha+\eta},\end{cases}\quad\text{for }j,k\ge0.
\end{gather}

For convenience we can normalize $f$ and $g$ so that $\|f\|_{\Co^\alpha L^\infty\cap\Co^{-\eta} \Co^\beta}=\|g\|_{\Co^{\alpha-\theta} L^\infty\cap\Co^{-\eta-\theta} \Co^\beta}=1$.

Note that when $l\ge\frac{\beta q}{\alpha+\eta}$, by \eqref{Eqn::Hold::Mult::CxLsControl} we have $$|(fg)_{lq}|\lesssim 2^{-l(\alpha-\theta)}\|fg\|_{\Co^{\alpha-\theta} \Co^0}\lesssim 2^{-l(\alpha-\theta)}\|f\|_{\Co^\alpha L^\infty\cap\Co^{-\eta} \Co^\beta}\|g\|_{\Co^{\alpha-\theta} L^\infty\cap\Co^{-\eta-\theta} \Co^\beta}=2^{-l(\alpha-\theta)}\le  2^{l(\eta+\theta)-q\beta}.$$
So we only need to show that
\begin{equation}\label{Eqn::Hold::PfMult::FinalGoal}
    \textstyle\sup_{x\in\R^n,s\in\R^q}|(fg)_{lq}(x,s)|\le\sum_{\mu,\nu=1}^3S^{\mu\nu}_{lq}\lesssim_{\phi,\psi,\alpha,\beta,\theta,\eta} 2^{l(\eta+\theta)-q\beta},\qquad\text{for }l\le\tfrac{\beta q}{\alpha+\eta}.
\end{equation}

We prove the control by estimating $S^{\mu\nu}_{lq}$ one by one. We use the following facts for geometric sums,
\begin{equation}\label{Eqn::Hold::PfMult::DyaSumTmp}
    \sum_{j=u}^v2^{j\gamma}\le\begin{cases}C_\gamma 2^{u\gamma},&\gamma<0,\\v-u,&\gamma=0,\\ C_\gamma 2^{v\gamma},&\gamma>0,\end{cases}\quad\Rightarrow\quad\sum_{j=0}^v2^{j\gamma}\le C_\gamma \min(v2^{v\gamma},1),\quad\text{for } u,v\ge0.
\end{equation}

For the first term $S^{11}_{lq}$, by \eqref{Eqn::Hold::PfMult::AB} we have $S^{11}_{lq}\le\sum_{j=l-2}^{l+2}\sum_{j'=0}^{j-3}\sum_{k=q-2}^{q+2}\sum_{k'=0}^{k-3}a_{jk}b_{j'k'}$. By \eqref{Eqn::Hold::PfMult::DyaSumTmp} we know $\sum_{j=l-2}^{l+2}\sum_{k=q-2}^{q+2}a_{jk}\le C_{\alpha,\eta,\theta}\cdot a_{lq}$, so we only need to consider the sums for $j',k'$ and omit $j,k$, as the following
{\small\begin{equation}
    \begin{aligned}
    &\sum_{j=l-2}^{l+2}\sum_{j'=0}^{j-3}\sum_{k=q-2}^{q+2}\sum_{k'=0}^{k-3}a_{jk}b_{j'k'}\lesssim a_{lq}\sum_{j'=0}^l\sum_{k'=0}^qb_{j'k'}\le 2^{l\eta-q\beta}\sum_{j'=0}^l\Big(\sum_{k'\le\frac{\alpha+\eta}\beta j'}2^{-j'(\alpha-\theta)}+\sum_{\frac{\alpha+\eta}\beta j'\le k'\le q}2^{j'(\eta+\theta)-k'\beta}\Big)
    \\&\lesssim2^{l\eta-q\beta}\sum_{j'=0}^l (j'2^{-j'(\alpha-\theta)}+2^{j'(\eta+\theta)-j'(\alpha+\eta)})\lesssim 2^{l\eta-q\beta}\le 2^{l(\eta+\theta)-q\beta}.
\end{aligned}
\end{equation}}

The deductions for the rest of the terms are similar, we sketch them as below. Note that the assumption $\alpha-\eta-\theta>0$ implies $\frac{2\alpha-\theta}{\alpha+\eta}>1$:
{\small\begin{equation}
    \begin{aligned}
    S_{lq}^{12}\lesssim&\sum_{j\le l}\sum_{k'\le q}a_{jq}b_{lk'}\le\sum_{j\le l}2^{j\eta-q\beta}\Big(\sum_{k'\le \frac{\alpha+\eta}\beta l}2^{-l(\alpha-\theta)}+\sum_{\frac{\alpha+\eta}\beta l\le k'\le q}2^{l(\eta+\theta)-k'\beta}\Big)
    \\
    &\lesssim l2^{l\eta-q\beta}(l2^{-l(\alpha-\theta)}+2^{l(\eta+\theta)-l(\alpha+\eta)})=2^{l(\eta+\theta)-q\beta}(l^22^{-l\alpha}+l2^{-l\alpha})\lesssim 2^{l(\eta+\theta)-q\beta}.
    \\
    S_{lq}^{13}\lesssim&\sum_{l'\ge l}\sum_{k'\le q}a_{l'q}b_{l'k'}\le\sum_{l'\le\frac{\beta q}{\alpha+\eta}}2^{l'\eta-q\beta}\Big(\sum_{k'\le\frac{\alpha+\eta}\beta l'}2^{-l'(\alpha-\theta)}+\sum_{\frac{\alpha+\eta}\beta l'\le k'\le q}2^{l'(\eta+\theta)-k'\beta}\Big)+\sum_{l'\ge\frac{\beta q}{\alpha+\eta}}2^{-l'\alpha}\sum_{k'\le q}2^{-l'(\alpha-\theta)}
    \\
    &\lesssim\sum_{l'\le\frac{\beta q}{\alpha+\eta}}2^{l'\eta-q\beta}(l'2^{-l'(\alpha-\theta)}+2^{-l'(\alpha-\theta)})+\sum_{l'\ge\frac{\beta q}{\alpha+\eta}}2^{-l'(2\alpha-\theta)}\lesssim 2^{-q\beta}\sum_{l'\le\frac{\beta q}{\alpha+\eta}}l'2^{-l'(\alpha-\eta-\theta)}+2^{-\frac{2\alpha-\theta}{\alpha+\eta}\beta q}\lesssim 2^{-q\beta}.
    \\
    S_{lq}^{21}\lesssim&\sum_{j'\le l}\sum_{k\le q}a_{lk}b_{j'q}\le\Big(\sum_{k\le\frac{\alpha+\eta}\beta l}2^{-l\alpha}+\sum_{\frac{\alpha+\eta}\beta l\le k\le q}2^{l\eta-k\beta}\Big)\sum_{j'\le l}2^{j'(\eta+\theta)-q\beta}\lesssim (l2^{-l\alpha}+2^{-l\alpha})\cdot l2^{l(\eta+\theta)-q\beta}\lesssim 2^{l(\eta+\theta)-q\beta}.
    \\
    S_{lq}^{22}\lesssim&\sum_{j\le l}\sum_{k\le q}a_{jk}b_{lq}\le\sum_{j,k=0}^\infty a_{jk}\cdot 2^{l(\eta+\theta)-q\beta}=\sum_{k=0}^\infty\Big(\sum_{j\ge\frac{\beta k}{\alpha+\eta}}2^{-j\alpha}+\sum_{j\le\frac{\beta k}{\alpha+\eta}}2^{j\eta-k\beta}\Big) 2^{l(\eta+\theta)-q\beta}\lesssim2^{l(\eta+\theta)-q\beta}.
    \\
    S_{lq}^{23}\lesssim&\sum_{l'\ge l}\sum_{k\le q}a_{l'k}b_{l'q}\le\sum_{l'\le\frac{\beta q}{\alpha+\eta}}\Big(\sum_{k\le\frac{\alpha+\eta}\beta l'}2^{-l'\alpha}+\sum_{\frac{\alpha+\eta}\beta l'\le k\le q}2^{l'\eta-k\beta}\Big)2^{l'(\eta+\theta)-q\beta}+\sum_{l'\ge\frac{\beta q}{\alpha+\eta}}\sum_{k\le q}2^{-l'\alpha} 2^{-l'(\alpha-\theta)}
    \\
    &\lesssim\sum_{l'\le\frac{\beta q}{\alpha+\eta}}(l'2^{-l'\alpha}+2^{-l'\alpha})\cdot 2^{l'(\eta+\theta)-q\beta}+\sum_{l'\ge\frac{\beta q}{\alpha+\eta}}2^{-l'(2\alpha-\theta)}\lesssim 2^{-q\beta}\sum_{l'\le\frac{\beta q}{\alpha+\eta}}l'2^{-l'(\alpha-\eta-\theta)}+2^{-\frac{2\alpha-\theta}{\alpha+\eta}\beta q}\lesssim 2^{-q\beta}.
    \\
    S_{lq}^{31}\lesssim&\sum_{j'\le l}\sum_{q'\ge q}a_{lq'}b_{j'q'}\le\sum_{j'\le l}\sum_{q'\ge q}2^{l\eta-q'\beta}2^{j'(\eta+\theta)-q'\beta}\lesssim l2^{l(2\eta+\theta)}\sum_{q'\ge q}2^{-2q'\beta}\lesssim 2^{l(\eta+\theta)}2^{-q\beta}(l2^{l\eta-q\beta})\overset{l\le\frac{\beta q}{\alpha+\eta}}{\lesssim}2^{l(\eta+\theta)-q\beta}.
    \\
    S_{lq}^{32}\lesssim&\sum_{j\le l}\sum_{q'\ge q}a_{jq'}b_{lq'}\le\sum_{j\le l}\sum_{q'\ge q}2^{j\eta-q'\beta}2^{l(\eta+\theta)-q'\beta}\lesssim l2^{l(2\eta+\theta)}\sum_{q'\ge q}2^{-2q'\beta}\lesssim2^{l(\eta+\theta)}2^{-q\beta}(l2^{l\eta-q\beta})\lesssim 2^{l(\eta+\theta)-q\beta}.
    \\
    S_{lq}^{33}\lesssim&\sum_{l'\ge l}\sum_{q'\ge q}a_{l'q'}b_{l'q'}\le\sum_{q'\ge q}\Big(\sum_{l\le l'\le \frac{\beta q'}{\alpha+\eta}}2^{l'\eta-q'\beta}2^{l'(\eta+\theta)-q'\beta}+\sum_{l'\ge\frac{\beta q'}{\alpha+\eta}}2^{-l'\alpha}2^{-l'(\alpha-\theta)}\Big)
    \\
    &\lesssim\sum_{q'\ge q}(q'2^{q'(\frac{2\eta+\theta}{\alpha+\eta}\beta-2\beta)}+2^{-\frac{2\alpha-\theta}{\alpha+\eta}\beta q'})=\sum_{q'\ge q}(q'2^{-\frac{2\alpha-\theta}{\alpha+\eta}\beta q'}+2^{-\frac{2\alpha-\theta}{\alpha+\eta}\beta q'})\lesssim q2^{-\frac{2\alpha-\theta}{\alpha+\eta}\beta q}\lesssim 2^{-q\beta}\le 2^{l(\eta+\theta)-q\beta}.
\end{aligned}
\end{equation}}

We conclude that $S^{\mu\nu}_{lq}\lesssim 2^{l(\eta+\theta)-q\beta}$ for all non-negatives $l\le\frac{\beta q}{\alpha+\eta}$ and all $1\le \mu,\nu\le 3$. Thus \eqref{Eqn::Hold::PfMult::FinalGoal} holds and we have finished the proof.
\end{proof}

\subsection{Compositions and inverse mapping with parameter}\label{Section::HoldComp}
In this subsection we work on the concept of compositions between parameter dependent maps. We are going to prove an auxiliary result, Proposition \ref{Prop::Hold::CompThm}, that is used in Proposition \ref{Prop::PDE::ExistPDE} \ref{Item::PDE::ExistPDE::Phi}. We recall that for a bijection $f:U_1\to U_2$, we denote its inverse map by $f^\Inv:U_2\to U_1$.

In this subsection, for maps $F(x,s):U\times V\subseteq\R^n\times\R^q\to\R^n$, usually we would denote by $\tilde F$  the map $\tilde F(x,s):=(F(x,s),s)$. Note that if $F(\cdot,s)$ has inverse for every $s$, then in this notation we have $$\tilde F^\Inv(y,s)=(F(\cdot,s)^\Inv(y),s).$$

Let $x=(x^1,\dots,x^n)$ and $y=(y^1,\dots,y^n)$ be two standard coordinate systems of $\R^n$, and let $s=(s^1,\dots,s ^q)$ be the standard coordinate system of $\R ^q$. We let $\Ic(x,s):=x$ for $x\in\R^n$ and $s\in\R ^q$.
\begin{prop}\label{Prop::Hold::CompThm}
Let $\alpha>1$ and $\beta\in(0,\alpha+1]$. Then for any $\eps>0$ there exists a $\delta=\delta(n,q,\alpha,\beta,\eps)>0$ such that the following is true:

Suppose $F\in\Co^{\alpha+1}L^\infty\cap\Co^1\Co^\beta(\B^n,\B ^q;\R^n)$ and $g\in\Co^\alpha L^\infty\cap\Co^0\Co^\beta(\B^n,\B ^q)$ satisfy
\begin{equation}\label{Eqn::Hold::CompThm::Assum}
    (F-\Ic)|_{(\partial\B^n)\times\B^q}\equiv0\quad\text{and}\quad\|F-\Ic\|_{\Co^{\alpha+1}L^\infty\cap\Co^1\Co^\beta(\B^n,\B ^q;\R^n)}+\|g\|_{\Co^\alpha L^\infty\cap\Co^0\Co^\beta(\B^n,\B ^q)}<\delta.
\end{equation}
Then $\tilde F(x,s):=(F(x,s),s)$ is bijective on $\B^n\times\B^q$. 
Moreover, by setting $\tilde\Phi(y,s):=\tilde F^\Inv(y,s)$ for $(y,s)\in\B^n\times\B^q$, we have $g\circ\tilde\Phi\in \Co^\alpha L^\infty\cap\Co^{-1}\Co^\beta(\B^n,\B^q)$ and 
\begin{equation}\label{Eqn::Hold::CompThm::Concl}
    \|g\circ\tilde\Phi\|_{\Co^\alpha L^\infty\cap\Co^{-1}\Co^\beta(\B^n,\B^q)}<\eps.
\end{equation}
\end{prop}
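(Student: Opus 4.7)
The plan is to first build $\tilde\Phi$ with quantitative control and then to bound $g \circ \tilde\Phi$ in each of the two factors $\Co^\alpha L^\infty$ and $\Co^{-1}\Co^\beta$ of the target norm by different means.

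For the inverse-function-theorem stage, I would fix $s \in \B^q$ and note that $F(\cdot, s) : \B^n \to \R^n$ is a small $\Co^{\alpha+1}$-perturbation of $\Ic$ with $F(\cdot, s)|_{\partial \B^n} = \Ic|_{\partial \B^n}$; the classical single-parameter inverse function theorem in $\Co^{\alpha+1}(\B^n)$ (applicable since $\nabla_x F$ is uniformly close to $I$) then makes $F(\cdot, s)$ a $\Co^{\alpha+1}$-diffeomorphism of $\B^n$ onto itself with $\|\Phi(\cdot, s) - \Ic\|_{\Co^{\alpha+1}(\B^n)} \lesssim \|F(\cdot, s) - \Ic\|_{\Co^{\alpha+1}(\B^n)}$. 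This yields both the bijectivity of $\tilde F$ on $\B^n \times \B^q$ and the uniform-in-$s$ bound $\|\Phi - \Ic\|_{\Co^{\alpha+1}L^\infty} \lesssim \delta$. To upgrade to the joint estimate $\|\Phi - \Ic\|_{\Co^{\alpha+1}L^\infty \cap \Co^1\Co^\beta} \lesssim \delta$, I would rewrite the fixed-point identity $\Phi - \Ic = -(F - \Ic)\circ \tilde\Phi$ as a Picard iteration in this bi-parameter space and close the bound using the product estimate of Corollary \ref{Cor::Hold::CorMult}.

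For the $\Co^\alpha L^\infty$ piece of the conclusion, a slice-wise argument suffices: for each fixed $s$, the classical composition estimate in $\Co^\alpha(\B^n)$ (standard since $\alpha > 1$) applied to $g(\cdot, s)$ and the near-identity $\Co^{\alpha+1}$-diffeomorphism $\Phi(\cdot, s)$ gives $\|g(\Phi(\cdot, s), s)\|_{\Co^\alpha(\B^n)} \lesssim \|g(\cdot, s)\|_{\Co^\alpha(\B^n)}$, so $\|g \circ \tilde\Phi\|_{\Co^\alpha L^\infty} \lesssim \delta$. For the $\Co^{-1}\Co^\beta$ piece I would use the Taylor expansion
$$g \circ \tilde\Phi - g \;=\; \Big(\int_0^1 (\nabla_x g) \circ \Psi_\tau \, d\tau\Big) \cdot (\Phi - \Ic), \qquad \Psi_\tau(y, s) := (y + \tau(\Phi(y, s) - y),\, s),$$
and apply Corollary \ref{Cor::Hold::CorMult} \ref{Item::Hold::CorMult::Prin}, which maps $\bigl(\Co^\alpha L^\infty \cap \Co^0\Co^\beta\bigr) \times \bigl(\Co^{\alpha-1}L^\infty \cap \Co^{-1}\Co^\beta\bigr)$ into $\Co^{\alpha-1}L^\infty \cap \Co^{-1}\Co^\beta$. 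The factor $\Phi - \Ic$ is small in the first factor by Stage 1, while $\nabla_x g \in \Co^{\alpha-1}L^\infty \cap \Co^{-1}\Co^\beta$ is small by hypothesis on $g$. Once I can bound $(\nabla_x g) \circ \Psi_\tau$ in the same space as $\nabla_x g$, the product is $O(\delta^2)$ in $\Co^{-1}\Co^\beta$ while the contribution from $g \in \Co^0\Co^\beta \subset \Co^{-1}\Co^\beta$ is $O(\delta)$, so choosing $\delta$ small enough drives everything below $\eps$.

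The main obstacle is precisely the auxiliary bound on $(\nabla_x g) \circ \Psi_\tau$ in $\Co^{\alpha-1}L^\infty \cap \Co^{-1}\Co^\beta$, since composition at the negative-regularity endpoint $\Co^{-1}$ in $y$ is not a literal change of variables. I plan to handle this via a paraproduct/Littlewood--Paley decomposition of $\nabla_x g$ in the spirit of the proof of Proposition \ref{Prop::Hold::Mult}, composing each smooth dyadic piece with $\Psi_\tau$ and then resumming; the fact that $\Psi_\tau - \Ic$ is small in the stronger space $\Co^{\alpha+1}L^\infty \cap \Co^1\Co^\beta$ should allow the resulting frequency interactions to be controlled, in the same spirit as the $S^{\mu\nu}_{lq}$ bookkeeping already carried out for products.
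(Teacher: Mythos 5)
Your proposal gets the routine parts right (bijectivity of $\tilde F$ via the parametric inverse function theorem, and the $\Co^\alpha L^\infty$ bound by the slice-wise composition estimate of Lemma \ref{Lem::Hold::QCompIFT} \ref{Item::Hold::QComp}), but the heart of the proposition --- the $\Co^{-1}\Co^\beta$ bound on $g\circ\tilde\Phi$ --- is not actually established. Two specific steps are unsubstantiated. First, your Stage-1 claim that $\|\Phi-\Ic\|_{\Co^{\alpha+1}L^\infty\cap\Co^1\Co^\beta}\lesssim\delta$ does not follow from the Picard identity $\Phi-\Ic=-(F-\Ic)\circ\tilde\Phi$: closing that iteration in $\Co^1\Co^\beta$ requires a composition estimate in the bi-parameter space $\Co^1_y\Co^\beta_s$, which is not among the available tools; Lemma \ref{Lem::Hold::QPIFT} only yields $\Phi\in\Co^{\alpha+1,\beta}=\Co^{\alpha+1}L^\infty\cap L^\infty\Co^\beta$, and equivalently one would need $\nabla_y\Phi-I=((\nabla_xF)^{-1}-I)\circ\tilde\Phi\in\Co^0\Co^\beta$, i.e.\ exactly the kind of composition at low $x$-regularity that the proposition is designed to circumvent (its own conclusion only delivers $\Co^{-1}\Co^\beta$). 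Second, the bound on $(\nabla_xg)\circ\Psi_\tau$ in $\Co^{\alpha-1}L^\infty\cap\Co^{-1}\Co^\beta$, which you correctly flag as the main obstacle, is precisely the crux of the whole statement: composing a distribution of negative $x$-regularity with a rough, parameter-dependent change of variables has no off-the-shelf estimate, and the sketched plan of composing dyadic blocks with $\Psi_\tau$ and resumming destroys frequency localization and is not controlled by the machinery of Proposition \ref{Prop::Hold::Mult}. Deferring this step leaves the proof circular in spirit: you assume a composition theorem at the negative-regularity endpoint in order to prove one.

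The paper's proof avoids any composition at negative regularity by a potential trick you would need (or a substitute for): write $g=\divg_xG$ with $G=\nabla_x\Pc g\in\Co^{\alpha+1}L^\infty\cap\Co^1\Co^\beta$ using the inverse Laplacian of Lemma \ref{Lem::Hold::LapInvBdd}, compose only the \emph{more regular} $G$ with $\tilde\Phi$ (where the $\Co^{\alpha+1,\beta}$ composition estimate is legitimate), and then recover $g\circ\tilde\Phi$ from the chain-rule identity $(\nabla_xG)\circ\tilde\Phi=\nabla_y(G\circ\tilde\Phi)\cdot(\nabla_y\Phi)^{-1}$, estimating the product by Corollary \ref{Cor::Hold::CorMult} \ref{Item::Hold::CorMult::-1} and $(\nabla_y\Phi)^{-1}$ by Corollary \ref{Cor::Hold::CramerMixed} with $\eta=1$. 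This transfers the one missing derivative onto the diffeomorphism rather than onto $g$, which is the idea absent from your argument; without it (or a genuinely proved paraproduct composition theorem at the $\Co^{-1}\Co^\beta$ endpoint), the proposal has a real gap.
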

We postpone the proof to the end of this subsection.
\begin{remark}In Proposition \ref{Prop::Hold::CompThm} we may have $g\circ\tilde\Phi\in\Co^\alpha L^\infty \cap\Co^0\Co^\beta(\B^n,\B^q)$.
    But in applications, the Proposition \ref{Prop::PDE::AnalyticPDE}, our result in Proposition \ref{Prop::Hold::CompThm} is enough.
\end{remark}

The proof is a combination of quantitative controls for composition and inverse mapping. Recall the quantitative results for the one-parameter cases:
\begin{lem}\label{Lem::Hold::QCompIFT}
Let $m,n\ge1$ be integers. Let $\alpha>0$.
\begin{enumerate}[parsep=-0.3ex,label=(\roman*)]
    \item\label{Item::Hold::QComp} Let $M>0$ and $\beta>0$, such that $\max(\alpha,\beta)>1$. There is a $K_1=K_1(m,n,\alpha,\beta,M)>0$ satisfying the following:
    
    \smallskip\quad
    Suppose $g\in\Co^{\alpha}(\B^n;\R^m)$ satisfies $g(\B^n)\subseteq \B^m$ and $\|g\|_{\Co^{\alpha}(\B^n;\R^m)}\le M$. Then for every $f\in\Co^\beta(\B^m)$, we have $f\circ g\in \Co^{\min(\alpha,\beta)}(\B^n)$ with $\|f\circ g\|_{\Co^{\min(\alpha,\beta)}(\B^n)}\le K_1\|f\|_{\Co^\beta(\B^m)}$.
    
    \item\label{Item::Hold::QIFT} There is a $K_2(n,\alpha)>0$ satisfying the following:
    
    \smallskip\quad
    Suppose $F\in\Co^{\alpha+1}(\B^n;\R^n)$ satisfy  $\|F-\id\|_{\Co^{\alpha+1}(\B^n;\R^n)}<K_2^{-1}$ and $F|_{\partial\B^n}\equiv \id$. Then $F$ is a $\Co^{\alpha+1}$-diffeomorphism on $\B^n$, and for the inverse map $F^\Inv:\B^n\to \B^n$ we have $\|F^\Inv\|_{\Co^{\alpha+1}(\B^n;\R^n)}<K_2$.
\end{enumerate}
\end{lem}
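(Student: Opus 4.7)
\textbf{Plan for Lemma \ref{Lem::Hold::QCompIFT}.} Both parts are standard quantitative results in H\"older--Zygmund theory, and my plan is to reduce them to pointwise difference estimates plus the composition/Cramer machinery already set up.

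For part \ref{Item::Hold::QComp}, I would split according to the size of $\min(\alpha,\beta)$. If $\min(\alpha,\beta)<1$, the hypothesis $\max(\alpha,\beta)>1$ forces one of the two functions to be Lipschitz. In the case $\alpha\ge 1>\beta$, the bound $\|g\|_{\Co^\alpha(\B^n;\R^m)}\le M$ gives $\|g\|_{C^1}\lesssim_M 1$, and then
\[
|f(g(x))-f(g(y))|\le\|f\|_{\Co^\beta}|g(x)-g(y)|^\beta\lesssim_M\|f\|_{\Co^\beta}|x-y|^\beta,
\]
which is the desired $\Co^\beta$ control. The case $\beta\ge 1>\alpha$ is symmetric via $\|f\|_{C^1}\lesssim\|f\|_{\Co^\beta}$ and $|g(x)-g(y)|\le\|g\|_{\Co^\alpha}|x-y|^\alpha$. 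For $\min(\alpha,\beta)\ge 1$, I would proceed by induction on the integer part of $\min(\alpha,\beta)$ using the chain rule $\nabla(f\circ g)=(\nabla f\circ g)\cdot\nabla g$: apply the inductive hypothesis with parameters $(\alpha,\beta-1)$ or $(\alpha-1,\beta)$ (still satisfying the max $>1$ condition as $\max(\alpha,\beta)>1$) to control $\nabla f\circ g$, combine with $\nabla g\in\Co^{\alpha-1}$ via Lemma \ref{Lem::Hold::Product}, and integrate to recover the $\Co^{\min(\alpha,\beta)}$ bound. Uniformity in $M$ comes for free since the constants from Lemma \ref{Lem::Hold::Product} and the induction step depend only on $\alpha,\beta,M,m,n$.

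For part \ref{Item::Hold::QIFT}, I would first establish bijectivity by the Banach fixed point trick. The hypothesis $F|_{\partial\B^n}=\id$ lets me define a continuous extension $\bar F\colon\R^n\to\R^n$ with $\bar F=F$ on $\B^n$ and $\bar F=\id$ on $\R^n\setminus\B^n$. For $y\in\B^n$ consider $T_y(x):=x+y-\bar F(x)$. Since $\|\nabla(F-\id)\|_{L^\infty(\B^n)}\le\|F-\id\|_{\Co^{\alpha+1}}<K_2^{-1}$, the map $T_y$ is a uniform contraction on $\R^n$ with constant $<1/2$ once $K_2$ is large enough. Its unique fixed point $x^\ast$ satisfies $\bar F(x^\ast)=y$; since $y\in\B^n$ and the only preimage of $\B^n$ under $\bar F\big|_{\R^n\setminus\B^n}=\id$ is in $\B^n$, we conclude $x^\ast\in\B^n$ and $F(x^\ast)=y$. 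Injectivity is automatic from the contraction. The same contraction estimate gives $\|F^\Inv\|_{C^1(\B^n;\R^n)}\le 2$.

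The remaining and main task is promoting $F^\Inv$ from $C^1$ to $\Co^{\alpha+1}$ with uniform bound, which I would do by a bootstrap using the formula $\nabla F^\Inv(y)=(\nabla F\circ F^\Inv(y))^{-1}$. Starting with $F^\Inv\in\Co^1$ (bound $\le 2$), I apply part \ref{Item::Hold::QComp} to conclude $(\nabla F)\circ F^\Inv-I\in\Co^{\min(\alpha,1)}$ with norm $\lesssim\|F-\id\|_{\Co^{\alpha+1}}<K_2^{-1}$, then invoke Remark \ref{Rmk::Hold::CramerMixedforSing} (Cramer in $\Co^\gamma$) to invert the matrix with a comparable bound, yielding $F^\Inv\in\Co^{1+\min(\alpha,1)}$. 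Iterating this step $k$ times upgrades $F^\Inv$ to $\Co^{1+\min(\alpha,k)}$, and after finitely many iterations one reaches $F^\Inv\in\Co^{\alpha+1}$. The delicate point, and the one I anticipate as the main obstacle, is keeping the smallness threshold required by the Cramer step intact throughout the bootstrap: each iteration multiplies the controlling constant by the $K_1$ of part \ref{Item::Hold::QComp} evaluated at the current $\Co^k$-bound on $F^\Inv$, so I must choose $K_2$ large enough (depending on the finitely many composition constants arising from $k=1,\ldots,\lceil\alpha\rceil$) to absorb all these factors and still meet the hypothesis $\|A-I\|_{\Co^\gamma}<c_0'$ of Remark \ref{Rmk::Hold::CramerMixedforSing} at every stage.
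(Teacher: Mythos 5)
Your plan for part \ref{Item::Hold::QIFT} (contraction mapping for bijectivity, then a bootstrap via $\nabla F^\Inv=(\nabla F\circ F^\Inv)^{-1}$, part \ref{Item::Hold::QComp} and Remark \ref{Rmk::Hold::CramerMixedforSing}) is a reasonable self-contained route; the paper itself does not argue this way but simply cites \cite[Proposition 4.10]{StreetYaoVectorFields}. The genuine gap is in part \ref{Item::Hold::QComp}, and it then contaminates the bootstrap in \ref{Item::Hold::QIFT}: your argument silently identifies the Zygmund spaces at integer exponents with Lipschitz/$C^k$ spaces, which is exactly what fails here ($C^{0,1}\subsetneq\Co^1$, see Remark \ref{Rmk::Hold::RmkforLPHoldChar}). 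Concretely: (a) when $\min(\alpha,\beta)=1$, e.g.\ $\alpha=1<\beta\le 2$ or $\beta=1<\alpha$, your base cases do not apply and your inductive step cannot be invoked, since the decremented pair $(\alpha,\beta-1)$ violates the hypothesis $\max>1$ (or lands at exponent $0$, where $\nabla f\circ g$ is not even defined pointwise because $\Co^0=\Bs^0_{\infty\infty}$ contains non-functions); (b) even for non-integer targets the induction does not strictly decrease $\lfloor\min(\alpha,\beta)\rfloor$ and can get stuck at an intermediate instance with exponent exactly $1$ (e.g.\ from $(\alpha,\beta)=(3/2,2)$ you must handle $(3/2,1)$, which your scheme never resolves); and (c) at these endpoints the naive first-difference/chain-rule estimate is actually false in general: a Zygmund function composed with a merely Lipschitz (or Zygmund) map need not be Zygmund, because the modulus of continuity of a $\Co^1$ function is $t\log(1/t)$, not $t$; in particular $\Co^1\circ\Co^1\not\subset\Co^1$, which is precisely why the hypothesis $\max(\alpha,\beta)>1$ appears. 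So the cases your plan omits are not routine leftovers; they are the part of the statement that needs a genuinely different estimate.

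The paper's proof handles exactly this: for $\alpha>1$ it cites \cite[Lemma 5.8]{CoordAdaptedII} (which covers all $\beta>0$, including integer $\beta$), and for $0<\alpha\le1<\beta$ it works with the second-difference (Zygmund) characterization of Lemma \ref{Lem::Hold::HoldChar} \ref{Item::Hold::HoldChar::02}, splitting
\begin{equation*}
\big|\tfrac{f(g(x_0))+f(g(x_1))}2-f\big(g(\tfrac{x_0+x_1}2)\big)\big|\le\big|\tfrac{f(g(x_0))+f(g(x_1))}2-f\big(\tfrac{g(x_0)+g(x_1)}2\big)\big|+\big|f\big(\tfrac{g(x_0)+g(x_1)}2\big)-f\big(g(\tfrac{x_0+x_1}2)\big)\big|,
\end{equation*}
bounding the first term by $\|f\|_{\Co^{\min(\beta,3/2)}}|g(x_0)-g(x_1)|^{\min(\beta,3/2)}$ and then using $g\in\Co^{\alpha/\min(\beta,3/2)}$ (a fractional-power trick that avoids ever needing $g$ Lipschitz), and the second term by $\|f\|_{C^{0,1}}$ times the Zygmund second difference of $g$. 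Your proof never invokes second differences, so it cannot recover these endpoint cases; either adopt the paper's argument for $0<\alpha\le1<\beta$ and a citation (or a second-difference argument) for $\alpha>1$ with integer $\beta$, or, in the bootstrap for \ref{Item::Hold::QIFT}, insert an $\varepsilon$-loss step (first obtain $F^\Inv\in\Co^{\alpha+1-\varepsilon}$, then re-compose using that $\alpha+1-\varepsilon>1$) so that every application of part \ref{Item::Hold::QComp} genuinely satisfies $\max>1$; note in particular that your very first bootstrap step, composing $\nabla F\in\Co^\alpha$ with the merely Lipschitz $F^\Inv$, is not an instance of part \ref{Item::Hold::QComp} when $\alpha\le1$.
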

\begin{proof}For the proof of \ref{Item::Hold::QIFT}, see \cite[Proposition 4.10]{StreetYaoVectorFields}.

For \ref{Item::Hold::QComp}, see \cite[Lemma 5.8]{CoordAdaptedII} for the case $\alpha>1$. We prove the case $0<\alpha\le1<\beta$ below. 

By Lemma \ref{Lem::Hold::HoldChar} \ref{Item::Hold::HoldChar::02} we have $\| f\circ g\|_{\Co^\alpha(\B^n)}\approx_{n,\alpha}\| f\circ g\|_{C^0(\B^n)}+\sup\limits_{x_0\neq x_1}|\tfrac{ f( g(x_0))+ f( g(x_1))}2- f( g(\tfrac{x_0+x_1}2))|\cdot|x_0-x_1|^{-\alpha}$ and $|\tfrac{ f( g(x_0))+ f( g(x_1))}2- f(\tfrac{ g(x_0)+ g(x_1)}2)|\lesssim_{m,\beta}\| f\|_{\Co^{\min(\beta,\frac32)}}|g(x_0)-g(x_1)|^{\min(\beta,\frac32)}$ for $x_0,x_1\in\B^n$. Therefore
\begin{align*}
    &|\tfrac{ f( g(x_0))+ f( g(x_1))}2- f( g(\tfrac{x_0+x_1}2))|\le|\tfrac{ f( g(x_0))+ f( g(x_1))}2- f(\tfrac{ g(x_0)+ g(x_1)}2)|+| f(\tfrac{ g(x_0)+ g(x_1)}2)- f( g(\tfrac{x_0+x_1}2))|
    \\
    \lesssim&_{m,\beta}\| f\|_{\Co^{\min(\beta,\frac32)}}| g(x_0)- g(x_1)|^{\min(\beta,\frac32)}+\| f\|_{C^{0,1}}|\tfrac{ g(x_0)+ g(x_1)}2- g(\tfrac{x_0+x_1}2)|
    \\
    \lesssim&_{n,\alpha}\| f\|_{\Co^{\min(\beta,\frac32)}}\| g\|_{\Co^{\alpha/\min(\beta,\frac32)}}^{\min(\beta,\frac32)}|x_0-x_1|^{\alpha}+\| f\|_{C^{0,1}}\| g\|_{\Co^\alpha}|x_0-x_1|^\beta\lesssim_M\| f\|_{\Co^{\beta}}|x_0-x_1|^{\alpha}.
\end{align*}

In particular we can choose $K_1(m,n,\alpha,\beta,M)\approx_{m,n,\alpha,\beta} (1+M)^{\min(\beta,\frac32)}$.
\end{proof}

\begin{lem}\label{Lem::Hold::QPIFT}
Let $n,q\in\Z_+$, $\alpha>0$ and $\beta\in(0,\alpha+1]$. Write $\Ic:\R^n\times\R^q\to\R^n$ as $\Ic(x,s):=x$. There is a $K_3=K_3(n,q,\alpha,\beta)>1$ that satisfies the following:

\medskip\quad Let $F\in\Co^{\alpha+1,\beta}(\B^n,\B^q;\R^n)$ satisfies $\|F-\Ic\|_{\Co^{\alpha+1,\beta}(\B^n,\B^q;\R^n)}<K_3^{-1}$ and $(F-\Ic)|_{(\partial\B^n)\times\B^q}\equiv0$. Then \begin{enumerate}[nolistsep,label=(\roman*)]
    \item\label{Item::Hold::QPIFT::Fbij} For each $s\in\B^q$, $F(\cdot,s):\B^n\to\B^n$ is bijective.
    \item\label{Item::Hold::QPIFT::Control} Define $\Phi(y,s):=F(\cdot,s)^\Inv(y)$ for $y\in\B^n$, $s\in\B^q$. Then $\Phi\in\Co^{\alpha+1,\beta}(\B^n,\B^q;\R^n)$ and satisfies 
    \begin{equation}\label{Eqn::Hold::QPIFT::Control}
        \|\Phi\|_{\Co^{\alpha+1,\beta}(\B^n,\B^q;\R^n)}\le K_3.
    \end{equation}
\end{enumerate}
\end{lem}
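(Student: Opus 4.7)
The plan is to use the characterization $\Co^{\alpha+1,\beta}(\B^n,\B^q) = \Co^{\alpha+1}L^\infty \cap L^\infty\Co^\beta(\B^n,\B^q)$ from Lemma \ref{Lem::Hold::CharMixHold} \ref{Item::Hold::CharMixHold::Mix=Bi}, which reduces \eqref{Eqn::Hold::QPIFT::Control} to two separate uniform estimates: $\sup_{s\in\B^q}\|\Phi(\cdot,s)\|_{\Co^{\alpha+1}(\B^n;\R^n)} \lesssim 1$ and $\sup_{y\in\B^n}\|\Phi(y,\cdot)\|_{\Co^\beta(\B^q;\R^n)} \lesssim 1$, with implicit constants depending only on $n,q,\alpha,\beta$.

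The first bound, together with part \ref{Item::Hold::QPIFT::Fbij}, follows by applying Lemma \ref{Lem::Hold::QCompIFT} \ref{Item::Hold::QIFT} to $F(\cdot,s)$ at each fixed $s\in\B^q$. Indeed, $\|F(\cdot,s)-\id\|_{\Co^{\alpha+1}(\B^n;\R^n)} \lesssim \|F-\Ic\|_{\Co^{\alpha+1}L^\infty} \lesssim \|F-\Ic\|_{\Co^{\alpha+1,\beta}} < K_3^{-1}$, and $F(\cdot,s)\equiv\id$ on $\partial\B^n$; choosing $K_3$ large enough (depending on $K_2(n,\alpha)$ from Lemma \ref{Lem::Hold::QCompIFT} and the equivalence constant of Lemma \ref{Lem::Hold::CharMixHold}) yields bijectivity of $F(\cdot,s)$ and $\|\Phi(\cdot,s)\|_{\Co^{\alpha+1}} \le K_2(n,\alpha)$ uniformly in $s$.

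For the second bound in the case $\beta\in(0,1]$, fix $y\in\B^n$, take $s_1,s_2\in\B^q$, and set $x_i:=\Phi(y,s_i)$. From $F(x_i,s_i)=y$ one obtains
\[
F(x_1,s_1)-F(x_1,s_2) \;=\; F(x_2,s_2)-F(x_1,s_2).
\]
The left side is at most $\|F-\Ic\|_{L^\infty\Co^\beta}|s_1-s_2|^\beta$; writing $G:=F-\Ic$, the right side equals $(x_2-x_1)+[G(x_2,s_2)-G(x_1,s_2)]$, and taking $K_3$ large forces $\|\nabla_xG\|_{L^\infty}<\tfrac12$, so the bracket is at most $\tfrac12|x_2-x_1|$. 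This gives $|\Phi(y,s_1)-\Phi(y,s_2)|\le 2\|F-\Ic\|_{L^\infty\Co^\beta}|s_1-s_2|^\beta$ uniformly in $y$, as desired.

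For $\beta\in(1,\alpha+1]$ I would bootstrap via implicit differentiation of $F(\Phi(y,s),s)=y$, obtaining
\[
\partial_{s_j}\Phi(y,s) \;=\; -\bigl(D_xF(\Phi(y,s),s)\bigr)^{-1}\,\partial_{s_j}F(\Phi(y,s),s).
\]
Using $\partial_{s_j}F\in L^\infty\Co^{\beta-1}$ (Definition \ref{Defn::Intro::DefofHold} \ref{Item::Hold::DefofHold::>1}), fiberwise Cramer (Remark \ref{Rmk::Hold::CramerMixedforSing}) to invert $D_xF(\cdot,s)\in\Co^\alpha(\B^n)$, the already-established $\Co^{\min(\beta,1)}$-regularity of $s\mapsto\Phi(y,s)$, and the single-parameter composition estimate of Lemma \ref{Lem::Hold::QCompIFT} \ref{Item::Hold::QComp}, one shows $\partial_{s_j}\Phi(y,\cdot)\in\Co^{\min(\beta-1,1)}(\B^q)$ uniformly in $y$; iterating this $\lceil\beta\rceil-1$ times lifts the regularity to the full $\Co^\beta$. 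The main obstacle is the upper endpoint $\beta=\alpha+1$, where $D_xF$ only has $s$-regularity $L^\infty\Co^{\alpha-}$ globally (Example \ref{Exam::Hold::GradMixHold}); the bootstrap must therefore be carried out at a fixed $y$, exploiting that only the $s$-regularity of $(D_xF)^{-1}\circ\tilde\Phi$ along the single curve $s\mapsto(\Phi(y,s),s)$ is needed, so that one-parameter Hölder composition replaces the unavailable bi-parameter composition of $(D_xF)^{-1}$ with $\tilde\Phi$.
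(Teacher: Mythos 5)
Your skeleton coincides with the paper's own proof (fiberwise Lemma \ref{Lem::Hold::QCompIFT} \ref{Item::Hold::QIFT} for \ref{Item::Hold::QPIFT::Fbij} and the $x$-estimate, then the identity $\nabla_s\Phi=-((\nabla_xF)^{-1}\nabla_sF)\circ\tilde\Phi$ and a bootstrap in $\beta$), but two steps fail as written. First, $\beta=1$ lies in the admissible range $(0,\alpha+1]$, and there your first-difference argument breaks down: $\Co^1$ is the Zygmund class, so the inequality $|F(x_1,s_1)-F(x_1,s_2)|\le\|F-\Ic\|_{L^\infty\Co^1}|s_1-s_2|$ is false in general (one only gets a logarithmic loss), and the conclusion you need for $\Phi(y,\cdot)\in\Co^1$ is itself a second-difference bound which first differences cannot produce. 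The paper handles the whole window $0<\beta<\min(\frac{2\alpha+1}{\alpha+1},\frac32)$ in Proposition \ref{Prop::AppQPIFT}, whose $\beta\ge1$ part runs on second differences via the mean value theorem; this stronger base case (regularity $\frac{3\alpha+1}{2\alpha+1}>1$ for $\tilde\Phi$) is also what later makes Lemma \ref{Lem::Hold::QCompIFT} \ref{Item::Hold::QComp} applicable, whereas in your first bootstrap step the inner map $s\mapsto(\Phi(y,s),s)$ is merely Lipschitz and the outer exponent is $\min(\beta-1,1)\le1$, so the hypothesis $\max(\alpha,\beta)>1$ of that lemma is violated at the point where you invoke it.

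Second, the endpoint $\beta=\alpha+1$: you correctly sense a difficulty, but the proposed remedy does not address it. Composing ``along the single curve $s\mapsto(\Phi(y,s),s)$'' still forces you, after splitting the increment into its two slots, to control $\sup_x\|(\nabla_xF(x,\cdot))^{-1}\|_{\Co^{\beta-1}(\B^q)}$ --- exactly the quantity you declared unavailable; fixing $y$ changes nothing. In fact this quantity \emph{is} finite at the endpoint: a direct Littlewood--Paley/Bernstein summation gives $\|\psi_k\ast_s\nabla_xF\|_{L^\infty}\lesssim\sum_{j\ge0}2^{j}\min(2^{-j(\alpha+1)},2^{-k\beta})\lesssim 2^{-k\alpha\beta/(\alpha+1)}=2^{-k(\beta-1)}$ when $\beta=\alpha+1$, so $\nabla_xF\in L^\infty\Co^{\beta-1}$; the ``$-$'' in Lemma \ref{Lem::Hold::GradMixHold} comes from interpolation, which loses the endpoint, and Example \ref{Exam::Hold::GradMixHold} only excludes exponents strictly above $\frac{\alpha\beta}{\alpha+1}$. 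This is precisely the input the paper's induction uses: it inverts $\nabla_xF$ slice-wise in both variables via Remark \ref{Rmk::Hold::CramerMixedforSing}, concludes $(\nabla_xF)^{-1}\nabla_sF\in\Co^{\beta-1}$ in each variable, and then applies Lemma \ref{Lem::Hold::QCompIFT} \ref{Item::Hold::QComp} with the inductively known $\Co^{\gamma(\beta)}$-regularity of $\tilde\Phi$, $\gamma(\beta)=\max(\beta-1,\frac{3\alpha+1}{2\alpha+1})>1$. To repair your proof you must either prove this endpoint estimate on $\nabla_xF$ (and its matrix inverse) or supply a genuine substitute; as it stands, the case $\beta=\alpha+1$, as well as $\beta=1$, is not established.
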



\begin{proof}We let $K_3$ be a large constant which may change from line to line.

Let $K_2=K_2(n,\alpha)>0$ be the constant in Lemma \ref{Lem::Hold::QCompIFT} \ref{Item::Hold::QIFT}. So we can choose $C_1>0$ such that 
\begin{equation*}
    \|F-\Ic\|_{\Co^{\alpha+1,\beta}(\B^n,\B^q;\R^n)}<C_1^{-1}\quad\Rightarrow\quad\|F(\cdot,s)-\id_{\R^n}\|_{\Co^{\alpha+1}(\B^n;\R^n)}<K_2^{-1},\ \forall s\in\B^q.
\end{equation*}
By assumption $(F(\cdot,s)-\Ic(\cdot,s))|_{\partial\B^n}=(F(\cdot,s)-\id_{\R^n})|_{\partial\B^n}\equiv0$, so by Lemma \ref{Lem::Hold::QCompIFT} \ref{Item::Hold::QIFT} $F(\cdot,s):\B^n\to\B^n$ is a $\Co^{\alpha+1}$-diffeomorphism for each $s\in\B^n$ and we have 
\begin{equation}\label{Eqn::Hold::PfQPIFT::C1}
    \textstyle\|F-\Ic\|_{\Co^{\alpha+1,\beta}(\B^n,\B^q;\R^n)}<C_1^{-1}\quad\Rightarrow\quad\sup_{s\in\B^q}\|F(\cdot,s)^\Inv\|_{\Co^{\alpha+1}(\B^n;\R^n)}<K_2.
\end{equation} In particular $\Phi(y,s)$ is pointwise defined for $(y,s)\in\B^n\times\B^q$. This finishes the proof of \ref{Item::Hold::QPIFT::Fbij}.

\medskip
Let $c'_0=c'_0(\B^n,n,\alpha)\in(0,1)$ be the constant in Remark \ref{Rmk::Hold::CramerMixedforSing}, we have, for every $s\in\B^q$,
\begin{equation}\label{Eqn::Hold::PfQPIFT::InvMat}
    \|\nabla_xF(\cdot,s)-I_n\|_{\Co^\alpha(\B^n;\R^{n\times n})}<c'_0\ \Rightarrow\ \|(\nabla_x F(\cdot,s))^{-1}-I_n\|_{\Co^\alpha(\B^n;\R^{n\times n})}<2c'_0.
\end{equation}

Thus we can find a $C_2\ge\max(2c'_0{}^{-1},C_1,K_2,1)$, such that
\begin{equation}\label{Eqn::Hold::PfQPIFT::C2}
    \|F-\Ic\|_{\Co^{\alpha+1,\beta}(\B^n,\B^q;\R^n)}<C_2^{-1}\ \Rightarrow\ \sup_{x\in\B^n,s\in\B^q}|(\nabla_x F(x,s))^{-1}|\le\sup_{s\in\B^q}\|(\nabla_x F(\cdot,s))^{-1}\|_{\Co^\alpha(\B^n;\R^{n\times n})}<C_2.
\end{equation}

In particular $\inf_{x\in\B^n,s\in\B^q}|\det(\nabla_xF(x,s))^{-1}|>C_2^{-n}$. Thus when $0<\beta<\min(\frac{2\alpha+1}{\alpha+1},\frac32)$ (in particular when $0<\beta\le\frac{3\alpha+1}{2\alpha+1}$), we get $\|\Phi\|_{\Co^{\alpha+1,\beta}}\lesssim_{n,q,\alpha,\beta}1$ by applying Proposition \ref{Prop::AppQPIFT}.

It remains to prove the case $\beta>\frac{3\alpha+1}{2\alpha+1}$. By chain rule, we have
\begin{equation*}
    0_{q\times n}=\nabla_s(F(\Phi(y,s),s))=\big((\nabla_xF)(\Phi(y,s),s)\big)\cdot\nabla_s\Phi(y,s)+(\nabla_sF)(\Phi(y,s),s).
\end{equation*}
Therefore
\begin{equation}
    \label{Eqn::Hold::PfQPIFT::Chain}
    \nabla_s\Phi(y,s)=-((\nabla_xF)^{-1}\cdot\nabla_sF)\circ(\Phi(y,s),s).
\end{equation}
What we need is to show $\|\nabla_s\Phi(y,\cdot)\|_{\Co^{\beta-1}(\B^q;\R^{q\times n})}\lesssim_{n,q,\alpha,\beta}1$. We will proceed by induction on $k=\lceil\beta-\frac{3\alpha+1}{2\alpha+1}\rceil$. The base case $k=1$ has done since we have obtained $K_3(n,q,\alpha,\beta)>0$ for $\beta\le\frac{3\alpha+1}{2\alpha+1}$ from the above discussion.

Suppose the case $k\ge1$ is done. Let $\beta$ satisfies $\lceil\beta-\frac{3\alpha+1}{2\alpha+1}\rceil=k+1$. We denote $\gamma(\beta):=\max(\beta-1,\frac{3\alpha+1}{2\alpha+1})>1$. By induction hypothesis and $\|\Phi\|_{\Co^{\gamma(\beta)}_{(x,s)}}\lesssim \|\Phi\|_{\Co^{\alpha+1,\gamma(\beta)}_{x,s}}$  there is a $C_3=C_3(n,q,\alpha,\beta)>0$ such that
\begin{equation}\label{Eqn::Hold::PfQPIFT::Assume}
    \|F-\Ic\|_{\Co^{\alpha+1,\gamma(\beta)}}\le K_3(n,q,\alpha,\gamma(\beta))^{-1}\quad\Rightarrow\quad\|\Phi\|_{\Co^{\gamma(\beta)}(\B^n\times\B^q;\R^n)}\le C_3.
\end{equation}

 Applying Lemma \ref{Lem::Hold::QCompIFT} \ref{Item::Hold::QComp} to \eqref{Eqn::Hold::PfQPIFT::Chain}, taking $f$ to be the components of $(\nabla_xF)^{-1}\cdot\nabla_sF\in\Co^{\beta-1}(\B^n\times\B^q;\R^{q\times n})$, we have, when the assumption of \eqref{Eqn::Hold::CompThm::Assum} is satisfied,
\begin{equation}\label{Eqn::Hold::PfQPIFT::Induction}
\begin{aligned}
\|\nabla_s\Phi\|_{\Co^{\beta-1}(\B^n,\B^q)}
\lesssim&\sup_{x,s}\|((\nabla_xF)^{-1}\cdot\nabla_sF)(\Phi(y,\cdot),s)\|_{\Co^{\beta-1}(\B^q)}+\sup_{x,s}\|((\nabla_xF)^{-1}\cdot\nabla_sF)(\Phi(y,s),\cdot)\|_{\Co^{\beta-1}(\B^q)}
\\
\le&K_1(n,q,\gamma(\beta),\beta-1,C_3)\sup_{s\in\B^q}\|(\nabla_xF)^{-1}\nabla_sF(\cdot,s)\|_{\Co^{\beta-1}(\B^n)}
\\
&+K_1(q,q,\gamma(\beta),\beta-1,\|\id_{\B^q}\|_{\Co^{\gamma(\beta)}})\sup_{x\in\B^n}\|(\nabla_xF)^{-1}\nabla_sF(x,\cdot)\|_{\Co^{\beta-1}(\B^q)}.
\end{aligned}
\end{equation}

Similar to the argument of \eqref{Eqn::Hold::PfQPIFT::C2}, by considering the constants $c'_0(\B^n,n,\beta-1)$ and $c'_0(\B^q,n,\beta-1)$ in Remark \ref{Rmk::Hold::CramerMixedforSing}, we can find a $C_4=C_4(n,q,\alpha,\beta)>\max(C_2,C_3,K_3(n,q,\alpha,\gamma(\beta)))$, such that
\begin{equation*}
    \|F-\Ic\|_{\Co^{\alpha+1,\beta}(\B^n,\B^q;\R^n)}<C_4^{-1}\ \Rightarrow\ \sup_{s\in\B^q}\|(\nabla_xF(\cdot,s))^{-1}\|_{\Co^{\beta-1}(\B^n;\R^{n\times n})}+\sup_{x\in\B^n}\|(\nabla_xF(x,\cdot))^{-1}\|_{\Co^{\beta-1}(\B^q;\R^{n\times n})}<C_4.
\end{equation*}
Thus $\|(\nabla_xF)^{-1}\nabla_sF\|_{\Co^{\beta-1}(\B^n\times\B^q;\R^{q\times n})}\lesssim_{n,q,\alpha,\beta}1$. Combining this with \eqref{Eqn::Hold::PfQPIFT::Induction} we see that $\sup\limits_{y\in\B^n}\|\Phi(y,\cdot)\|_{\Co^\beta(\B^q;\R^n)}\lesssim_{n,q,\alpha,\beta}1$. Thus we get a $K_3(n,q,\alpha,\beta)$ for such $\beta$, finishing the induction and hence the whole proof.
\end{proof}

To prove Proposition \ref{Prop::Hold::CompThm} we also need an auxiliary result for Laplacian.

\begin{lem}\label{Lem::Hold::LapInvBdd}
Let $\alpha>-1$. Let $U\subset\R^n_x$ and $V\subset\R^q_s$ be two bounded convex open sets with smooth boundaries. Then the $x$-variable Laplacian $\Delta_x=\sum_{j=1}^n\partial_{x^j}^2$ has a bounded linear inverse $\Pc$ as the following map
$$\Delta_x:\{f\in\Co^{\alpha+1}_x\Xs_s(U,V):f|_{(\partial U)\times V}=0\}\to\Co^{\alpha-1}_x\Xs_s(U,V),\quad\Xs\in\{L^\infty,\Co^\beta:\beta\in\R\}.$$
In particular $\Pc:\Co^{\alpha-1}\Xs(U,V)\to\Co^{\alpha+1}\Xs(U,V)$ is bounded linear.
\end{lem}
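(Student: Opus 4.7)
The plan is to reduce the bi-parameter boundedness to the corresponding statement for the single-variable Dirichlet problem on $U$, and then lift that to the bi-parameter setting using the tensor extension lemma (Lemma~\ref{Lem::Hold::TOtimesId}).

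The first step is to record the single-variable fact: for every $\alpha > -1$, the Dirichlet Laplacian
\[
\Delta : \{u \in \Co^{\alpha+1}(U) : u|_{\partial U} = 0\} \xrightarrow{\sim} \Co^{\alpha-1}(U)
\]
is a linear isomorphism with bounded inverse $\Pc_0 : \Co^{\alpha-1}(U) \to \Co^{\alpha+1}(U)$. For non-integer $\alpha+1 \in (0,\infty)\setminus\Z$ this is the classical Schauder/regularity theorem. For the full Zygmund range $\alpha > -1$, including integer endpoints, it follows from the identification $\Co^\gamma(U) = \Bs^\gamma_{\infty\infty}(U)$ from Lemma~\ref{Lem::Hold::HoldChar} together with the standard $\Bs^\gamma_{\infty\infty}$ mapping properties of the Dirichlet Laplacian on smooth bounded domains (see, for instance, \cite[Chapter~3]{Triebel1}); alternatively one can real-interpolate between two nearby non-integer indices and use the Zygmund characterization to recover the integer cases.

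Next I would invoke Lemma~\ref{Lem::Hold::TOtimesId} with $T = \Pc_0$, taking $R_1 = -1$, $R_2 = +\infty$, and gain $r = 2$. This produces a bounded linear operator
\[
\tilde\Pc_0 : \Co^{\alpha-1}_x \Xs_s(U,V) \to \Co^{\alpha+1}_x \Xs_s(U,V), \qquad (\tilde\Pc_0 f)(x,s) := \Pc_0(f(\cdot,s))(x),
\]
for all $\alpha > -1$ and all $\Xs \in \{L^\infty\} \cup \{\Co^\beta : \beta \in \R\}$. I would then verify the remaining structural claims slicewise: by construction $(\tilde\Pc_0 f)(\cdot,s)|_{\partial U} \equiv 0$ for every $s \in V$; the identity $\Delta_x \circ \tilde\Pc_0 = \id$ is obtained by applying $\Delta \circ \Pc_0 = \id$ in each slice; and the injectivity of $\Delta_x$ on the zero-boundary subspace likewise reduces to its single-parameter counterpart. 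Setting $\Pc := \tilde\Pc_0$ yields the claimed inverse, and the ``in particular'' statement follows upon forgetting the boundary constraint on the target.

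I expect the only substantive obstacle to be the single-variable Zygmund/Besov isomorphism at the integer regularity thresholds; once that is in hand, the lift to the parameter $s$ is a purely formal application of Lemma~\ref{Lem::Hold::TOtimesId}, which commutes each Littlewood--Paley block $\phi_j\ast_x(\cdot)$ with the $s$-slicewise action of $\Pc_0$.
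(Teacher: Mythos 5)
Your proposal is correct and follows essentially the same route as the paper: establish the single-variable Dirichlet solution operator $\Pc_0:\Co^{\alpha-1}(U)\to\Co^{\alpha+1}(U)$ (the paper cites this for all $\alpha>-1$, including negative and integer orders, from the literature rather than arguing via interpolation), and then lift it to the parameter variable by the slicewise construction $\Pc f(x,s)=\Pc_0(f(\cdot,s))(x)$ using Lemma \ref{Lem::Hold::TOtimesId}. The only difference is cosmetic, namely how the single-variable Zygmund/Besov boundedness is sourced.
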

\begin{proof}
Indeed, for any $v\in\Co^{(-2)+}(U)$ there is a unique continuous $u\in C^0(\overline{U})$ satisfying $\Delta u=v$ and $u\big|_{\partial U}=0$. Moreover the solution operator $[v\mapsto u]:\Co^{\mu-1}(U)\to\Co^{\mu+1}(U)$ is bounded linear for all $\mu>-1$. See \cite[Theorem 15]{DirichletBoundedness}. One can also see \cite[Theorems 6.19 and 8.33]{GilbargTrudinger} for the cases where $\mu$ is positive non-integer.

We can denote the above solution operator as $\Pc_0$. So $\Pc_0:\Co^{\alpha-1}(U)\to\Co^{\alpha+1}(U)$ is bounded linear. Therefore by Lemma \ref{Lem::Hold::TOtimesId}, $\Pc f(x,s):=\Pc_0(f(\cdot,s))(x)$ gives the bounded operator $\Pc:\Co^{\alpha-1}\Xs(U,V)\to \Co^{\alpha+1}\Xs(U,V)$.
\end{proof}

\begin{proof}[Proof of Proposition \ref{Prop::Hold::CompThm}]We have $\|F-\Ic\|_{\Co^{\alpha+1,\beta}(\B^n,\B^q;\R^n)}\lesssim\|F-\Ic\|_{\Co^{\alpha+1}L^\infty\cap\Co^1\Co^\beta(\B^n,\B^q;\R^n)}$ by Remark \ref{Rmk::Hold::SimpleHoldbyCompFact}. Therefore there is a $\delta_1>0$ such that if \eqref{Eqn::Hold::CompThm::Assum} is satisfied with $\delta=\delta_1$, then $\|F-\Ic\|_{\Co^{\alpha+1,\beta}(\B^n,\B^q;\R^n)}\le K_3^{-1}$, wh ere $K_3=K_3(n,q,\alpha,\beta)>0$ is the constant in Lemma \ref{Lem::Hold::QPIFT}. Thus by Lemma \ref{Lem::Hold::QPIFT} \ref{Item::Hold::QPIFT::Fbij} $\Phi(y,s)=F(\cdot,s)^\Inv(y)$ is well-defined and $\|\Phi\|_{\Co^{\alpha+1,\beta}}\le K_3$ holds.

Applying Lemma \ref{Lem::Hold::QCompIFT} \ref{Item::Hold::QComp} along with its constant $K_1$, we have for every $f\in\Co^{\alpha+1,\beta}(\B^n,\B^q)$,
\begin{equation*}
\begin{aligned}
    \textstyle\sup_{s\in\B^q}\|f\circ(\Phi(\cdot,s),s)\|_{\Co^{\alpha+1}(\B^n)}&
    \textstyle\le K_1(n,n,\alpha+1,\alpha+1,K_3)\cdot\sup_{s\in\B^q}\|f(\cdot,s)\|_{\Co^{\alpha+1}(\B^n)};
    \\
    \textstyle\sup_{y\in\B^n,s\in\B^q}\|f\circ(\Phi(y,\cdot),s)\|_{\Co^\beta(\B^n)}&
    \textstyle\le K_1(n,q,\alpha+1,\beta,K_3)\cdot\sup_{s\in\B^q}\|f(\cdot,s)\|_{\Co^{\alpha+1}(\B^n)};
    \\
    \textstyle\sup_{y\in\B^n,s\in\B^q}\|f\circ(\Phi(y,s),\cdot)\|_{\Co^\beta(\B^n)}&
    \textstyle\le K_1(q,q,\alpha+1,\beta,\|\id_{\B^q}\|_{\Co^{\alpha+1}})\cdot\sup_{x\in\B^n}\|f(x,\cdot)\|_{\Co^\beta(\B^n)}.
\end{aligned}
\end{equation*}
Therefore we can find a $C_1=C_1(n,q,\alpha,\beta)>0$ such that
\begin{equation}\label{Eqn::Hold::PfCompThm::EstComp2}
    \|f\circ\tilde\Phi\|_{\Co^{\alpha+1,\beta}(\B^n,\B^q)}\le C_1\|f\|_{\Co^{\alpha+1,\beta}(\B^n,\B^q)},\quad\forall  f\in\Co^{\alpha+1,\beta}(\B^n,\B^q).
\end{equation}
In particular by taking $f$ to be the components of $F-\Ic$, we have $\|\Ic-\Phi\|_{\Co^{\alpha+1,\beta}(\B^n,\B^q;\R^n)}\lesssim\|F-\Ic\|_{\Co^{\alpha+1,\beta}(\B^n,\B^q;\R^n)}$. By $\Co^{\alpha+1,\beta}\subset\Co^{\alpha+1}L^\infty\cap\Co^0\Co^\beta$ in Remark \ref{Rmk::Hold::SimpleHoldbyCompFact}  and that $\nabla_y\Ic(y,s)=I_n$, we know $\|\nabla_y\Phi-I_n\|_{\Co^\alpha_y L^\infty_s\cap\Co^{-1}_y\Co^\beta_s}\lesssim\|F-\Ic\|_{\Co^{\alpha+1,\beta}_{x,s}}$. Thus, we can find a $C_2=C_2(n,q,\alpha,\beta)>0$ such that
\begin{equation*}
    \|\nabla_y\Phi-I_n\|_{\Co^\alpha L^\infty\cap\Co^{-1}\Co^\beta(\B^n,\B^q;\R^{n\times n})}\le C_2\|F-\Ic\|_{\Co^{\alpha+1,\beta}(\B^n,\B^q;\R^n)},\text{ when }\eqref{Eqn::Hold::CompThm::Assum}\text{ is satisfied with }\delta=\delta_1.
\end{equation*}

By Corollary \ref{Cor::Hold::CramerMixed} (with $\eta=1$), we see that if $C_2\|F-\Ic\|_{\Co^{\alpha+1,\beta}(\B^n,\B^q;\R^n)}$ is small enough, then $(\nabla_y\Phi)^{-1}\in\Co^\alpha L^\infty\cap\Co^{-1}\Co^\beta(\B^n,\B^q;\R^{n\times n})$. Therefore we can find a $\delta_2\in(0,\delta_1)$ such that
\begin{equation}\label{Eqn::Hold::PfCompThm::EstInvMat}
    \|(\nabla_y\Phi)^{-1}\|_{\Co^\alpha L^\infty\cap\Co^{-1}\Co^\beta(\B^n,\B^q;\R^{n\times n})}\le\delta_2^{-1},\text{ when }\eqref{Eqn::Hold::CompThm::Assum}\text{ is satisfied with }\delta=\delta_2.
\end{equation}

Now for $g\in\Co^\alpha L^\infty\cap\Co^0\Co^\beta(\B^n,\B^q)$, we take $G_j=\partial_{x^j}\Pc g$ where $\Pc:\Co^\alpha L^\infty\cap\Co^0\Co^\beta(\B^n,\B^q)\to\Co^{\alpha+2} L^\infty\cap\Co^2\Co^\beta(\B^n,\B^q)$ is the operator in Lemma \ref{Lem::Hold::LapInvBdd}. Thus $G=(G_1,\dots,G_n)\in \Co^{\alpha+1} L^\infty\cap\Co^1\Co^\beta(\B^n,\B^q;\R^n)$ and
\begin{equation*}
    g\circ\tilde\Phi=(\divg_xG)\circ\tilde\Phi,\qquad(\nabla_x G)\circ\tilde\Phi=\nabla_y(G\circ\tilde\Phi)\cdot(\nabla_y\Phi)^{-1}.
\end{equation*}

By Corollary \ref{Cor::Hold::CorMult} \ref{Item::Hold::CorMult::-1} we have
\begin{align*}
    \|g\circ\tilde\Phi\|_{\Co^\alpha L^\infty\cap\Co^{-1}\Co^\beta(\B^n,\B^q)}&\lesssim\|\nabla_y(G\circ\tilde\Phi)\|_{\Co^\alpha L^\infty\cap\Co^{-1}\Co^\beta(\B^n,\B^q;\R^{n\times n})}\|(\nabla_y\Phi)^{-1}\|_{\Co^\alpha L^\infty\cap\Co^{-1}\Co^\beta(\B^n,\B^q;\R^{n\times n})}.
\end{align*}

Therefore by \eqref{Eqn::Hold::PfCompThm::EstInvMat}, \eqref{Eqn::Hold::PfCompThm::EstComp2} and Remark \ref{Rmk::Hold::SimpleHoldbyCompFact}, we have: when \eqref{Eqn::Hold::CompThm::Assum} is satisfied with $\delta=\delta_2$,
\begin{align*}
    \|g\circ\tilde\Phi\|_{\Co^\alpha L^\infty\cap\Co^{-1}\Co^\beta(\B^n,\B^q)}&\lesssim\|G\circ\tilde\Phi\|_{\Co^\alpha L^\infty\cap\Co^0\Co^\beta(\B^n,\B^q;\R^n)}\lesssim\|G\circ\tilde\Phi\|_{\Co^{\alpha+1,\beta}(\B^n,\B^q;\R^n)}
    \\
    &\lesssim\|G\|_{\Co^{\alpha+1,\beta}(\B^n,\B^q;\R^n)}\lesssim\|G\|_{\Co^{\alpha+1} L^\infty\cap\Co^1\Co^\beta(\B^n,\B^q;\R^n)}\lesssim\|g\|_{\Co^{\alpha} L^\infty\cap\Co^0\Co^\beta(\B^n,\B^q)}.
\end{align*}

In other words, there is a $C_3=C_3(n,q,\alpha,\beta)>0$, such that
\begin{equation*}
    \|g\circ\tilde\Phi\|_{\Co^\alpha L^\infty\cap\Co^{-1}\Co^\beta(\B^n,\B^q)}\le C_3\|g\|_{\Co^{\alpha} L^\infty\cap\Co^0\Co^\beta(\B^n,\B^q)},\quad\forall\ g\in \Co^{\alpha} L^\infty\cap\Co^0\Co^\beta(\B^n,\B^q).
\end{equation*}
Now for every $\eps>0$, take $\delta=\min(C_3^{-1}\eps,\delta_2)$, we get \eqref{Eqn::Hold::CompThm::Concl} and complete the proof.
\end{proof}

In applications, we also need a local version of the composition estimate.
\begin{lem}\label{Lem::Hold::CompofMixHold}
Let $\alpha,\beta,\gamma,\delta>0$. Let $U_1\subseteq\R^n_x$, $V_1\subseteq\R^p_t$, $U_2\subseteq\R^q_y$ and $V_2\subseteq\R^r_s$ be four open sets. 
\begin{enumerate}[parsep=-0.3ex,label=(\roman*)]
    \item\label{Item::Hold::CompofMixHold::Comp} Suppose $\max(\alpha,\gamma)>1$ and $\max(\beta,\gamma)>1$. Let $g\in\Co^{\alpha,\beta}_\loc(U_1, V_1;U_2)$ and $f\in\Co^{\gamma,\delta}_\loc(U_2, V_2)$. We define $h:U_1\times V_1\times V_2\to\R $ as $h(x,t,s):=f(g(x,t),s)$. Then $h\in\Co^{\min(\alpha,\gamma),\min(\beta,\gamma),\delta}_{\loc}(U_1,V_1, V_2)$.
    
    \item\label{Item::Hold::CompofMixHold::InvMat} Let $A\in\Co^{\alpha,\beta}_{\loc}(U_1,V_1;\C^{m\times m})$ be a matrix function such that $A(x,s)\in\R^{n\times n}$ is invertible for every $(x,s)$. Then $A^{-1}\in\Co^{\alpha,\beta}_{\loc}(U_1,V_1;\C^{m\times m})$ as well.
    \item\label{Item::Hold::CompofMixHold::InvFun} Let $F\in\Co^{\alpha+1,\beta}_{\loc}(U_1,V_1;\R^n)$ and $(x_0,t_0)\in U_1\times V_1$. Suppose $\nabla_xF(x_0,t_0)\in \R^{n\times n}$ is an invertible matrix. Then there are neighborhoods $U_1'\subseteq U_1$ of $x_0$, $V_1'\subseteq V_1$ of $t_0$ and $\Omega_1\subseteq\R^n$ of $F(x_0)$ such that
    \begin{itemize}[nolistsep]
        \item For each $t\in V'_1$, $F(\cdot,t):U'_1\to F(U'_1,t)\subseteq\R^n$ is bijective and $F(U'_1,t)\supseteq \Omega_1$.
        \item Set $\Phi(x,t):=F(\cdot,t)^\Inv(x)$, then $\Phi\in\Co^{\alpha+1,\min(\alpha+1,\beta)}_{\loc}(\Omega_1,V'_1;U'_1)$.
    \end{itemize}
    In particular for the map $\tilde F(x,t):=(F(x,t),t))$ we have that $\tilde F:U_1'\times V_1'\to \tilde F(U'_1\times V'_1)$ is homeomorphism.
\end{enumerate}
\end{lem}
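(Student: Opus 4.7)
\medskip
\noindent\textbf{Plan.} I would prove the three parts in order, each building on the previous ones and on the quantitative composition/inversion results (Lemmas~\ref{Lem::Hold::QCompIFT} and~\ref{Lem::Hold::QPIFT}) and Corollary~\ref{Cor::Hold::CramerMixed}. Throughout I reduce to relatively compact subdomains $U_1'\Subset U_1$, $V_1'\Subset V_1$, $V_2'\Subset V_2$ and work with the supremum-based characterization in Definitions~\ref{Defn::Hold::MixHold} and~\ref{Defn::Hold::MoreMixHold}~\ref{Item::Hold::MoreMixHold::3Mix}.

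\medskip
\noindent\textbf{Part \ref{Item::Hold::CompofMixHold::Comp}.} By Remark~\ref{Rmk::Hold::RmkCAlpBetGam}, to show $h\in\Co^{\min(\alpha,\gamma),\min(\beta,\gamma),\delta}_{x,t,s}(U_1',V_1',V_2')$ it is enough to bound three quantities: $\sup_{x,t}\|h(x,t,\cdot)\|_{\Co^\delta(V_2')}$, $\sup_{x,s}\|h(x,\cdot,s)\|_{\Co^{\min(\beta,\gamma)}(V_1')}$, and $\sup_{t,s}\|h(\cdot,t,s)\|_{\Co^{\min(\alpha,\gamma)}(U_1')}$. The first is immediate from $\|f\|_{\Co^{\gamma,\delta}(U_2'',V_2')}$ where $U_2''\Supset g(U_1'\times V_1')$, since for each fixed $(x,t)$ we only vary $s$ in $f(g(x,t),s)$. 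For the second, I fix $(x,s)$ and compose $t\mapsto g(x,t)\in\Co^\beta(V_1';U_2)$ with $y\mapsto f(y,s)\in\Co^\gamma(U_2'')$; the hypothesis $\max(\beta,\gamma)>1$ allows me to apply Lemma~\ref{Lem::Hold::QCompIFT}~\ref{Item::Hold::QComp} uniformly in $(x,s)$. The third is analogous using $\max(\alpha,\gamma)>1$.

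\medskip
\noindent\textbf{Part \ref{Item::Hold::CompofMixHold::InvMat}.} Near any $(x_0,s_0)\in U_1\times V_1$, I write $A(x,s)=A(x_0,s_0)\big(I_m+B(x,s)\big)$ on a small neighborhood where $\|B\|$ is as small as needed. Then $A^{-1}=(I_m+B)^{-1}A(x_0,s_0)^{-1}$ and Corollary~\ref{Cor::Hold::CramerMixed} applied with $\eta=0$ (or directly Remark~\ref{Rmk::Hold::CramerMixedforSing} combined with Lemma~\ref{Lem::Hold::CharMixHold}~\ref{Item::Hold::CharMixHold::Mix=Bi}) shows $(I_m+B)^{-1}\in\Co^{\alpha,\beta}$ on that neighborhood.

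\medskip
\noindent\textbf{Part \ref{Item::Hold::CompofMixHold::InvFun}.} After an affine change of coordinates in $x$ and $t$ I may assume $(x_0,t_0)=(0,0)$, $F(0,0)=0$, and $\nabla_xF(0,0)=I_n$. I then choose a small $r>0$ and a cutoff $\chi\in C^\infty_c(r\B^n\times r\B^p)$ with $\chi\equiv 1$ near the origin, and define
\begin{equation*}
\tilde F(x,t):=\Ic(x,t)+\chi(x,t)\cdot(F(x,t)-\Ic(x,t)),
\end{equation*}
extended by $\Ic$ outside $r\B^n\times r\B^p$. After rescaling I may assume the domain is $\B^n\times\B^p$ and that $\tilde F-\Ic$ vanishes on $(\partial\B^n)\times\B^p$. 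Shrinking $r$ makes $\|\tilde F-\Ic\|_{\Co^{\alpha+1,\beta'}(\B^n,\B^p;\R^n)}$ as small as needed, where $\beta':=\min(\alpha+1,\beta)$ so that $\beta'\in(0,\alpha+1]$, and where I have used the continuity of multiplication by $\chi$ on $\Co^{\alpha+1,\beta'}$. Applying Lemma~\ref{Lem::Hold::QPIFT} to $\tilde F$ with the parameter $\beta'$ yields a bijective $\tilde F(\cdot,t):\B^n\to\B^n$ for each $t$ and an inverse $\tilde\Phi\in\Co^{\alpha+1,\beta'}(\B^n,\B^p;\R^n)$. Restricting to a neighborhood $U_1'\times V_1'$ where $\chi\equiv 1$ (so $\tilde F=F$) and setting $\Omega_1$ to be a small ball inside $\bigcap_{t\in V_1'}F(U_1',t)$, I obtain the claimed $\Phi\in\Co^{\alpha+1,\min(\alpha+1,\beta)}_\loc(\Omega_1,V_1';U_1')$. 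The homeomorphism claim for $\tilde F$ then follows from the continuity of $\Phi$.

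\medskip
\noindent\textbf{Main obstacle.} The subtle point is in part~\ref{Item::Hold::CompofMixHold::InvFun}: Lemma~\ref{Lem::Hold::QPIFT} is stated only for $\beta\in(0,\alpha+1]$, so when $\beta>\alpha+1$ I must downgrade the regularity to $\Co^{\alpha+1,\alpha+1}$ before invoking it, which is exactly why the conclusion has $\min(\alpha+1,\beta)$ rather than $\beta$. A secondary nuisance is verifying that cutting off $F$ and then rescaling does not destroy the smallness needed to apply Lemma~\ref{Lem::Hold::QPIFT}; this is a routine but careful check using the product estimate from Corollary~\ref{Cor::Hold::CorMult} combined with the equivalence in Lemma~\ref{Lem::Hold::CharMixHold}~\ref{Item::Hold::CharMixHold::Mix=Bi}.
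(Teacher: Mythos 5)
Your part \ref{Item::Hold::CompofMixHold::Comp} is essentially the paper's argument (reduce to balls, bound the three slice norms, apply Lemma~\ref{Lem::Hold::QCompIFT}~\ref{Item::Hold::QComp} uniformly in the frozen variables), so nothing to add there. Part \ref{Item::Hold::CompofMixHold::InvMat} takes a genuinely different route from the paper, which instead shrinks so that $\inf|\det A|>c$, controls $(\det A)^{-1}$ slice-wise and concludes by Cramer's rule; your perturbative route can be made to work, but with two caveats. First, Corollary~\ref{Cor::Hold::CramerMixed} with $\eta=0$ inverts in $\Co^\alpha L^\infty\cap\Co^0\Co^\beta$, which a priori strictly contains $\Co^{\alpha,\beta}=\Co^\alpha L^\infty\cap L^\infty\Co^\beta$ (cf.\ Remark~\ref{Rmk::Hold::SimpleHoldbyCompFact}), so it does not by itself give $A^{-1}\in\Co^{\alpha,\beta}$; only your fallback, applying Remark~\ref{Rmk::Hold::CramerMixedforSing} slice-wise in $x$ and in $s$ and using the slice characterization of $\Co^{\alpha,\beta}$, yields the stated space. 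Second, merely shrinking the neighborhood does not make $\|B\|_{\Co^{\alpha,\beta}}$ small (only its $L^\infty$ part); you must also rescale coordinates and use $B(x_0,s_0)=0$, as the paper does in Lemma~\ref{Lem::Key::IniNorm}, to obtain the norm smallness your citation requires.

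The genuine gap is in part \ref{Item::Hold::CompofMixHold::InvFun}: the smallness $\|\tilde F-\Ic\|_{\Co^{\alpha+1,\beta'}(\B^n,\B^p;\R^n)}\ll1$ cannot be achieved by your normalization, cutoff and (isotropic) shrinking. You normalized only $F(x_0,t_0)=0$ and $\nabla_xF(x_0,t_0)=I_n$, so the parameter drift $F(x_0,t)-F(x_0,t_0)$ is generically of size comparable to $|t-t_0|$, i.e.\ to the radius $r$ of the bidisc; after the conjugated rescaling that is forced if you want a self-map of $\B^n$ equal to the identity on $\partial\B^n$, this drift contributes an $O(1)$ term to $\|\tilde F-\Ic\|_{L^\infty}$ (and without conjugation the cutoff term $(\nabla_x\chi)\cdot(F-\Ic)$ is $O(1)$, even unbounded when $\beta<1$). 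The already-smooth example $F(x,t)=x+t$ on $\R^1_x\times\R^1_t$ exhibits the failure: there $\tilde F-\Ic=\chi\cdot t$, whose rescaled norm never becomes small. So Lemma~\ref{Lem::Hold::QPIFT} cannot be invoked as you describe; the obstruction is this drift, not the cutoff product estimate you flagged as the main obstacle. Possible repairs: (a) first replace $F(x,t)$ by $F(x,t)-F(x_0,t)+F(x_0,t_0)$ to kill the drift, run your scheme, and then recover $\Phi$ by composing with the resulting $t$-dependent translation, which costs an extra composition estimate of the type of part \ref{Item::Hold::CompofMixHold::Comp}; (b) rescale anisotropically, shrinking the $t$-radius much faster than the $x$-radius so the drift is small relative to the $x$-scale; or (c) argue as the paper does: apply the Inverse Function Theorem to $F(\cdot,t_0)$, use the continuity of $t\mapsto F(\cdot,t)$ into $C^1$ (via the interpolation $\Co^{\alpha+1}\Co^0\cap\Co^0\Co^\beta\subset\Co^{\frac\alpha2+1}\Co^{\frac{\alpha\beta}{2(\alpha+1)}}$) to see that $F(\cdot,t)$ is a $C^1$-embedding for $t$ near $t_0$, get $\sup_t\|\Phi(\cdot,t)\|_{\Co^{\alpha+1}}<\infty$ from the one-variable quantitative inverse estimate, and obtain the $t$-regularity from Proposition~\ref{Prop::AppQPIFT} when $\beta\le1$ and from the Inverse Function Theorem applied to $(x,t)\mapsto(F(x,t),t)$ when $\beta>1$.
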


\begin{proof}
By passing to local and taking scaling, we can assume $U_1,V_1,U_2,V_2$ are all unit balls, and every maps have finite norms. 

For \ref{Item::Hold::CompofMixHold::Comp}, what we need to show is that $\sup_{t\in V_1,s\in V_2}\|h(\cdot,t,s)\|_{\Co^{\min(\alpha+1,\gamma)}(U_1)}$, $\sup_{x\in U_1,s\in V_2}\|h(x,\cdot,s)\|_{\Co^{\min(\beta,\gamma)}(V_1)}$ and $\sup_{x\in U_1,t\in V_1}\|h(x,y,\cdot)\|_{\Co^{\delta}(V_2)}$ are all finite.

Clearly $h\in\Co^\delta_sL^\infty_{x,t}$ because $\sup_{x,t}\|h(x,t,\cdot)\|_{\Co^\delta(V_2)}\le\sup_{y\in U_2}\|f(y,\cdot)\|_{\Co^\delta(V_2)}$.

By Lemma \ref{Lem::Hold::QCompIFT} \ref{Item::Hold::QComp} (since we assume $V_1$ and $U_2$ are balls) we have a $C>0$ that depends on $U_2,V_1,\beta,\gamma$ and the upper bound of $\sup_{s'\in V_2}\|g(\cdot,s')\|_{\Co^{\alpha}(U_1)}$ but not on $t\in V_1$ or $s\in V_2$, such that
$$\|h(\cdot,t,s)\|_{\Co^{\min(\alpha,\gamma)}(U_1)}\le C\|f(\cdot,t)\|_{\Co^\gamma(U_2)}.$$
Taking supremum over $t\in V_1$ and $s\in V_2$ we get $\sup_{t\in V_1,s\in V_2}\|h(\cdot,t,s)\|_{\Co^{\min(\alpha,\gamma)}(U_1)}<\infty$.

Switching the role of $x$ and $t$ we get $\sup\limits_{x\in U_1,s\in V_2}\|h(x,\cdot,s)\|_{\Co^{\min(\beta,\gamma)}(V_2)}<\infty$, finishing the proof of \ref{Item::Hold::CompofMixHold::Comp}.

\smallskip For \ref{Item::Hold::CompofMixHold::InvMat}, by assumption $\det A:U_1\times V_1\to\C$ is non-vanishing in the domain. By continuity and by shrinking $U_1,V_1$ we can assume that $\inf_{U\times V}|\det A|>c$ for some $c>0$. By \cite[Lemma 5.7]{CoordAdaptedII} (where we take $X=\nabla$ in the reference), we get $\sup_x\|\det A(x,\cdot)^{-1}\|_{\Co^\beta_s}<\infty$ and $\sup_s\|\det A(\cdot,s)^{-1}\|_{\Co^\alpha_x}<\infty$. So $(\det A)^{-1}\in\Co^{\alpha,\beta}_{x,s}$.

By Cramer's rule, the entries of $A^{-1}$ are the products of $(\det A)^{-1}$ and the entries of $A$, thus $A^{-1}\in\Co^{\alpha,\beta}_{x,s}$.

\smallskip For \ref{Item::Hold::CompofMixHold::InvFun}, apply the Inverse Function Theorem on $F(\cdot,t_0)$, we can find a neighborhood $\tilde U_1\subseteq U_1$ of $x_0$ such that $F(\cdot,t_0):\tilde U_1\to F(\tilde U_1,t_0)$ is a $\Co^{\alpha+1}$-diffeomorphism. Shrinking $\tilde U_1$ if necessary, we can assume $F(\cdot,t_0),F(\cdot,t_0)^\Inv\in\Co^{\alpha+1}(\tilde U_1;\R^n)$ both have bounded $C^1$-norm.

By \eqref{Eqn::Hold::RmkforBiHold::Interpo} we have $F\in\Co^{\alpha+1}\Co^0\cap\Co^0\Co^\beta\subset\Co^{\frac\alpha2+1}_x\Co^{\frac{\alpha\beta}{2(\alpha+1)}}_s$, thus $[t\mapsto F(\cdot,t)]:V_1\to C^1(\tilde U_1;\R^n)$ is a continuous map (in fact H\"older) with respect to the $C^1$-norm topology. Since the set of bounded $C^1$-embedding map in $C^1(\tilde U_1;\R^n)$ is open, we can now find a neighborhood $\tilde V_1\subseteq V_1$ of $t_0$ such that $F(\cdot,t)$ is $C^1$-embedding for all $t\in\tilde V_1$. By Inverse Function Theorem again, the inverse map of $F(\cdot,t):\tilde U_1\to F(\tilde U_1,t)$ is also $\Co^{\alpha+1}$.

Take $U'_1:=\tilde U_1$ and take $\Omega_1$ such that $F(x_0,t_0)\in\Omega_1\Subset F(U'_1,t_0)$. By continuity there is a neighborhood $V'_1\subseteq\tilde V_1$ of $t_0$ such that $\Omega_1\Subset F(U'_1,t) $ for all $t\in V'_1$ as well. Now we get $U'_1,V'_1,\Omega_1$ as desired and $\Phi:\Omega_1\times V'_1\to U'_1$ is pointwise defined.

Since $F(\cdot,t)$ is $C^1$-embedding which is homeomorphic to its image, the result $\sup_{t\in V'_1}\|\Phi(\cdot,t)\|_{\Co^{\alpha+1}(\Omega_1;\R^n)}<\infty$ then follows from \cite[Lemma 5.9]{CoordAdaptedII}.

When $\beta\le1$, the result $\sup_{x\in \Omega_1}\|\Phi(x,\cdot)\|_{\Co^\beta(V_1;\R^n)}<\infty$ follows from Proposition \ref{Prop::AppQPIFT} via a scaling. When $\beta>1$, applying Inverse Function Theorem to the $\Co^{\min(\alpha+1,\beta)}$-map $[(x,t)\in U'_1\times V'_1\mapsto(F(x,t),t)]$, we see that its inverse $[(x,t)\in \Omega_1\times V'_1\mapsto(\Phi(x,t),t)]$ is also $\Co^{\min(\alpha+1,\beta)}$, which means $\sup_{x\in \Omega_1}\|\Phi(x,\cdot)\|_{\Co^{\min(\alpha+1,\beta)}(V_1;\R^n)}<\infty$.

Therefore either case we have $\Phi\in\Co^{\alpha+1}_xL^\infty_t\cap L^\infty_x\Co^{\min(\alpha+1,\beta)}_t=\Co^{\alpha+1,\min(\alpha+1,\beta)}_{x,t}$.

Now $\tilde F$ and $\tilde F^\Inv$ are both $\Co^{\alpha+1,\beta}\subset C^0$ so $\tilde F$ is homeomorphic to its image.
\end{proof}

\section{The Vector Fields and Ordinary Differential Equations}\label{Section::SecODE}
\subsection{Review of tangent subbundles and involutivity}\label{SectionConvDiffGeom}
In this part we give some conventions of notations and terminologies. 

For $\kappa>1$, we can define $\Co^\alpha$-maps between two $\Co^\kappa$-manifolds, since we have compositions $\Co^\alpha_\loc\circ\Co^\kappa_\loc\subseteq\Co^\alpha_\loc$ and $\Co^\kappa_\loc\circ\Co^\alpha_\loc\subseteq\Co^\alpha_\loc$.
\begin{note}
    Let $\kappa>1$, $\alpha\in(0,\kappa]$, and let $\Mf,\Nf$ be two $\Co^\kappa$-differential manifolds.
    We use $\Co^\alpha_\loc(\Mf;\Nf)$ as the set of all (locally) $\Co^\alpha$-maps from $\Mf$ to $\Nf$.
\end{note}

We can define the mixed regularity $\Co^{\alpha,\beta}$ on product manifolds.

\begin{defn}\label{Defn::ODE::MixHoldMaps}
Let $\kappa>1$ and $\alpha,\beta\in(0,\kappa]$. Let $\Mf^m,\Nf^n,\Pf^q$ be three $\Co^\kappa$-manifolds.

We say a continuous map $f:\Mf\times\Nf\to\Pf$ is $\Co^{\alpha,\beta}_{\loc}$, denoted by $f\in\Co^{\alpha,\beta}_{\loc}(\Mf,\Nf;\Pf)$, if for every coordinate charts $\phi:U\subseteq\Mf\to\R^m$, $\psi:V\subseteq\Nf\to\R^n$ and $\rho:W\subseteq\Pf\to\R^q$ that satisfy $f(U\times V)\subseteq W $, we have 
\begin{equation}\label{Eqn::ODE::MixHoldMaps}
    \rho\circ f\circ(\phi^\Inv,\psi^\Inv)\in\Co^{\alpha,\beta}_{\loc}(\phi(U), \psi(V);\R^q).
\end{equation}

\end{defn}

\begin{remark}
\begin{enumerate}[parsep=-0.3ex,label=(\roman*)]
    \item The definition still has symmetry for indices: $\Co^{\alpha,\beta}_\loc(\Mf,\Nf;\Pf)=\Co^{\beta,\alpha}_\loc(\Nf,\Mf;\Pf)$.
    \item To check $f\in\Co^{\alpha,\beta}_\loc$ it suffices to show \eqref{Eqn::ODE::MixHoldMaps} is valid on a coordinate cover $\{(\phi_i,\psi_i,\rho_i)\}_{i\in I}$ for $\Mf\times\Nf\times f(\Mf\times\Nf)\subseteq\Mf\times\Nf\times\Pf$ such that $f(U_i\times V_i)\subseteq W_i$ for each $i\in I$. The proof can be done using Lemma \ref{Lem::Hold::CompofMixHold} \ref{Item::Hold::CompofMixHold::Comp}. Such cover always exists because of the continuity of $f$. We leave the proof to readers.
    \item In applications we only consider differentiable maps localized near a fixed point. By passing to a precompact coordinate chart near a point, everything is reduced to the case of Euclidean space, and by shrinking the domains every functions or vector fields would have bounded H\"older-Zygmund regularities.
\end{enumerate}
\end{remark}
Definition \ref{Defn::ODE::MixHoldMaps} is applied for labeling component-wise regularity for vector fields  and parameterizations with parameters.
\begin{defn}
    Let $1<\alpha\le\kappa$ and let $\Mf$ be a $m$-dimensional $\Co^\kappa$-manifold. We say $\Phi:\Omega\subseteq\R^m\to \Mf$ is a \textbf{$\Co^\alpha$-regular parameterization}, if $\Phi$ is injective and $\Phi^\Inv:\Phi(\Omega)\subseteq \Mf\to\R^m$ is a $\Co^\alpha$-coordinate chart.
\end{defn}

When $\kappa>2$, the (real) tangent bundle $T\Mf$ of a $\Co^\kappa$-manifold $\Mf$ has a natural $\Co^{\kappa-1}$-differential structure. Its complexification, the complex tangent bundle $\C T\Mf:=T\Mf\otimes\C= T\Mf\oplus iT\Mf$ is also a $\Co^{\kappa-1}$-differential manifold. So we can define (real or complex) $\Co^\alpha$-vector fields when $\alpha\in(0,\kappa-1]$.

Similarly, the cotangent bundle $T^*\Mf$ and the complex cotangent bundle $\C T^*\Mf$ are also $\Co^{\kappa-1}$-manifolds.

\begin{defn}\label{Defn::ODE::CpxSubbd}
    Let $\kappa>2$, $0<\alpha\le\kappa$ and let $\Mf$ be a $\Co^\kappa$-manifold. A \textbf{$\Co^\alpha$-complex tangent subbundle of rank $r$} is a subset $\Se\subseteq\C T\Mf$ satisfying that for any $p\in \Mf$,
    \begin{itemize}[parsep=-0.35ex]
        \item $\Se_p:=\Se\cap\C T_p\Mf$ is a $r$-dimensional complex vector subspace.
        \item There is an open neighborhood $U\subseteq \Mf$ of $p$ and $\Co^\alpha$-complex vector fields $X_1,\dots,X_r$ on $U$, such that $X_1|_q,\dots,X_r|_q$ form a complex linear basis for $\Se_q$ for all $q\in U$.
    \end{itemize}
    We denote it as $\Se\le\C T\Mf$. And we say $(X_1,\dots,X_r)$ form a \textbf{$\Co^\alpha$-local basis} for $\V$ (or $\Se$) on $U$.
\end{defn}
In the paper we do not deal with abstract vector bundle,  we will use ``subbundle'' as the abbreviation of (complex) tangent subbundle.

The real tangent subbundles can be defined in the same way through replacing every $\C T\Mf$ by $T\Mf$. 
    But we focus mostly on complex subbundles since every real subbundle $\V\le T\Mf$ can be identified as a complex subbundle $\V\otimes\C\le\C T\Mf$.
    
Alternatively a rank $r$ $\Co^\alpha$-subbundle $\Se\le \C T\Mf$ can be equivalently defined as a $\Co^\alpha$-section of the complex Grassmannian bundle $\Gr_\C(r,\C T\Mf)=\coprod_{p\in\Mf}\{\text{rank }r\text{ complex spaces of }\C T_p\Mf\}$. For their equivalence we leave the proof to readers.

In Theorem \ref{Thm::Key} we consider tangent subbundles with mixed regularities.
\begin{defn}\label{Defn::ODE::CpxPaSubbd}
    Let $0<\alpha,\beta\le\kappa$ and let $\Mf,\Nf$ be two $\Co^\kappa$-manifolds. A \textbf{$\Co^{\alpha,\beta}$-complex tangent subbundle of rank $r$} is a $\Co^{\min(\alpha,\beta)}$-subbundle $\Se\le \C T(\Mf\times\Nf)$ such that, for any $(u_0,v_0)\in\Mf\times\Nf$ there is a neighborhood $U\times V\subseteq \Mf\times\Nf$ of $(u_0,v_0)$ and complex vector fields $X_1,\dots,X_r\in\Co^{\alpha,\beta}_\loc(U,V;\C T(\Mf\times\Nf))$, such that $X_1|_{(u,v)},\dots,X_r|_{(u,v)}$ form a complex linear basis for $\Se_{(u,v)}$ for all $(u,v)\in U\times V$.
\end{defn}
Equivalently this is saying $\Se\in\Co^{\alpha,\beta}_\loc(\Mf,\Nf;\Gr_\C(r,\C T(\Mf\times \Nf)))$ as a section of the Grassmannian bundle. 

\medskip
Recall the notion of involutive structure in Definition \ref{Defn::Intro::InvStr}. Let $x=(x^1,\dots,x^n)$ be a local coordinate system, for (at least $C^1$) vector fields $X=:\sum_{j=1}^nX^j\Coorvec{x^j}$ and $Y=:\sum_{k=1}^nY^k\Coorvec{x^k}$, we recall that
\begin{equation}\label{Eqn::ODE::LieBracketRecap}
    [X,Y]=\sum_{j,k=1}^n\Big(X^j\frac{\partial Y^k}{\partial x^j}-Y^j\frac{\partial X^k}{\partial x^j}\Big)\Coorvec{x^k}.
\end{equation}
For completeness we give definition of the involutivity on subbundles with mixed regularities. 
\begin{defn}\label{Defn::ODE::InvMix}
Let $\kappa>2$, $\alpha\in(1,\kappa]$ and $\beta\in(0,\kappa]$. Let $\Mf,\Nf$ be two $\Co^\kappa$-manifolds and $\Se$ be a $\Co^{\alpha,\beta}$-complex subbundle such that $\Se\le(\C T\Mf)\times\Nf$. We say $\Se$ is involutive, if for every $C^0$-sections $X,Y:\Mf\times\Nf\to\Se$ of $\Se$ that are $C^1_\loc$ in $\Mf$, the commutator $[X,Y]$ is a $C^0$-section of $\Se$.
\end{defn}
\begin{remark}\label{Rmk::ODE::InvMix}
Definition \ref{Defn::ODE::InvMix} does not require $X$ or $Y$ to be differentiable on $\Nf$, since when $\Se\le(\C T\Mf)\times\Nf$ the differentiations in \eqref{Eqn::ODE::LieBracketRecap} are only taken on $\Mf$ but not on $\Nf$. This is critical for the case $\beta\le1$.
\end{remark}

An important fact for involutive structures is that we can pick a collection of good basis.

\begin{lem}[Canonical local basis]\label{Lem::ODE::GoodGen}
	Let $\alpha,\beta>0$, $\kappa\ge\max(\alpha,\beta)+1$. Let $\Mf$ and $\Nf$ be two $\Co^\kappa$-manifolds. Let $\Se$ be a $\Co^{\alpha,\beta}$-complex subbundle over $\Mf\times\Nf$ such that $\Se\le (\C T\Mf)\times\Nf$.
	
	Let $p\in \Mf$ and $\lambda_0\in\Nf$. Then there are numbers $r,m,q\ge0$ and a (mixed real and complex) $\Co^\kappa$-coordinate system $(t,z,s)=(t^1,\dots,t^r,z^1,\dots,z^m,s^1,\dots,s^q):\tilde U\subseteq \Mf\to\R^r\times\C^m\times\R^q$ near $p$, such that 
	\begin{equation}\label{Eqn::ODE::GoodGen::DirectSumAssumption}
	    \Se_{(p,\lambda_0)}\oplus\Span(\partial_{\bar z^1}|_p,\dots,\partial_{\bar z^m}|_p,\partial_{s^1}|_p,\dots,\partial_{s^q}|_p)=\C T_p\Mf
	\end{equation}
	Moreover there are a neighborhood $U\subseteq\tilde U$ of $p$, a neighborhood $V\subseteq\Nf$ of $\lambda_0$, and a $\Co^{\min(\alpha,\beta)}$-local basis $X=[X_1,\dots,X_{r+m}]^\top$ for $\Se$ on $U\times V$, such that the collection vectors $X'=[X_1,\dots,X_r]^\top$ and $X''=[X_{r+1},\dots,X_{r+m}]^\top$ are of the form 
	\begin{equation}\label{Eqn::ODE::GoodGen::GoodGenFormula}
	\begin{pmatrix}X'\\X''\end{pmatrix}=\begin{pmatrix}I_r&&A'&B'\\&I_m&A''&B''\end{pmatrix}\begin{pmatrix}\partial/\partial t\\\partial/\partial z\\\partial/\partial{\bar z}\\\partial/\partial s\end{pmatrix},\end{equation}
	where $A'\in\Co^{\alpha,\beta}_\loc(U,V;\C^{r\times m})$, $A''\in\Co^{\alpha,\beta}_\loc(U,V;\C^{m\times m})$, $B'\in\Co^{\alpha,\beta}_\loc(U,V;\C^{r\times q})$ and $B''\in\Co^{\alpha,\beta}_\loc(U,V;\C^{m\times q})$. In addition,
	\begin{enumerate}[parsep=-0.3ex,label=(\roman*)]
	    \item\label{Item::ODE::GoodGen::Uniqueness} Such $X_1,\dots,X_{r+m}$ are uniquely determined by the coordinate system $(t,z,s)$.
	    \item\label{Item::ODE::GoodGen::InvComm} If $\Se$ is involutive, then $X_1,\dots,X_{r+m}$ are pairwise commutative.
	    \item\label{Item::ODE::GoodGen::Real} Suppose $\Se=\bar\Se$, then we must have $m=0$, $A'=0$, $A''=0$, $B''=0$ and $B'$ is real-valued.
	\end{enumerate}

\end{lem}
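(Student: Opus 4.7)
The argument splits cleanly into two independent pieces: a pointwise linear algebra construction of the coordinate chart realizing \eqref{Eqn::ODE::GoodGen::DirectSumAssumption}, and a row-reduction that normalizes an arbitrary local basis of $\Se$ into the form \eqref{Eqn::ODE::GoodGen::GoodGenFormula} by multiplying on the left by the inverse of a square block of its coefficient matrix. The three sub-items \ref{Item::ODE::GoodGen::Uniqueness}--\ref{Item::ODE::GoodGen::Real} will then be routine consequences of the canonical form. The one non-elementary ingredient is the regularity of the matrix inverse in the mixed class $\Co^{\alpha,\beta}_\loc$; this is the main obstacle, and it is handled by Lemma \ref{Lem::Hold::CompofMixHold}\ref{Item::Hold::CompofMixHold::InvMat}.

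\textbf{Step 1: Pointwise construction of the chart.} Write $\Se_0 := \Se_{(p,\lambda_0)}$ and set $r := \dim_\C(\Se_0 \cap \bar\Se_0)$, $m := \rank\Se - r$, and $q := \dim_\R\Mf - r - 2m$; these are nonnegative because $\dim_\C(\Se_0 + \bar\Se_0) = 2(r+m) - r = r + 2m \le \dim_\R\Mf$. Both $\Se_0 \cap \bar\Se_0$ and $\Se_0 + \bar\Se_0$ are stable under complex conjugation, hence are complexifications of real subspaces of $T_p\Mf$ of real dimensions $r$ and $r+2m$ respectively. I would pick a real basis $T_1,\ldots,T_r$ of the smaller real part, extend to a $\C$-basis of $\Se_0$ by choosing $Z_1,\ldots,Z_m \in \Se_0$, write $Z_j = U_j - iV_j$ with $U_j,V_j$ real so that $\{T_i,U_j,V_j\}$ forms a real basis of the real form of $\Se_0+\bar\Se_0$, and finally complete with real vectors $S_1,\ldots,S_q$ to a basis of $T_p\Mf$. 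Any $\Co^\kappa$-coordinate system $(t,x,y,s)$ near $p$ with $\partial_{t^i}|_p = T_i$, $\partial_{x^j}|_p = U_j$, $\partial_{y^j}|_p = V_j$, $\partial_{s^k}|_p = S_k$ then satisfies, with $z^j := x^j + iy^j$, $\partial_{z^j}|_p = \tfrac12(U_j - iV_j) = \tfrac12 Z_j \in \Se_0$, and $\partial_{\bar z^j}|_p, \partial_{s^k}|_p$ manifestly give a complement, establishing \eqref{Eqn::ODE::GoodGen::DirectSumAssumption}.

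\textbf{Step 2: Normalizing an arbitrary local basis.} Pick any $\Co^{\alpha,\beta}$-local basis $Y = [Y_1,\ldots,Y_{r+m}]^\top$ of $\Se$ on a neighborhood $U\times V$ of $(p,\lambda_0)$, which exists by Definition \ref{Defn::ODE::CpxPaSubbd}. Expanding in the $(t,z,\bar z,s)$-frame gives a $\Co^{\alpha,\beta}_\loc$ matrix of coefficients $(M^{(t)} \mid M^{(z)} \mid M^{(\bar z)} \mid M^{(s)})$ of size $(r+m) \times (r+2m+q)$. The square block $C := (M^{(t)} \mid M^{(z)})$ is invertible at $(p,\lambda_0)$ by \eqref{Eqn::ODE::GoodGen::DirectSumAssumption}; after shrinking $U \times V$ so that $\det C$ stays away from zero, Lemma \ref{Lem::Hold::CompofMixHold}\ref{Item::Hold::CompofMixHold::InvMat} yields $C^{-1} \in \Co^{\alpha,\beta}_\loc$. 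The change of basis $X := C^{-1} Y$ then has coefficient matrix $(I_{r+m} \mid C^{-1} M^{(\bar z)} \mid C^{-1} M^{(s)})$, which is exactly \eqref{Eqn::ODE::GoodGen::GoodGenFormula} with $(A',A'') := C^{-1} M^{(\bar z)}$ and $(B',B'') := C^{-1} M^{(s)}$ in $\Co^{\alpha,\beta}_\loc$ by the same lemma (applied entry-wise to the product).

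\textbf{Step 3: The three subproperties.} For \ref{Item::ODE::GoodGen::Uniqueness}, any change-of-basis matrix preserving the canonical form must have its first $(r+m) \times (r+m)$ block equal to $I_{r+m}$, hence equal the identity. For \ref{Item::ODE::GoodGen::InvComm}, observe that the $\partial_{t^i}, \partial_{z^j}$ coefficients of each $X_k$ are constants ($0$ or $1$); by \eqref{Eqn::ODE::LieBracketRecap}, together with Remark \ref{Rmk::ODE::InvMix} which allows differentiation only in the $\Mf$-variables, the $\partial_{t^i}, \partial_{z^j}$ components of $[X_k, X_l]$ all vanish. Involutivity then writes $[X_k,X_l] = \sum_\mu c_\mu X_\mu$, and the canonical form reads off each $c_\mu$ as one of these vanishing components, forcing $[X_k,X_l] = 0$. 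For \ref{Item::ODE::GoodGen::Real}, $\Se = \bar\Se$ gives $\Se_0 \cap \bar\Se_0 = \Se_0$, so $m = 0$ and the matrices $A',A'',B''$ do not occur; the conjugated family $\{\overline{X_i}\}_{i=1}^r$ is another canonical basis of $\bar\Se = \Se$ adapted to the same chart, so \ref{Item::ODE::GoodGen::Uniqueness} forces $\overline{X_i} = X_i$, meaning $B'$ is real-valued.
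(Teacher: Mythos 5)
Your proof is correct and follows essentially the same route as the paper's: define $r,m,q$ from $\Se_{(p,\lambda_0)}\cap\bar\Se_{(p,\lambda_0)}$ and $\Se_{(p,\lambda_0)}+\bar\Se_{(p,\lambda_0)}$, build the chart by pointwise linear algebra, normalize an arbitrary $\Co^{\alpha,\beta}$-basis by inverting the $(t,z)$-block of its coefficient matrix via Lemma \ref{Lem::Hold::CompofMixHold} \ref{Item::Hold::CompofMixHold::InvMat}, and prove \ref{Item::ODE::GoodGen::Uniqueness}--\ref{Item::ODE::GoodGen::Real} exactly as the paper does (uniqueness from $\Se\cap\Span(\partial_{\bar z},\partial_s)=\{0\}$, commutativity from the bracket lying in $\Span(\partial_{\bar z},\partial_s)\cap\Se=\{0\}$, realness of $B'$ from conjugation plus uniqueness). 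The only deviation is in \ref{Item::ODE::GoodGen::Real}, where you obtain $m=0$ directly from $m=\rank\Se-\dim_\C(\Se_{(p,\lambda_0)}\cap\bar\Se_{(p,\lambda_0)})$ rather than from the paper's rank count with $X'',\bar X''$; this is a harmless (arguably cleaner) shortcut consistent with your Step 1.
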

\begin{proof}Clearly $\Se_{(p,\lambda_0)}\cap\bar\Se_{(p,\lambda_0)}$ and $\Se_{(p,\lambda_0)}+\bar\Se_{(p,\lambda_0)}$ are both complex linear subspaces of $\C T_p\Mf$. We have $r:=\rank(\Se_{(p,\lambda_0)}\cap\bar\Se_{(p,\lambda_0)})$, $m:=\rank\Se_{(p,\lambda_0)}-r$ and $q:=\dim\Mf-\rank(\Se_{(p,\lambda_0)}+\bar\Se_{(p,\lambda_0)})$. Note that $\dim\Mf=r+2m+q$. See \cite[Lemma I.8.5]{Involutive} for details of \eqref{Eqn::ODE::GoodGen::DirectSumAssumption}.

The formula \eqref{Eqn::ODE::GoodGen::GoodGenFormula} follows from the standard linear algebra argument. See \cite[Lemma 1]{ShortFrobenius} or \cite[Page 18]{Involutive}. Note that by Lemma \ref{Lem::Hold::CompofMixHold} \ref{Item::Hold::CompofMixHold::InvMat}, if $M\in\Co^{\alpha,\beta}_\loc(U,V;\C^{(r+m)\times (r+m)})$ is a matrix function invertible at every point in the domain, then $M^{-1}\in\Co^{\alpha,\beta}_\loc(U,V;\C^{(r+m)\times (r+m)})$ holds as well. We leave the details to readers.

\medskip
\noindent\ref{Item::ODE::GoodGen::Uniqueness}: If $\tilde X=[\tilde X_1,\dots,\tilde X_{r+m}]^\top$ is another local basis that has expression \eqref{Eqn::ODE::GoodGen::GoodGenFormula}, then  $\tilde X-X$ are linear combinations of $\partial_{\bar z}$ and $\partial_s$. But $\Se|_U\cap\Span(\partial_{\bar z},\partial_s)=\{0\}$, so $\tilde X=X$ is the unique collection.

\medskip\noindent\ref{Item::ODE::GoodGen::InvComm}: We use  $(\partial_{u^1},\dots,\partial_{u^{r+m}}):=(\partial_{t^1},\dots,\partial_{t^r},\partial_{z^1},\dots,\partial_{z^m})$, $(\partial_{v^1},\dots,\partial_{v^{m+q}}):=(\partial_{\bar z^1},\dots,\partial_{\bar z^m},\partial_{s^1},\dots,\partial_{s^q})$ and  $\begin{pmatrix}A'&B'\\A''&B''\end{pmatrix}=\left(f_j^k\right)_{\substack{1\le j\le r+m\\1\le k\le m+q}}$ for simplicity.

To show that $X_1,\dots,X_{r+m}$ are commutative, by direct computation, for $1\le j,j'\le r+m$,
	\begin{equation}\label{LieBraofX}
	    [X_j,X_{j'}]=\sum_{k=1}^{r+m}\bigg(\frac{\partial f_{j'}^k}{\partial u^j}-\frac{\partial f_j^k}{\partial u^{j'}}+\sum_{k'=1}^{m+q}\Big(f_j^{k'}\frac{\partial f_{j'}^k}{\partial v^{k'}}-f_{j'}^{k'}\frac{\partial f_j^k}{\partial v^{k'}}\Big)\bigg)\Coorvec{v^k}\in\Span\Big(\Coorvec{\bar z},\Coorvec{s}\Big).
	\end{equation}
	
	But by involutivity $[X_j,X_{j'}]$ are all sections of $\Se=\Span X$. Since $\Span X\cap\Span(\partial_{\bar z},\partial_s)=\{0\}$ by \eqref{Eqn::ODE::GoodGen::GoodGenFormula}, we get $[X_j,X_{j'}]=0$ for $1\le j,j'\le r+m$.
	
\medskip\noindent\ref{Item::ODE::GoodGen::Real}: When $\Se=\bar\Se$, the complex conjugate $\bar X_1,\dots,\bar X_{r+m}$ also form a local basis for $\Se$. By \ref{Item::ODE::GoodGen::Uniqueness} we see that $\bar X'=X'$, so $B'=\bar B'$ is a real-valued matrix.

By \eqref{Eqn::ODE::GoodGen::GoodGenFormula} we have $X''=\partial_z+A''\partial_{\bar z}+B''\partial_s$ and  $\bar X''=\partial_{\bar  z}+\bar A''\partial_{ z}+\bar B''\partial_s$. They are linearly independent since $\partial_z$ and $\partial_{\bar z}$ are linearly independent. So $X_1,\dots,X_{r+m},\bar X_{r+1},\dots,\bar X_{r+m}$ are all sections $\Se$ and are also linearly independent, which means $\rank\Se\ge r+2m$. Since $\rank\Se=r+m$ we must have $m=0$, which means $A'=0$, $A''=0$, $B''=0$ and $B'\in\Co^{\alpha,\beta}(U,V;\R^{r\times q})$.
\end{proof}

We recall the definition of complex Frobenius structure.
\begin{defn}\label{Defn::ODE::CpxStr}
	Let $\alpha>1$ and let $\Se\le\C T\Mf$ be a $\Co^\alpha$-complex subbundle. We call $\Se$ a \textbf{complex Frobenius structure} of $\Mf$, if $\Se+\bar \Se$ is a subbundle, and both $\Se$ and $\Se+\bar\Se$ are involutive.
\end{defn}
\begin{remark}
In \cite{Gong} $\Se$ is also called a \textit{Levi-flat CR vector bundle} on $\Mf$.
\end{remark}
\begin{defn}
    Let $\Se\le\C T\Mf$ be a complex tangent subbundle, we use $\Se^\bot$ as the \textbf{dual bundle} of $\Se$, and $\bar\Se$ as the \textbf{complex conjugate} of $\Se$. They are given by 
    \begin{equation}\label{Eqn::ODE::DualConjBundle}
        \Se^\bot:=\{(p,\theta)\in\C T^*\Mf:\theta(v)=0,\forall v\in\C T_p\Mf\}\le\C T^*\Mf,\quad\bar\Se:=\{(p,\bar v)\in\C T\Mf:(p,v)\in\Se\}\le\C T\Mf.
    \end{equation}
\end{defn}

\begin{remark}\label{Rmk::ODE::RmkInvStr}
\begin{enumerate}[label=(\roman*),parsep=-0.3ex]
    \item One can check that $\bar \Se$ is indeed a subbundle, since $\bar\Se_p\le\C T_p\Mf$ is  a complex subspace (which is closed under the scalar multiplication by $i$) for each $p\in \Mf$.
    \item\label{Item::ODE::RmkInvStr::NotBund} The fact $\Se\le\C T\Mf$ is a subbundle does not necessary implies $\Se\cap\bar \Se$ or $\Se+\bar\Se$ is a subbundle. For example, the \textit{Mizohata operator} $X=\partial_t+it\partial_x$ on $\R^2_{t,x}$ generates a rank 1 complex subbundle $\Se\le \C T\R^2$, but $\Se\cap\bar\Se$ and $\Se+\bar\Se$ do not have constant rank: $$(\Se\cap\bar\Se)_{(t,x)}=\begin{cases}0,&t\neq0,\\\C_t\times\{0\},&t=0,\end{cases}\quad (\Se+\bar\Se)_{(t,x)}=\begin{cases}\C_{t,x}^2,&t\neq0,\\\C_t\times\{0\},&t=0.\end{cases}$$
    \item\label{Item::ODE::RmkInvStr::MoreReg} Suppose $\Se+\bar\Se$ is a subbundle. Then it is possible that $\Se+\bar\Se$ is more regular than $\Se$. For example, in $\R^3$ with coordinates $(x,y,z)$, $\Se$ is spanned by $\partial_x+i(1+|y|)\partial_y$. $\Se$ is merely a Lipschitz subbundle, but $\Se+\bar\Se$ can be spanned by $\partial_x,\partial_y$. So $\Se+\bar\Se$ is a analytic subbundle. Similarly if $\Se\cap\bar\Se$ is a subbundle than it can be more regular than $\Se$ as well. In this case $\Se\cap\bar\Se=\{0\}$ is a rank 0 subbundle which formally speaking is real-analytic as well.
\end{enumerate}
\end{remark}

In the definition of complex Frobenius structure, we have no regularity assumption on $\Se+\bar\Se$. But the following lemma shows that once $\Se+\bar\Se$ has constant rank then $\Se\cap\bar\Se$ and $\Se+\bar \Se$ are both subbundles with at least $\Co^\alpha$-regularities.
\begin{lem}
	Let $\alpha>1$ and let $\Mf$ be a $\Co^{\alpha+1}$-manifold. Let $\Se\le\C T\Mf$ be a $\Co^\alpha$-complex subbundle such that $\Se+\bar \Se$ has constant rank at every point, then $\Se+\bar\Se$ and $\Se\cap\bar\Se$ are both $\Co^\alpha$-complex tangent subbundles.
\end{lem}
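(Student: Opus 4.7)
The plan is to work locally near a fixed point $p_0\in\Mf$. Choose a $\Co^\alpha$-local basis $X_1,\dots,X_n$ for $\Se$ on a neighborhood $U$ of $p_0$, where $n=\rank\Se$. Then $\bar X_1,\dots,\bar X_n$ is a $\Co^\alpha$-local basis for $\bar\Se$ on $U$. Set $k:=\rank(\Se+\bar\Se)$, which is constant by hypothesis; the dimension formula forces $\rank(\Se\cap\bar\Se)=2n-k=:r$ to be constant as well. The strategy is to show that the $2n$ continuous sections $X_1,\dots,X_n,\bar X_1,\dots,\bar X_n$ can be organized to produce $\Co^\alpha$-local bases for both $\Se+\bar\Se$ and $\Se\cap\bar\Se$.

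For $\Se+\bar\Se$, at $p_0$ select a subset of size $k$ from $\{X_j|_{p_0},\bar X_j|_{p_0}\}_{j=1}^n$ that forms a basis of $(\Se+\bar\Se)_{p_0}$. By continuity of determinants and the fact that the chosen vector fields are at least $C^0$, the corresponding $k$ vector fields remain pointwise linearly independent on a smaller neighborhood $U'\subseteq U$; since $\rank(\Se+\bar\Se)\equiv k$, they span $(\Se+\bar\Se)_q$ for every $q\in U'$. This gives a $\Co^\alpha$-local basis, so $\Se+\bar\Se$ is a $\Co^\alpha$-subbundle.

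For $\Se\cap\bar\Se$, after relabeling we may assume the selected basis on $U'$ is $\mathcal{B}=(X_1,\dots,X_n,\bar X_{r+1},\dots,\bar X_n)$, which has $n+(n-r)=k$ elements. For each $j\in\{1,\dots,r\}$, the section $\bar X_j$ is a section of $\Se+\bar\Se$, and the coefficients in the expansion
\begin{equation*}
\bar X_j=\sum_{l=1}^n c_{jl}X_l+\sum_{k=r+1}^n d_{jk}\bar X_k
\end{equation*}
are obtained by inverting the $\Co^\alpha$-matrix of components of $\mathcal{B}$ in any local coordinate frame for $\C T\Mf$. By Cramer's rule (equivalently, the single-parameter case of Corollary~\ref{Cor::Hold::CramerMixed}, see Remark~\ref{Rmk::Hold::CramerMixedforSing}), the functions $c_{jl},d_{jk}\in\Co^\alpha(U')$. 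Now define
\begin{equation*}
Y_j:=\bar X_j-\sum_{k=r+1}^n d_{jk}\bar X_k=\sum_{l=1}^n c_{jl}X_l,\qquad j=1,\dots,r.
\end{equation*}
The first expression shows $Y_j$ is a $\Co^\alpha$-section of $\bar\Se$; the second shows it is a $\Co^\alpha$-section of $\Se$. So each $Y_j$ is a $\Co^\alpha$-section of $\Se\cap\bar\Se$. Pointwise linear independence at any $q\in U'$ follows because, modulo $\Span(\bar X_{r+1}(q),\dots,\bar X_n(q))$, the vector $Y_j(q)$ has $\bar X_j(q)$ as its only nontrivial component, and $\bar X_1(q),\dots,\bar X_n(q)$ are linearly independent. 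Since $\rank(\Se\cap\bar\Se)\equiv r$, the $r$ vectors $Y_1,\dots,Y_r$ form a $\Co^\alpha$-local basis for $\Se\cap\bar\Se$ on $U'$.

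The only place that requires attention is the passage from the $\Co^\alpha$ matrix whose columns are the components of $\mathcal{B}$ (invertible at $p_0$, hence in a neighborhood by continuity) to the $\Co^\alpha$-regularity of the coefficients $c_{jl},d_{jk}$; this is routine via cofactor expansion once the determinant stays bounded away from zero on $U'$, so no genuine obstacle arises. The resulting local bases are intrinsic up to $\Co^\alpha$-change of frame, so the constructions patch globally to yield $\Co^\alpha$-subbundle structures on $\Se+\bar\Se$ and $\Se\cap\bar\Se$.
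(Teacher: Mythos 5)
Your proof is correct, but it takes a different route from the paper. The paper applies Lemma \ref{Lem::ODE::GoodGen} to produce the canonical local basis $X=[X_1,\dots,X_{r+m}]^\top$ in the normalized form \eqref{Eqn::ODE::GoodGen::GoodGenFormula}; that form makes $X_1,\dots,X_{r+m},\bar X_{r+1},\dots,\bar X_{r+m}$ visibly independent on the whole chart (so they give the $\Co^\alpha$-basis of $\Se+\bar\Se$ once constant rank forces $\rank(\Se+\bar\Se)=r+2m$), and then it observes that $[\bar X_1,\dots,\bar X_r,X_{r+1},\dots,X_{r+m},\bar X_{r+1},\dots,\bar X_{r+m}]^\top$ is another canonical-form basis of $\Se+\bar\Se$, so the uniqueness statement of Lemma \ref{Lem::ODE::GoodGen} \ref{Item::ODE::GoodGen::Uniqueness} yields $X_j=\bar X_j$ for $j\le r$, handing over a $\Co^\alpha$-basis of $\Se\cap\bar\Se$ with no further computation. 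You instead start from an arbitrary $\Co^\alpha$-basis of $\Se$, complete $X_1,\dots,X_n$ to a basis of $\Se+\bar\Se$ by adjoining conjugates, and then eliminate the $\bar X_{r+1},\dots,\bar X_n$ components of $\bar X_1,\dots,\bar X_r$ to manufacture sections $Y_j$ of $\Se\cap\bar\Se$, with regularity of the coefficients coming from a Cramer-type argument (Remark \ref{Rmk::Hold::CramerMixedforSing}). Your route is more elementary and does not need the canonical basis or its uniqueness, at the cost of some bookkeeping that you pass over quickly: the size-$k$ selection must be made so that it contains \emph{all} of $X_1,\dots,X_n$ (possible by the exchange lemma, since these are independent inside the spanning family, but not automatic for an arbitrary selection, so "after relabeling" is a slight overstatement), and "inverting the matrix of components of $\mathcal B$" should be read as extracting a nonsingular $k\times k$ minor of the rectangular component matrix, since $\dim\Mf$ may exceed $k$; both repairs are routine. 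The paper's approach buys brevity and reuses machinery (the canonical generators with $X_j=\bar X_j$ for $j\le r$ are exploited elsewhere), while yours is self-contained given only Cramer-rule regularity of $\Co^\alpha$ matrix inversion.
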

\begin{proof}Fix a point $p\in\Mf$. By Lemma \ref{Lem::ODE::GoodGen} $\Se$ has a $\Co^\alpha$-local basis $(X_1,\dots,X_{r+m})$ that has the form \eqref{Eqn::ODE::GoodGen::GoodGenFormula}. 

At the point $p$, by \eqref{Eqn::ODE::GoodGen::GoodGenFormula} we see that $X_1|_p,\dots,X_{r+m}|_p,\bar X_{r+1}|_p,\dots,\bar X_{r+m}|_p$ are linearly independent and span $\Se_p+\bar\Se_p=(\Se+\bar\Se)_p$. By assumption $\Se+\bar\Se$ has constant rank, so $\rank(\Se+\bar\Se)=r+2m$. Clearly by \eqref{Eqn::ODE::GoodGen::GoodGenFormula} $X_1,\dots,X_{r+m},\bar X_{r+1},\dots,\bar X_{r+m}$ are still linear independent near $p$. Therefore $\Se+\bar\Se$ is a $\Co^\alpha$-subbundle since $X_1,\dots,X_{r+m},\bar X_{r+1},\dots,\bar X_{r+m}$ form a $\Co^\alpha$-local basis.

Note that $[\bar X_1,\dots,\bar X_r,X_{r+1},\dots,X_{r+m},\bar X_{r+1},\dots,\bar X_{r+m}]^\top$ also have the form \eqref{Eqn::ODE::GoodGen::GoodGenFormula} and form a $\Co^\alpha$-local basis for $\Se+\bar\Se$. By Lemma \ref{Lem::ODE::GoodGen} \ref{Item::ODE::GoodGen::Uniqueness} we have $X_1=\bar X_1,\dots,X_r=\bar X_r$, which means $X_1,\dots,X_r$ are linear independent sections of $\Se\cap\bar\Se$.

Now $\rank(\Se\cap\bar\Se)=2\rank\Se-\rank(\Se+\bar\Se)=r$ we see that $X_1,\dots,X_r$ form a $\Co^\alpha$-local basis for $\Se\cap\bar\Se$. In particular $\Se\cap\bar\Se$ is also a $\Co^\alpha$-subbundle.
\end{proof}

If $\Mf$ is a complex manifold and $\Se$ is a holomorphic subbundle, we have a similar result to Lemma  \ref{Lem::ODE::GoodGen}.

\begin{defn}
Let $\Mf$ be a $m$-dimensional complex manifold. A holomorphic vector field on $\Mf$ is a complex vector field such that locally it has form $\sum_{k=1}^m f^k(z)\Coorvec{z^k}$ where $(z^1,\dots,z^m):U\subseteq\Mf\to\C^m$ is a complex (holomorphic) coordinate chart for $\Mf$, and $f^k$ are holomorphic functions on $U$.

A rank $r$ holomorphic tangent subbundle over $\Mf$ is a complex tangent subbundle $\Se\le \C T\Mf$ such that for any $p\in\Mf$ there is a neighborhood $U\subseteq\Mf$ of $p$ and holomorphic vector fields $Z_1,\dots,Z_r$ on $U$ that that form a local basis for $\Se|_U$.
\end{defn}

Equivalently $\Se$ is a holomorphic section of the Grassmannian bundle $\Gr_\C(r,T^{\Oh}\Mf)\subset \Gr_\C(r,\C T\Mf)$, where $T^\Oh \Mf=\Span(\Coorvec{z^1},\dots,\Coorvec{z^m})$ is the holomorphic tangent bundle. 

In the proof of Theorems \ref{Thm::ThmCoor1} and \ref{Thm::ThmCoor2}, after applying Frobenius theorem on $\Se+\bar\Se$, we need to show that the pullback complex Frobenius structure $\Phi^*\Se$ has the same regularity as $\Se$.

\begin{lem}\label{Lem::ODE::HoloGoodGen}
Let $\Mf$ is a $(r+q)$-dimensional complex manifold and $\Se\le T^\Oh \Mf$ be a rank $r$ holomorphic subbundle. Then for any point $p\in\Mf$ there is a holomorphic chart  $(z,w)=(z^1,\dots,z^r,w^1,\dots,w^q):\tilde U\subseteq \Mf\to\C^r\times\C^n$ such that $\Se_p\oplus\Span(\partial_w|_p)=T^\Oh_p \Mf$. Here $T^\Oh \Mf$ is the holomorphic tangent bundle.

Moreover for such chart $(z,w)$ there is a neighborhood $U$ of $p$, and a unique holomorphic local basis $Z=[Z_1,\dots,Z_r]^\top$ for $\Se$ on $U$ that has the form $Z=\partial_z+A\partial_w$ where $A\in\Oh(U;\C^{r\times n})$.
\end{lem}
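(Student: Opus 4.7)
The plan is to proceed in two stages: first find a suitable holomorphic coordinate system around $p$, then construct the canonical basis as a holomorphic change of frame on a smaller neighborhood.

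For the first stage, I would fix an arbitrary holomorphic chart $(u^1,\dots,u^{r+q})$ near $p$ and pick any holomorphic local basis $\tilde Z_1,\dots,\tilde Z_r$ of $\Se$. Writing $\tilde Z_i=\sum_j c_i^j(u)\partial_{u^j}$, the matrix $(c_i^j(p))_{1\le i\le r,\,1\le j\le r+q}$ has rank $r$ since $\tilde Z_1|_p,\dots,\tilde Z_r|_p$ span $\Se_p$. Hence some $r\times r$ submatrix of columns is invertible; after a permutation of the coordinates $u^1,\dots,u^{r+q}$ I may assume these are the first $r$ columns. Relabel the first $r$ coordinates as $z=(z^1,\dots,z^r)$ and the remaining as $w=(w^1,\dots,w^q)$. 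Then the projection $\Se_p\to\Span(\partial_z|_p)$ along $\Span(\partial_w|_p)$ is an isomorphism, which is equivalent to the transversality $\Se_p\oplus\Span(\partial_w|_p)=T^\Oh_p\Mf$.

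For the second stage, working in the chart $(z,w)$ above, I would write any holomorphic local basis $\tilde Z=[\tilde Z_1,\dots,\tilde Z_r]^\top$ of $\Se$ as
\begin{equation*}
\tilde Z=B\,\partial_z+C\,\partial_w,\qquad B\in\Oh(\tilde U;\C^{r\times r}),\quad C\in\Oh(\tilde U;\C^{r\times q}).
\end{equation*}
The transversality condition at $p$ is exactly the statement that $B(p)$ is invertible. By continuity of $\det B$, there is a neighborhood $U\subseteq\tilde U$ of $p$ on which $B$ is invertible, and on this $U$ the matrix $B^{-1}$ is holomorphic (Cramer's rule). Setting $Z:=B^{-1}\tilde Z=\partial_z+A\,\partial_w$ with $A:=B^{-1}C\in\Oh(U;\C^{r\times q})$ yields the desired local basis, since $B^{-1}$ is an invertible holomorphic change of frame so the rows of $Z$ still span $\Se$ on $U$.

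For uniqueness, suppose $Z'=\partial_z+A'\partial_w$ is another such holomorphic local basis on some neighborhood $U'$ of $p$. Then each row of $Z'-Z=(A'-A)\partial_w$ is a holomorphic section of $\Se$ lying in $\Span(\partial_w)$. But $\Se_p\cap\Span(\partial_w|_p)=\{0\}$ by the transversality hypothesis, so shrinking the neighborhood further if necessary we have $\Se\cap\Span(\partial_w)=\{0\}$ throughout, which forces $A'=A$ and hence $Z'=Z$.

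There is no real obstacle here: the argument is entirely linear algebra in the fibers, promoted to a neighborhood by the open condition ``$B$ invertible'' and the fact that matrix inversion preserves holomorphy. The proof is the exact analog of Lemma \ref{Lem::ODE::GoodGen} with $m=q=0$ (no complex-conjugate or parameter directions), specialized to the holomorphic category, so even the bookkeeping is simpler.
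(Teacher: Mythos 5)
Your proposal is correct and follows essentially the same route as the paper: the canonical basis is obtained exactly as in the paper by writing a local basis as $B\,\partial_z+C\,\partial_w$, inverting $B$ near $p$ (holomorphic by Cramer's rule), and setting $A=B^{-1}C$. The only cosmetic difference is that you prove the chart existence (coordinate permutation) and the uniqueness (via $\Se\cap\Span(\partial_w)=\{0\}$) directly, whereas the paper delegates these to Lemma \ref{Lem::ODE::GoodGen}, whose proof is the same argument.
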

\begin{proof}The existence of $(z,w)$ follows the same but simpler argument from the existence of the chart $(t,z,s)$ in Lemma \ref{Lem::ODE::GoodGen}.

Fix $(z,w)$, we can find a holomorphic local basis $W=[W_1,\dots,W_r]^\top$ for $\Se$ near $p$ that has expression $W=M'\partial_z+M''\partial_w$ for some holomorphic matrix functions $M',M''$ defined near $p$. Since $\Se_p\oplus\Span(\partial_w|_p)=T^\Oh_p \Mf$, $M'(p)\in\C^{r\times r}$ is invertible. By continuity $M'$ is invertible in some neighborhood $U\subseteq\Mf$ of $p$. And by cofactor representation of a holomorphic matrix we know $M'^{-1}$ is also a holomorphic matrix function on $U$.

Take $Z=M'^{-1}W=\partial_z+M'^{-1}M''\partial_w$ in $U$, so $A:=M'^{-1}M''$ is the desired holomorphic matrix function.

Since $\Se$ is a complex tangent subbundle of $\Mf$ and we $\Se_p\oplus\Span(\partial_{\bar z}|_p,\partial_{\re w}|_p,\partial_{\im w}|_p)=\C T_p \Mf$. By Lemma \ref{Lem::ODE::GoodGen} \ref{Item::ODE::GoodGen::Uniqueness}, $A$ is uniquely determined by the mixed real and complex coordinate system $(z,\re w,\im w)$, thus determined by $(z,w)$.
\end{proof}

Finally we give an auxiliary result to illustrate the regularity of the pullback subbundle. 
\begin{lem}\label{Lem::ODE::PullBackReg}
Let $\kappa>2$, $\alpha>0$ and $\beta\ge\alpha$ such that $1<\beta\le\kappa-1$. Let $0\le r\le  m\le n$, let $\Mf$ be a $n$-dimensional $\Co^\kappa$ manifold and $\Se\le \C T\Mf$ be a $\Co^\alpha$-complex tangent subbundle of rank $r$.

Suppose $\Phi:\Omega\subseteq\R^n_x\to\Mf$ is $\Co^\beta$-regular parameterization such that $\Phi_*\Coorvec{x^1},\dots,\Phi_*\Coorvec{x^m}\in\Co^\alpha_\loc(\Phi(\Omega);T\Mf)$ and $\Se|_{\Phi(\Omega)}\le\Span(\Phi_*\Coorvec{x^1},\dots,\Phi_*\Coorvec{x^m})$ (here $\Span$ is for the complex span). Then $\Phi^*\Se\le \C T\Omega$ is $\Co^\alpha$-subbundle as well.
\end{lem}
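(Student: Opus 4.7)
The plan is to reduce to a local statement, extract the coefficients of a local basis of $\Se$ in the $\Co^\alpha$-frame $\Phi_*\Coorvec{x^1},\dots,\Phi_*\Coorvec{x^m}$, and then pull those coefficients back via $\Phi$ to express $\Phi^*\Se$ in the smooth frame $\{\Coorvec{x^j}\}$ on $\Omega$. The crucial structural input is the hypothesis that $\Phi_*\Coorvec{x^1},\dots,\Phi_*\Coorvec{x^m}\in\Co^\alpha$; this gives a $\Co^\alpha$-ambient bundle in which $\Se$ already lies. Without it, the only available ambient frame comes from pushing forward the remaining coordinate vector fields, which is merely $\Co^{\beta-1}$ and costs a derivative.

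Fix $x_0\in\Omega$, set $p=\Phi(x_0)$, and choose a $\Co^\kappa$-chart $u=(u^1,\dots,u^n):V\subseteq\Mf\to\R^n$ near $p$, together with a $\Co^\alpha$-local basis $X_1,\dots,X_r$ of $\Se$ on $V$ from Definition \ref{Defn::ODE::CpxSubbd}. In the chart $u$ write $X_i=\sum_k X_i^k\,\partial_{u^k}$ and $\Phi_*\Coorvec{x^j}=\sum_k M_j^k\,\partial_{u^k}$, so that $X_i^k\in\Co^\alpha_\loc$ by choice of $X_i$, and $M_j^k\in\Co^\alpha_\loc$ for $j=1,\dots,m$ by hypothesis. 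Since $\Phi$ is an immersion (being the inverse of a $\Co^\beta$-chart with $\beta>1$), the vectors $\Phi_*\Coorvec{x^1}|_p,\dots,\Phi_*\Coorvec{x^m}|_p$ are linearly independent in $\C T_p\Mf$, and the assumption $\Se|_{\Phi(\Omega)}\le\Span(\Phi_*\Coorvec{x^1},\dots,\Phi_*\Coorvec{x^m})$ yields unique scalar functions $a_i^j$ such that $X_i=\sum_{j=1}^m a_i^j\,\Phi_*\Coorvec{x^j}$ on a neighborhood of $p$.

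To see $a_i^j\in\Co^\alpha_\loc$ near $p$, shrink $V$ so that some $m\times m$ submatrix of $(M_j^k)_{1\le k\le n,\,1\le j\le m}$ is invertible at $p$ (possible by the linear independence above), and hence throughout a smaller neighborhood by continuity. Restricting the linear system $\sum_j M_j^k a_i^j=X_i^k$ to the chosen rows and inverting via Cramer's rule expresses each $a_i^j$ as a rational combination of $\Co^\alpha$-functions with non-vanishing denominator, hence $a_i^j\in\Co^\alpha_\loc$ (cf.\ Remark \ref{Rmk::Hold::CramerMixedforSing}). Applying $\Phi^*$ and using $\Phi^*(\Phi_*\Coorvec{x^j})=\Coorvec{x^j}$ together with the fact that $\Phi^*$ acts on scalar coefficients by pre-composition, we obtain
\begin{equation*}
\Phi^*X_i=\sum_{j=1}^m(a_i^j\circ\Phi)\,\Coorvec{x^j}\qquad\text{on }\Phi^\Inv(V).
\end{equation*}

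The composition $a_i^j\circ\Phi$ lies in $\Co^\alpha_\loc$ by Lemma \ref{Lem::Hold::QCompIFT} \ref{Item::Hold::QComp} applied in a chart: $\Phi\in\Co^\beta$ with $\beta>1$ and $a_i^j\in\Co^\alpha$ with $\alpha\le\beta$, so $\min(\alpha,\beta)=\alpha$. Since $\Phi$ is a diffeomorphism, $\Phi^*X_1,\dots,\Phi^*X_r$ remain pointwise linearly independent and therefore form a $\Co^\alpha$-local basis of $\Phi^*\Se$ near $x_0$, completing the proof. The only genuine obstacle was carefully isolating the coefficients $a_i^j$ so as not to pick up the $\Co^{\beta-1}$ regularity of the completing pushforwards; the Cramer's rule step above bypasses this loss.
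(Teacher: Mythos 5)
Your proof is correct and takes essentially the same route as the paper's: write the $\Co^\alpha$-local basis $X_1,\dots,X_r$ of $\Se$ as combinations of the $\Co^\alpha$-frame $\Phi_*\Coorvec{x^1},\dots,\Phi_*\Coorvec{x^m}$, observe the coefficients are $\Co^\alpha$, and pull back, using that composition with $\Phi\in\Co^\beta$, $\beta>1\ge$ nothing worse than $\alpha\le\beta$, preserves $\Co^\alpha$. Your Cramer's-rule argument simply supplies the detail behind the paper's one-line assertion that the uniquely determined coefficients $f_j^k$ lie in $\Co^\alpha$.
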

In general, if $\beta<\alpha+1$, it is possible that $\Phi_*\Coorvec{x^{m+1}},\dots,\Phi_*\Coorvec{x^n}$ are only $\Co^{\beta-1}(\supset\Co^\alpha)$.
\begin{proof}
Let $u_0\in \Omega$, we need to show that $\Phi^*\Se$ has a $\Co^\alpha$-local basis near $u_0$.

By assumption $\Se$ has $\Co^\alpha$-local basis $(X_1,\dots,X_r)$ near $\Phi(u_0)\in\Mf$. Thus $(\Phi^*X_1,\dots,\Phi^*X_r)$ is a local basis for $\Phi^*\Se$ near $u_0$.

Since $\Se|_{\Phi(\Omega)}\le\Span(\Phi_*\Coorvec{x^1},\dots,\Phi_*\Coorvec{x^m})$, we can write $X_j=\sum_{k=1}^mf_j^k\cdot\Phi_*\Coorvec{x^k}$ for $1\le j\le r$, for some functions $f_j^k$ defined near $\Phi(u_0)$. Since $f_j^k$ are uniquely determined by $X_1,\dots,X_r$ and $\Phi$, we see that $f_j^k\in\Co^{\min(\alpha,\beta)}=\Co^\alpha$.

Taking pullback by $\Phi$, we have $\Phi^*X_j=\sum_{k=1}^m(f_j^k\circ\Phi)\Coorvec{x^k}$, since $\Phi\in\Co^\beta\subseteq\Co^\alpha$, we get $f_j^k\circ\Phi\in\Co^\alpha$, so $(\Phi^*X_1,\dots,\Phi^*X_r)$ is a $\Co^\alpha$-local basis for $\Se$, finishing the proof.
\end{proof}
\subsection{Review of autonomous ODE flows}


In the real Frobenius theorem and the holomorphic Frobenius theorem we need ODE flows to construct the parameterizations.

\begin{defn}
Let $X$ be $C^1$-vector field on a $C^2$-manifold $\Mf$. The \textbf{flow map} of $X$, denoted by $\exp_X:\Do_X\subseteq\R\times \Mf\to \Mf$, is defined in the way that for any $p\in \Mf$, $t \mapsto\exp_X(t,p)$ is the unique maximal solution $\gamma=\gamma_p$ to the ODE 
$$\left\{\begin{array}{l}\dot\gamma(t)=X(\gamma(t))\in T_{\gamma(t)}\Mf,\\\gamma(0)=p.\end{array}\right.$$
Here $\Do_X\cap(\R\times\{p\})\hookrightarrow\R$ is the maximal existence interval of $\gamma$, which is an open interval containing $0$.

We denote $e^{tX}(p):=\exp_X(t,p)$.
\end{defn}
\begin{remark}There is no ambiguity to use $e^{tX}$ since $(t,p)\in\Do_X$ if and only if $(1,p)\in\Do_{tX}$ and $\exp_X(t,p)=\exp_{tX}(1,p)$ for all $(t,p)\in\Do_X$.
\end{remark}

From classical ODE theories, we know $\exp_X$ is well-defined, and $\Do_X$ is an open set in $\R\times \Mf$. Moreover $\Do_X=\R\times \Mf$ is the whole space when either $\Mf$ is a compact manifold or $X$ has compact support. For such cases $e^{tX}$ is a $C^1$-diffeomorphism of $\Mf$ for every $t$.

\begin{lem}\label{Lem::ODE::FlowComm}
    Let $X$ and $Y$ be $C^1$-vector fields defined on $\R^n$ with compact supports. Then 
    \begin{equation}\label{Eqn::ODE::LieDer}
    (e^{tX})^*Y=Y+\int_0^t(e^{uX})^*[X,Y]du,\quad\text{on }\R^n,\quad\forall\ t\in\R.
\end{equation}
    In particular, if $[X,Y]=0$ holds on some open subset $U_0\subseteq\R^n$, then for any $U_1\Subset U_0$ there is a $\eps>0$ such that $e^{tX}e^{sY}=e^{sY}e^{tX}=e^{tX+sY}$ holds in $U_1$ for $|t|,|s|<\eps$.
\end{lem}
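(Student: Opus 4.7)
The plan is to prove the integral identity \eqref{Eqn::ODE::LieDer} by direct pointwise differentiation, and then to deduce the commutation statement from it via the uniqueness of ODE flows. Since $X$ and $Y$ are $C^1$ with compact support, classical ODE theory ensures that $\phi(t,x):=e^{tX}(x)$ is globally defined on $\R\times\R^n$, is jointly $C^1$, and that for each $t$, $\phi(t,\cdot)$ is a diffeomorphism of $\R^n$. Writing $M(t,x):=D_x\phi(t,x)$, the variational equation $\partial_tM=DX(\phi)\,M$ gives $\partial_tM^{-1}=-M^{-1}DX(\phi)$.

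Using the pointwise formula $(e^{tX})^*Y(x)=M(t,x)^{-1}Y(\phi(t,x))$, the chain rule then yields
\begin{equation*}
\partial_t(e^{tX})^*Y(x)=-M^{-1}DX(\phi)Y(\phi)+M^{-1}DY(\phi)X(\phi)=M^{-1}[X,Y](\phi(t,x))=(e^{tX})^*[X,Y](x),
\end{equation*}
and the right hand side is continuous in $(t,x)$ since $M^{-1}$ and $\phi$ are continuous and $[X,Y]\in C^0$. Integrating from $0$ to $t$ and using $(e^{0\cdot X})^*Y=Y$ produces the identity \eqref{Eqn::ODE::LieDer}.

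For the commutation claim, suppose $[X,Y]=0$ on $U_0$ and fix $U_1\Subset U_0$. By joint continuity of the flows of $X$, $Y$, and $tX+sY$ together with compactness of $\overline{U_1}$, one can choose $\eps>0$ such that for all $|t|,|s|<\eps$, all $x\in\overline{U_1}$, and all $u\in[0,1]$, each of the points $e^{utX}(x)$, $e^{usY}(x)$, $e^{tX}(e^{usY}(x))$, and $e^{u(tX+sY)}(x)$ lies in $U_0$. Plugging into \eqref{Eqn::ODE::LieDer} and using $[X,Y]|_{U_0}=0$ gives $(e^{tX})^*Y=Y$ at every such point, equivalently $M(t,y)Y(y)=Y(e^{tX}(y))$. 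Consequently the curve $\sigma(s):=e^{tX}(e^{sY}(x))$ satisfies $\sigma(0)=e^{tX}(x)$ and $\dot\sigma(s)=M(t,e^{sY}(x))Y(e^{sY}(x))=Y(\sigma(s))$, so by uniqueness of ODE solutions $\sigma(s)=e^{sY}(e^{tX}(x))$. Analogously, $\eta(r):=e^{rtX}(e^{rsY}(x))$ satisfies $\eta(0)=x$ and $\dot\eta(r)=tX(\eta(r))+sY(\eta(r))=(tX+sY)(\eta(r))$, so $\eta(1)=e^{tX+sY}(x)$.

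The main technical point is the bookkeeping of this neighborhood: one must verify simultaneously that each iterated flow appearing in the derivations stays in $U_0$ for $|t|,|s|<\eps$, so that every application of the identity $(e^{tX})^*Y=Y$ is legitimate. This is a routine compactness argument given $U_1\Subset U_0$ and the joint continuity of the flows, and constitutes the only non-mechanical step in the proof.
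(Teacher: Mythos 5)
Your proposal is correct and follows essentially the same route as the paper: the paper also obtains \eqref{Eqn::ODE::LieDer} by differentiating $(e^{tX})^*Y$ in $t$ (it quotes the Lie-derivative limit formula plus the group law where you instead compute explicitly with $M(t,x)=D_xe^{tX}$ and the variational equation, which is an equivalent and self-contained way to get $\partial_t(e^{tX})^*Y=(e^{tX})^*[X,Y]$), and it likewise deduces the three flow identities from $(e^{tX})_*Y=Y$ together with uniqueness of integral curves (your $\eta(r)=e^{rtX}e^{rsY}(x)$ is exactly the paper's curve $u\mapsto e^{utX}e^{usY}$, and your $\sigma$-argument replaces the paper's conjugation $e^{sY}e^{tX}e^{-sY}=e^{tX}$ by an equivalent direct argument). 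The only nit is that your list of points required to lie in $U_0$ should also contain the intermediate points $e^{u'tX}(e^{usY}(x))$ for $u',u\in[0,1]$, since \eqref{Eqn::ODE::LieDer} with vanishing bracket must be applied along the whole $X$-orbit segment emanating from each $e^{usY}(x)$; this is exactly the routine compactness step you flag, and the paper disposes of it by simply taking $\eps<\tfrac13\dist(U_1,\partial U_0)\max(\|X\|_{C^0(U_0;\R^n)},\|Y\|_{C^0(U_0;\R^n)})^{-1}$.
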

Here for a diffeomorphism $F:\R^n\to\R^n$, the pullback vector field $F^*X:\R^n\to\R^n$ can be written as $(X\cdot\nabla F^\Inv)\circ F$, where we use row vectors $X=(X^1,\dots,X^n)$ and $(X\cdot\nabla G)^j=\sum_{k=1}^nX^k\frac{\partial G^j}{\partial x^k}$ for $j=1,\dots,n$.

\begin{proof}
The standard Lie derivative formula shows that $\lim\limits_{h\to 0}\frac1h((e^{hX})^*Y-Y)=[X,Y]$ as a continuous vector field. By $(e^{tX})^*X=X$, we have $\lim\limits_{h\to 0}\frac1h((e^{(t+h)X})^*Y-(e^{tX})^*Y)=[X,(e^{tX})^*Y]=(e^{tX})^*[X,Y]$ pointwisely. Taking the definite integral from $0$ to $t$ we get \eqref{Eqn::ODE::LieDer}.

Now assume $[X,Y]=0$ in $U_0\subseteq\R^n$. Take $\eps<\frac13\dist(U_1,\partial U_0)\max(\|X\|_{C^0(U_0;\R^n)},\|Y\|_{C^0(U_0;\R^n)})^{-1}$, so $e^{tX}$, $e^{sY}$ and $e^{sY}e^{tX}e^{-sY}$ are all defined in $U_1$ for $|t|,|s|<\eps$. 

Using \eqref{Eqn::ODE::LieDer} with $[X,Y]=0$ we see that $(e^{tX})_*Y=(e^{-tX})^*Y=Y$. Therefore for $|t|,|s|<\eps_0$, in  $U_1$
\begin{align*}
    &\Coorvec s(e^{sY}\circ e^{tX}\circ e^{-sY})=(Y-(e^{tX})_*Y)\circ e^{sY}e^{tX}e^{-sY}=0.
\end{align*}

Thus $e^{sY}\circ e^{tX}\circ e^{-sY}=e^{tX}$ as continuous maps.

On the other hand for $u\in[0,1]$, on the domain we have $\Coorvec u(e^{utX}e^{usY})=\Coorvec u(e^{utX})e^{usY}+\Coorvec u(e^{usY})e^{utX}=(tX+sY)\circ e^{utX}e^{usY}$, $\Coorvec u e^{u(tX+sY)}=(tX+sY)\circ e^{u(tX+sY)}$. So $u\mapsto e^{utX}e^{usY}$ and $u\mapsto  e^{u(tX+sY)}$ are both ODE flows of $tX+sY$, which are equal by the flow uniqueness. Taking $u=1$ we get $e^{tX}e^{sY}=e^{tX+sY}$, finishing the proof.
\end{proof}

Lemma \ref{Lem::ODE::FlowComm} fails if $t$ and $s$ are large. See the example below.
\begin{example}
Consider the vector fields $X(x,y)=(e^{-x}\cos y,-e^{-x}\sin y)$ and $Y(x,y)=(e^{-x}\sin y,e^{-x}\cos y)$ defined in $\R^2$. By direct computation, $[X,Y]=0$ so $X$ and $Y$ commute.

One can see that we have the following ODE solutions, for $t,s\in\R$:
\begin{align*}
    e^{tX}(\ln\sqrt2,\tfrac54\pi)=(\tfrac12\ln(t^2+2t+2),\tfrac32\pi+\arctan(t-1)),&\quad e^{2X}(\ln\sqrt2,\tfrac54\pi)=(\ln\sqrt2,\tfrac74\pi);
    \\
    e^{sY}(\ln\sqrt2,\tfrac54\pi)=(\tfrac12\ln(s^2+2s+2),\pi-\arctan(s-1)),&\quad e^{2Y}(\ln\sqrt2,\tfrac54\pi)=(\ln\sqrt2,\tfrac34\pi);
    \\
    e^{sY}(\ln\sqrt2,\tfrac74\pi)=(\tfrac12\ln(s^2+2s+2),2\pi+\arctan(s-1)),&\quad e^{2Y}(\ln\sqrt2,\tfrac74\pi)=(\ln\sqrt2,\tfrac94\pi);
    \\
    e^{tX}(\ln\sqrt2,\tfrac34\pi)=(\tfrac12\ln(t^2+2t+2),\tfrac12\pi-\arctan(t-1)),&\quad e^{2X}(\ln\sqrt2,\tfrac34\pi)=(\ln\sqrt2,\tfrac14\pi).
\end{align*}
Therefore $e^{2X}e^{2Y}(\ln\sqrt2,\tfrac54\pi)=(\ln\sqrt2,\tfrac94\pi)$ and $e^{2Y}e^{2X}(\ln\sqrt2,\tfrac34\pi)=(\ln\sqrt2,\tfrac14\pi)$ are not equal.

A related fact is that for the holomorphic vector field $Z(z)=e^{-z}\Coorvec z$ defined on $\C^1$, we cannot talk about its holomoprhic ODE flow globally.
\end{example}

In the proof of Frobenius theorem we need H\"older-Zygmund regularity estimates for ODE flows as follows: 

\begin{lem}[Flow regularity]\label{Lem::ODE::ODEReg} Let $\alpha\in(1,\infty]$ and let $X$ be a $\Co^\alpha$-vector field on an open set $U_0\subseteq\R^n$. Then for any $U_1\Subset U_0$ and $0<\eps_0<\frac12\dist(U_1,\partial U_0)\cdot\|X\|_{C^0(U_0)}^{-1}$, the map $\exp_X$ is defined on $(-\eps_0,\eps_0)\times U_1$ and $\exp_X\in\Co^{\alpha+1,\alpha}((-\eps_0,\eps_0),U_1;U_0)$.
\end{lem}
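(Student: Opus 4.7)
The plan is as follows. By the choice of $\eps_0$, any integral curve of $X$ starting at $p\in U_1$ stays in $U_0$ for $|t|<\eps_0$, so Picard--Lindel\"of (using that $X\in\Co^\alpha\subset C^{0,1}_\loc$ since $\alpha>1$) gives a unique solution $\gamma_p\in C^1((-\eps_0,\eps_0);U_0)$ with joint continuous dependence on $(t,p)$, and $\exp_X\in C^1((-\eps_0,\eps_0)\times U_1;U_0)$. The conclusion $\exp_X\in\Co^{\alpha+1,\alpha}$ then amounts, by Lemma~\ref{Lem::Hold::CharMixHold}~\ref{Item::Hold::CharMixHold::Mix=Bi}, to two estimates that are uniform in the other variable:
\begin{equation*}
\textstyle\sup_{p\in U_1}\|\exp_X(\cdot,p)\|_{\Co^{\alpha+1}(-\eps_0,\eps_0)}<\infty,\qquad \sup_{|t|<\eps_0}\|\exp_X(t,\cdot)\|_{\Co^\alpha(U_1)}<\infty.
\end{equation*}

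For the $t$-estimate, I would bootstrap directly from the ODE $\dot\gamma_p(t)=X(\gamma_p(t))$. Knowing already that $\gamma_p\in C^{0,1}$, the composition rule for H\"older spaces in one variable (Lemma~\ref{Lem::Hold::QCompIFT}~\ref{Item::Hold::QComp} applied to $g=\gamma_p$, $f=X$, which is permissible because $\max(\alpha,1)>1$) yields $X\circ\gamma_p\in\Co^{\min(\alpha,\beta)}$ whenever $\gamma_p\in\Co^\beta$, with norm bounded in terms of $\|X\|_{\Co^\alpha}$ and $\|\gamma_p\|_{C^{0,1}}$ alone---hence uniformly in $p\in U_1$. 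Integrating gives $\gamma_p\in\Co^{\min(\alpha,\beta)+1}$, and iterating finitely many times (or, for $\alpha=\infty$, running the iteration for each finite truncation and intersecting) reaches $\gamma_p\in\Co^{\alpha+1}$ with the required uniform bound.

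For the $p$-estimate I would use the variational equation. A standard Gronwall argument applied to the difference $\exp_X(t,p)-\exp_X(t,q)$ shows $\exp_X(t,\cdot)\in C^{0,1}(U_1;U_0)$ uniformly in $t$, and the Jacobian $J(t,p):=D_p\exp_X(t,p)$ solves the linear matrix ODE
\begin{equation*}
    \partial_tJ(t,p)=(DX)(\exp_X(t,p))\cdot J(t,p),\qquad J(0,p)=I_n.
\end{equation*}
Because $DX\in\Co^{\alpha-1}$ and $\exp_X(t,\cdot)$ is Lipschitz in $p$ uniformly in $t$, the coefficient matrix $A(t,p):=DX(\exp_X(t,p))$ belongs to $\Co^{\min(\alpha-1,1)}$ in $p$ uniformly in $t$ (by the single-variable composition lemma). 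A direct Gronwall estimate on the differences $J(t,p)-J(t,q)$ transfers this regularity to the solution, giving $J(t,\cdot)\in\Co^{\min(\alpha-1,1)}$ uniformly in $t$; hence $\exp_X(t,\cdot)\in\Co^{\min(\alpha,2)}$. For $\alpha\le 2$ this already gives $\Co^\alpha$; for larger $\alpha$ one iterates, noting that each round upgrades $\exp_X(t,\cdot)\in\Co^\beta$ to $\exp_X(t,\cdot)\in\Co^{\min(\alpha-1,\beta)+1}$, which reaches $\Co^\alpha$ after finitely many steps (or is taken intersection-wise when $\alpha=\infty$).

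The main obstacle is keeping the constants uniform in the ``other'' variable through each bootstrap round. In the $t$-bootstrap this is automatic because the composition bound depends only on $\|X\|_{\Co^\alpha}$ and the uniform-in-$p$ Lipschitz bound for $\gamma_p$; in the $p$-bootstrap one has to check that the Gronwall argument for the linear variational ODE with H\"older-in-parameter coefficients produces norms bounded only in terms of $\sup_t\|A(t,\cdot)\|_{\Co^\gamma}$ and $\eps_0$, which is a standard but careful computation. Once these uniform controls are established, combining the two one-sided estimates via Lemma~\ref{Lem::Hold::CharMixHold}~\ref{Item::Hold::CharMixHold::Mix=Bi} yields $\exp_X\in\Co^{\alpha+1,\alpha}((-\eps_0,\eps_0),U_1;U_0)$.
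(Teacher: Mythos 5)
Your reduction to the two one-variable estimates and your $t$-direction bootstrap (integrate $\dot\gamma_p=X(\gamma_p)$ and upgrade via the composition lemma) are fine and coincide with the second half of the paper's argument. The genuine gap is in the $x$-direction, and it sits exactly at the H\"older--Zygmund integer exponents that the lemma is stated to cover. First, the claim that $A(t,p):=DX(\exp_X(t,p))$ lies in $\Co^{\min(\alpha-1,1)}$ in $p$ ``by the single-variable composition lemma'' is not available when $\min(\alpha-1,1)=1$ and all you know about $\exp_X(t,\cdot)$ is a Lipschitz bound: Lemma \ref{Lem::Hold::QCompIFT} \ref{Item::Hold::QComp} requires $\max$ of the two exponents to be $>1$, and the statement is in fact false in general --- for $\alpha=2$ one has $DX\in\Co^1$ (Zygmund), and Zygmund composed with Lipschitz need not be Zygmund (take $f(t)=t\log|t|$ and $g(t)=|t|$; then $f\circ g=|t|\log|t|\notin\Co^1$). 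Second, even where the composition bound is legitimate (e.g.\ $\alpha>2$, so $DX\in\Co^{\alpha-1}$ with $\alpha-1>1$), a Gronwall estimate on the \emph{first} differences $J(t,p)-J(t,q)$ cannot transfer a $\Co^1$ (Zygmund) bound on the coefficients to $J$: a Zygmund coefficient is only log-Lipschitz in first differences, so Gronwall yields $|J(t,p)-J(t,q)|\lesssim|p-q|\log\frac1{|p-q|}$, not the second-difference bound defining $\Co^1$. Consequently your scheme, as written, proves the lemma for non-integer $\alpha$ but only gives $\Co^{\alpha-}$-type spatial regularity when $\alpha\in\Z$ (e.g.\ it does not show that the flow of a $\Co^2$ field is $\Co^2$ in $x$), which is strictly weaker than the statement and would propagate losses into the sharp results the lemma feeds.

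This is precisely the point where the paper takes a different route: after cutting $X$ off to a compactly supported $\Co^\alpha$ field, it observes that $f(t,x)=\chi_1(x)e^{tX}(x)$ solves the transport equation $\partial_tf-Xf=0$ and invokes the Besov-space transport estimate \cite[Theorem 3.14]{Chemin}, which is proved by Littlewood--Paley/paradifferential methods and yields $\exp_X\in L^\infty_t\Co^\alpha_x$ at \emph{all} exponents, integers included; the subsequent bootstrap in $t$ is then the same as yours. Your variational-equation approach can be repaired --- one must first run the argument at non-integer exponents to get $\exp_X(t,\cdot)\in\Co^{\beta}$ for some $\beta>1$ (so the composition lemma genuinely applies to $A$), and then replace the first-difference Gronwall by a Gronwall argument on the second differences $J(t,p)+J(t,q)-2J(t,\frac{p+q}2)$, using the Zygmund bound on $A$ for the leading term and the already-established sub-integer bounds for the cross terms --- but that is a different and more delicate argument than the ``standard but careful computation'' your proposal defers to.
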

See Lemma \ref{Lem::AppODEReg} for a paramter version. Here for $\alpha=\infty$ we use the standard convention $\alpha+1=\infty$.
\begin{proof}We only need to prove the case $\alpha<\infty$.

Let $\chi\in C_c^\infty(U_0)$ be such that $\chi\equiv1$ in a neighborhood of $\{x:\dist(x,U_1)\le\frac12\dist(U_1,\partial U_0)\}$. We see that $e^{tX}|_{U_1}=e^{t(\chi X)}|_{U_1}$ whenever $|t|<\eps_0$. Therefore, in $U_1$ we can replace $X$ by $\chi X$ in the discussion. In other words, we can assume $X\in\Co^\alpha_c(\R^n)$.

Let $R>0$ be such that $\supp X\subseteq B^n(0,R)$ and let $\chi_1\in C_c^\infty(B^n(0,2R)) $ be such that $\chi_1|_{B^n(0,R)}\equiv1$. Clearly, $e^{tX}(x)=x$ for all $t\in\R$ if $x\notin B^n(0,R)$. Thus $f(t,x):=\chi_1(x)\cdot e^{tX}(x)$ solves the transport equation
\begin{equation*}\label{Eqn::ODE::ODEReg::TransEqn}
\partial_tf(t,x)-(Xf)(t,x)=0,\qquad f(0,x)=\chi_1(x)\cdot x,\quad t\in\R,\quad x\in\R^n.
\end{equation*}

By \cite[Theorem 3.14]{Chemin} we get $f\in L^\infty\Co^\alpha((-T,T),\R^n;\R^n)$ for every $T>0$. Clearly $f\in C^1([-T,T]\times \R^n;\R^n)$ for every $T>0$ and $f(t,x)\equiv e^{tX}(x)$ whenever $|x|<R$. Thus $\exp_X\in \Co^{1,\alpha}((-T,T),B(0,R);\R^n)$.
The results $\exp_X\in\Co^{\alpha+1,\alpha}((-T,T),B(0,R);\R^n)$ follows from the following recursive argument: locally in $(-T,T)\times B(0,R)$,
\begin{equation*}
    \exp_X\in\Co^{k,\alpha}\overunderset{\text{by Lemma \ref{Lem::Hold::QCompIFT} \ref{Item::Hold::QComp}}}{\text{or Lemma \ref{Lem::Hold::CompofMixHold} \ref{Item::Hold::CompofMixHold::Comp}}}{\Longrightarrow} \tfrac{\partial\exp_X}{\partial t}=X\circ \exp_X\in\Co^{\min(\alpha,k),\alpha}\Rightarrow \exp_X\in\Co^{\min(\alpha,k)+1}L^\infty\cap\Co^1\Co^\alpha\subset\Co^{\min(\alpha,k)+1,\alpha}.
\end{equation*}
We start with $k=1$ and ends at $k=\lceil\alpha\rceil$. Thus $\exp_X\in\Co^{\alpha+1,\alpha}((-T,T),B(0,R);\R^n)$, finishing the proof.
\end{proof}

In applications, we consider the multiflow associated with commutative vector fields.

\begin{cor}\label{Cor::ODE::ODERegCor} Let $\alpha>1$ and let $X=[X_1,\dots,X_r]^\top$ be a collection of commutative $\Co^\alpha$-vector fields on an open set $U_0\subseteq\R^n_x$. Then for any $U_1\Subset U_0$ there is a $\eps_0>0$ such that we the following equality, both of whose sides are defined:
\begin{equation*}
    e^{t^1X_1+\dots+t^rX_r}(x)=e^{t^1X_1}\circ\dots\circ e^{t^rX_r}(x),\quad\text{for } t=(t^1,\dots,t^r)\in(-\eps_0,\eps_0)^r,\quad x\in U_1.
\end{equation*}
Moreover by denoting $\exp_X(t,x):=e^{t^1X_1}\dots e^{t^rX_r}(x)$, we have
$\exp_X\in\Co^{\alpha+1,\alpha}(B^r(0,\eps_0),U_1;U_0)$ and satisfies $\Coorvec{t^j}\exp_X(t,x)=X_j\circ\exp_X(t,x)$ for $j=1,\dots,r$.
\end{cor}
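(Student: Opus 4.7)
The plan is to bootstrap from Lemma \ref{Lem::ODE::FlowComm} (two-field commuting case) up to $r$ commuting fields, and then extract regularity from Lemma \ref{Lem::ODE::ODEReg} through the composition $\exp_X = \exp_{X_1}\circ\dots\circ\exp_{X_r}$.

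First I would fix $U_1\Subset U_1'\Subset U_0$ and set $M:=\max_j\|X_j\|_{C^0(U_0;\R^n)}$. Choosing $\eps_0>0$ so small that any trajectory of any partial sum $\sum_{k=j}^r s^k X_k$, with $|s^k|<\eps_0$ and initial point in $U_1$, remains inside $U_1'$ for all times up to $1$, ensures that both sides of the claimed identity are pointwise defined on $B^r(0,\eps_0)\times U_1$. Setting $Y_j:=t^j X_j$, the hypotheses give $[Y_j,Y_k]=t^jt^k[X_j,X_k]=0$ on $U_0$, and more generally $[\sum_{k=1}^j Y_k,Y_{j+1}]=0$ on $U_0$. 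Then, by induction on $j$ using Lemma \ref{Lem::ODE::FlowComm} at each step (applied to the commuting pair $\sum_{k=1}^j Y_k$ and $Y_{j+1}$ on the shrunken neighborhood $U_1'$), one obtains
\begin{equation*}
e^{Y_1}\circ\dots\circ e^{Y_j}\circ e^{Y_{j+1}}(x)=e^{Y_1+\dots+Y_j+Y_{j+1}}(x),\quad x\in U_1,
\end{equation*}
provided $\eps_0$ was chosen small enough to accommodate all finitely many intermediate applications. The case $j=r-1$ gives the desired equality.

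For the ODE identity, I would note that since all $X_k$ commute, a second application of Lemma \ref{Lem::ODE::FlowComm} (with the combined field $\sum_{k\neq j}t^k X_k$ and $X_j$) allows factoring
\begin{equation*}
\exp_X(t+he_j,x)=e^{hX_j}\bigl(\exp_X(t,x)\bigr)
\end{equation*}
for $|h|$ small and $(t,x)\in B^r(0,\eps_0/2)\times U_1$; differentiating at $h=0$ using the fundamental ODE property of $\exp_{X_j}$ yields $\partial_{t^j}\exp_X(t,x)=X_j(\exp_X(t,x))$.

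Finally, for the regularity statement $\exp_X\in\Co^{\alpha+1,\alpha}(B^r(0,\eps_0),U_1;U_0)$, I would view $\exp_X$ as the iterated composition
\begin{equation*}
\exp_X(t,x)=\exp_{X_1}\bigl(t^1,\exp_{X_2}(t^2,\dots,\exp_{X_r}(t^r,x))\bigr).
\end{equation*}
By Lemma \ref{Lem::ODE::ODEReg}, each $\exp_{X_j}\in\Co^{\alpha+1,\alpha}((-\eps_0,\eps_0),U_j';\R^n)$ for appropriately nested relatively compact $U_j'$; applying Lemma \ref{Lem::Hold::CompofMixHold} \ref{Item::Hold::CompofMixHold::Comp} from the innermost composition outward, and tracking the time-variables $t^1,\dots,t^r$ as parameters along with $x$, gives $\exp_X\in\Co^{\alpha+1,\alpha}$ in the joint $(t,x)$ variables.

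The main subtlety, in my view, will not lie in any of these three steps individually, but in the bookkeeping of step one: at the $j$-th invocation of Lemma \ref{Lem::ODE::FlowComm} the commuting pair changes, so one must choose $\eps_0$ a priori small enough so that the Lemma's hypothesis---trajectories remain in the ambient set on which the bracket vanishes---holds simultaneously for all $r-1$ invocations. This is why I prefer to fix the intermediate set $U_1'$ once at the start, with the uniform flow bound $|\exp_{tY}(x)-x|\le |t|\cdot\|Y\|_{C^0}$, rather than successively shrinking domains.
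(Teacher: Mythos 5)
Your first three steps are fine and are essentially the paper's argument, which is a terse appeal to Lemmas \ref{Lem::ODE::FlowComm} and \ref{Lem::ODE::ODEReg}: the choice of $\eps_0$, the induction on Lemma \ref{Lem::ODE::FlowComm} giving $e^{t^1X_1+\dots+t^rX_r}=e^{t^1X_1}\circ\dots\circ e^{t^rX_r}$ on $U_1$, and the factorization $\exp_X(t+he_j,x)=e^{hX_j}(\exp_X(t,x))$ yielding $\Coorvec{t^j}\exp_X=X_j\circ\exp_X$ (your only omission there is the routine reduction to compactly supported fields, by a cutoff as in the proof of Lemma \ref{Lem::ODE::ODEReg}, before invoking Lemma \ref{Lem::ODE::FlowComm}, which is stated for compactly supported fields on $\R^n$). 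The gap is in the regularity step. The membership $\exp_X\in\Co^{\alpha+1,\alpha}(B^r(0,\eps_0),U_1;U_0)$ means $\Co^{\alpha+1}$ regularity \emph{jointly} in $t=(t^1,\dots,t^r)$ uniformly in $x$, and the iterated composition you describe does not deliver this. Writing $\exp_X(t,x)=\exp_{X_1}(t^1,g(t^2,\dots,t^r,x))$ with $g$ the inner multiflow, the inner time variables enter through the spatial slot of $\exp_{X_1}$, which is only $\Co^\alpha$; Lemma \ref{Lem::Hold::CompofMixHold} \ref{Item::Hold::CompofMixHold::Comp} therefore returns exponent $\min(\alpha+1,\alpha)=\alpha$ for them, i.e. you obtain $\exp_X\in\Co^{\alpha,\alpha,\alpha+1}_{(t^2,\dots,t^r),x,t^1}$: $\Co^{\alpha+1}$ only in the outermost time $t^1$, and $\Co^\alpha$ in $t^2,\dots,t^r$. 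So the asserted conclusion is not what the cited lemma gives.

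The repair is short but needs to be said. Either (a) use the ODE system you have already derived and bootstrap exactly as in the last display of the proof of Lemma \ref{Lem::ODE::ODEReg}: from the composition you do get $\exp_X\in\Co^{\alpha,\alpha}\subset\Co^{1,\alpha}$, and then $\exp_X\in\Co^{k,\alpha}$ implies $\nabla_t\exp_X=X\circ\exp_X\in\Co^{\min(\alpha,k),\alpha}$ (Lemma \ref{Lem::Hold::QCompIFT} \ref{Item::Hold::QComp} or Lemma \ref{Lem::Hold::CompofMixHold} \ref{Item::Hold::CompofMixHold::Comp}), hence $\exp_X\in\Co^{\min(\alpha,k)+1}L^\infty\cap\Co^1\Co^\alpha\subset\Co^{\min(\alpha,k)+1,\alpha}$, and iterating from $k=1$ to $k=\lceil\alpha\rceil$ gives $\Co^{\alpha+1,\alpha}$; or (b) note that by the commutation identity you may permute the order of the factors, so $\sup_{x}\|\exp_X(\cdot,x)\|_{\Co^{\alpha+1}}$ is finite in each single variable $t^j$ separately (uniformly in the remaining variables), and then upgrade separate to joint regularity via Lemma \ref{Lem::Hold::CharMixHold} \ref{Item::Hold::CharMixHold::HoldbyComp}, applied iteratively to the time variables; this separate-to-joint step is not automatic and must be invoked explicitly, since equal exponents are exactly what makes it true. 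With either addition your proof is complete and matches the intended mechanism of the paper.
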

\begin{proof}
We see that $\exp_X$ is defined whenever $\eps_0<\frac1r\dist(U_1,\partial U_0)\cdot(\max_{1\le j\le r}\|X_j\|_{\Co^\alpha(U_0;\R^n)})^{-1}$. 

The property $e^{t^1X_1+\dots+t^rX_r}(x)=e^{t^1X_1}\dots e^{t^rX_r}$ and $\Coorvec{t^j}\exp_X=X_j\circ\exp_X$ follows from Lemma \ref{Lem::ODE::FlowComm}. The regularity statement $\exp_X\in\Co^{\alpha+1,\alpha}$ follows from Lemma \ref{Lem::ODE::ODEReg}.
\end{proof}

\begin{remark}
    By Corollary \ref{Cor::ODE::ODERegCor} we can now use the multi-flow notation $e^{t\cdot X}=\exp_X(t,\cdot)$, where $t=(t^1,\dots,t^r)\in\R^r$ is a small vector and $X=[X_1,\dots,X_r]^\top$ is a collection of (at least $C^1$) commutative vector fields.
\end{remark}

All these properties of ODE flows also work for holomorphic vector fields on complex manifolds.

\begin{lem}[Holomorphic ODE flow with parameter]\label{Lem::ODE::HoloFlow}
    Let $U_0\subseteq\C^m$ and $V_0\subseteq\R^q$ be bounded open sets that are endowed with standard complex coordinate system $z=(z^1,\dots,z^m)$ and real coordinate system $s=(s^1,\dots,s^q)$ respectively. 
    
    Let $\beta>0$, and let $Z=(Z^1,\dots,Z^m):U_0\times V_0\to\C^m$ be a $\Co^\beta$-map that is holomorphic in $z$-variable. Then
    \begin{enumerate}[parsep=-0.3ex,label=(\roman*)]
        \item\label{Item::ODE::HoloFlow::DefReg} For any $U_1\times V_1\Subset U_0\times V_0$ there are an $\eps_0>0$ and a unique continuous map $\Phi^Z:\eps_0\B^2\times U_1\times V_1\subseteq\C^1_\tau\times\C^m_z\times\R^q_s\to U_0$ such that $\Phi^Z$ is $C^1$ in $(\tau,z)$ and satisfies
    \begin{equation}\label{Eqn::ODE::HoloFlow::EqnDef}
        \textstyle\frac{\partial\Phi^Z}{\partial\tau}(\tau,z,s)=Z(\Phi^Z(\tau,z,s)),\quad\frac{\partial\Phi^Z}{\partial\bar\tau}(\tau,z,s)=0,\quad\Phi^Z(0,z,s)=z,\quad\forall\tau\in \eps_0\B^2,\ z\in U_1,\ s\in V_1.
    \end{equation}
    In addition, $\Phi^Z$ is holomorphic in $(\tau,z)$ and $\Phi^Z\in\Co^\infty\Co^\beta(\eps_0\B^2\times U_1,V_1;\C^m)$.
    \item\label{Item::ODE::HoloFlow::Comm} Moreover, suppose $W:U_0\times V_0\to\C^m$ is another such map as $Z$, such that two vector fields $\sum_{j=1}^mZ^j(z,s)\Coorvec{z^j}$ and $\sum_{j=1}^mW^j(z,s)\Coorvec{z^j}$  are commutative. Let $\Phi^Z$ and $\Phi^W$ be as in \eqref{Eqn::ODE::HoloFlow::EqnDef}. Then for any for any $U_1\times V_1\Subset U_0\times V_0$ there are an $\eps_0>0$ such that
    \begin{equation}\label{Eqn::ODE::HoloFlow::FlowComm}
        \Phi^Z(\tau_1,\Phi^W(\tau_2,z,s),s)=\Phi^W(\tau_2,\Phi^Z(\tau_1,z,s),s),\quad\forall \tau_1,\tau_2\in\eps_0\B^2,\quad z\in U_1,\quad s\in V_1.
    \end{equation}
    \end{enumerate}
\end{lem}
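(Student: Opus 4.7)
The plan for part (i) is a Picard iteration carried out in complex time. Fix $U_1 \Subset U_1' \Subset U_0$ and $V_1 \Subset V_0$. Because $Z$ is holomorphic in $z$, Cauchy estimates on a slightly larger polydisk give a uniform Lipschitz bound $\sup_{z\in U_1',\,s\in V_1}|\nabla_z Z(z,s)|<\infty$. For $\eps_0$ small enough that the iterates stay in $U_1'$ and the contraction closes, iterate
\begin{equation*}
\Phi_0(\tau,z,s)=z,\qquad \Phi_{n+1}(\tau,z,s)=z+\int_0^\tau Z(\Phi_n(\sigma,z,s),s)\,d\sigma,
\end{equation*}
with the line integral taken along $\sigma=r\tau$, $r\in[0,1]$. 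Standard Picard--Lindel\"of gives a uniform $C^0$ limit $\Phi^Z$ satisfying \eqref{Eqn::ODE::HoloFlow::EqnDef}, and Gr\"onwall applied to the difference of two solutions of the $\tau$-ODE (for fixed $(z,s)$) gives uniqueness.

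For the regularity, each iterate $\Phi_n$ is holomorphic in $(\tau,z)$ (the integral of a holomorphic integrand against $\tau$ is holomorphic, and holomorphy in $z$ propagates by composition), so $\Phi^Z$ is holomorphic in $(\tau,z)$ as a uniform limit of holomorphic maps. Cauchy estimates then upgrade this to $C^\infty$ in $(\tau,z)$ locally. To propagate the $\Co^\beta$-regularity in $s$, observe that the iteration is an integral in $\tau$ of $Z(\Phi_n(\cdot),s)$, and since $Z\in\Co^\beta$ in $s$ with a uniform-in-$s$ Lipschitz bound in $z$, Lemma \ref{Lem::Hold::CompofMixHold} \ref{Item::Hold::CompofMixHold::Comp} applied slotwise in $s$ shows the iteration is a contraction on $L^\infty_{\tau,z}\Co^\beta_s$ provided $\eps_0$ is small. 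Passing to the limit yields $\Phi^Z\in L^\infty_{\tau,z}\Co^\beta_s$, and then Cauchy estimates applied to the $s$-dependent family of holomorphic functions $\tau\mapsto \Phi^Z(\tau,z,s)$ commute with $\Co^\beta_s$-norms, giving $\Phi^Z\in\Co^\infty\Co^\beta(\eps_0\B^2\times U_1,V_1;\C^m)$.

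For part (ii), the key algebraic observation is that the holomorphic commutativity $[Z,W]=0$ (as coefficients of the $\partial_{z^k}$) implies that the corresponding real vector fields $\mathcal{Z}=Z+\bar Z$ and $\mathcal{W}=W+\bar W$ on $U_0\subset\R^{2m}$ also commute: the mixed bracket $[Z,\bar W]$ vanishes automatically since $\partial_{\bar z^j}W^k=0$ and $\partial_{z^j}\bar Z^k=0$, and thus
\begin{equation*}
[\mathcal{Z},\mathcal{W}]=[Z,W]+[Z,\bar W]+[\bar Z,W]+[\bar Z,\bar W]=0.
\end{equation*}
For each fixed $s\in V_1$, Lemma \ref{Lem::ODE::FlowComm} applied to $\mathcal{Z}(\cdot,s)$ and $\mathcal{W}(\cdot,s)$ gives commutativity of the corresponding real-time flows on $U_1$; since $\Phi^Z(t,\cdot,s)$ for real $t$ coincides with the flow of $\mathcal{Z}(\cdot,s)$ (and similarly for $W$), this is precisely \eqref{Eqn::ODE::HoloFlow::FlowComm} restricted to real $(\tau_1,\tau_2)\in(-\eps_0,\eps_0)^2$. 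But both sides of \eqref{Eqn::ODE::HoloFlow::FlowComm} are holomorphic in $(\tau_1,\tau_2)$ on the connected bidisk $\eps_0\B^2\times\eps_0\B^2$ by part (i) and the chain rule, and they agree on the totally real square $(-\eps_0,\eps_0)^2$; Taylor-expanding at the origin and reading off coefficients from the real-variable agreement forces equality on the whole bidisk.

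The main obstacle is the low $s$-regularity: when $\beta<1$ one cannot differentiate in $s$, so one must avoid invoking chain-rule arguments that require $\nabla_s$. This is precisely why the iteration is set up in $L^\infty_{\tau,z}\Co^\beta_s$ rather than a space that assumes $s$-differentiability, and why we separate the holomorphy-in-$(\tau,z)$ argument (Cauchy estimates) from the $s$-regularity argument (composition in $\Co^\beta_s$). Everything else reduces to bookkeeping around the contraction constant $\eps_0$ and the comparison of real and complex time flows.
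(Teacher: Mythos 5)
Your proposal is correct in substance but follows a genuinely different route from the paper. The paper never runs a complex-time Picard iteration: it splits $Z$ into the two commuting real fields $X=\sum\re Z^j\partial_{x^j}+\im Z^j\partial_{y^j}$ and $Y=\sum-\im Z^j\partial_{x^j}+\re Z^j\partial_{y^j}$, defines $\tilde\Phi(u,v,x,y,s)=e^{uX(\cdot,s)+vY(\cdot,s)}(x,y)$ via Corollary \ref{Cor::ODE::ODERegCor}, gets the parameter regularity $\tilde\Phi\in\Co^{\beta+1,\beta}$ from the appendix result Lemma \ref{Lem::AppODEReg} (which is where the delicate $\beta\le1$ and $\beta=1$ Gr\"onwall arguments live), identifies $\Phi^Z=\tilde\Phi^R+i\tilde\Phi^I$, cites the uniqueness of holomorphic ODE flows for $z$-holomorphy, and then boosts to $\Co^\infty\Co^\beta$ by Lemma \ref{Lem::Hold::NablaHarm}; for (ii) it uses pairwise commutativity of the four real fields $X,Y,X',Y'$ and the real multi-flow commutativity again. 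Your construction builds holomorphy in $(\tau,z)$ directly (uniform limits of holomorphic iterates, no citation of holomorphic ODE theory), and your part (ii) is slicker: you only need $[Z+\bar Z,\,W+\bar W]=0$, real-time commutativity via Lemma \ref{Lem::ODE::FlowComm}, and the identity theorem on the bidisk applied to two holomorphic functions agreeing on the totally real square $(-\eps_0,\eps_0)^2$ — that continuation argument is sound, and the bracket computation $[Z,\bar W]=[\bar Z,W]=0$ is correct.

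One point needs shoring up in part (i): you assert that Lemma \ref{Lem::Hold::CompofMixHold} \ref{Item::Hold::CompofMixHold::Comp} ``applied slotwise in $s$'' makes the iteration a contraction on $L^\infty_{\tau,z}\Co^\beta_s$. That lemma only gives boundedness of a single composition $s\mapsto Z(g(s),s)$; a contraction requires the difference estimate $\|Z(\Phi,\cdot)-Z(\Psi,\cdot)\|_{\Co^\beta_s}\lesssim\|\Phi-\Psi\|_{\Co^\beta_s}$ with a constant uniform along the iteration. This is routine but must be said: write $Z(\Phi)-Z(\Psi)=\bigl(\int_0^1\nabla_zZ(\Psi+t(\Phi-\Psi),s)\,dt\bigr)\cdot(\Phi-\Psi)$, use the algebra property of $\Co^\beta_s$ (Lemma \ref{Lem::Hold::Product}) together with the composition bound applied to $\nabla_zZ$ (available since $Z\in\Co^\infty_z\Co^\beta_s$ by Lemma \ref{Lem::Hold::NablaHarm}), and note that the iterates stay in a fixed ball of $L^\infty_{\tau,z}\Co^\beta_s$ for $\eps_0$ small — the composition constant depends on that ball radius, and the interior Cauchy estimates supply the $\Co^{\gamma,\beta}$ regularity of the iterates that the composition lemma demands of the inner map. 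With that supplement (and the small remark that your Gr\"onwall uniqueness uses $\partial_{\bar\tau}\Phi=0$ to reduce to rays $\sigma=r\tau$), your argument closes and reproves the lemma without the paper's Appendix B machinery.
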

\begin{remark}
    If we forget the $s$-variable, the result is indeed the well-definedness of the holomorphic ODE flow $e^{\tau Z}(z)$. In this way, we can write $\Phi^Z$ as 
    \begin{equation}\label{Eqn::ODE::HoloFlow::DefExpZ}
        \Phi^Z(\tau,z,s)=\exp_{Z(\cdot,s)}(\tau,z)=e^{\tau Z(\cdot,s)}(z).
    \end{equation}
    
    And thus \eqref{Eqn::ODE::HoloFlow::FlowComm} is saying that $e^{\tau_1Z(\cdot,s)}e^{\tau_2Z(\cdot,s)}=e^{\tau_2Z(\cdot,s)}e^{\tau_1Z(\cdot,s)}$.
\end{remark}

\begin{proof}[Proof of Lemma \ref{Lem::ODE::HoloFlow}]Write $(x,y):=(\re z,\im z)$. We define real vector fields $X$ and $Y$ on $U_0\subseteq\R^{2m}_{x,y}$ with parameter on $V_0\subseteq\R^{2m+q}_s$ as
\begin{equation}\label{Eqn::ODE::PfHoloFlow::XY}
    X(x,y;s)=\sum_{j=1}^m\re Z^j(z,s)\Coorvec{x^j}+\im Z^j(z,s)\Coorvec{y^j}, \quad Y(x,y;s)=\sum_{j=1}^m-\im Z^j(z,s)\Coorvec{x^j}+\re Z^j(z,s)\Coorvec{y^j}.
\end{equation}
By assumption, $Z$ is holomorphic in $z$, so $[X,Y]=0$ is commutative.

\smallskip
\noindent\ref{Item::ODE::HoloFlow::DefReg}: By the well-definedness of $C^1$ ODE flows and Corollary \ref{Cor::ODE::ODERegCor}, we can find a $\eps_0>0$ such that  $\tilde \Phi(u,v,x,y,s):=e^{uX(\cdot,s)+vY(\cdot,s)}(x,y)$ is the unique map defined in $(u,v)\in(-\eps_0,\eps_0)^2$, $(x,y,s)\in U_1\times V_1$ such that $\tilde\Phi$ is $C^1$ in $(u,v)$ and satisfies $\partial_u\tilde\Phi=X\circ\tilde\Phi$, $\partial_v\tilde\Phi=Y\circ\tilde\Phi$ and $\tilde\Phi(0,0,x,y,s)=(x,y,s)$.

By Lemma \ref{Lem::AppODEReg} $(t,x,y,s)\mapsto\exp_{X(\cdot,s)}(t,(x,y))$ and $(t,x,y,s)\mapsto \exp_{Y(\cdot,s)}(t,(x,y))$ both belongs to $\Co^{\beta+1,\beta}_{(t,x,y),s}$. Since $\tilde \Phi(u,v,x,y,s):=\exp_{X(\cdot,s)}(u,\exp_{Y(\cdot,s)}(v,(x,y)))$, by Lemma \ref{Lem::Hold::CompofMixHold} \ref{Item::Hold::CompofMixHold::Comp}, we get $\tilde \Phi\in\Co^{\beta+1,\beta}_{(u,v,x,y),s}$ as well.

We can write $\tilde\Phi=(\tilde\Phi^R,\tilde\Phi^I)$ where $\tilde\Phi^R,\tilde\Phi^I:(-\eps_0,\eps_0)^2\times U_1\times V_1\to\R^m$ are the components of $x$ and $y$-coordinates respectively. Thus, by direct computation we see that $\Phi^Z:=\tilde\Phi^R+i\tilde\Phi^I$ solves \eqref{Eqn::ODE::HoloFlow::EqnDef}. 

The uniqueness of $\Phi^Z$ follows from the uniqueness of $\tilde \Phi$.
On the other hand, by the uniqueness of the holomorphic ODE flow (see \cite[Theorem 1.1]{HoloODE} for example), we know $\Phi^Z(\cdot,s)$ is the holomoprhic ODE flow for the holomorphic vector field $Z(\cdot,s)$, for each $s$. Thus $\Phi^Z$ is holomorphic in $z$ as well.

Now $\tilde\Phi\in\Co^{\beta}_{u,v,x,y,s}$, so $\Phi^Z$ is a $\Co^\beta$-map. By Lemma \ref{Lem::Hold::NablaHarm} we get $\Phi\in\Co^\infty_{\tau,z}\Co^\beta_{s}$.

\medskip\noindent \ref{Item::ODE::HoloFlow::Comm}: We can fix a $s\in V_1$ in the discussion below, thus $Z$ and $W$ are holomorphic vector fields in $U_0$.

 Let $W=(W^1,\dots,W^m)$ be from the assumption, we define vector fields $X'$ and $Y'$ to be the ``real and imaginary part of $W$'' by replacing $Z^j$ with $W^j$ in \eqref{Eqn::ODE::PfHoloFlow::XY}. We have $[X,Y]=0$ and $[X',Y']=0$ since $Z$ and $W$ are holormorphic in $z$. The commutativity of $\sum_{j=1}^mZ^j(z,s)\Coorvec{z^j}$ and $\sum_{j=1}^mW^j(z,s)\Coorvec{z^j}$ implies that $X,X',Y,Y'$ are all pairwise commutative. 

By Corollary \ref{Cor::ODE::ODERegCor} we have $e^{u_1X}e^{v_1Y}e^{u_2X'}e^{v_2Y'}=e^{u_2X'}e^{v_2Y'}e^{u_1X}e^{v_1Y}$ in $U_1\times V_1$ for $u_1,u_2,v_1,v_2\in\R$ small. Repeating the argument for the construction of $\Phi^Z$ above we get \eqref{Eqn::ODE::HoloFlow::FlowComm}, finishing the proof.
\end{proof}

\subsection{The bi-layer Frobenius theorem and a holomorphic analogy}\label{Section::PfFro}
The classical Frobenius theorem states that an involutive real tangent subbundle is locally spanned by coordinate vector fields in a suitable real coordinate system. In this subsection, we will prove some generalizations along with the H\"older-Zygmund regularity estimates.

In the proof of Theorem \ref{Thm::ThmCoor2} (or Theorem \ref{Thm::TrueThm2}), we need to apply the real Frobenius theorem on $\Se\cap\bar\Se$ and on $\Se+\bar\Se$ simultaneously. In order to achieve the best possible regularity, we need a slightly different proof. 



In the following, we endow $\R^r$, $\R^m$ and $\R^q$ with standard coordinate system $t=(t^1,\dots,t^r)$, $x=(x^1,\dots,x^m)$ and $s=(s^1,\dots,s^q)$ respectively. For the case $\kappa=\infty$, we use the convention $\kappa-1=\infty$.

\begin{thm}[Bi-layer Frobenius theorem]\label{Thm::ODE::BLFro}
	Let $\kappa\in(2,\infty]$ and $\beta,\gamma\in(1,\kappa-1]$. Let $r,m,q$ be non-negative integers. Let $\Mf$ be a $(r+m+q)$-dimensional $\Co^\kappa$-manifold. Let $\V\le\E\le T\Mf$ be two involutive real tangent subbundles such that $\V\in\Co^\gamma$  has rank $r$, and $\E\in\Co^\beta$ has rank $r+m$.

	Then for any $p\in \Mf$, there is a neighborhood $\Omega=\Omega'\times\Omega''\times\Omega'''\subseteq\R^r\times\R^m\times\R^q$ of $(0,0,0)$ and a $\Co^{\min(\gamma,\beta)}$-regular parameterization $\Phi(t,x,s):\Omega\to \Mf$, such that:
	\begin{enumerate}[parsep=-0.3ex,label=(\arabic*)]
		\item\label{Item::ODE::BLFro1} $\Phi(0)=p$.
		\item\label{Item::ODE::BLFro2} $\Phi\in\Co_{t,x,s}^{\gamma+1,\min(\gamma,\beta+1),\min(\gamma,\beta)}(\Omega',\Omega'',\Omega''';\Mf)$, $\frac{\partial\Phi}{\partial t^1},\dots,\frac{\partial\Phi}{\partial t^r}\in\Co^{\gamma,\min(\gamma,\beta+1),\min(\gamma,\beta)}(\Omega',\Omega'',\Omega''';T\Mf)$ and $\frac{\partial\Phi}{\partial x^1},\dots,\frac{\partial\Phi}{\partial x^m}\in\Co^{\min(\gamma-1,\beta)}(\Omega;T\Mf)$.
		\item\label{Item::ODE::BLFro3} For every $(t,x,s)\in\Omega$, $\V_{\Phi(t,x,s)} $ is spanned by $\frac{\partial\Phi}{\partial t^1}(t,x,s),\dots,\frac{\partial\Phi}{\partial t^r}(t,x,s)\in T_{\Phi(t,x,s)}\Mf$, and $\E_{\Phi(t,x,s)}$ is spanned by  $\frac{\partial\Phi}{\partial t^1}(t,x,s),\dots,\frac{\partial\Phi}{\partial t^r}(t,x,s),\frac{\partial\Phi}{\partial x^1}(t,x,s),\dots,\frac{\partial\Phi}{\partial x^m}(t,x,s)\in T_{\Phi(t,x,s)}\Mf$.
	\end{enumerate}
	
    On the coordinates side, set $U:=\Phi(\Omega)\subseteq\Mf$ and let $F=(F',F'',F'''):=\Phi^\Inv:U\subseteq \Mf\to \R^r_t\times\R^m_x\times\R^q_s$ be the inverse map of $\Phi$. Then we have
	\begin{enumerate}[parsep=-0.3ex,label=(\arabic*)]\setcounter{enumi}{3}
		\item\label{Item::ODE::BLFro4} $F(p)=0$.
		\item\label{Item::ODE::BLFro5} $F'\in\Co^\kappa(U;\R^r)$, $F''\in\Co^\gamma(U;\R^m)$ and $F'''\in\Co^\beta(U;\R^q)$.
		\item\label{Item::ODE::BLFro6} $F^*\Coorvec{t^1},\dots,F^*\Coorvec{t^r}\in\Co^\gamma(U;T\Mf)$ and $F^*\Coorvec{x^1},\dots,F^*\Coorvec{x^m}\in\Co^{\min(\gamma-1,\beta)}(U;T\Mf)$.
		\item\label{Item::ODE::BLFro7} $\V|_U$ is spanned by $F^*\Coorvec{t^1},\dots,F^*\Coorvec{t^r}$, and $\E|_U$ is spanned by $F^*\Coorvec{t^1},\dots,F^*\Coorvec{t^r},F^*\Coorvec{x^1},\dots,F^*\Coorvec{x^m}$.
	\end{enumerate}
\end{thm}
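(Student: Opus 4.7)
The plan is to apply Lemma~\ref{Lem::ODE::GoodGen} (in its essentially real form~\ref{Item::ODE::GoodGen::Real}) to both $\V$ and $\E$ simultaneously, producing adapted smooth coordinates $(\tilde t, \tilde x, \tilde s): \tilde U \to \R^r \times \R^m \times \R^q$ near $p$ such that $\V_p \oplus \Span(\partial_{\tilde x}|_p, \partial_{\tilde s}|_p) = T_p\Mf$ and $\E_p \oplus \Span(\partial_{\tilde s}|_p) = T_p\Mf$, and then to construct $\Phi$ as an iterated multi-flow. Because both $\V$ and $\E$ are essentially real, that lemma furnishes unique pairwise commuting canonical bases
\begin{equation*}
T_i = \tfrac{\partial}{\partial \tilde t^i} + \sum_k a^k_i \tfrac{\partial}{\partial \tilde x^k} + \sum_l b^l_i \tfrac{\partial}{\partial \tilde s^l}, \qquad \tilde T_i = \tfrac{\partial}{\partial \tilde t^i} + \sum_l \tilde d^l_i \tfrac{\partial}{\partial \tilde s^l}, \qquad X_j = \tfrac{\partial}{\partial \tilde x^j} + \sum_l e^l_j \tfrac{\partial}{\partial \tilde s^l},
\end{equation*}
for $\V$ and $\E$ respectively, with $a^k_i, b^l_i \in \Co^\gamma$ and $\tilde d^l_i, e^l_j \in \Co^\beta$; uniqueness~\ref{Item::ODE::GoodGen::Uniqueness} together with $\V \le \E$ forces the decomposition $T_i = \tilde T_i + \sum_k a^k_i X_k$. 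The critical observation is that $T_i(\tilde t^j) = \delta^j_i$, so flowing along the $T_i$'s translates $\tilde t$ by exactly the flow time.

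Next I would set
\begin{equation*}
\Psi_E(x, s) := e^{x_1 X_1} \cdots e^{x_m X_m}(0, 0, s), \qquad \Phi(t, x, s) := e^{t_1 T_1} \cdots e^{t_r T_r}\bigl(\Psi_E(x, s)\bigr),
\end{equation*}
using commutativity of $\{X_j\}$ and $\{T_i\}$ from~\ref{Item::ODE::GoodGen::InvComm} to justify the multi-flow interpretation via Corollary~\ref{Cor::ODE::ODERegCor}. Lemma~\ref{Lem::ODE::ODEReg} gives $\Psi_E \in \Co^{\beta+1, \beta}_{x, s}$ and $(\tau, z) \mapsto e^{\tau \cdot T}(z) \in \Co^{\gamma+1, \gamma}_{\tau, z}$; combining these via Lemma~\ref{Lem::Hold::CompofMixHold}\ref{Item::Hold::CompofMixHold::Comp} yields $\Phi \in \Co^{\gamma+1, \min(\gamma, \beta+1), \min(\gamma, \beta)}_{t, x, s}$. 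The flow identity $\partial_{t^i}\Phi = T_i \circ \Phi$ places $\partial\Phi/\partial t$ pointwise in $\V$ with the regularity asserted in~\ref{Item::ODE::BLFro2}; the chain rule identity $\partial_{x^j}\Phi = D(e^{t \cdot T})|_{\Psi_E} \cdot X_j(\Psi_E)$ places it in $\E$ by invariance of $\E$ under the flow of sections of $\V \subseteq \E$, and the loss of one $z$-derivative in $D_z(e^{t \cdot T})$ (quantified via Lemma~\ref{Lem::Hold::GradMixHold}) gives the uniform bound $\partial_{x^j}\Phi \in \Co^{\min(\gamma-1, \beta)}(\Omega; T\Mf)$.

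On the coordinate side, the inverse $F = \Phi^\Inv$ exists locally by Lemma~\ref{Lem::Hold::CompofMixHold}\ref{Item::Hold::CompofMixHold::InvFun} with $F(p) = 0$. The identity $T_i(\tilde t^j) = \delta^j_i$ integrates along the $t$-flow to give $\tilde t(\Phi(t, x, s)) = t$, so $F' = \tilde t|_U$ as functions on $U$, which is $\Co^\kappa$ since $\tilde t$ belongs to the smooth initial chart; this is precisely the mechanism producing $F' \in \Co^\kappa$. The remaining components $F'''$ and $F''$ are characterized as first integrals of $\E$ and of $\V$ respectively (transversely normalized so $(F', F'', F''')$ is a chart), and their sharp individual regularities $F''' \in \Co^\beta$ and $F'' \in \Co^\gamma$, along with the pullback-vector-field regularities $F^*\partial_{t^i} \in \Co^\gamma$ and $F^*\partial_{x^j} \in \Co^{\min(\gamma-1, \beta)}$, follow from applying the sharp single-distribution Frobenius estimates to $\V$ and to $\E$ separately. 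The main technical obstacle will be decoupling the two Hölder exponents in the inverse direction so that the lower regularity $\beta$ of $\E$ does not contaminate $F''$ (which ought to see only $\V$'s regularity $\gamma$), and symmetrically that the higher regularity $\gamma$ does not force a matching loss in $F'''$; this decoupling is achieved by exploiting that $F''$ and $F'''$ are first integrals rather than arbitrary coordinates, combined with the paraproduct-based product and composition estimates of Section~\ref{Section::HoldSec} and the inverse-mapping results Lemma~\ref{Lem::Hold::QPIFT} and Lemma~\ref{Lem::Hold::CompofMixHold}\ref{Item::Hold::CompofMixHold::InvFun}.
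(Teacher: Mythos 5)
Your construction of the parameterization is the same as the paper's: adapted smooth coordinates at $p$, the canonical commuting frames $T$ for $\V$ and $(\tilde T,X)$ for $\E$ from Lemma \ref{Lem::ODE::GoodGen}, and $\Phi(t,x,s)=e^{t\cdot T}\big(e^{x\cdot X}(0,0,s)\big)$, with the regularity of $\Phi$, $\partial_t\Phi$, $\partial_x\Phi$ obtained from Lemma \ref{Lem::ODE::ODEReg}/Corollary \ref{Cor::ODE::ODERegCor} and the composition estimates; that part, including $F'=\tilde t|_U\in\Co^\kappa$ via $T_i\tilde t^j=\delta_i^j$, matches the paper. The gaps are on the coordinate side. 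First, your proof that $\partial_{x^j}\Phi$ spans $\E$ together with $\partial_t\Phi$ rests on the assertion that $\E$ is invariant under the flow of sections of $\V\subseteq\E$; at $\Co^\beta$ regularity this is not free — it needs the Lie-derivative formula of Lemma \ref{Lem::ODE::FlowComm} plus a Gronwall-type argument for the frame coefficients, or (as the paper does) the auxiliary map $\Psi(t,x,s)=e^{tX'+xX''}(0,0,s)$ with inverse $G=(G',G'')$ and the identity $F'''=G''$, proved by showing $t\mapsto G''(\Phi(t,x_0,s_0))$ is constant because $T_i$ annihilates $G''$.

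Second, and more seriously, the sharp componentwise estimates $F''\in\Co^\gamma$ and $F'''\in\Co^\beta$ — exactly the content you flag as "the main technical obstacle" — are not actually established. Inverting $\Phi$ only gives $F\in\Co^{\min(\gamma,\beta)}$, so when $\gamma<\beta$ the claim $F'''\in\Co^\beta$, and when $\beta<\gamma$ the claim $F''\in\Co^\gamma$, are strictly stronger and need a mechanism. Appealing to "the sharp single-distribution Frobenius estimates applied to $\V$ and $\E$ separately" is circular within this paper (Corollary \ref{Cor::ODE::RealFroCor} is deduced from this very theorem as the $m=0$ case), and even granting such estimates, they produce two unrelated charts; one must still transfer their regularity to the specific map $F=\Phi^\Inv$, which is precisely what is missing. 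The paper's resolution is explicit: $F'''=G''$ with $G=\Psi^\Inv\in\Co^\beta$ gives $F'''\in\Co^\beta$, and the formula $F''(u,v,w)=\tilde v\big(e^{-u\cdot T}(u,v,w)\big)$ — which uses $F'(u,v,w)=u$ and $G'(u,v,w)=(u,v)$, i.e.\ that $\tilde v(e^{xX''}(0,0,s))=x$ — gives $F''\in\Co^\gamma$ because $\exp_T\in\Co^\gamma$. Pointing at the paraproduct, composition, and inverse-function lemmas does not substitute for these identities; without them the decoupling of the exponents in the inverse direction is unproven.
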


For definition of $\Co^{\alpha,\beta,\gamma}$-type maps, see Definitions \ref{Defn::Hold::MoreMixHold} \ref{Item::Hold::MoreMixHold::3Mix} and \ref{Defn::ODE::MixHoldMaps}. 

\begin{remark}
    Here the regularity results for $F$ and those for $\Phi$ do not directly imply each other.
\end{remark}


If we take $m=0$ in Theorem \ref{Thm::ODE::BLFro}, i.e. $\V=\E$, we get the H\"older estimate immediately for the classical real Frobenius theorem.
\begin{cor}[Real Frobenius theorem]\label{Cor::ODE::RealFroCor}Let $\kappa\in(2,\infty]$, $\alpha\in(1,\kappa-1]$, let $r,q\ge0$, and let $\Mf$ be an $(r+q)$-dimensional $\Co^\kappa$-manifold. Assume $\V\le T\Mf$ is a $\Co^\alpha$-involutive real tangent subbundle of rank $r$.

Then for any $p\in \Mf$ there is a neighborhood $\Omega=\Omega'\times\Omega''\subseteq\R^r\times\R^n$ of $0$ and a $\Co^\alpha$-regular parameterization $\Phi(t,s):\Omega\to \Mf$, such that:
	\begin{enumerate}[nolistsep,label=(\arabic*)]
		\item $\Phi(0)=p$.
		\item $\Phi\in\Co_{t,s}^{\alpha+1,\alpha}(\Omega;\Mf)$, and $\frac{\partial\Phi}{\partial t^1},\dots,\frac{\partial\Phi}{\partial t^r}\in\Co^{\alpha}_{(t,s)}(\Omega;T\Mf)$.
		\item For any $(t,s)\in\Omega$, $\V_{\Phi(t,s)}\le T_{\Phi(t,s)}\Mf$ is spanned by $\frac{\partial\Phi}{\partial t^1}(t,s),\dots,\frac{\partial\Phi}{\partial t^r}(t,s)$.
	\end{enumerate}
    On the coordinate side, set $U:=\Phi(\Omega)\subseteq\Mf$ and let $F=(F',F''):=\Phi^\Inv:U\subseteq \Mf\to \R^r_t\times\R^q_s$, we have:
	\begin{enumerate}[nolistsep,label=(\arabic*)]\setcounter{enumi}{3}
		\item $F(p)=0$.
		\item $F'\in\Co^\kappa(U;\R^r)$ and $F''\in\Co^\alpha(U;\R^q)$.
		\item $F^*\Coorvec{t^1},\dots,F^*\Coorvec{t^r}\in\Co^\alpha(U;T\Mf)$.
		\item $\V|_U$ has a local basis $(F^*\Coorvec{t^1},\dots,F^*\Coorvec{t^r})$.
	\end{enumerate}
    
\end{cor}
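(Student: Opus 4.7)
The plan is to deduce this corollary directly from the bi-layer Frobenius theorem (Theorem \ref{Thm::ODE::BLFro}) by degenerating the two-layer hypothesis into a one-layer one. Since the author already signals that the corollary is the case $\V = \E$ of Theorem \ref{Thm::ODE::BLFro}, there is essentially nothing to prove beyond unpacking the conclusions.

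Concretely, I would set $\E := \V$ and take $m := 0$ in Theorem \ref{Thm::ODE::BLFro}, with both H\"older indices $\beta := \gamma := \alpha$. The hypotheses are met: $\V \le \V \le T\Mf$ trivially with $\V$ involutive of rank $r$ and $\Co^\alpha$-regular; since $\beta = \gamma = \alpha \in (1,\kappa-1]$ and $\dim\Mf = r + 0 + q$, the dimensional requirement $r + m + q = \dim\Mf$ is satisfied. The theorem then produces a neighborhood $\Omega = \Omega' \times \Omega'' \times \Omega''' \subseteq \R^r \times \R^0 \times \R^q$ of the origin (which we identify with $\Omega' \times \Omega''' \subseteq \R^r_t \times \R^q_s$ by dropping the trivial factor) together with a regular parameterization $\Phi$.

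Translating the conclusions with $m = 0$: item \ref{Item::ODE::BLFro1} gives $\Phi(0) = p$; item \ref{Item::ODE::BLFro2} collapses the $x$-factor and yields $\Phi \in \Co^{\alpha+1,\min(\alpha,\alpha+1),\min(\alpha,\alpha)}_{t,x,s} = \Co^{\alpha+1,\alpha}_{t,s}$ (with the middle slot vacuous) and $\partial\Phi/\partial t^j \in \Co^{\alpha,\min(\alpha,\alpha+1),\alpha} = \Co^\alpha_{(t,s)}$ as sections of $T\Mf$, with no $\partial\Phi/\partial x^k$ to worry about; item \ref{Item::ODE::BLFro3} becomes exactly the spanning statement for $\V_{\Phi(t,s)}$ by $\partial_{t^1}\Phi,\dots,\partial_{t^r}\Phi$. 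On the coordinate side, with the convention $F'' := F'''$ from the theorem (since the middle component is absent), items \ref{Item::ODE::BLFro4}--\ref{Item::ODE::BLFro7} give respectively $F(p) = 0$, $F' \in \Co^\kappa$, $F'' \in \Co^\alpha$, the $\Co^\alpha$-regularity of $F^*\partial_{t^j}$, and the spanning $\V|_U = \Span(F^*\partial_{t^1},\dots,F^*\partial_{t^r})$.

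There is no substantive obstacle: the bi-layer Frobenius theorem is precisely set up so that $\V = \E$ recovers the standard real Frobenius theorem with sharp regularity bookkeeping. The only mild care needed is bookkeeping for the vacuous $m = 0$ case of the mixed H\"older spaces $\Co^{\alpha_1,\alpha_2,\alpha_3}$ from Definition \ref{Defn::Hold::MoreMixHold} \ref{Item::Hold::MoreMixHold::3Mix} and the matching three-variable maps from Definition \ref{Defn::ODE::MixHoldMaps}, which reduce to the two-variable mixed H\"older spaces $\Co^{\alpha_1,\alpha_3}$ once the $\R^0_x$ factor is suppressed; this is routine and requires no new estimate.
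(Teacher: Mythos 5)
Your proposal is correct and is exactly the paper's own argument: the paper derives Corollary \ref{Cor::ODE::RealFroCor} precisely by taking $m=0$ (i.e.\ $\V=\E$, so $\beta=\gamma=\alpha$) in Theorem \ref{Thm::ODE::BLFro} and reading off the conclusions, with the vacuous $x$-slot suppressed and $\Co^{\alpha,\alpha}=\Co^\alpha$ from Lemma \ref{Lem::Hold::CharMixHold} handling the regularity of $\partial\Phi/\partial t^j$.
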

Corollary \ref{Cor::ODE::RealFroCor} is sharp, see Section \ref{Section::SharpGen} and Proposition \ref{Prop::Final::RealFroisSharp}.

\begin{proof}[Proof of the Bi-layer Frobenius, Theorem \ref{Thm::ODE::BLFro}]We can pick an initial $\Co^\kappa$-coordinate system near $p$
\begin{equation}\label{Eqn::ODE::BLFro::H}
    (u,v,w)=(u^1,\dots,u^r,v^1,\dots,v^m,w^1,\dots,w^q):U_0\subseteq \Mf\to\R^r\times\R^m\times\R^q,
\end{equation}
such that
\begin{itemize}[parsep=-0.3ex]
    \item $u(p)=0\in\R^r$, $v(p)=0\in\R^m$, $w(p)=0\in\R^q$;
    \item $\Coorvec{u^1}|_p,\dots,\Coorvec{u^r}|_p$ form a real basis for $\V_p\le T_p\Mf$;
    \item $\Coorvec{u^1}|_p,\dots,\Coorvec{u^r}|_p,\Coorvec{v^1}|_p,\dots,\Coorvec{v^m}|_p$  form a real basis for $\E_p\le T_p\Mf$.
\end{itemize}

Thus, we can identify $\V,\E$ with their pushforwards on the open neighborhood of  $(0,0,0)\in\R^r\times\R^m\times\R^q$. In other words, we can assume $\Mf\subseteq\R^r_u\times\R^m_v\times\R^q_w$ and $p=(0,0,0)$.

Applying Lemma \ref{Lem::ODE::GoodGen}, on $\V$ and on $\E$ respectively, we can find a smaller neighborhood $U_1\subseteq U_0$ of $p=0$, a $\Co^\gamma$-local basis $T=[T_1,\dots,T_r]^\top$ for $\V$ on $U_1$, and a $\Co^\beta$-local basis $\tilde X=[X',X'']^\top=[X_1,\dots,X_{r+m}]^\top$ for $\E$ on $U_1$, that have the following form:
\begin{equation}\label{Eqn::ODE::BLFro::GenforTX}
    T=\Coorvec u+A'\Coorvec v+A''\Coorvec w,\quad \tilde X=\begin{pmatrix}X'\\X''\end{pmatrix}=\begin{pmatrix}I_r&&B'\\&I_m&B''\end{pmatrix}\begin{pmatrix}\Coorvec u\\\Coorvec v\\\Coorvec w\end{pmatrix},\quad\text{on }U_1\subseteq\R^r_u\times\R^m_v\times\R^q_w.
\end{equation}
Here $A'\in\Co^\gamma(U_1;\R^{r\times m})$, $A''\in\Co^\gamma(U_1;\R^{r\times n})$, $B'\in\Co^\beta(U_1;\R^{r\times n})$ and $B''\in\Co^\beta(U_1;\R^{m\times n})$. 

By \eqref{Eqn::ODE::BLFro::GenforTX} we see that $T_1,\dots,T_r,X_1,\dots,X_m$ are linearly independent, so they form a $\Co^{\min(\gamma,\beta)}$-local basis for $\E$ on $U_1$.

We define a natural map $\Gamma$ as:
\begin{equation}\label{Eqn::ODE::BLFro::DefGamma}
    \Gamma:\R^q_s\to\R^r_u\times\R^m_v\times\R^q_w, \qquad\Gamma(s):=(0,0,s).
\end{equation}
We have $\Gamma(0)=(0,0,0)=p$, and $\frac{\partial\Gamma}{\partial s^j}(s)=\Coorvec{w^j}\big|_{\Gamma(s)}$ for $j=1,\dots,q$.

\medskip
We can now define our map $\Phi:\Omega\subseteq\R^r_t\times\R^m_x\times\R^q_s\to \Mf$ for a smaller $\Omega$, as
\begin{equation}\label{Eqn::ODE::BLFro::DefPhi}
    \Phi(t,x,s):=\exp_T(t,\exp_{X''}(x,\Gamma(s)))=e^{tT}e^{xX''}(\Gamma(s))=e^{t^1T_1}\dots e^{t^rT_r}e^{x^1X_{r+1}}\dots e^{x^mX_{r+m}}(0^{r+m},s^1,\dots,s^q).
\end{equation}

The notation $\exp_T,\exp_{X''}$ are given in Corollary \ref{Cor::ODE::ODERegCor}. By taking $\eps_0$ in Lemma \ref{Lem::ODE::ODEReg} small enough we see that $\Phi$ is defined is when $\Omega$ is small enough.

We are going to see that $\Phi$ and $F:=\Phi^\Inv$ are the desired maps.

By shrinking $\Omega$ if necessary, we can define an auxiliary map $\Psi:\Omega\to \Mf$ as the following
    \begin{gather}
    \label{Eqn::ODE::BLFro::DefPsi}
    \Psi(t,x,s):=\exp_{\tilde X}((t,x),(0,0,s))=e^{tX'+xX''}(0,0,s)=e^{t^1X_1}\dots e^{t^rX_r}e^{x^1X_{r+1}}\dots e^{x^mX_{r+m}}(0^{r+m},s^1,\dots,s^q).
    \end{gather}

\ref{Item::ODE::BLFro1} and \ref{Item::ODE::BLFro4} are immediately since $\Phi(0)=\Psi(0)=(0,0,0)=p$ and $F(p)=F(0)=(0,0,0)$.

By construction \eqref{Eqn::ODE::BLFro::H} \eqref{Eqn::ODE::BLFro::DefPhi} and \eqref{Eqn::ODE::BLFro::DefPsi}, we know 
$$\textstyle\frac{\partial\Phi}{\partial t}(0)=T(0)=\frac{\partial\Psi}{\partial t}(0)=X'(0)=\Coorvec{u}\big|_p,\quad\frac{\partial\Phi}{\partial x}(0)=\frac{\partial\Psi}{\partial x}(0)=X''(0)=\Coorvec{v}\big|_p,\quad\frac{\partial\Phi}{\partial s}(0)=\frac{\partial\Psi}{\partial s}(0)=\frac{\partial\Gamma}{\partial s}(0)=\Coorvec{w}\big|_p.$$
So $\nabla\Phi(0)$ and $\nabla\Psi(0)$ both have full rank $r+m+q$. By the Inverse Function Theorem and shrinking $\Omega$ if necessary, we see that $\Phi:\Omega\to\Mf$ and $\Psi:\Omega\to\Mf$ are both $C^1$-regular parameterizations.

Therefore, $F=\Phi^\Inv$ is well-defined coordinate chart on $U:=\Phi(\Omega)\subseteq\Mf$. We write $F=(F',F'',F''')$ as the components to $\R^r_t$, $\R^m_x$ and $\R^q_s$ respectively.

We define $\tilde U:=\Psi(\Omega)$ and $G:=\Psi^\Inv:\tilde U\to\R^{r+m}_{(t,x)}\times\R^q_s$. Similarly, we write $G=(G',G'')$ where $G':\tilde U\to\R^{r+m}_{(t,x)}$ and $G'':\tilde U\to\R^q_s$.

\smallskip\noindent
\textit{Proof of \ref{Item::ODE::BLFro2}}: By Corollary \ref{Cor::ODE::ODERegCor} we know $\exp_T\in\Co^{\gamma+1,\gamma}_{t,(u,v,w)}$ and $\exp_{X''}\in\Co^{\beta+1,\beta}_{x,(u,v,w)}$. By Lemma \ref{Lem::Hold::CompofMixHold} \ref{Item::Hold::CompofMixHold::Comp}, since $\Gamma\in\Co^\kappa$, we know $\exp_{X''}(x,\Gamma(s))\in\Co^{\beta+1,\beta}_{x,s}$ and thus $\Phi\in\Co^{\gamma+1,\min(\gamma,\beta+1),\beta}_{t,x,s}$, in a neighborhood of $(0,0,0)$. 

Similarly, using $\exp_{\tilde X}\in\Co^{\beta+1,\beta}_{(t,x),(u,v,w)}$, we get $\Psi\in\Co^{\beta+1,\beta}_{(t,x),s}$. 

We have $\frac{\partial\Phi}{\partial t^j}=T_j\circ\Phi$, $j=1,\dots,r$. By Lemma \ref{Lem::Hold::CompofMixHold} \ref{Item::Hold::CompofMixHold::Comp} with $T\in\Co^\gamma$ and $\Phi\in\Co^{\gamma+1,\min(\gamma,\beta+1),\min(\gamma,\beta)}_{t,x,s}$ we get $\frac{\partial\Phi}{\partial t}=T\circ\Phi\in\Co^{\gamma,\min(\gamma,\beta+1),\min(\gamma,\beta)}_{t,x,s}$. 

We have $\frac{\partial\Phi}{\partial x^j}(t,x,s)=((e^{tT})_*X_j)\circ\Phi(t,x,s)$, $j=1,\dots,m$. For the pushforward formula we have
\begin{equation}\label{Eqn::ODE::BLFro::LieDer}
    ((e^{tT})_*X_j)(u,v,w)=(X_j\cdot\nabla_{u,v,w}(e^{tT}))(e^{-tT}(u,v,w)).
\end{equation}

We see that $(t,u,v,w)\mapsto (e^{tT}_*X_j)(u,v,w)$ is a $\Co^{\min(\gamma-1,\beta)}$-map. By Lemma \ref{Lem::Hold::CompofMixHold} \ref{Item::Hold::CompofMixHold::Comp} we get $\frac{\partial\Phi}{\partial x}\in\Co^{\min(\gamma-1,\beta)}$. Now the proof of \ref{Item::ODE::BLFro2} is complete.

\smallskip\noindent
\textit{Proof of \ref{Item::ODE::BLFro6}}: Since $\Phi=F^\Inv$, we have $F^*\Coorvec{t^j}=\frac{\partial\Phi}{\partial t^j}\circ F$ and $F^*\Coorvec{x^j}=\frac{\partial\Phi}{\partial x^j}\circ F$. By \eqref{Eqn::ODE::BLFro::DefPhi} we have $\frac{\partial\Phi}{\partial t^j}=T_j\circ\Phi$, so $F^*\Coorvec{t^j}=T_j\in\Co^\gamma(U;T\Mf)$ for $j=1,\dots,r$. By \ref{Item::ODE::BLFro2} we know $\Phi\in\Co^{\min(\gamma,\beta)}$ and $\frac{\partial\Phi}{\partial x^j}\in\Co^{\min(\gamma-1,\beta)}$, so $F\in\Co^{\min(\gamma,\beta)}$ and by composition we get $F^*\Coorvec{x^j}\in\Co^{\min(\gamma-1,\beta)}$ for $j=1,\dots,m$.

\medskip
\noindent \textit{Proof of \ref{Item::ODE::BLFro3} and \ref{Item::ODE::BLFro7}}: Again by $F^*\Coorvec{t^i}=\frac{\partial\Phi}{\partial t^i}\circ \Phi^\Inv$ and $F^*\Coorvec{x^j}=\frac{\partial\Phi}{\partial x^j}\circ \Phi^\Inv$ for $i=1,\dots,r$, $j=1,\dots,m$, we know \ref{Item::ODE::BLFro3} and \ref{Item::ODE::BLFro7} are equivalent. It suffices to prove \ref{Item::ODE::BLFro7}.

By \eqref{Eqn::ODE::BLFro::DefPhi} we have $F^*\Coorvec{t^i}=T_i$, $i=1,\dots, r$. Since by \eqref{Eqn::ODE::BLFro::GenforTX} $T_1,\dots,T_r$ form a local basis of $\V$ on $U\subseteq U_1$, we know $\V|_U$ is spanned by $F^*\Coorvec{t^1},\dots,F^*\Coorvec{t^r}$.

To prove $F^*\Coorvec{t^1},\dots,F^*\Coorvec{t^r},F^*\Coorvec{x^1},\dots,F^*\Coorvec{x^m}$ span $\E$ in a neighborhood of $p=0\in\Mf$, we need to show that
\begin{equation}\label{Eqn::ODE::BLFro::F'''=G''}
    F'''=G'',\quad\text{ in a neighborhood of }p=(0,0,0)\in\Mf.
\end{equation}
Once \eqref{Eqn::ODE::BLFro::F'''=G''} is done, since $F$ and $G$ are both coordinate chart, $(dF'''^1,\dots,dF'''^q)$ and $(dG''^1,\dots,dG''^q)$ are both collection of $q$-differentials that are linearly independent at every point in the domain,  we have that, in a neighborhood of $p\in\Mf$,
\begin{equation*}
    \textstyle\Span (F^*\Coorvec t,F^*\Coorvec x)=(\Span dF''')^\bot=(\Span dG'')^\bot=\Span (G^*\Coorvec t,G^*\Coorvec x)=\Span(X_1,\dots,X_{r+m})=\E.
\end{equation*}
This would prove \ref{Item::ODE::BLFro3} by shrinking $\Omega$ (and thus $U=\Phi(\Omega)$).

To prove \eqref{Eqn::ODE::BLFro::F'''=G''}, let $(t_0,x_0,s_0)\in\Omega$ be any point such that $\Phi(t_0,x_0,s_0)$ lays in the domain of $G$. It suffices to show $G''(t_0,x_0,s_0)=s_0$. To see this, by \eqref{Eqn::ODE::BLFro::DefPhi} and \eqref{Eqn::ODE::BLFro::DefPsi} we have $\Phi(0,x_0,s_0)=\Psi(0,x_0,s_0)$, so $G''(\Phi(0,x_0,s_0))=s_0$ since $G=\Psi^\Inv$.

Now for $1\le i\le r$ and for $t\in\R^r$ small, $\Coorvec{t^i}\Phi(t,x_0,s_0)=T_i\circ\Phi(t,x_0,s_0)$ is the linear combination of $X_1|_{\Phi(t,x_0,s_0)},\dots,X_{r+m}|_{\Phi(t,x_0,s_0)}\in \E_{\Phi(t,x_0,s_0)}=(\Span dG'')^\bot|_{\Phi(t,x_0,s_0)}$. Therefore $(\Phi_*\Coorvec {t^i})G''^j\equiv0$ for $1\le i\le r$, $1\le j\le q$, in a neighborhood of $0\in\Mf$, which means $t\mapsto G''(\Phi(t,x_0,s_0))$ is constant.

Since $G''(\Phi(0,x_0,s_0))=s_0$ and $t_0$ is also small, we conclude that $G''(\Phi(t_0,x_0,s_0))=s_0$, completing the proof of \eqref{Eqn::ODE::BLFro::F'''=G''} and hence of \ref{Item::ODE::BLFro3}.

\medskip\noindent
\textit{Proof of \ref{Item::ODE::BLFro5}}:
Clearly $F'\in\Co^\kappa(U;\R^r)$, because by \eqref{Eqn::ODE::BLFro::GenforTX} and \eqref{Eqn::ODE::BLFro::DefPhi} we have
\begin{equation}\label{Eqn::ODE::BLFro::F'=u}
    F'(u,v,w)=u,\quad\text{ on }U\subseteq\Mf.
\end{equation}

Similarly, $G'(u,v,w)=(u,v)$ as well.

To see $F''\in\Co^\gamma(U;\R^m)$, we write $F''$ in terms of $\exp_T$. Here we denote $\tilde v:\Mf\to\R^m_v$, $\tilde v(u,v,w):=v$ as the natural projection\footnote{We abuse of notation here. It is more convenient to use $u,v,w$ as variables in the proof. Thus we use $\tilde v$ for $v$ in \eqref{Eqn::ODE::BLFro::H}.}. Using the proven fact $G'(u,v,w)=(u,v)$ we have $\tilde v(\Psi(0,x,s))=\tilde v(e^{xX''}(\Gamma(s)))=x$. 

Note that by \eqref{Eqn::ODE::BLFro::DefPhi} and \eqref{Eqn::ODE::BLFro::DefPsi} we have $\Phi(0,x,s)=\Psi(0,x,s)$ and $\Phi(0,x,s)=e^{-t\cdot T}(\Phi(t,x,s))$. Therefore \begin{align*}
    F''(\Phi(t,x,s))=x=\tilde v(\Phi(0,x,s))=\tilde v(e^{-t\cdot T}(\Phi(t,x,s))=\tilde v(e^{-F'(\Phi(t,x,s))\cdot T}(\Phi(t,x,s)).
\end{align*}
Using $(u,v,w)=\Phi(t,x,s)$ and that $F'(u,v,w)=u$ from \eqref{Eqn::ODE::BLFro::F'=u}, we conclude that in the domain
\begin{equation*}
    F''(u,v,w)=\tilde v(e^{-u\cdot T}(u,v,w))=\tilde v\circ\exp_T(-u,(u,v,w)).
\end{equation*}
Thus $F''\in\Co^\gamma$ since $\exp_T\in\Co^\gamma$.

Finally by \eqref{Eqn::ODE::BLFro::F'''=G''} we have $F'''=G''$ in a neighborhood of $0\in\Mf$. Since $G=\Psi^\Inv\in\Co^\beta$ and $G''$ is a component of $G$, we see that $F'''\in\Co^\beta$. 

By shrinking $U$ if necessary, we conclude that $F'\in\Co^\kappa$, $F''\in\Co^\gamma$ and $F'''\in\Co^\beta$, finishing the proof of \ref{Item::ODE::BLFro5} and thus the whole proof.
\end{proof}
\begin{remark}
    It would be natural to generalize the result to the multi-layers case, where we have a flag of involutive tangent subbundles $0\le\V_1\le\dots\le \V_k\le T\Mf$. And the parameterization $\Phi(t^{(1)},\dots,t^{(k)},s)=e^{t^{(1)}T_{(1)}}\dots e^{t^{(k)}T_{(k)}}(\Gamma(s))$ gives the parameterization with desired regularities, where $T_{(j)}$ are collections of vector fields such that $T_{(1)},\dots,T_{(k)}$ are linearly independent and $\V_j=\Span_\R(T_{(1)},\dots,T_{(j)})$ for each $j$. We will not use this generalization in the paper.
\end{remark}

Using a similar but simpler construction from Theorem \ref{Thm::ODE::BLFro}, we can prove a holomorphic Frobenius theorem with parameter dependence. Here we endow $\C^n$ and $\R^q$ with standard (real and complex) coordinate system $\zeta=(\zeta^1,\dots,\zeta^n)$ and $s=(s^1,\dots,s^q)$ respectively.

\begin{prop}[Holomorphic Frobenius theorem with parameter]\label{Prop::ODE::ParaHolFro}
    Let $\tilde U_0\subseteq\C^n$ and $V_0\subseteq\R^q$. Let $\beta>0$ and $1\le m\le n$. And let $Z_j=\sum_{k=1}^nb_j^k(\zeta,s)\Coorvec{\zeta^k}$, $j=1,\dots,m$ be complex vector fields on $\tilde U_0\times V_0$ such that 
    \begin{itemize}[nolistsep]
        \item $Z_1,\dots,Z_m$ are linearly independent and pairwise commutative at every point in $\tilde U_0\times V_0$
        \item $b_j^k\in\Co^\beta_\loc(\tilde U_0\times V_0;\C^n)$ are holomorphic in $w$-variable.
    \end{itemize}
    
    Then for any $(\zeta_0,s_0)\in \tilde U_0\times V_0$ there is a neighborhood $\tilde U\times V\subseteq \tilde U_0\times V_0$ of $(\zeta_0,s_0)$ and a $\Co^\beta$-map $\widetilde G'':\tilde U\times V\to\C^{n-m}$, such that 
    \begin{enumerate}[parsep=-0.3ex,label=(\roman*)]
        \item\label{Item::ODE::ParaHolFro::0}$\widetilde G''(\zeta_0,s_0)=0$.
        \item\label{Item::ODE::ParaHolFro::Reg} $\widetilde G''\in\Co^\infty\Co^\beta(\tilde U,V;\C^{n-m})$ is holomorphic in $\zeta$-variable. In particular $\widetilde G''\in\Co^{\infty,\beta}_{\zeta,s,\loc}$ and $\nabla_\zeta \widetilde G''\in\Co^{\infty,\beta}_{\zeta,s,\loc}$. 
        \item\label{Item::ODE::ParaHolFro::Span} For each $s\in V$, the contangent subbundle $\Span(Z_1(\cdot,s),\dots,Z_m(\cdot,s))^\bot|_{\tilde U}\le \C T^*\tilde U$ has a holomorphic local basis $(d\widetilde G''^1(\cdot,s),\dots,d\widetilde G''^{n-m}(\cdot,s),d\bar \zeta^1,\dots,d\bar \zeta^n)$.
    \end{enumerate}
    
    Let  $U:=\tilde U\cap\R^n\subseteq\R^n$ be the open set in the real space endowed with the real coordinates $(\xi^1,\dots,\xi^n)$ where $\xi=\re\zeta$. Let $X_j:=\sum_{k=1}^nb_j^k(\xi+i0,s)\Coorvec{\xi^k}$ for $1\le j\le m$ be the ``real domain restriction'' of $Z_1,\dots,Z_m$. Then
    \begin{enumerate}[parsep=-0.3ex,label=(\roman*)]\setcounter{enumi}{3}
        \item\label{Item::ODE::ParaHolFro::SpanR} For each $s\in V$, the cotangent subbundle $\Span(X_1(\cdot,s),\dots,X_m(\cdot,s))^\bot\le \C T^*U$ has a real-analytic local basis $\big(d(\widetilde G''^1(\cdot,s)|_U),\dots,d(\widetilde G''^{n-m}(\cdot,s)|_U)\big)$.
    \end{enumerate}
\end{prop}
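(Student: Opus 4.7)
My approach is to mimic the construction in the proof of Theorem \ref{Thm::ODE::BLFro}, but replace the real ODE multi-flow with the parametric holomorphic multi-flow of Lemma \ref{Lem::ODE::HoloFlow}. First, after a constant $\C$-linear change of coordinates on $\C^n$, translations, and shrinking $\tilde U_0 \times V_0$, I may assume $(\zeta_0, s_0) = (0, s_0)$ and that $Z_1|_{(0,s_0)}, \ldots, Z_m|_{(0,s_0)}, \partial/\partial\zeta^{m+1}|_0, \ldots, \partial/\partial\zeta^n|_0$ form a $\C$-basis of $T^\Oh_0 \C^n$. Splitting $\zeta = (\zeta', \zeta'') \in \C^m \times \C^{n-m}$, the matrix $M(\zeta, s) = (b^k_j(\zeta, s))_{1 \le j, k \le m}$ is invertible at the origin, hence on a neighborhood by Lemma \ref{Lem::Hold::CompofMixHold}~\ref{Item::Hold::CompofMixHold::InvMat}, and $M, M^{-1}$ are both holomorphic in $\zeta$ with $\Co^\beta$ parameter dependence. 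The replacement $W := M^{-1} Z$ gives a new generating tuple in the normal form
$$W_j = \frac{\partial}{\partial \zeta'^j} + \sum_{l=1}^{n-m} A_{j,l}(\zeta, s) \frac{\partial}{\partial \zeta''^l}, \qquad A_{j,l} \in \Co^\beta_\loc, \text{ holomorphic in } \zeta.$$
Since $\Span_\C(W) = \Span_\C(Z)$ is involutive, $[W_j, W_k] \in \Span_\C(W)$; but the bracket has no $\partial/\partial\zeta'$-component by direct computation, while every element of $\Span_\C(W)$ with vanishing $\partial/\partial\zeta'$-component is zero. Hence $[W_j, W_k] = 0$.

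Next, iterating Lemma \ref{Lem::ODE::HoloFlow}~\ref{Item::ODE::HoloFlow::Comm} and using \ref{Item::ODE::HoloFlow::DefReg}, I construct the parametric holomorphic multi-flow
$$\Phi(\tau, \zeta'', s) := e^{\tau^1 W_1(\cdot, s)} \circ \cdots \circ e^{\tau^m W_m(\cdot, s)}\bigl((0, \zeta''), s\bigr), \qquad (\tau, \zeta'') \text{ near } 0 \in \C^m \times \C^{n-m},$$
which is independent of the order of composition, holomorphic in $(\tau, \zeta'')$, and $\Co^\infty\Co^\beta$ jointly. Its complex Jacobian in $(\tau, \zeta'')$ at the origin is the identity, so $\Phi(\cdot, \cdot, s)$ is a local biholomorphism for each $s$ near $s_0$. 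Viewing $\C^n \cong \R^{2n}$ and applying the parametric inverse function theorem (Lemma \ref{Lem::Hold::CompofMixHold}~\ref{Item::Hold::CompofMixHold::InvFun}) to the real and imaginary parts, I obtain $\Psi(\zeta, s) = (\Psi'(\zeta, s), \Psi''(\zeta, s)) := \Phi(\cdot, \cdot, s)^\Inv(\zeta)$ defined on a neighborhood $\tilde U \times V$ of $(0, s_0)$ with $\Psi \in \Co^\beta_\loc$. Because the inverse of a holomorphic bijection is holomorphic, $\Psi(\cdot, s)$ is holomorphic in $\zeta$; Lemma \ref{Lem::Hold::NablaHarm} then upgrades $\Psi$ to $\Co^\infty\Co^\beta$. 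Setting $\widetilde G'' := \Psi''$ gives \ref{Item::ODE::ParaHolFro::0} and \ref{Item::ODE::ParaHolFro::Reg}.

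For \ref{Item::ODE::ParaHolFro::Span}, by construction $\widetilde G''(\cdot, s)$ is constant along every integral curve of each $W_j(\cdot, s)$, so $W_j \widetilde G''^l \equiv 0$; since $\Span_\C(W) = \Span_\C(Z)$, also $Z_j \widetilde G''^l \equiv 0$, i.e., each $d\widetilde G''^l$ annihilates every $Z_j$. The anti-holomorphic forms $d\bar\zeta^k$ annihilate $Z_j$ automatically because $Z_j \in T^\Oh$. At each point of $\tilde U \times V$, the $d\widetilde G''^l$ lie in the $(1,0)$-cotangent space with independent images (their complex Jacobian $\partial_\zeta \widetilde G''$ has rank $n-m$ since $\widetilde G''(\cdot, s)$ is a holomorphic submersion), while the $d\bar\zeta^k$ span the $(0,1)$-cotangent space; thus the full collection of $2n-m$ forms is linearly independent. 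Since $\Span_\C(Z_1, \ldots, Z_m)^\perp$ has complex dimension $2n-m$, it follows these forms are a local basis.

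Finally, for the real-domain claim \ref{Item::ODE::ParaHolFro::SpanR}, for any function $f(\zeta, s)$ holomorphic in $\zeta$ the Cauchy--Riemann equations give $(\partial f/\partial \zeta^k)|_U = \partial(f|_U)/\partial \xi^k$, so $(Z_j \widetilde G''^l)|_U = X_j(\widetilde G''^l|_U) = 0$ for all $j, l$. The real Jacobian $\partial_\xi(\widetilde G''|_U)$ equals the complex Jacobian $\partial_\zeta \widetilde G''|_U$, which has rank $n-m$, so $d(\widetilde G''^1|_U), \ldots, d(\widetilde G''^{n-m}|_U)$ are $\C$-linearly independent 1-forms on $U \times V$; a dimension count ($\Span_\C(X_1, \ldots, X_m)^\perp \le \C T^*U$ has complex dimension $n-m$) yields the basis claim, and real-analyticity on $U$ follows since $\widetilde G''$ is holomorphic. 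The main technical issue throughout is not the geometric content, which is the standard holomorphic Frobenius proof via commuting flows, but tracking the mixed regularity at each step, for which Lemma \ref{Lem::Hold::NablaHarm} is the essential tool to promote joint $\Co^\beta$ regularity to $\Co^\infty\Co^\beta$ whenever a function is holomorphic in $\zeta$.
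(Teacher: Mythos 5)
Your proof is correct and follows essentially the same route as the paper: flow the commuting holomorphic vector fields (with $\Co^\beta$ parameter dependence, via Lemma \ref{Lem::ODE::HoloFlow}) from a slice transverse to their span, invert with the mixed-regularity inverse function theorem (Lemma \ref{Lem::Hold::CompofMixHold} \ref{Item::Hold::CompofMixHold::InvFun}), bootstrap to $\Co^\infty\Co^\beta$ by Lemma \ref{Lem::Hold::NablaHarm}, and take the transverse component of the inverse; the paper merely flows the original commuting $Z_j$ from an affine slice $\Gamma$ instead of first normalizing to $W=M^{-1}Z$, which spares it your (correct) commutativity check for the $W_j$. The only cosmetic slip is that with your normalization the Jacobian of $\Phi$ at the origin is unit lower-triangular rather than the identity, but it is still invertible, so nothing breaks.
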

\begin{remark}
We state \ref{Item::ODE::ParaHolFro::Span} and \ref{Item::ODE::ParaHolFro::SpanR} in this way because the pointwise span of $dG''$ or $d(G''|_{U\times V})$ does not make sense when $\beta\le1$.

    When $\beta>1$, the result \ref{Item::ODE::ParaHolFro::SpanR} is the same as to say that the $\Co^\beta$-subbundle $\Span(X_1,\dots,X_m)\le \C T^*(U\times V)$ has a local basis $(d(\widetilde G''^1|_{U\times V}),\dots,d(\widetilde G''^{n-m}|_{U\times V}),ds^1,\dots,ds^q)$. Similar result holds for \ref{Item::ODE::ParaHolFro::Span}.
\end{remark}
\begin{proof}[Proof of Proposition \ref{Prop::ODE::ParaHolFro}]
Endow two complex spaces $\C^m$ and $\C^{n-m}$ with standard complex coordinate $z=(z^1,\dots,z^m)$ and $w=(w^1,\dots,w^{n-m})$.

Since $Z_1,\dots,Z_m$ are linearly independent, the span of $Z_1,\dots,Z_m$ is a rank $m$ complex subbundle on $\tilde U_0\times V_0$. Hence we can find an affine complex linear map $\Gamma:\C^{n-m}_w\to\C^n_\zeta$ such that $\Gamma(0)=\zeta_0$ and 
\begin{equation}\label{Eqn::ODE::PfParaHolFro::LinInd}
    \partial_{w^1}\Gamma,\dots,\partial_{w^{n-m}}\Gamma,Z_1|_{(\zeta_0,s_0)},\dots,Z_m|_{(\zeta,s)}\in\C T_{\zeta_0} \tilde U_0\text{ are $\C$-linearly independent}.
\end{equation}

We define $\Co^\beta$-maps $\Psi(z,w,s)$ to $\tilde U_0$ and $\widehat\Psi(z,w,s)$ to $\tilde U_0\times V_0$ as
\begin{equation*}
    \Psi(z,w,s):=e^{z\cdot Z}(\Gamma(w),s+s_0)=\big(e^{z^1 Z_1(\cdot,s+s_0)}\dots e^{z^m Z_m(\cdot,s+s_0)}(\Gamma(w))\big),\quad\widehat\Psi(z,w,s):=(\Psi(z,w,s),s+s_0).
\end{equation*}
Here $e^{z^jZ_j(\cdot,s)}(\zeta)=\Phi^{Z_j}(z_j,\zeta,s)$ is given by \eqref{Eqn::ODE::HoloFlow::EqnDef} and \eqref{Eqn::ODE::HoloFlow::DefExpZ}.

Clearly $\Psi(0,0,0)=(\zeta_0,s_0)$. By Lemma \ref{Lem::ODE::HoloFlow} \ref{Item::ODE::HoloFlow::DefReg} and taking compositions, $\Psi$ is defined in a neighborhood $\Omega'\times\Omega''\times\Omega'''\subseteq\C^m_z\times\C^m_w\times\R^q_s$ of $0$, such that $\Psi\in\Co^\infty\Co^\beta(\Omega'\times\Omega'',\Omega''';\tilde U_0)$ and  $\Psi$ is holomorphic in $(z,w)$.

Since $\Coorvec{z^j}e^{z^jZ_j}=Z_j\circ e^{z^jZ_j}$ from \eqref{Eqn::ODE::HoloFlow::EqnDef}, by Lemma \ref{Lem::ODE::FlowComm} we have
\begin{equation}\label{Eqn::ODE::PfParaHolFro::DPsi}
    \textstyle\Coorvec{z^j}\widehat\Psi(z,w,s)=Z_j\circ\widehat\Psi(z,w,s),\quad 1\le j\le m,\quad (z,w,s)\in\Omega.
\end{equation}

By condition \eqref{Eqn::ODE::PfParaHolFro::LinInd} we see that $\nabla_{z,w}\Psi(\cdot,0)$ is of full rank in a neighborhood of $0$. 
Therefore, by Lemma \ref{Lem::Hold::CompofMixHold} \ref{Item::Hold::CompofMixHold::InvFun} and shrinking $\Omega'\times\Omega''\times\Omega'''$ if necessary, $\Psi(\cdot,s):\Omega'\times\Omega''\to\tilde U_0$ is locally biholomorphic for each $s\in\Omega'''$ and $\widehat\Psi^\Inv\in\Co^{\infty,\beta}_\loc(\tilde U,V;\C^n\times\R^q)$ for some neighborhood $\tilde U\times V\subseteq\tilde U_0\times V_0$ of $(\zeta_0,s_0)$. Thus we get the well-defined map $\widetilde G(\zeta,s):=\Psi(\cdot,s-s_0)^\Inv(\zeta)$ for $(\zeta,s)\in\tilde U\times V$ with $\widetilde G\in\Co^{\infty,\beta}_{\zeta,s}$.

Write $\widetilde G=(\widetilde G',\widetilde G'',\widetilde G''')$ with respect to the $z$, $w$, $s$-coordinate components. We are going to show that $\widetilde G'':\tilde U\times V\to\C^{n-m}$ is as desired.

Immediately $\widetilde G''(\zeta_0,s_0)=0$ because $\widehat\Psi^\Inv(\zeta,s)=(\widetilde G''(\zeta,s),s-s_0)$ and $\widehat\Psi(0)=(\zeta_0,s_0)$. We have \ref{Item::ODE::ParaHolFro::0}.

Since $\widetilde G\in\Co^{\infty,\beta}_{\zeta,s}(\tilde U,V;\C^n)$ and $\Psi$ is holomorphic in $z,w$, we know $\widetilde G$ is holomorphic in $\zeta$. By Lemma \ref{Lem::Hold::NablaHarm} we get $\widetilde G\in\Co^\infty_\zeta\Co^\beta_s$. In particular $\widetilde G''\in\Co^\infty_\zeta\Co^\beta_s$ is holomorphic in $\zeta$, giving \ref{Item::ODE::ParaHolFro::Reg}.

\medskip To prove \ref{Item::ODE::ParaHolFro::Span} and \ref{Item::ODE::ParaHolFro::SpanR} we can fix a $s\in V$. Thus without loss of generality $V=\{\ast\}$ is a singleton, and we can assume $Z(\zeta,s)=Z(\zeta)$ and $\widetilde G(\zeta,s)=\widetilde G(\zeta)$ in the following argument.

\noindent\smallskip
\textit{Proof of \ref{Item::ODE::ParaHolFro::Span}}: Now $\widetilde G:\tilde U\to\C^m$ is a complex (holomorphic) coordinate chart, so $d\widetilde G''^1,\dots,d\widetilde G''^{n-m},d\bar \zeta^1,\dots,d\bar \zeta^n$ are linearly independent differentials on $\tilde U$.

By \eqref{Eqn::ODE::PfParaHolFro::DPsi} we have $Z_j=\Psi_*\Coorvec{z^j}=\widetilde G^*\Coorvec{z^j}$, thus $Z_j\widetilde G''^k=(\widetilde G^*\Coorvec{z^j})(\widetilde G^*dw^k)=0$ all vanish for $1\le j\le m$ and $1\le k\le n-m$.
This finishes the proof of \ref{Item::ODE::ParaHolFro::Span}.

\medskip\noindent\textit{Proof of \ref{Item::ODE::ParaHolFro::SpanR}}: By \ref{Item::ODE::ParaHolFro::Span}, we know the $n\times (n-m)$ matrix $(\partial_{\zeta^j}\widetilde G''^k)_{\substack{1\le j\le n\\1\le k\le n-m}}$ has full rank $n-m$ at every point in $\tilde U$.
By \ref{Item::ODE::ParaHolFro::Reg} $\widetilde G''$ is holomorphic in $\zeta$, so $\partial_\xi \widetilde G''=\partial_\zeta \widetilde G''$. Therefore, the matrix-valued function $(\partial_{\xi^j}\widetilde G''^k)_{n\times(n-m)}:U\to\C^{n\times (n-m)}$ still has rank $n-m$ at every point. Since $d(\widetilde G''^k|_{U})=\sum_{j=1}^n\frac{\partial \widetilde G''^k}{\partial\xi^j}d\xi^j$, we conclude that $d(\widetilde G''|_{U})$ is a collection of $(n-m)$-differentials that are linearly independent on $U$.

Since $\partial_\xi \widetilde G''=\partial_\zeta \widetilde G''$ and $Z_j\widetilde G''=0$ from above, we know $X_j(\widetilde G''|_{U})=0$ as well, for $1\le j\le m$. Therefore, $d(\widetilde G''|_{U})$ annihilates $X_1,\dots,X_m$ on $U$. This completes the proof of \ref{Item::ODE::ParaHolFro::SpanR}.
\end{proof}

\section{Some Elliptic PDEs}\label{Section::SecPDE}
In this section we will prove two results which occur in the key steps of Malgrange's factorization.
\subsection{Existence of a particular PDE}\label{Section::ExistPDE}
In this subsection we use $\tau=(\tau^1,\dots,\tau^r)$, $w=(w^1,\dots,w^m)$, $s=(s^1,\dots,s^q)$ as the standard (real or complex) coordinate system for $\R^r$, $\C^m$ and $\R^q$ respectively. We consider the unit balls $\B^{r+2m}\subset\R^r_\tau\times\C^m_w$ and $\B^q\subset\R^q_s$.

\begin{conv}\label{Conv::PDE::ConvofExtPDE}
We denote $\Ic:\B^{r+2m}\times\B^q\to\B^{2m}$, $\Ic(\tau,w,s):=w$ as the projection map. 

For a matrix $B=(b_j^k)$, we use $b_j^k$ to denote the $j$-th row $k$-th column entry of $B$.

We use $A=\begin{pmatrix}	A'\\A''	\end{pmatrix}:\B^{r+2m}_{\tau,w}\times \B^q_s\to\C^{(r+m)\times m}$ as a matrix map, $H''=(H''^1,\dots,H''^m):\B^{r+2m}_{\tau,w}\times \B^q_s\to \C^m$ as a vector-valued map whose image is set to be a $m$-dimensional row vector. We use 
\begin{equation}\label{Eqn::PDE::GradH}
    H''_\tau=\begin{pmatrix}\frac{\partial H''^1}{\partial \tau^1}&\cdots&\frac{\partial H''^m}{\partial \tau^1}\\\vdots&\ddots&\vdots\\\frac{\partial H''^1}{\partial \tau^r}&\cdots&\frac{\partial H''^m}{\partial \tau^r}\end{pmatrix},\quad H''_w=\begin{pmatrix}\frac{\partial H''^1}{\partial w^1}&\cdots&\frac{\partial H''^m}{\partial w^1}\\\vdots&\ddots&\vdots\\\frac{\partial H''^1}{\partial w^m}&\cdots&\frac{\partial H''^m}{\partial w^m}\end{pmatrix},\quad\nabla_{\tau,w} H''=\begin{pmatrix}H''_\tau\\H''_w\\H''_{\bar w}\end{pmatrix}
\end{equation}

Let $\Psi,\Theta,\Lambda$ be matrix-valued functions of $H''$ and $A$ as follow:
\begin{equation}\label{Eqn::PDE::matrixfun}
\Psi[H'']:=\begin{pmatrix}I&H''_\tau&\bar H''_\tau\\&H''_ w&\bar H''_ w\\&H''_{\bar w}&\bar H''_{\bar w}\end{pmatrix},\ \Theta[A;H'']:=\begin{pmatrix}I&H''_\tau+ A'H''_{\bar w}\\&H''_ w+ A''H''_{\bar w}\end{pmatrix},\  \Lambda[A;H'']:=\Theta[A;H'']^{-1}\begin{pmatrix}\bar H''_\tau+ A'\bar H''_{\bar w}\\\bar H''_ w+ A''\bar H''_{\bar w}\end{pmatrix}.\end{equation}

\end{conv}

\begin{remark}One can see that $\Psi,\Theta,\Lambda$ are all rational functions in the components of $A$ and $\nabla_{\tau,w}H''$.
\end{remark} 

We are going to consider the following PDE system, whose deduction is in Lemma \ref{Lem::Key::NewGen}: for $l=1,\dots,m$, $(\tau,w)\in\B^{r+2m}$ and $s\in\B^q$,
\begin{equation}\label{Eqn::PDE::ExistenceH}\begin{aligned}
		&\sum_{j=1}^r\left(\begin{pmatrix}I&H''_\tau&\bar H''_\tau\\&H''_ w&\bar H''_ w\\&H''_{\bar w}&\bar H''_{\bar w}\end{pmatrix}^{-1}\!\!\begin{pmatrix}\partial_{\tau}\\\partial_{ w}\\\partial_{\bar w}\end{pmatrix}\right)_j\left(\begin{pmatrix}I&H''_\tau+ A'H''_{\bar w}\\&H''_ w+ A''H''_{\bar w}\end{pmatrix}^{-1}\!\!\begin{pmatrix}\bar H''_\tau+ A'\bar H''_{\bar w}\\\bar H''_ w+ A''\bar H''_{\bar w}\end{pmatrix}\right)_j^l\Bigg|_{(\tau, w;s)}
		\\
		+&\sum_{k=1}^m\left(\begin{pmatrix}I&H''_\tau&\bar H''_\tau\\&H''_ w&\bar H''_ w\\&H''_{\bar w}&\bar H''_{\bar w}\end{pmatrix}^{-1}\!\!\begin{pmatrix}\partial_{\tau}\\\partial_{ w}\\\partial_{\bar w}\end{pmatrix}\right)_{k+r+m}\!\!\left(\begin{pmatrix}I&H''_\tau+ A'H''_{\bar w}\\&H''_ w+ A''H''_{\bar w}\end{pmatrix}^{-1}\!\!\begin{pmatrix}\bar H''_\tau+ A'\bar H''_{\bar w}\\\bar H''_ w+ A''\bar H''_{\bar w}\end{pmatrix}\right)_{k+r}^l\Bigg|_{(\tau, w;s)}=0.
		\end{aligned}\end{equation}

Recall $\partial_\tau=[\partial_{\tau^1},\dots,\partial_{\tau^r}]^\top$ in Section \ref{Section::Convention} and the space $\Co^{\alpha+1}_{\tau,w}L^\infty_s\cap\Co^1_{\tau,w}\Co^\beta_s$ in Definition \ref{Defn::Hold::BiHold}. In this subsection we prove the following: 
\begin{prop}\label{Prop::PDE::ExistPDE}
	Using Convention \ref{Conv::PDE::ConvofExtPDE}, for any $\alpha\in(1,\infty)$, $\beta\in(0,\alpha+1]$ and $\eps>0$, there is a $\delta=\delta(\eps,\alpha,\beta,r,m,q)>0$, such that if $A\in\Co^{\alpha,\beta}(\B^{r+2m},\B^q;\C^{(r+m)\times m})$ satisfies $\|A\|_{\Co^{\alpha,\beta}_{(\tau,w),s}}<\delta$, then there is a $H''\in\Co^{\alpha+1}_{\tau,w}L^\infty_s\cap\Co^1_{\tau,w}\Co^\beta_s(\B^{r+2m}, \B^q;\C^m)$ such that 
	\begin{enumerate}[parsep=-0.3ex,label=(\roman*)]
	    \item\label{Item::PDE::ExistPDE::PDEEqn}$H''$ solves the following PDE system \eqref{Eqn::PDE::ExistenceH} and satisfies the boundary condition $H''|_{(\partial\B^{r+2m})\times\B^q}=\Ic$.
	    \item\label{Item::PDE::ExistPDE::HisDiffeo} Let $H(\tau,w,s):=(\tau,H''(\tau,w,s),s)$. Then $H:\B^{r+2m}\times\B^q\to\B^{r+2m}\times\B^q$ is homeomorphism, and $H(\frac14\B^{r+2m}\times\frac14\B^q)\subseteq\frac12\B^{r+2m}\times\frac12\B^q$.
	    \item\label{Item::PDE::ExistPDE::Phi} Endow $\R^r$ and $\C^m$ with another (real or complex) coordinate system $\sigma=(\sigma^1,\dots,\sigma^r)$ and $\zeta=(\zeta^1,\dots,\zeta^m)$. Let $\tilde\Phi:=H^\Inv:\B^{r+2m}_{\sigma,\zeta}\times\B^q_s\to\B^{r+2m}_{\tau,w}\times\B^q_s$ be the inverse map. Then  $\Lambda[A;H'']\circ\tilde\Phi\in\Co^\alpha_{\sigma,\zeta} L^\infty_s\cap\Co^{-1}_{\sigma,\zeta}\Co^\beta_s(\B^{r+2m},\B^q;\C^{(r+m)\times m})$. Moreover
	    \begin{equation}\label{Eqn::PDE::ExistPDE::BddLambda}
	        \|\Lambda[A;H'']\circ\tilde\Phi\|_{\Co^\alpha L^\infty \cap\Co^{-1} \Co^\beta(\B^{r+2m},\B^q;\C^{(r+m)\times m})}<\eps.
	    \end{equation}
	\end{enumerate}
\end{prop}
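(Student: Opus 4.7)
The plan is to write $H'' = \Ic + f$, which converts \eqref{Eqn::PDE::ExistenceH} into a quasilinear second-order elliptic system for $f$ with the Dirichlet boundary condition $f|_{(\partial \B^{r+2m}) \times \B^q} = 0$. At $(f, A) = (0, 0)$ one has $H''_w = I_m$, $H''_{\bar w} = H''_\tau = 0$, hence $\Psi = I$, $\Theta = I$ and $\Lambda = 0$. Linearising the $l$-th equation in $f$ around this trivial solution (and taking complex conjugates, which is harmless since the leading operator has real coefficients), the principal part is the constant-coefficient elliptic operator
\[
    \mathcal L := \sum_{j=1}^r \partial_{\tau^j}^2 + \sum_{k=1}^m \partial_{\bar w^k} \partial_{w^k} = \Delta_\tau + \tfrac14 \Delta_w,
\]
which up to a linear rescaling of $w$ is the Euclidean Laplacian on $\R^{r+2m}$. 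The PDE can therefore be rearranged as $\mathcal L f = \mathcal N(f, \nabla f, \nabla^2 f, A, \nabla A)$, where $\mathcal N$ is a rational function of its arguments, well-defined whenever $f$ and $A$ are small, vanishing at $(f, A) = (0, 0)$, and every summand of which is a product of factors drawn from the list $\{f, \nabla f, \nabla^2 f, A, \nabla A\}$ with at most one factor carrying a top (i.e.\ second on $f$, first on $A$) derivative.

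Let $\mathcal P$ be the Dirichlet solver for $\mathcal L$ on $\B^{r+2m}$ with parameter $s$, provided by Lemma \ref{Lem::Hold::LapInvBdd} after the linear coordinate change reducing $\mathcal L$ to $\Delta$, combined with Lemma \ref{Lem::Hold::TOtimesId} for the $s$-variable. Then the boundary-value problem is equivalent to the fixed-point equation $f = T(f) := \mathcal P\bigl(\mathcal N(f, \nabla f, \nabla^2 f, A, \nabla A)\bigr)$ on the closed subspace
\[
    \mathcal X := \bigl\{f \in \Co^{\alpha+1}_{\tau, w} L^\infty_s \cap \Co^1_{\tau, w} \Co^\beta_s(\B^{r+2m}, \B^q; \C^m) : f|_{(\partial \B^{r+2m}) \times \B^q} = 0\bigr\}.
\]
For $f \in \mathcal X$, the non-top factors of $\mathcal N$ sit in $\Co^\alpha L^\infty \cap \Co^0 \Co^\beta$ while the top factors $\nabla^2 f$, $\nabla A$ sit in $\Co^{\alpha-1} L^\infty \cap \Co^{-1} \Co^\beta$; Corollary \ref{Cor::Hold::CorMult} \ref{Item::Hold::CorMult::Prin} with $(\eta, \theta) = (0, 1)$ then places each summand in $\Co^{\alpha-1} L^\infty \cap \Co^{-1} \Co^\beta$, and Corollary \ref{Cor::Hold::CramerMixed} handles the inversions $\Psi^{-1}, \Theta^{-1}$ (perturbations of $I$ when $\|f\|_{\mathcal X}$ is small). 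Since $\mathcal P$ gains two $(\tau, w)$-derivatives, $T$ maps $\mathcal X$ into itself. The vanishing of $\mathcal N$ at the origin combined with these multiplicative estimates yields that, on a small ball $\{\|f\|_{\mathcal X} \le \delta'\}$, $T$ is a contraction with constant $O(\|A\|_{\Co^{\alpha, \beta}} + \delta')$. Choosing $\delta, \delta'$ sufficiently small, Banach's fixed-point theorem produces $H'' = \Ic + f \in \mathcal X$ verifying \ref{Item::PDE::ExistPDE::PDEEqn}.

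For \ref{Item::PDE::ExistPDE::HisDiffeo}, the inclusion $\Co^{\alpha+1} L^\infty \cap \Co^1 \Co^\beta \subseteq \Co^{\alpha+1, \beta}$ from Remark \ref{Rmk::Hold::SimpleHoldbyCompFact} makes $\|H'' - \Ic\|_{\Co^{\alpha+1, \beta}}$ arbitrarily small. Lemma \ref{Lem::Hold::QPIFT} (applied in the $(\tau, w)$-variable with $s$ as parameter, using $\C^m \cong \R^{2m}$) then shows $H: \B^{r+2m} \times \B^q \to \B^{r+2m} \times \B^q$ is a bijection with $\Co^{\alpha+1, \beta}$-inverse $\tilde\Phi$; the ball inclusion is immediate from $\|f\|_{C^0}$ being small. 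For \ref{Item::PDE::ExistPDE::Phi}, $\Lambda[A; H'']$ is a rational function of $A, \nabla H''$ vanishing at $(A, f) = (0, 0)$, so the same product estimates yield that $\|\Lambda[A; H'']\|_{\Co^\alpha L^\infty \cap \Co^0 \Co^\beta}$ is controlled by $\|A\|_{\Co^{\alpha, \beta}} + \|f\|_{\mathcal X}$, hence arbitrarily small. Proposition \ref{Prop::Hold::CompThm}, applied componentwise with $g$ equal to the entries of $\Lambda[A; H'']$ and with $F$ equal to the $(\tau, w)$-coordinates of $H$, then delivers \eqref{Eqn::PDE::ExistPDE::BddLambda} after a further shrinking of $\delta$.

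The main technical obstacle is the bookkeeping in the fixed-point step: each summand of $\mathcal N$ must be grouped as one factor in $\Co^\alpha L^\infty \cap \Co^0 \Co^\beta$ against one factor in $\Co^{\alpha-1} L^\infty \cap \Co^{-1} \Co^\beta$, which is precisely the pairing required by Corollary \ref{Cor::Hold::CorMult} \ref{Item::Hold::CorMult::Prin} to land in the input space $\Co^{\alpha-1} L^\infty \cap \Co^{-1} \Co^\beta$ of $\mathcal P$. A less careful pairing (e.g.\ combining two top-derivative factors, or letting a single factor carry more than one top derivative) would output into a strictly larger space and break the scheme. Once this is arranged, and Corollary \ref{Cor::Hold::CramerMixed} is used to control $\Psi^{-1}, \Theta^{-1}$ uniformly, the remaining steps reduce to a routine Banach fixed-point argument plus the composition estimate of Proposition \ref{Prop::Hold::CompThm}.
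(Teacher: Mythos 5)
Your proposal is correct and takes essentially the same route as the paper: the same bilinear estimates (Corollary \ref{Cor::Hold::CorMult} \ref{Item::Hold::CorMult::Prin}, Corollary \ref{Cor::Hold::CramerMixed}) control the nonlinearity, the Dirichlet solver of Lemma \ref{Lem::Hold::LapInvBdd} inverts the principal part $\Delta_\tau+\Box_w$, and parts \ref{Item::PDE::ExistPDE::HisDiffeo} and \ref{Item::PDE::ExistPDE::Phi} follow exactly as in the paper from Lemma \ref{Lem::Hold::QPIFT} and Proposition \ref{Prop::Hold::CompThm} once $\|H''-\Ic\|$ and $\|\Lambda[A;H'']\|_{\Co^\alpha L^\infty\cap\Co^0\Co^\beta}$ are small. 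The only (cosmetic) difference is packaging: the paper establishes Fr\'echet differentiability of $T[A;H'']$ with linearization $(\Delta_\tau+\Box_w)\overline{(\cdot)}$ at $(0,\Ic)$ and invokes the Banach-space Implicit Function Theorem (Lemma \ref{Lem::PDE::MatisInv}), whereas you invert the principal part explicitly and run the equivalent contraction-mapping argument.
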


As an immediate corollary to Proposition \ref{Prop::PDE::ExistPDE} we have:
\begin{cor}\label{Cor::PDE::CorofExistPDE}
	The map $H''$ in the consequence of Proposition \ref{Prop::PDE::ExistPDE} satisfies $H''\in\Co^{\alpha+1,\beta}(\B^{r+2m},\B^q;\C^m)$ and $\nabla_{\tau,w} H''\in\Co^{\alpha,\beta-}(\B^{r+2m},\B^q;\C^{(r+2m)\times m})$.
\end{cor}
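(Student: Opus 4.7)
The first assertion $H''\in\Co^{\alpha+1,\beta}(\B^{r+2m},\B^q;\C^m)$ is essentially immediate. Proposition \ref{Prop::PDE::ExistPDE} gives $H''\in\Co^{\alpha+1}L^\infty\cap\Co^1\Co^\beta$; the elementary embedding $\Co^1\subset L^\infty$ upgrades the second factor so that $H''\in\Co^{\alpha+1}L^\infty\cap L^\infty\Co^\beta$, and this intersection equals $\Co^{\alpha+1,\beta}$ by Lemma \ref{Lem::Hold::CharMixHold} \ref{Item::Hold::CharMixHold::Mix=Bi} (as already noted in Remark \ref{Rmk::Hold::SimpleHoldbyCompFact}).

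For the second assertion I would not try to route through Lemma \ref{Lem::Hold::GradMixHold} applied to $H''\in\Co^{\alpha+1,\beta}$, since that only yields $\nabla H''\in\Co^{\alpha,\alpha\beta/(\alpha+1)-}$, which is strictly weaker than the claimed $\Co^{\alpha,\beta-}$ for any finite $\alpha$. Instead I would exploit the stronger $\Co^1\Co^\beta$ information in the conclusion of Proposition \ref{Prop::PDE::ExistPDE} -- information that is not captured by $\Co^{\alpha+1,\beta}$ alone. Taking one $(\tau,w)$-derivative immediately gives $\nabla_{\tau,w}H''\in\Co^\alpha L^\infty\cap\Co^0\Co^\beta$, and the remaining task is to show that this space embeds into $\Co^{\alpha,\beta-}$.

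Since $\Co^{\alpha,\beta-}=\Co^\alpha L^\infty\cap L^\infty\Co^{\beta-}$ (by Definition \ref{Defn::Hold::MoreMixHold} and Lemma \ref{Lem::Hold::CharMixHold}) and the $\Co^\alpha L^\infty$ factor is already in hand, the plan reduces to proving $\Co^\alpha L^\infty\cap\Co^0\Co^\beta\subseteq L^\infty\Co^\gamma$ for every $\gamma<\beta$. First I would upgrade $\Co^\alpha L^\infty\subseteq\Co^\alpha\Co^0$ by Young's inequality, using the uniform bound $\sup_{k\ge0}\|\psi_k\|_{L^1}<\infty$: $\|(\phi_j\otimes\psi_k)\ast f\|_{L^\infty}\le\|\psi_k\|_{L^1}\|\phi_j\ast_xf\|_{L^\infty}\lesssim 2^{-j\alpha}$. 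The bi-parameter interpolation of Remark \ref{Rmk::Hold::RmkforBiHold} \ref{Item::Hold::RmkforBiHold::Interpo} then gives $\Co^\alpha\Co^0\cap\Co^0\Co^\beta\subseteq\Co^{(1-\theta)\alpha}\Co^{\theta\beta}$ for every $\theta\in[0,1]$. For $\theta\in(0,1)$ the first index $(1-\theta)\alpha>0$, so the geometric series in $j$ converges and summing the Littlewood-Paley bounds gives
\[|\psi_k\ast_sf(x,s)|\le\sum_{j=0}^\infty\|(\phi_j\otimes\psi_k)\ast f\|_{L^\infty}\lesssim 2^{-k\theta\beta}\sum_{j=0}^\infty 2^{-j(1-\theta)\alpha}\lesssim 2^{-k\theta\beta},\]
yielding $\Co^{(1-\theta)\alpha}\Co^{\theta\beta}\subseteq L^\infty\Co^{\theta\beta}$. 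Letting $\theta\nearrow 1$ covers all $\gamma<\beta$ and completes the embedding.

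The main subtlety, and the only genuine obstacle, is that the endpoint $\theta=1$ must be excluded (the geometric series in $j$ diverges there), which is precisely the source of the loss from $\beta$ to $\beta-$. This endpoint failure is a shadow of the unboundedness of the Riesz transform on $L^\infty$ singled out in the introduction, and is ultimately what forces the coordinate vector field $F^*\Coorvec z$ in Theorem \ref{Thm::ThmCoor1} to be only $\Co^{\alpha-}$ rather than $\Co^\alpha$.
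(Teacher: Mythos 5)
Your proof is correct and follows essentially the same route as the paper: the first assertion via Remark \ref{Rmk::Hold::SimpleHoldbyCompFact}, and the second by differentiating to get $\nabla_{\tau,w}H''\in\Co^\alpha L^\infty\cap\Co^0\Co^\beta$ and then using the bi-parameter interpolation of Remark \ref{Rmk::Hold::RmkforBiHold} \ref{Item::Hold::RmkforBiHold::Interpo} together with the embedding of a positive-index space into $L^\infty\Co^\gamma$ for every $\gamma<\beta$. Your write-up simply makes explicit the Littlewood--Paley summation that the paper compresses into the chain $\Co^\alpha L^\infty\cap\Co^0\Co^\beta\subset\Co^\eps\Co^{\beta-\eps}\subset L^\infty\Co^{\beta-\eps}$, and your remark on why Lemma \ref{Lem::Hold::GradMixHold} would be insufficient is accurate.
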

\begin{proof}By assumption $H''\in\Co^{\alpha+1}L^\infty\cap\Co^1\Co^\beta$.
Remark \ref{Rmk::Hold::SimpleHoldbyCompFact} we have $\Co^{\alpha+1}L^\infty\cap\Co^1\Co^\beta\subset\Co^{\alpha+1,\beta}$, thus $H''\in\Co^{\alpha+1,\beta}_{(\tau,w),s}$. 
By Remark \ref{Rmk::Hold::RmkforBiHold} \ref{Item::Hold::RmkforBiHold::Interpo} we have $\Co^\alpha L^\infty\cap\Co^0\Co^\beta\subset\Co^\eps\Co^{\beta-\eps}\subset L^\infty\Co^{\beta-\eps}$ for every $0<\eps<\alpha$. Thus $\nabla_{\tau,w} H''\in \Co^{\alpha}_{\tau,w}L^\infty_s\cap(\bigcap_{0<\eps<\beta} L^\infty_{\tau,w}\Co^{\beta-\eps}_s)=\Co^{\alpha,\beta-}_{(\tau,w),s}$.
\end{proof}

Before we prove the Proposition \ref{Prop::PDE::ExistPDE}, we need to show the PDE \eqref{Eqn::PDE::ExistenceH} is well-defined, in the sense that $\Psi[H''](\tau,w,s)$ and $\Theta[A;H''](\tau,w,s)$ are invertible matrices for all $(\tau,w,s)\in\B^{r+2m}\times \B^q$. 

For $\alpha,\beta>0$ and $\eps>0$, we define spaces
	\begin{equation}\label{Eqn::PDE::MatisInv::SpaceUV}
	    \begin{aligned}
    &\U_{\alpha,\beta,\eps}:=\{A\in\Co^{\alpha,\beta}(\B^{r+2m},\B^q;\C^{(r+m)\times m}):\|A\|_{\Co^{\alpha,\beta}}<\eps\},\\
    &\V_{\alpha,\beta,\eps}:=\{H''\in\Co^{\alpha+1}L^\infty\cap\Co^1\Co^\beta(\B^{r+2m},\B^q;\C^m):H''|_{(\partial\B^{r+2m})\times\B^q}\equiv\Ic,\ \|H''-\Ic\|_{\Co^{\alpha+1}L^\infty\cap\Co^1\Co^\beta}<\eps\},
        \end{aligned}
	\end{equation}
	with metrics induced by the norm structures of their respective ambient Banach spaces.
\begin{lem}\label{Lem::PDE::MatisInv}
    For any $\alpha>1$ and $\beta>0$ there is a $\eps_1=\eps_1(r,m,q,\alpha,\beta)>0$ such that
    \begin{enumerate}[parsep=-0.3ex,label=(\roman*)]
        \item\label{Item::PDE::MatisInv::Inv}For every $A\in\U_{\alpha,\beta,\eps_1}$, $H''\in \V_{\alpha,\beta,\eps_1}$, $(\tau,w)\in\B^{r+2m}$ and $s\in\B^q$, the matrices $\Psi[H''](\tau,w,s)$ and $\Theta[A;H''](\tau,w,s)$ given in \eqref{Eqn::PDE::matrixfun} are both invertible.
        \item\label{Item::PDE::MatisInv::BddLambda} For every $A\in\U_{\alpha,\beta,\eps_1}$ and $H''\in\V_{\alpha,\beta,\eps_1}$,
        \begin{equation}\label{Eqn::PDE::MatisInv::BddLambda}
	    \|\Lambda[A;H'']\|_{\Co^{\alpha}_{\tau,w}L^\infty_s\cap\Co^0_{\tau,w}\Co^\beta_s}\le\eps_1^{-1}(\|H''-\Ic\|_{\Co^{\alpha+1}_{\tau,w}L^\infty_s\cap\Co^1_{\tau,w}\Co^\beta_s}+\|A\|_{\Co^{\alpha,\beta}_{(\tau,w),s}}).
	\end{equation}
	
	    \item\label{Item::PDE::MatisInv::Diff} Denote by $T[A;H'']^l$ the left hand side of \eqref{Eqn::PDE::ExistenceH} for $l=1,\dots,m$, and $T[A;H'']:=(T[A;H'']^l)_{l=1}^m$ as the vector-valued function. Then $T:\U_{\alpha,\beta,\eps_1}\times\V_{\alpha,\beta,\eps_1}\to \Co^{\alpha-1}L^\infty\cap\Co^{-1}\Co^\beta(\B^{r+2m},\B^q;\C^m)$ is Fr\'echet differentiable. Moreover for every $H''\in\V_{\alpha,\beta,\eps_1} $,
	    \begin{equation}\label{Eqn::PDE::MatisInv::TangT}
	        \|T[0;H'']-(\Delta_\tau+\Box_w)\bar H''\|_{\Co^{\alpha-1}L^\infty\cap\Co^{-1}\Co^\beta(\B^{r+2m},\B^q;\C^m)}\le\eps_1^{-1}\|H''-\Ic\|_{\Co^{\alpha+1}L^\infty\cap\Co^1\Co^\beta(\B^{r+2m},\B^q;\C^m)}^2.
	    \end{equation}
	    Here $\Delta_\tau+\Box_w=\sum_{j=1}^r\frac{\partial^2}{\partial\tau^j\partial\tau^j}+\sum_{k=1}^m\frac{\partial^2}{\partial w^k\partial\bar w^k}$ is the mixed real and complex Laplacian.
    \end{enumerate}
\end{lem}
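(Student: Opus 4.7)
The three statements all reduce to perturbation arguments at the base point $(A, H'') = (0, \Ic)$. Direct computation from $\Ic_\tau \equiv 0$, $\Ic_w \equiv I_m$, $\Ic_{\bar w} \equiv 0$ gives $\Psi[\Ic] = I_{r+2m}$, $\Theta[0;\Ic] = I_{r+m}$, and $\Lambda[0;\Ic] = 0$. For \ref{Item::PDE::MatisInv::Inv}, the entries of $\Psi[H''] - I_{r+2m}$ are components of $\nabla_{\tau,w}(H''-\Ic)$, whose $\Co^\alpha L^\infty \cap \Co^0\Co^\beta$-norm is bounded by $\eps$ whenever $H'' \in \V_{\alpha,\beta,\eps}$; the entries of $\Theta[A;H''] - I_{r+m}$ differ from these by products of entries of $A$ and $H''_{\bar w}$, which are themselves small by Corollary~\ref{Cor::Hold::CorMult}\ref{Item::Hold::CorMult::0}. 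Applying Corollary~\ref{Cor::Hold::CramerMixed} with $\eta = 0$ (allowed since $\alpha > 1$) then yields pointwise invertibility together with quantitative control of the inverses in $\Co^\alpha L^\infty \cap \Co^0\Co^\beta$, provided $\eps_1$ is sufficiently small.

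For \ref{Item::PDE::MatisInv::BddLambda}, set $h := H''-\Ic$. The right factor in the definition of $\Lambda$ equals the column vector with blocks $A' + \bar h_\tau + A'\bar h_{\bar w}$ and $A'' + \bar h_w + A''\bar h_{\bar w}$, whose bi-parameter norm is at most $C(\|A\|_{\Co^{\alpha,\beta}} + \|h\|_{\Co^{\alpha+1}L^\infty\cap\Co^1\Co^\beta})$ by Corollary~\ref{Cor::Hold::CorMult}\ref{Item::Hold::CorMult::0}. Left-multiplying by $\Theta^{-1}$, whose $\Co^\alpha L^\infty\cap\Co^0\Co^\beta$-norm is $O(1)$ from the previous paragraph, and applying the same product lemma once more gives \eqref{Eqn::PDE::MatisInv::BddLambda}.

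For \ref{Item::PDE::MatisInv::Diff}, observe that $T$ is a finite composition of bounded linear maps ($A \mapsto A$, $H'' \mapsto \nabla_{\tau,w} H''$, and one further $\nabla_{\tau,w}$ applied to $\Lambda$), bilinear products handled by Corollary~\ref{Cor::Hold::CorMult}\ref{Item::Hold::CorMult::0} at the ``undifferentiated'' level and by Corollary~\ref{Cor::Hold::CorMult}\ref{Item::Hold::CorMult::Prin} after the outer derivative is applied, and the matrix inversion $M \mapsto M^{-1}$, which is real-analytic on the open set of invertible matrices by Corollary~\ref{Cor::Hold::CramerMixed}. Consequently $T$ is real-analytic on $\U_{\alpha,\beta,\eps_1} \times \V_{\alpha,\beta,\eps_1}$, in particular Fréchet differentiable. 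For the quadratic estimate \eqref{Eqn::PDE::MatisInv::TangT}, expand $\Theta^{-1}$ and $\Psi^{-1}$ as Neumann series around the identity to obtain, for $A = 0$, $\Lambda^l_j = \partial_{\tau^j}\bar h^l + Q^l_j$ for $j \le r$ and $\Lambda^l_{k+r} = \partial_{w^k}\bar h^l + Q^l_{k+r}$ for $k \le m$, together with $(\Psi^{-1}\partial)_j = \partial_{\tau^j} + R_j$ for $j \le r$ and $(\Psi^{-1}\partial)_{k+r+m} = \partial_{\bar w^k} + R_{k+r+m}$ for $k \le m$, where every remainder $Q$, $R$ contains at least one factor drawn from $\nabla h$ or from $\Psi^{-1} - I$. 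Substituting into the definition of $T$ and using the identity $\sum_{j=1}^r \partial^2_{\tau^j}\bar h^l + \sum_{k=1}^m \partial^2_{w^k\bar w^k}\bar h^l = (\Delta_\tau + \Box_w)\bar h^l = (\Delta_\tau + \Box_w)\bar H''^l$ (the last equality because $(\Delta_\tau + \Box_w)\bar \Ic \equiv 0$) leaves a remainder that is a finite sum of products of at least two small factors drawn from $\nabla h$, $\nabla^2 h$, or $\Psi^{-1} - I$. Each such product has $\Co^{\alpha-1}L^\infty\cap\Co^{-1}\Co^\beta$-norm bounded by $\|h\|_{\Co^{\alpha+1}L^\infty\cap\Co^1\Co^\beta}^2$ via Corollary~\ref{Cor::Hold::CorMult}\ref{Item::Hold::CorMult::Prin}, yielding \eqref{Eqn::PDE::MatisInv::TangT} after possibly shrinking $\eps_1$.

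The main obstacle is purely bookkeeping: the matrix inversions live naturally at the ``undifferentiated'' level $\Co^\alpha L^\infty\cap\Co^0\Co^\beta$ (requiring $\eta=0$ in Corollary~\ref{Cor::Hold::CramerMixed}), whereas the target space of $T$ sits at the ``once-differentiated'' level $\Co^{\alpha-1}L^\infty\cap\Co^{-1}\Co^\beta$ (matching $\eta=1$ in the product estimate); one must verify carefully at each step of the composition that the hypotheses of Corollary~\ref{Cor::Hold::CorMult}\ref{Item::Hold::CorMult::Prin} are satisfied so that no extra derivative loss is incurred, and that the pure Laplacian term is cleanly separated from all quadratic cross terms.
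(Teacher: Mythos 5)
Your proposal is correct and follows essentially the same route as the paper: compute the base point $(A,H'')=(0,\Ic)$, use Corollary \ref{Cor::Hold::CorMult} and Corollary \ref{Cor::Hold::CramerMixed} to get invertibility, real-analyticity (hence Fréchet differentiability) of $\Psi^{-1}$ and $\Lambda$ at the $\Co^\alpha L^\infty\cap\Co^0\Co^\beta$ level, and then isolate the linear term of $\Lambda[0;H'']$ and of $\Psi[H'']^{-1}$ so that $T[0;H'']-(\Delta_\tau+\Box_w)\bar H''$ is a sum of at-least-quadratic remainders estimated by Corollary \ref{Cor::Hold::CorMult}\ref{Item::Hold::CorMult::Prin}. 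The paper phrases the expansion via power series of the rational functions rather than your explicit Neumann-series bookkeeping, but the content is identical.
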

\begin{proof} We let $\eps_1$ be a small constant which may change from line to line.

Note that when $H''=\Ic$ and $A=0$, we have
$$\nabla_{\tau,w}\Ic=\begin{pmatrix}0_{r\times m}\\I_m\\0_{m\times m}\end{pmatrix},\quad\Psi[\Ic]=\begin{pmatrix}I_r\\&I_m\\&&I_m\end{pmatrix}=I_{r+2m},\quad\Theta[0;\Ic]=\begin{pmatrix}I_r+ 0_{r\times r}&0_{r\times m}\\0_{m\times r}&I_m+0_{m\times m}\end{pmatrix}=I_{r+m}.$$

By cofactor representations of the matrices, $\Psi^{-1}$, $\Theta^{-1}$ and $\Lambda$ are rational functions in the components of $A$, $\nabla_{\tau,w}H''$ and $\nabla_{\tau,w}\bar H''$, which can be expressed as power series expansions in the components of $A$, $\nabla_{\tau,w}H''$ and $\nabla_{\tau,w}\bar H''$ at the point $A=0$ and $\nabla_{\tau,w}H''=\nabla_{\tau,w}\Ic$ and $\nabla_{\tau,w}\bar H''=\nabla_{\tau,w}\Ic$.

By Corollary \ref{Cor::Hold::CorMult} \ref{Item::Hold::CorMult::0} $\Co^{\alpha}_{\tau,w}L^\infty_s\cap\Co^0_{\tau,w}\Co^\beta_s$ is closed under products. By Corollary \ref{Cor::Hold::CramerMixed}, inverting a  $\Co^{\alpha}_{\tau,w}L^\infty_s\cap\Co^0_{\tau,w}\Co^\beta_s$-matrix is a real-analytic map near the identity matrix. Therefore there is a $\tilde\eps_1>0$ such that if $\|A\|_{\Co^\alpha L^\infty\cap\Co^0\Co^\beta}<\tilde\eps_1$ and $\|\nabla_{\tau,w}(H''-\Ic)\|_{\Co^\alpha L^\infty\cap\Co^0\Co^\beta}<\tilde\eps_1$, then $\Psi[H'']$ and $\Theta[A;H'']$ are invertible matrices at every point in $\B^{r+2m}\times\B^q$, and moreover as maps between $\Co^{\alpha}_{\tau,w}L^\infty_s\cap\Co^0_{\tau,w}\Co^\beta_s$-spaces,
\begin{align*}
&\Psi^{-1}:\{\|\nabla_{\tau,w}(H''-\Ic)\|_{\Co^{\alpha}_{\tau,w}L^\infty_s\cap\Co^0_{\tau,w}\Co^\beta_s}<\tilde\eps_1\}\to\Co^{\alpha}_{\tau,w}L^\infty_s\cap\Co^0_{\tau,w}\Co^\beta_s,
\\
    &\Lambda:\{\|A\|_{\Co^{\alpha}L^\infty\cap\Co^0\Co^\beta}<\tilde\eps_1\}\times\{\|\nabla_{\tau,w}(H''-\Ic)\|_{\Co^{\alpha}L^\infty\cap\Co^0\Co^\beta}<\tilde\eps_1\}\to\Co^{\alpha}_{\tau,w}L^\infty_s\cap\Co^0_{\tau,w}\Co^\beta_s,
\end{align*}
are both real-analytic.

Clearly $\Co^{\alpha,\beta}_{(\tau,w),s}\hookrightarrow \Co^{\alpha}_{\tau,w}L^\infty_s\cap\Co^0_{\tau,w}\Co^\beta_s $ (see Remark \ref{Rmk::Hold::SimpleHoldbyCompFact}) and $\nabla_{\tau,w}:\Co^{\alpha+1}_{\tau,w}L^\infty_s\cap\Co^1_{\tau,w}\Co^\beta_s\to\Co^{\alpha}_{\tau,w}L^\infty_s\cap\Co^0_{\tau,w}\Co^\beta_s$. Thus there is a  $\tilde\eps_2<C^{-1}\tilde\eps_1$ where $C=C(\alpha,\beta,r,m,q)>0$ is a large constant, such that
\begin{equation}\label{Eqn::PDE::MatisInv::Tmp1}
    A\in\U_{\alpha,\beta,\tilde\eps_2},\ H''\in\V_{\alpha,\beta,\tilde\eps_2}\quad\Longrightarrow\quad\|A\|_{\Co^{\alpha}_{\tau,w}L^\infty_s\cap\Co^0_{\tau,w}\Co^\beta_s}+\|\nabla_{\tau,w}(H''-\Ic)\|_{\Co^{\alpha}_{\tau,w}L^\infty_s\cap\Co^0_{\tau,w}\Co^\beta_s}<\tilde\eps_1.
\end{equation}

Therefore when $\eps\le\tilde\eps_2$ we see that $\Psi[H'']$ and $\Theta[A;H'']$ are pointwise invertible in $\B^{r+2m}\times\B^q$, finishing the proof of \ref{Item::PDE::MatisInv::Inv}. Moreover we have the following Fr\'echet differentiable maps (which are in fact real-analytic):
\begin{equation}\label{Eqn::PDE::MatisInv::PsiLambdaDiff}
    \begin{aligned}
    \Psi^{-1}:\U_{\alpha,\beta,\tilde\eps_2}\to\Co^{\alpha}L^\infty\cap\Co^0\Co^\beta(\B^{r+2m},\B^q;\C^{(r+2m)\times (r+2m)}),
    \\
    \Lambda:\U_{\alpha,\beta,\tilde\eps_2}\times\V_{\alpha,\beta,\tilde\eps_2}\to\Co^{\alpha}L^\infty\cap\Co^0\Co^\beta(\B^{r+2m},\B^q;\C^{(r+2m)\times m}).
\end{aligned}
\end{equation}

Since $\Psi[\Ic]=I_{r+2m}$ and $\Lambda[0;\Ic]=0$, taking the first order terms of the power expansion we have
\begin{gather}
\label{Eqn::PDE::MatisInv::PfBddPsi}
\|\Psi[H'']^{-1}-I_{r+2m}\|_{\Co^{\alpha}_{\tau,w}L^\infty_s\cap\Co^0_{\tau,w}\Co^\beta_s}\lesssim\|\nabla_{\tau,w}H''-\nabla_{\tau,w}\Ic\|_{\Co^{\alpha}_{\tau,w}L^\infty_s\cap\Co^0_{\tau,w}\Co^\beta_s},
\\
\label{Eqn::PDE::MatisInv::PfBddLambda}
    \|\Lambda[A;H'']\|_{\Co^{\alpha}_{\tau,w}L^\infty_s\cap\Co^0_{\tau,w}\Co^\beta_s}\lesssim_{\alpha}\|A\|_{\Co^{\alpha}_{\tau,w}L^\infty_s\cap\Co^0_{\tau,w}\Co^\beta_s}+\|\nabla_{\tau,w}H''-\nabla_{\tau,w}\Ic\|_{\Co^{\alpha}_{\tau,w}L^\infty_s\cap\Co^0_{\tau,w}\Co^\beta_s},
\end{gather}
whenever $A\in\U_{\alpha,\beta,\tilde\eps_2}$ and $H''\in\V_{\alpha,\beta,\tilde\eps_2}$. Therefore by \eqref{Eqn::PDE::MatisInv::PfBddLambda} with possibly shrinking $\eps_1$, we get \eqref{Eqn::PDE::MatisInv::BddLambda} and prove \ref{Item::PDE::MatisInv::BddLambda}.

\medskip
We now prove \ref{Item::PDE::MatisInv::Diff}. From the expression \eqref{Eqn::PDE::ExistenceH}, $T[A;H'']$ is the linear combinations to the components of $\Psi[H'']^{-1}\otimes\nabla_{\tau,w}\Lambda[A;H'']$. As the maps in \eqref{Eqn::PDE::MatisInv::PsiLambdaDiff}, $\Psi^{-1}$ and $\Lambda$ are both Fr\'echet differentiable. Therefore 
$$\big[(A,H'')\mapsto \nabla_{\tau,w}\Lambda[A;H'']\big]:\U_{\alpha,\beta,\eps_1}\times\V_{\alpha,\beta,\eps_1}\to\Co^{\alpha-1}L^\infty\cap\Co^{-1}_{\tau,w}\Co^\beta(\B^{r+2m},\B^q;\C^{(r+m)\times m\times(r+2m)}),$$ is differentiable.

By Corollary \ref{Cor::Hold::CorMult} \ref{Item::Hold::CorMult::Prin} the product map $\Co^{\alpha}_{\tau,w}L^\infty_s\cap\Co^{0}_{\tau,w}\Co^\beta_s\times \Co^{\alpha-1}_{\tau,w}L^\infty_s\cap\Co^{-1}_{\tau,w}\Co^\beta_s\to \Co^{\alpha-1}_{\tau,w}L^\infty_s\cap\Co^{-1}_{\tau,w}\Co^\beta_s $ is bounded hence is differentiable, we see that
$$\big[(A,H'')\mapsto\Psi[H'']^{-1}\otimes \nabla_{\tau,w}\Lambda[A;H'']\big]:\U_{\alpha,\beta,\eps_1}\times\V_{\alpha,\beta,\eps_1}\to\Co^{\alpha-1}L^\infty\cap\Co^{-1}\Co^\beta(\B^{r+2m},\B^q;\C^{(r+2m)^2\times m(r+m)(r+2m)}),$$ is also differentiable. Taking linear combinations we get that $T:\U_{\alpha,\beta,\eps_1}\times \V_{\alpha,\beta,\eps_1}\to\Co^{\alpha-1}L^\infty\cap\Co^{-1}\Co^\beta$ is Fr\'echet differentiable.

Since $H''\mapsto\Lambda[0;H'']$ can a real-analytic function to the components of $\nabla_{\tau,w}H''$ and $\nabla_{\tau,w}\bar H''$, taking second order power expansion at $\nabla_{\tau,w}H''=\nabla_{\tau,w}\bar H''=\nabla_{\tau,w}\Ic$ we have
\begin{equation*}
    \Lambda[0;H'']=\begin{pmatrix}I_r&\\&I_m\end{pmatrix}\begin{pmatrix}\bar H''_\tau\\\bar H''_w\end{pmatrix}+O(|\nabla_{\tau,w}H''-\nabla_{\tau,w}\Ic|^2).
\end{equation*}

By  Corollary \ref{Cor::Hold::CorMult} \ref{Item::Hold::CorMult::Prin} again, since $\eps_1$ is small, we have
\begin{equation}\label{Eqn::PDE::MatisInv::PfBddLambda2}
    \bigg\|\Lambda[0;H'']-\begin{pmatrix}\bar H''_\tau\\\bar H''_w\end{pmatrix}\bigg\|_{\Co^\alpha_{\tau,w}L^\infty_s\cap\Co^0_{\tau,w}\Co^\beta_s}\lesssim\|\nabla_{\tau,w}(H''-\Ic)\|_{\Co^\alpha_{\tau,w}L^\infty_s\cap\Co^0_{\tau,w}\Co^\beta_s}^2\lesssim\|H''-\Ic\|_{\Co^{\alpha+1}_{\tau,w}L^\infty_s\cap\Co^1_{\tau,w}\Co^\beta_s}^2.
\end{equation}

Taking $\nabla_{\tau,w}$ on $\Lambda[0;H'']$, since by \eqref{Eqn::PDE::MatisInv::PfBddPsi} that $\Psi[H'']^{-1}=I_{r+2m}+O(|\nabla_{\tau,w}(H''-\Ic)|)$ near $H''=\Ic$, we have
\begin{align*}
    T[0;H'']^l=&\sum_{j=1}^r\left((I_{r+2m}+O(\nabla_{\tau,w}(H''-\Ic))\begin{pmatrix}\partial_{\tau}\\\partial_{ w}\\\partial_{\bar w}\end{pmatrix}\right)_j\left(\begin{pmatrix}\bar H''_\tau\\\bar H''_ w\end{pmatrix}+O(\nabla_{\tau,w}(H''-\Ic))^2\right)_j^l
		\\
		&+\sum_{k=1}^m\left((I_{r+2m}+O(\nabla_{\tau,w}(H''-\Ic))\begin{pmatrix}\partial_{\tau}\\\partial_{ w}\\\partial_{\bar w}\end{pmatrix}\right)_{k+r+m}\left(\begin{pmatrix}\bar H''_\tau\\\bar H''_ w\end{pmatrix}+O(\nabla_{\tau,w}(H''-\Ic))^2\right)_{k+r}^l
		\\
		=&\sum_{j=1}^r\Coorvec{\tau^j}\frac{\partial\bar H''^l}{\partial\tau^j}+\sum_{k=1}^m\Coorvec{w^k}\frac{\partial\bar H''^l}{\partial \bar w^k}+O(\nabla_{\tau,w}(H''-\Ic))^2+O(\nabla_{\tau,w}(H''-\Ic))O(\nabla^2_{\tau,w}(H''-\Ic))
		\\
		=&(\Delta_\tau+\Box_w)\bar H''+O(\nabla_{\tau,w}(H''-\Ic))^2+O(\nabla_{\tau,w}(H''-\Ic))O(\nabla^2_{\tau,w}(H''-\Ic)).
\end{align*}

Combining \eqref{Eqn::PDE::MatisInv::PfBddPsi}, \eqref{Eqn::PDE::MatisInv::PfBddLambda}, \eqref{Eqn::PDE::MatisInv::PfBddLambda2} and taking $\eps_1$ smaller, we therefore get \eqref{Eqn::PDE::MatisInv::TangT}, finishing the proof of \ref{Item::PDE::MatisInv::Diff}.
\end{proof}


\begin{proof}[Proof of Proposition \ref{Prop::PDE::ExistPDE}]
	By Lemma \ref{Lem::PDE::MatisInv} \ref{Item::PDE::MatisInv::Diff}, $T:\U_{\alpha,\beta,\eps_1}\times\V_{\alpha,\beta,\eps_1}\to\Co^{\alpha-1}_{\tau,w}L^\infty_s\cap\Co^{-1}_{\tau,w}\Co^\beta_s(\B^{r+2m},\B^q,\C^m)$ is a well-defined Fr\'echet differentiable map for some small $\eps_1>0$. By \eqref{Eqn::PDE::MatisInv::TangT}, since $T[0;\Ic]=0$, we know the tangential map of $T$ with respect to $H''$ at $(A,H'')=(0,\Ic)$ is
	\begin{equation}\label{Eqn::PDE::ExistPDE::PfTangT}
	    \begin{aligned}
	    \left.\frac{\partial T}{\partial H''}\right|_{A=0,H''=\Ic}=(\Delta_\tau+\Box_w)\overline{(\cdot)}:&\{F\in\Co^{\alpha+1}_{\tau,w}L^\infty_s\cap\Co^1_{\tau,w}\Co^\beta_s(\B^{r+2m},\B^q;\C^m):F|_{(\partial\B^{r+2m})\times\B^q}=0\}
	    \\
	    &\to\Co^{\alpha-1}L^\infty\cap\Co^{-1}\Co^\beta(\B^{r+2m},\B^q;\C^m).
	\end{aligned}
	\end{equation}
	By Lemma \ref{Lem::Hold::LapInvBdd} (passing to a scaling on $\B^{r+2m}$), we see that the operator \eqref{Eqn::PDE::ExistPDE::PfTangT} is invertible. Therefore by the standard Implicit Function Theorem on Banach space, we can find a $\delta_1$ and a continuous map $\mathfrak h:\U_{\alpha,\beta,\delta_1}\to\V_{\alpha,\beta,\eps_1}$ such that $T[A;\mathfrak h[A]]=0$ holds for all $A\in\U_{\alpha,\beta,\delta_1}$. By continuity of $\mathfrak h$, we see that for any $\eps>0$ there is a $\delta>0$ such that $\mathfrak h:\U_{\alpha,\beta,\delta}\to\V_{\alpha,\beta,\eps}$ holds. Taking $H''=\mathfrak h[A]$ we can finish the proof of \ref{Item::PDE::ExistPDE::PDEEqn}.
	
    The results \ref{Item::PDE::ExistPDE::HisDiffeo} and \ref{Item::PDE::ExistPDE::Phi} are the direct consequence to Lemma \ref{Lem::Hold::QPIFT} and Proposition \ref{Prop::Hold::CompThm} by considering $F((\tau,w),s):=(\tau,H''(\tau,w,s))$. Note that $H(\frac14\B^{r+2m}\times\frac14\B^q)\subseteq\frac12\B^{r+2m}\times\frac12\B^q$ is automatically satisfied if $\|H''-\Ic\|_{C^0}<\frac14$, which can be obtained whenever $\delta_0$ is suitably small.
\end{proof}
\begin{remark}\label{Rmk::PDE::RegBeta}
    By Lemma \ref{Lem::PDE::MatisInv}, one can see that no matter how large $\beta>0$ is (in particular for $\beta>\alpha+1$), we can find a $\delta_0=\delta_0(\alpha,\beta,r,m,q)>0$ such that: if $\|A\|_{\Co^{\alpha,\beta}}<\delta_0$ then there is a $H''\in\Co^{\alpha+1}L^\infty\cap\Co^1\Co^\beta$ satisfying results \ref{Item::PDE::ExistPDE::PDEEqn} and \ref{Item::PDE::ExistPDE::HisDiffeo} in Proposition \ref{Prop::PDE::ExistPDE}. We only consider the case $\beta\le\alpha+1$ because for the inverse map $\tilde \Phi=H^\Inv$ we only have $\tilde \Phi\in\Co^{\alpha+1,\min(\alpha+1,\beta)}_{(\zeta,\sigma),s}$, where the regularity $\Co^{\min(\alpha+1,\beta)}$ for parameters is optimal.
\end{remark}
\subsection{Real-analyticity of a particular PDE}
\label{Section::AnalPDE}
In this part on a complex space we denote the fixed open cone
\begin{equation}\label{Eqn::HCone}
    \Hb^n=\{x+iy:x,y\in\B^n,\ 4|y|<1-|x|\}\subset\C^n_z.
\end{equation}
Clearly $\Hb^n_z\cap\R^n_x=\B^n$ is a ``complex extension'' of the unit ball $\B^n\subset\R^n$ in the real domain. 

For a holomorphic function $f\in\Oh(\Hb^n)$, we denote $\partial f=\partial_zf=(\frac{\partial f}{\partial z^1},\dots,\frac{\partial f}{\partial z^n})$ as a $n$-dimensional vector valued function.

In this part we give the proof of the following regularity result for a quasilinear elliptic equation system:
\begin{prop}\label{Prop::PDE::AnalyticPDE}Let $m,n,p,q\in\Z_+$, let $\Theta:\C^m\times\C^{n\times m}\to\C^p$ be a complex bilinear map, and let $L=\sum_{j=1}^na^j\Coorvec{x^j}$ be a vector-valued first order differential operator on $\R^n$ with constant coefficients $a^j\in\C^{m\times p}$.

Assume $\alpha>\frac12$, $\beta>0$, there is a $\eps_0=\eps_0(m,n,p,q,\alpha,\beta,\Theta,L)>0$ that satisfies the following:

Suppose $f\in \Co^\alpha_xL^\infty_s\cap\Co^{-1}_x\Co^\beta_s(\B^n,\B^q;\C^m)$ satisfies $\|f\|_{\Co^\alpha_xL^\infty_s\cap\Co^{-1}_x\Co^\beta_s}<\eps_0$ and $\Delta_xf=L_x\Theta(f,\partial_x f)$, then $f$ admits extension $\mathfrak f:\Hb^n\times \B^q\subset\C^n_z\times \R^q_s\to\C^m$ such that $\mathfrak f$ is holomorphic in $z$ and $\mathfrak f(x+i0;s)=f(x;s)$ for $x\in\B^n,s\in \B^q$. 
Moreover $\mathfrak f\in\Co^\infty_\loc\Co^\beta(\Hb^n,\B^q;\C^m)$.
\end{prop}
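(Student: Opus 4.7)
The plan follows Malgrange's factorization method (in the spirit of the sharp Newlander--Nirenberg theorem sketched in the introduction): I will reformulate the PDE as a fixed-point equation in a Banach space of holomorphic functions on $\Hb^n$ with $\Co^\beta$-valued dependence on $s$, and solve it by contraction using the holomorphic extension of the inverse Laplacian provided by Proposition~\ref{Prop::HolLap}. First, decompose $f = h + \Pc[L_x\Theta(f,\partial_x f)]$ on $\B^n\times\B^q$, where $\Pc$ is the Dirichlet right inverse of $\Delta_x$ from Lemma~\ref{Lem::Hold::LapInvBdd} and $h := f - \Pc[L_x\Theta(f,\partial_x f)]$ is harmonic in $x$ for each fixed $s$. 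By Poisson integration applied slicewise in $s$, $h$ extends to a holomorphic function $\mathfrak h$ on $\Hb^n$; combining Lemma~\ref{Lem::Hold::NablaHarm} with the commuting-extension setup of Lemma~\ref{Lem::Hold::TOtimesId} yields $\mathfrak h\in\Co^\infty_\loc\Co^\beta(\Hb^n,\B^q;\C^m)$, with quantitative control by $\|f\|_{\Co^\alpha_xL^\infty_s\cap\Co^{-1}_x\Co^\beta_s}$.

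Next, introduce a weighted Banach space $X$ of $\Co^\beta(\B^q;\C^m)$-valued holomorphic functions on $\Hb^n$, with a weight of the form $\dist(z,\partial\Hb^n)^{\alpha-1}$ (calibrated to the boundary regularity $\Co^\alpha$ of $f$), and define
\begin{equation*}
    T[\mathfrak g] := \mathfrak h + \mathfrak P\bigl[L_z\Theta(\mathfrak g,\partial_z\mathfrak g)\bigr],
\end{equation*}
where $\mathfrak P$ is the holomorphic extension of $\Pc$ given by Proposition~\ref{Prop::HolLap} and $L_z=\sum_j a^j\partial_{z^j}$ is the natural holomorphic extension of $L_x$. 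Using the two-derivative gain of $\mathfrak P$, the interior Cauchy estimate $|\partial_z\mathfrak g(z)|\lesssim\dist(z,\partial\Hb^n)^{-1}\|\mathfrak g\|_X$, and the bilinearity of $\Theta$, the plan is to show that $T$ maps a small ball $B_\rho\subseteq X$ into itself and is a strict contraction there, provided $\rho$ (and correspondingly $\eps_0$) are small enough to guarantee $\|\mathfrak h\|_X<\rho/2$.

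The unique fixed point $\mathfrak f\in X$ is holomorphic in $z$ and $\Co^\beta$ in $s$. Its boundary trace $\mathfrak f(x+i0,s)$ satisfies the same integral equation as $f$, which is itself a contraction on a small ball of $\Co^\alpha_xL^\infty_s\cap\Co^{-1}_x\Co^\beta_s(\B^n,\B^q;\C^m)$ by Corollary~\ref{Cor::Hold::CorMult} and Lemma~\ref{Lem::Hold::LapInvBdd}; so uniqueness forces $\mathfrak f(x+i0,s)=f(x,s)$, and the required regularity $\mathfrak f\in\Co^\infty_\loc\Co^\beta(\Hb^n,\B^q;\C^m)$ follows from interior Cauchy estimates in $z$ together with the preserved $\Co^\beta_s$-regularity. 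The main obstacle is the calibration of $X$, so that three constraints close simultaneously: $\mathfrak P$ must map $L_z\Theta(\mathfrak g,\partial_z\mathfrak g)$ back into $X$, with the two derivatives gained by $\mathfrak P$ compensating the derivative lost to $L_z$ and the boundary blow-up $\dist^{-1}$ produced by $\partial_z$; the bilinear map $(\mathfrak g_1,\mathfrak g_2)\mapsto\Theta(\mathfrak g_1,\partial_z\mathfrak g_2)$ must be bounded $X\times X$ into the input space of $\mathfrak P$ with quadratic dependence on $\|\mathfrak g\|_X$; and the $\Co^\beta_s$-regularity must be preserved throughout. The hypothesis $\alpha>\tfrac12$ is precisely the threshold at which $\Theta(f,\partial f)$ makes distributional sense via paraproducts (Lemma~\ref{Lem::Hold::Product}).
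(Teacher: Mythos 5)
Your overall route is the paper's route: split $f$ into a harmonic-in-$x$ part plus an inverse Laplacian of the bilinear nonlinearity, extend the harmonic part holomorphically to $\Hb^n$, run a contraction for the extended equation using the holomorphic inverse Laplacian of Proposition~\ref{Prop::HolLap}, identify the trace of the fixed point with $f$ via a second contraction on the real side, and finish with Lemma~\ref{Lem::Hold::NablaHarm}. However, two concrete calibration points, which you leave open, would break the argument as written. First, the space $X$: a single weight $\dist(z,\partial\Hb^n)^{\alpha-1}$ on $\Co^\beta(\B^q)$-valued holomorphic functions amounts to the space $\Co^\alpha_\Oh\Co^\beta$, and the harmonic part does \emph{not} land there, because the hypothesis only gives $f\in\Co^\alpha_xL^\infty_s\cap\Co^{-1}_x\Co^\beta_s$, not $f\in\Co^\alpha_x\Co^\beta_s$ (indeed the failure of the latter is the source of the $\beta-$ loss elsewhere in the paper). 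The correct workspace is the two-weight intersection $\Co^\alpha_\Oh L^\infty\cap\Co^{-1}_\Oh\Co^\beta(\Hb^n,\B^q;\C^m)$ of Definition~\ref{Defn::PDE::HoloHoldSpace}: the $L^\infty_s$ component carries the exponent $\alpha$ and the $\Co^\beta_s$ component only the exponent $-1$, and the bilinear estimate needed to close the contraction is exactly Lemma~\ref{Lem::PDE::HoloMult}, which sends $\Co^\alpha_\Oh L^\infty\cap\Co^{-1}_\Oh\Co^\beta$ times $\Co^{\alpha-1}_\Oh L^\infty\cap\Co^{-2}_\Oh\Co^\beta$ into the latter; with your single-weight space the quadratic term cannot be bounded by the available norm of $f$.

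Second, the choice of inverse Laplacian: you decompose $f$ on the real side using the Dirichlet operator $\Pc$ of Lemma~\ref{Lem::Hold::LapInvBdd}, but Proposition~\ref{Prop::HolLap} does not provide a holomorphic extension of $\Pc$; it provides $\tilde\Pv$ extending its own operator $\Pv$, with the compatibility $(\tilde\Pv u)|_{\B^n}=\Pv(u|_{\B^n})$. Your trace-identification step needs precisely this compatibility between the complex-side and real-side operators (so that the restriction of the fixed point satisfies the \emph{same} integral equation as $f$); with $\Pc$ on one side and $\tilde\Pv$ on the other, the two fixed-point equations differ by an uncontrolled harmonic correction and uniqueness no longer forces $\mathfrak f(x+i0,s)=f(x,s)$. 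The fix is what the paper does: use $\Pv$ from Proposition~\ref{Prop::HolLap} consistently on both sides ($f-\Pv\Delta f$ is still harmonic in $x$, and $\Ex$ from Proposition~\ref{Prop::HolLap} extends it with bounds in the intersection space), and also note the harmless reduction to $\tfrac12<\alpha<1$ so that the weighted holomorphic norms are the ones of Definition~\ref{Defn::PDE::HoloHoldSpace} and the restriction/inclusion bounds of Lemma~\ref{Lem::PDE::HoloParaLem} apply.
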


The proof of Proposition \ref{Prop::PDE::AnalyticPDE} is based on the following Holomorphic extensibility for Laplacian. 
\begin{prop}[Holormorphic property of Laplacian]\label{Prop::HolLap}Let $\B^n\subset\R^n$ and $\Hb^n\subset\C^n$ be as above.
	\begin{enumerate}[parsep=-0.3ex,label={(\roman*)}]
		\item\label{Item::HolLap::P} There is a linear operator $\Pv$ acting on functions on $\B^n$ such that $\Delta \Pv=\id_{\B^n}$ and $\Pv:\Co^{\alpha-2}(\B^n)\to\Co^\alpha(\B^n)$ is bounded for all $-2<\alpha<1$.
		\item\label{Item::HolLap::TildeP} There is a linear operator  $\tilde \Pv$ acting on functions on $\Hb^n$, such that for every $-2<\alpha<1$:
		\begin{itemize}[nolistsep]
		    \item $\tilde \Pv:\Co^{\alpha-2}_\Oh(\Hb^n)\to \Co^\alpha_\Oh(\Hb^n)$ is bounded linear;
		    \item $\Delta \tilde \Pv u=u$ and $(\tilde \Pv u)|_{\B^n}=\Pv(u|_{\B^n})$ for all $u\in\Co^{\alpha-2}_\Oh(\Hb^n)$. Here $\Delta=\Delta_x=\sum_{j=1}^n\frac{\partial^2}{\partial x^j\partial x^j}$.
		\end{itemize}
		\item\label{Item::HolLap::E} If $\Delta u=0$ in $\B^n$, then $u$ extend holomorphically into $\Hb^n$, call it $\Ex u\in\Oh(\Hb^n)$. Moreover the holomorphic extension operator $\Ex :\Co^{\alpha}(\B^n;\C)\cap\ker\Delta\to \Co^{\alpha}_\Oh(\Hb^n)$ is bounded for all $-2<\alpha<1$.
	\end{enumerate}
\end{prop}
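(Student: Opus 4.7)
My plan is to realize $\Pv$ as the Dirichlet Green operator on $\B^n$ and show that its integral kernel extends holomorphically in the $x$-variable to $\Hb^n$, so that parts (ii) and (iii) can be read off from the resulting formula.

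For part \ref{Item::HolLap::P}, I would set $\Pv u(x):=\int_{\B^n} G(x,y)u(y)\,dy$, where $G$ is the classical Green's function of $-\Delta$ on $\B^n$ with zero Dirichlet data. The boundedness $\Pv:\Co^{\alpha-2}(\B^n)\to\Co^\alpha(\B^n)$ for $-2<\alpha<1$ follows from exactly the Dirichlet Besov--Zygmund estimate already invoked for Lemma \ref{Lem::Hold::LapInvBdd} (see \cite{DirichletBoundedness}), since on the whole scale $-2<\alpha<1$ there is no integer constraint.

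For part \ref{Item::HolLap::TildeP}, the key observation is the explicit formula
$$G(x,y)=c_n\Bigl[|x-y|^{2-n}-\bigl(|x|^2|y|^2-2x\cdot y+1\bigr)^{(2-n)/2}\Bigr]$$
(with the standard logarithmic modification for $n=2$). Both $|x-y|^2$ and $|x|^2|y|^2-2x\cdot y+1$ are polynomials that extend to holomorphic polynomials $P(z,y)=\sum_j(z^j-y^j)^2$ and $Q(z,y)=(\sum_j z^j z^j)(\sum_j y^j y^j)-2\sum_j z^jy^j+1$ in $z$. The cone $\Hb^n=\{x+iy:x,y\in\B^n,\,4|y|<1-|x|\}$ is tailored so that for every $z\in\Hb^n$ and every $y\in\B^n$, the values $P(z,y),Q(z,y)$ avoid the nonpositive real axis, and therefore the principal branch of $(\cdot)^{(2-n)/2}$ produces a well-defined holomorphic kernel $\tilde G(z,y)$. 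I would then put $\tilde\Pv u(z):=\int_{\B^n}\tilde G(z,y)u(y)\,dy$; the identity $\Delta_z\tilde G(\cdot,y)=0$ off the diagonal together with the correct singular behavior at $z=y\in\B^n$ yields $\Delta_z\tilde\Pv u=u$ on $\B^n$ by the usual Green's identity, and this extends to $\Hb^n$ by analytic continuation in $z$. Boundedness $\tilde\Pv:\Co^{\alpha-2}_\Oh(\Hb^n)\to\Co^\alpha_\Oh(\Hb^n)$ is obtained by controlling $\partial_z^k\tilde G(z,y)$ in terms of the distance of $z$ to $\partial\Hb^n$; this is essentially the content of the estimates adapted from \cite{Analyticity}.

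For part \ref{Item::HolLap::E}, given $u\in\Co^\alpha(\B^n)\cap\ker\Delta$, I would use the Poisson representation on a slightly shrunken ball: the Poisson kernel is a rational function of $(x,\omega)$ built from the same polynomials $P,Q$, and therefore it also extends holomorphically in $x$ from $\B^n$ into $\Hb^n$ under the same cone geometry. Defining $\Ex u(z):=\int_{\partial\B^n_r}\mathscr P(z,\omega)u(\omega)\,d\omega$ for some $r<1$ sufficiently close to $1$ and then letting $r\to 1$ with the help of the $\Co^\alpha$-bound on $u$ produces the extension, with boundary-distance weighted control giving the claimed norm bound. The principal obstacle is part \ref{Item::HolLap::TildeP}: one must verify quantitatively that $\tilde G(z,y)$ and its $z$-derivatives stay uniformly holomorphic with the right boundary-distance weights, which is where the precise cone aperture $4|\im z|<1-|\re z|$ enters; once the kernel estimates are in place, mapping properties on the full scale $-2<\alpha<1$ follow by the same arguments as in part \ref{Item::HolLap::P} together with duality.
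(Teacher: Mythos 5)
Your part \ref{Item::HolLap::P} is fine (it is essentially Lemma \ref{Lem::Hold::LapInvBdd} via \cite{DirichletBoundedness}), but part \ref{Item::HolLap::TildeP} has a genuine gap, and it is exactly the point the appendix is built to overcome. Your key claim — that for every $z\in\Hb^n$ and every $y\in\B^n$ the quantity $P(z,y)=(z-y)\cdot(z-y)$ avoids the nonpositive real axis — is false: taking $y=\re z$ gives $P=-|\im z|^2<0$, and more generally the complexified kernel is singular (and its fractional power branch-ambiguous) on the whole sphere $\{y\in\B^n:|y-\re z|=|\im z|\}$, which lies inside your region of integration as soon as $\im z\neq0$. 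The cone condition $4|\im z|<1-|\re z|$ only guarantees that $\Ga(z-t)$ is defined when $|\re z-t|>|\im z|$ (cf.\ \eqref{Eqn::SecHolLap::Eqn|z|}--\eqref{Eqn::SecHolLap::DevGa1}); it does nothing for $t$ near $\re z$. Consequently $\int_{\B^n}\tilde G(z,y)u(y)\,dy$ is not well defined as written, and one cannot get holomorphy of $\tilde\Pv u$ in $z$ from holomorphy of a fixed kernel. The paper's resolution is Morrey's contour (``membrane'') deformation: near $\re z$ the integration is moved onto the surface $S_z=\{t+i\lambda(t,x)\im z\}\subset\Hb^n$ (Definition \ref{Defn::SecHolLap::DefofTildeP}), which uses the values of the \emph{holomorphic} input $u$ off the real ball — this is precisely why the domain of $\tilde\Pv$ is $\Co^{\alpha-2}_\Oh(\Hb^n)$ rather than functions on $\B^n$ — and holomorphy in $z$ is then proved by the Stokes-type identities of Proposition \ref{Prop::SecHolLap::PfTildePHolo}, not by differentiating a kernel.

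There is a second, quantitative gap: even away from the singular set, your formula pairs the kernel against $u|_{\B^n}$, and for $u\in\Co^{\alpha-2}_\Oh(\Hb^n)$ one only has $|u(y)|\lesssim(1-|y|)^{\alpha-2}$ (Lemma \ref{Lem::SecHolLap::HLLem} \ref{Item::SecHolLap::HLLem::0Char}). Since the Dirichlet Green's function vanishes merely to first order as $y\to\partial\B^n$, the integral diverges for $\alpha\le 0$, i.e.\ on most of the range $-2<\alpha<1$. This is why the paper does not use the Green kernel at all: its $\Pv$ and $\tilde\Pv$ are built from the Newtonian potential composed with the adjoint of a reflection extension operator whose moment conditions $\sum_j a_j(-b_j)^k=1$, $-4\le k\le 4$, force fourth-order cancellation of the kernel at $\partial\B^n$ (Lemma \ref{Lem::SecHolLap::AprioriBddforP}), making the boundary-region integral absolutely convergent and yielding the weighted bound of Proposition \ref{Prop::SecHolLap::BddTildeP}. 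For part \ref{Item::HolLap::E}, your fixed-sphere Poisson representation is closer in spirit, but the ``$r\to1$ limit'' is neither needed nor meaningful for negative $\alpha$; the workable version (as in Proposition \ref{Prop::HolLap::PfE}) applies the Poisson formula on the ball $B^n(\re z,\tfrac{1-|\re z|}2)$ adapted to $z$, so that $|\im z|$ is at most half the radius and the extended kernel stays uniformly nondegenerate, and then obtains the weighted estimate from the interior gradient bound for harmonic $\Co^\alpha$ functions, Lemma \ref{Lem::SecHolLap::HLLem} \ref{Item::SecHolLap::HLLem::Harm}.
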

\begin{remark}
    Proposition \ref{Prop::HolLap} is essentially done in \cite{Analyticity}. However in our cases we need $\alpha$ to be negative. So we include a complete proof in Section \ref{Section::SecHolLap}.
\end{remark}
\begin{remark}
    In \ref{Item::HolLap::TildeP} since $u\in\Co^{\alpha-2}_\Oh$ is holomorphic, the statement $\Delta_x \tilde \Pv u=u$ is equivalent as $\Delta_z\tilde\Pv u=u$ where $\Delta_z=\sum_{j=1}^n\frac{\partial^2}{\partial z^j\partial z^j}$. In this case $\Delta_z$ is NOT an elliptic operator on $\Hb^n$: it is not the complex Laplacian $\square=\sum_{j=1}^n\frac{\partial^2}{\partial z^j\partial\bar z^j}$.
\end{remark}

We postpone the proof to Appendix \ref{Section::SecHolLap}. Here the space $\Co^\alpha_\Oh(\Hb^n)$ is given by the following:

\begin{defn}\label{Defn::PDE::HoloHoldSpace}
Let $\alpha<1$, we denote $\Co^\alpha_\Oh(\Hb^n)=\Co^\alpha_\Oh(\Hb^n;\C)$ as the space of holomorphic function $g\in\Oh(\Hb)$ such that 
\begin{equation*}
    \|g\|_{\Co^\alpha_\Oh}:=|g(0)|+\sup\limits_{z\in\Hb^n}\dist(z,\partial\Hb^n)^{1-\alpha}|\nabla g(z)|<\infty.
\end{equation*}

Let $\beta>0$ and let $\Xs\in\{L^\infty,\Co^\beta\}$. 
We define $\Co^\beta_{\Oh}\Xs(\Hb^n,\B^q)=\Co^\beta_{\Oh}\Xs(\Hb^n,\B^q;\C)$ as the set of continuous function $f:\Hb^n\times \B^q\to\C$ such that 
\begin{equation}\label{Eqn::HoloParaSpace::Norm}
    \|f\|_{\Co^\alpha_{\Oh}\Xs(\Hb^n,\B^q)}:=\|f(0,\cdot)\|_{\Xs(\B^q;\C)}+\sup\limits_{z\in\Hb^n}\dist(z,\partial\Hb^n)^{1-\alpha}\|\partial_z f(z,\cdot)\|_{\Xs(\B^q;\C)}<\infty.
\end{equation}

Also we define the space $\Co^\alpha_{\Oh}L^\infty\cap\Co^{-\eta}_{\Oh}\Co^\beta(\Hb^n,\B^q)$ with norm $\|f\|_{\Co^\alpha_{\Oh}L^\infty\cap\Co^{-\eta}_{\Oh}\Co^\beta(\Hb^n,\B^q)}:=\|f\|_{\Co^\alpha_{\Oh}L^\infty(\Hb^n,\B^q)}+\|f\|_{\Co^{-\eta}_{\Oh}\Co^\beta(\Hb^n,\B^q)}$, for $\eta<1$.
\end{defn}
\begin{remark}
    $\Co^\alpha_\Oh(\Hb^n)$ is exactly the space of $\Co^\alpha$-functions on $\Hb^n$ that are holomorphic in the interior. For part of the inclusion see Lemma \ref{Lem::SecHolLap::HLLem}.
\end{remark}
\begin{remark}\label{Rmk::PDE::HolLapPara}
    By the same argument to Lemma \ref{Lem::Hold::TOtimesId}, we see that $\Pv,\tilde\Pv,\Ex$ in Proposition \ref{Prop::HolLap} have natural extension to the parameter case. We still denote them as $\Pv,\tilde\Pv,\Ex$: let $V\subseteq\R^q$ be a bounded smooth subset, we have boundedness
    \begin{itemize}[nolistsep]
        \item $\Pv:\Co^{\alpha-2}\Xs(\B^n,V)\to\Co^\alpha\Xs(\B^n,V)$ for $-2<\alpha<1$ and $\Xs\in\{L^\infty,\Co^\beta:\beta\in\R\}$.
        \item $\tilde\Pv:\Co^{\alpha-2}_\Oh\Xs(\Hb^n,V)\to\Co^\alpha_\Oh\Xs(\Hb^n,V)$ for $-2<\alpha<1$ and $\Xs\in\{L^\infty,\Co^\beta:\beta\in\R\}$.
        \item $\Ex:\Co^{\alpha}\Xs(\B^n,V)\cap\Delta_x\to\Co^\alpha_\Oh\Xs(\Hb^n,V)$ for $-2<\alpha<1$ and $\Xs\in\{L^\infty,\Co^\beta:\beta\in\R\}$.
    \end{itemize}
    We omit the proof to readers.
\end{remark}

The following lemma is useful in the proof of Proposition \ref{Prop::PDE::AnalyticPDE}.
\begin{lem}\label{Lem::PDE::HoloParaLem}
    Let $\beta>0$ and let $\Xs\in\{L^\infty,\Co^\beta\}$.
    \begin{enumerate}[parsep=-0.3ex,label=(\roman*)]
        \item\label{Eqn::PDE::HoloParaLem::Grad} The differentiation $[\tilde f\mapsto\partial_z\tilde f]:\Co^{\alpha}_{\Oh}\Xs(\Hb^n,\B^q)\to\Co^{\alpha-1}_{\Oh}\Xs(\Hb^n,\B^q;\C^n)$ is bounded linear for all $\alpha<1$.
        \item\label{Item::PDE::HoloParaLem::EqvNorm} For $\alpha<0$, $\Co^\alpha_{\Oh}\Xs(\Hb^n,\B^q)$ has an equivalent norm
        \begin{equation}\label{Eqn::PDE::HoloParaLem::EqvNorm}
            \textstyle\tilde f\mapsto\sup_{z\in\Hb^n}\dist(z,\partial\Hb^n)^{-\alpha}\|\tilde f(z,\cdot)\|_{\Xs(\B^q;\C)}.
        \end{equation}
        
        \item\label{Item::PDE::HoloParaLem::Res} The restriction map $[\tilde f\mapsto \tilde f|_{\B^n\times\B^q}]:\Co^\alpha_\Oh\Xs(\Hb^n,\B^q)\to\Co^\alpha_x\Xs(\B^n,\B^q;\C)$ is bounded linear for $0<\alpha<1$.
        \item\label{Item::PDE::HoloParaLem::Inc} We have inclusion map $\Co^\alpha_\Oh\Xs(\Hb^n,\B^q)\hookrightarrow L^\infty\Xs(\Hb^n,\B^q;\C)$ for $0<\alpha<1$.
    \end{enumerate}
    
\end{lem}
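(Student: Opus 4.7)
} The four assertions are all essentially standard weighted Hardy space facts; the two main tools are (a) Cauchy's integral formula for the holomorphic function $z\mapsto\tilde f(z,s)$, which converts boundary-distance decay of $\tilde f$ into boundary-distance decay of its holomorphic derivatives, and (b) the convexity of $\Hb^n$ together with $0\in\mathrm{int}\,\Hb^n$, which gives the estimate
$$\dist(tz,\partial\Hb^n)\ge (1-t)\,c_0+t\,\dist(z,\partial\Hb^n),\qquad t\in[0,1],\quad z\in\Hb^n,$$
where $c_0:=\dist(0,\partial\Hb^n)>0$. Every operation used below (Cauchy's formula, the fundamental theorem of calculus along segments) is linear in $\tilde f$, so all pointwise bounds pass through to the $\Xs$-norm in $s$ for both $\Xs=L^\infty$ and $\Xs=\Co^\beta$; for this reason I will only sketch the scalar case and then remark on the uniformity in $s$.

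For \ref{Eqn::PDE::HoloParaLem::Grad}, fix $z\in\Hb^n$, set $r=\dist(z,\partial\Hb^n)/2$, and note that the polydisc of radius $r/\sqrt n$ centered at $z$ lies in $\Hb^n$. Cauchy's formula applied componentwise yields
$$\|\partial_z^2 \tilde f(z,\cdot)\|_{\Xs}\ \lesssim\ r^{-1}\sup_{|w-z|<r}\|\partial_z\tilde f(w,\cdot)\|_{\Xs}\ \lesssim\ r^{-1}\,r^{\alpha-1}\|\tilde f\|_{\Co^\alpha_\Oh\Xs},$$
since $\dist(w,\partial\Hb^n)\ge r$ for all such $w$. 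Multiplying by $\dist(z,\partial\Hb^n)^{2-\alpha}\approx r^{2-\alpha}$ controls the supremum term in the $\Co^{\alpha-1}_\Oh\Xs$-norm of $\partial_z\tilde f$; the ``base point'' term $\|\partial_z\tilde f(0,\cdot)\|_\Xs$ is similarly controlled by applying the same Cauchy argument at $z=0$ (where $\dist(0,\partial\Hb^n)=c_0$).

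For \ref{Item::PDE::HoloParaLem::EqvNorm}, the direction from the defined norm to the proposed one is the delicate step, since a naive radial estimate $|\partial_z\tilde f(tz)|\lesssim (td)^{\alpha-1}$ produces a divergent integral when $\alpha<0$. Instead, using the convexity bound above and integrating,
$$\int_0^1\bigl((1-t)c_0+td\bigr)^{\alpha-1}dt\;=\;\frac{c_0^{\alpha}-d^{\alpha}}{\alpha(c_0-d)}\;\lesssim\;\frac{d^{\alpha}}{|\alpha|\,c_0}\qquad(0<d\le c_0/2,\ \alpha<0),$$
so the fundamental theorem of calculus along $[0,z]$ gives $\|\tilde f(z,\cdot)-\tilde f(0,\cdot)\|_\Xs\lesssim d^{\alpha}\|\tilde f\|_{\Co^\alpha_\Oh\Xs}$; since $d^{\alpha}\ge c_0^{\alpha}$ the boundary value $\|\tilde f(0,\cdot)\|_\Xs\lesssim \|\tilde f\|_{\Co^\alpha_\Oh\Xs}$ is absorbed, and the regime $d\ge c_0/2$ is handled similarly with a uniformly bounded integrand. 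The reverse direction is immediate from Cauchy as in \ref{Eqn::PDE::HoloParaLem::Grad}: assuming the weighted sup-norm bound, the values $\|\tilde f(w,\cdot)\|_\Xs\lesssim \dist(w,\partial\Hb^n)^\alpha$ on a ball $B(z,d/2)$ yield $\|\partial_z\tilde f(z,\cdot)\|_\Xs\lesssim d^{-1}d^{\alpha}=d^{\alpha-1}$, which is the required gradient weight, and the $z=0$ value is controlled by taking $z=0$ in the hypothesis.

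For \ref{Item::PDE::HoloParaLem::Res} and \ref{Item::PDE::HoloParaLem::Inc}, with $0<\alpha<1$ the radial integration from $0$ is now harmless: $\int_0^1((1-t)c_0)^{\alpha-1}dt=c_0^{\alpha-1}/\alpha<\infty$, so using that $\Hb^n$ is bounded we obtain $\|\tilde f(z,\cdot)\|_\Xs\le \|\tilde f(0,\cdot)\|_\Xs+|z|\int_0^1\dots\,dt\cdot\|\tilde f\|_{\Co^\alpha_\Oh\Xs}\lesssim \|\tilde f\|_{\Co^\alpha_\Oh\Xs}$ uniformly in $z\in\Hb^n$, proving \ref{Item::PDE::HoloParaLem::Inc}. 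For \ref{Item::PDE::HoloParaLem::Res} it only remains to bound the H\"older seminorm of $\tilde f|_{\B^n\times\B^q}$ in $x$: for $x,y\in\B^n$ with $d=|x-y|$, either $d\ge\tfrac18(1-\max(|x|,|y|))$, in which case one uses the $L^\infty$ bound just established together with $d^{-\alpha}\lesssim(1-|x|)^{-\alpha}$, or $d$ is smaller and the segment $[x,y]$ satisfies $\dist(\xi,\partial\Hb^n)\gtrsim d$ throughout, so $|\tilde f(x,s)-\tilde f(y,s)|\le d\cdot\sup_{\xi\in[x,y]}\|\partial_z\tilde f(\xi,\cdot)\|_\Xs\lesssim d\cdot d^{\alpha-1}\|\tilde f\|_{\Co^\alpha_\Oh\Xs}=d^\alpha\|\tilde f\|_{\Co^\alpha_\Oh\Xs}$. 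The only genuinely delicate point in the whole lemma is keeping track of the weighted integration in \ref{Item::PDE::HoloParaLem::EqvNorm} when $\alpha<0$; once that is done, the rest follows from standard Cauchy-type estimates.
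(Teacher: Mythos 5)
Your overall route is sound and, in substance, reproduces the paper's own argument: the paper proves this lemma simply by quoting the parameter-free Hardy--Littlewood estimates of Lemma \ref{Lem::SecHolLap::HLLem} and transferring them to the parameter setting by the same device as Lemma \ref{Lem::Hold::TOtimesId}, whereas you redo the Cauchy-formula and convexity estimates directly with the $\Xs$-norm in $s$ carried along (legitimate, since every operation is linear in $\tilde f$ and Minkowski's inequality applies). Your parts \ref{Eqn::PDE::HoloParaLem::Grad}, \ref{Item::PDE::HoloParaLem::EqvNorm} and \ref{Item::PDE::HoloParaLem::Inc} are correct and essentially match the appendix proofs step for step, including the careful treatment of the radial integral when $\alpha<0$.

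There is, however, a genuine gap in your proof of \ref{Item::PDE::HoloParaLem::Res}, in the first case. When $d=|x-y|\ge\tfrac18(1-\max(|x|,|y|))$, the tools you invoke --- the uniform bound $\sup_z\|\tilde f(z,\cdot)\|_{\Xs}\lesssim\|\tilde f\|_{\Co^\alpha_\Oh\Xs}$ together with $d^{-\alpha}\lesssim(1-|x|)^{-\alpha}$ --- only yield $d^{-\alpha}\|\tilde f(x,\cdot)-\tilde f(y,\cdot)\|_{\Xs}\lesssim(1-|x|)^{-\alpha}\|\tilde f\|_{\Co^\alpha_\Oh\Xs}$, and $(1-|x|)^{-\alpha}$ blows up as $|x|\to1$ since $\alpha>0$. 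The problematic configuration is two points close to $\partial\B^n$ whose separation $d$ is small but comparable to (or larger than) their distance to the boundary: there the crude $L^\infty$ bound cannot produce the factor $d^\alpha$, so this case is not covered as written. The missing idea is the standard Hardy--Littlewood step of passing through an interior point at depth $\approx d$: for instance pull both $x$ and $y$ radially inward to radius $1-8d$, using $\int_0^{8d}\big((1-|x|)+t\big)^{\alpha-1}dt\lesssim d^{\alpha}$ (this is exactly where $\alpha>0$ enters), and then join the pulled-in points by a segment, every point of which lies at distance $\gtrsim d$ from $\partial\Hb^n$, so that the gradient bound gives $d\cdot d^{\alpha-1}\|\tilde f\|_{\Co^\alpha_\Oh\Xs}$. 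This is precisely what the paper's proof of Lemma \ref{Lem::SecHolLap::HLLem} \ref{Item::SecHolLap::HLLem::Res} does by choosing an auxiliary point $w$ with $|w-z_j|\le|z_0-z_1|$ and $\dist\big(tw+(1-t)z_j,\partial\Hb^n\big)\gtrsim t|z_0-z_1|$ and integrating along the two segments. With that repair, your case of small $d$ (which is correct) covers the remaining configurations and the lemma follows.
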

\begin{proof}These results follow from Lemma \ref{Lem::SecHolLap::HLLem}, the parameter free cases, along with the same argument to Lemma \ref{Lem::Hold::TOtimesId}. 
\end{proof}

We need to show that in $\Hb^n\times \B^q$, $\mathfrak f\mapsto \Theta(\mathfrak f,\partial_z\mathfrak f)$ maps $\mathfrak f$ into the desired function space.

\begin{lem}\label{Lem::PDE::HoloMult}
	Let $0<\alpha<1$ and $\beta>0$. 
For $f\in\Co^\alpha_\Oh L^\infty\cap\Co^{-1}_\Oh\Co^\beta(\Hb^n,\B^q)$ and $g\in\Co^{\alpha-1}_\Oh L^\infty\cap\Co^{-2}_\Oh\Co^\beta(\Hb^n,\B^q)$, we have $fg\in\Co^{\alpha-1}_\Oh L^\infty\cap\Co^{-2}_\Oh\Co^\beta(\Hb^n,\B^q)$. Moreover there is a $C=C(n,q,\alpha,\beta)>0$ that does not depend on $f,g$ such that
$$\|fg\|_{\Co^{\alpha-1}_\Oh L^\infty\cap\Co^{-2}_\Oh\Co^\beta(\Hb^n,\B^q)}\le C\|f\|_{\Co^{\alpha}_\Oh L^\infty\cap\Co^{-1}_\Oh\Co^\beta(\Hb^n,\B^q)}\|g\|_{\Co^{\alpha-1}_\Oh L^\infty\cap\Co^{-2}_\Oh\Co^\beta(\Hb^n,\B^q)}.$$
\end{lem}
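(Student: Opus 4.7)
The plan is to exploit the equivalent norm characterization from Lemma \ref{Lem::PDE::HoloParaLem} \ref{Item::PDE::HoloParaLem::EqvNorm}, which applies since the target exponents $\alpha-1$ and $-2$ are both negative (using $\alpha<1$). Once we express the target norms as weighted suprema of pointwise $\Xs$-slice norms, the proof reduces to combining the single-parameter product rule with the distance weights carried by $f$ and $g$. Throughout, $fg$ is automatically holomorphic in $z$, so membership in the holomorphic spaces just reduces to the norm bound.

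First I would estimate the $L^\infty$ slice. By Hölder,
\[
\|fg(z,\cdot)\|_{L^\infty(\B^q)}\le\|f(z,\cdot)\|_{L^\infty}\cdot\|g(z,\cdot)\|_{L^\infty}.
\]
Since $0<\alpha<1$, Lemma \ref{Lem::PDE::HoloParaLem} \ref{Item::PDE::HoloParaLem::Inc} bounds the first factor by $\|f\|_{\Co^\alpha_\Oh L^\infty}$; since $\alpha-1<0$, Lemma \ref{Lem::PDE::HoloParaLem} \ref{Item::PDE::HoloParaLem::EqvNorm} bounds the second by $\dist(z,\partial\Hb^n)^{\alpha-1}\|g\|_{\Co^{\alpha-1}_\Oh L^\infty}$. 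Multiplying by $\dist(z,\partial\Hb^n)^{1-\alpha}$ and taking the supremum, the same equivalent-norm lemma applied to $fg$ with exponent $\alpha-1<0$ gives $fg\in\Co^{\alpha-1}_\Oh L^\infty$ with the required bilinear estimate.

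Next I would estimate the $\Co^\beta$ slice using Lemma \ref{Lem::Hold::Product} \ref{Item::Hold::Product::Hold2}:
\[
\|fg(z,\cdot)\|_{\Co^\beta(\B^q)}\lesssim\|f(z,\cdot)\|_{L^\infty}\|g(z,\cdot)\|_{\Co^\beta}+\|f(z,\cdot)\|_{\Co^\beta}\|g(z,\cdot)\|_{L^\infty}.
\]
Applying Lemma \ref{Lem::PDE::HoloParaLem} \ref{Item::PDE::HoloParaLem::EqvNorm} and \ref{Item::PDE::HoloParaLem::Inc} slice-by-slice, the first term is controlled by $\|f\|\cdot\dist(z,\partial\Hb^n)^{-2}\|g\|$ and the second by $\dist(z,\partial\Hb^n)^{-1}\|f\|\cdot\dist(z,\partial\Hb^n)^{\alpha-1}\|g\|$. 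Because $\Hb^n$ is bounded and $\alpha>0$, we have $\dist(z,\partial\Hb^n)^{\alpha-2}\lesssim\dist(z,\partial\Hb^n)^{-2}$, so both terms are absorbed into $\dist(z,\partial\Hb^n)^{-2}\|f\|\|g\|$. Invoking the equivalent norm with exponent $-2<0$ once more yields $fg\in\Co^{-2}_\Oh\Co^\beta$ with the desired bound.

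There is no real obstacle here; the lemma is essentially a bookkeeping exercise. The only point worth flagging is that the equivalent norm of Lemma \ref{Lem::PDE::HoloParaLem} \ref{Item::PDE::HoloParaLem::EqvNorm} is only valid for negative exponents, which is exactly why the hypotheses are arranged so that the product lands in $\Co^{\alpha-1}_\Oh L^\infty\cap\Co^{-2}_\Oh\Co^\beta$ (both exponents negative) rather than back in the input space. The continuity and joint measurability needed to make the pointwise product meaningful follow from $\Co^\alpha_\Oh L^\infty\subseteq L^\infty$ for $\alpha\in(0,1)$.
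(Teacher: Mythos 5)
Your proposal is correct and follows essentially the same route as the paper: both parts use the pointwise/slice-wise product bounds together with the equivalent-norm characterization of Lemma \ref{Lem::PDE::HoloParaLem} \ref{Item::PDE::HoloParaLem::EqvNorm} (valid since $\alpha-1<0$ and $-2<0$) and the inclusion \ref{Item::PDE::HoloParaLem::Inc}, plus Lemma \ref{Lem::Hold::Product} \ref{Item::Hold::Product::Hold2} for the $\Co^\beta$-slice, exactly as in the paper's proof. The absorption $\dist(z,\partial\Hb^n)^{\alpha-2}\lesssim\dist(z,\partial\Hb^n)^{-2}$ that you make explicit is used implicitly in the paper as well, so there is nothing missing.
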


\begin{proof}Clearly the product $f(z,s)g(z,s)$ is locally continuous and holomorphic in $z$.
    
    By Lemma \ref{Lem::PDE::HoloParaLem} \ref{Item::PDE::HoloParaLem::EqvNorm} and \ref{Item::PDE::HoloParaLem::Inc}, we have almost every $s\in\B^q$,
    \begin{align*}
        |(fg)(z,s)|\lesssim_{n,\alpha}\|f\|_{L^\infty}\dist(z,\partial\Hb^n)^{\alpha-1}\|g\|_{\Co^{\alpha-1}_\Oh L^\infty}\lesssim_{n,\alpha}\|f\|_{\Co^\alpha_\Oh L^\infty}\dist(z,\partial\Hb^n)^{\alpha-1}\|g\|_{\Co^{\alpha-1}_\Oh L^\infty}.
    \end{align*}
    Dividing $\dist(z,\partial\Hb^n)^{\alpha-1}$ on both side, taking essential sup over $z$ and $s$, and using Lemma \ref{Lem::PDE::HoloParaLem} \ref{Item::PDE::HoloParaLem::EqvNorm} we get the first control $\|fg\|_{\Co^{\alpha-1}_\Oh L^\infty}\lesssim\|f\|_{\Co^{\alpha}_\Oh L^\infty\cap\Co^{-1}_\Oh\Co^\beta}\|g\|_{\Co^{\alpha-1}_\Oh L^\infty\cap\Co^{-2}_\Oh\Co^\beta}$.
    
    To control $\|fg\|_{\Co^{-2}_\Oh\Co^\beta}$. By Lemmas \ref{Lem::Hold::Product} \ref{Item::Hold::Product::Hold2} and \ref{Lem::PDE::HoloParaLem} \ref{Item::PDE::HoloParaLem::EqvNorm}, for every $z\in\Hb^n$,
    \begin{align*}
        &\|fg(z,\cdot)\|_{\Co^\beta(\B^q)}\lesssim_\beta\|f(z,\cdot)\|_{L^\infty}\|g(z,\cdot)\|_{\Co^\beta}+\|f(z,\cdot)\|_{\Co^\beta}\|g(z,\cdot)\|_{L^\infty}
        \\
        \le&\|f\|_{\Co^\alpha_\Oh L^\infty}\|g\|_{\Co^{-2}_\Oh\Co^\beta}\dist(z,\partial\Hb^n)^{-2}+\|f\|_{\Co^{-1}_\Oh\Co^\beta}\|g\|_{\Co^{\alpha-1}_\Oh L^\infty}\dist(z,\partial\Hb^n)^{-1}\cdot\dist(z,\partial\Hb^n)^{\alpha-1}
        \\
        \le&\|f\|_{\Co^{\alpha}_\Oh L^\infty\cap\Co^{-1}_\Oh\Co^\beta}\|g\|_{\Co^{\alpha-1}_\Oh L^\infty\cap\Co^{-2}_\Oh\Co^\beta}\dist(z,\partial\Hb^n)^{-2}.
    \end{align*}
	
	Dividing $\dist(z,\partial\Hb^n)^{-2}$ on both side, we get $\|fg\|_{\Co^{-2}_\Oh \Co^\beta}\lesssim\|f\|_{\Co^{\alpha}_\Oh L^\infty\cap\Co^{-1}_\Oh\Co^\beta}\|g\|_{\Co^{\alpha-1}_\Oh L^\infty\cap\Co^{-2}_\Oh\Co^\beta}$, completing the proof.
\end{proof}

\begin{proof}[Proof of Proposition \ref{Prop::PDE::AnalyticPDE}]It suffices to consider $\frac12<\alpha<1$.


We use $\Theta[u]=\Theta(u,\nabla_xu)$ for function $u$ either on $\B^n\times\B^q$ or on $\Hb^n\times\B^q$. Note that for $\tilde u\in\Co^{\alpha}_\Oh L^\infty\cap\Co^{-1}_\Oh\Co^\beta $ we have $\nabla_x\tilde u=\partial_z\tilde u$.
	
	The proof uses contraction mappings twice. The first one take place in $\Co^\alpha L^\infty(\B^n,\B^q;\C^m)$ and shows that $\Pv\Delta f$ has some particular properties. The second one take place in $\Co^\alpha_\Oh L^\infty\cap\Co^{-1}_\Oh\Co^\beta(\Hb^n,\B^q;\C^m)$, showing that $\Pv\Delta f$ is a restriction of a function in $\Hb^n\times\B^q$ which is holomorphic in $z$. 
	
	\medskip
	For $u\in\Co^\alpha  L^\infty(\B^n,\B^q;\C^m)$, by Lemma \ref{Lem::Hold::Product} (since $\alpha>\frac12$) we have $u\otimes\nabla_x u(\cdot,s)\in\Co^{\alpha-1}_x$ uniformly for almost every $s$, so $\Theta[u]\in\Co^{\alpha-1} L^\infty(\B^n,\B^q;\C^p)$ and thus $L\Theta[u]\in \Co^{\alpha-2}  L^\infty(\B^n,\B^q;\C^m)$. Moreover we have for every $u,v\in\Co^\alpha  L^\infty(\B^n,\B^q;\C^m)$
{\small\begin{equation}\label{Eqn::PDE::AnaPDE::ContMap1}
\begin{aligned}
    \|L\Theta[u]\|_{\Co^{\alpha-2}L^\infty}&\lesssim \|\Theta(u,\nabla_x u)\|_{\Co^{\alpha-1}L^\infty}\lesssim\|u\otimes\nabla_x u\|_{\Co^{\alpha-1}L^\infty}\lesssim\|u\|_{\Co^{\alpha-2}L^\infty}^2,
    \\
        \|L\Theta[u]-L\Theta[v]\|_{\Co^{\alpha-2}L^\infty}&\lesssim\|\Theta(u-v,\nabla_x u)\|_{\Co^{\alpha-1}L^\infty}+\|\Theta(v,\nabla_x (u-v))\|_{\Co^{\alpha-1}L^\infty}\lesssim(\|u\|_{\Co^{\alpha}L^\infty}+\|v\|_{\Co^{\alpha}L^\infty})\|u-v\|_{\Co^{\alpha}L^\infty}.
\end{aligned}
\end{equation}
}

Here the implicit constants depend only on $m,n,q,\alpha,L,\Theta$ but not on $u$ or $v$.

    For $\tilde u\in\Co^\alpha_\Oh L^\infty\cap\Co^{-1}_\Oh\Co^\beta(\Hb^n,\B^q;\C^m)$,  $L\Theta[\tilde u]$ is a function on $\Hb^n\times\B^q$ that is holormorphic in $z$. By Lemma \ref{Lem::PDE::HoloMult}, $\tilde u\otimes\partial_z \tilde u\in \Co^{\alpha-1}_\Oh L^\infty\cap\Co^{-2}_\Oh\Co^\beta(\Hb^n,\B^q;\C^{m^2 n})$, we know $L\Theta[\tilde u]\in \Co^{\alpha-2}_\Oh L^\infty\cap\Co^{-3}_\Oh\Co^\beta(\Hb^n,\B^q;\C^m)$. Moreover, similar to \eqref{Eqn::PDE::AnaPDE::ContMap2} we have for $\tilde u,\tilde v\in\Co^{\alpha}_\Oh L^\infty\cap\Co^{-1}_\Oh\Co^\beta(\Hb^n,\B^q;\C^m)$:
	\begin{equation}\label{Eqn::PDE::AnaPDE::ContMap2}
	\begin{aligned}
    \|L\Theta[\tilde u]\|_{\Co^{\alpha-2}_{\Oh}L^\infty\cap\Co^{-3}_{\Oh}\Co^\beta}&\lesssim\|\tilde u\|_{\Co^{\alpha-1}_{\Oh}L^\infty\cap\Co^{-1}_{\Oh}\Co^\beta}^2,\\
       \|L\Theta[\tilde u]-L\Theta[\tilde v]\|_{\Co^{\alpha-2}_{\Oh}L^\infty\cap\Co^{-3}_{\Oh}\Co^\beta}&\lesssim(\|\tilde u\|_{\Co^{\alpha-1}_{\Oh}L^\infty\cap\Co^{-1}_{\Oh}\Co^\beta}+\|\tilde v\|_{\Co^{\alpha-1}_{\Oh}L^\infty\cap\Co^{-1}_{\Oh}\Co^\beta})\|\tilde u-\tilde v\|_{\Co^{\alpha-1}_{\Oh}L^\infty\cap\Co^{-1}_{\Oh}\Co^\beta}.
    \end{aligned}
\end{equation}
Here the implicit constants depend only on $m,n,q,\alpha,\beta,L,\Theta$ but not on $\tilde u$ or $\tilde v$.

	Let $\Pv:\Co^{\alpha-2}(\B^n)\to\Co^\alpha(\B^n)$ be as in Proposition \ref{Prop::HolLap}. So the assumption $\Delta f=L\Theta[f]$ implies that 
	\begin{equation}\label{Eqn::PDE::AnaPDE::FixPtf}
	    f=f-\Pv\Delta f+\Pv\Delta f=(f-\Pv\Delta f)+\Pv L\Theta[\Pv\Delta f+(f-\Pv\Delta f)].
	\end{equation}
	
	Define $T_f$ on $\Co^\alpha  L^\infty(\B^n,\B^q;\C^m)$ as 
	$$T_f[u]:=(f-\Pv\Delta f)+\Pv L\Theta[u+(f-\Pv\Delta f)],\quad u\in\Co^\alpha L^\infty(\B^n,\B^q;\C^m).$$
	
	Since $\|f-\Pv\Delta f\|_{\Co^\alpha L^\infty(\B^n,\B^q;\C^m)}\lesssim\|f\|_{\Co^\alpha L^\infty(\B^n,\B^q;\C^m)}$. By \eqref{Eqn::PDE::AnaPDE::ContMap1} and Remark \ref{Rmk::PDE::HolLapPara}, we can find a $C_1=C_1(m,n,q,\alpha,L,\Theta,\Pv)>1$ that does not depend on $f$, that bounds \eqref{Eqn::PDE::AnaPDE::ContMap1} and satisfies $C_1>\|\id-\Pv\Delta \|_{\Co^\alpha L^\infty}$. More precisely,  for every $f,u,v\in \Co^\alpha L^\infty(\B^n,\B^q,\C^m)$, we have
\begin{equation*}
    \begin{aligned}
    \|T_f[u]\|_{\Co^\alpha L^\infty}&\le C_1\|f\|_{\Co^\alpha L^\infty}+ C_1\big(\|u\|_{\Co^\alpha L^\infty}+C_1\|f\|_{\Co^\alpha L^\infty}\big)^2;
    \\
    \|T_f[u]-T_f[v]\|_{\Co^\alpha L^\infty}&\le C_1\left(\|u\|_{\Co^\alpha L^\infty}+\|v\|_{\Co^\alpha L^\infty}+C_1\|f\|_{\Co^\alpha L^\infty}\right)\|u-v\|_{\Co^\alpha L^\infty};
    \\
    \|f-\Pv\Delta f\|_{\Co^\alpha L^\infty}&\le C_1\|f\|_{\Co^\alpha L^\infty}.
    \end{aligned}
\end{equation*}

Take $\eps_1>0$ such that $9\eps_1C_1+10\eps_1 C_1^2<1$. If $\|f\|_{\Co^\alpha  L^\infty}<\eps_1$ and $\|u\|_{\Co^\alpha  L^\infty},\|v\|_{\Co^\alpha  L^\infty}\le 2C_1\eps_1$, then 
\begin{gather*}
    \|T_f[u]\|_{\Co^\alpha L^\infty}\le C_1\eps_1+C_1(3C_1\eps_1)^2<2C_1\eps_1,\\
    \|T_f[u]-T_f[v]\|_{\Co^\alpha L^\infty}<C_1(5C_1\eps_1)\|u-v\|_{\Co^\alpha L^\infty}<\tfrac12\|u-v\|_{\Co^\alpha L^\infty}.
\end{gather*}

Therefore when $\|f\|_{\Co^\alpha  L^\infty}<\eps_1$, $T_f$ is a contraction mapping in $\{u\in\Co^\alpha   L^\infty(\B^n,\B^q;\C^m):\|u\|_{\Co^\alpha L^\infty}\le2C_1\eps_1\}$. In such case, there is a unique $u$ satisfying $\|u\|_{\Co^\alpha L^\infty}\le2C_1\eps_1$ such that $T_f[u]=u$. On the other hand, $f\in\{u\in\Co^\alpha   L^\infty(\B^n,\B^q;\C^m):\|u\|_{\Co^\alpha L^\infty}\le2C_1\eps_1\}$ satisfies \eqref{Eqn::PDE::AnaPDE::FixPtf}. Thus $f$ is that unique fixed point.

\medskip
Similarly, by Proposition \ref{Prop::HolLap} \ref{Item::HolLap::P} (also see Remark \ref{Rmk::PDE::HolLapPara}), we have boundedness $\Pv:\Co^{-3}(\B^n)\to\Co^{-1}(\B^n)$, so $\|f-\Pv\Delta f\|_{\Co^\alpha L^\infty\cap\Co^{-1} \Co^\beta}\lesssim\|f\|_{\Co^\alpha L^\infty\cap\Co^{-1} \Co^\beta}$. Note that $f-\Pv\Delta f$ is harmonic function in $x$-variable, so by Proposition \ref{Prop::HolLap}  \ref{Item::HolLap::E} we have $\Ex(f-\Pv\Delta f)\in \Co^\alpha_\Oh L^\infty\cap\Co^{-1}_\Oh\Co^\beta(\Hb^n,\B^q;\C^m)$ with 
\begin{equation}\label{Eqn::PDE::AnaPDE::BddExf}
    \|\Ex(f-\Pv\Delta f)\|_{\Co^\alpha_\Oh L^\infty\cap\Co^{-1}_\Oh \Co^\beta(\Hb^n,\B^q;\C^m)}\lesssim_{\alpha,\beta}\|f\|_{\Co^\alpha L^\infty\cap\Co^{-1}\Co^\beta(\B^n,\B^q;\C^m)}.
\end{equation}

For a $f\in \Co^\alpha L^\infty\cap\Co^{-1} \Co^\beta(\B^n,\B^q;\C^m)$, define $\tilde T_f$ on $\Co^\alpha_\Oh L^\infty\cap\Co^{-1}_\Oh\Co^\beta(\Hb^n,\B^q;\C^m)$ as 
$$\tilde T_f[\tilde u]=\Ex(f-\Pv\Delta f)+\tilde\Pv L\Theta[\tilde u+\Ex(f-\Pv\Delta f)],\quad\tilde u\in \Co^\alpha_\Oh L^\infty\cap\Co^{-1}_\Oh\Co^\beta(\Hb^n,\B^q;\C^m).$$
Here $\tilde\Pv$ is as in Proposition \ref{Prop::HolLap} \ref{Item::HolLap::TildeP}.

Take $C_2=C_2(m,n,q,\alpha,\beta,L,\Theta,\tilde\Pv,\Ex)>1$ that bounds \eqref{Eqn::PDE::AnaPDE::ContMap2} and \eqref{Eqn::PDE::AnaPDE::BddExf}. Therefore, for every $\tilde u,\tilde v\in\Co^\alpha_\Oh L^\infty\cap\Co^{-1}_\Oh\Co^\beta(\Hb^n,\B^q;\C^m)$,
{\small\begin{align*}
    \|\tilde T_f[\tilde u]\|_{\Co^\alpha_{\Oh} L^\infty\cap\Co^{-1}_{\Oh}\Co^\beta}&\le C_2\|f\|_{\Co^\alpha L^\infty\cap\Co^{-1}\Co^\beta}+ C_2\big(\|\tilde u\|_{\Co^\alpha_{\Oh} L^\infty\cap\Co^{-1}_{\Oh}\Co^\beta}+C_2\|f\|_{\Co^\alpha L^\infty\cap\Co^{-1}\Co^\beta}\big)^2;
    \\
    \|\tilde T_f[\tilde u]-\tilde T_f[\tilde v]\|_{\Co^\alpha_{\Oh} L^\infty\cap\Co^{-1}_{\Oh}\Co^\beta}&\le C_2\big(\|\tilde u\|_{\Co^\alpha_\Oh L^\infty\cap\Co^{-1}_\Oh\Co^\beta}+\|\tilde v\|_{\Co^\alpha_\Oh L^\infty\cap\Co^{-1}_\Oh\Co^\beta}+C_2\|f\|_{\Co^\alpha L^\infty\cap\Co^{-1}\Co^\beta}\big)\|\tilde u-\tilde v\|_{\Co^\alpha_\Oh L^\infty\cap\Co^{-1}_\Oh\Co^\beta}.
\end{align*}}

By Lemma \ref{Lem::PDE::HoloParaLem} \ref{Item::PDE::HoloParaLem::Res} we can find a $C_3=C_3(m,n,q,\alpha,\beta)>0$ such that,
\begin{equation}\label{Eqn::PDE::AnaPDE::RestrictionTmp}
    \|\tilde u|_{\B^n\times\B^q}\|_{\Co^\alpha  L^\infty(\B^n,\B^q;\C^m)}\le C_3\|\tilde u\|_{\Co^\alpha_\Oh L^\infty\cap\Co^{-1}_\Oh\Co^\beta(\Hb^n,\B^q;\C^m)},\quad \forall u\in\Co^\alpha_\Oh L^\infty\cap\Co^{-1}_\Oh\Co^\beta(\Hb^n,\B^q;\C^m).
\end{equation}
Take $\eps_0>0$ such that $C_3C_2\eps_0 <\eps_1$ and $9\eps_0 C_2+10\eps_0 C_2^2<1$. So $\|f\|_{\Co^\alpha L^\infty\cap\Co^{-1}\Co^\beta}<\eps_0$ implies $\|f\|_{\Co^\alpha L^\infty}<\eps_1$.
For such $f$, we know $\tilde T_f$ is a contraction mapping in $\{\tilde u\in\Co^\alpha_{\Oh} L^\infty\cap\Co^{-1}_{\Oh}\Co^\beta(\Hb^n,\B^q;\C^m):\|\tilde u\|_{\Co^\alpha_{\Oh} L^\infty\cap\Co^{-1}_{\Oh}\Co^\beta}\le2C_2\eps_0\}$.
Thus there is a unique $\tilde u\in\Co^\alpha_\Oh L^\infty\cap\Co^{-1}_\Oh\Co^\beta(\Hb^n,\B^q;\C^m) $ such that $\|\tilde u\|_{\Co^\alpha_\Oh}\le2C_2\eps_0$ and $\tilde u=\tilde T_f[\tilde u]$. 

By Proposition \ref{Prop::HolLap} \ref{Item::HolLap::TildeP}, 
$$\tilde u|_{\B^n\times \B^q}=\big(\Ex(f-\Pv\Delta f)+\tilde\Pv L\Theta[\tilde u+\Ex(f-\Pv\Delta f)]\big)\big|_{\B^n\times \B^q}=f-\Pv\Delta f+\Pv L\Theta[\tilde u|_{\B^n\times \B^q}+f-\Pv\Delta f]=T_f[\tilde u|_{\B^n\times \B^q}].$$
By \eqref{Eqn::PDE::AnaPDE::RestrictionTmp} and the assumption $C_3C_2\eps_0<\eps_1 <2C_1\eps_1$, we have  $\|\tilde u|_{\B^n\times \B^q}\|_{\Co^\alpha  L^\infty(\B^n,\B^q;\C^m)}\le 2C_1\eps_1$. By the uniqueness of the fixed point for $T_f$, we get $f=\tilde u|_{\B^n\times \B^q}$. 

So $\mathfrak f=\tilde u\in\Co^{-1}_\Oh\Co^\beta(\Hb^n,\B^q;\C^m)$ is the desired extension of $f$ to $\Hb^n\times\B^q$.

Finally, by Lemma \ref{Lem::Hold::NablaHarm} we have $\mathfrak f\in\Co^\infty_\loc \Co^\beta(\Hb^n,\B^q;\C^m)$, completing the proof.
\end{proof}

\section{Malgrange's Factorization with Parameters}\label{Section::SecKey}
In this section we prove the key part of Theorems \ref{Thm::ThmCoor1} and \ref{Thm::ThmCoor2}. 

Let $\Mf$  and $\Nf$ be two smooth manifolds. In this section we consider the product manifold $\Mf\times\Nf$ as the ambient space, so $(\C T\Mf)\times\Nf=\coprod_{q\in\Nf}\C T\Mf$ is a subbundle of $\C T(\Mf\times\Nf)$ whose (complex) rank equals to the (real) dimension of $\Mf$. 

We consider a $\Co^{\alpha,\beta}$ involutive complex tangential subbundle $\Se$ of $\Mf\times\Nf$ such that $\Se\le(\C T\Mf)\times\Nf$ and $\Se+\bar\Se=(\C T\Mf)\times \Nf$. Note that $(\C T\Mf)\times\Nf$ is automatically involutive, so $\Se$ is a complex Frobenius structure.

We recall the space $\Co^{\alpha,\beta}_\loc(\Mf,\Nf)$ ($\alpha>1$, $\beta>0$) in Definition \ref{Defn::ODE::MixHoldMaps}, the $\Co^{\alpha,\beta}$-subbundle $\Se$ in Definition \ref{Defn::ODE::CpxPaSubbd}, and the involutivity of $\Se\le(\C T\Mf)\times\Nf$ in Definition \ref{Defn::ODE::InvMix} (also see Remark \ref{Rmk::ODE::InvMix}). We endow $\R^r$, $\C^m$ and $\R^q$ with standard (real and complex) coordinate system $t=(t^1,\dots,t^r)$, $z=(z^1,\dots,z^m)$ and $s=(s^1,\dots,s^q)$ respectively. 

\begin{thm}[The key estimate]\label{Thm::Key}
Let $r,m,q\ge0$, let $\Mf$ and $\Nf$ be two smooth manifolds with dimensions $(r+2m)$ and $q$ respectively. Let $\alpha\in(1,\infty)$ and $\beta\in(0,\alpha+1]$. Let $\Se\le (\C T\Mf)\times \Nf$ be a $\Co^{\alpha,\beta}$-involutive subbundle of rank $r+m$ such that $\Se+\bar\Se=(\C T\Mf)\times \Nf$.

Then for any $u_0\in \Mf$ and $v_0\in\Nf$, there are neighborhoods $U\subseteq\Mf$ of $u_0$, $V\subseteq\Nf$ of $v_0$ and a map $F=(F',F'',F'''):U\times V\to\R^r_t\times\C^m_z\times\R^q_s$ such that 
\begin{enumerate}[parsep=-0.3ex,label=(\arabic*)]
    \item\label{Item::Key::F'''}$F''':V\to\R^q_s$ is a smooth coordinate chart of $\Nf$ that is independent of $U\subseteq\Mf$, such that $F'''(v_0)=0$; $F':U\to\R^r_t$ is a smooth map that is independent of $V\subseteq\Nf$, such that $F'(u_0)=0$.
    \item\label{Item::Key::FBase}$F:U\times V\to F(U\times V)$ is homeomorphism. And for each $v\in V$, $[u\mapsto (F'(u),F''(u,v))]:U\to\R^r_t\times\C^m_z$ is a $\Co^{\alpha+1}$-coordinate chart.
    \item\label{Item::Key::F''Reg} $F''(u_0,v_0)=0$, $F''\in\Co^{\alpha+1,\beta}_\loc(U,V;\C^m)$ and $XF''\in\Co^{\alpha,\beta-}_\loc(U,V;\C^m)$ for every $C^\infty$ vector field $X$ on $\Mf$.
    \item\label{Item::Key::Span} $F^*\Coorvec{t^1},\dots,F^*\Coorvec{t^r},F^*\Coorvec{z^1},\dots,F^*\Coorvec{z^m}$ are well-defined $\Co^{\min(\alpha,\beta-)}$-vector fields and span $\Se|_{U\times V}$.
\end{enumerate}

For the parameterization side, let $\Phi=(\Phi',\Phi''):F(U\times V)\to\Mf\times\Nf$ be the inverse map of $F$, and let $\Omega:=\Omega'\times\Omega''\times\Omega'''\subseteq\R^r_t\times\C^m_z\times\R^q_s$ be any neighborhood of $(0,0,0)$ which is contained in $F(U\times V)$. Then
\begin{enumerate}[parsep=-0.3ex,label=(\arabic*)]\setcounter{enumi}{4}
    \item\label{Item::Key::Phi''} $\Phi'':V\to \Nf$ is a smooth regular parameterization. Moreover $\Phi''=(F''')^\Inv$.
    \item\label{Item::Key::Phi0}  $\Phi:\Omega\to\Mf\times\Nf$ is homeomorphic onto its image,  and $\Phi(0,0,0)=(u_0,v_0)$. For each $s\in\Omega''$, $\Phi'(\cdot,s):\Omega'\times\Omega''\to\Mf$ is a $\Co^{\alpha+1}$-regular parameterization.
    \item\label{Item::Key::PhiReg}  $\Phi'\in\Co^{\alpha+1,\beta}_\loc(\Omega'\times\Omega'',\Omega''';\Mf)$ and $\frac{\partial\Phi'}{\partial t^1},\dots,\frac{\partial\Phi'}{\partial t^r},\frac{\partial\Phi'}{\partial z^1},\frac{\partial\Phi'}{\partial \bar z^1},\dots,\frac{\partial\Phi'}{\partial z^m},\frac{\partial\Phi'}{\partial \bar z^m}\in\Co^{\alpha,\beta-}_\loc(\Omega'\times\Omega'',\Omega''';\C T\Mf)$.
    \item\label{Item::Key::PhiSpan}  $\Se_{\Phi(t,z,s)}\le\C T_{\Phi'(t,z,s)}\Mf$ is spanned by $\frac{\partial\Phi}{\partial t^1}(t,z,s),\dots,\frac{\partial\Phi}{\partial t^r}(t,z,s),\frac{\partial\Phi}{\partial z^1}(t,z,s),\dots,\frac{\partial\Phi}{\partial z^m}(t,z,s)$,  $\forall (t,z,s)\in\Omega$.
\end{enumerate}
In particular,
\begin{enumerate}[parsep=-0.3ex,label=(\arabic*)]\setcounter{enumi}{8}
    \item\label{Item::Key::>1} When $1<\beta\le\alpha+1$, $F$ is a $\Co^{\beta}$-coordinate chart and $\Phi$ is a $\Co^{\beta}$-regular parameterization.
\end{enumerate}

\end{thm}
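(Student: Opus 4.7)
The plan is to execute the three-step Malgrange factorization $F = G \circ H \circ F_0$ sketched in Section 1.3, carefully tracking regularities through each step. Step~1 (initial smooth chart): choose smooth coordinates on $\Nf$ near $v_0$ to obtain the component $F'''$, and apply Lemma \ref{Lem::ODE::GoodGen} at $(u_0, v_0)$. Because $\Se + \bar\Se = (\C T\Mf)\times\Nf$, the auxiliary integer $q$ in that lemma is zero, so one obtains smooth coordinates $(\tau, w)$ on a neighborhood of $u_0$ in $\Mf$ together with a canonical $\Co^{\alpha,\beta}$-local basis of the form $X' = \partial_\tau + A'\partial_{\bar w}$, $X'' = \partial_w + A''\partial_{\bar w}$, where $A = \binom{A'}{A''} \in \Co^{\alpha,\beta}_\loc$ vanishes at the origin and the $X_j$ pairwise commute by involutivity (Lemma \ref{Lem::ODE::GoodGen} \ref{Item::ODE::GoodGen::InvComm}). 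The $\tau$ component of this chart already gives the smooth $F'$ required in \ref{Item::Key::F'''}.

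Step~2 (PDE coordinate change): after rescaling to $\B^{r+2m}\times\B^q$ and shrinking the domain so that $\|A\|_{\Co^{\alpha,\beta}}$ is smaller than the constant $\delta$ produced by Proposition \ref{Prop::PDE::ExistPDE}, I would apply that proposition to obtain $H'' \in \Co^{\alpha+1}L^\infty \cap \Co^1\Co^\beta$ solving the elliptic system \eqref{Eqn::PDE::ExistenceH}, and set $H(\tau,w,s) = (\tau, H''(\tau,w,s), s)$ with inverse $\tilde\Phi$. A direct pushforward computation (which will occupy a companion lemma dedicated to reverse-engineering the motivation of \eqref{Eqn::PDE::ExistenceH}) shows that, in the new coordinates $(\sigma,\zeta,s)$, the generators of $H_*\Se$ are controlled by $\Lambda[A;H''] \circ \tilde\Phi$, which by Proposition \ref{Prop::PDE::ExistPDE} \ref{Item::PDE::ExistPDE::Phi} lies in $\Co^\alpha L^\infty \cap \Co^{-1}\Co^\beta$ with arbitrarily small norm. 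The identity \eqref{Eqn::PDE::ExistenceH}, rewritten in the new coordinates, is precisely a quasilinear elliptic system $\Delta_{\sigma,\zeta} f = L\Theta(f, \nabla f)$ of the form required by Proposition \ref{Prop::PDE::AnalyticPDE}.

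Step~3 (holomorphic Frobenius): Proposition \ref{Prop::PDE::AnalyticPDE} then furnishes a holomorphic extension of these coefficients from $\B^{r+2m}$ to the cone $\Hb^{r+2m}\subset\C^{r+2m}$, with $\Co^\beta$ regularity in the parameter $s$. For each fixed $s$, $H_*\Se$ is therefore a real-analytic elliptic structure with pairwise commuting generators whose coefficients extend holomorphically in the spatial variables. Apply Proposition \ref{Prop::ODE::ParaHolFro} to these extensions to produce a map $\widetilde G''$, holomorphic in the spatial variables and $\Co^\beta$ in $s$, whose differentials annihilate the generators. Setting $F''$ equal to (the appropriate complex part of) the restriction of $\widetilde G''$ to the real domain composed with $H$, and assembling $F = (F', F'', F''')$, yields the sought coordinate chart, with $\Phi = F^\Inv$ providing the parameterization.

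The main obstacle is the regularity bookkeeping, especially the assertion $XF''\in\Co^{\alpha,\beta-}$ in \ref{Item::Key::F''Reg}. Corollary \ref{Cor::PDE::CorofExistPDE} gives $F''\in\Co^{\alpha+1,\beta}$, but applying a smooth vector field of $\Mf$ to $F''$ yields only $\Co^{\alpha,\beta-}$ because Riesz-transform-like operators fail to be bounded between $L^\infty$-based mixed H\"older spaces; this unavoidable loss is captured by Lemma \ref{Lem::Hold::GradMixHold}. Propagating regularity through the composition $F'' = \widetilde G''|_\text{real}\circ H$ will require Lemma \ref{Lem::Hold::CompofMixHold}, while the inverse-map analysis establishing \ref{Item::Key::Phi0}--\ref{Item::Key::PhiSpan} for $\Phi$ will use Lemma \ref{Lem::Hold::QPIFT} and Lemma \ref{Lem::Hold::NablaHarm}; the endpoint case \ref{Item::Key::>1} then follows from Lemma \ref{Lem::Hold::CharMixHold} \ref{Item::Hold::CharMixHold::HoldbyComp}.
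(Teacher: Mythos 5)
Your proposal reproduces the paper's own proof of Theorem \ref{Thm::Key}: the same factorization $F''=\overline{G''}\circ H\circ L$, built from the rescaled canonical basis of Lemma \ref{Lem::Key::IniNorm}, the coordinate change of Proposition \ref{Prop::PDE::ExistPDE} together with the pushforward computation of Lemma \ref{Lem::Key::NewGen}, the analyticity input of Proposition \ref{Prop::PDE::AnalyticPDE} via Lemma \ref{Lem::Key::BSatAna}, the parametric holomorphic Frobenius theorem (Proposition \ref{Prop::ODE::ParaHolFro} through Lemma \ref{Lem::Key::FinalModf}), and the same regularity bookkeeping with Lemma \ref{Lem::Hold::CompofMixHold}. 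One phrasing caution: the system fed to Proposition \ref{Prop::PDE::AnalyticPDE} is not the rewritten equation \eqref{Eqn::PDE::ExistenceH} alone (which yields only the divergence-type last row of \eqref{Eqn::Key::KeyEqnB}) but its combination with the commutator identities of the pushed-forward canonical generators, and you should also record the linear independence of $dF'$ and $dF''(\cdot,v_0)$ at $u_0$ before invoking the inverse-function step—both are minor since you already have commutativity from Lemma \ref{Lem::ODE::GoodGen} \ref{Item::ODE::GoodGen::InvComm}.
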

\begin{remark}\label{Rmk::Key::KeyFirst}
\begin{enumerate}[parsep=-0.3ex,label=(\roman*)]
    \item By taking some modifications in the proof using \cite[Proposition B.4]{SharpElliptic}, Theorem \ref{Thm::Key} are also true for the case $\alpha=\infty$ and $0<\beta<\infty$ (cf. \cite[Theorem 1.3(i)]{Gong}), we omit the detail to readers. 

\item We shall see in Section \ref{Section::Sharpddz} that the index ``$\beta-$'' for  $XF''\in\Co^{\alpha,\beta-}$ and $F^*\Coorvec z\in\Co^{\min(\alpha,\beta-)}$ are both sharp.

\item Theorem \ref{Thm::Key} implies Street's result \cite[Theorem 1.1]{SharpElliptic} for the sharp estimate on elliptic structure by considering $\alpha=\beta$ and forgetting the $s$-parameter. (Recall $\Co^{\alpha,\alpha}(U,V)=\Co^\alpha(U\times V)$ from Lemma \ref{Lem::Hold::CharMixHold}.)
\end{enumerate}
\end{remark}


\begin{remark}\label{Rmk::Key::KeySpecial}
Our results allow the range $\beta\in(0,1]$ and $\beta\in(\alpha,\alpha+1]$. 

For the case $\beta\le1$, it does not make sense to talk about $\Co^\beta$-coordinate charts nor $\Co^\beta$-parameterizations. Nevertheless by \ref{Item::Key::FBase} we have 
$F^*\Coorvec t|_{(u,v)}=(F',F''(\cdot,v))^*\Coorvec t|_u $ and $F^*\Coorvec z|_{(u,v)}=(F',F''(\cdot,v))^*\Coorvec z|_u $, since $(F',F''(\cdot,v))$ is a coordinate chart for a fix $v\in V$. Alternatively by \ref{Item::Key::Phi''} $F^*\Coorvec t=\frac{\partial\Phi}{\partial t}\circ F$ and $F^*\Coorvec z=\frac{\partial\Phi}{\partial t}\circ F$ are both continuous. Either way $F^*\Coorvec t,F^*\Coorvec z$ are collections of pointwise defined vector fields.

For the case $\beta>\alpha$, the parameter part is actually more regular than the part of spatial variable. In this case $F^*\Coorvec t,F^*\Coorvec z$ are $\Co^\alpha$-vector fields on $U\times V\subseteq\Mf\times\Nf$. See also Remark \ref{Rmk::PDE::RegBeta}.
\end{remark}
\begin{remark}\label{Rmk::Key::KeyImpliesGong}
If we only consider a family of elliptic structures rather than a family of complex Frobenius structures, and assume $\alpha,\beta$ are non-integers, then Theorem \ref{Thm::Key} is much stronger than \cite[Theorem 1.3]{Gong}:
\begin{itemize}[parsep=-0.3ex]
    \item We consider all $(\alpha,\beta)$ such that $\alpha>1$ and $0<\beta\le\alpha+1$. In \cite[Theorem 1.3]{Gong} only the case $\alpha>\beta+3\ge4$ and the case $\alpha=\beta$ is considered.
    \item For the regularity of the coordinate charts we have the sharp estimate $F\in\Co^{\alpha+1,\beta}(U,V)$. \cite[Theorem 1.3]{Gong} only gives $F\in\Co^{\alpha-,\beta-}(U,V)$.
\end{itemize}
\end{remark}



We give a detailed overview to the idea of its proof in Section \ref{Section::KeyProofOver} using Malgrange's factorization. 

\subsection{Overview of the key estimate}\label{Section::KeyProofOver}
 For vector fields $X_1,\dots,X_{r+m}$, we write $X=[X_1,\dots,X_{r+m}]^\top$ following from the convention in Section \ref{Section::Convention}.

The most important part is to find the map $F'':U\times V\to\C^m$ that has the desired regularity property. We factorize $F''$ to the following composition of maps:
\begin{equation}\label{Eqn::Key::FacterizationofF}
    F''=\overline{G''}\circ H\circ L.
\end{equation}

Endow $\R^r,\C^m,\R^q$ with standard coordinates $\tau=(\tau^1,\dots,\tau^r)$, $w=(w^1,\dots,w^m)$ and $s=(s^1,\dots,s^q)$ respectively.
There are smooth coordinate charts $(L',L''):U_0\subseteq\Mf\xrightarrow{\sim}\B^{r+2m}\subset\R^r_\tau\times\C^m_w$ and $L''':V_0\subseteq\Nf\xrightarrow{\sim}\B^q\subset\R^q_s$, such that by taking $L=(L',L'',L'''):U_0\times V_0\to\B^{r+2m}\times\B^q$,  $L_*\Se$ has a $\Co^{\alpha,\beta}_{(\tau,w),s}$-local basis on $\B^{r+2m}\times\B^q$ with the form 
\begin{equation}\label{Eqn::Key::DefofA}
    X=\begin{pmatrix}X'\\X''\end{pmatrix}=\begin{pmatrix}I_r&0&A'\\0&I_m&A''\end{pmatrix}\begin{pmatrix}\partial_\tau\\\partial_w\\\partial_{\bar w}\end{pmatrix},\quad A:=\begin{pmatrix}
    A'\\A''\end{pmatrix},
\end{equation}
where $\|A\|_{\Co^{\alpha,\beta}(\B^{r+2m},\B^q;\C^{(r+m)\times m})}$ is small enough. This can be done by some scaling argument, see Lemma \ref{Lem::Key::IniNorm}.

There is a ``$\Co^{\alpha+1,\beta}_{(\tau,w),s}$-coordinate change'' $(\sigma,\zeta,s)=H(\tau,w,s)$ where $H:\B^{r+2m}_{\tau,w}\times\B^q_s\to\B^{r+2m}_{\sigma,\zeta}\times\B^q_s$ is of the form $H(\tau,w,s)=(\tau,H''(\tau,w,s),s)$, such that the local generators for $(H\circ L)_*\Se$ are $\Co^\beta$ and  real-analytic in $(\sigma,\zeta)$-variable. More precisely, the canonical local basis of $(H\circ L)_*\Se$ has the the form
\begin{equation}\label{Eqn::Key::DefofB}
\begin{pmatrix}T\\Z\end{pmatrix}=\begin{pmatrix}I_r&0&B'\\0&I_m&B''\end{pmatrix}\begin{pmatrix}\partial_\sigma\\\partial_\zeta\\\partial_{\bar \zeta}\end{pmatrix},\ B= \left.\begin{pmatrix}B'\\B''\end{pmatrix}\right|_{(\sigma,\zeta,s)}:=\left.\begin{pmatrix}I&H''_\tau+ A'H''_{\bar w}\\&H''_ w+ A''H''_{\bar w}\end{pmatrix}^{-1}\!\!\begin{pmatrix}\bar H''_\tau+ A'\bar H''_{\bar w}\\\bar H''_ w+ A''\bar H''_{\bar w}\end{pmatrix}\right|_{H^\Inv(\sigma,\zeta,s)}.
\end{equation}
Here $B'$ and $B''$ are defined on $\B^{r+2m}_{\sigma,\zeta}\times\B^q_s\subset(\R^r\times\C^m)\times\R^q$ and admit $(\sigma,\zeta)$-variables holomorphic extensions to the domain $\frac12\Hb^{r+2m}\times\frac12\B^q\subset\C^{r+2m}\times\R^q$.
In this case we note that $H''_\tau,H''_{\bar w}\in\Co^{\alpha,\beta-}_{(\tau,w),s}$ has a regularity loss on the parameter, while $T,Z\in\Co^{\infty,\beta}_{(\sigma,\zeta),s}$ retrieve the full $\Co^\beta$ regularity on the parameter. 

We denote by $\Tf$ and $\Zf$ the  collections of holomorphic vector fields corresponding to $T$ and $Z$. Finally, applying Proposition \ref{Prop::ODE::ParaHolFro}, the holomorphic Frobenius theorem with parameter, on $\Tf,\Zf$ we can find a $\Co^{\infty}_{\sigma,\zeta}\Co^\beta_s$-map $G''(\sigma,\zeta,s)$ such that $(H\circ L)_*\Se^\bot$ is spanned by $dG'',d\bar\zeta,ds$. See Lemma \ref{Lem::Key::FinalModf}.

In this way, the composition map $\overline{G''}\circ H\circ L$ is as desired.

\medskip To make $B(\sigma,\zeta,s)$ real-analytic, we impose extra conditions on $H$: We require $H''\in\Co^{\alpha+1}_{\tau,w}L^\infty_s\cap\Co^1_{\tau,w}\Co^\beta_s$, and $H$ satisfies the following condition:
\begin{equation}\label{Eqn::Key::KeyEqn1}
\sum_{j=1}^r\frac{\partial T_j}{\partial \sigma^j}+\sum_{k=1}^m\frac{\partial Z_k}{\partial \bar \zeta^k}=0.
\end{equation}

Rewriting \eqref{Eqn::Key::KeyEqn1} in the $(\tau,w,s)$-space using the coordinate change $H$, we get \eqref{Eqn::PDE::ExistenceH}. See Lemma \ref{Lem::Key::NewGen} for this deduction. The existence of such $H$ is done in Proposition \ref{Prop::PDE::ExistPDE}.

We write $B'=(b_j^l)_{\substack{1\le j\le r\\1\le l\le m}}$ and $B''=(b_{r+k}^l)_{\substack{1\le k\le m\\1\le l\le m}}$.
By Lemma \ref{Lem::ODE::GoodGen} \ref{Item::ODE::GoodGen::InvComm} the involutivity of $\Se$ implies that $T_1,\dots,T_r,Z_1,\dots,Z_m$ are commutative. Writing out the conditions $[T_j,T_{j'}]=0$, $[T_j,Z_k]=0$ and $[Z_k,Z_{k'}]=0$ in terms of $B'$ and $B''$ and combining \eqref{Eqn::Key::KeyEqn1}, we have the following:
\begin{equation}\label{Eqn::Key::KeyEqnB}
\begin{aligned}
\frac{\partial b_{j'}^l}{\partial \sigma^j}-\frac{\partial b_j^l}{\partial \sigma^{j'}}&=\sum_{q=1}^m\Big(b_{j'}^q\frac{\partial b_j^l}{\partial\bar  \zeta^q}-b_j^q\frac{\partial b_{j'}^l}{\partial\bar \zeta^q}\Big),&1\le j<j'\le r,1\le l\le m;\\
\frac{\partial b_{r+k}^l}{\partial \sigma^j}-\frac{\partial b_j^l}{\partial \zeta^k}&=\sum_{q=1}^m\Big(b_{r+k}^q\frac{\partial b_j^l}{\partial\bar \zeta^q}-b_j^q\frac{\partial b_{r+k}^l}{\partial\bar \zeta^q}\Big),&1\le j\le r,1\le k,l\le m;\\
\frac{\partial b_{r+k'}^l}{\partial \zeta^k}-\frac{\partial b_{r+k}^l}{\partial \zeta^{k'}}&=\sum_{q=1}^m\Big(b_{r+k'}^q\frac{\partial b_{r+k}^l}{\partial\bar \zeta^q}-b_{r+k}^q\frac{\partial b_{r+k'}^l}{\partial\bar \zeta^q}\Big),&1\le k<{k'}\le m,1\le l\le m;\\
\sum_{j=1}^r\frac{\partial b_j^l}{\partial\sigma^j}+\sum_{k=1}^m\frac{\partial b_{r+k}^l}{\partial\bar\zeta^k}&=0,&1\le l\le m.
\end{aligned}\end{equation}

\eqref{Eqn::Key::KeyEqnB} is a quasilinear PDE system of the form $DB=\Theta(B,\nabla_{\sigma,\zeta}B)$, where $D$ is a first order differential operator with constant (vector-valued) coefficients and $\Theta$ is a product map with constant (vector-valued) coefficients. We can check that $D^*D=\Delta_\sigma+\Box_\zeta$ is the real and complex Laplacian acting on the components. So \eqref{Eqn::Key::KeyEqnB} is an elliptic PDE system and with some simple modification we can apply Proposition \ref{Prop::PDE::AnalyticPDE}. See Lemma \ref{Lem::Key::BSatAna}.

\subsection{Some transition lemmas}\label{Section::KeyLems}
We first construct the coordinate chart $L$.
\begin{lem}\label{Lem::Key::IniNorm}
Endow $\R^r,\C^m,\R^q$ with standard coordinate systems $\tau=(\tau^1,\dots,\tau^r)$, $w=(w^1,\dots,w^m)$, $s=(s^1,\dots,s^q)$ respectively.

Let $\alpha>1$, $\beta>0$. Let $\Se\le(\C T\Mf)\times\Nf$ and $(u_0,v_0)\in\Mf\times\Nf$ be as in the assumption of Theorem \ref{Thm::Key}. Then for any $\delta>0$ there are neighborhoods $U_0\subseteq\Mf$ of $p_0$, $V_0\subseteq\Nf$ of $q_0$ and smooth coordinate charts $(L',L''):U_0\xrightarrow{\sim}\B^{r+2m}\subset\R^r_\tau\times\C^m_w$, $L''':V_0\xrightarrow{\sim}\B^q_s$, such that for the product chart $L=(L',L'',L''')$, $L_*\Se$ has a local basis $X=[X_1,\dots,X_{r+m}]^\top$ with the form \eqref{Eqn::Key::DefofA} defined on $\B^{r+2m}_{\tau,w}\times\B^q_s$, and
\begin{equation}\label{Eqn::Key::IniNorm::A}
    \|A\|_{\Co^{\alpha,\beta}(\B^{r+2m},\B^q;\C^{(r+m)\times m})}<\delta.
\end{equation}
\end{lem}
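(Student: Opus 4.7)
The plan is to produce an initial smooth coordinate chart near $(u_0,v_0)$ in which $\Se$ already has a canonical local basis of the form \eqref{Eqn::Key::DefofA}, arrange the coefficient matrix $A$ to vanish at $(u_0,v_0)$, and then rescale all coordinates by a small factor to drive the $\Co^{\alpha,\beta}$-norm of $A$ below any prescribed threshold.

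First I apply Lemma \ref{Lem::ODE::GoodGen} at $(u_0,v_0)$. Because $\Se+\bar\Se=(\C T\Mf)\times\Nf$ has rank $\dim\Mf=r+2m$ everywhere, the integer $q$ appearing in that lemma is zero, so its canonical basis has no $\partial_s$-components and takes precisely the form \eqref{Eqn::Key::DefofA}, with coefficient matrix $A\in\Co^{\alpha,\beta}_\loc(\tilde U_0,\tilde V_0;\C^{(r+m)\times m})$ on some neighborhood $\tilde U_0\times\tilde V_0$ of $(u_0,v_0)$. Translating the coordinates, I may assume $(u_0,v_0)$ corresponds to the origin.

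To secure $A(0,v_0)=0$ I refine the initial coordinate choice. I pick a real basis $e_1,\dots,e_r$ of the real $r$-dimensional subspace $(\Se\cap\bar\Se)_{(u_0,v_0)}\cap T_{u_0}\Mf$ and extend to a complex basis $e_1,\dots,e_r,f_1,\dots,f_m$ of $\Se_{(u_0,v_0)}$. A dimension count using $\Se+\bar\Se=\C T\Mf$ shows $\{e_j,\re f_k,\im f_k\}$ forms a real basis of $T_{u_0}\Mf$, so I can build smooth coordinates $(t,z)$ near $u_0$ with $\partial_{t^j}|_{u_0}=e_j$ and $\partial_{z^k}|_{u_0}=f_k$. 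Then $\Se_{(u_0,v_0)}=\Span(\partial_t,\partial_z)|_{u_0}$, which is transverse to $\Span(\partial_{\bar z})|_{u_0}$, so every canonical basis vector $X_j|_{(u_0,v_0)}$ must have vanishing $\partial_{\bar z}$-component, forcing $A(0,v_0)=0$. (Equivalently, starting from any chart provided by Lemma \ref{Lem::ODE::GoodGen}, a subsequent constant $\C$-linear change of variables in $(t,z)$ depending on $A(0,v_0)$ could absorb this constant.)

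Finally I pick any smooth chart $\psi:\tilde V_0\to\R^q_s$ with $\psi(v_0)=0$, and for each $\lambda\in(0,1)$ I define rescaled charts $L':=\lambda^{-1}t$, $L'':=\lambda^{-1}z$, $L''':=\lambda^{-1}\psi$, with $U_0,V_0$ the preimages of $\B^{r+2m}$ and $\B^q$ respectively. Multiplying the canonical basis vectors by $\lambda$ preserves the form \eqref{Eqn::Key::DefofA} and produces new coefficients $\tilde A(\tau,w,s)=A(\lambda\tau,\lambda w,\psi^\Inv(\lambda s))$ on $\B^{r+2m}\times\B^q$. Using the equivalent norm $\|\cdot\|_{\Co^{\alpha,\beta}}\approx\|\cdot\|_{\Co^\alpha L^\infty\cap L^\infty\Co^\beta}$ from Lemma \ref{Lem::Hold::CharMixHold}, the $L^\infty$ parts tend to $|A(0,v_0)|=0$ as $\lambda\to 0$ by continuity, while the spatial-H\"older seminorm scales by $\lambda^\alpha$ and the parameter-H\"older seminorm scales by $\lambda^\beta$ (times fixed constants coming from $\psi^\Inv$). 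Hence $\|\tilde A\|_{\Co^{\alpha,\beta}(\B^{r+2m},\B^q;\C^{(r+m)\times m})}\to 0$, and any sufficiently small $\lambda$ achieves $\|\tilde A\|_{\Co^{\alpha,\beta}}<\delta$. The only genuinely nontrivial step is arranging $A(0,v_0)=0$; once that is in place, the rescaling estimate is routine.
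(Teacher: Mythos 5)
Your proposal is correct and takes essentially the same route as the paper: both arrange an initial smooth product chart so that $\Se_{(u_0,v_0)}$ is spanned by $\partial_\tau,\partial_w$ at the base point (so the canonical-basis coefficient matrix from Lemma \ref{Lem::ODE::GoodGen} vanishes there) and then rescale the coordinates by a small $\lambda$ to drive $\|A\|_{\Co^{\alpha,\beta}}$ below $\delta$. The only cosmetic difference is that the paper invokes a quantitative scaling lemma giving the factor $\lambda^{\min(\beta,1/2)}$, while you get smallness from continuity of $A$ at the center for the sup-part plus scaling of the seminorm parts (for $\alpha>1$ the intermediate-derivative terms scale like $\lambda^k$ with $k\ge1$ rather than $\lambda^\alpha$, but they still tend to zero, so your conclusion stands).
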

\begin{proof}Let $\tilde L''':\tilde V_0\subseteq\Nf\to\R^q_s$ be an arbitrary smooth coordinate chart near $v_0$ such that $\tilde L'''(v_0)=0$. Since $\rank\Se=r+m$ and $\rank(\Se+\bar\Se)=r+2m$, by standard linear algebra argument (see also \cite[Page 18]{Involutive}) we can find a smooth coordinate chart $(\tilde L',\tilde L''):\tilde U_0\subseteq\Mf\to\R^r_\tau\times\C^m_w$ such that
\begin{equation}\label{Eqn::Key::PfIniNorm::GenAssum1}
    \textstyle(\tilde L',\tilde L'')(u_0)=(0,0)\text{ and }\Se_{(u_0,v_0)}=(\tilde L',\tilde L'')^*\Span\big(\Coorvec{\tau^1},\dots,\Coorvec{\tau^r},\Coorvec{w^1},\dots,\Coorvec{w^m}\big)\big|_{u_0}\le \C T_{u_0}\Mf.
\end{equation}
Therefore, by considering $\tilde L:=(\tilde L',\tilde L'',\tilde L''')$ and working on $\Se_{(u_0,v_0)}\le\C T_{(u_0,v_0)}(\Mf\times\Nf)$, we have
\begin{equation}\label{Eqn::Key::PfIniNorm::GenAssum2}
    \textstyle(\tilde L_*\Se)_{(0,0,0)}\oplus\Span\big(\Coorvec{\bar w^1}\big|_{(0,0,0)},\dots,\Coorvec{\bar w^m}\big|_{(0,0,0)},\Coorvec{s^1}\big|_{(0,0,0)},\dots\Coorvec{s^q}\big|_{(0,0,0)}\big)=\R^r\times\C^m\times\R^q.
\end{equation}

Applying Lemma \ref{Lem::ODE::GoodGen}, since we have \eqref{Eqn::Key::PfIniNorm::GenAssum2}, there is a $\lambda_0>0$ such that $\lambda_0\B^{r+2m}_{\tau,w}\times\lambda_0\B^q_s\subseteq\tilde L(U_0\times V_0)$ and we can find a $\Co^\alpha$-local basis $\tilde X=[\tilde X_1,\dots,\tilde X_{r+m}]^\top$ of $\tilde L_*\Se$ that is defined on $\lambda_0\B^{r+2m}_{\tau,w}\times\lambda_0\B^q_s$ and has the form,
\begin{equation*}
    \tilde X=\begin{pmatrix}\tilde X'\\\tilde X''\end{pmatrix}=\begin{pmatrix}I_r&&\tilde A'&\tilde A'''\\&I_m&\tilde A''&\tilde A''''\end{pmatrix}\begin{pmatrix}\partial_\tau\\\partial_w\\\partial_{\bar w}\\\partial_s\end{pmatrix},\quad\text{on }\lambda_0\B^{r+2m}_{\tau,w}\times\lambda_0\B^q_s.
\end{equation*}
Clearly $\tilde A'''=0$ and $\tilde A''''=0$ since $\Se\le(\C T\Mf)\times\Nf$. By \eqref{Eqn::Key::PfIniNorm::GenAssum1} we have $\tilde A'(0)=0$ and $\tilde A''(0)=0$.

Write $\tilde A=\begin{pmatrix}\tilde A'\\\tilde A''\end{pmatrix}$. By scaling (see \cite[Lemma 5.9 (i)]{SharpElliptic} or \cite[Lemma 5.16]{StreetYaoVectorFields}) there is a  $C=C(r,m,q,\lambda_0,\alpha,\beta)>0$ such that 
\begin{equation}\label{Eqn::Key::IniNorm::Scaling}
    \|\tilde A(\lambda\cdot)\|_{\Co^{\alpha,\beta}(\B^{r+2m},\B^q;\C^{(r+m)\times m})}\le C\lambda^{\min(\beta,\frac12)}\|\tilde A\|_{\Co^{\alpha,\beta}(\lambda_0\B^{r+2m},\lambda_0\B^q;\C^{(r+m)\times m})},\quad\forall \lambda\in(0,\lambda_0].
\end{equation}

Take $\lambda=(C\|\tilde A\|_{\Co^{\alpha,\beta}(\lambda_0\B^{r+2m},\lambda_0\B^q)})^{-2}\cdot(\frac\delta2)^2$. Then the right hand side of \eqref{Eqn::Key::IniNorm::Scaling} is less than $\delta$. 

Take $L:=\lambda\tilde L$, we see that $L_*\Se$ has a $\Co^{\alpha,\beta}$-local basis $X=[X_1,\dots,X_{r+m}]^\top$ with the form
\begin{equation*}
    X=\begin{pmatrix}I_r&&A'\\&I_m& A''\end{pmatrix}\begin{pmatrix}\partial_\tau\\\partial_w\\\partial_{\bar w}\end{pmatrix},\quad\text{where }\begin{pmatrix}
    A'(\tau,w,s)\\A''(\tau,w,s)
    \end{pmatrix}=\begin{pmatrix}
    \tilde A'(\lambda\tau,\lambda w,\lambda s)\\\tilde A''(\lambda\tau,\lambda w,\lambda s)
    \end{pmatrix}\text{ for }(\tau,w)\in\B^{r+2m},s\in\B^q.
\end{equation*}

So we have $\|A\|_{\Co^{\alpha,\beta}(\B^{r+2m},\B^q)}=\|\tilde A(\lambda\cdot)\|_{\Co^{\alpha,\beta}(\lambda_0\B^{r+2m},\lambda_0\B^q)}<\delta$ as desired.
\end{proof}

Next we show that \eqref{Eqn::Key::KeyEqn1} and \eqref{Eqn::PDE::ExistenceH} are associated. Here we endow $\R^r\times\C^m$ with two standard coordinate systems $(\tau,w)=(\tau^1,\dots,\tau^r,w^1,\dots,w^m)$ and $(\sigma,\zeta)=(\sigma^1,\dots,\sigma^r,\zeta^1,\dots,\zeta^m)$, and we still use $s=(s^1,\dots,s^q)$ as the standard coordinate system for $\R^q$.
\begin{lem}\label{Lem::Key::NewGen}Let $\eps,\delta>0$, $A:\B^{r+2m}_{\tau,w}\times\B^q_s\to\C^{(r+m)\times m}$ and $H:\B^{r+2m}_{\tau,w}\times\B^q_s\to\B^{r+2m}_{\sigma,\zeta}\times\B^q_s$ be as in (the assumptions and consequence) of Proposition \ref{Prop::PDE::ExistPDE}. Then,
\begin{enumerate}[parsep=-0.3ex,label=(\roman*)]
    \item\label{Item::Key::NewGen::B} If $L_*\Se$ has a local basis with the form \eqref{Eqn::Key::DefofA}, then $(H\circ L)_*\Se$ has a local basis with the form \eqref{Eqn::Key::DefofB}.
    \item\label{Item::Key::NewGen::Eqn} Suppose $\Se$ is involutive and $H''$ is a solution to \eqref{Eqn::PDE::ExistenceH}, then the coefficient matrix $B=[B',B'']^\top$ in \eqref{Eqn::Key::DefofB} is a solution to \eqref{Eqn::Key::KeyEqnB}.
\end{enumerate} 
\end{lem}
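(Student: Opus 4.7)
The plan is a direct chain-rule computation in part (i), combined with canonical-basis commutativity and the reinterpretation of \eqref{Eqn::PDE::ExistenceH} as a divergence condition in part (ii). For part (i), I will first establish the chain-rule identity $H_*\partial_{(\tau,w,\bar w)} = \Psi[H'']\cdot\partial_{(\sigma,\zeta,\bar\zeta)}$, understood as an identity of vector fields on the target via precomposition of the coefficient matrix with $H^{\Inv}$. The one delicate point is Wirtinger bookkeeping: $\partial_{w^k}\bar H''^l = \overline{\partial_{\bar w^k}H''^l}$ and its siblings, so the entries of $\Psi$ in the $\bar H''$-blocks must be read in the conjugate-then-differentiate sense. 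Substituting this into \eqref{Eqn::Key::DefofA} gives $H_*X = \begin{pmatrix}I & 0 & A' \\ 0 & I & A''\end{pmatrix}\Psi[H'']\,\partial_{(\sigma,\zeta,\bar\zeta)}$, and a direct multiplication identifies the first $r+m$ columns of the product as precisely $\Theta[A;H'']$ and the last $m$ columns as $(\bar H''_\tau + A'\bar H''_{\bar w};\;\bar H''_w + A''\bar H''_{\bar w})^\top$. Since Lemma \ref{Lem::PDE::MatisInv} \ref{Item::PDE::MatisInv::Inv} makes $\Theta[A;H'']$ pointwise invertible under the smallness hypothesis, left-multiplying by $\Theta[A;H'']^{-1}$ yields a new local basis $[T;Z]^\top := \Theta[A;H'']^{-1}\cdot H_*X$ of $(H\circ L)_*\Se$ in the canonical form \eqref{Eqn::Key::DefofB}, whose $\partial_{\bar\zeta}$-coefficients are exactly the entries of $B$ stated in \eqref{Eqn::Key::DefofB}.

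For part (ii), I will split \eqref{Eqn::Key::KeyEqnB} into the first three ``Lie-bracket'' equations and the last ``divergence'' equation. For the former: involutivity of $\Se$ passes to involutivity of $(H\circ L)_*\Se$ under the diffeomorphism, and since $(T,Z)$ produced in (i) is the canonical basis of the pushed-forward subbundle, Lemma \ref{Lem::ODE::GoodGen} \ref{Item::ODE::GoodGen::InvComm} gives pairwise commutativity $[T_j,T_{j'}] = [T_j,Z_k] = [Z_k,Z_{k'}] = 0$. Expanding these brackets via \eqref{Eqn::ODE::LieBracketRecap} on $T_j = \partial_{\sigma^j} + \sum_l b_j^l\partial_{\bar\zeta^l}$ and $Z_k = \partial_{\zeta^k} + \sum_l b_{r+k}^l\partial_{\bar\zeta^l}$, the coefficients of $\partial_\sigma$ and $\partial_\zeta$ vanish trivially, while setting the coefficients of $\partial_{\bar\zeta^l}$ to zero reproduces verbatim the first three lines of \eqref{Eqn::Key::KeyEqnB}.

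For the divergence equation, the key observation is that $(\Psi[H'']^{-1}\partial_{(\tau,w,\bar w)})_j$ is the $H$-pullback of $\partial_{(\sigma,\zeta,\bar\zeta)^j}$; this follows by applying $H_*$ and invoking (i), since $H_*(\Psi^{-1}\partial_{(\tau,w,\bar w)}) = (\Psi^{-1}\Psi)\circ H^{\Inv}\cdot\partial_{(\sigma,\zeta,\bar\zeta)} = \partial_{(\sigma,\zeta,\bar\zeta)}$. Setting $\tilde B := B\circ H$ (so that $\tilde B$ is exactly the $\Theta^{-1}(\cdots)^\top$-matrix appearing inside \eqref{Eqn::PDE::ExistenceH}), the left-hand side of \eqref{Eqn::PDE::ExistenceH} collapses to $\sum_{j=1}^r(\partial_{\sigma^j}b_j^l)\circ H + \sum_{k=1}^m(\partial_{\bar\zeta^k}b_{r+k}^l)\circ H$, so that \eqref{Eqn::PDE::ExistenceH} is precisely the $H$-precomposition of the fourth line of \eqref{Eqn::Key::KeyEqnB}, completing (ii).

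The main obstacle throughout is the Wirtinger/chain-rule bookkeeping in part (i) — in particular, verifying that the $\bar H''$-entries of $\Psi$ and $\Theta$ are interpreted consistently across the computation (differentiate-then-conjugate versus conjugate-then-differentiate) and confirming that $\Theta[A;H'']^{-1}$ is exactly the change-of-frame matrix needed to reach the canonical form. Once this matrix identity is pinned down, part (ii) becomes a clean algebraic consequence of canonical-basis commutativity plus the reinterpretation of \eqref{Eqn::PDE::ExistenceH} as the fourth line of \eqref{Eqn::Key::KeyEqnB} in new coordinates.
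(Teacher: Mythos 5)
Your proposal is correct and follows essentially the same route as the paper: part (i) is the same chain-rule/change-of-frame computation, using the pointwise invertibility of $\Theta[A;H'']$ from Lemma \ref{Lem::PDE::MatisInv} \ref{Item::PDE::MatisInv::Inv} to pass to the canonical form \eqref{Eqn::Key::DefofB}, and part (ii) combines commutativity of the canonical basis (Lemma \ref{Lem::ODE::GoodGen} \ref{Item::ODE::GoodGen::InvComm}) for the first three lines of \eqref{Eqn::Key::KeyEqnB} with the identification of $\Psi[H'']^{-1}\partial_{(\tau,w,\bar w)}$ as the $H$-pullback of $\partial_{(\sigma,\zeta,\bar\zeta)}$, so that \eqref{Eqn::PDE::ExistenceH} becomes the fourth line composed with $H$. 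The only cosmetic difference is that the paper first writes the identity for pulled-back differentials and then inverts, whereas you state the pushforward of the coordinate vector fields directly.
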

\begin{proof}
Following Convention \ref{Conv::PDE::ConvofExtPDE}, by assumption $H(\tau,w,s)=(\tau,H''(\tau,w,s),s)$, we have
\begin{equation}\label{Eqn::Key::NewGen::H}
    H^*(d\sigma,d\zeta,d\bar\zeta,ds)=(d\tau,dw,d\bar w,ds)\begin{pmatrix}I&H''_\tau&\bar H''_\tau\\&H''_w&\bar H''_w\\&H''_{\bar w}&\bar H''_{\bar w}\\& H''_s&\bar H''_s&I\end{pmatrix}\ \Rightarrow\ \begin{pmatrix}
    \Coorvec\tau\\\Coorvec w\\\Coorvec{\bar w}\end{pmatrix}=\begin{pmatrix}I&H''_\tau&\bar H''_\tau\\&H''_w&\bar H''_w\\&H''_{\bar w}&\bar H''_{\bar w}\end{pmatrix}\cdot H^*\begin{pmatrix}
    \Coorvec\sigma\\\Coorvec\zeta\\\Coorvec{\bar\zeta}\end{pmatrix}.
\end{equation}
Therefore taking pushforward of \eqref{Eqn::Key::DefofA} under $H$ we have, \begin{equation*}
    H_*X=H_*\left(\begin{pmatrix}I&&A'\\&I&A''\end{pmatrix}\begin{pmatrix}I&H''_\tau&\bar H''_\tau\\&H''_w&\bar H''_w\\&H''_{\bar w}&\bar H''_{\bar w}\end{pmatrix}\right)\begin{pmatrix}
    \Coorvec\sigma\\\Coorvec\zeta\\\Coorvec{\bar\zeta}\end{pmatrix}=\left(\begin{pmatrix}I&H''_\tau+A'H''_{\bar w}&\bar H''_\tau+A'\bar H''_{\bar w}\\&H''_w+A''\bar H''_{\bar w}&\bar H''_w+A''\bar H''_{\bar w}\end{pmatrix}\circ H^\Inv\right)\cdot\begin{pmatrix}
    \Coorvec\sigma\\\Coorvec\zeta\\\Coorvec{\bar\zeta}\end{pmatrix}.
\end{equation*}
The matrix $\begin{pmatrix}I&H''_\tau+A'H''_{\bar w}\\&H''_w+A''\bar H''_{\bar w}\end{pmatrix}$ is invertible (see Lemma \ref{Lem::PDE::MatisInv} \ref{Item::PDE::MatisInv::Inv}), we see that $H_*(L_*\Se)=(H\circ L)_*\Se$ has a local basis
\begin{equation*}
    H_*\left(\begin{pmatrix}I&H''_\tau+A'H''_{\bar w}\\&H''_w+A''\bar H''_{\bar w}\end{pmatrix}^{-1}\begin{pmatrix}X'\\X''\end{pmatrix}\right)=\left(\left(I_{r+m}+\begin{pmatrix}I&H''_\tau+ A'H''_{\bar w}\\&H''_ w+ A''H''_{\bar w}\end{pmatrix}^{-1}\begin{pmatrix}\bar H''_\tau+ A'\bar H''_{\bar w}\\\bar H''_ w+ A''\bar H''_{\bar w}\end{pmatrix}\right)\circ H^\Inv\right)\cdot\begin{pmatrix}
    \Coorvec\sigma\\\Coorvec\zeta\\\Coorvec{\bar\zeta}\end{pmatrix}.
\end{equation*}
This gives \eqref{Eqn::Key::DefofB} and finishes the proof of \ref{Item::Key::NewGen::B}.

\medskip Recall the notations $B'=(b_j^l)_{\substack{1\le j\le r;1\le l\le m}}$ and $B''=(b_{r+k}^l)_{\substack{1\le k,l\le m}}$. By \eqref{Eqn::Key::DefofB} we have,
\begin{equation}\label{Eqn::Key::NewGen::Tmp1}
    b_j^l\circ H=\left(\begin{pmatrix}I&H''_\tau+ A'H''_{\bar w}\\&H''_ w+ A''H''_{\bar w}\end{pmatrix}^{-1}\!\!\begin{pmatrix}\bar H''_\tau+ A'\bar H''_{\bar w}\\\bar H''_ w+ A''\bar H''_{\bar w}\end{pmatrix}\right)_j^l,\quad 1\le j\le m+r,\quad 1\le l\le m.
\end{equation}

By \eqref{Eqn::Key::NewGen::H} we have, for $1\le j\le r$ and $1\le k\le m$,
\begin{equation}\label{Eqn::Key::NewGen::Tmp2}
    \left.\Coorvec{\sigma^j}\right|_{H(\tau,w,s)}=\left(\begin{pmatrix}I&H''_\tau&\bar H''_\tau\\&H''_w&\bar H''_w\\&H''_{\bar w}&\bar H''_{\bar w}\end{pmatrix}^{-1}\;\begin{pmatrix}
    \Coorvec\tau\\\Coorvec w\\\Coorvec{\bar w}\end{pmatrix}\right)_j,
    \
    \left.\Coorvec{\bar\zeta^k}\right|_{H(\tau,w,s)}=\left(\begin{pmatrix}I&H''_\tau&\bar H''_\tau\\&H''_w&\bar H''_w\\&H''_{\bar w}&\bar H''_{\bar w}\end{pmatrix}^{-1}\;\begin{pmatrix}
    \Coorvec\tau\\\Coorvec w\\\Coorvec{\bar w}\end{pmatrix}\right)_{r+m+k}.
\end{equation}

Plugging \eqref{Eqn::Key::NewGen::Tmp1} and \eqref{Eqn::Key::NewGen::Tmp2} into \eqref{Eqn::PDE::ExistenceH} we get $\sum_{j=1}^r\frac{\partial b_j^l}{\partial\sigma^j}+\sum_{k=1}^m\frac{\partial b_{r+k}^l}{\partial\bar\zeta^k}=0$ for $1\le l\le m$, which is the last row of \eqref{Eqn::Key::KeyEqnB}.

Since $\Se$ is involutive, $(H\circ L)_*\Se$ is also involutive, by Lemma \ref{Lem::ODE::GoodGen} \ref{Item::ODE::GoodGen::InvComm}, $T_1,\dots,T_r,Z_1,\dots,Z_m$ given in \eqref{Eqn::Key::DefofB} are pairwise commutative. Writing out the condition $[T_j,T_{j'}]=0$ $(1\le j<j'\le r)$, $[T_j,Z_k]=0$ $(1\le j\le r,1\le k\le m)$ and $[Z_k,Z_{k'}]=0$ $(1\le k<k'\le m)$ in the coordinate components we get the first three rows of \eqref{Eqn::Key::KeyEqnB}, finishing the proof of \ref{Item::Key::NewGen::Eqn}.
\end{proof}

Now we have the PDE system \eqref{Eqn::Key::KeyEqnB}. We show that it satisfies the assumption of Proposition \ref{Prop::PDE::AnalyticPDE}.
\begin{lem}\label{Lem::Key::BSatAna}
Suppose $B:\B^{r+2m}_{\sigma,\zeta}\times\B^q_s\to\C^{(r+m)\times m}$ solves \eqref{Eqn::Key::KeyEqnB}, then $f(\sigma,\xi,\eta,s):=B(\sigma,\frac12(\xi+i\eta),s)$ is defined on $\B^{r+2m}_{\sigma,\xi,\eta}\times\B^q_s$ and $f$ solves a PDE with the form $Lf=\Delta_{\sigma,\xi,\eta}\Theta(f,\nabla_{\sigma,\xi,\eta}f)$ where $L$ and $\Theta$ satisfy the assumptions of Proposition \ref{Prop::PDE::AnalyticPDE}.
\end{lem}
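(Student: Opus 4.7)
The plan is to view $B$ as a $\C^{m(r+m)}$-valued unknown in the real variables $(\sigma,\xi,\eta,s)\in\B^{r+2m}\times\B^q$ via the substitution $\zeta=\tfrac12(\xi+i\eta)$, and to derive from the four families of relations in \eqref{Eqn::Key::KeyEqnB} a single quasilinear elliptic equation of the form required to invoke Proposition \ref{Prop::PDE::AnalyticPDE}. The key coordinate-level fact is that, under this substitution, $\partial_{\zeta^k}=\partial_{\xi^k}-i\partial_{\eta^k}$ and $\partial_{\bar\zeta^k}=\partial_{\xi^k}+i\partial_{\eta^k}$, hence $\partial_{\zeta^k}\partial_{\bar\zeta^k}=\Delta_{\xi^k,\eta^k}$; therefore the mixed operator $\Delta_\sigma+\Box_\zeta$ coincides with the full real Laplacian $\Delta_{\sigma,\xi,\eta}$. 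Every first-order operator in $\{\partial_{\sigma^j},\partial_{\zeta^k},\partial_{\bar\zeta^k}\}$ is a constant-coefficient first-order operator in $\{\partial_{\sigma^j},\partial_{\xi^k},\partial_{\eta^k}\}$, and the ball condition $(\sigma,w)\in\B^{r+2m}$ transfers verbatim to $(\sigma,\xi,\eta)\in\B^{r+2m}$, so the target domain is correct.

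The algebraic core is to apply $\partial_{\sigma^{j'}}$ to the divergence relation (row four of \eqref{Eqn::Key::KeyEqnB}) to obtain
\begin{equation*}
    \textstyle\sum_{j=1}^r\partial_{\sigma^{j'}}\partial_{\sigma^j}b_j^l+\sum_{k=1}^m\partial_{\bar\zeta^k}\partial_{\sigma^{j'}}b_{r+k}^l=0,
\end{equation*}
then to use rows one and two (extended by antisymmetry to all index pairs) to replace $\partial_{\sigma^{j'}}b_j^l$ and $\partial_{\sigma^{j'}}b_{r+k}^l$ by $\partial_{\sigma^j}b_{j'}^l$ and $\partial_{\zeta^k}b_{j'}^l$ respectively, modulo bilinear corrections in the $b$'s and their $\bar\zeta$-derivatives. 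The second-order pieces collapse to $\sum_j\partial_{\sigma^j}^2b_{j'}^l+\sum_k\partial_{\zeta^k}\partial_{\bar\zeta^k}b_{j'}^l=\Delta_{\sigma,\xi,\eta}b_{j'}^l$, giving an identity of the shape $\Delta_{\sigma,\xi,\eta}b_{j'}^l=\tilde L[\text{bilinear}(B,\nabla B)]$ with $\tilde L$ a first-order constant-coefficient operator. The analogous manipulation starting from $\partial_{\bar\zeta^{k'}}$ applied to the same divergence relation (and using rows two and three) yields the corresponding elliptic equation for $b_{r+k'}^l$. Packaging all entries of $B$ into a single $\C^{m(r+m)}$-valued $f$, the bilinear corrections assemble into one constant-coefficient bilinear map $\Theta$ into a suitable $\C^p$, and the outer first derivatives into one constant-coefficient $L$, producing the desired quasilinear elliptic PDE on $\B^{r+2m}\times\B^q$.

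The principal obstacle, and the one requiring care, is reconciling the form printed in the lemma, $Lf=\Delta_{\sigma,\xi,\eta}\Theta(f,\nabla_{\sigma,\xi,\eta}f)$, with the form actually consumed by Proposition \ref{Prop::PDE::AnalyticPDE}, namely $\Delta_x f=L_x\Theta(f,\partial_x f)$ with $a^j\in\C^{m\times p}$. The natural Malgrange derivation sketched above outputs exactly the latter shape; the former appears to transpose the roles of $L$ and $\Delta$. Rather than treat this as a cosmetic slip, I would explicitly verify that the PDE produced in the second paragraph coincides with the hypothesis of Proposition \ref{Prop::PDE::AnalyticPDE} under the dimension matching $n=r+2m$, $m_{\mathrm{Prop}}=m(r+m)$, and parameter dimension $q$, and record any residual ambiguity about which side of the equation is called "$L$" versus "$\Delta$" as a bookkeeping convention on $(L,\Theta)$ rather than a genuine difference of content; the remaining verifications (that $\Theta$ has no variable coefficients, that $L$'s coefficient matrices land in the right block-dimension, and that the bilinear form is indeed bilinear in $(f,\nabla f)$ rather than just polynomial) are then routine.
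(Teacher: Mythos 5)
Your argument is essentially the paper's own proof, unpacked componentwise: the paper writes \eqref{Eqn::Key::KeyEqnB} as $DB=\tilde\Theta(B,\nabla B)$ for a constant-coefficient first-order operator $D$ and invokes the identity $D^*D=\Delta_\sigma+\Box_\zeta$ (citing \cite[Lemma B.5]{SharpElliptic}); your cross-differentiation of the divergence row against rows one/two (resp.\ two/three) is exactly that identity carried out by hand, and your reading of the $L$ versus $\Delta$ tension is correct --- the paper's proof ends with $\Delta_{\sigma,\xi,\eta}f=D^*\tilde\Theta\big(f,(\nabla_\sigma f,2\nabla_{\xi,\eta}f)\big)$, i.e.\ precisely the hypothesis of Proposition \ref{Prop::PDE::AnalyticPDE} with $L=D^*$ and $\Theta$ absorbing the factor $2$ from the scaling $\zeta=\frac12(\xi+i\eta)$, so the form displayed in the lemma statement is a transposed bookkeeping slip, not extra content to be proved.

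One step as written would fail, though the fix is immediate: for the components $b_{r+k'}^l$ you must apply $\partial_{\zeta^{k'}}$, not $\partial_{\bar\zeta^{k'}}$, to the divergence relation. The left-hand sides of the first three rows of \eqref{Eqn::Key::KeyEqnB} involve only $\partial_\sigma$- and $\partial_\zeta$-derivatives of $B$ (the $\partial_{\bar\zeta}$-derivatives occur only in the bilinear right-hand sides and in the divergence row), so $\partial_{\bar\zeta^{k'}}b_j^l$ and $\partial_{\bar\zeta^{k'}}b_{r+k}^l$ cannot be traded for derivatives of $b_{r+k'}^l$ modulo bilinear corrections; moreover it is the pairing $\partial_{\zeta^{k'}}\partial_{\bar\zeta^{k}}$ with $k=k'$ that produces $\Box_\zeta$, whereas $\partial_{\bar\zeta^{k'}}\partial_{\bar\zeta^{k}}$ is not elliptic. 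Replacing $\partial_{\bar\zeta^{k'}}$ by $\partial_{\zeta^{k'}}$, rows two and three let you rewrite $\partial_{\zeta^{k'}}b_j^l=\partial_{\sigma^j}b_{r+k'}^l-(\text{bilinear})$ and $\partial_{\zeta^{k'}}b_{r+k}^l=\partial_{\zeta^k}b_{r+k'}^l-(\text{bilinear})$, and the manipulation closes exactly as in your first family; the remaining verifications in your proposal are then routine and correct.
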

\begin{proof}
We can write \eqref{Eqn::Key::KeyEqnB} as the form $DB=\tilde\Theta(B,\nabla_{\sigma,\xi,\eta}B)$ where the coefficients of $D$ and $\Theta$ take value in $\C^{m\times\frac{(r+m)(r+m-1)}2+m}$.

By direct computation (see also \cite[Lemma B.5]{SharpElliptic}) we have $D^*D=\sum_{j=1}^r\frac{\partial^2}{\partial\sigma^j\partial\sigma^j}+\sum_{k=1}^m\frac{\partial^2}{\partial\zeta^k\partial\bar\zeta^k}=\Delta_\sigma+\Box_\zeta$, acting on the components of the vector value functions. 

Using the real coordinates $\xi=\re\zeta$, $\eta=\im\zeta$, we see that $\Box_\zeta=\frac14\sum_{k=1}^m(\frac{\partial^2}{\partial\xi^k\partial\xi^k}+\frac{\partial^2}{\partial\eta^k\partial\eta^k})=\frac14\Delta_{\xi,\eta}$. So for $f(\sigma,\xi,\eta,s)=B(\sigma,\frac{\xi+i\eta}2,s)$ we have $(D^*DB)(\sigma,\frac{\xi+i\eta}2,s)=(\Delta_\sigma+\Delta_{\xi,\eta})f(\sigma,\xi,\eta,s)$ and $\nabla_{\xi,\eta}f(\sigma,\xi,\eta,s)=\frac12(\nabla_{\xi,\eta}B)(\sigma,\frac{\xi+i\eta}2,s)$. Therefore $f$ satisfies $\Delta_{\sigma,\xi,\eta}f=D^*\tilde \Theta(f,(\nabla_\sigma f,2\nabla_{\xi,\eta}f))$. Taking $L:=D^*$ and $\Theta(f,\nabla_{\sigma,\xi,\eta}f):=\tilde \Theta(f,(\nabla_\sigma f,2\nabla_{\xi,\eta}f))$ we finish the proof.
\end{proof}

Finally we find the map $G''$ in \eqref{Eqn::Key::FacterizationofF} using Proposition \ref{Prop::ODE::ParaHolFro}. We work on $\frac12\B^{r+2m}\subset\R^{r+2m}_{\sigma,\xi,\eta}$ and its ``complex extension'' $\frac12\Hb^{r+2m}\subset\C^{r+2m}$.

We endow $\C^{r+2m}$ with standard complex coordinates $(\boldsymbol\sigma,\boldsymbol\xi,\boldsymbol\eta)=(\boldsymbol\sigma^1,\dots,\boldsymbol\sigma^r,\boldsymbol\xi^1,\dots,\boldsymbol\xi^m,\boldsymbol\eta^1,\dots,\boldsymbol\eta^m)$ in the way that $\sigma=\re\boldsymbol\sigma$, $\xi=\re\boldsymbol\xi$ and $\eta=\re\boldsymbol\eta$. We denote $\Coorvec{\boldsymbol\zeta}:=\frac12(\Coorvec{\boldsymbol\xi}-i\Coorvec{\boldsymbol\eta})=[\frac12(\Coorvec{\boldsymbol\xi^1}-i\Coorvec{\boldsymbol\eta^1}),\dots,\frac12(\Coorvec{\boldsymbol\xi^m}-i\Coorvec{\boldsymbol\eta^m})]^\top$. 

Note that in this place, we temporarily forget the original complex structure of $\C^m_\zeta$. The objects $\boldsymbol\zeta=\boldsymbol\xi+i\boldsymbol\eta$ and $\Coorvec{\boldsymbol\zeta}$ are just conventions.

In this setting, $T_1,\dots,T_r,Z_1,\dots,Z_m$ given in \eqref{Eqn::Key::DefofB} are complex vector fields defined on $\frac12\B^{r+2m}\times\frac12\B^q\subset\R^{r+2m}_{\sigma,\xi,\eta}\times\R^q_s$.
\begin{lem}\label{Lem::Key::FinalModf}
Suppose $T_1,\dots,T_r,Z_1,\dots,Z_m$ in \eqref{Eqn::Key::DefofB} are commutative, and the coefficient matrix function $B:\frac12\B^{r+2m}_{\sigma,\xi,\eta}\times\frac12\B^q_s\to\C^{(r+m)\times m}$ has a $(\sigma,\xi,\eta)$-variable holomorphic extension $\Bf\in\Co^\infty_\loc\Co^\beta(\frac12\Hb^{r+2m},\frac12\B^q;\C^{(r+m)\times m})$. Then for any $(\sigma_0,\xi_0,\eta_0,s_0)\in \frac12\B^{r+2m}\times\frac12\B^q$ there is a neighborhood $U_1\times V_1\subset\frac12\B^{r+2m}\times\frac12\B^q$ of $(\sigma_0,\xi_0,\eta_0,s_0)$ and a map $G''\in\Co^\infty\Co^\beta(U_1,V_1;\C^m)$ such that $G''(\sigma_0,\xi_0,\eta_0,s_0)=0$ and
\begin{equation*}
    (\Span(T_1(\cdot,s),\dots,T_r(\cdot,s),Z_1(\cdot,s),\dots,Z_m(\cdot,s)))^\bot|_{U_1}=\Span(dG''(\cdot,s)^1,\dots,dG''(\cdot,s)^m)\le\C T^*U_1,\quad\forall\ s\in V_1.
\end{equation*}
\end{lem}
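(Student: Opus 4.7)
The plan is to reduce this lemma to Proposition \ref{Prop::ODE::ParaHolFro} (the holomorphic Frobenius theorem with parameter) by promoting $T_1,\dots,T_r,Z_1,\dots,Z_m$ to holomorphic vector fields on the complex neighborhood $\frac12\Hb^{r+2m}\times\frac12\B^q$ via the hypothesized extension $\Bf$, invoking the proposition at the base point, and restricting the resulting first integrals to the real slice.

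First I would set up the holomorphic lifts. In the holomorphic coordinates $(\boldsymbol\sigma,\boldsymbol\xi,\boldsymbol\eta)$ on $\C^{r+2m}$, introduce the holomorphic tangent vectors $\partial_{\boldsymbol\zeta^l}:=\frac12(\partial_{\boldsymbol\xi^l}-i\partial_{\boldsymbol\eta^l})$ and $\partial_{\bar{\boldsymbol\zeta}^l}:=\frac12(\partial_{\boldsymbol\xi^l}+i\partial_{\boldsymbol\eta^l})$ (these are $\C$-linear combinations of holomorphic coordinate derivatives, hence themselves holomorphic vector fields on $\C^{r+2m}$). Writing $\Bf=[\Bf',\Bf'']^\top$ with entries $\Bf^l_j$, define
\begin{equation*}
\Tf_j := \partial_{\boldsymbol\sigma^j} + \sum_{l=1}^m \Bf^l_j\,\partial_{\bar{\boldsymbol\zeta}^l},\qquad \Zf_k := \partial_{\boldsymbol\zeta^k} + \sum_{l=1}^m \Bf^l_{r+k}\,\partial_{\bar{\boldsymbol\zeta}^l},
\end{equation*}
for $j=1,\dots,r$ and $k=1,\dots,m$, on $\frac12\Hb^{r+2m}\times\frac12\B^q$. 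Since $\Bf\in\Co^\infty_\loc\Co^\beta$ is holomorphic in the complex variables, the coefficients of $\Tf_j,\Zf_k$ are holomorphic in $(\boldsymbol\sigma,\boldsymbol\xi,\boldsymbol\eta)$ and $\Co^\beta$ in $s$; their block-triangular form gives pointwise $\C$-linear independence; and restriction to the real slice $\frac12\B^{r+2m}$ recovers $T_j,Z_k$.

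The main technical step is checking that $\Tf_1,\dots,\Tf_r,\Zf_1,\dots,\Zf_m$ remain pairwise commutative on the whole complex domain. Expanding any such bracket via \eqref{LieBraofX}, its coefficients are polynomial expressions in the entries of $\Bf$ and their first $(\boldsymbol\sigma,\boldsymbol\xi,\boldsymbol\eta)$-derivatives, and are therefore holomorphic in $(\boldsymbol\sigma,\boldsymbol\xi,\boldsymbol\eta)$ for each fixed $s$. On the real slice they specialize to the corresponding components of $[T_\bullet,T_\bullet]$, $[T_\bullet,Z_\bullet]$, $[Z_\bullet,Z_\bullet]$, all of which vanish by assumption. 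Since $\frac12\B^{r+2m}$ is a totally real submanifold of the connected open set $\frac12\Hb^{r+2m}\subset\C^{r+2m}$, the identity principle (applied slice-by-slice in $s$) forces these coefficients to vanish on all of $\frac12\Hb^{r+2m}\times\frac12\B^q$. This is the one point in the argument that is not automatic.

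Finally I would invoke Proposition \ref{Prop::ODE::ParaHolFro}, with $n:=r+2m$ (and the proposition's ``$m$'' replaced by $r+m$), applied to $\Tf_1,\dots,\Tf_r,\Zf_1,\dots,\Zf_m$ at the point $(\sigma_0+i0,\xi_0+i0,\eta_0+i0,s_0)\in\frac12\Hb^{r+2m}\times\frac12\B^q$. This produces a neighborhood $\tilde U\times V_1$ and a map $\widetilde G''\in\Co^\infty\Co^\beta(\tilde U,V_1;\C^m)$, holomorphic in the complex variables, with $\widetilde G''$ vanishing at the base point and with $d\widetilde G''^1(\cdot,s),\dots,d\widetilde G''^m(\cdot,s)$ together with $d\bar{\boldsymbol\sigma},d\bar{\boldsymbol\xi},d\bar{\boldsymbol\eta}$ a holomorphic local basis of $\Span(\Tf_j(\cdot,s),\Zf_k(\cdot,s))^\bot$ for every $s\in V_1$. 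Setting $U_1:=\tilde U\cap\R^{r+2m}$ and $G'':=\widetilde G''|_{U_1\times V_1}$, part (iv) of the proposition gives that $dG''^1(\cdot,s),\dots,dG''^m(\cdot,s)$ is a real-analytic local basis of $\Span(T_j(\cdot,s),Z_k(\cdot,s))^\bot$ for every $s\in V_1$, and the regularity $G''\in\Co^\infty\Co^\beta(U_1,V_1;\C^m)$ is inherited directly from $\widetilde G''$.
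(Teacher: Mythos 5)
Your proposal is correct and follows essentially the same route as the paper's own proof: define the holomorphic lifts $\Tf_j,\Zf_k$ from $\Bf$, propagate commutativity from the real slice $\frac12\B^{r+2m}$ to $\frac12\Hb^{r+2m}$ by the identity principle for holomorphic functions, apply Proposition \ref{Prop::ODE::ParaHolFro} at the base point (with $n=r+2m$ and $r+m$ vector fields), and restrict $\widetilde G''$ to the real slice. The parameter bookkeeping and the regularity conclusion $G''\in\Co^\infty\Co^\beta(U_1,V_1;\C^m)$ match the paper's argument.
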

Recall the notation of dual bundle in \eqref{Eqn::ODE::DualConjBundle}.
\begin{proof}
We define collections of vector fields $\Tf=[\Tf_1,\dots,\Tf_r]^\top$ and $\Zf=[\Zf_1,\dots,\Zf_m]^\top$  as
\begin{equation*}
    \begin{pmatrix}\Tf\\\Zf\end{pmatrix}:=\begin{pmatrix}I_r&&\Bf'\\&I_m&\Bf''\end{pmatrix}\begin{pmatrix}\partial_{\boldsymbol\sigma}\\\partial_{\boldsymbol\zeta}\\\partial_{\bar{\boldsymbol\zeta}}\end{pmatrix}.
\end{equation*}
Here $\Bf'$ is the upper $r\times m$ block of $\Bf$, and $\Bf''$ is the lower $m\times m$ block of $\Bf$.

Thus $(T_1,\dots,T_r,Z_1,\dots,Z_m)$  is the ``real domain restriction'' of $(\Tf_1,\dots,\Tf_r,\Zf_1,\dots,\Zf_m)$ in the sense of Proposition \ref{Prop::ODE::ParaHolFro}. We see that the coefficients of $[\Tf_j,\Tf_{j'}]$, $[\Tf_j,\Zf_k]$ and $[\Zf_k,\Zf_{k'}]$ are the holomorphic extension those of $[T_j,T_{j'}]$, $[T_j,Z_k]$ and $[Z_k,Z_{k'}]$. 

By assumption $T_1,\dots,T_r,Z_1,\dots,Z_m$ are commutative, so $[T_j,T_{j'}]=[T_j,Z_k]=[Z_k,Z_{k'}]=0$. Since a holomorphic function in $\Hb^{r+2m}$ that vanishes in $\B^{r+2m}$ must be a zero function, we know $[\Tf_j,\Tf_{j'}]=[\Tf_j,\Zf_k]=[\Zf_k,\Zf_{k'}]=0$ as well. Thus $\Tf_1,\dots,\Tf_r,\Zf_1,\dots,\Zf_m$ are commutative.

Now the assumptions of Proposition \ref{Prop::ODE::ParaHolFro} are satisfied. By Proposition \ref{Prop::ODE::ParaHolFro} we can find a neighborhood $\tilde U_1\times V_1\subseteq\frac12\Hb^{r+2m}\times\frac12\B^q$ of $(\sigma_0,\xi_0,\eta_0,s_0)$ and a map $\widetilde G''\in\Co^\infty\Co^\beta(\tilde U_1,V_1;\C^m)$ such that $\widetilde G''(\sigma_0,\xi_0,\eta_0,s_0)=0$ and for $U_1:=\tilde U_1\cap\R^{r+2m}$ we have for each $s\in V_1$,
$$(\Span(T_1(\cdot,s),\dots,T_r(\cdot,s),Z_1(\cdot,s),\dots,Z_m(\cdot,s)))^\bot|_{U_1}=\Span(d(\widetilde G''(\cdot,s)^1|_{U_1}),\dots,d(\widetilde G''(\cdot,s)^m|_{U_1}))\le \C T^*U_1.$$

Clearly $\widetilde G''|_{U_1\times V_1}\in\Co^\infty\Co^\beta(U_1,V_1;\C^m)$ holds. Thus, $G'':=\widetilde G''|_{U_1\times V_1}$ is as desired.
\end{proof}

\subsection{Proof of Theorem \ref{Thm::Key}}\label{Section::KeyPf}

In the proof we often consider the local bases of cotangent subbundles. 
\begin{conv}\label{Conv::Key::CoSpan}
Let $M$ and $N$ be two smooth manifolds, let $\Tc\le (\C TM)\times N$ be a continuous subbundle. Let $\mu=(\mu^1,\dots,\mu^n):M\to\R^n$ and $\nu=(\nu^1,\dots,\nu^q):N\to\R^q$ be two smooth coordinate charts and let $f=(f^1,\dots,f^{n-r}):M\times N\to\C^{n-r}$ be a continuous map which is $C^1$ in $M$. We say $\Tc^\bot$ is spanned by $df^1,\dots,df^{n-r},d\nu^1,\dots,d\nu^q$, denoted by $\Tc^\bot=\Span(df,d\nu)$, if
\begin{equation}\label{Eqn::Key::CoSpan}
    \Span\Big(\sum_{j=1}^n\frac{\partial f^i}{\partial\mu^j}(x,s)d\mu^j|_x\Big)_{i=1}^r=\{\theta\in T^*_xM:\langle\theta,X\rangle=0,\ \forall X\in\Tc_{(x,s)}\},\quad\forall\ u\in M,\ s\in N.
\end{equation}
\end{conv}

\begin{remark}
When $\beta>1$ and when $f\in\Co^\beta$, \eqref{Eqn::Key::CoSpan} coincides with the classical notion $\Tc^\bot=\Span(df,d\nu)$ since $\Span(df,d\nu)=\Span(\frac{\partial f}{\partial\mu}d\mu+\frac{\partial f}{\partial\nu}d\nu,d\nu)=\Span(\frac{\partial f}{\partial\mu}d\mu,d\nu)$. 

The convention is used specifically for the case $\beta\le1$, since when $f\in\Co^{\alpha+1,\beta}$, the collection of differentials $df$ is no longer pointwise defined, while $\frac{\partial f}{\partial\mu}\cdot d\mu\equiv df \pmod{d\nu}$ and the left hand side is still pointwise.
\end{remark}

We now begin the proof.
On the space $\R^r_\sigma\times\C^m_\zeta$, we write $\xi=\re\zeta$ and $\eta=\im\zeta$.

By Lemma \ref{Lem::Key::BSatAna} and Proposition \ref{Prop::PDE::AnalyticPDE}, along with a scaling, we can find a $\eps_0=\eps_0(r,m,q,\alpha)>0$ such that 
\begin{enumerate}[parsep=-0.3ex,label=(B.1)]
    \item\label{Item::Key::PfKey::B} If $B\in\Co^\alpha L^\infty\cap\Co^{-1}\Co^\beta(\B^{r+2m}_{\sigma,\eta,\xi},\B^q_s;\C^{(r+m)\times m})$ satisfies \eqref{Eqn::Key::KeyEqnB} with 
    \begin{equation}\label{Eqn::Key::PfKey::BddB}
        \|B\|_{\Co^\alpha L^\infty\cap\Co^{-1}\Co^\beta(\B^{r+2m},\B^q)}<\eps_0,
    \end{equation}then $B(\sigma,\frac{\xi+i\eta}2,s)$ has a   $(\sigma,\xi,\eta)$-holomorphic extension $\Bf(\boldsymbol\sigma,\frac12\boldsymbol\xi,\frac12\boldsymbol\eta,s)\in\Co^\infty_\loc\Co^\beta(\Hb^{r+2m}_{\boldsymbol\sigma,\boldsymbol\xi,\boldsymbol\eta},\B^q_s;\C^{(r+m)\times m})$.
\end{enumerate}
For such $\eps_0>0$, we can find a $\delta_0>0$ such that the results of Proposition \ref{Prop::PDE::ExistPDE} are satisfied with $\eps=\eps_0$, $\delta=\delta_0$.

For such $\delta_0>0$, by Lemma \ref{Lem::Key::IniNorm} there is a  smooth coordinate chart $L:=((L',L''),L'''):U_0\times V_0\subseteq\Mf\times\Nf\xrightarrow{\sim}\B^{r+2m}_{\tau,w}\times\B^q_s$ near $(u_0,v_0)$ such that,
\begin{enumerate}[parsep=-0.3ex,label=(L.\arabic*)]
    \item\label{Item::Key::PfKey::LBasic} $(L',L''):U_0\subseteq\Mf\xrightarrow{\sim}\B^{r+2m}_{\tau,w}$ and $L''':V_0\subseteq\Nf\xrightarrow{\sim}\B^q_s$ are both smooth coordinate charts, and $L(u_0,v_0)=(0,0)$.
    \item $L_*\Se$ has a $\Co^{\alpha,\beta}_{(\tau,w),s}$-local basis with form \eqref{Eqn::Key::DefofA}, where \eqref{Eqn::Key::IniNorm::A} is satisfied with $\delta=\delta_0$.
\end{enumerate}

Now $\|A\|_{\Co^{\alpha,\beta}(\B^{r+2m},\B^q;\C^{(r+m)\times m})}<\delta_0$ from \eqref{Eqn::Key::IniNorm::A}. By Proposition \ref{Prop::PDE::ExistPDE} and Corollary \ref{Cor::PDE::CorofExistPDE} we can find a $\Co^{\alpha+1,\beta}$-map $H:\B^{r+2m}_{\tau,w}\times\B^q_s\to\R^r_\sigma\times\C^m_\zeta\times\B^q_s$ that has the form $H(\tau,w,s)=(\tau,H''(\tau,w,s),s)$ such that
\begin{enumerate}[parsep=-0.3ex,label=(H.\arabic*)]
    \item\label{Item::Key::PfKey::HDiffeo} $H:\B^{r+2m}_{\tau,w}\times\B^q_s\to\B^{r+2m}_{\sigma,\zeta}\times\B^q_s$ is homeomorphism, and $H,H^\Inv\in\Co^{\alpha+1,\beta}(\B^{r+2m},\B^q;\R^r\times\C^m)$.
    \item\label{Item::Key::PfKey::HTmp} $H(\frac14\B^{r+2m}\times\frac14\B^q)\subseteq\frac12\B^{r+2m}\times\frac12\B^q$.
    \item\label{Item::Key::PfKey::HReg}$H''\in\Co^{\alpha+1,\beta}(\B^{r+2m},\B^q;\C^m)$ and $\nabla_{\tau,w}H''\in\Co^{\alpha,\beta-}(\B^{r+2m},\B^q;\C^{(r+2m)\times m})$.
    \item\label{Item::Key::PfKey::HPDE} $H''$ solves \eqref{Eqn::PDE::ExistenceH}.
    \item\label{Item::Key::PfKey::BddLambda} \eqref{Eqn::PDE::ExistPDE::BddLambda} is satisfied with $\eps=\eps_0$.
\end{enumerate}

Here \ref{Item::Key::PfKey::HDiffeo} shows that $(H\circ L)_*\Se$ is defined on $\B^{r+2m}_{\sigma,\zeta}\times\B^q_s$.

By Lemma \ref{Lem::Key::NewGen} \ref{Item::Key::NewGen::Eqn},  \ref{Item::Key::PfKey::HPDE} implies that $B$ solves \eqref{Eqn::Key::KeyEqnB}. By comparing \eqref{Eqn::PDE::matrixfun} and \eqref{Eqn::Key::DefofB} we see that $B=\Lambda[A,H'']\circ H^\Inv$, so \ref{Item::Key::PfKey::BddLambda} implies \eqref{Eqn::Key::PfKey::BddB} with $\eps=\eps_0$. Thus we get the result \ref{Item::Key::PfKey::B}: a matrix-valued function $\Bf\in\Co^\infty_\loc\Co^\beta(\frac12\Hb^{r+2m},\frac12\B^q;\C^{(r+m)\times m})$.

Applying Lemma \ref{Lem::Key::FinalModf} on $H\circ L(u_0,v_0)= H(0,0)\in\frac12\B^{r+2m}\times\frac12\B^q$, we can find a neighborhood $U_1\times V_1\subseteq \frac12\B^{r+2m}\times\frac12\B^q$ of $H(0,0)$ and a $G'':U_1\times V_1\to\C^m$ such that

\begin{enumerate}[parsep=-0.3ex,label=(G.\arabic*)]
    \item\label{Item::Key::PfKey::GReg} $G''\in\Co^\infty\Co^\beta(U_1,V_1;\C^m)$ and $G''(H(0,0))=0$. In particular $G''\in\Co^{\infty,\beta}_{(\sigma,\zeta),s}$ and $\nabla_{\sigma,\zeta}G''\in\Co^{\infty,\beta}_{(\sigma,\zeta),s}$.
    \item\label{Item::Key::PfKey::GSpan} For each $s\in V_1$, $\Span d(G''(\cdot,s))=\Span(T(\cdot,s),Z(\cdot,s))|_{U_1}^\bot=(H\circ L)_*\Se|_{U_1\times \{s\}}^\bot$. Equivalently we have $\Span(dG'',ds)=(H\circ L)_*\Se|_{U_1\times V_1}^\bot $ in the sense of Convention \ref{Conv::Key::CoSpan}.
\end{enumerate}

Therefore, $(\Span d(G''\circ H),ds)|_{H^{-1}(U_1\times V_1)}=L_*\Se|_{H^{-1}(U_1\times V_1)}$ in the sense of Convention \ref{Conv::Key::CoSpan}, which means
\begin{equation}\label{Eqn::Key::PfKey::Span}
    \Span(d(G''\circ H\circ L),dL''')|_{(H\circ L)^{-1}(U_1\times V_1)}=\Se|_{(H\circ L)^{-1}(U_1\times V_1)}^\bot\text{ in the sense of Convention \ref{Conv::Key::CoSpan}}.
\end{equation}

Next, we claim that 
\begin{enumerate}[parsep=-0.3ex,label=(L.3)]
    \item\label{Item::Key::PfKey::Claim} $\big(dL',d(\re G''\circ H\circ L(\cdot,v_0)),d(\im G''\circ H\circ L(\cdot,v_0))\big)$ is a collection of $(r+2m)$-real differentials that are linearly independent at $u_0\in\Mf$.
\end{enumerate} 

Indeed, we have $\Span(d(\overline{G''}\circ H),ds)=L_*\bar\Se^\bot$ in the sense of Convention \ref{Conv::Key::CoSpan}, which means for each $s\in V_1$,
$$\Span(d(\re G''\circ H(\cdot,s)),d(\im G''\circ H(\cdot,s))^\bot=\Span(d(G''\circ H(\cdot,s)),d(\overline{G''}\circ H(\cdot,s)))^\bot=L_*(\Se\cap\bar \Se)|_{H^{-1}(U_1\times\{s\})},$$ has rank $r=\dim\Mf-(r+2m)$. Therefore, $(d(\re G''\circ H(\cdot,s),d(\im G''\circ H(\cdot,s))$ is a linearly independent collection in the domain for all $s\in V_1$.

On the other hand, by \eqref{Eqn::Key::DefofA} we have 
$L_*\Se=\Span(\Coorvec \tau+A'\Coorvec w,\Coorvec w+A''\Coorvec{\bar w})$, so $L_*(\Se\cap\bar\Se)=\Span(\Coorvec\tau+\frac12A'\Coorvec w+\frac12\bar A'\Coorvec{\bar w}$, which means $L_*(\Se\cap\bar\Se)^\bot\cap\Span d\tau=\{0\}$. We conclude that $(d\tau,d(\re G''\circ H(\cdot,s)),d(\im G''\circ H(\cdot,s))$ is a linearly independent collection in the domain for all $s\in V_1$. Since $dL'=L^*d\tau$ and $dL'''=L^*ds$, taking compositions with $L$ we obtain \ref{Item::Key::PfKey::Claim}.

\medskip
Now endow $\R^r\times\C^m\times\R^q$ with standard coordinates $(t,z,s)=(t^1,\dots,t^r,z^1,\dots,z^m,s^1,\dots,s^q)$. We take
\begin{equation}\label{Eqn::Key::PfKey::DefF}
    F=(F',F'',F'''):=(L',\overline{G''}\circ H\circ L,L'''):(H\circ L)^{-1}(U_1\times V_1)\subseteq\Mf\times\Nf\to\R^r_t\times\C^m_z\times\R^q_s.
\end{equation}

By \ref{Item::Key::PfKey::LBasic}, \ref{Item::Key::PfKey::HReg}, \ref{Item::Key::PfKey::GReg} and Lemma \ref{Lem::Hold::CompofMixHold} \ref{Item::Hold::CompofMixHold::Comp} we have 

\begin{enumerate}[parsep=-0.3ex,label=(F.1)]
    \item\label{Item::Key::PfKey::FReg1} $F'(u_0)=0$, $F''(u_0,v_0)=0$, $F'''(v_0)=0$. And for every product neighborhood $U\times V\subseteq (H\circ L)^{-1}(U_1\times V_1)$ of $(u_0,v_0)$, we have $F'\in\Co^\infty_\loc(U;\R^r)$, $F''\in\Co^{\alpha+1,\beta}_\loc(U,V;\C^m)$ and $F'''\in\Co^\infty_\loc(V;\R^q)$.
\end{enumerate}
In particular \ref{Item::Key::F'''} is satisfied, since $L'''$ is already a smooth coordinate chart.

By \ref{Item::Key::PfKey::FReg1} $(F',F'')\in\Co^{\alpha+1,\beta}(U,V;\R^r\times\C^m)$ for every $U\times V\subseteq (H\circ L)^{-1}(U_1\times V_1) $. By \ref{Item::Key::PfKey::Claim}, $(dF',dF''(\cdot,v_0))$ has full rank $r+2m$ in the domain.
Therefore, by Lemma \ref{Lem::Hold::CompofMixHold} \ref{Item::Hold::CompofMixHold::InvFun}, we can find a small enough neighborhood $U\times V\subseteq (H\circ L)^{-1}(U_1\times V_1)$ of $(u_0,v_0)$, such that $F:U\times V\to F(U\times V)\subseteq\R^r_t\times\C^m_z\times\R^q_s$ is homeomorphism, and 
\begin{enumerate}[parsep=-0.3ex,label=(F.2)]
    \item\label{Item::Key::PfKey::FReg2} $F^\Inv\in\Co^{\alpha+1,\beta}_{(t,z),s}(\Omega'\times\Omega'',\Omega''';\Mf\times\Nf)$ whenever $\Omega'_t\times\Omega''_z\times\Omega'''_s\subseteq F(U\times V)$.
\end{enumerate}

Define $\Phi:=F^\Inv:\Omega\to U\times V$ where $\Omega=\Omega'_s\times\Omega''_z\times\Omega'''_s$ is an arbitrary product neighborhood of $(0,0,0)$ in the assumption of Theorem \ref{Thm::Key}. We are going to show that $F$ and $\Phi$ are as desired.

\medskip
Firstly, by \ref{Item::Key::PfKey::FReg1}, we get $\Phi''=(L''')^\Inv=(F''')^\Inv$ is a smooth parameterization, which is the result \ref{Item::Key::Phi''}.

By \ref{Item::Key::PfKey::FReg1} and \ref{Item::Key::PfKey::FReg2} we know $F\in\Co^{\beta}$, $\Phi\in\Co^{\beta}$. Thus \ref{Item::Key::>1} holds and $F$, $\Phi$ are both homeomorphic to their images respectively. Since $F(\cdot,v)\in\Co^{\alpha+1}$, $\Phi(\cdot,s)\in\Co^{\alpha+1}$ for each $v$ and $s$, we know $(F',F''(\cdot,v))$ is a $\Co^{\alpha+1}$-chart and $\Phi'(\cdot,s)$ is a $\Co^{\alpha+1}$-parameterization, which gives \ref{Item::Key::FBase} and \ref{Item::Key::Phi0}.

\medskip
To prove \ref{Item::Key::F''Reg} and \ref{Item::Key::PhiReg}, we use $\widehat U:=(L',L'')(U)\subseteq\R^r_\tau\times\C^m_w$, $\widehat V:=L'''(V)\subseteq\R^q_s$, $\widehat F'':=F''\circ L^{-1}$ and $\widehat\Phi:=L\circ\Phi$.
Since $(L',L'')$ and $L'''$ are smooth charts, it is equivalent but more convenient to work on the chart $\widehat F:\widehat U\times\widehat V\to\R^r_t\times\C^m_z\times\R^q_s$ and the parameterization $\widehat\Phi:\Omega\to\widehat U\times\widehat V$:
 it suffices to show the following
\begin{equation}\label{Eqn::Key::PfKey::AllReg}
    \begin{gathered}
    \widehat F''\in\Co^{\alpha+1,\beta}_{(\tau,w),s}(\widehat U,\widehat V;\C^m), \quad\nabla_{\tau,w}\widehat F''\in \Co^{\alpha,\beta-}(\widehat U,\widehat V;\C^{(r+2m)\times m}),
    \\
    \widehat\Phi\in\Co^{\alpha+1,\beta}_{(t,z),s}(\Omega'\times\Omega'',\Omega''';\widehat U\times \widehat V),
    \quad\nabla_{t,z}\widehat\Phi\in\Co^{\alpha,\beta-}(\Omega'\times\Omega'',\Omega''';\C^{(r+2m)\times(r+2m)}).
\end{gathered}
\end{equation}

Here for every vector field $X\in\Co^\infty_\loc(\Mf;T\Mf)$, $(L',L'')_*X$ must be the smooth linear combinations of $\Coorvec\tau,\Coorvec w,\Coorvec{\bar w}$ on $\widehat U$. Thus $\nabla_{\tau,w}\widehat F''\in \Co^{\alpha,\beta-}_{(\tau,w),s}$ implies $XF''\in\Co^{\alpha,\beta-}_\loc$.

\medskip\noindent\textit{Proof of \eqref{Eqn::Key::PfKey::AllReg}}: By \eqref{Eqn::Key::PfKey::DefF} we have $\widehat F''(\tau,w,s)=\overline{G''}(H(\tau,w,s))=\overline{G''}(\tau,H''(\tau,w,s),s)$.
By \ref{Item::Key::PfKey::HReg} and \ref{Item::Key::PfKey::GReg} we have $G''\in\Co^{\infty,\beta}_{(\sigma,\zeta),s}$, $H\in\Co^{\alpha+1,\beta}_{(\tau,w),s}$, $\nabla_{\sigma,\zeta}G''\in\Co^{\infty,\beta}_{(\sigma,\zeta),s}$ and $\nabla_{\tau,w}H\in\Co^{\alpha,\beta-}_{(\tau,w),s}$. Therefore, by Lemma \ref{Lem::Hold::CompofMixHold} \ref{Item::Hold::CompofMixHold::Comp} we get $\widehat F''\in \Co^{\alpha+1,\beta}_{(\tau,w),s}$ and $\nabla_{\tau,w}\widehat F''=((\nabla_{\sigma,\zeta}\overline{G''})\circ H)\cdot\nabla_{\tau,w}H\in\Co^{\alpha+1,\beta}_{(\tau,w),s}\cdot \Co^{\alpha,\beta-}_{(\tau,w),s}\subseteq \Co^{\alpha,\beta-}_{(\tau,w),s}$.

Since $F\circ L^{-1}(\tau,w,s)=(\tau,\widehat F''(\tau,w,s),s)$, we see that $\widehat\Phi$ has the form
\begin{equation*}
    \widehat\Phi(t,z,s)=(t,\widehat\Psi(t,z,s),s),\quad\text{where }\widehat\Psi:\Omega'_t\times\Omega''_z\times\Omega'''_s\to\C^m_w,\quad\widehat\Psi(t,z,s):=\widehat F''(t,\cdot,s)^\Inv(z).
\end{equation*}

Since $\widehat F''\in\Co^{\alpha+1,\beta}_{(\tau,w),s}$, applying Lemma \ref{Lem::Hold::CompofMixHold} \ref{Item::Hold::CompofMixHold::InvFun} to the map $[(\tau,w),s)\mapsto(\tau,\widehat F''(\tau,w,s))]$ we get $\widehat\Psi\in\Co^{\alpha+1,\beta}_{(t,z),s}$. 

Using the chain rules on $z=\widehat F''(\widehat\Phi(t,z,s))=\widehat F''(t,\widehat\Psi(t,z,s),s)$ we have (recall $\nabla_w=[\Coorvec w,\Coorvec{\bar w}]^\top$),
\begin{align*}
    I_{2m}&=((\nabla_w\widehat F'')\circ\widehat\Phi)\cdot\nabla_z\widehat\Psi,& 0_{r\times m}&=\nabla_\tau \widehat F''\circ\widehat\Phi+((\nabla_w\widehat F'')\circ\widehat\Phi)\cdot\nabla_t\widehat\Psi;
    \\
    \Longrightarrow\quad \nabla_z\widehat\Psi&=(\nabla_w\widehat F'')^{-1}\circ\widehat\Phi,&\nabla_t\widehat\Psi&=((\nabla_w\widehat F'')^{-1}\cdot\nabla_\tau \widehat F'')\circ\widehat\Phi.
\end{align*}

Since $\nabla_{\tau,w}\widehat F''\in\Co^{\alpha,\beta-}_{(\tau,w),s}$, by Lemma \ref{Lem::Hold::CompofMixHold} \ref{Item::Hold::CompofMixHold::InvMat} we have $(\nabla_w\widehat F'')^{-1}\in\Co^{\alpha,\beta-}_{(\tau,w),s}$ thus $(\nabla_w\widehat F'')^{-1}\cdot\nabla_\tau \widehat F''\in\Co^{\alpha,\beta-}_{(\tau,w),s}$. Applying Lemma \ref{Lem::Hold::CompofMixHold} \ref{Item::Hold::CompofMixHold::Comp} with $\widehat\Phi\in\Co^{\alpha+1,\beta}_{(t,z),s}$, we get $\nabla_{t,z}\widehat\Psi\in\Co^{\alpha,\beta-}_{(t,z),s}$. Therefore $\nabla_{t,z}\widehat\Phi\in\Co^{\alpha,\beta-}_{(t,z),s}$ finishing the proof of \eqref{Eqn::Key::PfKey::AllReg} and hence the proof of \ref{Item::Key::F''Reg} and \ref{Item::Key::PhiReg}.

\medskip
Now by \ref{Item::Key::PfKey::FReg1} and \ref{Item::Key::PhiReg} we see that $F\in\Co^{\alpha+1,\beta}_{u,v}$ and $\nabla_{t,z}\Phi\in\Co^{\alpha,\beta-}_{(t,z),s}$. By Lemma \ref{Lem::Hold::CompofMixHold} \ref{Item::Hold::CompofMixHold::Comp} we see that $\frac{\partial\Phi}{\partial t^j}\circ F\in\Co^{\alpha,\beta-}_\loc(U,V;T\Mf)$ and $\frac{\partial\Phi}{\partial z^k}\circ F\in\Co^{\alpha,\beta-}_\loc(U,V;\C T\Mf)$ for $1\le j\le r$ and $1\le k\le m$. Thus $F^*\Coorvec{t^j}=\frac{\partial\Phi}{\partial t^j}\circ F$, $F^*\Coorvec{z^k}=\frac{\partial\Phi}{\partial z^k}\circ F$ are both well-defined vector fields and have regularity $\Co^\alpha\cap\Co^{\beta-}=\Co^{\min(\alpha,\beta-)}$.

By definition, $dF'=F^*dt$, $dF'''=F^*ds$,  $dF''=F^*dz$ and $d\overline{F''}=F^*d\bar z$. So by \eqref{Eqn::Key::PfKey::Span} and \eqref{Eqn::Key::PfKey::DefF}, in the sense of Convention \ref{Conv::Key::CoSpan},
\begin{equation*}
    \textstyle\Span(F^*\Coorvec t,F^*\Coorvec z)=\Span(F^*d\bar z,F^*ds)^\bot=\Span(d\overline{F''},dF''')^\bot=\Se|_{U\times V}.
\end{equation*}

This proves \ref{Item::Key::Span}. Immediately \ref{Item::Key::PhiSpan} follows.
Now the whole proof is complete.\qed

\section{The Rough Complex Frobenius Theorem: Statements and Proofs}\label{Section::SecThm}\label{Section::PfThm}

Now we give more general statements for Theorems \ref{Thm::ThmCoor1} and \ref{Thm::ThmCoor2} with proofs. 

Recall  Definitions \ref{Defn::Hold::MixHold}, \ref{Defn::Hold::MoreMixHold} and \ref{Defn::ODE::MixHoldMaps} for the $\Co^{\alpha,\beta}$-spaces, Definition \ref{Defn::ODE::CpxSubbd} for a local basis, and \eqref{Eqn::Intro::ColumnNote} for the column and matrix convention of Jacobian matrices. 

We endow $\R^r\times\C^m\times\R^q$ with standard coordinate system $(t,z,s)=(t^1,\dots,t^r,z^1,\dots,z^m,s^1,\dots,s^q)$.

The refinement of Theorem \ref{Thm::ThmCoor1} is the following.
\begin{thm}\label{Thm::TrueThm1} 

Let $\alpha,\beta,\kappa\in(1,\infty]$ satisfy $\alpha\le\beta\le\kappa-1$, and let $r,m,q$ be nonnegative integers. Let $\Mf$ be a $(r+2m+q)$-dimensional $\Co^\kappa$-manifold. 

Let $\Se$ be a $\Co^\alpha$-complex Frobenius structure over $\Mf$ that has complex rank $r+m$, such that  $\Se+\bar\Se$ is a $\Co^\beta$-subbundle and has complex rank $r+2m$.

Then for any $p_0\in \Mf$ there is an open neighborhood $\Omega=\Omega'\times\Omega''\times\Omega'''\subseteq\R^r_t\times\C^m_z\times\R^q_s$ of $(0,0,0)$ and a map $\Phi:\Omega\to\Mf$ such that:
\begin{enumerate}[parsep=-0.3ex,label=(\arabic*)]
    \item\label{Item::TrueThm::Phi0} $\Phi:\Omega\to\Phi(\Omega)$ is a $\Co^\alpha$-diffeomorphism and $\Phi(0)=p_0$.
    \item\label{Item::TrueThm::PhiReg}  $\Phi\in\Co^{\alpha+1,\alpha}_\loc(\Omega'\times\Omega'',\Omega''';\Mf)$.
    \item\label{Item::TrueThm::PhiDReg}  $\frac{\partial\Phi}{\partial t^1},\cdots,\frac{\partial\Phi}{\partial t^r}\in\Co^{\alpha,\alpha-}_\loc(\Omega'\times\Omega'',\Omega''';T\Mf)$, and $\frac{\partial\Phi}{\partial z^1},\dots,\frac{\partial\Phi}{\partial z^m},\frac{\partial\Phi}{\partial\bar z^1},\dots,\frac{\partial\Phi}{\partial\bar z^m}\in\Co^{\alpha,\alpha-}_\loc(\Omega'\times\Omega'',\Omega''';\C T\Mf)$.
    \item\label{Item::TrueThm::PhiSpan} For any $u\in\Omega$, the complex subspace $\Se_{\Phi(u)}$ is spanned by $\frac{\partial\Phi}{\partial t^1}(u),\cdots,\frac{\partial\Phi}{\partial t^r}(u),\frac{\partial\Phi}{\partial z^1}(u),\dots,\frac{\partial\Phi}{\partial z^m}(u)\in\C T_{\Phi(u)}\Mf$. 
\end{enumerate}

For the coordinate side, set $U:=\Phi(\Omega)\subseteq\Mf$ and let $F=(F',F'',F'''):U\to\R^r\times\C^m\times\R^q$ be the inverse map of $\Phi$. Then:
\begin{enumerate}[parsep=-0.3ex,label=(\arabic*)]\setcounter{enumi}{4}
    \item\label{Item::TrueThm::F0} $F$ is a $\Co^\alpha$-coordinate chart of $\Mf$ near $p_0$ and $F(p_0)=(0,0,0)$.
    \item\label{Item::TrueThm::FReg} $F'\in\Co^\kappa_\loc(U;\R^r)$, $F''\in\Co^\alpha_\loc(U;\C^m)$ and $F'''\in\Co^\beta_\loc(U;\R^q)$.
    \item\label{Item::TrueThm::FDReg} $F^*\Coorvec{t^1},\dots,F^*\Coorvec{t^r},F^*\Coorvec{z^1},\dots,F^*\Coorvec{z^m}$ are $\Co^{\alpha-}$-vector fields defined on $U$.
    \item\label{Item::TrueThm::FSpan}$\Se|_U$ has a local basis $(F^*\Coorvec{t^1},\dots,F^*\Coorvec{t^r},F^*\Coorvec{z^1},\dots,F^*\Coorvec{z^m})$.
\end{enumerate}

In particular,
\begin{enumerate}[parsep=-0.3ex,label=(\arabic*)]\setcounter{enumi}{8}
    \item\label{Item::TrueThm::SBotSpan} $\Se^\bot|_U$ has a local basis $(F^*d\bar z^1,\dots,F^*d\bar z^m,F^*ds^1,\dots,F^*ds^q)$. 
    \item\label{Item::TrueThm::SCapSpan} $(\Se\cap\bar\Se)|_U$ has a local basis $(F^*\Coorvec{t^1},\dots,F^*\Coorvec{t^r})$.
    \item\label{Item::TrueThm::S+Span} $(\Se+\bar\Se)|_U$ has a local basis $(F^*\Coorvec{t^1},\dots,F^*\Coorvec{t^r},F^*\Coorvec{z^1},\dots,F^*\Coorvec{z^m},F^*\Coorvec{\bar z^1},\dots,F^*\Coorvec{\bar z^m})$.
\end{enumerate}
\end{thm}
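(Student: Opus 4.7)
The plan is to reduce Theorem \ref{Thm::TrueThm1} to the key estimate, Theorem \ref{Thm::Key}, by using the real Frobenius theorem to first straighten out $\Se + \bar\Se$. Since $\Se + \bar\Se$ is essentially real, the subbundle $\E := (\Se + \bar\Se) \cap T\Mf \le T\Mf$ is a real involutive $\Co^\beta$-subbundle of rank $r + 2m$. Applying Corollary \ref{Cor::ODE::RealFroCor} at $p_0$ yields a $\Co^\beta$-regular parameterization $\Phi_1 : \Omega_1 \subseteq \R^{r+2m}_x \times \R^q_s \to U_0 \subseteq \Mf$ with $\Phi_1(0,0) = p_0$, such that $\Phi_1 \in \Co^{\beta+1,\beta}_{x,s}$, $\partial_x \Phi_1 \in \Co^\beta_\loc$, and writing $F_1 = (F_1', F_1'') = \Phi_1^\Inv$ one has $F_1' \in \Co^\kappa(U_0)$, $F_1'' \in \Co^\beta(U_0)$, with $\E|_{U_0} = \Span(F_1^* \partial_{x^1}, \dots, F_1^* \partial_{x^{r+2m}})$. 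The leaves $\{F_1'' = \mathrm{const}\}$ are thus tangent to $\Se + \bar\Se$, and in particular to $\Se$.

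Next I would form the pullback $\Se_1 := \Phi_1^* \Se$, a complex subbundle of rank $r + m$ in $\C T\Omega_1$. Because $\Se \le \Se + \bar\Se = \Span(\Phi_{1*} \partial_x)$ and $\Phi_{1*}\partial_x \in \Co^\beta$, Lemma \ref{Lem::ODE::PullBackReg} gives $\Se_1 \in \Co^\alpha$; involutivity and $\Se_1 \le (\C T\R^{r+2m}_x) \times \R^q_s$, $\Se_1 + \bar\Se_1 = (\C T\R^{r+2m}_x) \times \R^q_s$ are inherited from $\Se$ since pullback commutes with Lie brackets. Viewing $\R^{r+2m}_x$ as the spatial factor and $\R^q_s$ as the parameter factor, $\Se_1$ is a $\Co^{\alpha,\alpha}$-subbundle in the sense of Definition \ref{Defn::ODE::CpxPaSubbd}, so the hypotheses of Theorem \ref{Thm::Key} are met with both indices equal to $\alpha$. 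Applying that theorem at the origin produces neighborhoods $U_2 \times V_2 \subseteq \R^{r+2m}_x \times \R^q_s$ of $0$ and a chart $F_2 = (F_2', F_2'', F_2''') : U_2 \times V_2 \to \R^r_t \times \C^m_z \times \R^q_{s'}$ in which $F_2'$ and $F_2'''$ are smooth (and after post-composing with the smooth inverse of $F_2'''$ we arrange $F_2'''(s) = s$), $F_2'' \in \Co^{\alpha+1,\alpha}_{x,s,\loc}$ with $\partial_x F_2'' \in \Co^{\alpha,\alpha-}$, and $F_2^* \partial_{t^j}, F_2^* \partial_{z^k} \in \Co^{\alpha-}_\loc(U_2 \times V_2)$ span $\Se_1$.

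Finally I would set $F := F_2 \circ F_1$ on $U := \Phi_1(U_2 \times V_2)$ and $\Phi := \Phi_1 \circ \Phi_2$ for $\Phi_2 := F_2^\Inv$, which has the form $\Phi_2(t,z,s) = (\Phi_2^x(t,z,s), s)$. Reading off components gives $F' = F_2' \circ F_1'$, $F''(p) = F_2''(F_1'(p), F_1''(p))$, $F''' = F_1''$, and Lemma \ref{Lem::Hold::CompofMixHold} together with $\beta \ge \alpha$ yield the regularities $F' \in \Co^\kappa$, $F'' \in \Co^\alpha$, $F''' \in \Co^\beta$ asserted in \ref{Item::TrueThm::FReg}. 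On the parameterization side $\Phi \in \Co^{\alpha+1,\alpha}_{(t,z),s,\loc}$ by the same composition rules, and the chain rule identity
\[
\frac{\partial \Phi}{\partial z^k} = \bigl((\partial_x \Phi_1) \circ \Phi_2\bigr) \cdot \frac{\partial \Phi_2^x}{\partial z^k},
\]
combined with $\partial_x \Phi_1 \in \Co^\beta_\loc$, $\Phi_2 \in \Co^{\alpha+1,\alpha}_{(t,z),s}$, and $\partial_{t,z}\Phi_2^x \in \Co^{\alpha,\alpha-}_{(t,z),s}$, gives $\partial_{t,z}\Phi \in \Co^{\alpha,\alpha-}_\loc$; composing with $F \in \Co^\alpha$ then produces $F^* \partial_{t^j}, F^* \partial_{z^k} \in \Co^{\alpha-}(U)$, which are items \ref{Item::TrueThm::PhiDReg} and \ref{Item::TrueThm::FDReg}. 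The spanning claim \ref{Item::TrueThm::FSpan} is obtained by pushing the corresponding statement for $\Se_1$ through $\Phi_1$, and \ref{Item::TrueThm::SBotSpan}--\ref{Item::TrueThm::S+Span} follow by standard complex linear algebra on local bases. The genuinely difficult analytic work is all absorbed into Theorem \ref{Thm::Key} (already established), so the main hurdles here are the bookkeeping for the pullback regularity via Lemma \ref{Lem::ODE::PullBackReg} and the careful propagation of mixed H\"older-Zygmund regularities through the two compositions.
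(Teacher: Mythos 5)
Your proposal is correct and follows essentially the same route as the paper: straighten $\Se+\bar\Se$ with the real Frobenius theorem (Corollary \ref{Cor::ODE::RealFroCor}), pull back $\Se$ and check via Lemma \ref{Lem::ODE::PullBackReg} that it is a $\Co^{\alpha,\alpha}$-involutive family of elliptic structures, apply Theorem \ref{Thm::Key}, and compose, propagating the mixed regularities through Lemma \ref{Lem::Hold::CompofMixHold} and the chain rule exactly as in the paper's proof. The only (minor) point the paper handles separately that you do not mention is the case $\alpha=\infty$, which falls outside the stated range of Theorem \ref{Thm::Key} and is dispatched by citing Nirenberg's classical result.
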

The refinement of Theorem \ref{Thm::ThmCoor2} is the following
\begin{thm}\label{Thm::TrueThm2}
Let $\alpha,\beta,\kappa>1$, $r,m,q\ge0$, $\Mf$ and $\Se\le\C T\Mf$ be the same as in Theorem \ref{Thm::TrueThm1}.

Suppose in addition to the assumptions of Theorem \ref{Thm::TrueThm1}, there is a $\gamma\in(1,\infty]$ such that $\alpha+1\le\gamma\le\kappa-1$ and $\Se\cap\bar\Se$ is a  $\Co^\gamma$-subbundle.

Then for any $p_0\in \Mf$ there are an open neighborhood $\Omega=\Omega'\times\Omega''\times\Omega'''\subseteq\R^r_t\times\C^m_z\times\R^q_s$ of $(0,0,0)$, and a $\Co^\alpha$-map $\Phi:\Omega\to\Mf$, such that in additional to all the consequences in Theorem \ref{Thm::TrueThm1},
\begin{enumerate}[parsep=-0.3ex,label=(\arabic*)]\setcounter{enumi}{11}
    \item\label{Item::TrueThm::ImprovePhi} $\Phi\in\Co^{\gamma+1,\alpha+1,\alpha}_\loc(\Omega',\Omega'',\Omega''';\Mf)$.
    \item\label{Item::TrueThm::ImproveDPhi}$\frac{\partial\Phi}{\partial t^1},\cdots,\frac{\partial\Phi}{\partial t^r}\in\Co^{\gamma,\alpha+1,\alpha}_\loc(\Omega',\Omega'',\Omega''';T\Mf)$.
    \item\label{Item::TrueThm::ImproveDDt}For $F=\Phi^\Inv$, the vector fields $F^*\Coorvec{t^1},\dots,F^*\Coorvec{t^r}$ defined on $U=\Phi(\Omega)$ are all $\Co^\gamma$.
\end{enumerate}
\end{thm}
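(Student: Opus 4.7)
My plan is to follow Nirenberg's two-step reduction, combining the two real Frobenius applications (on $\Se \cap \bar\Se$ and $\Se + \bar\Se$) into a single application of the bi-layer Frobenius theorem (Theorem \ref{Thm::ODE::BLFro}) and then invoking Theorem \ref{Thm::Key} with $r = 0$. The key observation driving the improvement in the $t$-direction is that, after the bi-layer step, the residual family of complex structures is automatically independent of $t$, so the subsequent coordinate change leaves the $t$-variable untouched.

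First, set $\V = (\Se \cap \bar\Se) \cap T\Mf$ and $\E = (\Se + \bar\Se) \cap T\Mf$; these are involutive real subbundles of $T\Mf$ of ranks $r$ and $r + 2m$ with regularities $\Co^\gamma$ and $\Co^\beta$. Theorem \ref{Thm::ODE::BLFro} applied at $p_0$ yields a chart $F_1 = (F_1', F_1'', F_1''') : U_1 \to \R^r_t \times \R^{2m}_x \times \R^q_s$ with $F_1' \in \Co^\kappa$, $F_1'' \in \Co^\gamma$, $F_1''' \in \Co^\beta$, pullback basis $F_1^*\partial_{t^j} \in \Co^\gamma$ spanning $\V$, and $F_1^*\partial_{x^k} \in \Co^{\min(\gamma-1, \beta)}$ extending these to a basis of $\E$. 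Since $\gamma \ge \alpha + 1$ and $\beta \ge \alpha$, the inverse $\Phi_1 = F_1^\Inv$ is in $\Co^{\gamma+1, \alpha+1, \alpha}_{t, x, s}$.

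Next, fix an auxiliary smooth complex structure $z^k = x^k + i x^{k+m}$ on $\R^{2m}_x$. Lemma \ref{Lem::ODE::GoodGen} supplies a canonical local basis $T_j = \partial_{t^j}$, $Z_k = \partial_{z^k} + \sum_l A_k^l(t, x, s)\,\partial_{\bar z^l}$ of $(F_1)_*\Se$ with $A \in \Co^\alpha$. Involutivity forces $[T_j, Z_k] = \sum_l (\partial_{t^j} A_k^l)\,\partial_{\bar z^l} \in \Se$, and since $\Se \cap \Span_\C(\partial_{\bar z}) = 0$, we conclude $\partial_{t^j} A_k^l \equiv 0$; hence $A = A(x, s)$ and $(F_1)_*\Se = \Span_\C(\partial_t) \oplus \Sc$ with $\Sc \in \Co^{\alpha, \alpha}_{x, s}$ a family of complex structures on $\R^{2m}_x$ depending only on $s$. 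Applying Theorem \ref{Thm::Key} to $\Sc$ with $r_{\text{Key}} = 0$, $\alpha_{\text{Key}} = \beta_{\text{Key}} = \alpha$, and smooth parameter chart equal to the identity on $\R^q_s$ then produces $F_2''(x, s) \in \Co^{\alpha+1, \alpha}_{x, s}$ such that $\partial_{z^k}$ in the new coordinates spans $\Sc$ on each $s$-slice.

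Finally, define the combined chart $F = (F_1', F_2'' \circ (F_1'', F_1'''), F_1''')$, whose inverse factors as $\Phi(t, z, s) = \Phi_1(t, F_2''(\cdot, s)^\Inv(z), s)$. Because $F_2''(\cdot, s)^\Inv$ is independent of $t$, we get $\partial\Phi/\partial t^j(t, z, s) = (\partial\Phi_1/\partial t^j)(t, F_2''(\cdot, s)^\Inv(z), s)$, with no contribution from the $x$-derivative of $\Phi_1$; as vector fields on $\Mf$ this gives $F^*\partial_{t^j} = F_1^*\partial_{t^j} \in \Co^\gamma$, and in $(t, z, s)$-coordinates, $\partial\Phi/\partial t^j$ inherits the mixed regularity $\Co^{\gamma, \alpha+1, \alpha}$ directly from $\partial\Phi_1/\partial t^j$. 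The remaining regularity assertions follow from Lemma \ref{Lem::Hold::CompofMixHold} applied to this composition, using $\Phi_1 \in \Co^{\gamma+1, \alpha+1, \alpha}$ and the $\Co^{\alpha+1, \alpha}$-regularity of $F_2''$. The main obstacle is justifying the $t$-independence of $A$ rigorously: $A$ has only $\Co^\alpha$-regularity, so $\partial_{t^j} A$ must be interpreted distributionally; the hypothesis $\alpha > 1$ ensures $Z_k \in C^1$, making $[T_j, Z_k]$ a well-defined $C^0$-vector field, whereupon unique representation in the basis $(T, Z, \partial_{\bar z})$ forces $\partial_{t^j} A_k^l = 0$ pointwise and $A$ is genuinely $t$-independent.
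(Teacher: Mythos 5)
Your proposal is correct and follows essentially the same route as the paper's proof: apply the bi-layer Frobenius theorem to $\Se\cap\bar\Se\le\Se+\bar\Se$, extract canonical generators whose $t$-independence follows from involutivity, apply Theorem \ref{Thm::Key} with $r=0$ to the resulting $s$-parameterized family of complex structures, and then do the composition/chain-rule regularity bookkeeping, noting that the inner coordinate change does not touch $t$ so that $F^*\Coorvec{t^j}=F_1^*\Coorvec{t^j}\in\Co^\gamma$. The only point to tighten is that the complex coordinates on $\R^{2m}_x$ cannot be fixed arbitrarily in advance: as in the paper, one should first choose a \emph{linear} complex coordinate system on $\R^{2m}$ adapted to $(\Phi_1^*\Se)$ at the base point, so that the canonical basis of Lemma \ref{Lem::ODE::GoodGen} really takes the form $\partial_z+A''\partial_{\bar z}$ (for a poorly chosen complex structure the projection of the pulled-back bundle onto $\Span(\partial_z)$ could be degenerate there).
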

Some of the results are directly implied by others, we mark them as below.
\begin{remark}\label{Rmk::Final::RmkofTrueThm}
\begin{enumerate}[parsep=-0.3ex,label=(\roman*)]
    \item\label{Item::RmkofTrueThm::Infty} The case $\alpha=\infty$ (which implies $\beta=\gamma=\kappa=\infty$) is the classical result \cite{Nirenberg} (see Proposition \ref{Prop::Intro::Nirenberg}).
    \item\label{Item::RmkofTrueThm::FDReg} \ref{Item::TrueThm::FDReg} is implied by \ref{Item::TrueThm::PhiDReg} and \ref{Item::TrueThm::F0}.
\end{enumerate}

\smallskip
We have $F^*\Coorvec{t^j}=\frac{\partial\Phi}{\partial t^j}\circ F$ and $F^*\Coorvec{z^k}=\frac{\partial\Phi}{\partial z^k}\circ F$ for $1\le j\le r$ and $1\le k\le m$. \ref{Item::TrueThm::PhiDReg} shows that $\frac{\partial\Phi}{\partial t^j},\frac{\partial\Phi}{\partial z^k}\in\Co^{\alpha-}$ and \ref{Item::TrueThm::F0} shows that $F\in\Co^\alpha$. Taking compositions we get $\frac{\partial\Phi}{\partial t^j}\circ F,\frac{\partial\Phi}{\partial z^k}\circ F\in\Co^{\alpha-}$.\hfill\qedsymbol

\begin{enumerate}[parsep=-0.3ex,label=(\roman*)]\setcounter{enumi}{2}
    \item\label{Item::RmkofTrueThm::Span} \ref{Item::TrueThm::PhiSpan} and \ref{Item::TrueThm::FSpan} are equivalent.
\end{enumerate}

This is because $F^*\Coorvec{t^j}|_p=\frac{\partial\Phi}{\partial t^j}(F(p))$ and $F^*\Coorvec{z^k}|_p=\frac{\partial\Phi}{\partial z^k}(F(p))$ for $p\in U$, $1\le j\le r$ and $1\le k\le m$.\hfill\qedsymbol

\begin{enumerate}[parsep=-0.3ex,label=(\roman*)]\setcounter{enumi}{3}
    \item\label{Item::RmkofTrueThm::OtherSpan} \ref{Item::TrueThm::SBotSpan}, \ref{Item::TrueThm::SCapSpan} and \ref{Item::TrueThm::S+Span} are all implied by \ref{Item::TrueThm::FSpan}.
\end{enumerate} 

Since $(F^*dt,F^*dz,F^*d\bar z,F^*ds)$ is the dual basis of $(F^*\Coorvec t,F^*\Coorvec z,F^*\Coorvec{\bar z},F^*\Coorvec s)$,  and by \ref{Item::TrueThm::FSpan} $F^*\Coorvec t,F^*\Coorvec z$ span $\Se|_U$. Taking the dual bundle, we get that $F^*d\bar z,F^*ds$ span $\Se^\bot|_U$.

For the complex conjugate of \ref{Item::TrueThm::FSpan}, $\bar\Se|_U$ has local basis $F^*\Coorvec t,F^*\Coorvec{\bar z}$. Taking intersection  we get that $\Se\cap\bar\Se|_U$ is spanned by $F^*\Coorvec t$. Taking union we get that $\Se+\bar\Se|_U$ is spanned by $F^*\Coorvec t,F^*\Coorvec z,F^*\Coorvec{\bar z}$.\hfill\qedsymbol
\end{remark}

The proofs of Theorems \ref{Thm::TrueThm1} and \ref{Thm::TrueThm2} are combinations of the real Frobenius theorem, the Theorem \ref{Thm::ODE::BLFro} or Corollary \ref{Cor::ODE::RealFroCor}, and the estimate of the parameterized elliptic structures, the Theorem \ref{Thm::Key}. 

\begin{proof}[Proof of Theorem \ref{Thm::TrueThm1}]By Remark \ref{Rmk::Final::RmkofTrueThm} \ref{Item::RmkofTrueThm::Infty} only the case $\alpha<\infty$ needs a proof.

We fix the point $p_0\in\Mf$.
First we apply real Frobenius theorem, the Corollary \ref{Cor::ODE::ODERegCor} $\Se+\bar\Se$. We endow $\R^{r+2m}\times\R^q$ with standard coordinate system $(\nu,\lambda)=(\nu^1,\dots,\nu^{r+2m},\lambda^1,\dots,\lambda^q)$.

Since $\Se+\bar\Se$ is $\Co^\beta$ and has rank $r+2m$, we can find a neighborhood $\Omega_1=\Omega'_1\times\Omega'''_1\subseteq\R^{r+2m}_\nu\times\R^q_\lambda$ of $(0,0)$ and a $\Co^\beta$-parameterization $\Phi_1:\Omega_1\to\Mf$, such that by taking $U_1:=\Phi_1(\Omega_1)$ and $F_1=(F_1',F_1'''):U_1\to\R^{r+2m}\times\R^q$, $F_1:=\Phi_1^\Inv$, we have
\begin{enumerate}[parsep=-0.3ex,label=(S1.\arabic*)]
    \item\label{Item::PfTrueThm1::Phi10} $\Phi_1(0,0)=p_0$. 
    \item\label{Item::PfTrueThm1::Phi1Reg} $\Phi_1\in\Co^{\beta+1,\beta}_{\nu,\lambda,\loc}(\Omega_1',\Omega_1''';\Mf)$ and $\nabla_\nu\Phi_1\in\Co^\beta_\loc(\Omega_1;T\Mf\otimes\R^{r+2m})$.
    \item\label{Item::PfTrueThm1::F1} $F_1'\in\Co^\kappa_\loc(U_1;\R^{r+2m})$ and $F_1'''\in\Co^\beta_\loc(U_1;\R^q)$.
    \item\label{Item::PfTrueThm1::RFroSpan} $(\Se+\bar\Se)|_{U_1}$ has a $\Co^\beta$-local basis $(F_1^*\Coorvec{\nu^1},\dots,F_1^*\Coorvec{\nu^{r+2m}})$.
\end{enumerate}

Now $\Phi_1^*\Se=(F_1)_*\Se\le\C T\Omega_1$. By \ref{Item::PfTrueThm1::RFroSpan}, $\Phi_1^*\Se\le\Phi_1^*(\Se+\bar\Se)=\Span(\Coorvec{\nu^1},\dots,\Coorvec{\nu^{r+2m}})|_{\Omega_1}=(\C T\Omega_1')\times\Omega_1'''$.

Since $F_1^*\Coorvec{\nu^1},\dots,F_1^*\Coorvec{\nu^{r+2m}}\in\Co^\beta\subseteq\Co^\alpha$, by Lemma \ref{Lem::ODE::PullBackReg} $\Phi_1^*\Se$ is a $\Co^\alpha$-subbundle of $(\C T\Omega_1')\times\Omega_1'''$. 

Now Theorem \ref{Thm::Key} applies to $\Phi_1^*\Se\le (\C T\Omega_1')\times\Omega_1'''$.
We endow $\R^r\times\C^m\times\R^q$ with standard real and complex coordinate system $(t,z,s)=(t^1,\dots,t^r,z^1,\dots,z^m,s^1,\dots,s^q)$. By Theorem \ref{Thm::Key}, we can find a neighborhood $\Omega=\Omega'\times\Omega''\times\Omega'''\subseteq\R^r_t\times\C^m_z\times\R^q_s$ of $(0,0,0)$ and a $\Co^\alpha$-parameterization $\Phi_2=(\Phi_2',\Phi_2'''):\Omega\to\Omega_1'\times\Omega_1'''$, such that by taking $U_2:=\Phi_2(\Omega)$ and $F_2=(F_2',F_2'',F_2'''):=\Phi_2^\Inv:U_2\to\R^r\times\C^m\times\R^q$,
\begin{enumerate}[parsep=-0.3ex,label=(S1.\arabic*)]\setcounter{enumi}{4}
    \item\label{Item::PfTrueThm1::Phi20} $\Phi_2(0,0,0)=(0,0)$.
    \item\label{Item::PfTrueThm1::Phi2''} $\Phi_2'''=\Phi_2'''(s):\Omega'''\subseteq\R^q_s\to\Omega'''_1$ is a smooth parameterization that does not depend on $(t,z)$.
    \item\label{Item::PfTrueThm1::Phi2Reg} $\Phi_2'\in\Co^{\alpha+1,\alpha}_{(t,z),s}(\Omega'\times\Omega'',\Omega''';\Omega_1')$ and $\nabla_{t,z}\Phi_2'\in\Co^{\alpha,\alpha-}(\Omega'\times\Omega'',\Omega''';\C^{(r+2m)\times(r+2m)})$.
    \item\label{Item::PfTrueThm1::F2Chart} $F_2'=F_2'(\nu)\in C^\infty$ does not depend on $\lambda$; and $F_2'''=F_2'''(\lambda)\in C^\infty$ does not depend on $\nu$.
    \item\label{Item::PfTrueThm1::KeySpan} $\Phi_1^*\Se|_{U_2}$ has a $\Co^{\alpha-}$-local basis $(F_2^*\Coorvec{t^1},\dots,F_2^*\Coorvec{t^r},F_2^*\Coorvec{z^1},\dots,F_2^*\Coorvec{z^m})$.
\end{enumerate}

We now define $\Phi:=\Phi_1\circ\Phi_2:\Omega\to\Mf$, $U:=\Phi(\Omega)$ and $F=(F',F'',F'''):=\Phi^\Inv$.

Immediately by \ref{Item::PfTrueThm1::Phi10} and \ref{Item::PfTrueThm1::Phi20} we have \ref{Item::TrueThm::Phi0} and \ref{Item::TrueThm::F0}. 

By \ref{Item::PfTrueThm1::RFroSpan} and \ref{Item::PfTrueThm1::KeySpan} we have $\Se=\Phi_1^*\Span(F_2^*\Coorvec t,F_2^*\Coorvec z)=\Span(F_1^*F_2^*\Coorvec t,F_1^*F_2^*\Coorvec z)=\Span(F^*\Coorvec t,F^*\Coorvec z)$, which is \ref{Item::TrueThm::FSpan}. By Remark \ref{Rmk::Final::RmkofTrueThm} \ref{Item::RmkofTrueThm::Span} we get \ref{Item::TrueThm::PhiSpan}.

By \ref{Item::PfTrueThm1::Phi2''} we have $\Phi(t,z,s)=\Phi_1(\Phi_2'(t,z,s),\Phi_2'''(s))$. By chain rule we have
\begin{equation*}\label{Item::PfTrueThm1::NablaPhiReg}
    \nabla_{t,z}\Phi(t,z,s)=((\nabla_\nu\Phi_1)\circ \Phi_2)(t,z,s))\cdot\nabla_{t,z}\Phi_2'(t,z,s).
\end{equation*}
Thus \ref{Item::TrueThm::PhiReg} and \ref{Item::TrueThm::PhiDReg} follow by applying Lemma  \ref{Lem::Hold::CompofMixHold} \ref{Item::Hold::CompofMixHold::Comp} with \ref{Item::PfTrueThm1::Phi1Reg} and \ref{Item::PfTrueThm1::Phi2Reg}.

Clearly $F''=F_2''\circ F_1\in\Co^\alpha$. By \ref{Item::PfTrueThm1::F1} and \ref{Item::PfTrueThm1::F2Chart} we have $F'=F_2'\circ F_1'\in\Co^\kappa$ and $F'''=F_2'''\circ F_1'''\in\Co^\beta$, giving the result \ref{Item::TrueThm::FReg}.

The rest of the results in Theorem \ref{Thm::TrueThm1} follow from Remark \ref{Rmk::Final::RmkofTrueThm}.
\end{proof}

\begin{proof}[Proof of Theorem \ref{Thm::TrueThm2}]
We use a construction that is slightly different to the proof of Theorem \ref{Thm::TrueThm1}. Instead of applying Corollary \ref{Cor::ODE::RealFroCor}, we apply Theorem \ref{Thm::ODE::BLFro} in the first step.

We endow $\R^r\times\R^{2m}\times\R^q$ with standard coordinate system $(\mu,\nu,\lambda)=(\mu^1,\dots,\mu^r,\nu^1,\dots,\nu^{2m},\lambda^1,\dots,\lambda^q)$.

Since $\Se+\bar\Se\in\Co^\beta$ has rank $r+2m$ and $\Se\cap\bar\Se\in\Co^\gamma$ has rank $r$, we can find a neighborhood $\Omega_1=\Omega'_1\times\Omega''_1\times\Omega'''_1\subseteq\R^r_\mu\times\R^{2m}_\nu\times\R^q_\lambda$ of $(0,0,0)$ and a $\Co^{\min(\beta,\gamma)}$-parameterization $\Phi_1:\Omega_1\to\Mf$, such that by taking $U_1:=\Phi_1(\Omega_1)$ and $F_1=(F_1',F_1'',F_1'''):U_1\to\R^r\times\R^{2m}\times\R^q$ as $F_1:=\Phi_1^\Inv$, we have
\begin{enumerate}[parsep=-0.3ex,label=(S2.\arabic*)]
    \item\label{Item::PfTrueThm2::Phi10} $\Phi_1(0,0,0)=p_0$, 
    \item\label{Item::PfTrueThm2::Phi1Reg} $\Phi_1\in\Co^{\gamma+1,\min(\gamma,\beta+1),\min(\gamma,\beta)}_{\mu,\nu,\lambda,\loc}(\Omega_1',\Omega_1'',\Omega_1''';\Mf)$ and $\nabla_\mu\Phi_1\in\Co^{\gamma,\min(\gamma,\beta+1),\min(\gamma,\beta)}_\loc(\Omega_1',\Omega_1'',\Omega_1''';T\Mf\otimes\R^r)$. 
    \item\label{Item::PfTrueThm2::F1Reg} $F_1'\in\Co^\kappa_\loc(U_1;\R^r)$, $F_1''\in\Co^\gamma_\loc(U_1;\R^{2m})$ and $F_1'''\in\Co^\beta_\loc(U_1;\R^q)$.
    \item\label{Item::PfTrueThm2::F*Reg} On $U_1$, $F_1^*\Coorvec{\mu^1},\dots,F_1^*\Coorvec{\mu^r}$ are $\Co^\gamma$-vector fields and $F_1^*\Coorvec{\nu^1},\dots,F_1^*\Coorvec{\nu^{2m}}$ are $\Co^{\min(\gamma-1,\beta)}$-vector fields.
    \item\label{Item::PfTrueThm2::Phi1Span} $(\Se\cap\bar\Se)|_{U_1}=\Span(F_1^*\Coorvec{\mu^1},\dots,F_1^*\Coorvec{\mu^r})$, and $(\Se+\bar\Se)|_{U_1}=\Span(F_1^*\Coorvec{\mu^1},\dots,F_1^*\Coorvec{\mu^r},F_1^*\Coorvec{\nu^1},\dots,F_1^*\Coorvec{\nu^{2m}})$.
\end{enumerate}

By \ref{Item::PfTrueThm2::F*Reg} and \ref{Item::PfTrueThm2::Phi1Span}, $\Se$ is contained in the span of $F_1^*\Coorvec{\mu^1},\dots,F_1^*\Coorvec{\mu^r},F_1^*\Coorvec{\nu^1},\dots,F_1^*\Coorvec{\nu^{2m}}\in\Co^{\min(\beta,\gamma-1)}$. By assumption $\min(\gamma-1,\beta)\ge\alpha$, using Lemma \ref{Lem::ODE::PullBackReg} we see that $\Phi_1^*\Se\le \C T\Omega_1$ is a $\Co^\alpha$-subbundle

By the standard linear algebra argument we can find a linear complex coordinate system $w=(w^1,\dots,w^m)$ on $\R^{2m}_\nu$ such that
\begin{equation*}
    \textstyle(\Phi_1^*\Se)_{(0,0)}=\Span\big(\Coorvec{\mu^1}|_{(0,0)},\dots,\Coorvec{\mu^r}|_{(0,0)},\Coorvec{w^1}|_{(0,0)},\dots,\Coorvec{w^m}|_{(0,0)}\big)\le \C T_{(0,0)}(\R^r_\mu\times\C^m_w).
\end{equation*}
In particular $(\Phi_1^*\Se)_{(0,0)}\oplus\Span \big(\Coorvec{\bar w^1}|_{(0,0)},\dots,\Coorvec{\bar w^m}|_{(0,0)},\Coorvec{\lambda^1}|_{(0,0)},\dots,\Coorvec{\lambda^q}|_{(0,0)})=\C T_{(0,0)}(\R^r_\mu\times\C^m_w)$.
Applying Lemma \ref{Lem::ODE::GoodGen}, we can find a neighborhood $W=W'\times W''\times W'''\subseteq\Omega_1'\times\Omega_1''\times\Omega_1'''$ of $(0,0,0)\in\Omega$ such that $\Phi_1^*\Se$ has the following local basis on $W$,
\begin{equation*}
    X=\begin{pmatrix}X'\\X''
    \end{pmatrix}\begin{pmatrix}I&&A'&B'\\&I&A''&B''\end{pmatrix}\begin{pmatrix}\partial_\mu\\\partial_w\\\partial_{\bar w}\\\partial_\lambda
    \end{pmatrix}.
\end{equation*}where $A',A'',B',B''$ are $\Co^\alpha$-matrix functions defined in $W$.

Since $\Phi_1^*\Se\le \C T(\Omega_1'\times\Omega_1'')\times\Omega_1'''$, we have $B'\equiv0$ and $B''\equiv 0$. Since $\Coorvec{\mu^1},\dots,\Coorvec{\mu^r}$ are sections of $\Phi_1^*(\Se\cap\bar\Se)\subset\Phi_1^*\Se$, by Lemma \ref{Lem::ODE::GoodGen} \ref{Item::ODE::GoodGen::Uniqueness} we have $A'=0$, i.e. $X_j=\Coorvec{\mu^j}$ for $1\le j\le r$.

By Lemma \ref{Lem::ODE::GoodGen} \ref{Item::ODE::GoodGen::InvComm}, since $\Phi_1^*\Se$ is involutive, we see that $[X_j,X_k]=0$ for $1\le j\le r<k\le r+m$. So $\Coorvec{\mu^j}A''(\mu,w,s)=0$ for $1\le j\le r$ in the domain, which means $A''(\mu,w,s)=A''(w,s)$.

Define a complex tangent subbundle $\Tc\le\C T(W''\times W''')$ as $\Tc:=\Span(\Coorvec w+A''(w,s)\Coorvec{\bar w})$. We see that $\Tc$ is a $\Co^\alpha$-involutive subbundle of rank $m$ and satisfies
\begin{equation}\label{Eqn::PfTrueThm2::Tensor}
    \Tc\cap\bar\Tc=0,\quad\Tc+\bar \Tc=(\C TW'')\times W''',\quad(\Phi_1^*\Se)|_W=(\C TW')\otimes_\C\Tc.
\end{equation}

Endow $\C^m\times\R^q$ with standard coordinate system $(z,s)=(z^1,\dots,z^m,s^1,\dots,s^q)$. By Theorem \ref{Thm::Key} (by taking $r=0$ in this theorem), we can find a neighborhood $\Omega_2=\Omega_2''\times\Omega_2'''\subseteq\C^m_z\times\R^q_s$ of $(0,0)$ and a $\Co^\alpha$-parameterization $\Phi_2=(\Phi_2'',\Phi_2'''):\Omega_2\to W''\times W'''$, such that by taking $U_2:=\Phi_2(\Omega_2)$ and $F_2=(F_2'',F_2'''):U_2\to\C^m\times\R^q$, $F_2:=\Phi_2^\Inv$, we have
\begin{enumerate}[parsep=-0.3ex,label=(S2.\arabic*)]\setcounter{enumi}{5}
    \item\label{Item::PfTrueThm2::Phi20} $\Phi_2(0,0,0)=(0,0)$.
    \item\label{Item::PfTrueThm2::Phi2''} $\Phi_2'''=\Phi_2'''(s):\Omega_2'''\subseteq\R^q_s\to\Omega_1'''$ is a smooth parameterization that does not depend on $z$.
    \item\label{Item::PfTrueThm2::Phi2Reg} $\Phi_2''\in\Co^{\alpha+1,\alpha}_{z,s}(\Omega_2'',\Omega_2''';W'')$ and $\nabla_z\Phi_2''\in\Co^{\alpha,\alpha-}(\Omega_2'',\Omega_2''';\C^{2m\times 2m})$ 
    \item\label{Item::PfTrueThm2::F2Reg} $F_2'''=F_2'''(\lambda)\in C^\infty$ does not depend on $(\mu,\nu)$; and $F_2''\in\Co^\alpha(U_2;\C^m)$.
    \item\label{Item::PfTrueThm2::Phi2Span} $\Tc|_{U_2}$ has a $\Co^{\alpha-}$-local basis $(F_2^*\Coorvec{z^1},\dots,F_2^*\Coorvec{z^m})$.
\end{enumerate}

Define $\Omega':=W'$, $\Omega'':=\Omega_2''$, $\Omega''':=\Omega_2'''$ and $\Phi:\Omega\to\Mf$ as 
\begin{equation}\label{Eqn::PfTrueThm2::DefPhi}
    \Phi(t,z,s):=\Phi_1(t,\Phi_2(t,z))=\Phi_1(t,\Phi''_2(z,s),\Phi_2'''(s)),\quad t\in\Omega',z\in\Omega'',s\in\Omega'''.
\end{equation}

We define $U:=\Phi(\Omega)$ and $F=(F',F'',F'''):=\Phi^\Inv$.  Immediately by \ref{Item::PfTrueThm2::Phi10} and \ref{Item::PfTrueThm2::Phi20} we have \ref{Item::TrueThm::Phi0} and \ref{Item::TrueThm::F0}.

Taking the chain rules on \eqref{Eqn::PfTrueThm2::DefPhi} we have
\begin{equation}\label{Eqn::PfTrueThm2::NablaPhiReg}
    \nabla_t\Phi(t,z,s)=\nabla_\mu\Phi_1(t,\Phi_2''(z,s),\Phi_2'''(s)),\quad\nabla_z\Phi(t,z,s)=(\nabla_\nu\Phi_1)(t,\Phi_2''(z,s),\Phi_2'''(s))\cdot\nabla_z\Phi_2(z,s).
\end{equation}

Based on \ref{Item::PfTrueThm2::Phi1Reg} and \ref{Item::PfTrueThm2::Phi2Reg}, applying Lemma \ref{Lem::Hold::CompofMixHold} \ref{Item::Hold::CompofMixHold::Comp} to \eqref{Eqn::PfTrueThm2::DefPhi} and \eqref{Eqn::PfTrueThm2::NablaPhiReg}, we get \ref{Item::TrueThm::ImprovePhi}, \ref{Item::TrueThm::PhiDReg} and \ref{Item::TrueThm::ImproveDPhi}. Note that \ref{Item::TrueThm::ImprovePhi} is stronger than \ref{Item::TrueThm::PhiReg}.

\smallskip
By \eqref{Eqn::PfTrueThm2::DefPhi}, \ref{Item::PfTrueThm2::F1Reg} and \ref{Item::PfTrueThm2::F2Reg} we see that $F=(F',F'',F''')$ satisfies
\begin{equation}\label{Eqn::PfTrueThm2::EqnofF}
    F'=F_1'=\id_{\R^r}\circ F_1'\in\Co^\kappa_\loc(U;\R^r),\quad F''=F_2''\circ F_1''\in\Co^\alpha_\loc(U;\C^m),\quad F'''=F_2'''\circ F_1'''\in\Co^\beta_\loc(U;\R^q).
\end{equation}
This gives \ref{Item::TrueThm::FReg}.

By \eqref{Eqn::PfTrueThm2::NablaPhiReg}, we get $\Coorvec{t^j}\Phi(t,z,s)=\frac{\partial\Phi_1}{\partial\mu^j}(t,\Phi_2''(z,s),\Phi_2'''(s))=(\Phi_1)_*\Coorvec{\mu^j}|_{\Phi(t,z,s)}$ for $1\le j\le r$. So $F^*\Coorvec{t^j}=F_1^*\Coorvec{\mu^j}\in\Co^\gamma$ for $1\le j\le r$, finishing the proof of \ref{Item::TrueThm::ImproveDDt}.

By \eqref{Eqn::PfTrueThm2::Tensor} and \ref{Item::PfTrueThm2::Phi2Span} we see that $\Phi_1^*\Se=\Span(\Coorvec{\mu},F_2^*\Coorvec{z})$, so by writing $\id_{\R^r}:\R^r_\mu\to\R^r_t$ as the identity we have $\Phi_1^*\Se=\Span(\id_{\R^r}^*\Coorvec{t},F_2^*\Coorvec{z})$. Using \eqref{Eqn::PfTrueThm2::EqnofF} we have $F=(\id_{\R^r},F_2)\circ F_1$, taking pushforward of $\Phi_1$ we get $\Se|_U=F_1^*\Span(\id_{\R^r}^*\Coorvec{t},F_2^*\Coorvec{z})|_U=\Span(F^*\Coorvec t,F^*\Coorvec z)$, finishing the proof of \ref{Item::TrueThm::FSpan}.

The rest of the results follow from Remark \ref{Rmk::Final::RmkofTrueThm} as well. 
\end{proof}

\section{The Sharpness of Coordinates and Coordinate Vector Fields}\label{Section::SharpGen}

Fixed four numbers $\alpha,\beta,\gamma,\kappa>1$ such that $\alpha\le\min(\beta,\gamma)$ and $\kappa\ge\max(\beta,\gamma)+1$. Consider all possible complex Frobenius structures $\Se\le \C T\Mf$ among all $\Co^\kappa$-manifolds $\Mf$ such that $\Se\in\Co^\alpha$, $\Se+\bar\Se\in\Co^\beta$ and $\Se\cap\bar\Se\in\Co^\gamma$. 

Fix a point $p\in \Mf$, let $F=(F',F'',F'''):U\subseteq\Mf\to\R^r_t\times\C^m_z\times\R^{n}_s$ be any desired $C^1$-coordinate chart near a fixed point $p$ that represents $\Se$, that is, $F^*\Coorvec{t^1},\dots,F^*\Coorvec{t^r},F^*\Coorvec{z^1},\dots,F^*\Coorvec{z^m}$ span $\Se$ in $U$. 

Recall the notation $\Co^{\mu+}=\bigcup_{\nu>\mu}\Co^\nu$ for $\mu\in\R$.

We list the best H\"older-Zygmund regularities the six objects: $F'$, $F''$, $F'''$, $F^*\Coorvec t$, $F^*\Coorvec z$, $F^*\Coorvec s$, as follows.

\begin{itemize}[parsep=-0.1ex]
    \item $F'\notin\Co^{\kappa+}$: $\Mf$ is $\Co^\kappa$, its coordinate functions are at most $\Co^\kappa$, so $F'\in\Co^\kappa$ is optimal.
    
    \item
 $F'''\notin\Co^{\beta+}$: This is due to the sharpness of real Frobenius theorem that $\Se+\bar\Se\in\Co^\beta$ only guarantees $F'''$ to be $\Co^\beta$. See Proposition \ref{Prop::Final::RealFroisSharp} below.

    \item
$F^*\Coorvec t\notin\Co^{\gamma+}$: $F^*\Coorvec t$ spans $\Se\cap\bar\Se$ which is $\Co^\gamma$. If $F^*\Coorvec t\in\Co^{\gamma+}$ then $\Se\cap\bar\Se\in\Co^{\gamma+}$ as well.

    \item
$F''\notin\Co^{\alpha+}$ and $F^*\Coorvec s\notin\Co^{(\alpha-1)+}$: The argument is similar to the sharpness of real Frobenius theorem. See Proposition \ref{Prop::Final::Sharpdds} below.

    \item
$F^*\Coorvec z\notin\Co^\alpha$: This is the counter intuitive part in our results. See Section \ref{Section::Sharpddz} for the complete deduction and Proposition \ref{Prop::Final::SharpddzRed}, Theorem \ref{Thm::Final::ProofExampleddz} for the proof.
\end{itemize}

When $\gamma\ge\alpha+1$ (that $\Se\cap\bar\Se$ is at least 1 order more regular than $\Se$), by Theorem \ref{Thm::TrueThm2}, all six regularity estimates above are sharp. 

When $\alpha\le\gamma<\alpha+1$, from Theorem \ref{Thm::ThmCoor1} we only have $F^*\Coorvec t\in\Co^{\alpha-}$, the rest of five estimates are still sharp.

When $\alpha\le\gamma<\alpha+1$ and $\gamma>2$, using Theorem \ref{Thm::TrueThm1} we can find a $\Co^\alpha$-coordinate chart $F=(F',F'',F''')$ to $\R^r_t\times\C^m_z\times\R^n_s$ such that $F''\in\Co^\alpha$, $F^*\Coorvec z\in\Co^{\alpha-}$ but $F^*\Coorvec t\in\Co^{\alpha-}$. If we use Theorem \ref{Thm::TrueThm2}, then we can get $F^*\Coorvec t\in\Co^{\gamma}$ which is sharp, but neither $F''\in\Co^{\gamma-1}$, $F^*\Coorvec z\in\Co^{(\gamma-1)-}$ nor $F^*\Coorvec s\in\Co^{\gamma-2}$ is sharp. 

We do not know whether we can choose suitable coordinate chart $F$ so that $F^*\Coorvec t\in\Co^{\gamma}$ and $F''\in\Co^\alpha$ holds in the same time. 

\subsection{Sharpness of $F''$, $F'''$ and $F^*\Coorvec s$: some elementary examples}\label{Section::SharpSimp}
In this subsection we use the conventions $f_u=\partial_uf$, $f_v=\partial_vf$, etc.
\begin{prop}\label{Prop::Final::RealFroisSharp}
    Let $\beta>1$, and let $(u,v)$ be the standard coordinate system for $\R^2$. We define $\V\le T\R^2$ as
    $$\textstyle \V:=\Span_\R T,\quad T:=\Coorvec u+f(v)\Coorvec v,\quad\text{where }f(v):=\max(0,v)^\beta.$$
    
    Suppose $F=(F',F'''):U\subseteq\R^2_{u,v}\to\R_x\times\R_s$ is a $C^1$-coordinate chart near $(0,0)$ such that $F^*\Coorvec x$ spans $\V|_U$, then $\partial_vF'''\notin\Co^{(\beta-1)+}_\loc(U)$. In particular $F'''\notin\Co^{\beta+}_\loc(U;\R^2)$.
\end{prop}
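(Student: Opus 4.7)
\medskip
\noindent\textbf{Proof plan.}
The first step is to extract an ODE for $F'''$. Since $F^*\partial_x$ is a nonvanishing continuous section of $\V=\Span_\R T$, there exists a continuous nonvanishing function $g$ on $U$ with $F^*\partial_x=gT$. Applying both sides to $F'''$ and using that $F'''$ is the second coordinate of $F$ gives $gTF''' = (F^*\partial_x)F''' = \partial_x s\circ F = 0$, hence $TF''' = 0$. Thus $F'''$ is a continuous first integral of $T$ that is also $C^1$.

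Next I would determine $F'''$ explicitly from this first-integral condition. On $\{v\leq 0\}$, $T=\partial_u$, so $F'''(u,v)=\psi(v)$ for some $C^1$-function $\psi$. On $\{v>0\}$, a direct calculation shows that $w(u,v):=v^{1-\beta}+(\beta-1)u$ satisfies $Tw=0$, so $F'''(u,v)=\tilde\psi(w(u,v))$ for some continuous $\tilde\psi$; continuity at $v=0$ forces $\tilde\psi(\xi)\to\psi(0)$ as $\xi\to+\infty$. The Jacobian of $F=(F',F''')$ at the origin reduces (since $\partial_u F'''\equiv 0$ on $\{v\leq 0\}$) to $\partial_uF'(0,0)\cdot\psi'(0)$, so the chart condition forces $\psi'(0)\neq 0$.

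The crux of the argument is the local asymptotic
\begin{equation*}
\partial_vF'''(u,v)-\partial_vF'''(0,v)=-\psi'(0)\beta\, u\, v^{\beta-1}\bigl(1+o(1)\bigr),\qquad (u,v)\to(0,0^+).
\end{equation*}
Differentiating $F'''=\tilde\psi(w)$ yields $\partial_vF'''(u,v)=-(\beta-1)\tilde\psi'(w)v^{-\beta}$, and the continuity of $\partial_vF'''$ at $v=0$ forces $\tilde\psi'(\xi)\sim-\tfrac{\psi'(0)}{\beta-1}\xi^{-\beta/(\beta-1)}$. To make the remainder quantitative I would use the assumed regularity itself: writing $\tilde\psi'(\xi)=-\tfrac{\psi'(0)}{\beta-1}\xi^{-\beta/(\beta-1)}(1+\eta(\xi))$, the restriction $v\mapsto\partial_vF'''(0,v)=\psi'(0)(1+\eta(v^{1-\beta}))$ inherits any assumed H\"older-Zygmund control, which bounds $\eta$ at infinity; combining with the expansion $w_u^{-\beta/(\beta-1)}-w_0^{-\beta/(\beta-1)}=-\beta u v^{2\beta-1}+O(u^2v^{3\beta-2})$ produces the asymptotic above uniformly along any path with $uv^{\beta-1}\to 0$.

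The contradiction then comes from the Zygmund second-difference characterization $|\Delta_h^2 f(p)|\le C|h|^\mu$ of $\Co^\mu$ combined with the trick of subtracting the $u=0$ baseline. Set
\begin{equation*}
D_h(u):=\partial_vF'''(u,h)+\partial_vF'''(u,-h)-2\partial_vF'''(u,0).
\end{equation*}
Since $\partial_vF'''(u,-h)=\psi'(-h)$ and $\partial_vF'''(u,0)=\psi'(0)$ are independent of $u$, we have $D_h(u_0)-D_h(0)=\partial_vF'''(u_0,h)-\partial_vF'''(0,h)\sim -\psi'(0)\beta u_0h^{\beta-1}$. If $\partial_vF'''\in\Co^\mu_\loc$ with $\mu\in(\beta-1,2)$, the Zygmund bound gives $|D_h(u_0)-D_h(0)|\lesssim h^\mu$, so $|\psi'(0)|\beta u_0 h^{\beta-1}\lesssim h^\mu$; fixing $u_0>0$ small and letting $h\to 0^+$ forces $u_0\lesssim h^{\mu-(\beta-1)}\to 0$, a contradiction. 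For $\mu\geq 2$ (relevant when $\beta>3$ and for the statement "$\notin \Co^{(\beta-1)+}$") the same scheme runs with an $N$-th order finite difference, $N>\mu$: the $u$-independent part still cancels in $D_h^{(N)}(u_0)-D_h^{(N)}(0)$ and the $u_0$-dependent remainder retains the nonsmooth factor of order $h^{\beta-1}$ at $v=0$. The second conclusion $F'''\notin\Co^{\beta+}_\loc$ follows because $F'''\in\Co^{\beta+\varepsilon}_\loc$ would give $\partial_vF'''\in\Co^{\beta-1+\varepsilon}_\loc$.

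The main obstacle is making the asymptotic for $\partial_vF'''(u,v)-\partial_vF'''(0,v)$ rigorous: the naive leading-order expansion of $\tilde\psi'$ has an $o$-remainder whose uniformity across the relevant $(u,v)$-regime is not automatic. The key is to bootstrap using the assumed $\Co^\mu$-regularity to extract a quantitative decay rate for $\eta(\xi)$ (and hence for $\tilde\psi'(\xi)-\text{leading}$) at infinity, which then controls the error when differencing in $u$.
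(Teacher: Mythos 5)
Your plan is correct in outline but takes a genuinely different route from the paper's proof. Both arguments begin identically: the spanning condition gives the transport equation $TF'''=0$, and the $C^1$-chart condition together with $f(0)=0$ forces $\partial_uF'''(0,0)=0$ and hence $\partial_vF'''(0,0)\neq0$ (your $\psi'(0)\neq0$). From there the paper stays at the level of $F'''$ itself: it computes the flow $e^{uT}$ explicitly, shows the map $(u,v)\mapsto g^u(v)$ (the $v$-component of $e^{uT}(0,v)$) is not $\Co^{\beta+}$ near $(0,0)$, and then, assuming $\partial_vF'''\in\Co^{(\beta-1)+}$, uses the PDE to upgrade to $F'''\in\Co^{\beta+}$, inverts $h=F'''(0,\cdot)$ within $\Co^{\beta+}$, and gets the contradiction from the identity $g^{-u}(v)=h^\Inv(F'''(u,v))$, since composition and inversion preserve $\Co^{\beta+}$ for exponents above $1$; this is uniform in $\beta$ and needs no quantitative error analysis. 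You instead work at the level of $\partial_vF'''$, using the explicit first integral $w=v^{1-\beta}+(\beta-1)u$ (the same explicit solvability the paper encodes in the flow), an asymptotic for $\partial_vF'''(u_0,h)-\partial_vF'''(0,h)$, and finite-difference characterizations of $\Co^\mu$; your cancellation of the $u$-independent nodes in $D_h(u_0)-D_h(0)$ and the final scaling contradiction are sound. The cost is exactly at the step you flag: the bound "$\eta$ is controlled at infinity" that you extract from the assumed regularity of $v\mapsto\partial_vF'''(0,v)$ gives $|\eta(\xi)|\lesssim\xi^{-\mu/(\beta-1)}$ directly only when you may take $\mu<1$, i.e. $\beta<2$; for $\beta\ge2$ you must subtract the Taylor polynomial of that restriction at $v=0$ (equivalently expand $\eta(\xi)$ in powers of $\xi^{-1/(\beta-1)}$) and verify that each polynomial correction contributes only $O(u_0h^{k+\beta-1})=o(u_0h^{\beta-1})$ to $\eta(w_{u_0})-\eta(w_0)$ — this does work, but it is absent from the sketch, and together with the $N$-th order difference machinery (where the stencil should be chosen with exactly one node in $\{v>0\}$, so the leading coefficient cannot vanish when $\beta-1$ is an integer) it makes your route substantially longer than the paper's. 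Two minor points to add if you write it up: $F'''=\tilde\psi(w)$ on $\{v>0\}$ requires the level sets of $w$ to be connected in $U$ (shrink $U$ to a small rectangle), and $\tilde\psi$ is $C^1$ there because $w$ is a submersion, which you use when differentiating.
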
Note that $\V$ has rank 1 so is automatically involutive.
\begin{proof}
By direct computation
\begin{equation*}
    e^{tT}(u,v)=\begin{cases}\big(u+t,\frac v{\left(1-(\beta-1)v^{\beta-1}t\right)^\frac1{\beta-1}}\big)&\text{for }v>0\text{ and }-\infty<t<(\beta-1)v^{1-\beta},
    \\
    (u+t,v)&\text{for }v\le0\text{ and }t\in\R.
    \end{cases}
\end{equation*}

First we show that for every $\delta\neq0$, the function
\begin{equation}\label{Eqn::ODE::DefGDelta}
    g^\delta:I_\delta\to\R,\quad g^\delta(v):=\begin{cases}\frac v{\left(1-(\beta-1)\delta v^{\beta-1}\right)^\frac1{\beta-1}},&v>0,\\v,&v\le0,\end{cases}\quad\text{where }I_\delta=\begin{cases}
    \left(-\infty,(\frac\delta{\beta-1})^\frac1{1-\beta}\right),&\delta>0,\\\R,&\delta<0,
    \end{cases}
\end{equation}
 is not $\Co^{\beta+}$ near $v=0$. Note that $e^{u T}(0,v)=(u,g^u(v))$ for every $u$.

Indeed by Taylor's expansion,
\begin{equation*}
    \frac v{\left(1-(\beta-1)\delta v^{\beta-1}\right)^\frac1{\beta-1}}=v+\delta v^\beta+\sum_{k=2}^\infty{-\frac1{\beta-1}\choose k}\left(-(\beta-1)\delta\right)^k\cdot v^{k\beta-k+1},\quad\text{converging for } 0\le v<(\tfrac\delta{\beta-1})^\frac1{1-\beta}.
\end{equation*}
Hence $g^\delta(v)=v+\max(0,v)^\beta+\sum_{k=2}^\infty{-\frac1{\beta-1}\choose k}\left(-(\beta-1)\delta\right)^k\cdot\max(0,v)^{k\beta-k+1}$ holds near $v=0$.

Now near $v=0$ $\sum_{k=2}^\infty{-\frac1{\beta-1}\choose k}\left(-(\beta-1)\delta\right)^k\cdot\max(0,v)^{k\beta-k+1}$ is a $\Co^{2\beta-1}_\loc\subsetneq\Co^{\beta+}_\loc$-function, but $\max(0,v)^\beta\notin\Co^{\beta+}$. So $g^\delta\notin\Co^{\beta+}$ near $v=0$.
Therefore the function $(u,v)\mapsto g^u(v)$ is not $\Co^{\beta+}$ near $(u,v)=(0,0)$.

By assumption $F^*\Coorvec x$ spans $\V|_U$. So $\V^\bot|_U=\Span dF''$, which means that we have a transport equation
\begin{equation}\label{Eqn::Final::RealFroisSharp::EqnofF'''}
    \textstyle TF'''(u,v)=0\quad\text{i.e.}\quad\partial_uF'''(u,v)=-f(v)\cdot\partial_vF'''(u,v),\quad\text{for all }(u,v)\in U.
\end{equation}

Since $F$ is a $C^1$-chart, we have $(F_u'''(0,0),F_v'''(0,0))\neq(0,0)$. By \eqref{Eqn::Final::RealFroisSharp::EqnofF'''} and $f(0)=0$, we have $F_u'''(0,0)=0$. Thus $F_v'''(0,0)\neq0$. 

Therefore $h(v):=F'''(0,v)$ is a $C^1$-function that has non-vanishing derivatives in a neighborhood of $v=0$. By the Inverse Function Theorem, on a smaller neighborhood of $v=0$, $h$ is a $C^1$-diffeomorphism onto its image.

\medskip
Suppose by contrast $F'''_v\in\Co^{(\beta-1)+}$ in a neighborhood of $(u,v)=(0,0)$. By \eqref{Eqn::Final::RealFroisSharp::EqnofF'''} since $f\in\Co^\beta_\loc\subset\Co^{(\beta-1)+}$, we have $F'''_u\in\Co^{(\beta-1)+}$ as well, which means $F'''\in\Co^{\beta+}$ near $(u,v)=(0,0)$. Since $h(v)=F'''(0,v)$, we have $h\in\Co^{\beta+}$ in the domain and by the Inverse Function Theorem $h^\Inv\in\Co^{\beta+}$.  

So $h^\Inv\circ F''(u,v)$ is a $\Co^{\beta+}$-function defined in a neighborhood of $(u,v)=(0,0)$. By \eqref{Eqn::Final::RealFroisSharp::EqnofF'''} and \eqref{Eqn::ODE::DefGDelta}, $F'''(u,v)=F'''(e^{-uT}(u,v))=F'''(0,g^{-u}(v))$ holds a neighborhood of $0$.
So $h^\Inv\circ F'''(u,v)=g^{-u}(v)$, contradicting to the fact that $(u,v)\mapsto g^u(v)$ is not $\Co^{\beta+}$ near $(0,0)$. This concludes that $F_v'''\notin\Co^{(\beta-1)+}$ near $(u,v)=(0,0)$.
Therefore $F'''\notin\Co^{\beta+}$ and we finish the proof.
\end{proof}

\begin{prop}\label{Prop::Final::Sharpdds}Endow $\R^3$ with standard coordinate system $(u,v,\theta)$. Let $\alpha>0$. We define $\Se\le\C T\R^3$ as
\begin{equation*}
    \Se:=\Span_\C Z,\quad Z:=\partial_u+ie^{f(\theta)}\partial_v\quad\text{where }f(\theta):=\max(0,\theta)^\alpha.
\end{equation*}

Suppose $F=(F'',F'''):U\subseteq\R^3_{u,v,\theta}\to\C^1_z\times\R^1_s$ is a $C^1$-coordinate chart near $0\in\R^3$ such that that $F(0)=(0,0)$ and $F^*\Coorvec z$ spans $\Se|_U$. Then both $F''\notin\Co^{\alpha+}$ and $F^*\Coorvec s\notin\Co^{(\alpha-1)+}$ near $(0,0,0)$.
\end{prop}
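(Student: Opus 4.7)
The strategy follows Proposition \ref{Prop::Final::RealFroisSharp} in spirit, augmented with a Malgrange-style holomorphic-in-$z$ reduction. Set $G(u,v,\theta):=(u-ie^{-f(\theta)}v,\theta)$; a direct computation gives $G^*\partial_z = \tfrac12(\partial_u + ie^{f(\theta)}\partial_v) = \tfrac12 Z$, so $G$ is a valid $\Co^\alpha$-chart representing $\Se$. Given an arbitrary $C^1$-chart $F=(F'',F''')$ with $F^*\partial_z$ spanning $\Se$, the transition $T:=G\circ F^\Inv$ must send $\Span\,\partial_z$ into $\Span\,\partial_z$ on the target side, forcing $\partial_{\bar z}T''=0$ and $\partial_z T'''=0$. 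Thus $T(z,s)=(T''(z,s),T'''(s))$ with $T''$ holomorphic in $z$ and $T'''$ a $C^1$-diffeomorphism; symmetrically $F\circ G^\Inv(z,t)=(S''(z,t),F'''(t))$ with $S''$ holomorphic in $z$, $C^1$ in $(z,s)$, and $\partial_z S''(0,0)\neq 0$ (by nondegeneracy of the Jacobian of $F$ at the origin). Equivalently,
\begin{equation*}
    F''(u,v,\theta) = S''(u-ie^{-f(\theta)}v,\,F'''(\theta)),\qquad F'''(u,v,\theta) = F'''(\theta).
\end{equation*}

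To prove $F''\notin\Co^{\alpha+}$, suppose for contradiction $F''\in\Co^{\alpha+\epsilon}$ with $\epsilon\in(0,1)$. Then $S''(u,s)=F''(u,0,s)\in\Co^{\alpha+\epsilon}$, and since $S''$ is holomorphic in $z$, Lemma \ref{Lem::Hold::NablaHarm} upgrades $\partial_z S''$ to $\Co^{\alpha+\epsilon}$, while $\partial_s S''\in\Co^{\alpha+\epsilon-1}$. The chain rule gives
\begin{equation*}
    \partial_\theta F''(u,v,\theta) = ie^{-f(\theta)}f'(\theta)v\cdot\partial_z S''(u-ie^{-f(\theta)}v,F'''(\theta)) + F'''_\theta(\theta)\cdot\partial_s S''(u-ie^{-f(\theta)}v,F'''(\theta)).
\end{equation*}
Comparing values at $(0,v,0)$ and $(0,v,\theta)$ for small $v,\theta>0$: the $\partial_s S''$-piece differs by $O(\theta^{\alpha+\epsilon-1})$ (Hölder continuity with argument-shift $O(\theta)$), while the first piece leads with $i\partial_z S''(0,0)\cdot\alpha\theta^{\alpha-1}v$. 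Dividing the excess by $\theta^{\alpha+\epsilon-1}$ and specializing to $v=\theta^{\epsilon/2}$ gives a ratio $\sim\theta^{-\epsilon/2}\to\infty$, contradicting $\partial_\theta F''\in\Co^{\alpha+\epsilon-1}$ near the origin. When $\alpha+\epsilon\geq 2$, the same scaling test is applied to $\partial_\theta^k F''$ with $k=\lfloor\alpha+\epsilon-1\rfloor$, replacing $f'$ by $f^{(k+1)}(\theta)\sim\theta^{\alpha-k-1}$.

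For $F^*\partial_s\notin\Co^{(\alpha-1)+}$, write $F^*\partial_s = A\partial_u + B\partial_v + C\partial_\theta$; the conditions $(F^*\partial_s)F'''=1$ and $(F^*\partial_s)F''=0$, combined with $F''_u=ie^{f(\theta)}F''_v$, give $C=1/F'''_\theta(\theta)$ (smooth) and, using $h(w,s):=\partial_s S''(w,s)/\partial_z S''(w,s)$, which is holomorphic in $w$,
\begin{equation*}
    B(u,v,\theta) = \frac{vf'(\theta)}{F'''_\theta(\theta)} + e^{f(\theta)}\,\im h\bigl(u-ie^{-f(\theta)}v,\,F'''(\theta)\bigr).
\end{equation*}
The same two-point test at $(0,v,0)$ versus $(0,v,\theta)$ isolates a leading $c_0 v\theta^{\alpha-1}$ from the first summand, while the $h$-correction contributes only $O(\theta^\alpha)$ from $e^{f(\theta)}-1$ plus a continuity term governed by the $w$-argument shift $|w(\theta)-w(0)|\leq Cv\theta^\alpha$, which is strictly sharper than $v\theta^{\alpha-1}$. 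The $v=\theta^{\epsilon/2}$ rate test then contradicts $B\in\Co^{(\alpha-1)+\epsilon}$, with the same iteration to $\partial_\theta^k B$ when $\alpha\geq 2$. The main obstacle is verifying that the holomorphic-in-$w$ correction coming from $\partial_s S''$ (respectively from $h$) cannot cancel the $f'(\theta)v$ (respectively $f^{(k+1)}(\theta)v$) contribution; this is ruled out by a regularity mismatch, since the correction absorbs its $\theta$-variation only through the substitution $w(\theta)=u-ie^{-f(\theta)}v$, whose $O(\theta^\alpha)$ movement is strictly smoother than the $O(\theta^{\alpha-1})$ delivered by $f'(\theta)$. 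In writing up the final argument I would first dispatch $\alpha\in(1,2)$ cleanly (only classical Hölder needed) and then indicate the routine iteration $\partial_\theta\mapsto\partial_\theta^k$ required for $\alpha\geq 2$.
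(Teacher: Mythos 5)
There is a genuine gap, and it sits at the heart of your scaling test: the two pieces of your chain-rule decomposition cancel exactly at the order you are trying to exploit. Write $S''(z,\theta):=F''(\re z,-e^{f(\theta)}\im z,\theta)$, which is holomorphic in $z$ because $\bar Z F''=0$, i.e. $F''_u=ie^{f}F''_v$. Your identity is
\begin{equation*}
    F''_\theta(u,v,\theta)=ie^{-f(\theta)}f'(\theta)v\,\partial_zS''(w,\theta)+\partial_\theta S''(w,\theta),\qquad w=u-ie^{-f(\theta)}v,
\end{equation*}
but $\partial_zS''=\tfrac12(F''_u+ie^fF''_v)=ie^fF''_v$, so the ``main'' term equals $-f'(\theta)v\,F''_v$, while differentiating $S''$ in $\theta$ at fixed $z$ gives $\partial_\theta S''(z,\theta)=-f'(\theta)e^{f}\im z\,F''_v+F''_\theta$, which at the test points ($\im w=-e^{-f}v$) contains $+f'(\theta)v\,F''_v$. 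The two singular contributions of size $v\theta^{\alpha-1}$ cancel identically, so your assertion that the $\partial_sS''$-piece varies only by $O(\theta^{\alpha+\epsilon-1})$ (and, in the second part, that the $h$-correction moves only through the $O(v\theta^\alpha)$ shift of $w$) is not merely unproven but generically false: the correction's direct $s$-variation off the real axis is of the same order $v\theta^{\alpha-1}$ as the term you want to isolate, precisely because $S''$ depends on $\theta$ through the factor $e^{-f(\theta)}$ multiplying $\im z$. Consequently the two-point test yields no contradiction; $\partial_\theta F''$ need not blow up at all, and the obstruction is carried by the ratio $F''_v/F''_u=-ie^{-f(\theta)}$ rather than by the $\theta$-derivative. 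Relatedly, you cannot invoke Lemma \ref{Lem::Hold::NablaHarm} to promote the regularity of the real-slice restriction $S''(u,s)=F''(u,0,s)$ to bounds on $\partial_zS''$ and $\partial_sS''$ at points with $\im z\neq0$: that lemma requires $\Co^\beta$ regularity on an open subset of $\C_z\times\R_s$, and the joint $\theta$-regularity of $S''$ off the real slice is capped by the $\Co^\alpha$ factor $e^{-f(\theta)}$, not by $\Co^{\alpha+\epsilon}$.

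Two further issues: the proposition is for every $\alpha>0$, but your argument only addresses $\alpha>1$ (for $0<\alpha\le1$ the model chart $G$ is not $C^1$, $f'(\theta)\sim\theta^{\alpha-1}$ is unbounded so $\partial_sS''$ need not exist, and ``contradicting $\partial_\theta F''\in\Co^{\alpha+\epsilon-1}$'' is vacuous once $\alpha+\epsilon\le1$, since that space contains all bounded continuous functions); and for the second claim you tacitly need regularity of $F''_\theta$ that the first claim shows can fail. The workable route — and the paper's — is different: use the Cauchy integral formula for the scaled $\bar\partial$-operator $\partial_u-ie^{f(\theta)}\partial_v$ to show that $F''\in\Co^\mu$ locally forces $F''\in\Co^{\infty,\mu-}_{(u,v),\theta}$, hence $F''_u,F''_v\in\Co^{\mu-\eps}$ uniformly in $\theta$; the first claim then follows at once from $F''_v/F''_u=-ie^{-f(\theta)}$ and $f\notin\Co^{\alpha+}$, and the second from the maximality of the exponent $\mu$ (which forces $F''_\theta\notin\Co^{(\mu-1)+}$) combined with the explicit formula \eqref{Eqn::Final::RealFroisSharp::EqnofF*dds} for $F^*\Coorvec{s}$.
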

By assumption $Z\in\Co^\alpha$, $\Se$ is involutive since it has rank 1, and $\Se+\bar\Se=\Span(\Coorvec u,\Coorvec v)$ is involutive as well. So $\Se$ is a $\Co^\alpha$-complex Frobenius structure.

\begin{proof}
By assumption $\Se+\bar\Se|_U=\Span(\partial_u,\partial_v)|_U=\Span(F^*\Coorvec z,F^*\Coorvec{\bar z})=(\Span F^*ds)^\bot=(\Span dF''')^\bot$, so $\partial_uF'''=\partial_vF'''=0$ which means $F'''(u,v,\theta)=F'''(\theta)$. So $dF'''=F'''_\theta d\theta$ and $F'''_\theta (\theta)$ is non-vanishing in a neighborhood of $\theta=0$.

Now $\Se^\bot|_U=(\Span Z)^\bot|_U=\Span(F^*d\bar z,F^*ds)=\Span (d\bar F'',dF''')$, so
\begin{equation*}
    Z\bar F''=\bar F''_u+ie^{f(\theta)}\bar F''_v=0,
    \quad\Rightarrow\quad F''_v=-ie^{-f(\theta)} F''_u,\quad\text{in }U.
\end{equation*}
Here we use $F''_u=\partial_u F''$, $F''_v=\partial_v F''$ and $F''_\theta=\partial_\theta F''$. So
\begin{equation}\label{Eqn::Final::Sharpdds::ClassicalExp1}
\begin{pmatrix}F^*dz\\F^*d\bar z\\F^*ds\end{pmatrix}=
\begin{pmatrix}
F''_u&-F''_u&F''_\theta\\
\bar F''_u&\bar F''_u&\bar F''_\theta\\
&&F'''_\theta 
\end{pmatrix}
\begin{pmatrix}du\\-ie^{-f(\theta)}dv\\d\theta\end{pmatrix}\ \Rightarrow\
\begin{pmatrix}
\partial_u\\ ie^{f(\theta)}\partial_v\\\partial_\theta
\end{pmatrix}=
\begin{pmatrix}
F''_u&\bar F''_u&\\
-F''_u&\bar F''_u\\
F''_\theta&\bar F''_\theta&F'''_\theta 
\end{pmatrix}
\begin{pmatrix}F^*\partial_z\\F^*\partial_{\bar z}\\F^*\partial_s\end{pmatrix}.
\end{equation}
Applying Cramer's rule for inverse matrices to the right hand equation of \eqref{Eqn::Final::Sharpdds::ClassicalExp1}, we have
\begin{equation}\label{Eqn::Final::RealFroisSharp::EqnofF*dds}
    F^*\Coorvec s=\frac1{F'''_\theta (\theta)}\bigg(\Coorvec\theta-\frac{\bar F''_\theta}{\bar F''_u}\Big(\Coorvec u+ie^{f(\theta)}\Coorvec v\Big)-\frac{F''_\theta}{F''_u}\Big(\Coorvec u-ie^{f(\theta)}\Coorvec v\Big)\bigg),\quad\text{in }U.
\end{equation}

In particular we see that $F''_u$ and $F''_v=-ie^{-f}F''_u$ are both non-vanishing in the domain $U$.

For fixed $\theta\in\R$,  $\partial_u+ie^{f(\theta)}\partial_v$ is scaled $\overline{\partial}$-operator on $\R^2_{u,v}$ that annihilates $F''(\cdot,\theta_0)$. Therefore by a scaling of the Cauchy Integral Formula, for every $\tilde V'\times V''\Subset U$, $(u,v)\in \tilde V'$, $\theta\in V''$ and $k\ge0$,
\begin{equation*}
    F''(u,v,\theta)=\frac1{2\pi i}\int_{\partial V'}\frac{F''(\xi,\eta,\theta)(d\xi+ie^{-f(\theta)}d\eta)}{(\xi-u)+ie^{-f(\theta)}(\eta-v)},\quad \frac{\partial^kF''(u,v,\theta)}{\partial u^k}=\frac{k!}{2\pi i}\int_{\partial V'}\frac{F''(\xi,\eta,\theta)(d\xi+ie^{-f(\theta)}d\eta)}{((\xi-u)+ie^{-f(\theta)}(\eta-v))^{k+1}}.
\end{equation*}

Since $F''\in C^0( \tilde V'\times V'';\C)$, for every precompact subset $ V'\Subset \tilde V'$ we have
\begin{equation*}
    \|\partial_u^kF''\|_{L^\infty(V'\times V'';\C^{2^k})}\le k!\dist(V',\partial \tilde V')^{-k-1}\|F''\|_{L^\infty( \tilde V'\times V'';\C)}<\infty,\quad\forall k\ge0.
\end{equation*}
Using $F_v''=-ie^{-f(\theta)}F_u''$, we see that $\|\nabla_{u,v}^kF''\|_{L^\infty( V'\times V'';\C^{2^k})}<\infty$ for every $V'\times V''\Subset U$, which means $F''\in \Co^\infty_{u,v}L^\infty_\theta(V',V'';\C)$.

Let $\mu>0$ be such that $F''\in\Co^\mu_\loc(U)$. Since $F''\in\Co^\infty_{u,v} L^\infty_\theta(V',V'';\C)$ for every $V'\times V''\Subset U$, we have:
\begin{enumerate}[parsep=-0.3ex,label=(7.$\mu$)]
    \item\label{Item::Final::Sharpdds::Claim} For every $\mu>0$, if $F''\in\Co^\mu_\loc(U)$, then $F''\in\Co^{\infty,\mu-}_{(u,v),s}(V',V'';\C)$ holds for all $V'\times V''\Subset U$.
\end{enumerate}

Now suppose by contrast that $F''\in\Co^{\alpha+}_\loc(U;\C)$. Say $F''\in\Co^{\alpha+2\eps}_\loc(U;\C)$ for some $\eps>0$. Take $\mu=\alpha+\eps$ in \ref{Item::Final::Sharpdds::Claim}, we have $F''_u,F''_v\in\Co^{\alpha+\eps}(V'\times V'';\C)$ for every precompact neighborhood $V'\times V''\Subset U$ of $0\in\R^2_{u,v}\times\R^1_\theta$.

Since $F''_u,F''_v$ are both non-vanishing, we have $-ie^{f(\theta)}=F''_v/F_u''\in\Co^{\alpha+\eps}$ near $(0,0,0)$. However $-ie^{f}\notin\Co^{\alpha+}$ near $\theta=0$ since $f\notin\Co^{\alpha+}$ near $\theta=0$. Contradiction!

This concludes the proof of $F''\notin\Co^{\alpha+}$ near $(0,0,0)$.

\medskip
To show $F^*\Coorvec s\notin\Co^{(\alpha-1)+}$ near $(0,0,0)$, we can assume $F'''\in\Co^{\alpha+}$, since otherwise $F'''_\theta \notin\Co^{(\alpha-1)+}$ in a neighborhood of $\theta=0$ and by \eqref{Eqn::Final::RealFroisSharp::EqnofF*dds} the $\Coorvec\theta$-component of $F^*\Coorvec s$ is not $\Co^{(\alpha-1)+}$ already.


Let $\mu>0$ be the largest number such that $F''\in\Co^{\mu-}$ near $(0,0,0)$. By assumption $F''\in C^1$ so $\mu\ge1$. Since $F''\notin\Co^{\alpha+}$ near $(0,0,0)$, we see that $\mu\le\alpha$.

By \ref{Item::Final::Sharpdds::Claim}, we know $F''_u,F''_v\in\Co^{\mu-\eps}$ near $(0,0,0)$ for every $\eps>0$, in particular $F''_u,F''_v\in\Co^{\mu-\frac12}$ near $(0,0,0)$.

We see that $F''_\theta\notin\Co^{(\mu-1)+}$ near $(0,0,0)$. Suppose otherwise, i.e. $F''_\theta\in\Co^{\mu-1+\eps}$ for some $0<\eps<\frac12$. Since $F''_u,F''_v\in\Co^{\mu-\frac12}$, we have $F''\in\Co^{\mu+\eps}$. This contradicts to the maximality of $\mu$.

Now we have $F''_\theta/F''_u\notin\Co^{(\mu-1)+}$ for such $\mu\ge1$, in particular $F''_\theta/F''_u\notin\Co^{(\alpha-1)+}$. Since $Z=\partial_u+ie^f\partial_v$ and $\bar Z=\partial_u-ie^f\partial_v$ are both $\Co^\alpha$-vector fields in $\R^3$ and we already assumed $F'''_\theta \in\Co^{(\alpha-1)+}$, using \eqref{Eqn::Final::RealFroisSharp::EqnofF*dds} again we conclude that $F^*\Coorvec s\notin\Co^{(\alpha-1)+}$ near $(0,0,0)$. This completes the whole proof.
\end{proof}

\subsection{Sharpness of $F^*\Coorvec z$: a $\bar\partial$-equation with parameter}\label{Section::Sharpddz}

In this subsection we will construct a rank-1 complex Frobenius structure $\Se$ on $\C^1\times\R^1$, such that $F^*\Coorvec z\notin\Co^\alpha$ whenever $F$ is a coordinate chart representing $\Se$ near the origin. 

We sketch the idea to the proof of Theorem \ref{Thm::Intro::Sharpddz} here. It is adapted from \cite{Liding}, where we prove that there is a $C^k$-integrable almost complex structure that does not admit a $C^{k+1}$-coordinate chart.

Endow $\C^1\times\R^1$ with standard coordinate system $(w,\theta)$. We consider a subbundle $\Se\le\C T(\C^1\times\R^1)$ as
\begin{equation*}
    \Se:=\Span(\partial_w+a(w,\theta)\partial_{\bar w}),\quad\text{where }a\in\Co^\alpha(\C^1\times\R^1;\C)\text{ such that }\|a\|_{L^\infty}<1.
\end{equation*}

Clearly $\Se$ is involutive and $\Se+\bar\Se=\Span(\partial_w,\partial_{\bar w})=(\C T\C_w^1)\times\R^1_\theta$ since $\rank \Se=1$ and $\sup_{w,\theta}|a(w,\theta)|<1$. So $\Se$ is a $\Co^\alpha$-complex Frobenius structure.

\smallskip
\noindent\textit{Step 1} (Proposition \ref{Prop::Final::SharpddzRed}): We show that, the existence of a coordinate chart $F$ near $(0,0)$ such that $F^*\Coorvec z\in\Co^\alpha$ spans $\Se$, is equivalent to the existence of $f\in\Co^\alpha$ near $(w,\theta)=(0,0)$ that solves
\begin{equation}\label{Eqn::Final::SharpddzRed::aPDE}
    (\partial_w+a(w,\theta)\partial_{\bar w})f=-\partial_{\bar w}a,\quad\text{or equivalently}\quad f_w+(af)_{\bar w}-a_{\bar w}f=-a_{\bar w}.
\end{equation}

\smallskip
\noindent\textit{Step 2} (Proposition \ref{Prop::Final::Expa}): We construct an $a(w,\theta)$ such that $a$ is smooth away from $\{w=0\}=\{0\}\times\R^1_\theta$, and the complex Hilbert transform (also known as the Beurling transform) $\partial_w^{-1}\partial_{\bar w} a\notin\Co^\alpha$ near $w=0$.

\smallskip
\noindent\textit{Step 3} (Theorem \ref{Thm::Final::ProofExampleddz}): We show that there is no $f\in\Co^\alpha$ defined near $(w,\theta)=(0,0)$ that solves \eqref{Eqn::Final::SharpddzRed::aPDE}. This can be done by showing that:
\begin{itemize}[parsep=-0.3ex]
    \item For every $f$ that solves the homogeneous equation $(\partial_w+a\partial_{\bar w})f=0$ with $f(0,\theta)=0$, we must have $f\in\Co^\alpha$.
    \item There exists a non-$\Co^\alpha$ function $f$ such that $f(0,\theta)=0$ and $f$ solves \eqref{Eqn::Final::SharpddzRed::aPDE}.
\end{itemize}
Thus, by the superposition principle, every solution to \eqref{Eqn::Final::SharpddzRed::aPDE} cannot be $\Co^\alpha$. By Step 1 we complete the proof.

To make the result more general, we consider the case of mixed regularity, where $a\in\Co^{\alpha,\beta}_{w,\theta}$ for arbitrary $\alpha>\frac12$ and $\beta>0$ (recall again from Lemma \ref{Lem::Hold::CharMixHold} that $\Co^\alpha_{(w,\theta)}=\Co^{\alpha,\alpha}_{w,\theta}$). See Theorem \ref{Thm::Final::SharpSum}.

\medskip
We now start the proof. We use the notation $f_w=\partial_wf$ etc.

\begin{prop}\label{Prop::Final::SharpddzRed}
Let $\alpha>\frac12,\beta>0$, let $a\in\Co^{\alpha,\beta}(\C^1_w,\R^1_\theta;\C)$ be such that $\sup|a|<1$, and let $\Se=\Span(\partial_w+a(w,\theta)\partial_{\bar w})$ be a $\Co^{\min(\alpha,\beta)}$-complex Frobenius structure with rank $1$. The following \ref{Item::Final::SharpddzRed::Chart} and \ref{Item::Final::SharpddzRed::PDE} are equivalent:
\begin{enumerate}[parsep=-0.3ex,label=(\Alph*)]
    \item\label{Item::Final::SharpddzRed::Chart} There is a continuous map $F:U_1\times V_1\subseteq\C^1_w\times\R^1_\theta\to\C^1_z\times\R^1_s$ near $(0,0)$ such that
    \begin{enumerate}[nolistsep,label=(A.\arabic*)]
        \item\label{Item::Final::SharpddzRed::Chart::Reg} $F$ is homeomorphic to its image, $\nabla_zF\in C^0(U_1\times V_1;\C^{2\times 2})$ and $\nabla_w(F^\Inv)\in C^0(U_1\times V_1;\C^{2\times 2})$.
        \item\label{Item::Final::SharpddzRed::Chart::Span}  $F^*\Coorvec z\in\Co^{\alpha,\beta}(U_1,V_1;\C^3)$ and $F^*\Coorvec z$ spans $\Se|_{U_1\times V_1}$.
    \end{enumerate}
    \item\label{Item::Final::SharpddzRed::PDE} There is a function $f\in\Co^{\alpha,\beta}(U_2, V_2;\C)$ defined in a neighborhood $U_2\times V_2\subseteq\C^1_w\times\R^1_\theta$ of $(0,0)$ that solves \eqref{Eqn::Final::SharpddzRed::aPDE}.
\end{enumerate}
\end{prop}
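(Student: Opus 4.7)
The plan is to establish the equivalence by making explicit the algebraic link between the chart $F$ and the function $f$, going through an intermediate Beltrami-type identity satisfied by $F''$.

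\textbf{Direction \ref{Item::Final::SharpddzRed::Chart} $\Rightarrow$ \ref{Item::Final::SharpddzRed::PDE}.} Given such an $F=(F'',F''')$, I would first exploit the span condition in two dual forms. Since $F^*\Coorvec z$ spans $\Se=\Span L$ with $L=\partial_w+a\partial_{\bar w}$, write $F^*\Coorvec z=\lambda L$ with $\lambda\in\Co^{\alpha,\beta}$ (the $\partial_w$-coefficient of $F^*\Coorvec z$). Dually, $\Se^\bot=\Span(d\bar w-a\,dw,d\theta)=\Span(F^*d\bar z,F^*ds)=\Span(d\bar F'',dF''')$. Reading off components gives (i) $F'''$ depends on $\theta$ only, and (ii) the Beltrami relation $F''_{\bar w}=-\bar a F''_w$. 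Consequently $LF''=(1-|a|^2)F''_w$, so $\lambda=((1-|a|^2)F''_w)^{-1}$, which forces $F''_w\in\Co^{\alpha,\beta}$ nonvanishing near $(0,0)$ (using $|a|<1$ and Corollary~\ref{Cor::Hold::CramerMixed}). Now define
\begin{equation*}
   f:=\overline{\log F''_w},
\end{equation*}
on a small enough neighborhood (a single branch of $\log$ works since $F''_w$ is continuous and nonvanishing). Differentiating the Beltrami identity $F''_{\bar w}+\bar a F''_w=0$ in $w$ and dividing by $F''_w$ gives $(\log F''_w)_{\bar w}+\bar a(\log F''_w)_w=-\bar a_w$, i.e.\ $\bar L(\log F''_w)=-\bar a_w$; taking conjugates yields $Lf=-a_{\bar w}$, and $f\in\Co^{\alpha,\beta}$ by composition with the analytic branch of $\log$.

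\textbf{Direction \ref{Item::Final::SharpddzRed::PDE} $\Rightarrow$ \ref{Item::Final::SharpddzRed::Chart}.} Reverse the construction. Given $f\in\Co^{\alpha,\beta}$ solving $Lf=-a_{\bar w}$, seek $G\in\Co^{\alpha+1,\beta}_{\mathrm{loc}}$ satisfying the overdetermined system
\begin{equation*}
   G_w=e^{\bar f},\qquad G_{\bar w}=-\bar a\,e^{\bar f}.
\end{equation*}
The mixed-partial compatibility $\partial_{\bar w}(e^{\bar f})=\partial_w(-\bar a e^{\bar f})$ reduces precisely to $\bar f_{\bar w}+\bar a\bar f_w=-\bar a_w$, which is the complex conjugate of the given PDE. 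With $\theta$ treated as a parameter, one obtains $G$ locally by a standard line-integral along any contour from $0$ in $\C^1_w$ (e.g.\ $G(w,\theta)=\int_0^1(w\cdot e^{\bar f}(tw,\theta)-\bar w\cdot\bar a e^{\bar f}(tw,\theta))\,dt$); the regularity $G_w,G_{\bar w}\in\Co^{\alpha,\beta}$ is immediate since they are prescribed by the right-hand sides. Set $F(w,\theta):=(G(w,\theta),\theta)$. Since $G_w(0,0)=e^{\overline{f(0,0)}}\neq 0$ and the Jacobian determinant of the real representation of $w\mapsto G(w,\theta)$ equals $|G_w|^2(1-|a|^2)>0$ at the origin, the Inverse Function Theorem yields a local $C^1$-homeomorphism with $\nabla_w F^\Inv\in C^0$. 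Finally $F^*\Coorvec z=((1-|a|^2)G_w)^{-1}L\in\Co^{\alpha,\beta}$ and spans $\Se$ by construction.

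\textbf{Main obstacle.} The substantive point is the regularity bookkeeping: in direction \ref{Item::Final::SharpddzRed::Chart}$\Rightarrow$\ref{Item::Final::SharpddzRed::PDE} one must verify that the $\Co^{\alpha,\beta}$-regularity of the vector field $F^*\Coorvec z$ transfers to its coefficient $\lambda$, and then through the algebraic identity $F''_w=((1-|a|^2)\lambda)^{-1}$ to $F''_w$; this uses boundedness of inversion in $\Co^{\alpha,\beta}$ (Corollary~\ref{Cor::Hold::CramerMixed} with $\eta=0$), together with $\sup|a|<1$ to keep $1-|a|^2$ bounded away from $0$. In direction \ref{Item::Final::SharpddzRed::PDE}$\Rightarrow$\ref{Item::Final::SharpddzRed::Chart}, the delicate point is that the line-integral construction of $G$ requires only the weak regularity $f\in\Co^{\alpha,\beta}$, and one must check that the compatibility identity holds in the distributional sense sufficient to make the integral path-independent; this is automatic here because $a\in\Co^{\alpha,\beta}\subset C^1_{\mathrm{loc}}$ in the relevant regime $\alpha>1/2$ is enough to make sense of $a_{\bar w}$ and of the PDE pointwise a.e., and the path-independence reduces to Stokes on a small disc.
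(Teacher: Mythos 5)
Your proposal is correct and follows essentially the same route as the paper's proof: duality gives the Beltrami relation $F''_{\bar w}=-\bar a F''_w$, the Cramer/coefficient identity $F^*\Coorvec z=\big((1-|a|^2)F''_w\big)^{-1}(\partial_w+a\partial_{\bar w})$ transfers the $\Co^{\alpha,\beta}$-regularity to $F''_w$, $f$ is a (conjugated) logarithm of $F''_w$, and the converse integrates the closed form built from $e^{f}$ and applies the inverse function theorem with parameter. The only differences are cosmetic (the paper normalizes by $F''_w(0,\theta)$ and uses the log power series instead of a branch of $\log$, and handles the parameter-dependent inversion via its Lemma \ref{Lem::Hold::CompofMixHold} \ref{Item::Hold::CompofMixHold::InvFun}), so no further changes are needed.
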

The results are also true for $\alpha$ or $\beta=\infty$, without any change of the proof.

For the application to Theorem \ref{Thm::Intro::Sharpddz}, we only need the contraposition of \ref{Item::Final::SharpddzRed::Chart} $\Rightarrow$ \ref{Item::Final::SharpddzRed::PDE}.

\begin{proof}
\ref{Item::Final::SharpddzRed::Chart} $\Rightarrow$ \ref{Item::Final::SharpddzRed::PDE}: Write $F=:(F',F'')$ and $\Phi=(\Phi',\Phi''):=F^\Inv:F(U_1\times V_1)\subseteq\C^1_z\times\R^1_s\to \C^1_w\times\R^1_\theta$. By assumption $\nabla_z\Phi=(\nabla_z\Phi',\nabla_z\Phi'')\in C^0(F(U_1\times V_1);\C^{2\times 2})$, so $F^*\Coorvec z=\frac{\partial(\re\Phi',\im\Phi',\Phi'')}{\partial z}\circ F$ and $F^*\Coorvec {\bar z}=\frac{\partial(\re\Phi',\im\Phi',\Phi'')}{\partial\bar z}\circ F$ are well-defined continuous vector fields on $\R^3_{(\re w,\im w,\theta)}$.

By assumption $(\Se+\bar\Se)|_{U_1\times V_1}=\Span(\partial_w,\partial_{\bar w})$ is spanned by $F^*\Coorvec{z}$ and $F^*\Coorvec z$, so $\partial_wF''=\partial_{\bar w}F''=0$ in the sense of distributions, which means $F''(w,\theta)=F''(\theta)$.

By assumption $d\bar F'=F^*d\bar z$ is continuous in $w$ and is annihilated by $\Span F^*\Coorvec z=\Span(\partial_w+a\partial_{\bar w})$. Therefore
\begin{equation}\label{Eqn::Final::SharpddzRed::Exp1}
    (\partial_w+a\partial_{\bar w})\bar F'=\bar F'_w+a\bar F'_{\bar w}=0,\quad\text{i.e.}\quad F'_{\bar w}=-\bar aF'_w.
\end{equation}

So for the transition matrix (cf. \eqref{Eqn::Final::Sharpdds::ClassicalExp1}) we have
\begin{equation}\label{Eqn::Final::SharpddzRed::Exp2}
\begin{pmatrix}F^*dz\\F^*d\bar z\\F^*ds\end{pmatrix}=
\begin{pmatrix}
F'_w&-\bar aF'_w&F'_\theta\\
-a\bar F'_{\bar w}&\bar F'_{\bar w}&\bar F'_\theta\\
&&F''_\theta
\end{pmatrix}
\begin{pmatrix}dw\\d\bar w\\d\theta\end{pmatrix}\quad \Rightarrow\quad
\begin{pmatrix}
\partial_w\\ \partial_{\bar w}
\end{pmatrix}=
\begin{pmatrix}
F'_w&-a\bar F'_{\bar w}\\
-\bar aF'_w&\bar F'_{\bar w}
\end{pmatrix}
\begin{pmatrix}F^*\partial_z\\F^*\partial_{\bar z}\end{pmatrix}.
\end{equation}

Applying the Cramer's rule of inverse matrices to the right hand matrix in \eqref{Eqn::Final::SharpddzRed::Exp2}, we have 
\begin{equation}\label{Eqn::Final::SharpddzRed::EqnF*ddz}
F^*\Coorvec z=\frac{\bar F'_{\bar w}}{(1-|a|^2)|F'_w|^2}(\partial_w+a\partial_{\bar w})=\frac{\partial_w+a\partial_{\bar w}}{(1-|a|^2)F'_w}\quad\text{in}\quad U_1\times V_1.
\end{equation}
Since $F^*\Coorvec z\in\Co^{\alpha,\beta}(U_1, V_1;\C^3)$ and $a\in\Co^{\alpha,\beta}(U_1,V_1;\C)$, we see that $F'_w\in\Co^{\alpha,\beta}_\loc(U_1,V_1;\C)$ and $F'_w\not\equiv0$ in $U_1$.

Take $U_2\times V_2\subseteq U_1\times V_1$ as a neighborhood of $(0,0)$ such that $\sup_{(w,\theta)\in U_2\times V_2}|\frac{F'_w(w,\theta)}{F'_w(0,\theta)}-1|<1$. 
We define 
$$f(w,\theta):=\log\frac{\bar F'_{\bar w}(w,\theta)}{\bar F'_{\bar w}(0,\theta)}=\sum_{n=1}^\infty\frac{(-1)^{n-1}}n\Big(\frac{\bar F'_{\bar w}(w,\theta)}{\bar F'_{\bar w}(0,\theta)}-1\Big)^n.$$

Since $\big[(w,\theta)\mapsto \frac{F'_w(w,\theta)}{F'_w(0,\theta)}\big]\in\Co^{\alpha,\beta}(U_2,V_2;\C)$, we see that $f\in\Co^{\alpha,\beta}(U_2,V_2;\C)$ as well. So \eqref{Eqn::Final::SharpddzRed::aPDE} follows:
\begin{equation}\label{Eqn::Final::SharpddzRed::PfTmp1}
    f_w+af_{\bar w}=\frac{\bar F'_{w\bar w}}{\bar F'_{\bar w}}+\frac{a\bar F'_{\bar w\bar w}}{\bar F'_{\bar w}}=\frac{(\bar F'_w+a\bar F'_{\bar w})_{\bar w}}{\bar F'_{\bar w}}-\frac{a_{\bar w}\bar F'_{\bar w}}{\bar F'_{\bar w}}=-a_{\bar w}.
\end{equation}


\noindent\ref{Item::Final::SharpddzRed::PDE} $\Rightarrow$ \ref{Item::Final::SharpddzRed::Chart}: This is the reverse of the above argument. By shrinking $U_2\times V_2$ we assume $U_2$ and $V_2$ are both convex neighborhoods.

Suppose $f\in\Co^{\alpha,\beta}(U_2,V_2;\C)$ is a solution to \eqref{Eqn::Final::SharpddzRed::aPDE}. Take $\mu:=e^{f}$, we know $\mu\in\Co^{\alpha,\beta}(U_2,V_2;\C)$ satisfies
\begin{equation}\label{Eqn::Final::SharpddzRed::PfTmp2}
    (a\mu)_{\bar w}=a_{\bar w}e^f+af_{\bar w}e^f=-f_we^f=-\mu_w,\quad\text{in}\quad U_2\times V_2.
\end{equation}
Therefore $\mu(\cdot,\theta)(d\bar w-a(\cdot,\theta)dw)$ is a closed 1-form in $U_2\subseteq\C^1$, for each $\theta\in\R$. We define $F':U_2\times V_2\to\C$ as 
\begin{equation}\label{Eqn::Final::SharpddzRed::DefF'}
    F'(w,\theta):=\int_0^1(w\cdot\bar \mu(tw,\theta)+\bar w\cdot\bar a(tw,\theta)\bar\mu(tw,\theta))dt.
\end{equation}

We see that $F'\in\Co^{\alpha,\beta}_\loc(U_1,V_1;\C)$ and satisfies $d\bar F'=\mu d\bar w-a\mu dw+\bar F'_\theta d\theta$. In other words
\begin{equation*}
    \nabla_w(F',\bar F')=\begin{pmatrix}F'_w&\bar F'_w
    \\ F'_{\bar w}&\bar F'_{\bar w}
    \end{pmatrix}=\begin{pmatrix}\bar \mu&-a\mu
    \\ -\bar a\bar\mu&\mu
    \end{pmatrix}\quad\Rightarrow\quad \det\big(\nabla_w(F',\bar F')\big)=|\mu|^2(1-|a|^2)>0.
\end{equation*}

Since $a,\mu\in \Co^{\alpha,\beta}$, we see that $F'\in\Co^{\alpha+1,\beta}$ and the matrix function $\nabla_w(F',\bar F')\in\Co^{\alpha,\beta}$ is invertible at $(0,0)$. Applying Lemma \ref{Lem::Hold::CompofMixHold} \ref{Item::Hold::CompofMixHold::InvFun} to $(\re F',\im F')\in\Co^{\alpha+1,\beta}(U_2,V_2;\R^2)$, we can find a neighborhood $U_1\times V_1\subseteq U_2\times V_2$ such that $F'(\cdot,\theta):U_1\to\C^1_z$ is injective for each $\theta\in V_1$ and its inverse map $\Phi'(z,s)=F'(\cdot,s)^\Inv(z)$ is $\Co^{\alpha+1,\beta}_{z,s}$ near $(F'(0,0),0)=(0,0)$.

Take $F(w,\theta):=(F'(w,\theta),\theta)$ and $\Phi:=F^\Inv$, we see that $\Phi(z,s)=(\Phi'(z,s),s)$. Therefore $F:U_1\times V_1\to\C^1\times\R^1$ is homeomorphic to its image (since $F,\Phi$ are both $\Co^{\alpha+1,\beta}\subset C^0$), and $\nabla_z(F^\Inv)=(\nabla_z\Phi',0)$ is a continuous map on $F(U_1\times V_1)$. This completes the proof of \ref{Item::Final::SharpddzRed::Chart::Reg}.

Now for each $\theta\in V_1$, $F'(\cdot,\theta):U_1\to\C_z$ is a $\Co^{\alpha+1}$-coordinate chart satisfying $d(\bar F'(\cdot,\theta))=\mu(\cdot,\theta)d\bar w-a\mu(\cdot,\theta)dw$ in $U_1$, so $(\partial_w+a\partial_{\bar w})F'=0$, which means $F'(\cdot,\theta)^*d\bar z=d\bar F'(\cdot,\theta)$ spans $\Se|_{U_1\times\{\theta\}}=\Span(\partial_w+a(\cdot,\theta)\partial_{\bar w})$. Taking a dual we get $\Se|_{U_1\times\{\theta\}}=\Span F'(\cdot,\theta)^*\Coorvec z$ for each $\theta\in V_1$.

Now $F(w,\theta)=(F'(w,\theta),\theta)$ and $\Phi(z,s)=(\Phi'(z,s),s)$, so $((\nabla_z\Phi)\circ F)(w,\theta)=((\nabla_z\Phi'(\cdot,\theta))\circ F'(\cdot,\theta))(w)$ hence $F^*\Coorvec z|_{(w,\theta)}=F'(\cdot,\theta)^*\Coorvec z|_w$ spans $\Se_{(w,\theta)}$ for each $w\in U_1$ and $\theta\in V_2$, finishing the proof of \ref{Item::Final::SharpddzRed::Chart::Span}.
\end{proof}
\begin{remark}We do not know how to relax the assumption $\alpha>\frac12$ to $\alpha>0$. The difficulty comes from \eqref{Eqn::Final::SharpddzRed::PfTmp1} and \eqref{Eqn::Final::SharpddzRed::PfTmp2} in the proof, where the intermediate calculations need a product of a $\Co^{\alpha-1}$-function and a $\Co^\alpha$-function. On the other hand, the PDE \eqref{Eqn::Final::SharpddzRed::aPDE} can still be defined when $0<\alpha\le\frac12$, by requiring $a$ to stay in the space, for example,  $\Co^{\alpha,\beta}(\C^1_w,\R^1_\theta;\C)\cap L^\infty(\R^1_\theta;W^{1,2}(\C^1_w;\C))$ (see Definition \ref{Defn::Final::HolSobSpace} and Proposition \ref{Prop::Final::Expa} \ref{Item::Final::Expa::Reg1}).
\end{remark}
For Step 2, the construction of $a(w,\theta)$ requires the following.


\begin{defn}\label{Defn::Final::HolSobSpace}
Let $k\ge0$ be an integer, $\beta>0$,  $1\le p<\infty$. Let $U\subseteq\R^n_x$ be an open set and let $(\psi_j)_{j=0}^\infty$ be a dyadic resolution of $\R^q_s$. We denote by $\Co^\beta_s(\R^q;W^{k,p}_x(U))$ the space of all distributions $f\in\D'(\R^q_s\times U_x)$ such that $\psi_j\ast_sf(\cdot,s)\in W^{k,p}(U)$ for every $s\in\R$ (recall Notation \ref{Note::Hold::ConvVar} for $\psi_j\ast_sf$), with
\begin{equation*}
    \|f\|_{\Co^\beta(\R^q;W^{k,p}(U))}=\|f\|_{\Co^\beta(W^{k,p})}:=\sup_{j\ge0,s\in\R^q}2^{j\beta}\|(\psi_j\ast_sf)(\cdot,s)\|_{W^{k,p}(U)}<\infty.
\end{equation*}

We denote by $L^\infty(\R^q;W^{k,p}(U))$ the space of all $L^1_\loc$-function $f:\R^q\times U\to\R$ such that $f(\cdot,s)\in W^{k,p}(U)$ for almost every $s\in\R^q$ with $\|f\|_{L^\infty(W^{k,p})}:=\essup_{s\in\R^q}\|f(\cdot,s)\|_{W^{k,p}(U)}<\infty$.
\end{defn}
One can check that $\Co^\beta(\R^q;W^{k,p}(U))$ is the same as putting $X=W^{k,p}(U)$ in Definition \ref{Defn::Intro::DefofHold} (see also Remark \ref{Rmk::Hold::RmkforBiHold}). In particular the space does not depend on the choice of $(\psi_j)_{j=0}^\infty$. We leave their proof to readers since we are allowed to fix a choice of $(\psi_j)_j$ in the later discussion.

When $U$ is a smooth domain, the space $L^\infty(\R^q;W^{k,p}(U))$ coincides with the classical vector-valued $L^\infty$-space (cf. Remark \ref{Rmk::Hold::RmkforBiHold} \ref{Item::Hold::RmkforBiHold::notVectMeas}) since $W^{k,p}(U)$ is a separable space.

\begin{lem}\label{Lem::Final::HolSobLem}
Let $\beta>0$ and $1<p<\infty$. Let $U\subset\R^n$ be a bounded smooth domain.
\begin{enumerate}[parsep=-0.3ex,label=(\roman*)]
    \item\label{Item::Final::HolSobLem::Prod1} The product map $[(f,g)\mapsto fg]:\Co^\beta(\R^q;L^p(U))\times L^\infty\Co^\beta(U,\R^q)\to\Co^\beta(\R^q;L^p(U))$ is bounded bilinear.
    \item\label{Item::Final::HolSobLem::Prod2} When $\beta<1$, the product map $[(f,g)\mapsto fg]:(L^\infty\cap W^{1,2})(U)\times\Co^{\beta-1}(U)\to\Co^{\beta-1}(U)$ is bounded bilinear.
\end{enumerate}
\end{lem}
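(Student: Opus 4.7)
The plan is to prove both parts via Bony's paraproduct decomposition, separating the arguments according to which variable carries the regularity. For part (i), all the relevant regularity lives in the $s$-variable, so I would fix a dyadic resolution $(\psi_k)_{k=0}^\infty$ of $\R^q$ and decompose $fg = T^s_f g + T^s_g f + R^s(f,g)$ as in Lemma \ref{Lem::Hold::LemParaProd}, but with the convolutions $\psi_k \ast_s$ taken only in $s$. Each building block $(S^s_{k-3} f)(\Delta^s_k g)$ lies in $L^p(U)$ pointwise in $s$ by H\"older's inequality in $x$ (using $L^p \cdot L^\infty \subset L^p$), and its $s$-frequency support is contained in $\{|\tau| \lesssim 2^k\}$. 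The three pieces then satisfy $L^p(U)$-valued analogues of the scalar paraproduct estimates from the proof of Proposition \ref{Prop::Hold::Mult}, yielding
\[ \|\psi_j \ast_s (fg)(\cdot,s)\|_{L^p(U)} \lesssim 2^{-j\beta}\|f\|_{\Co^\beta(\R^q; L^p(U))}\|g\|_{L^\infty\Co^\beta(U,\R^q)}, \]
after summing the geometric tails, which converge because $\beta > 0$.

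For part (ii), I would use Bony's paraproduct $fg = T_f g + T_g f + R(f,g)$ in the $x$-variable. The paraproducts $T_f g$ and $T_g f$ are controlled in $\Co^{\beta-1}(U)$ by $\|f\|_{L^\infty}\|g\|_{\Co^{\beta-1}}$ using only the $L^\infty$-bound on $f$ and the standard Littlewood--Paley estimate $\|\Delta_k g\|_{L^\infty} \lesssim 2^{k(1-\beta)}\|g\|_{\Co^{\beta-1}}$. The nontrivial step is the remainder $R(f,g) = \sum_{|k-k'|\leq 2} \Delta_k f\, \Delta_{k'} g$: the naive bound $\|\Delta_k f\,\Delta_{k'} g\|_{L^\infty} \lesssim \|f\|_{L^\infty} \cdot 2^{k(1-\beta)}\|g\|_{\Co^{\beta-1}}$ diverges as $k \to \infty$ since $1-\beta > 0$. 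Here the $W^{1,2}$-hypothesis on $f$ enters through Bernstein's inequality: combining $\|\Delta_l h\|_{L^\infty} \lesssim 2^{ln/2}\|\Delta_l h\|_{L^2}$ (applied to $h = \Delta_k f\,\Delta_{k'} g$, which has Fourier support in $\{|\xi| \lesssim 2^{k+2}\}$) with $\|\Delta_k f\|_{L^2} \lesssim 2^{-k}\|f\|_{W^{1,2}}$ and the $L^\infty$-bound on $\Delta_{k'} g$ yields
\[ \|\Delta_l(\Delta_k f\,\Delta_{k'} g)\|_{L^\infty} \lesssim 2^{ln/2 - k\beta}\|f\|_{W^{1,2}}\|g\|_{\Co^{\beta-1}}. \]
Summing over the relevant indices $k \geq l-4$ and multiplying by $2^{l(\beta-1)}$ gives a bound of order $2^{l(n/2-1)}$, which is uniformly finite precisely when $n \leq 2$---this matches the intended application with $n = 2$ (since $w \in \C^1 = \R^2$ in Section \ref{Section::Sharpddz}).

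The main obstacle is this remainder estimate for $R(f,g)$: the dimension constraint is exactly saturated, and the $L^2$-gain from $\nabla f \in L^2$ must be matched precisely against the $L^\infty$-loss in $\Delta_{k'} g$ through Bernstein's inequality, so there is no slack to work with. A subsidiary technical step is to reduce the estimates to the case $U = \R^n$ via a common extension operator preserving $L^\infty$, $W^{1,2}$, and $\Co^{\beta-1}$ simultaneously (analogous to Lemma \ref{Lem::Hold::CommuteExt}), so that the Littlewood--Paley machinery on $\R^n$ applies without boundary corrections.
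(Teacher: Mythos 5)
Your proposal is correct and follows the paper's strategy. For part (i) the paper does exactly what you describe: the $s$-variable paraproduct of Lemma \ref{Lem::Hold::LemParaProd} with the blocks estimated through $L^p(U)\cdot L^\infty(U)\subset L^p(U)$ and geometric sums converging because $\beta>0$. For part (ii) the paper also uses Bony's decomposition, but handles the three pieces by citation: $T:L^\infty\times\Co^{\beta-1}\to\Co^{\beta-1}$ and $T:\Co^{\beta-1}\times L^\infty\to\Co^{\beta-1}$ from Chemin, and for the remainder $R:W^{1,2}(\R^2)\times\Co^{\beta-1}(\R^2)\to\Bs_{22}^\beta(\R^2)=H^\beta(\R^2)$ followed by the Sobolev embedding $H^\beta(\R^2)\hookrightarrow\Co^{\beta-1}(\R^2)$; your Bernstein computation ($L^2$-gain of $2^{-k}$ from $\nabla f\in L^2$ traded against a $2^{ln/2}$ loss to return to $L^\infty$-based spaces) is precisely the hand-done version of that citation, so the two arguments have identical content, yours being self-contained and the paper's shorter. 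The dimension constraint $n\le 2$ that you flag is likewise present in the paper: its proof of (ii) is written on $\R^2$ (the embedding $H^\beta\hookrightarrow\Co^{\beta-1}$ needs $n=2$), and only $U=\B^2$ is used in Section \ref{Section::Sharpddz}, so this is an imprecision in the lemma's statement rather than a defect of your argument; your remark about extension operators is also fine, noting only that $f$ and $g$ may be extended by different operators (an $L^\infty\cap W^{1,2}$-bounded one for $f$, a $\Co^{\beta-1}$-bounded one for $g$), so no simultaneous extension is actually required.
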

\begin{proof}
    \ref{Item::Final::HolSobLem::Prod1} can follow from the boundedness of the product map $L^p(U)\times L^\infty(U)\to L^p(U)$ and that $\Co^\beta(\R^q)$ is closed under multiplications. More precisely, by Lemma \ref{Lem::Hold::LemParaProd} we have the following uniformly in $l\ge0$ and $s\in\R^q$:
    \begin{align*}
        &\|(\psi_l\ast_s(fg))(\cdot,s)\|_{L^p(U)}\le\|\psi_l\|_{L^1}\big(\sum_{j=l-2}^{l+2}\sum_{j'=0}^{j-3}+\sum_{j=l-2}^{l+2}\sum_{j'=0}^{j-3}+\sum_{\substack{j,j'\ge l-3\\|j-j'|\le2}}\big)\|(\psi_j\ast_sf)(\cdot,s)\|_{L^p(U)}\|(\psi_{j'}\ast_sg)(\cdot,s)\|_{L^\infty(U)}
        \\
        \lesssim&_{\psi}\|f\|_{\Co^\beta_s(L^p_x)}\|g\|_{L^\infty_x\Co^\beta_s}\big(\sum_{|j-l|\ge 2}\sum_{j'\ge j-3}+\sum_{|j'-l|\ge 2}\sum_{j\ge j'-3}+\sum_{\substack{j,j'\ge l-3\\|j-j'|\le2}}\big)2^{-j\beta}2^{-j'\beta}\lesssim_\beta\|f\|_{\Co^\beta_s(L^p_x)}\|g\|_{L^\infty_x\Co^\beta_s}2^{-l\beta}.
    \end{align*}
    
    Proof of \ref{Item::Final::HolSobLem::Prod2} follows from \cite[Chapter 2.8.1]{Chemin}, where in the reference we use Bony's decomposition $uv=T(u,v)+T(v,u)+R(u,v)$ for distributions $u,v$ on $\R^2$ (see \cite[(2.41)]{Chemin} and Remark \ref{Rmk::Hold::LemParaProd}). By \cite[Theorem 2.82]{Chemin} we have $T:L^\infty(\R^2)\times\Co^{\beta-1}(\R^2)\to\Co^{\beta-1}(\R^2)$ and $T:\Co^{\beta-1}(\R^2)\times L^\infty(\R^2)\to\Co^{\beta-1}(\R^2)$. By \cite[Theorem 2.85]{Chemin} we have $R:W^{1,2}(\R^2)\times\Co^{\beta-1}(\R^2)\to\Bs_{22}^\beta(\R^2)$. Here $\Bs_{22}^\beta(\R^2)=H^\beta(\R^2)$ is the fractional $L^2$-Sobolev space (see for example \cite[Definition 2.68]{Chemin} or \cite[Definition 2.3.1/2(i)]{Triebel1}).
    We have Sobolev embedding $\Bs_{22}^{\beta}(\R^2)\hookrightarrow\Co^{\beta-1}(\R^2)$, whose proof can be found in for example \cite[Chapter 2.7.1]{Triebel1}\footnote{We recall the correspondence of Besov space $\Co^\beta(\R^q)=\Bs_{\infty\infty}^\beta(\R^q)$ and Triebel-Lizorkin space $L^p(\R^n)=\mathscr F_{p2}^0(\R^n)$ from \cite[Theorems 2.5.7 and 2.5.12]{Triebel1}.}. Thus $R:W^{1,2}(\R^2)\times\Co^{\beta-1}(\R^2)\to\Co^{\beta-1}(\R^2)$ is bounded.
    
    Therefore $[(u,v)\mapsto uv]:(L^\infty\cap W^{1,2})(\R^2)\times\Co^{\beta-1}(\R^2)\to\Co^{\beta-1}(\R^2)$ is bilinearly bounded. Taking restrictions to $U$ we complete the proof.
\end{proof}

\begin{lem}\label{Lem::Final::InvDBar}
Let $\Pc_0$ and $\Pc$ be the zero Dirichlet boundary solution operators in Lemma \ref{Lem::Hold::LapInvBdd} with $U=\B^2_w\subset\C^1$ and $V=\R^1_\theta$ in the lemma. Define $\Tc_0:=4\partial_{\bar w}\Pc_0$ and $\Tc:=4\partial_{\bar w}\Pc$. Then 
\begin{enumerate}[parsep=-0.3ex,label=(\roman*)]
    \item\label{Item::Final::InvDBar::Bdd} $\partial_w\Tc f=f$ for all $f\in L^\infty(\B^2,\R^1;\C)$ and $\Tc$ has the following boundedness: for $\alpha>0$, $1<p<\infty$ and $\Xs\in\{L^\infty,\Co^\gamma:\gamma>0\}$,
\begin{gather}\label{Eqn::Final::InvDBar::Bdd}
    \Tc:\Co^{\alpha-1}\Xs(\B^2,\R^1;\C)\to\Co^\alpha\Xs(\B^2,\R^1;\C)\quad\text{and}\quad\Tc:\Co^\alpha(\R^1;L^p(\B^2))\to\Co^\alpha(\R^1;W^{1,p}(\B^2)).
\end{gather}
    \item\label{Item::Final::InvDBar::Holo} Moreover, if $f\in L^\infty\Co^\beta(\B^2,\R^1;\C)$ satisfies $f|_{\{|w|>\frac12\}}\equiv0$, then
\begin{equation}\label{Eqn::Final::InvDBar::Holo}
    f-\Tc\partial_w f\in\Co^\infty\Co^\beta(\B^2,\R^1;\C).
\end{equation}
\end{enumerate}

\end{lem}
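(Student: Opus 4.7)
My plan is to handle the two parts separately, with part (i) being essentially a transcription of known elliptic regularity facts into the present notation, and part (ii) following from the harmonic-in-$w$ identity obtained by applying (i).

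For part (i), the identity $\partial_w \Tc f = f$ is immediate from the complex form of the Laplacian: since $\partial_w$ and $\partial_{\bar w}$ commute and $\Delta = 4\partial_w\partial_{\bar w}$ on $\C^1$, I get $\partial_w\Tc f = 4\partial_w\partial_{\bar w}\Pc f = \Delta\Pc f = f$ by the defining property of $\Pc$ as the zero-boundary Dirichlet inverse from Lemma \ref{Lem::Hold::LapInvBdd}. For the H\"older–Zygmund bound, Lemma \ref{Lem::Hold::LapInvBdd} gives $\Pc\colon \Co^{\alpha-1}\Xs \to \Co^{\alpha+1}\Xs$, and since the first-order derivative $\partial_{\bar w}$ drops one degree of Zygmund regularity, composing yields the claimed $\Tc\colon\Co^{\alpha-1}\Xs\to\Co^\alpha\Xs$. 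For the Sobolev bound, I will use that $\Tc$ acts only in $w$ and hence commutes with the $\theta$-variable dyadic convolutions $\psi_k\ast_\theta$, so that the parametric norm
$$\|\Tc f\|_{\Co^\alpha(\R^1;W^{1,p}(\B^2))} = \sup_{k\ge 0,\,\theta\in\R^1}2^{k\alpha}\|\Tc_0(\psi_k\ast_\theta f)(\cdot,\theta)\|_{W^{1,p}(\B^2)}$$
reduces the claim to the single-parameter estimate $\Tc_0\colon L^p(\B^2)\to W^{1,p}(\B^2)$. This last bound is classical: standard $L^p$-elliptic regularity for the Dirichlet problem on the smooth domain $\B^2$ gives $\Pc_0\colon L^p\to W^{2,p}$, after which $\Tc_0 = 4\partial_{\bar w}\Pc_0$ lands in $W^{1,p}$.

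For part (ii), the plan is to set $g := f - \Tc\partial_w f$ and show that $g$ is antiholomorphic (hence harmonic) in $w$, then apply Lemma \ref{Lem::Hold::NablaHarm}. The support condition $f|_{\{|w|>1/2\}}\equiv 0$ allows me to extend $f$ by zero past $\partial\B^2$ so that $\partial_w f$ is a well-defined compactly supported distribution in $\Co^{\alpha-1}\Co^\beta$ for some small $\alpha>0$, which lies in the domain of $\Tc$ by part (i). Using $\partial_w\Tc = \Delta\Pc = \mathrm{id}$, a distributional computation gives
$$\partial_w g = \partial_w f - \partial_w\Tc\partial_w f = \partial_w f - \Delta\Pc\partial_w f = 0,$$
so that $\Delta_w g = 4\partial_{\bar w}\partial_w g = 0$. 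Since $f \in L^\infty\Co^\beta \subset \Co^{-1}\Co^\beta$ and $\Tc\partial_w f \in \Co^\alpha\Co^\beta \subset \Co^{-1}\Co^\beta$, we have $g\in\Co^{-1}\Co^\beta(\B^2,V)$ for every bounded interval $V\Subset\R^1$. Lemma \ref{Lem::Hold::NablaHarm} then yields $g\in\Co^\infty_\loc\Co^\beta(\B^2,V)$, and since all bounds depend only on $\|f\|_{L^\infty\Co^\beta(\B^2,\R^1)}$ uniformly in $V$, we conclude $g\in\Co^\infty\Co^\beta(\B^2,\R^1;\C)$.

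The main obstacle I anticipate is the bookkeeping at the negative end of the Zygmund scale, in particular ensuring that $\Tc\partial_w f$ lands in a space compatible with Lemma \ref{Lem::Hold::NablaHarm} when $f$ is only $L^\infty$ in $w$; the compact-support hypothesis on $f$ is what removes any boundary subtlety. Beyond that, the argument is essentially a routine packaging of the complex Laplacian identity $\Delta = 4\partial_w\partial_{\bar w}$ together with the parametric extension machinery already developed in Section \ref{BasicHold}.
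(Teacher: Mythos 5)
Your part (i) is exactly the paper's argument: $\partial_w\Tc=\id$ from $\Delta=4\partial_w\partial_{\bar w}$, the Zygmund bound by composing Lemma \ref{Lem::Hold::LapInvBdd} with $\partial_{\bar w}$, and the Sobolev bound by $\Pc_0:L^p\to W^{2,p}$ plus the $\theta$-parameter extension as in Lemma \ref{Lem::Hold::TOtimesId}. No issues there.

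For part (ii), however, your argument has a genuine gap. Setting $g=f-\Tc\partial_w f$ and showing $\partial_w g=0$ is fine (and $\Delta\Pc=\id$ does hold on $\Co^{-1}\Co^\beta$ by Lemma \ref{Lem::Hold::LapInvBdd}, though note $\partial_w f$ of an $L^\infty\Co^\beta$ function only lies in $\Co^{-1}\Co^\beta$, not in $\Co^{\alpha-1}\Co^\beta$ for some $\alpha>0$ as you wrote). The problem is the last step: Lemma \ref{Lem::Hold::NablaHarm} is an \emph{interior} regularity statement — its conclusion is $\Co^\infty_\loc\Co^\beta$, with constants blowing up like $(1-r)^{-k-1}$ near $\partial\B^2$ — whereas the lemma asserts membership in the global space $\Co^\infty\Co^\beta(\B^2,\R^1;\C)$, i.e.\ bounds uniform up to $\partial\B^2$ (recall Definition \ref{Defn::Hold::BiHold}\ref{Item::Hold::BiHold::Domain} defines these spaces by restriction of globally defined functions). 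The uniformity you address is only in the parameter $\theta$, not in $w$, so your argument does not deliver the stated conclusion; and the up-to-boundary version is what is actually used later (in the proof of Theorem \ref{Thm::Final::ProofExampleddz} one needs $h-\Tc h_w\in\Co^\alpha L^\infty\cap\Co^\beta(\R^1;W^{1,p}(\B^2))$ and $g-\Tc g_w\in L^\infty\Co^\beta$ on the whole ball). The missing ingredient is boundary regularity: since $\partial_w f$ is supported in $\{|w|\le\frac12\}$, the function $u=\Pc\partial_w f$ is harmonic near $\partial\B^2$ with zero Dirichlet data, and one must exploit this (via Schwarz reflection, boundary elliptic estimates, or — as the paper does — the explicit Green's function of the disc, whose reflected part $\partial^2_{z\bar w}\Gamma(|z|w-z/|z|)$ is a smooth kernel for $z\in\frac12\B^2$, $w\in\overline{\B^2}$, giving $\id-\Tc_0\partial_w:L^\infty(\tfrac12\B^2)\to\Co^M(\B^2)$ for every $M$, and then the parameter case by Lemma \ref{Lem::Hold::TOtimesId}). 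With such a boundary argument added your route would close; as written, it only proves the $\Co^\infty_\loc\Co^\beta$ statement.
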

\begin{proof}
That $\partial_w\Tc=\id$ and $\Tc:\Co^{\alpha-1}_w\Xs_\theta\to \Co^{\alpha}_w\Xs_\theta$ are the immediate consequences to Lemma \ref{Lem::Hold::LapInvBdd} since $\Delta_{\re w,\im w}=4\frac{\partial^2}{\partial w\partial \bar w}$.

We have boundedness $\Pc_0:L^p(\B^2)\to W^{2,p}(\B^2)$ by \cite[Theorem 9.13]{GilbargTrudinger}, thus $\Tc_0:L^p(\B^2;\C)\to W^{1,p}(\B^2;\C)$. We get $\Tc:\Co^\beta(L^p)\to\Co^\beta(W^{1,p})$ by the same reason to Lemma \ref{Lem::Hold::TOtimesId}. Thus we prove \ref{Item::Final::InvDBar::Bdd}.

\smallskip
To prove \ref{Item::Final::InvDBar::Holo}, we need to show that $\id-\Tc_0\partial_w:L^\infty(\frac12\B^2;\C)\to\Co^M(\B^2;\C)$ is bounded for all $M>0$. 

Let $g\in L^\infty(\frac12\B^2;\C)$ with zero extension outside $\frac12\B^2$. By formula of Green functions (see for example \cite[Theorem 2.12 and Section 2.2.c]{Evans}) we have for $w\in\B^2$,
\begin{equation*}
    g(w)-\Tc_0\partial_wg(w)=g(w)-\int_{\frac12\B^2}4g(z)\partial^2_{z\bar w}\big(\Ga(z-w)-\Ga\big(|z|w-\tfrac z{|z|}\big)\big)dxdy=\int_{\frac12\B^2}4g(z)\partial^2_{z\bar w}\Ga\big(|z|w-\tfrac z{|z|}\big)dxdy.
\end{equation*}
Here $z=x+iy\in\B^2$ and $\Ga(z)=\frac1{2\pi}\log|z|$ (which is the $n=2$ case in \eqref{Eqn::SecHolLap::NewtonPotent}). The first equality follows from integration by part, the second equality follows from the fact that $\partial_{z\bar w}\Ga(z-w)=\partial_{z\bar z}\Ga(z-w)=\frac14\Delta_{x,y}\Ga(z-w)=\frac{\delta_w(z)}4$.

Clearly $(z,w)\mapsto\Ga(|z|w-z/|z|)$ is a smooth function on $(z,w)\in r\B^2\times\B^2$ for any $r<1$. Thus for any integer $M\ge0$
\begin{equation*}
    \|\nabla^M(g-\Tc_0\partial_wg)\|_{L^\infty(\B^2;\C^{2^k})}\le\|g\|_{L^\infty(\frac12\B^2;\C)}\sup_{|w|<\frac12}\int_{\B^2}\big|\nabla^{M+2}_{z,w}\Ga\big(|z|w-\tfrac z{|z|}\big)\big|dxdy\lesssim_M\|g\|_{L^\infty(\frac12\B^2;\C)}.
\end{equation*}
We conclude that $\id-\Tc_0\partial_w:L^\infty(\frac12\B^2;\C)\to\Co^M(\B^2;\C)$ is bounded for all $M>0$. By Lemma \ref{Lem::Hold::TOtimesId}, we get the boundedness $\id-\Tc\partial_w:L^\infty\Co^\beta(\frac12\B^2,\R^1;\C)\to\Co^M\Co^\beta(\B^2,\R^1;\C)$. The proof is complete.
\end{proof}

\begin{lem}\label{Lem::Final::CalpNonlocal}
Let $\beta>0$ and let $(c_k)_{k=1}^\infty\subset\C$ be a sequence.
Define $g(\theta):=\sum_{k=1}^\infty c_ke^{2\pi i\cdot 2^k\theta}2^{-\beta\theta}$, then
\begin{enumerate}[parsep=-0.3ex,label=(\roman*)]
    \item\label{Item::Final::CalpNonlocal::Bdd} $\|g\|_{\Co^\beta(\R;\C)}\approx\sup_{k\ge0}|c_k|$ with the implied constant depending only on $\beta$.
    \item\label{Item::Final::CalpNonlocal::Main} If $\varlimsup_{k\to\infty}|c_k|=\infty$, then $g\notin\Co^\beta(I;\C)$ for every interval $I\subseteq\R$.
\end{enumerate}

\end{lem}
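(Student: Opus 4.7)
My plan is to deduce \ref{Item::Final::CalpNonlocal::Bdd} directly from the Littlewood--Paley characterization and to prove \ref{Item::Final::CalpNonlocal::Main} by extracting each $c_n$ as a continuous linear functional on $\Co^\beta(I)$.

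For \ref{Item::Final::CalpNonlocal::Bdd}, I would pick a dyadic resolution $(\varphi_j)_{j=0}^\infty$ as in Definition~\ref{Defn::Hold::DyadicResolution} with the additional property that $\hat\varphi_j$ is supported in $\{2^{j-1}<|\xi|<2^{j+1}\}$ and $\hat\varphi_j(2^j)=1$ for $j\geq 1$. Because only the frequency $2^j$ among $\{2^k\}_{k=1}^\infty$ survives this cutoff, convolving term-by-term yields $\varphi_j \ast g = c_j\,2^{-\beta j}\,e^{2\pi i\, 2^j \theta}$ with $\|\varphi_j\ast g\|_{L^\infty(\R)} = |c_j|\,2^{-\beta j}$. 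Lemma~\ref{Lem::Hold::HoldChar}\ref{Item::Hold::HoldChar::LPHoldChar} then gives $\|g\|_{\Co^\beta(\R)} \approx \sup_j 2^{\beta j}\|\varphi_j\ast g\|_{L^\infty} = \sup_j|c_j|$, with implied constants depending only on $\beta$ and $(\varphi_j)$.

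For \ref{Item::Final::CalpNonlocal::Main} I would prove the contrapositive: if $g\in \Co^\beta(I)$ then $\sup_n|c_n|<\infty$. After translating, assume $0\in\mathrm{int}(I)$ and fix $\delta>0$ with $[-\delta,\delta]\subset I$, together with a cutoff $\chi\in C_c^\infty((-\delta,\delta))$ equal to $1$ on $[-\delta/2,\delta/2]$, so that $\chi g \in \Co^\beta(\R)\cap L^\infty(\R)$ with $\|\chi g\|_{\Co^\beta(\R)} \lesssim_\chi \|g\|_{\Co^\beta(I)}$. The extraction device is a Paley--Wiener test function: choose $\psi\in\Sc(\R)$ with $\hat\psi$ compactly supported in $(-1/3,1/3)$ and $\hat\psi(0)=1$, and set
\begin{equation*}
\phi_n(\theta) := 2^n\,\psi(2^n\theta)\,e^{-2\pi i\, 2^n\theta}, \qquad n\geq 1.
\end{equation*}
Then $\hat\phi_n(\xi)=\hat\psi(1+\xi/2^n)$, so $\int\phi_n(\theta)\,e^{2\pi i\, 2^k\theta}\,d\theta=\hat\psi(1-2^{k-n})$, which equals $1$ when $k=n$ and vanishes for every other integer $k$ (since $|1-2^{k-n}|\geq 1/2$ lies outside $\supp\hat\psi$). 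In addition $\partial_\xi^m\hat\phi_n(0)=2^{-mn}\hat\psi^{(m)}(1)=0$ for every $m\geq 0$, so $\phi_n$ has vanishing moments of all orders.

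With this setup I would bound $\int \phi_n\cdot(\chi g)\,d\theta$ in two complementary ways. On one hand, since $\chi g = g$ on $[-\delta/2,\delta/2]$ and $\phi_n$ is Schwartz-concentrated in $|\theta|\lesssim 2^{-n}$, the formal extraction identity $\int\phi_n g = c_n\,2^{-\beta n}$ transfers to $\bigl|\int \phi_n\,(\chi g)-c_n\,2^{-\beta n}\bigr|\leq C_N\,2^{-Nn}$ for every $N$, where the tail error is controlled by the Schwartz decay of $\phi_n$ against the (at worst polynomial in $n$) growth of partial sums of the defining series. On the other hand, Taylor-expanding $g$ at $0$ on $I$ gives $g=P+R$ on $I$ with $P$ a polynomial of degree $\lfloor\beta\rfloor$ and $|R(\theta)|\leq C\|g\|_{\Co^\beta(I)}|\theta|^\beta$ (using the second-difference characterization of Lemma~\ref{Lem::Hold::HoldChar}\ref{Item::Hold::HoldChar::02} when $\beta$ is an integer). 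Vanishing moments of $\phi_n$ kill the contribution of $\chi P$ up to rapidly decaying boundary errors, and the scale-invariant bound on $R$ gives $\bigl|\int \phi_n (\chi g)\bigr| \leq C\|g\|_{\Co^\beta(I)}\,2^{-\beta n}+C_N\,2^{-Nn}$. Combining and multiplying by $2^{\beta n}$ yields $|c_n|\leq C\|g\|_{\Co^\beta(I)}+C_N\,2^{(\beta-N)n}$; choosing $N>\beta$ and letting $n\to\infty$ gives $\sup_n|c_n|<\infty$, contradicting the hypothesis $\varlimsup|c_n|=\infty$.

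The main obstacle I expect is making the extraction formula fully rigorous when $(c_k)$ is unbounded, because then $\sum c_k 2^{-\beta k}e^{2\pi i\, 2^k\theta}$ converges only in $\Sc'(\R)$ and the unqualified pairing $\int\phi_n g$ requires interpretation. The cutoff $\chi$ circumvents this by reducing everything to the honest bounded function $\chi g$, but care is needed when estimating $\int_{|\theta|\geq\delta/2}\phi_n\cdot(\chi g)\,d\theta$ without invoking any a priori bound on $\sup_k|c_k|$; I would handle this by comparing $\chi g$ against truncated partial sums $g_M$ of the series (which are smooth and bounded for each $M$) and exploiting the uniform Schwartz decay of $\phi_n$, which ensures the comparison error is dominated by $C_N 2^{-Nn}$ uniformly in $M$.
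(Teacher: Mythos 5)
Your part (i) is correct and is essentially the paper's own argument: the frequencies $2^k$ are lacunary, so each Littlewood--Paley block picks out exactly one term of the series, and Lemma \ref{Lem::Hold::HoldChar}\ref{Item::Hold::HoldChar::LPHoldChar} gives the norm equivalence. For part (ii), however, your extraction argument has a genuine gap at integer $\beta$. You approximate $g$ near $0$ by a \emph{single} Taylor polynomial $P$ of degree $\lfloor\beta\rfloor$ with $|g(\theta)-P(\theta)|\le C\|g\|_{\Co^\beta(I)}|\theta|^\beta$; for $\beta\in\Z_+$ this is false, since $\Co^\beta$ is a Zygmund-type class and $g$ need not possess the relevant derivative at $0$ (for $\beta=1$ take $g(\theta)=\theta\log(1/|\theta|)$ near $0$: no affine $P$ gives $|g-P|\lesssim|\theta|$, only $|g-P|\lesssim|\theta|\log(1/|\theta|)$). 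The second-difference characterization of Lemma \ref{Lem::Hold::HoldChar}\ref{Item::Hold::HoldChar::02} does not supply the missing pointwise bound, and the logarithmic loss is fatal rather than cosmetic: it only yields $|c_n|\lesssim n\,\|g\|_{\Co^\beta(I)}$, which is perfectly compatible with $\varlimsup_n|c_n|=\infty$, so no contradiction is obtained. The repair is to replace the fixed Taylor polynomial by scale-dependent near-best polynomial approximants of degree $\le\lceil\beta\rceil$ on intervals of length $\sim 2^{-n}$ (the local polynomial approximation characterization of $\Bs^\beta_{\infty\infty}$, valid also at integer $\beta$); your vanishing-moment device still annihilates each such polynomial, but this characterization is an extra ingredient that must be proved or cited, and the boundary error then involves coefficients growing like $2^{jn}$, so the Schwartz-tail estimate has to be redone with that growth in mind.

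There is a second, smaller gap: your comparison of $\int\phi_n(\chi g)$ with $c_n2^{-\beta n}$ via partial sums $g_M$ is not ``uniform in $M$'' as claimed. The obvious bound on $\int\phi_n(1-\chi)g_M$ is $\|g_M\|_{L^\infty}\,C_N2^{-Nn}$, and $\|g_M\|_{L^\infty}$ can blow up with $M$ precisely in the regime where $(c_k)$ is unbounded. To close this one should work on the Fourier side---estimate $\widehat{\phi_n(1-\chi)}$ (equivalently $\widehat{\phi_n\chi}$) at the frequencies $2^k$ using the rapid decay of $\hat\chi$ and the support of $\hat\phi_n$---and one must also assume $(c_k)$ grows at most polynomially so that $g$ is a tempered distribution at all, an assumption implicit in the lemma. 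For comparison, the paper's proof of (ii) avoids all of this machinery: subtract the first $k_0$ terms so that the tail $g^{k_0}$ is $2^{-k_0}$-periodic with a full period contained in $I$; by (i) the global norm $\|g\|_{\Co^\beta(\R)}$ is infinite, the subtracted finite sum has finite norm, and periodicity transfers the infinite norm of $g^{k_0}$ to that one period, hence to $I$. You may either adopt that shorter route or carry out the two repairs above if you prefer the extraction argument.
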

\begin{proof}
Let $\{\psi_k\}_{k=0}^\infty$ be a dyadic resolution for $\R$. Recall from \eqref{Eqn::Hold::RmkDyaSupp} $\supp\hat\psi_k\subset(-2^{k-1},-2^{k+1})\cup(2^{k-1},2^{k+1})$ holds for $k\ge1$. Since the Fourier support of $e^{2\pi i2^k\theta}$ is $\{2^k\}$, we have $\psi_k\ast g(\theta)=c_k2^{-\beta\theta}e^{2\pi i\cdot 2^k\theta}$ for all $k\ge1$. Therefore by Lemma \ref{Lem::Hold::HoldChar} \ref{Item::Hold::HoldChar::LPHoldChar}, we get the result \ref{Item::Final::CalpNonlocal::Bdd}:
$$\textstyle\|g\|_{\Co^\beta(\R)}\approx_{\psi,\beta}\sup_{k\ge0}2^{k\beta}\|\psi_k\ast g\|_{L^\infty}=\sup_{k\ge1}|c_k|.$$

Now suppose $\varlimsup_{k\to\infty}|c_k|=+\infty$, so $\|g\|_{\Co^\beta(\R)}\approx_{\psi,\beta}=\sup|c_k|=\infty$.

For a given interval $I\subseteq\R$, $I$ contains a dyadic subinterval, say $[ n2^{-k_0},(n+1)2^{-k_0}]$ where $n\in\Z$ and $k_0\in\Z_+$. Take $g^{k_0}(\theta):=\sum_{k=k_0+1}^\infty c_{k}2^{2\pi i2^k\theta}2^{-k\beta}$, so $$\textstyle g(\theta)-g^{k_0}(\theta)=\sum_{k=1}^{k_0}c_ke^{2\pi i2^k\theta}2^{-\beta\theta},$$
must have finite $\Co^\beta(\R)$-norm (though whose bound depends on $k_0$).

Since $g^{k_0}$ is $2^{-k_0}$-periodic, $\|g\|_{\Co^\beta(\R)}=\infty$ and $\|g-g^{k_0}\|_{\Co^\beta(\R)}<\infty$, we know $\|g^{k_0}\|_{\Co^\beta(n2^{-k_0},(n+1)2^{-k_0})}=\infty$. Therefore, we complete the proof of \ref{Item::Final::CalpNonlocal::Main} by:
$$\textstyle\|g\|_{\Co^\beta(I)}\ge\|g\|_{\Co^\beta([n2^{-k_0},(n+1)2^{-k_0}])}\ge \|g^{k_0}\|_{\Co^\beta([n2^{-k_0},(n+1)2^{-k_0}])}-\|g-g^{k_0}\|_{\Co^\beta(\R)}=\infty.\qedhere$$
\end{proof}

We now construct our $\Co^{\alpha,\beta}$-function $a(w,\theta)$ and finish the Step 2:
\begin{prop}\label{Prop::Final::Expa}
Let $\alpha,\beta>0$. There is a function $ A(w,\theta)$ defined in $\C^1_w\times\R^1_\theta$, that satisfies the following properties:
\begin{enumerate}[parsep=-0.3ex,label=(\roman*)]
    \item\label{Item::Final::Expa::Reg1} $ A\in\Co^{\alpha,\beta}(\C^1,\R^1;\C)\cap \Co^\beta(\R^1;W^{1,2}(\C^1;\C))$ and $[(w,\theta)\mapsto w\partial_{\bar w} A(w,\theta)]\in\Co^{\alpha,\beta}(\C^1,\R^1;\C)$. 
    \item\label{Item::Final::Expa::Reg2} $[(w,\theta)\mapsto w^{-1} A(w,\theta)]\in \Co^\frac\beta2(\R^1;L^p(\C^1;\C))$ for all  $1<p<\frac4{2-\min(\alpha,2)}$. In particular $w^{-1}A\in\Co^\frac\beta2(\R^1;L^\frac{\alpha+4}2)$.
    
    \item\label{Item::Final::Expa::Unbdd} $\Tc[\partial_{\bar w} A]\notin L^\infty_w\Co^\beta_\theta$ near $(w,\theta)=(0,0)$, where $\Tc$ be the inverse $\partial_w$-operator in Lemma \ref{Lem::Final::InvDBar}.

\end{enumerate}
\end{prop}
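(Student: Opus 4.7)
The plan is to construct $A$ as an explicit lacunary sum in both variables, adapting the strategy of \cite{Liding} to the bi-parameter setting. Fix a bump function $\phi \in C_c^\infty(\C)$ with support in the annulus $\{\frac12 \le |w| \le 1\}$ and $\int \phi \neq 0$, and choose a lacunary integer sequence $N_k$ of growth $N_k \asymp 2^{k\alpha/\beta}$ (the critical ratio determined by the two Hölder exponents). Take
\[
A(w,\theta) \;=\; \sum_{k=1}^\infty a_k\, 2^{-k\alpha}\, \phi(2^k w)\, e^{2\pi i N_k \theta},
\]
where $\{a_k\}$ is a bounded complex sequence to be chosen. The supports of $\phi(2^k\cdot)$ lie in pairwise disjoint dyadic annuli shrinking to $0$, so at each $w$ at most one summand is nonzero. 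The normalization $2^{-k\alpha}$ is exactly what is needed so that $\|A(\cdot,\theta)\|_{\Co^\alpha_w}$ and $\|A(w,\cdot)\|_{\Co^\beta_\theta}$ are uniformly bounded in $\theta$ and $w$ respectively; the factor $|w|\sim 2^{-k}$ on $\supp\phi(2^k\cdot)$ cancels the $2^k$ from the derivative, giving $w\partial_{\bar w}A\in\Co^{\alpha,\beta}$. The $W^{1,2}$-bound in \ref{Item::Final::Expa::Reg1} follows from the $\theta$-orthogonality of the exponentials and the scale invariance of $\|\phi(2^k\cdot)\|_{W^{1,2}}$. The $\Co^{\beta/2}(L^p)$-bound in \ref{Item::Final::Expa::Reg2} is verified by computing the $L^p$-norm on each annulus $\{|w|\sim 2^{-k}\}$ and summing the resulting geometric series; the range $p < 4/(2-\min(\alpha,2))$ guarantees the series converges, and the $\Co^{\beta/2}_\theta$-bound is controlled by the same $\theta$-Littlewood--Paley argument as for $\Co^\beta$.

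For \ref{Item::Final::Expa::Unbdd}, since $\partial_{\bar w}$ commutes with the sum and $\Tc$ acts only in $w$,
\[
\Tc[\partial_{\bar w}A](w,\theta) \;=\; \sum_{k=1}^\infty a_k\, 2^{-k\alpha}\,\Tc[\partial_{\bar w}(\phi(2^k\cdot))](w)\,e^{2\pi i N_k \theta},
\]
a lacunary Fourier series in $\theta$ at the frequencies $N_k$. By Lemma \ref{Lem::Final::CalpNonlocal} \ref{Item::Final::CalpNonlocal::Main}, membership in $\Co^\beta_\theta$ at a given $w$ is controlled by $\sup_k|a_k|\,2^{-k\alpha}\,|\Tc[\partial_{\bar w}(\phi(2^k\cdot))](w)|\cdot N_k^\beta$, and this quantity must be forced to diverge along a sequence $w_n \to 0$. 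Using Lemma \ref{Lem::Final::InvDBar} \ref{Item::Final::InvDBar::Holo}, the operator $\Tc[\partial_{\bar w}(\phi(2^k\cdot))]$ reduces, modulo a $\Co^\infty$ error localized inside $\frac12\B^2$, to the Beurling-type integral $-\pi^{-1}\!\int \phi(2^kz)(\bar w-\bar z)^{-2}dm(z)$. The nonlocal tail of this kernel retains order $r_k^{\alpha+2}/|w|^2$ at distant points, and the critical ratio $N_k\asymp 2^{k\alpha/\beta}$ exactly saturates the $\Co^{\alpha,\beta}$-regularity budget; the witness sequence $w_n$ is chosen in the gap between consecutive dyadic bump positions so that the contributions from many scales $k$ add with coherent phase, producing a coefficient that does not decay. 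The sequence $\{a_k\}$ is then taken with $\varlimsup|a_k|=+\infty$ along a subsequence aligned with the witness points while remaining bounded overall, exploiting the fact that only a finite window of $k$ contributes at each fixed $w_n$.

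The main obstacle is precisely the calibration described above: the $\Co^{\alpha,\beta}$ regularity of $A$ forces $|a_k|$ to be bounded and the product $|a_k|\,2^{-k\alpha}\,N_k^\beta$ to be controlled, while the blow-up in \ref{Item::Final::Expa::Unbdd} requires the coefficients $|a_k\Tc[\partial_{\bar w}(\phi(2^k\cdot))](w_n)|\cdot N_k^\beta$ to diverge along $w_n\to 0$. This tension is resolved exactly as in \cite{Liding}, by allowing the bump centres (or alternatively the internal shape of $\phi$) to depend on $k$ in a way that puts the long-range contribution from each scale in phase at the witness point of the appropriate scale, while keeping the short-range contributions under the $\Co^{\alpha,\beta}$-threshold. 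The remaining estimates of error terms—arising from the $\Co^\infty$-correction in Lemma \ref{Lem::Final::InvDBar} \ref{Item::Final::InvDBar::Holo} and from truncation against the cutoff at $|w|=\frac12$—are handled by routine $L^1$- and $L^\infty$-bounds on the smoothed kernel, and do not affect the divergence of the leading Fourier coefficients.
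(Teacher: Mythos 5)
Your construction of $A$ and your verification of \ref{Item::Final::Expa::Reg1}--\ref{Item::Final::Expa::Reg2} are plausible (the disjoint-support lacunary sum with amplitude $2^{-k\alpha}$ and $\theta$-frequency $N_k\asymp 2^{k\alpha/\beta}$ does give the $\Co^{\alpha,\beta}$, $\Co^\beta(W^{1,2})$ and $\Co^{\beta/2}(L^p)$ bounds, modulo the minor point that for $\alpha<\beta$ the frequencies $N_k$ need not be dyadically separated, so Lemma \ref{Lem::Final::CalpNonlocal} requires a reindexing). The genuine gap is in \ref{Item::Final::Expa::Unbdd}: with $\phi(2^k\cdot)$ a \emph{rescaled smooth bump}, the function $\Tc\big[\partial_{\bar w}(\phi(2^k\cdot))\big]$ is bounded \emph{uniformly in $k$} by scale invariance of the Beurling-type kernel in two dimensions (a solution of $\partial_w u_k=2^k(\partial_{\bar w}\phi)(2^kw)$ is $u_k(w)=v(2^kw)$ with $\partial_w v=\partial_{\bar w}\phi$ fixed, and the discrepancy with the Dirichlet solution $\Tc$ is controlled by Lemma \ref{Lem::Final::InvDBar} \ref{Item::Final::InvDBar::Holo}). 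Since Lemma \ref{Lem::Final::CalpNonlocal} makes the $\Co^\beta_\theta$-norm at a fixed $w$ comparable to $\sup_k|a_k|\,2^{-k\alpha}N_k^\beta\,|\Tc[\partial_{\bar w}(\phi(2^k\cdot))](w)|\approx\sup_k|a_k|\cdot O(1)$, and the regularity in \ref{Item::Final::Expa::Reg1} forces $\sup_k|a_k|<\infty$, no blow-up can occur. Your proposed resolutions do not repair this: coefficients at distinct lacunary $\theta$-frequencies are never summed (the Zygmund norm is a supremum over frequencies, so ``coherent phase'' contributions from different scales cannot reinforce each other), and taking $\varlimsup|a_k|=\infty$ ``while remaining bounded overall'' is self-contradictory --- unbounded $a_k$ would already destroy $A\in L^\infty_w\Co^\beta_\theta$.

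The paper's construction avoids exactly this obstruction by abandoning rescaled disjoint bumps. It first builds one fixed profile $b(w)=\partial_w\big(\bar w(-\log|w|)^{1/3}\big)$ near $w=0$ (see \eqref{Eqn::Final::Expa::Defofb}), which is bounded, lies in $W^{1,2}$, has $wb\in C^1_c$, and yet every solution of $\partial_wf=\partial_{\bar w}b$ blows up like $(-\log|w|)^{1/3}$ near $0$; the singularity is calibrated so boundedness and $W^{1,2}$ survive while $\Tc_0\partial_{\bar w}b\notin L^\infty$. The $w$-profiles are then taken to be $c_k=\rho_k\ast b-\rho_k\ast b(0)$, mollifications of this single singular function at scale $2^{-k\beta/\alpha}$ (see \eqref{Eqn::Final::Expa::Defofa}): they all ``see'' the singularity at $w=0$, each has uniformly bounded $L^\infty\cap W^{1,2}$ norm and Fourier support of size $2^{k\beta/\alpha}$ (which yields \ref{Item::Final::Expa::Reg1}--\ref{Item::Final::Expa::Reg2} exactly as in your argument), but $\|\Tc_0\partial_{\bar w}c_k\|_{L^\infty(U)}\to\infty$ on every neighborhood $U$ of $0$ because $\rho_k\ast\partial_{\bar w}b\to\partial_{\bar w}b$. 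Feeding this divergence of the \emph{individual} coefficients into Lemma \ref{Lem::Final::CalpNonlocal} \ref{Item::Final::CalpNonlocal::Main} is what gives $\Tc[\partial_{\bar w}A]\notin L^\infty_w\Co^\beta_\theta$; some device of this kind, producing a non--scale-invariant family of profiles whose inverse $\partial_w$ degenerates, is indispensable and is missing from your proposal.
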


The construction is essentially the parameter version of \cite{Liding}. 
\begin{remark}
By taking some simple modifications from the construction below (see \eqref{Eqn::Final::Expa::Defofa}), one can also prove Proposition \ref{Prop::Final::Expa} for the case $\alpha=\infty$, $\beta<\infty$. We leave the details to readers.
\end{remark}

\begin{proof}[Proof of Proposition \ref{Prop::Final::Expa}]
First we find a function $b\in C_c^0(\B^2;\C)$ such that,
\begin{enumerate}[nolistsep,label=(b.\arabic*)]
    \item\label{Item::Final::Expa::BReg} $b\in  C^\infty_\loc(\B^2\backslash\{0\};\C)$, $b\in W^{1,2}(\B^2;\C)$ and $wb\in C_c^1(\B^2;\C)$. 
    \item\label{Item::Final::Expa::BUnbdd} $\Tc_0 \partial_{\bar w}b\notin L^\infty$ near $w=0$. Here $\Tc_0$ be the inverse $\partial_w$-operator in Lemma \ref{Lem::Final::InvDBar}.
\end{enumerate}

Indeed, we take a function $b$ such that
\begin{equation}\label{Eqn::Final::Expa::Defofb}
    b(w):=\partial_w\big(\bar w(-\log|w|)^{\frac13}\big)=-\tfrac16(-\log|w|)^{-\frac23}\tfrac{|w|^2}{w^2},\quad\text{when }|w|<\tfrac12.
\end{equation}
And $b$ is smoothly cutoffed outside $|w|<\frac12$.

Immediately we get $b\in C^\infty_\loc(\B^2\backslash\{0\};\C)$ and $wb(w)\in C^1_c$. And clearly we have $|\nabla b(w)|\lesssim|w|^{-1}(-\log|w|)^{-\frac23}$, so $b\in W^{1,2}$, which gives \ref{Item::Final::Expa::BReg}.

To prove \ref{Item::Final::Expa::BUnbdd}, since $\partial_w\Tc_0 \partial_{\bar w}b=\partial_{\bar w}b$ it suffices to show that if $U$ is a neighborhood of $0$ and $f:U\subseteq\frac12\B^2\to\C$ is a function satisfies $\partial_wf=\partial_{\bar w}b$, then $f\notin L^\infty$ near $0$.

Indeed by \eqref{Eqn::Final::Expa::Defofb} in a neighborhood of $0$ we have $f_w=(\bar w(-\log|w|)^\frac12)_{w\bar w}$, so $f-(\bar w(-\log|w|)^\frac12)_{\bar w}$ is annihilated by $\partial_w$ which is an anti-holmorphic function. Therefore $f-(\bar w(-\log|w|)^\frac12)_{\bar w}$ is a bounded function near $w=0$. On the other hand 
$$(\bar w(-\log|w|)^\frac13)_{\bar w}=(-\log|w|)^\frac13+O(1)\quad \text{as }w\to0,$$
is not bounded near $0$. We conclude that $f\notin L^\infty$ near $0$.

\medskip
Let $\rho_0\in\Sc(\C^1_w)$ be a Schwartz function whose Fourier transform satisfies $\hat\rho_0\in C_c^\infty(\B^2)$ and $\hat\rho_0|_{\frac12\B^2}\equiv1$. Define functions $(\rho_k,c_k)_{k=0}^\infty$ and $ A(w,\theta)$ as 
\begin{equation}\label{Eqn::Final::Expa::Defofa}
    \rho_k(w):=2^{n\frac\beta\alpha k}\rho_0(2^{\frac\beta\alpha k}w),\quad c_k(w):=\rho_k\ast b(w)-\rho_k\ast b(0),\quad k\ge0;\quad  A(w,\theta):=\sum_{j=0}^\infty c_j(w)e^{2\pi i2^j\theta}2^{-j\beta}.
\end{equation}

Thus, we have $wc_k=\int_\C(w-z)\rho_k(w-z)b(z)d\vol(z)=(w\rho_k)\ast b$ for $k\ge0$.
By scaling
\begin{equation}\label{Eqn::Final::Expa::EstRho}
    \|\rho_k\|_{L^1}=\|\rho_0\|_{L^1},\quad\|\nabla\rho_k\|_{L^1}=2^{\frac\beta\alpha k}\|\nabla\rho_0\|_{L^1},\quad\|w\nabla\rho_k\|_{L^1}=\|w\nabla\rho_0\|_{L^1}\quad\text{for each }k\ge1.
\end{equation}
Therefore,
\begin{equation}\label{Eqn::Final::Expa::EstC}
\begin{gathered}
    \|c_k\|_{L^\infty\cap W^{1,2}}\le2\|\rho_k\ast b\|_{L^\infty\cap W^{1,2}}\lesssim\|\rho_k\|_{L^1}\|b\|_{L^\infty\cap W^{1,2}}\lesssim1;\\
    \|w\partial_{\bar w} c_k\|_{L^\infty}=\|\partial_{\bar w}(w\rho_k)\ast b\|_{L^\infty}\le\|w\partial_{\bar w}\rho_k\|_{L^1}\|b\|_{L^\infty}\lesssim1.
\end{gathered}
\end{equation}

Let $(\phi_j)_{j=0}^\infty$ and $(\psi_k)_{k=0}^\infty$ be two dyadic resolutions for $\C^1_w$ and $\R^1_\theta$ respectively (recall Definition \ref{Defn::Hold::DyadicResolution}). Recall from \eqref{Eqn::Hold::RmkDyaSupp} $\hat\phi_0\in C_c^\infty(2\B^2)$, $\hat\psi_0\in C_c^\infty(2\B^1)$, $\supp\hat\phi_j\subset2^{j+1}\B^2\backslash2^{j-1}\B^2$ and $\supp\hat\psi_j\subset2^{j+1}\B^1\backslash2^{j-1}\B^1$ for $j\ge1$. Since $\supp\hat\rho_0\subset\B^2$, by scaling we have $\supp\hat\rho_k\subset2^{\frac\beta\alpha k}\B^2$. Thus,
\begin{equation}\label{Eqn::Final::Expa::Estconv}
    \phi_j\ast_w A(w,\theta)=\sum_{k\ge\max(0,\frac\alpha\beta(j-1))}(\phi_j\ast c_k)(w)e^{2\pi i2^k\theta}2^{-k\beta},\quad \psi_j\ast_\theta A(w,\theta)=c_j(w)e^{2\pi i2^j\theta}2^{-j\beta},\quad j\ge0.
\end{equation}
By \eqref{Eqn::Final::Expa::EstC} and \eqref{Eqn::Final::Expa::Estconv} we get $ A\in\Co^\alpha L^\infty\cap L^\infty\Co^\beta(\C^1,\R^1;\C)\cap \Co^\beta(\R^1;W^{1,2}(\C^1;\C))$, since for $j\ge0$,
\begin{equation*}
\begin{gathered}
    \|\phi_j\ast_w A\|_{L^\infty(\C^1\times\R^1;\C)}\le\sum_{k\ge\max(0,\frac\alpha\beta(j-1))}\|\phi_j\|_{L^1(\C^1)}\|c_k\|_{L^\infty(\C^1\times\R^1;\C)}2^{-k\beta}\lesssim_{\phi,c,\alpha,\beta}2^{-j\alpha};\\
    \sup_{\theta\in\R^1}\|\psi_j\ast_\theta A(\cdot,\theta)\|_{L^\infty\cap W^{1,2}(\C^1;\C)}\le\|\psi_j\|_{L^1(\R)}\|c_j\|_{L^\infty\cap W^{1,2}(\C^1;\C)}2^{-j\beta}\lesssim_{\psi,c,\beta}2^{-j\beta}.
\end{gathered}
\end{equation*}
Taking supremum over $j\ge0$ we get \ref{Item::Final::Expa::Reg1}.

\medskip Using \eqref{Eqn::Final::Expa::EstRho} and \eqref{Eqn::Final::Expa::EstC} we also have $\|c_k\|_{C^1}\lesssim 2^{\frac\beta\alpha k}$ and $\|w\nabla c_k\|_{C^0}\lesssim1$. Thus 
\begin{equation*}
    |\nabla c_k(w)|\lesssim\min(|w|^{-1},2^{\frac\beta\alpha k})\le|w|^{\min(\frac\alpha2,1)-1}2^{k\frac\beta2\cdot\min(\frac\alpha2,1)}\le|w|^{\min(\frac\alpha2,1)-1}2^{k\frac\beta2} ,\quad\text{for all}\quad k\ge0,\quad w\in\C^1.
\end{equation*}
By \eqref{Eqn::Final::Expa::Defofa} we have $c_k(0,\theta)\equiv0 $ for all $\theta\in\R$. Thus by \eqref{Eqn::Final::Expa::Estconv},
\begin{equation*}
    |\psi_k\ast_\theta (w^{-1}A)(w,\theta)|=|w^{-1}c_k(w)|2^{-k\beta}\le|\nabla c_k(w)|2^{-k\beta}\le|w|^{\min(\frac\alpha2,1)-1}2^{-k\frac\beta2},\quad k\ge0,\quad w\in\C^2,\quad\theta\in\R^1.
\end{equation*}

On the other hand $|w|^{-\gamma}\in L^{\frac2\gamma-}(\B^2)$ holds for all $0<\gamma<2$. Thus for $1<p<\frac2{1-\min(\alpha/2,1)}=\frac4{2-\min(\alpha,2)}$ we have $\sup_{k\ge0;\theta\in\R^1}2^{k\frac\beta2}\|\psi_k\ast_\theta (w^{-1}A)(\cdot,\theta)\|_{L^p(\C^1;\C)}<\infty$.

Clearly $2<\frac{\alpha+4}2<\frac4{2-\min(\alpha,2)}$, we finish the proof of \ref{Item::Final::Expa::Reg2}.

\medskip To prove \ref{Item::Final::Expa::Unbdd}, since $\Tc$ only acts on $w$-variable,  by \eqref{Eqn::Final::Expa::Defofa} we have 
$$\Tc  A_{\bar w}(w,\theta)=\sum_{k=1}^\infty2^{-k\alpha}e^{2\pi i2^k\theta}\Tc_0\partial_{\bar w} c_k(w)=\sum_{k=1}^\infty2^{-k\alpha}e^{2\pi i2^k\theta}\Tc_0(\rho_k\ast \partial_{\bar w}b)(w),\quad w\in\C^1,\ \theta\in\R.$$

Now $\lim\limits_{k\to\infty}\rho_k\ast(\partial_{\bar w}b)=\partial_{\bar w} b$ holds (as $L^2$-functions), and by \ref{Item::Final::Expa::BUnbdd} $\Tc_0 \partial_{\bar w}b\notin L^\infty$ near $w=0$. So for any neighborhood $U\subseteq\C^1$ of $w=0$, we have $\varlimsup_{k\to\infty}\|\Tc_0 \partial_wc_k\|_{L^\infty(U;\C)}=+\infty$. Therefore by Lemma \ref{Lem::Final::CalpNonlocal} \ref{Item::Final::CalpNonlocal::Main}, for any interval $I\subseteq\R$ containing $\theta=0$, we have $\|\Tc\partial_{\bar w} A\|_{L^\infty\Co^\alpha(U,I;\C)}\gtrsim \varlimsup_{k\to\infty}\|\Tc_0\partial_{\bar w}b\|_{L^\infty(U;\C)}=+\infty$.

So $\Tc \partial_{\bar w} A\notin L^\infty_w\Co^\beta_\theta$ near $(0,0)$, finishing the proof of \ref{Item::Final::Expa::Unbdd}.
\end{proof}

Now we prove Step 3, which is the non-existence of $f\in\Co^{\alpha,\beta}$ to \eqref{Eqn::Final::SharpddzRed::aPDE}. Cf. \cite[Proposition 7]{Liding}. Here we consider all $\alpha,\beta>0$.

\begin{thm}[$\bar\partial$-equation for sharpness of $F^*\Coorvec z\notin\Co^\alpha $]\label{Thm::Final::ProofExampleddz}
    Let $\alpha,\beta>0$, let $ A$ be as in Proposition \ref{Prop::Final::Expa}. Then there is a constant $\delta=\delta(\alpha,\beta, A)>0$, such that for any neighborhood $U_2\times V_2\subset\C^1_w\times\R^1_\theta$ of $(0,0)$ there is no $f\in\Co^{\alpha,\beta}(U_2,V_2;\C)$ solving \eqref{Eqn::Final::SharpddzRed::aPDE} with $a(w,\theta):=\delta\cdot A(w,\theta)$.
\end{thm}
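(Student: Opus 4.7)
The plan is a contradiction argument: I reduce the nonexistence claim to the singularity statement in Proposition \ref{Prop::Final::Expa} \ref{Item::Final::Expa::Unbdd} via a localization and the right-inverse $\Tc$ of $\partial_w$ from Lemma \ref{Lem::Final::InvDBar}. Fix a cutoff $\chi\in C_c^\infty(\tfrac12\B^2)$ with $\chi\equiv 1$ on a neighborhood $U'\Subset U_2$ of $0\in\C^1$, and set $g:=\chi f\in\Co^{\alpha,\beta}(\C^1,V_2;\C)$ (extended by zero in $w$). A direct substitution using $f_w+af_{\bar w}=-a_{\bar w}$ and the product rule gives
\begin{equation*}
\partial_w g \;=\; -\chi a_{\bar w} \;-\; a\,\partial_{\bar w} g \;+\; R, \qquad R \;:=\; \chi_w f + a\,\chi_{\bar w} f.
\end{equation*}
Since $\supp g\subset\tfrac12\B^2\times V_2$, Lemma \ref{Lem::Final::InvDBar} \ref{Item::Final::InvDBar::Holo} yields $H:=g-\Tc\partial_w g\in\Co^\infty\Co^\beta(\B^2,V_2;\C)$, and rearranging produces the key identity
\begin{equation*}
\Tc(\chi a_{\bar w}) \;=\; -g + H + \Tc R - \Tc\bigl(a\,\partial_{\bar w} g\bigr).
\end{equation*}

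Next I would verify that each term on the right lies locally in $L^\infty_w\Co^\beta_\theta$ on a neighborhood of $(0,0)$. The contributions of $g\in\Co^{\alpha,\beta}$ and $H\in\Co^\infty\Co^\beta$ are immediate. The source $R$ vanishes on $U'\times V_2$, so $\Pc R$ is harmonic in $w$ on $U'$, and Lemma \ref{Lem::Hold::NablaHarm} therefore places $\Tc R\in\Co^\infty_\loc\Co^\beta(U',V_2)$. The nonlinear term $\Tc(a\,\partial_{\bar w} g)$ is treated by Corollary \ref{Cor::Hold::CorMult} \ref{Item::Hold::CorMult::Prin}: from $a\in\Co^\alpha L^\infty\cap\Co^0\Co^\beta$ and $\partial_{\bar w} g\in\Co^{\alpha-1}L^\infty\cap\Co^{-1}\Co^\beta$ one obtains $a\,\partial_{\bar w} g\in\Co^{\alpha-1}L^\infty\cap\Co^{-1}\Co^\beta$, and Lemma \ref{Lem::Final::InvDBar} \ref{Item::Final::InvDBar::Bdd} then places $\Tc(a\,\partial_{\bar w} g)\in\Co^\alpha L^\infty\cap\Co^0\Co^\beta$.

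To close the contradiction I would analyze the left-hand side. Writing $\chi a_{\bar w}=a_{\bar w}-(1-\chi)a_{\bar w}$ and using that $(1-\chi)$ vanishes on $U'$, the distribution $(1-\chi)a_{\bar w}$ vanishes on $U'\times\R$, so $\Pc\bigl((1-\chi)a_{\bar w}\bigr)$ is harmonic in $w$ on $U'$; Lemma \ref{Lem::Hold::NablaHarm} again gives $\Tc\bigl((1-\chi)a_{\bar w}\bigr)\in\Co^\infty_\loc\Co^\beta(U',V_2)$. Hence $\Tc(\chi a_{\bar w})$ agrees with $\delta\,\Tc(\partial_{\bar w}A)$ modulo a $\Co^\infty\Co^\beta$ remainder on $U'\times V_2$, and for $\delta\ne 0$ Proposition \ref{Prop::Final::Expa} \ref{Item::Final::Expa::Unbdd} forces $\Tc(\chi a_{\bar w})\notin L^\infty_w\Co^\beta_\theta$ on every neighborhood of $(0,0)$, contradicting the identity above. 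Smallness of $\delta$ is used only to arrange $\|\delta A\|_{L^\infty}<1$, so that Proposition \ref{Prop::Final::SharpddzRed} applies and $\Se=\Span(\partial_w+a\partial_{\bar w})$ is a genuine rank-one complex Frobenius structure of the kind handled there.

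The principal technical obstacle is the endpoint sharpness of the product-then-$\Tc$ step: a naive Littlewood--Paley interpolation of $\Co^\alpha L^\infty\cap\Co^0\Co^\beta$ only yields $L^\infty\Co^{\beta-}$, which would fall just short of contradicting the endpoint $L^\infty\Co^\beta$-failure of $\Tc\partial_{\bar w}A$. I would close this gap by refining the estimate of $\Tc(a\,\partial_{\bar w} g)$ using the Sobolev regularity $A\in\Co^\beta(\R;W^{1,2}(\C))$ from Proposition \ref{Prop::Final::Expa} \ref{Item::Final::Expa::Reg1} together with the H\"older--Sobolev product bound Lemma \ref{Lem::Final::HolSobLem} \ref{Item::Final::HolSobLem::Prod1} and the mapping $\Tc:\Co^\beta(\R;L^p(\C))\to\Co^\beta(\R;W^{1,p}(\C))$ of Lemma \ref{Lem::Final::InvDBar} \ref{Item::Final::InvDBar::Bdd}, which via the Sobolev embedding $W^{1,p}(\C)\hookrightarrow L^\infty(\C)$ for $p>2$ delivers the missing endpoint $L^\infty_w\Co^\beta_\theta$ control. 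The same Sobolev route, combined with the $\Co^{\beta/2}(\R;L^p(\C))$ bound on $w^{-1}A$ from Proposition \ref{Prop::Final::Expa} \ref{Item::Final::Expa::Reg2}, is what I anticipate needing to extend the argument to $0<\alpha\le 1$, where Corollary \ref{Cor::Hold::CorMult} no longer applies directly.
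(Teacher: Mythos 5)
Your overall architecture — cut off, apply the right inverse $\Tc$ of $\partial_w$, isolate $\Tc(a_{\bar w})$, and contradict Proposition \ref{Prop::Final::Expa} \ref{Item::Final::Expa::Unbdd} — is the same as the paper's final step, and your localization of $\Tc((1-\chi)a_{\bar w})$ and $\Tc R$ is fine. But the heart of the theorem is exactly the step you defer: showing $\Tc(a\,\partial_{\bar w}g)\in L^\infty_w\Co^\beta_\theta$ near $(0,0)$, and your sketch for closing that endpoint does not work as stated. To apply Lemma \ref{Lem::Final::HolSobLem} \ref{Item::Final::HolSobLem::Prod1} to $a\,\partial_{\bar w}g$ you need either $\partial_{\bar w}g\in L^\infty\Co^\beta$ (false in general: differentiating in $w$ destroys the $\theta$-regularity, which is the whole point of the sharpness example) or $\partial_{\bar w}g\in\Co^\beta(\R;L^p)$ for some $p>2$ — and the latter is not a consequence of $g\in\Co^{\alpha,\beta}$; it can only be recovered by feeding the PDE back in through a fixed-point/Neumann-series argument for the operator $\varphi\mapsto\Tc(a\,\partial_{\bar w}\varphi)$, which is where the smallness of $\delta$ is genuinely used (as an operator-norm contraction condition, not merely to arrange $\|a\|_{L^\infty}<1$; indeed the theorem's $\delta$ depends on $\alpha,\beta,A$ for precisely this reason, and $\|a\|_{L^\infty}<1$ plays no role in this theorem at all). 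Your proposal mislocates the role of $\delta$, which signals that this bootstrap is missing from the plan.

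Even granting the bootstrap, the direct Sobolev route you describe stalls at the exponent $p=2$: the natural forcing term $\chi a_{\bar w}$ is only $\Co^\beta(\R;L^2)$-valued (since $A\in\Co^\beta(\R;W^{1,2})$), so the fixed point would give at best $\partial_{\bar w}g\in\Co^\beta(\R;L^2)$ and then $\Tc(a\,\partial_{\bar w}g)\in\Co^\beta(\R;W^{1,2})$, and $W^{1,2}(\C)\not\hookrightarrow L^\infty(\C)$ — the endpoint embedding you invoke requires $p>2$. The paper circumvents this with two devices absent from your proposal: first, the substitution $h:=wg$, whose equation has forcing $\chi w a_{\bar w}\in\Co^{\alpha,\beta}$ (the factor $w$ absorbs the singularity of $a_{\bar w}$ at $w=0$, by Proposition \ref{Prop::Final::Expa} \ref{Item::Final::Expa::Reg1}), so the contraction yields $h_{\bar w}=w\,\partial_{\bar w}g\in\Co^\beta(\R;W^{0,4+16/\alpha})$-type control; second, a paraproduct decomposition in $\theta$ of $a\,\partial_{\bar w}g$, in which only the high-$\theta$-frequency-of-$g$ piece is rewritten as $(w^{-1}a)\,h_{\bar w}$ and estimated via $w^{-1}A\in\Co^{\beta/2}(\R;L^{(\alpha+4)/2})$ (Proposition \ref{Prop::Final::Expa} \ref{Item::Final::Expa::Reg2}), the remaining pieces being handled by $A\in\Co^\beta(\R;L^\infty\cap W^{1,2})$ and Lemma \ref{Lem::Final::HolSobLem} \ref{Item::Final::HolSobLem::Prod2}; this lands $a\,\partial_{\bar w}g$ in $\Co^{\eps-1}\Co^\beta$ for some $\eps>0$, after which $\Tc$ gains a derivative in $w$ and gives the needed $L^\infty\Co^\beta$ bound. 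Without the $h=wg$ substitution, the contraction in a $p>2$ space, and the frequency splitting, your identity leaves both sides potentially outside $L^\infty_w\Co^\beta_\theta$ and no contradiction is reached; also note that your main product step (Corollary \ref{Cor::Hold::CorMult} \ref{Item::Hold::CorMult::Prin}) is unavailable for $0<\alpha\le1$, while the theorem claims all $\alpha>0$.
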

This result contains the largest possible range for $\alpha$ and $\beta$. The proof can be simplified if $\alpha>1$.
\begin{proof}
Let $\Tc:\Co^{\alpha-1} L^\infty(\B^2,\R^1;\C)\to \Co^{\alpha} L^\infty(\B^2,\R^1;\C)$ be the inverse $\partial_w$-operator as in Lemma \ref{Lem::Final::InvDBar}. Recall that $\Tc:\Co^\beta(\R^1;L^p(\B^2;\C))\to \Co^\beta(\R^1;W^{1,p}(\B^2;\C))$ is also bounded for $1<p<\infty$. 

We define a linear operator $\Tf=\Tf_{ A}$ on functions in $\B^2\times\C^1$ as
\begin{equation}
    \Tf[\varphi]:=\Tc( A\cdot\partial_{\bar w}\varphi),\quad\varphi\in \Co^{\alpha} L^\infty(\B^2,\R^1;\C).
\end{equation}

By Lemma \ref{Lem::Hold::Product} \ref{Item::Hold::Product::Hold1}, Lemma  \ref{Lem::Final::HolSobLem} \ref{Item::Final::HolSobLem::Prod1} and \ref{Item::Final::HolSobLem::Prod2},
\begin{align*}
    \| A\partial_{\bar w}\varphi\|_{\Co^{\alpha-1} L^\infty(\B^2,\R^1;\C)}&\lesssim_\alpha\|A\|_{\Co^\alpha L^\infty}\|\partial_{\bar w}\varphi\|_{\Co^{\alpha-1} L^\infty}\lesssim_A\|\varphi\|_{\Co^{\alpha} L^\infty(\B^2,\R^1;\C)},&\text{provided }\alpha>\tfrac12;
    \\
    \| A\partial_{\bar w}\varphi\|_{\Co^{\alpha-1} L^\infty(\B^2,\R^1;\C)}&\lesssim_\alpha\|A\|_{L^\infty(\R^1;L^\infty\cap W^{1,2})}\|\partial_{\bar w}\varphi\|_{\Co^{\alpha-1} L^\infty}\lesssim_A\|\varphi\|_{\Co^{\alpha} L^\infty(\B^2,\R^1;\C)},&\text{provided }0<\alpha<1;
    \\
    \| A\partial_{\bar w}\varphi\|_{\Co^\beta(\R^1;L^p(\B^2;\C))}&\lesssim_{\beta}\| A\|_{L^\infty\Co^\beta}\|\partial_{\bar w}\varphi\|_{\Co^\beta(\R^1;L^p(\B^2;\C))}\lesssim_{ A,p}\|\varphi\|_{\Co^\beta(\R^1;W^{1,p}(\B^2;\C))},&\text{for all } 1<p<\infty.
\end{align*}

Taking $p=p_\alpha:=4+16/\alpha$, we conclude that $\Tf$ is a bounded endomorphism on both $\Co^\alpha L^\infty(\B^2,\R^1;\C)$ and $\Co^\alpha L^\infty(\B^2,\R^1;\C)\cap\Co^\beta(\R^1;W^{1,4+\frac{16}\alpha}(\B^2;\C))$.

We now take $\delta=\delta(\alpha,\beta, A)>0$ small such that
\begin{equation}\label{Eqn::Final::ProofExampleddz::DefDelta}
    \delta\big(\|\Tf\|_{\Co^{\alpha} L^\infty(\B^2,\R^1;\C)}+\|\Tf\|_{\Co^{\alpha} L^\infty(\B^2,\R^1;\C)\cap\Co^\beta(\R^1;W^{1,4+16/\alpha}(\B^2;\C))}\big)\le\tfrac12.
\end{equation}
Note that $\Tf$ is now contraction map in these two spaces.

We claim that $a(w,\theta)=\delta A(w,\theta)$ is as desired.

\medskip
Suppose by contradiction there is a $U_2\times V_2\ni (0,0)$ and a $f\in\Co^{\alpha,\beta}(U_2,V_2;\C)$ solving \eqref{Eqn::Final::SharpddzRed::aPDE}. Take $0<\eps_0<\frac12$ such that $\eps_0\B^2\subseteq U_2 $ and $(-\eps_0,\eps_0)\subseteq V_2$.  We define $\tilde\chi$ and $\chi$ such that
\begin{equation}\label{Eqn::Final::ProofExampleddz::Chi}
    \tilde\chi\in C_c^\infty(-\eps_0,\eps_0),\quad\tilde\chi|_{(-\frac12\eps_0,\frac12\eps_0)}\equiv1,\quad\chi\in C_c^\infty(\eps_0\B^2\times(-\eps_0,\eps_0)),\quad\chi(w,\theta):=\tilde\chi(|w|)\tilde\chi(\theta).
\end{equation}
Therefore $\chi\in C_c^\infty(U_2)$ equals to 1 in the neighborhood $\frac{\eps_0}2\B^2\times(-\frac{\eps_0}2,\frac{\eps_0}2)$ of $(0,0)$, and $\nabla_w\chi$ vanishes in the  neighborhood $\frac{\eps_0}2\B^2\times\R^1$ of $\{w=0\}=\{0\}_w\times\R^1_\theta$.

Define $g,h\in\Co^{\alpha,\beta}_c(U_2,V_2;\C)$ as $g:=\chi f$ and $h(w,\theta):=w g(w,\theta)$, so
\begin{equation}\label{Eqn::Final::ProofExampleddz::ghPDE}
    g_w+a\cdot g_{\bar w}=(\chi_w+a\chi_{\bar w})f-\chi a_{\bar w},\quad h_w+a\cdot h_{\bar w}=g+w(\chi_w+a\chi_{\bar w})f-\chi wa_{\bar w},\quad\text{in }\B^2_w\times\R^1_\theta.
\end{equation}

\noindent \textit{Claim 1:} $h\in\Co^\beta(\R^1;W^{1,4+\frac{16}\alpha}(\B^2;\C))$ and therefore $h_{\bar w}\in\Co^\beta(\R^1;L^{4+\frac{16}\alpha}(\B^2;\C))$.

Rewriting the second equation of \eqref{Eqn::Final::ProofExampleddz::ghPDE} we have
\begin{equation*}
    h=-\Tc(ah_{\bar w})+(h-\Tc h_w)+\Tc(g+w(\chi_w+a\chi_{\bar w})f-\chi wa_{\bar w}).
\end{equation*}

In other words $h=\varphi$ is a solution to the following affine linear equation:
\begin{equation}\label{Eqn::Final::ProofExampleddz::EqnVPhi}
    \varphi=-\delta\Tf[\varphi]+(h-\Tc h_w)+\Tc(g+w(\chi_w+a\chi_{\bar w})f-\chi wa_{\bar w}).
\end{equation}

By assumption $g\in\Co^{\alpha,\beta}(\B^2,\R^1;\C)$, $f\nabla_w\chi\in\Co^{\alpha,\beta} (\B^2,\R^1;\C^2) $. By Proposition \ref{Prop::Final::Expa} \ref{Item::Final::Expa::Reg1} $wa_{\bar w}\in \Co^{\alpha,\beta}(\B^2,\R^1;\C)$. Therefore by \eqref{Eqn::Final::InvDBar::Bdd} (using $L^\infty_w\Co^\beta_\theta\subset\Co^\beta_\theta(L^{4+\frac{16}\alpha}_w)$) we get 
\begin{equation}\label{Eqn::Final::ProofExampleddz::hTerm1}
    \Tc(g+w(\chi_w+a\chi_{\bar w})f-\chi wa_{\bar w})\in\Tc(\Co^{\alpha,\beta}(\B^2,\R^1;\C))\subset\Co^{\alpha}L^\infty(\B^2,\R^1;\C)\cap \Co^\beta(\R^1;W^{1,4+\frac{16}\alpha}(\B^2;\C)).
\end{equation}

By assumption $h\in\Co^{\alpha,\beta}(\B^2,\R^1;\C)\subset L^\infty\Co^\beta(\B^2,\R^1;\C)$ is supported in $\frac12\B^2_w\times\R^1_\theta$, so by \eqref{Eqn::Final::InvDBar::Holo} $h-\Tc h_w\in\Co^\infty\Co^\beta(\B^2,\R^1;\C)$. In particular 
\begin{equation}\label{Eqn::Final::ProofExampleddz::hTerm2}
    h-\Tc h_w\in\Co^{\alpha}L^\infty(\B^2,\R^1;\C)\cap \Co^\beta(\R^1;W^{1,4+\frac{16}\alpha}(\B^2;\C)).
\end{equation}

Using \eqref{Eqn::Final::ProofExampleddz::hTerm1}, \eqref{Eqn::Final::ProofExampleddz::hTerm2} and the assumption \eqref{Eqn::Final::ProofExampleddz::DefDelta}, $\big[\varphi\mapsto -\delta\Tf[\varphi]+(h-\Tc h_w)+\Tc(g+w(\chi_w+a\chi_{\bar w})f-\chi wa_{\bar w})\big]$ is a contraction map in both $\Co^{\alpha}L^\infty(\B^2,\R^1;\C)$ and $\Co^{\alpha} L^\infty(\B^2,\R^1;\C)\cap \Co^\beta(\R^1;W^{1,4+\frac{16}\alpha}(\B^2;\C))$. Since \eqref{Eqn::Final::ProofExampleddz::EqnVPhi} has a unique fixed point in both spaces, and  $h\in\Co^{\alpha} L^\infty(\B^2,\R^1;\C)$ is a fixed point to \eqref{Eqn::Final::ProofExampleddz::EqnVPhi}, we conclude that  $h\in\Co^\beta(\R^1;W^{1,4+\frac{16}\alpha}(\B^2;\C))$. Thus $h_{\bar w}\in\Co^\beta(\R^1;L^{4+\frac{16}\alpha}(\B^2;\C))$. This finishes the proof of Claim 1.

\medskip
\noindent\textit{Claim 2:} $a\cdot g_{\bar w}\in\Co^{\eps-1}_w\Co^\beta_\theta(\B^2,\R^1;\C)$ for some $\eps>0$, and therefore $\Tc(a\cdot g_{\bar w})\in L^\infty\Co^{\beta}(\B^2,\R^1;\C)$.

Let $(\psi_j)_{j=0}^\infty$ be a dyadic resolution for $\R^1_\theta$. By Lemma \ref{Lem::Hold::LemParaProd}, for $l\ge0$, on $\C^1\times\R^1$,
\begin{align*}
    \psi_l\ast_\theta (ag_{\bar w})&=\psi_l\ast_\theta\Big(\sum_{j=l-2}^{l+2}\sum_{j'=0}^{j-3}+\sum_{j=l-2}^{l+2}\sum_{j'=0}^{j-3}+\sum_{\substack{j,j'\ge l-3;|j-j'|\le2}}\Big)(\psi_j\ast_\theta a)\cdot(\psi_{j'}\ast_\theta g_{\bar w})=:P_l^1+P_l^2+P_l^3.
\end{align*}

Our goal is to show that for $\eps=\frac\alpha{8+2\alpha}>0$, $\sup_{l\ge0;\theta\in\R^1}2^{l\beta}\|P_l^\nu(\cdot,\theta)\|_{\Co^{\eps-1}(\C^1;\C)}<\infty$ for $\nu=1,2,3$.

By assumption $a\in \Co^\beta(\R^1;L^\infty\cap W^{1,2}(\C^1;\C))$ and $g_{\bar w}\in\Co^{\alpha-1}L^\infty\cap\Co^{-1}\Co^\beta(\C^1,\R^1;\C)\subset\Co^{\frac\alpha2-1}\Co^\frac\beta2(\C^1,\R^1;\C)$, so we have $\sup_{\theta\in\R^1}\|\psi_j\ast_\theta a(\cdot,\theta)\|_{L^\infty\cap W^{1,2}}\lesssim2^{-j\beta}$ and $\sup_{\theta\in\R^1}\|\psi_{j'}\ast_\theta g_{\bar w}(\cdot,\theta)\|_{\Co^{\frac\alpha2-1}}\lesssim2^{-j'\frac\beta2}$. Thus by Lemma \ref{Lem::Final::HolSobLem} \ref{Item::Final::HolSobLem::Prod2}, we have for each $\theta\in\R^1$,
\begin{align*}
    \|P^1_l(\cdot,\theta)\|_{\Co^{\frac\alpha2-1}}&\le\|\psi_l\|_{L^1}\sum_{j=l-2}^{l+2}\sum_{j'=0}^{j-3}\|\psi_j\ast_\theta a(\cdot,\theta)\|_{L^\infty\cap W^{1,2}}\|\psi_{j'}\ast_\theta g_{\bar w}(\cdot,\theta)\|_{\Co^{\frac\alpha2-1}}\lesssim_{a,g}\sum_{j=l-2}^{l+2}\sum_{j'=0}^{j-3}2^{-j\beta}2^{-j'\frac\beta2}\lesssim2^{-l\beta};
    \\
    \|P^3_l(\cdot,\theta)\|_{\Co^{\frac\alpha2-1}}&\le\|\psi_l\|_{L^1}\sum_{\substack{j,j'\ge l-3\\|j-j'|\le2}}\|\psi_j\ast_\theta a(\cdot,\theta)\|_{L^\infty\cap W^{1,2}}\|\psi_{j'}\ast_\theta g_{\bar w}(\cdot,\theta)\|_{\Co^{\frac\alpha2-1}}\lesssim_{a,g}\sum_{\substack{j,j'\ge l-3\\|j-j'|\le2}}2^{-j\beta}2^{-j'\frac\beta2}\lesssim2^{-l\frac32\beta}.
\end{align*}
Taking supremum over $\theta\in\R^1$ we get $\sup_{l\ge0;\theta\in\R^1}2^{l\beta}(\|P^1_l(\cdot,\theta)\|_{\Co^{\frac\alpha2-1}}+\|P^3_l(\cdot,\theta)\|_{\Co^{\frac\alpha2-1}})<\infty$, bounding the first and the third term.

For the second sum we use $(\psi_j\ast_\theta a)\cdot(\psi_{j'}\ast_\theta g_{\bar w})=(\psi_j\ast_\theta(w^{-1} a))\cdot(\psi_{j'}\ast_\theta wg_{\bar w})=(\psi_j\ast_\theta(w^{-1} a))\cdot(\psi_{j'}\ast_\theta h_{\bar w})$. By Proposition \ref{Prop::Final::Expa} \ref{Item::Final::Expa::Reg2} $\sup_{\theta\in\R^1}\|\psi_j\ast_\theta(w^{-1} a)(\cdot,\theta)\|_{L^\frac{\alpha+4}2}\lesssim2^{-j\frac\beta2}$, by Claim 1 $\sup_{\theta\in\R^1}\|\psi_{j'}\ast_\theta h_{\bar w}(\cdot,\theta)\|_{L^{4+\frac{16}\alpha}}\lesssim2^{-j'\beta}$. And since the product map $L^\frac{\alpha+4}2\times L^{4+\frac{16}\alpha}\to L^\frac{4(\alpha+4)}{\alpha+8}$ is bounded, we have
\begin{align*}
    \|P^2_l(\cdot,\theta)\|_{L^\frac{4(\alpha+4)}{\alpha+8}}&\le\|\psi_l\|_{L^1}\sum_{j'=l-2}^{l+2}\sum_{j=0}^{j'-3}\|\psi_j\ast_\theta (w^{-1}a)(\cdot,\theta)\|_{L^\frac{\alpha+4}2}\|\psi_{j'}\ast_\theta h_{\bar w}(\cdot,\theta)\|_{L^{4+\frac{16}\alpha}}\lesssim\sum_{j'=l-2}^{l+2}\sum_{j=0}^{j'-3}2^{-j\frac\beta2}2^{-j'\beta}\lesssim2^{-l\beta}.
\end{align*}

By Sobolev embedding $L^\frac{4(\alpha+4)}{\alpha+8}(\C^1)\hookrightarrow\Co^{\frac\alpha{8+2\alpha}-1}(\C^1)$ (also see \cite[Chapter 2.7.1]{Triebel1}), we conclude that $\sup_{l\ge0;\theta\in\R^1}2^{l\beta}\|P^2_l(\cdot,\theta)\|_{\Co^{\frac\alpha{8+2\alpha}-1}(\C^1;\C)}<\infty$.

Since $\frac\alpha{8+2\alpha}<\frac\alpha2$, we conclude that $ag_{\bar w}\in \Co^{\frac\alpha{8+2\alpha}-1}\Co^\beta(\C^1,\R^1;\C)$. By Lemma \ref{Lem::Final::InvDBar} \ref{Item::Final::InvDBar::Bdd} we get $\Tc(a\cdot g_{\bar w})\in\Co^\frac\alpha{8+2\alpha}\Co^\beta(\B^2,\R^1;\C)\subset L^\infty\Co^{\beta}(\B^2,\R^1;\C)$, finishing the proof of Claim 2.

\medskip\noindent\textit{Final Step:}
We use $\Tc(a\cdot g_{\bar w})\in L^\infty_w\Co^{\beta}_{\theta}$ to obtain a contradiction.

Now rewriting the first equation in \eqref{Eqn::Final::ProofExampleddz::ghPDE} we have
\begin{equation}\label{Eqn::Final::ProofExampleddz::PDEofg}
    g=-\Tc(ag_{\bar w})+(g-\Tc g_w)+\Tc((\chi_w+a\chi_{\bar w})f)-\Tc((\chi-1) a_{\bar w})-\Tc( a_{\bar w}).
\end{equation}

By Claim 2, we have $\Tc(ag_{\bar w})\in L^\infty\Co^{\beta}(\B^2,\R^1;\C)$. By assumption $f\in\Co^{\alpha,\beta}_{w,\theta}$, we have $(\chi_w+a\chi_{\bar w})f\in\Co^{\alpha,\beta}(\B^2,\R^1;\C)$, so $\Tc((\chi_w+a\chi_{\bar w})f)\in \Co^1\Co^\beta(\B^2,\R^1;\C)\subset L^\infty\Co^\beta(\B^2,\R^1;\C)$ as well.

Since $g\in\Co^{\alpha,\beta}_{w,\theta}\subset L^\infty_w\Co^\beta_\theta$ is supported in $\{|w|<\frac12\}\times\R^1$, by \eqref{Eqn::Final::InvDBar::Holo} $g-\Tc g_w\in L^\infty\Co^\beta(\B^2,\R^1;\C)$  as well.

Let $\mu(\theta):=\tilde\chi(2\theta)$, we have $\mu|_{(-\frac14\eps_0,\frac14\eps_0)}\equiv1$. Thus by \eqref{Eqn::Final::ProofExampleddz::Chi}, for every $(w,\theta)\in\frac{\eps_0}2\B^2\times(-\frac{\eps_0}4,\frac{\eps_0}4)$, using $\mu(\chi-1)|_{\frac{\eps_0}2\B^2\times(-\frac{\eps_0}4,\frac{\eps_0}4)}=0$ we have
\begin{equation*}
    \Tc((\chi-1) a_{\bar w})(w,\theta)=\mu(\theta)\Tc((\chi-1) a_{\bar w})(w,\theta)=\Tc(\mu(\chi-1) a_{\bar w})(w,\theta)=\big(\Tc(\mu(\chi-1) a_{\bar w})-\mu(\chi-1) a_{\bar w}\big)(w,\theta).
\end{equation*}

By assumption $\mu(\chi-1) a_{\bar w}$ is supported in $\{\frac{\eps_0}2<|w|<\eps_0\}\times(-\frac{\eps_0}4,\frac{\eps_0}4)$, so by Proposition \ref{Prop::Final::Expa} \ref{Item::Final::Expa::Reg1} we have $\mu(\chi-1) a_{\bar w}\in L^\infty\Co^\beta(\B^2,\R^1;\C)$. Using Lemma \ref{Lem::Final::InvDBar} \ref{Item::Final::InvDBar::Holo} we get $\Tc((\chi-1) a_{\bar w})\in L^\infty\Co^\beta(\B^2,\R^1;\C)$ as well.

However, by Proposition \ref{Prop::Final::Expa} \ref{Item::Final::Expa::Unbdd} we have $\Tc(a_{\bar w})\notin L^\infty_w\Co^\beta_{\theta}$ near $(0,0)$. Therefore near $(w,\theta)=(0,0)$, the left hand side of \eqref{Eqn::Final::ProofExampleddz::PDEofg} is in $L^\infty_w\Co^\beta_\theta$ while the right hand side is not, this contradicts to the assumption $f\in\Co^{\alpha,\beta}(U_2,V_2;\C)$ and finishes the proof.
\end{proof}

To summarize this subsection, what we obtained is the following:
\begin{thm}\label{Thm::Final::SharpSum}
    Let $\alpha>\frac12$ and $\beta>0$, let $a\in\Co^{\alpha,\beta}(\B^2_w,\R^1_\theta;\C)$ be as in Theorem \ref{Thm::Final::ProofExampleddz} (see \eqref{Eqn::Final::Expa::Defofa}). Let $\Se:=\Span(\partial_w+a(w,\theta)\partial_{\bar w})$ be a rank 1 complex Frobenius structure in $\C^1\times\R^1$. 
    
    Then for any neighborhood $U\times V\subseteq\C^1_w\times\R^1_\theta$ of $(0,0)$ there is no continuous map $F:U\times V\to\C^1_z\times\R^1_s$ such that $F$ is homeomorphic to its image, $\nabla_wF$ and $\nabla_z(F^\Inv)$ are both continuous, and $F^*\Coorvec z$ spans $\Se|_{U\times V}$.
    
    In particular we have the result for Theorem \ref{Thm::Intro::Sharpddz}: if $\alpha=\beta>1$, then there is no $C^1$-coordinate chart $F:U\times V\subseteq\C^1_z\times\R^1_s$ near $(0,0)$ such that $F^*\Coorvec z$ spans $\Se|_{U\times V}$.
\end{thm}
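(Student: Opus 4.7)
The plan is to combine Proposition~\ref{Prop::Final::SharpddzRed} with Theorem~\ref{Thm::Final::ProofExampleddz}. The former converts the existence of a chart representing $\Se$ into the solvability of the PDE \eqref{Eqn::Final::SharpddzRed::aPDE} in $\Co^{\alpha,\beta}$, and the latter rules out such solutions when $a=\delta A$ for the specific $A$ from Proposition~\ref{Prop::Final::Expa} and the constant $\delta$ from Theorem~\ref{Thm::Final::ProofExampleddz}. Suppose for contradiction $F=(F',F'')$ satisfies the stated hypotheses on $U\times V$. By the spanning condition and the computation in the proof of the implication \ref{Item::Final::SharpddzRed::Chart}~$\Rightarrow$~\ref{Item::Final::SharpddzRed::PDE} of Proposition~\ref{Prop::Final::SharpddzRed}, one first extracts that $F''(w,\theta)=F''(\theta)$ depends only on $\theta$, that $F'$ satisfies the Beltrami equation $F'_{\bar w}=-\bar a\, F'_w$ with $F'_w$ continuous and non-vanishing, and that $f(w,\theta):=\log\bigl(\bar F'_{\bar w}(w,\theta)/\bar F'_{\bar w}(0,\theta)\bigr)$ is a well-defined continuous function whose direct differentiation verifies the PDE \eqref{Eqn::Final::SharpddzRed::aPDE}.

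To invoke Theorem~\ref{Thm::Final::ProofExampleddz} I need to upgrade $f$ to $\Co^{\alpha,\beta}$, i.e.\ $F'_w\in\Co^{\alpha,\beta}_\loc$ near $(0,0)$. For each fixed $\theta$, Malgrange's sharp estimate applied to the Beltrami equation with $\Co^\alpha$-coefficient $a(\cdot,\theta)$ (cf.\ Proposition~\ref{Prop::Intro::SharpE}) gives $F'(\cdot,\theta)\in\Co^{\alpha+1}_\loc$ uniformly in $\theta$, so already $F'_w\in\Co^\alpha_w L^\infty_\theta$. For the parameter regularity, the plan is to compare $F$ with the $\Co^{\alpha+1,\beta}$-chart $\tilde F$ produced by Theorem~\ref{Thm::Key} (applied with $r=0$, $m=q=1$): the transition $G:=F\circ\tilde F^{\operatorname{Inv}}$ has first component $G^1$ that is holomorphic in the $\C$-coordinate for each fixed parameter, and the identity $F'=G^1(\tilde F'',\tilde F''')$ yields $F'_w=(G^1_{\tilde z}\circ\tilde F)\cdot\tilde F''_w$. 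Since $\tilde F''_w\in\Co^{\alpha,\beta-}$ by Theorem~\ref{Thm::Key}\ref{Item::Key::F''Reg}, the task reduces to upgrading the parameter regularity of $G^1_{\tilde z}$; this is done via Cauchy's integral formula in the spirit of Lemma~\ref{Lem::Hold::NablaHarm}, which bypasses the $L^\infty$-unboundedness of parameter-dependent Riesz/Beurling transforms by using the holomorphicity of $G^1$ in $\tilde z$ to propagate the $\Co^\beta$-regularity of $a$ (hence of $\tilde F''$) into $G^1_{\tilde z}$.

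With $f\in\Co^{\alpha,\beta}$ solving \eqref{Eqn::Final::SharpddzRed::aPDE} established, Theorem~\ref{Thm::Final::ProofExampleddz} applied to $a=\delta A$ supplies the required contradiction. The ``in particular'' case for $\alpha=\beta>1$ and $C^1$-charts is then immediate: a $C^1$-chart has continuous $\nabla F$ and $\nabla F^{\operatorname{Inv}}$ in all variables, hence meets the weaker derivative-continuity hypotheses of the main statement, and $\Co^{\alpha,\alpha}(U,V)=\Co^\alpha(U\times V)$ by Lemma~\ref{Lem::Hold::CharMixHold}\ref{Item::Hold::CharMixHold::HoldbyComp}.

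The hardest step will be the parameter-regularity bootstrap for $F'_w$. The obstruction is precisely the phenomenon discussed throughout the introduction: parameter-dependent Beurling-type transforms lose an endpoint in $L^\infty_w\Co^\beta_\theta$, which is why Theorem~\ref{Thm::Key} only attains $\Co^{\alpha,\beta-}$ for coordinate vector fields. The resolution here, exploiting the holomorphy of the transition map $G^1$ in $\tilde z$ to apply a Cauchy-formula bootstrap, is essential in order to recover the sharp $\Co^{\alpha,\beta}$ needed to feed into the PDE non-existence of Theorem~\ref{Thm::Final::ProofExampleddz}.
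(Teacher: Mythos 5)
Your high-level skeleton (Proposition \ref{Prop::Final::SharpddzRed} combined with Theorem \ref{Thm::Final::ProofExampleddz}) is exactly the paper's route, but the step you yourself flag as the heart of your argument --- upgrading $F'_w$ (equivalently $F^*\Coorvec z$) from merely continuous to $\Co^{\alpha,\beta}$ by a bootstrap through the chart of Theorem \ref{Thm::Key} and a Cauchy-formula argument --- is a genuine gap, and in fact the intermediate claim you would need there is false. The theorem is meant to be read with the regularity hypothesis $F^*\Coorvec z\in\Co^{\alpha,\beta}(U,V;\C^3)$, i.e.\ condition \ref{Item::Final::SharpddzRed::Chart::Span} of Proposition \ref{Prop::Final::SharpddzRed}, which is explicit in Theorem \ref{Thm::Intro::Sharpddz} and is what the paper's proof feeds into the implication \ref{Item::Final::SharpddzRed::Chart} $\Rightarrow$ \ref{Item::Final::SharpddzRed::PDE}. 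Without that hypothesis the non-existence assertion cannot be proved, because for $\alpha>1$ and $0<\beta\le\alpha+1$ Theorem \ref{Thm::Key} applied to this very structure $\Se$ produces, on some neighborhood of $(0,0)$, a homeomorphic chart with $\nabla_wF$ and $\nabla_z(F^\Inv)$ continuous and with $F^*\Coorvec z\in\Co^{\min(\alpha,\beta-)}$ spanning $\Se$ --- a map satisfying every hypothesis you allow yourself. So no argument from continuity alone can force $F'_w\in\Co^{\alpha,\beta}$; the present theorem is precisely the statement that the endpoint $\beta$ is unattainable. Concretely, your bootstrap breaks in two places: the transition map $G=F\circ\tilde F^{\Inv}$ is only known to be continuous in the parameter (you have no modulus of continuity of $F$ in $\theta$ beyond $C^0$, so $G^1_{\tilde z}\in L^\infty\Co^\beta_s$ is unjustified), and even granting it, the factorization $F'_w=(G^1_{\tilde z}\circ\tilde F)\cdot\tilde F''_w$ caps you at $\Co^{\alpha,\beta-}$, since $\tilde F''_w\in\Co^{\alpha,\beta-}$ by Theorem \ref{Thm::Key} \ref{Item::Key::F''Reg} and that index is sharp --- by the very theorem you are trying to prove.

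Once the hypothesis $F^*\Coorvec z\in\Co^{\alpha,\beta}$ is in place no bootstrap is needed: from \eqref{Eqn::Final::SharpddzRed::EqnF*ddz} one reads off $F'_w\in\Co^{\alpha,\beta}_\loc$ directly (this is done inside the proof of Proposition \ref{Prop::Final::SharpddzRed}), and the paper's proof of the theorem is just the two-step citation you started with: Proposition \ref{Prop::Final::SharpddzRed} \ref{Item::Final::SharpddzRed::Chart} $\Rightarrow$ \ref{Item::Final::SharpddzRed::PDE} produces $f\in\Co^{\alpha,\beta}$ solving \eqref{Eqn::Final::SharpddzRed::aPDE}, contradicting Theorem \ref{Thm::Final::ProofExampleddz}; for the ``in particular'' clause, $C^1$-ness supplies the continuity of $\nabla_wF$ and $\nabla_z(F^\Inv)$ and $\Co^{\alpha,\alpha}=\Co^\alpha$, while the assumption $F^*\Coorvec z\in\Co^\alpha$ (as in Theorem \ref{Thm::Intro::Sharpddz}) is what makes Proposition \ref{Prop::Final::SharpddzRed} applicable, not the $C^1$-ness of the chart. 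A secondary remark: your fixed-$\theta$ Malgrange step is also shakier than you suggest (Proposition \ref{Prop::Intro::SharpE} requires $\alpha>1$ while the theorem allows $\frac12<\alpha\le1$, and the uniformity in $\theta$ of the $\Co^{\alpha+1}$ bounds needs an argument), but the fatal issue is the parameter endpoint described above.
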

\begin{proof}
Suppose, by contrast, such continuous map $F$ exists. Then by Proposition \ref{Prop::Final::SharpddzRed} \ref{Item::Final::SharpddzRed::Chart} $\Rightarrow$ \ref{Item::Final::SharpddzRed::PDE} there are a neighborhood $U_2\times V_2\subseteq U\times V$ of $(0,0)$ and a function $f\in\Co^{\alpha,\beta}(U_2,V_2;\C)$ that solves the PDE $f_w+af_{\bar w}=-a_{\bar w}$. However by Theorem \ref{Thm::Final::ProofExampleddz} such $f$ would not exist, contradiction.

When $\alpha=\beta>1$, a $C^1$-coordinate chart $F$ implies $\nabla_wF\in C^0$ and $\nabla_z(F^\Inv)\in C^0$ automatically, so the assumptions of Proposition \ref{Prop::Final::SharpddzRed} \ref{Item::Final::SharpddzRed::Chart} are satisfied. By Proposition \ref{Prop::Final::SharpddzRed} \ref{Item::Final::SharpddzRed::Chart} $\Rightarrow$ \ref{Item::Final::SharpddzRed::PDE} and Theorem \ref{Thm::Final::ProofExampleddz} the proof follows.
\end{proof}
\appendix
\section{Some Results with non $C^1$ Parameters}\label{Section::QPIFT}

\begin{prop}\label{Prop::AppQPIFT}Let $\alpha>0$ and $0<\beta<\min(\frac{2\alpha+1}{\alpha+1},\frac32)$. Let $F\in\Co^{\alpha+1,\beta}(\B^n_x,\B^q_s;\R^n_y)$ be a map such that $F(\cdot,s)$ is a $\Co^{\alpha+1}$-diffeomorphism onto its image and $F(\B^n,s)\supseteq\B^n$ for each $s\in\B^q$. Then $\Phi(y,s):=F(\cdot,s)^\Inv(y)$ satisfies $\Phi\in\Co^{\alpha+1,\beta}(\B^n_x,\B^q_s;\R^n_y)$.

Moreover, for any $M>1$ there is a $K_4=K_4(n,q,\alpha,\beta,M)>0$ that does not depend on $F$, such that if $\|F\|_{\Co^{\alpha+1,\beta}(\B^n,\B^q;\R^n)}<M$ and $\inf_{x\in \B^n,s\in\B^q}|\det(\nabla_x F(x,s))^{-1}|>M^{-1}$ then $\|\Phi\|_{\Co^{\alpha+1,\beta}(\B^n,\B^q;\R^n)}<K_4$.

\end{prop}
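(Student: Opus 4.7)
The plan is to establish the two factor norms $\sup_{s\in\B^q}\|\Phi(\cdot,s)\|_{\Co^{\alpha+1}(\B^n)}$ and $\sup_{y\in\B^n}\|\Phi(y,\cdot)\|_{\Co^\beta(\B^q)}$ separately, with all constants tracked in terms of $M$. The first will follow from a parametric version of the quantitative inverse function theorem: fixing $s$, $F(\cdot,s)$ is a $\Co^{\alpha+1}$-diffeomorphism with $\|F(\cdot,s)\|_{\Co^{\alpha+1}}\le M$ and $|\det\nabla_xF(\cdot,s)|\ge M^{-1}$, so the argument behind Lemma~\ref{Lem::Hold::QCompIFT}\ref{Item::Hold::QIFT} (cf.\ \cite[Proposition 4.10]{StreetYaoVectorFields}) applied pointwise in $s$ yields $\|\Phi(\cdot,s)\|_{\Co^{\alpha+1}(\B^n)}\lesssim_{n,\alpha,M}1$ uniformly in $s$.

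For the $s$-regularity, I would treat first the easy case $\beta\le1$. The defining relation $F(\Phi(y,s),s)=y$ and the mean value theorem give
\[
B_y(s_1,s_2)\cdot\bigl(\Phi(y,s_1)-\Phi(y,s_2)\bigr)=F(\Phi(y,s_2),s_2)-F(\Phi(y,s_2),s_1),
\]
where $B_y(s_1,s_2):=\int_0^1\nabla_xF\bigl((1-t)\Phi(y,s_2)+t\Phi(y,s_1),s_1\bigr)\,dt$. The right-hand side is $O(|s_1-s_2|^\beta)$ from $\|F\|_{L^\infty_x\Co^\beta_s}\le M$, while $B_y(s_1,s_2)$ is uniformly close to $\nabla_xF(\Phi(y,s_2),s_1)$ and hence invertible with uniformly bounded inverse by Remark~\ref{Rmk::Hold::CramerMixedforSing}. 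This gives $|\Phi(y,s_1)-\Phi(y,s_2)|\lesssim_M|s_1-s_2|^\beta$ uniformly in $y$.

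For $1<\beta<3/2$, I would invoke Lemma~\ref{Lem::Hold::HoldChar}\ref{Item::Hold::HoldChar::02}, reducing the problem to the Zygmund second-difference estimate $|\Phi(y,s_1)+\Phi(y,s_2)-2\Phi(y,\bar s)|\lesssim|s_1-s_2|^\beta$ with $\bar s=(s_1+s_2)/2$. Writing $\Phi_j:=\Phi(y,s_j)$, $\bar\Phi:=\Phi(y,\bar s)$, $A_j:=\int_0^1\nabla_xF\bigl((1-t)\bar\Phi+t\Phi_j,s_j\bigr)\,dt$ and $A:=\nabla_xF(\bar\Phi,\bar s)$, subtracting the identities $F(\Phi_j,s_j)=F(\bar\Phi,\bar s)=y$ for $j=1,2$ produces
\[
A(\Phi_1+\Phi_2-2\bar\Phi)=-\bigl[F(\bar\Phi,s_1)+F(\bar\Phi,s_2)-2F(\bar\Phi,\bar s)\bigr]-(A_1-A)(\Phi_1-\bar\Phi)-(A_2-A)(\Phi_2-\bar\Phi).
\]
The first bracket is $O(|s_1-s_2|^\beta)$ from the $L^\infty_x\Co^\beta_s$ bound on $F$. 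Lemma~\ref{Lem::Hold::GradMixHold} gives $\nabla_xF\in\Co^{\alpha,\frac{\alpha\beta}{\alpha+1}-}$, so $|A_j-A|\lesssim_M|s_1-s_2|^{\frac{\alpha\beta}{\alpha+1}-\varepsilon}+|\Phi_j-\bar\Phi|^{\min(\alpha,1)}$ for arbitrarily small $\varepsilon>0$. Because $\beta>1$ makes $F$ Lipschitz in $s$, the same mean value argument as above yields $|\Phi_j-\bar\Phi|\lesssim_M|s_1-s_2|$; combined with the uniform bound on $A^{-1}$ from Corollary~\ref{Cor::Hold::CramerMixed}, the right side of the displayed identity totals $O(|s_1-s_2|^\beta)+O(|s_1-s_2|^{1+\frac{\alpha\beta}{\alpha+1}-\varepsilon})+O(|s_1-s_2|^{1+\min(\alpha,1)})$. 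The hypothesis $\beta<\frac{2\alpha+1}{\alpha+1}$ is exactly what makes the middle exponent strictly exceed $\beta$ for $\varepsilon$ small, and it a fortiori forces $\beta<1+\min(\alpha,1)$, so all three terms are dominated by $|s_1-s_2|^\beta$.

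The hard part will be extracting the sharp $s$-Hölder rate of $|A_j-A|$: the whole argument rests on the optimal mixed-regularity index $\tfrac{\alpha\beta}{\alpha+1}$ in Lemma~\ref{Lem::Hold::GradMixHold}, and any weaker rate would fail to produce a middle exponent strictly above $\beta$. The quantitative dependence of $K_4$ on $M$ then comes by tracking the constants in Remark~\ref{Rmk::Hold::CramerMixedforSing}, Corollary~\ref{Cor::Hold::CramerMixed}, and Lemma~\ref{Lem::Hold::GradMixHold}, all of which are explicit in the relevant norms.
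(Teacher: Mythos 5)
Your strategy is essentially the paper's: a uniform-in-$s$ quantitative inverse bound in $x$, a first-difference estimate in $s$ for $\beta$ below $1$, and a Zygmund second-difference estimate for $1<\beta<\frac32$ that reuses the sub-unit case together with Lemma \ref{Lem::Hold::GradMixHold}; your identity for $A(\Phi_1+\Phi_2-2\bar\Phi)$ is a rearrangement of the paper's computation of $x_0+x_1-2x_{\frac12}$, with your exponents $1+\frac{\alpha\beta}{\alpha+1}-\varepsilon$ and $1+\min(\alpha,1)$ playing the role of the paper's exact count $(\beta-1+\varepsilon)+(1-\varepsilon)=\beta$ for the choice $\varepsilon=1-\frac{\alpha+1}{2\alpha+1}\beta$. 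The genuine gap is in your case $\beta\le1$: you must invert the averaged Jacobian $B_y(s_1,s_2)$, and your justification --- that it is ``uniformly close'' to $\nabla_xF(\Phi(y,s_2),s_1)$ --- is circular, because the distance between these two matrices is controlled only by $|\Phi(y,s_1)-\Phi(y,s_2)|^{\min(\alpha,1)}$, i.e.\ by the very quantity you are trying to estimate, which a priori is only bounded by $2$. An average of matrices each satisfying $|\det|\ge M^{-1}$ can be singular, so neither invertibility of $B_y$ nor a uniform bound on $B_y^{-1}$ follows from the hypotheses, and Remark \ref{Rmk::Hold::CramerMixedforSing} does not apply since $B_y$ need not be near the identity. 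The paper avoids this by inverting a pointwise value of $\nabla_xF$ in its mean-value formula \eqref{Eqn::PfQPIFT::MeanValue}, whose inverse is uniformly bounded by the determinant hypothesis via cofactors; alternatively you can avoid matrix inversion altogether by writing $\Phi(y,s_0)=\Phi\big(F(\Phi(y,s_0),s_1),s_1\big)$ and using the uniform Lipschitz bound on $\Phi(\cdot,s_1)$ already secured in your first step. Note the gap propagates to your case $\beta>1$, since the bound $|\Phi_j-\bar\Phi|\lesssim_M|s_1-s_2|$ there is obtained by the same first-difference argument (your inversion of $A=\nabla_xF(\bar\Phi,\bar s)$ itself is fine).

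Two smaller repairs. At $\beta=1$ the estimate $|F(x,s_1)-F(x,s_2)|\lesssim_M|s_1-s_2|$ does not follow from $\|F\|_{L^\infty_x\Co^1_s}\le M$ (Zygmund $\Co^1$ is not Lipschitz), so $\beta=1$ should be treated by the second-difference argument rather than folded into the ``easy case''; similarly, when $\alpha\ge1$ or when $\frac{\alpha\beta}{\alpha+1}-\varepsilon\ge1$, the Hölder-difference bounds you invoke with exponents $\min(\alpha,1)$ and $\frac{\alpha\beta}{\alpha+1}-\varepsilon$ must be capped strictly below $1$. The paper dispenses with both caveats by reducing without loss of generality to $\alpha<1$ (legitimate because $\beta<\frac32$ and the $x$-regularity of $\Phi$ has already been settled separately), and you could adopt that reduction verbatim. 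Finally, Lemma \ref{Lem::Hold::QCompIFT} \ref{Item::Hold::QIFT} assumes $F|_{\partial\B^n}=\id$ and a smallness condition, neither of which holds here; for the first step the paper instead appeals to the quantitative inverse estimate of \cite[Lemma 5.9]{CoordAdaptedII}, which uses precisely the $\Co^{\alpha+1}$ bound and the determinant lower bound you have, and your claim that $\beta<\frac{2\alpha+1}{\alpha+1}$ is ``exactly'' what makes the middle exponent exceed $\beta$ is not accurate (after capping exponents at $1$ the needed conditions are $\beta<\alpha+1$ and $\beta<1+\min(\alpha,1)$), though this does not affect the outcome.
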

The result is indeed true for all $0<\beta\le\alpha+1$. When $\beta>1$, the proof is simpler by applying the Inverse Function Theorem on the $\Co^\beta$-map $(x,s)\mapsto (F(x,s),s)$. In our case, we deal with the difference quotiences directly.
\begin{proof}It suffices to prove the quantitative result: by assumption, $\|F(\cdot,s)\|_{\Co^{\alpha+1}}<M$ and $\inf_{x\in \B^n}|\det(\nabla_x F(x,s))^{-1}|>M^{-1}$,  for each $s\in\B^q$. Applying \cite[Lemma 5.9]{CoordAdaptedII} we get $\|\Phi(\cdot,s)\|_{\Co^{\alpha+1}(\B^n;\R^n)}\lesssim_{n,\alpha,M}1$. 

Thus to prove the proposition, it suffices to prove $\|\Phi(y,\cdot)\|_{\Co^\beta(\B^q;\R^n)}\lesssim_{n,q,\alpha,\beta,M}1$ uniformly in $y\in\B^n$.
Without loss of generality, we assume $\alpha<1$ from now on, which means $\frac{2\alpha+1}{\alpha+1}<\frac32$.

Let $y\in\B^n$ and $s_0,s_1\in\B^q$. For $j=0,1$, applying mean value theorem to $F(\cdot,s_j)$,
\begin{equation}\label{Eqn::PfQPIFT::MeanValue}
    \Phi(y,s_0)-\Phi(y,s_1)=\nabla_xF\big((1-\theta_j)\Phi(y,s_0)+\theta_j\Phi(y,s_1),s_j\big)^{-1}\cdot\big(F(\Phi(y,s_0),s_j)-F(\Phi(y,s_1),s_j))\big),\text{ for some }\theta_j\in(0,1).
\end{equation}
Now $F\in\Co^{\alpha+1,\beta}(\B^n,\B^q;\R^n)$. By Lemma \ref{Lem::Hold::GradMixHold}, we have $\nabla_xF\in\Co^{\alpha,\frac{\alpha\beta}{2\alpha+1}}(\B^n,\B^q;\R^{n\times n})$ with $\|\nabla_xF\|_{\Co^{\alpha,\frac{\alpha\beta}{2\alpha+1}}_{x,s}}\lesssim_{n,q,\alpha,\beta}\|F\|_{\Co^{\alpha+1,\beta}_{x,s}}\le M$. By \cite[Lemma 5.7]{CoordAdaptedII} and the cofactor representation of $(\nabla_xF)^{-1}$, we can find a $C_1=C_1(n,q,\alpha,\beta,M)>0$ such that
\begin{equation}\label{Eqn::PfQPIFT::C1}
    \sup_{s\in\B^q}\|\nabla_xF(\cdot,s)^{-1}\|_{\Co^\alpha(\B^n;\R^{n\times n})}+\sup_{x\in\B^n}\|\nabla_xF(x,\cdot)^{-1}\|_{\Co^\frac{\alpha\beta}{2\alpha+1}(\B^q;\R^{n\times n})}<C_1,
\end{equation}
when the assumptions $\|F\|_{\Co^{\alpha+1,\beta}(\B^n,\B^q;\R^n)}<M$ and $\inf_{x\in \B^n}|\det(\nabla_x F(x,s))^{-1}|>M^{-1}$ are satisfied. 

We separate the discussions for $\beta<1$ and $\beta>1$.

\medskip
\noindent\textit{Case $\beta\in(0,1)$}: It suffices to show $\sup_{y,s_0,s_1}|\Phi(y,s_0)-\Phi(y,s_1)|\lesssim_{n,q,\alpha,\beta,M}|s_0-s_1|^\beta$.

We have $y=F(\Phi(y,s_0),s_0)=F(\Phi(y,s_1),s_1)$ since $\Phi(\cdot,s_j)=F(\cdot,s_j)^\Inv$. Therefore
\begin{align*}
    &\textstyle|\Phi(y,s_0)-\Phi(y,s_1)|\le\big(\sup_{\B^n\times\B^q}|(\nabla_xF)^{-1}|\big)\cdot|F(\Phi(y,s_0),s_0)-F(\Phi(y,s_1),s_0))|
    \\
    \le&\textstyle C_1|F(\Phi(y,s_1),s_1)-F(\Phi(y,s_1),s_0))|\le C_1\sup_{x\in\B^n}\|F(x,\cdot)\|_{\Co^\beta(\B^q)}|s_0-s_1|^\beta\le C_1M|s_0-s_1|^\beta.
\end{align*}
This complete the proof of $0<\beta<1$.

\medskip
\noindent\textit{Case $\beta\in(1,\frac{2\alpha+1}{\alpha+1}]$}: It suffices to show $\sup_{y,s_0,s_1}|\Phi(y,s_0)+\Phi(y,s_1)-2\Phi(y,\frac{s_0+s_1}2)|\lesssim_{n,q,\alpha,\beta,M}|s_0-s_1|^\beta$.

Fix a $\tilde y\in\B^n$. For $s_0,s_1\in\B^q$ we denote $s_\frac12:=\frac{s_0+s_1}2$, $x_i:=\Phi(\tilde y,s_i)$ for $i=0,\frac12,1$. 

Take $\eps=1-\frac{\alpha+1}{2\alpha+1}\beta$, so $0<\eps<1-\frac\beta{\alpha+1}$. Since $\|F\|_{\Co^{\alpha+1,1-\eps}_{x,s}}\lesssim_\eps\|F\|_{\Co^{\alpha+1,\beta}_{x,s}}\le M$ we have $\|F\|_{\Co^{\alpha+1,1-\eps}_{x,s}}\le\tilde M$ for some $\tilde M=\tilde M(n,q,\alpha,M)>0$. Let $C_2:=K_4(n,q,\alpha,1-\eps,\tilde M)$. By the proven case $\beta=1-\eps$ above we have $\sup_{x\in\B^n}\|\Phi(x,\cdot)\|_{\Co^{1-\eps}(\B^q;\R^n)}<C_2$. In particular $|x_i-x_j|=|\Phi(\tilde y,s_i)-\Phi(\tilde y,s_j)|\le C_2\cdot|s_i-s_j|^{1-\eps}$.


By the mean value theorem we can find $\xi_0$ and $\xi_1$ contained in the triangle with vertices $x_0,x_\frac12,x_1$ such that,
\begin{equation*}
    F(x_0,s_0)-F(x_\frac12,s_0)=\nabla_xF(\xi_0,s_0)\cdot(x_0-x_\frac12),\quad F(x_1,s_1)-F(x_\frac12,s_1)=\nabla_xF(\xi_1,s_1)\cdot(x_1-x_\frac12).
\end{equation*}
Clearly $|\xi_0-\xi_1|\le\max_{i,j=0,\frac12,1}|x_i-x_j|$. Therefore by \eqref{Eqn::PfQPIFT::C1} and the property $|x_i-x_j|\le C_2\cdot|s_i-s_j|^{\beta-1+\eps}$,
\begin{equation}\label{Eqn::PfQPIFT::CTmp}
    |\nabla_xF(\xi_0,s_0)^{-1}-\nabla_xF(\xi_1,s_1)^{-1}|\le C_1(|\xi_0-\xi_1|^\alpha+|s_0-s_1|^\frac{\alpha\beta}{2\alpha+1})\le C_1(C_2^\alpha+1)|s_0-s_1|^{\beta-1+\eps}.
\end{equation}
Here we use the fact that $|s_0-s_1|^\frac{\alpha\beta}{2\alpha+1}=|s_0-s_1|^{\beta-1+\eps}$ and $|\xi_0-\xi_1|^\alpha\le C_2^\alpha|s_0-s_1|^{\alpha(1-\eps)}\le C_2^\alpha|s_0-s_1|^{\beta-1+\eps}$.

Therefore using $\tilde y=F(x_j,s_j)$ for $j=0,\frac12,1$, we have
\begin{align*}
    x_0+x_1-2x_\frac12&=\nabla_xF(\xi_0,s_0)^{-1}\cdot(F(x_0,s_0)-F(x_\frac12,s_0))+\nabla_xF(\xi_1,s_1)^{-1}\cdot(F(x_1,s_1)-F(x_\frac12,s_1))
    \\
    &=\nabla_xF(\xi_0,s_0)^{-1}\cdot(2\tilde y-F(x_\frac12,s_0)-F(x_\frac12,s_1))+\big(\nabla_xF(\xi_0,s_0)^{-1}-\nabla_xF(\xi_1,s_1)^{-1}\big)\cdot(\tilde y-F(x_\frac12,s_1)).
\end{align*}

Applying \eqref{Eqn::PfQPIFT::C1} and \eqref{Eqn::PfQPIFT::CTmp} to the above equality:
\begin{align*}
    &|x_0+x_1-2x_\frac12|\le C_1|2F(x_\frac12,s_\frac12)-F(x_\frac12,s_0)-F(x_\frac12,s_1)|+C_1(C_2^\alpha+1)|s_0-s_1|^{\beta-1+\eps}\cdot|F(x_\frac12,s_\frac12)-F(x_\frac12,s_1)|
    \\
    \lesssim&_{q,\beta} C_1\|F\|_{L^\infty\Co^\beta}|s_0-s_1|^\beta+C_1(C_2^\alpha+1)\|F\|_{L^\infty\Co^{1-\eps}}|s_0-s_1|^{\beta-1+\eps+1-\eps}\le C_1(M+(C_2^\alpha+1)\tilde M)|s_0-s_1|^\beta.
\end{align*}
Thus $\sup_{y\in\B^n}\|\Phi(y,\cdot)\|_{\Co^\beta(\B^q)}\lesssim_{n,q,\alpha,\beta,M}1$ and we complete the proof.
\end{proof}

\begin{lem}\label{Lem::AppODEReg}
Let $\alpha>1$, $\beta\in(0,\alpha]$, and let $U_0\subseteq\R^n_x$, $V_0\subseteq\R^q_s$ be two open sets. Let $X\in\Co^{\alpha,\beta}_\loc(U_0,V_0;\R^n)$ be a vector field on $U_0$ with parameter on $V_0$.

Then for any $U_1\times V_1\Subset U_0\times V_0$ there is a $\eps_0>0$ and a unique map $\Phi(t,x,s):(-\eps_0,\eps_0)\times U_1\times V_1\to U_0\times V_0$ such that $\Phi(\cdot,\cdot,s)$ is $C^1$ for each $s\in V_1$; $\frac{\partial\Phi}{\partial t}(t,x,s)=X(\Phi(t,x,s),s)$ and $\Phi(0,x,s)=x$ in the domain. Moreover $\Phi\in\Co^{\alpha+1,\alpha,\beta}((-\eps_0,\eps_0),U_1,V_1;U_0)$.
\end{lem}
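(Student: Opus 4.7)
\medskip

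\noindent\textbf{Proof proposal.}
The plan is to mimic the parameter-free proof of Lemma \ref{Lem::ODE::ODEReg}, fiberwise in $s$, and then separately establish the $\Co^\beta_s$ regularity by Gr\"onwall-type estimates on the integral equation. First, I would reduce to the case of a globally defined, compactly supported vector field: choose $\chi\in C_c^\infty(U_0\times V_0)$ which equals $1$ on a neighborhood of $\overline{U_1\times V_1}$ and replace $X$ by $\chi X$, extended by zero. For $|t|<\eps_0$ small enough (depending on $\dist(U_1,\partial U_0)$ and $\|X\|_{L^\infty}$), the flows of $X$ and $\chi X$ coincide on $U_1\times V_1$, and $\chi X\in\Co^{\alpha,\beta}(\R^n,\R^q;\R^n)$.

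Second, for each fixed $s\in V_1$, the vector field $X(\cdot,s)$ lies in $\Co^\alpha_c(\R^n)$ with norm bounded uniformly in $s$ by $\|X\|_{\Co^\alpha_xL^\infty_s}$. Lemma \ref{Lem::ODE::ODEReg} then produces a unique $\Phi(\cdot,\cdot,s)\in\Co^{\alpha+1,\alpha}_{t,x}((-\eps_0,\eps_0)\times U_1;U_0)$ with $\partial_t\Phi(t,x,s)=X(\Phi(t,x,s),s)$ and $\Phi(0,x,s)=x$, whose $\Co^{\alpha+1,\alpha}_{t,x}$-norm is bounded uniformly in $s$. This yields
\begin{equation*}
\Phi\in\Co^{\alpha+1,\alpha}_{t,x}L^\infty_s\;\subseteq\;\Co^{\alpha+1}_tL^\infty_{x,s}\cap\Co^\alpha_xL^\infty_{t,s}.
\end{equation*}

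Third, I would show $\Phi\in L^\infty_{t,x}\Co^\beta_s$. For the base case $0<\beta\le 1$: from the integral form $\Phi(t,x,s)=x+\int_0^tX(\Phi(\tau,x,s),s)\,d\tau$ and the splitting
\begin{equation*}
X(\Phi(\tau,x,s_1),s_1)-X(\Phi(\tau,x,s_0),s_0)=\bigl[X(\Phi(\tau,x,s_1),s_1)-X(\Phi(\tau,x,s_0),s_1)\bigr]+\bigl[X(\Phi(\tau,x,s_0),s_1)-X(\Phi(\tau,x,s_0),s_0)\bigr],
\end{equation*}
using $\|X\|_{\Lip_x L^\infty_s}\lesssim\|X\|_{\Co^\alpha_xL^\infty_s}$ (since $\alpha>1$) and $\|X(x,\cdot)\|_{\Co^\beta_s}\lesssim\|X\|_{L^\infty_x\Co^\beta_s}$, Gr\"onwall's inequality gives $|\Phi(t,x,s_1)-\Phi(t,x,s_0)|\lesssim|s_1-s_0|^\beta$ uniformly in $(t,x)$. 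For $\beta>1$, I would induct on $\lceil\beta\rceil$: differentiating the ODE in $s$ produces the linear system $\partial_t(\partial_s\Phi)=(\partial_xX)(\Phi,s)\cdot\partial_s\Phi+(\partial_sX)(\Phi,s)$, which can be solved explicitly via an integrating factor. Applying the induction hypothesis (for the lower regularity of $\partial_sX$) together with Lemma \ref{Lem::Hold::CompofMixHold} \ref{Item::Hold::CompofMixHold::Comp} for composition, and Lemma \ref{Lem::Hold::Product} for products, controls $\partial_s\Phi$ in $L^\infty_{t,x}\Co^{\beta-1}_s$, hence $\Phi\in L^\infty_{t,x}\Co^\beta_s$.

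Combining the two bounds via Remark \ref{Rmk::Hold::RmkCAlpBetGam} yields $\Phi\in\Co^{\alpha+1}_tL^\infty_{x,s}\cap\Co^\alpha_xL^\infty_{t,s}\cap\Co^\beta_sL^\infty_{t,x}=\Co^{\alpha+1,\alpha,\beta}_{t,x,s}$, which is the claimed regularity. The main obstacle is the bootstrap when $\beta>1$: tracking the $\Co^{\beta-1}_s$-regularity of the composition $(\partial_sX)(\Phi(t,x,\cdot),\cdot)$ requires combining the inductively gained $s$-regularity of $\Phi$ with the $\Co^\alpha_x$ regularity of $X$ and $\partial_sX$, and here the mixed-regularity composition machinery from Section \ref{Section::HoldComp} is essential. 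All other steps are routine adaptations of the parameter-free case.
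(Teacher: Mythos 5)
Your fiberwise step and your Gr\"onwall estimate for $0<\beta<1$ follow the paper's argument, but there is a genuine gap at the Zygmund endpoint $\beta=1$, which your ``base case $0<\beta\le 1$'' silently absorbs. Your splitting requires the bound $|X(y,s_1)-X(y,s_0)|\lesssim\|X(y,\cdot)\|_{\Co^\beta_s}|s_1-s_0|^\beta$, which is valid for $\beta<1$ but false for $\beta=1$: by Definition \ref{Defn::Intro::DefofHold} and Remark \ref{Rmk::Hold::RmkforLPHoldChar}, $\Co^1$ is the Zygmund class, strictly larger than $C^{0,1}$, and its norm only controls second differences, so the first-difference Gr\"onwall argument does not close. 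The paper treats $\beta=1$ as a separate, and in fact the most delicate, case: it estimates the second difference $|\Phi(t,x,s_0)+\Phi(t,x,s_1)-2\Phi(t,x,\tfrac{s_0+s_1}2)|$ along the flow, using the interpolation $\|X\|_{\Co^{\alpha/2}\Co^{1/2}}\lesssim\|X\|_{\Co^{\alpha,1}}$ (Remark \ref{Rmk::Hold::RmkforBiHold} \ref{Item::Hold::RmkforBiHold::Interpo}) together with the already-proven case $\beta=\tfrac1\alpha<1$ to handle the cross terms, and then applies Gr\"onwall to the second difference. Without some such device your proof does not cover $\beta=1$, which is inside the stated range $\beta\in(0,\alpha]$.

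For $\beta>1$ your route (differentiate the ODE in $s$ and induct on $\lceil\beta\rceil$) also diverges from the paper and is considerably more delicate than necessary: you must first justify differentiability of $\Phi$ in $s$, and the coefficients of your linearized equation, $(\partial_xX)(\Phi,s)$ and $(\partial_sX)(\Phi,s)$, involve derivatives of $X$ that lose mixed regularity (Lemma \ref{Lem::Hold::GradMixHold}, Example \ref{Exam::Hold::GradMixHold}), e.g. $\partial_sX\in\Co^{\frac{(\beta-1)\alpha}{\beta}-}_xL^\infty_s\cap L^\infty_x\Co^{\beta-1}_s$ only, so the bootstrap threatens to lose an $\varepsilon$ at the endpoint $\beta=\alpha$. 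The paper avoids all of this: since $\beta\le\alpha$, the augmented autonomous field $(x,s)\mapsto(X(x,s),0)$ is jointly $\Co^\beta$ on $U_0\times V_0$ (because $\Co^{\alpha,\beta}\subset\Co^{\beta,\beta}=\Co^\beta$), its flow is $(t,x,s)\mapsto(\Phi(t,x,s),s)$, and the parameter-free Lemma \ref{Lem::ODE::ODEReg} immediately gives $\Phi\in\Co^{\beta+1,\beta}_{t,(x,s)}$, hence $L^\infty_{t,x}\Co^\beta_s$, which combined with the uniform fiberwise $\Co^{\alpha+1,\alpha}_{t,x}$ bound yields $\Co^{\alpha+1,\alpha,\beta}$ with no differentiation in $s$ at all. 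I would adopt that shortcut for $\beta>1$ and add the second-difference argument for $\beta=1$; the remainder of your proposal is sound.
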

\begin{proof}
Take $\eps_0<\frac12\dist(U_1,\partial U_0)$. For each $s\in V_1$, $X(\cdot,s)$ is a $\Co^\alpha$-vector field. By Lemma \ref{Lem::ODE::ODEReg}, $\exp_{X(\cdot,s)}\in \Co^{\alpha+1,\alpha}((-\eps_0,\eps_0),U_1;U_0)$ is the unique $C^1$-map $\phi(\cdot,s)$ such that $\Coorvec t\phi(\cdot,s)=X(\cdot,s)\circ\phi(\cdot,s)$ and $\phi(0,x,s)=x$. Thus, our map $\Phi(t,x,s)$ is pointwisely defined and $C^1$ in $(t,x)$. By the same argument in the proof of Lemma \ref{Lem::ODE::ODEReg} we see that $\sup_s\|\Phi(\cdot,s)\|_{\Co^{\alpha+1,\alpha}}<\infty$.

When $\beta>1$, $(X(x,s),s)$ is a $\Co^\beta$-vector field on $U_0\times V_0$ and we see that $(\Phi(t,x,s),s)$ is its ODE flow, so by Lemma \ref{Lem::ODE::ODEReg} we have $\Phi\in\Co^{\beta+1,\beta}_{t,(x,s)}$. Since we already have $\sup_s\|\Phi(\cdot,s)\|_{\Co^{\alpha+1,\alpha}}<\infty$, we get $\Phi\in\Co^{\alpha+1,\alpha,\beta}_{t,x,s}$.

When $\beta<1$, by shrinking $U_0\times V_0$ if necessary, we can assume $\|X\|_{\Co^{\alpha,\beta}(U_0,V_0)}<\infty$. We fix $x\in U_1$ and $s_0,s_1\in V_1$. Define $\psi_j(t):=\Phi(t,x,s_j)$ for $j=0,1$ and $0\le t<\eps_0$, we see that
\begin{align*}
    &\textstyle|\psi_0(t)-\psi_1(t)|\le\int_0^t|X(\psi_0(\tau),s_0)-X(\psi_1(\tau),s_1)|d\tau
    \\
    \le&\textstyle t\sup_{x\in U_0}\|X(x,\cdot)\|_{\Co^\beta(V_1)}|s_0-s_1|^\beta+\sup_{s\in V_1}\|X(\cdot,s)\|_{C^{0,1}(U_0)}\int_0^t|\psi_0(\tau)-\psi_1(\tau)|d\tau.    
\end{align*}

By Gronwall's inequality we conclude that $|\psi_0(t)-\psi_1(t)|\lesssim_{\|X\|,\eps_0}|s_0-s_1|^\beta$ for all $0\le t<\eps_0$, where the implied constant does not depend on $x,s_0,s_1$. Replacing $t$ by $-t$, we see that $|\psi_0(t)-\psi_1(t)|\lesssim_{\|X\|,\eps_0}|s_0-s_1|^\beta$ is also true for $-\eps_0<t\le0$.
Therefore $\sup_{t,x}\|\Phi(t,x,\cdot)\|_{\Co^\beta}<\infty$ and we get $\Phi\in\Co^{\alpha+1,\alpha,\beta}_{t,x,s}$.

When $\beta=1$, without loss of generality we assume $\alpha<2$ since we only need to show $\sup_{t,x}\|\Phi(t,x,\cdot)\|_{\Co^1}<\infty$. We similarly fix $x\in U_1$, $s_0,s_1\in V_1$. We define additionally $s_\frac12:=\frac{s_0+s_1}2$, $\psi_\frac12(t):=\Phi(t,x,\frac{s_0+s_1}2)$ and $X_{ij}(t):=X(\psi_i(t),s_j)$ for $i,j=0,\frac12,1$ and $-\eps_0<t<\eps_0$. Therefore  for $t\ge0$,
\begin{align*}
    &\big|\psi_0(t)+\psi_1(t)-2\psi_\frac12(t)\big|\le\int_0^t|X_{00}(\tau)+X_{11}(\tau)-2X_{\frac12\frac12}(\tau)|d\tau
    \\
    \le&\int_0^t\big|\tfrac{X_{00}(\tau)+X_{01}(\tau)}2-X_{0\frac12}(\tau)\big|d\tau+\int_0^t\big|\tfrac{X_{10}(\tau)+X_{11}(\tau)}2-X_{1\frac12}(\tau)\big|d\tau
    \\&+\int_0^t|X_{0\frac12}(\tau)+X_{1\frac12}(\tau)-2X_{\frac12\frac12}(\tau)|d\tau+\int_0^t\big|\tfrac{X_{00}(\tau)-X_{01}(\tau)}2-\tfrac{X_{10}(\tau)-X_{11}(\tau)}2\big|d\tau
    \\
    \le&2t\sup_{x\in U_0}\|X(x,\cdot)\|_{\Co^1_s(V_1)}|s_0-s_1|+\sup_{s\in V_1}\|X(\cdot,s)\|_{\Co^1_x(U_0)}\int_0^t\big|\psi_0(\tau)+\psi_1(\tau)-2\psi_\frac12(\tau)\big|d\tau
    \\
    &+\tfrac12\|X\|_{\Co^\frac\alpha2\Co^\frac12(U_0,V_1)}|s_0-s_1|^\frac12\int_0^t|\psi_0(\tau)-\psi_1(\tau)|^\frac\alpha2d\tau.
\end{align*}

By Remark \ref{Rmk::Hold::RmkforBiHold} \ref{Item::Hold::RmkforBiHold::Interpo} we have $\|X\|_{\Co^\frac\alpha2\Co^\frac12(U_0,V_1)}\lesssim\|X\|_{\Co^\alpha \Co^0\cap\Co^0\Co^1}\lesssim\|X\|_{\Co^{\alpha,1}_{x,s}}$. By the proven case $\beta=\frac 1\alpha<1$ we have $|s_0-s_1|^\frac12\int_0^t|\psi_0(\tau)-\psi_1(\tau)|^\frac\alpha2d\tau\le|s_0-s_1|^\frac12t\sup_{t,x}\|\Phi(t,x,\cdot)\|_{\Co^\frac1\alpha}^\frac\alpha2|s_0-s_1|^{\frac\alpha2\cdot\frac1\alpha}\lesssim_{\alpha,X,\eps_0}|s_0-s_1|$. Therefore $\big|\psi_0(t)+\psi_1(t)-2\psi_\frac12(t)\big|\le C_{X,\alpha,\eps_0}(|s_0-s_1|+\int_0^t\big|\psi_0(\tau)+\psi_1(\tau)-2\psi_\frac12(\tau)\big|d\tau)$ for some $C_{X,\alpha,\eps_0}$ that does not depend on $x,s_0,s_1$. Thus by Gronwall's inequality again we get $\sup_{t,x}\|\Phi(t,x,\cdot)\|_{\Co^1}<\infty$, finishing the proof.
\end{proof}

\section{Holomorphic Extension of Laplacian}\label{Section::SecHolLap}

In this section, for a holomorphic function $f(z)$ we use $\nabla f=\partial_zf=(\partial_1f,\dots,\partial_nf)$ (cf. \eqref{Eqn::Intro::ColumnNote}).

We use $\Sp^{n-1}=\partial\B^{n-1}$ for unit sphere.
We recall $\Hb^n=\{x+iy\in\C^n:|x|<1,4|y|<1-|x|\}$ from \eqref{Eqn::HCone}. We use $\Oh(\Hb^n)=\Oh_\loc(\Hb^n)$ for the space of holomorphic functions in $\Hb^n$, endowed with  compact-open topology.

Our goal in this section is to prove Proposition \ref{Prop::HolLap}. The idea follows from Morrey \cite{Analyticity}, which only consider the cases for positive $\alpha$.
By taking direct modification to our construction (see \eqref{Eqn::SecHolLap::FormulaForP}) we can show that there exist an inverse Laplacian $\Pv$ and its extension $\tilde \Pv$ such that $\Pv:\Co^{\alpha-2}(\B^n)\to\Co^\alpha(\B^n)$ and $\tilde\Pv:\Co^{\alpha-2}_\Oh(\Hb^n)\to\Co^\alpha(\Hb^n)$ is bounded for all $-M<\alpha<M$, where $M$ is an arbitrarily fixed large number.

We illustrate some property of holomorphic H\"older space in Section \ref{Section::SecHLLem}. We construct the inverse Laplacian $\Pv$ in Definition \ref{Defn::SecHolLap::DefofP} and its holomorphic extension $\tilde\Pv$ in Definition \ref{Defn::SecHolLap::DefofTildeP}. We decompose the proof of Proposition \ref{Prop::HolLap} into Propositions \ref{Prop::HolLap::PfE}, \ref{Prop::SecHolLap::Pv}, \ref{Prop::SecHolLap::TildePisHolo} and \ref{Prop::SecHolLap::BddTildeP}. 

In this section we use $\Ga$ as the Newtonian potential in $\R^n$, namely
\begin{equation}\label{Eqn::SecHolLap::NewtonPotent}
    \Ga(x)=\begin{cases}\frac{|x|}2&n=1,\\\frac1{2\pi}\log|x|&n=2,\\
	\frac{|x|^{2-n}}{(2-n)|\Sp^{n-1}|}&n\ge3,
	\end{cases}\quad x\in\R^n\backslash\{0\}.
\end{equation}

Note that $\R^n\backslash\{0\}$ is contained in $\{z\in\C^n:\re(z\cdot z)>0\}=\{z\in\C^n:|\re z|>|\im z|\}$, which is a connect set in $\C^n$. So $x\in\R^n\backslash\{0\}\mapsto|x|$ has unique holomorphic extension to $\{|\re z|>|\im z|\}$ as 
\begin{equation}\label{Eqn::SecHolLap::Eqn|z|}
    z\in\big\{|\re z|>|\im z|\big\}\mapsto(z\cdot z)^\frac12,\quad\text{with principle argument }-\tfrac\pi2<2\arg z<\tfrac\pi 2.
\end{equation}

We still use $\Ga$ as the holomorphic extended fundamental solution on $\{|\re z|>|\im z|\}$. 

Therefore, for each $\theta\in\Sp^{n-1}=\partial\B^n$, the function $[x\in\B^n\mapsto\Ga(x-\theta)]$ can be holomorphic extended to $[z\in\Hb^n\mapsto\Ga(z-\theta)]$. Moreover, for every multindex $\nu\neq0$, since $|\partial^\nu\Ga(x)|\lesssim_\nu|x|^{2-n-|\nu|}$, we have the following
\begin{align}\label{Eqn::SecHolLap::DevGa1}
    |(\partial^\nu\Ga)(z-t)|\lesssim_\nu|\re z-t|^{2-n-|\nu|}&,\quad\forall t\in\R^n,z\in\C^n\text{ satisfying }|\re z-t|\ge2|\im z|.
\end{align}




\subsection{Holomorphic H\"older spaces and extension of harmonic functions}\label{Section::SecHLLem}
For holomorphic functions in a complex domain whose H\"older norms are finite, we can use the blow-up speed of its derivative near the boundary.

Let $\alpha<1$, recall in Definition \ref{Defn::PDE::HoloHoldSpace} that $\Co^\alpha_\Oh(\Hb^n)$ is the space of all holomorphic functions in $\Hb^n$ such that $\|f\|_{\Co^\alpha_\Oh}=|f(0)|+\sup_{z\in\Hb^n}\dist(z,\Hb)^{1-\alpha}|\nabla f(z)|<\infty$.

We have in fact $\Co^\alpha_\Oh(\Hb^n)=\Co^\alpha(\Hb^n;\C)\cap\Oh(\Hb^n)$, see \cite[Chapter 7]{ZhuSpaceBall} for some illustrations. For completeness we prove some partial results that are used later.

\begin{lem}[Hardy-Littlewood type characterizations]\label{Lem::SecHolLap::HLLem}
\begin{enumerate}[parsep=-0.3ex,label=(\roman*)]
    \item\label{Item::SecHolLap::HLLem::Grad} Let $\alpha<1$. The differentiation $[f\mapsto\nabla f]:\Co^\alpha_\Oh(\Hb^n)\to\Co^{\alpha-1}_\Oh(\Hb^n;\C^n)$ is bounded linear.
    \item\label{Item::SecHolLap::HLLem::0Char} Let $\alpha<0$. Then $\Co^\alpha_\Oh(\Hb^n)$ has equivalent norm 
    $f\mapsto\sup_{z\in\Hb^n}\dist(z,\partial\Hb^n)^{-\alpha}|f(z)|$.
    
    \item\label{Item::SecHolLap::HLLem::Res} The restriction map $[f\mapsto f|_{\B^n}]:\Co^\alpha_\Oh(\Hb^n)\to\Co^\alpha(\B^n;\C)$ and the inclusion map $\Co^\alpha_\Oh(\Hb^n)\hookrightarrow L^\infty(\Hb^n;\C)$ are both bounded linear for $0<\alpha<1$.
    \item\label{Item::SecHolLap::HLLem::Harm} Let $\alpha<1$. If $g\in\Co^\alpha(\B^n)$ satisfies $\Delta g=0$, then $\sup_{x\in\B^n}\dist(x,\Sp^{n-1})^{1-\alpha}|\nabla g(x)|<\infty$. Moreover there is a $C=C(n,\alpha)>0$ such that $$\sup_{x\in\B^n}\dist(x,\Sp^{n-1})^{1-\alpha}|\nabla g(x)|\le C_3\|g\|_{\Co^\alpha(\B^n)},\quad\forall g\in\Co^\alpha(\B^n)\cap\ker\Delta.$$
\end{enumerate}
\end{lem}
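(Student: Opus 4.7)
The plan is to derive the four items from interior holomorphic/harmonic estimates combined with path integration exploiting the convexity of $\Hb^n$ and the fact that $0$ is an interior point. The geometric input is that $\Hb^n$ is a convex bounded open set with $c_0 := \dist(0,\partial\Hb^n) > 0$, so the distance function $\dist(\cdot,\partial\Hb^n)$ is concave on $\Hb^n$.

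For part (i), if $z_0 \in \Hb^n$ and $r := \dist(z_0,\partial\Hb^n)$, the polydisc $\Delta(z_0,r/2) := \{w \in \C^n : |w^j - z_0^j| < r/2\}$ lies in $\Hb^n$ and every point of it has distance at least $r/2$ from $\partial\Hb^n$ (since the distance function is $1$-Lipschitz). Cauchy's estimate applied to the holomorphic function $\nabla f$ yields $|\nabla^2 f(z_0)| \lesssim r^{-1}\sup_{\Delta(z_0,r/2)}|\nabla f| \lesssim r^{-1}(r/2)^{\alpha-1}\|f\|_{\Co^\alpha_\Oh}$, which is precisely the $\Co^{\alpha-1}_\Oh$ gradient bound after controlling $|\nabla f(0)|$. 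For part (iv), the classical interior gradient estimate for harmonic functions applied to $g - g(x_0)$ on $B(x_0,r/2)$ with $r := \dist(x_0,\Sp^{n-1})$ gives $|\nabla g(x_0)| \lesssim r^{-1}\sup_{B(x_0,r/2)}|g - g(x_0)|$. When $0 < \alpha < 1$ the Hölder condition immediately yields the claim. For the remaining ranges, in particular when $\alpha \le 0$, I would pass to the Littlewood-Paley characterization $\Co^\alpha = \Bs_{\infty\infty}^\alpha$ and represent $g$ as the Poisson integral of its values on $\partial B(x_0,r)$, estimating $\nabla g(x_0)$ by pairing $g$ against the smooth kernel $\nabla_x P(x_0,\cdot)$ whose $\Co^{-\alpha}$ norm at scale $r$ is $\lesssim r^{\alpha-1}$.

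For part (ii), concavity of the distance function gives $\dist(tz_0, \partial\Hb^n) \ge (1-t)c_0 + tr$ for $z_0 \in \Hb^n$ with $r := \dist(z_0,\partial\Hb^n)$ and $t \in [0,1]$. Integrating the gradient bound from the $\Co^\alpha_\Oh$ norm along the segment yields
\[
|f(z_0) - f(0)| \lesssim \|f\|_{\Co^\alpha_\Oh}\int_0^1((1-t)c_0 + tr)^{\alpha-1}\,dt \lesssim r^\alpha\|f\|_{\Co^\alpha_\Oh}\quad\text{for }\alpha < 0,
\]
where the last bound is a direct substitution $u = (1-t)c_0 + tr$; combined with the trivial control of $|f(0)|$, this yields $\sup_z\dist(z,\partial\Hb^n)^{-\alpha}|f(z)| \lesssim \|f\|_{\Co^\alpha_\Oh}$. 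The reverse direction is immediate from Cauchy: $|\nabla f(z_0)| \lesssim r^{-1}\sup_{\Delta(z_0,r/2)}|f| \lesssim r^{-1}(r/2)^\alpha\sup_w\dist(w,\partial\Hb^n)^{-\alpha}|f(w)|$, since $\alpha < 0$ makes $\dist(w)^\alpha \le (r/2)^\alpha$ whenever $\dist(w) \ge r/2$.

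For part (iii), the $L^\infty$ inclusion uses the same path integration as in (ii): for $0 < \alpha < 1$ the integral $\int_0^1((1-t)c_0+tr)^{\alpha-1}\,dt$ remains uniformly bounded as $r \to 0$. The Hölder restriction $\Co^\alpha_\Oh(\Hb^n) \to \Co^\alpha(\B^n)$ is the main technical point: for $x, y \in \B^n$ with $\rho := |x-y|$, I would construct a path in $\Hb^n$ from $x$ to $y$ along which $\dist(\cdot,\partial\Hb^n) \gtrsim \rho$ and total length is $\lesssim \rho$; concretely this is a concatenation of a small radial inward translation of $x$ and $y$ (needed when these points are close to $\Sp^{n-1}$), a vertical fattening into the imaginary direction using the defining inequality $4|\im w| < 1 - |\re w|$, and a horizontal segment. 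Integrating the gradient bound along this path yields $|f(x) - f(y)| \lesssim \rho^{\alpha-1}\cdot\rho = \rho^\alpha$. The main obstacle is constructing this path uniformly in the regime when $\rho$ is comparable to $1 - \max(|x|,|y|)$, which I would handle by splitting into subcases (small $\rho$ handled by the direct path construction, large $\rho$ by the already-proven $L^\infty$ bound combined with the observation that $|x| \approx |y|$ forces $1-|x|, 1-|y| \lesssim \rho$).
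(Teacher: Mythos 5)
Your parts (i)--(iii) follow essentially the paper's own route: interior Cauchy estimates give the derivative bounds, and convexity of $\Hb^n$ together with integration of $\dist(\cdot,\partial\Hb^n)^{\alpha-1}$ along segments gives (ii), the $L^\infty$ bound, and the H\"older bound in (iii); your corkscrew path (push $x,y$ radially inward by $\sim|x-y|$, then join at depth $\gtrsim|x-y|$) plays exactly the role of the paper's intermediate point $w$ with $\dist(tw+(1-t)z_j,\partial\Hb^n)\ge\frac1{100}t|z_0-z_1|$, and working only with real $x,y$ suffices for the stated restriction map. One small repair in (i)--(ii): the polydisc $\Delta(z_0,r/2)$ need not lie in $\Hb^n$ (its points can be at Euclidean distance $\sqrt n\,r/2>r$ from $z_0$), so either take polyradius $r/(2\sqrt n)$ or, as the paper does, apply the one-variable Cauchy formula in each coordinate line with radius $r/2$.

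The step that does not work as written is part (iv) for $\alpha\le0$. The pairing estimate you invoke, $|\langle g,\varphi\rangle|\lesssim\|g\|_{\Co^\alpha}\|\varphi\|_{\Co^{-\alpha}}$, is false: duality for $\Bs^\alpha_{\infty\infty}$ requires an $L^1$-based norm on the test function, $|\langle g,\varphi\rangle|\lesssim\|g\|_{\Bs^\alpha_{\infty\infty}}\|\varphi\|_{\Bs^{-\alpha}_{11}}$, and the $\Co^{-\alpha}$ norm of the derivative of an $L^1$-normalized kernel at scale $r$ is $\approx r^{\alpha-1-n}$, not $r^{\alpha-1}$; the exponent $r^{\alpha-1}$ you quote is precisely its $\Bs^{-\alpha}_{11}$ bound. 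Moreover, pairing the distribution $g$ against the Poisson kernel of $\partial B(x_0,r)$, a surface-supported object, is not directly controlled by Besov norms of $g$; the clean device is the solid mean-value property: take a radial bump $\rho_r$ supported in $B(x_0,r)\subset\B^n$ with $\rho_r\ast g=g$ near $x_0$, write $\nabla g(x_0)=\langle\tilde g,\nabla\rho_r(x_0-\cdot)\rangle$ for an extension $\tilde g\in\Co^\alpha(\R^n)$ (the pairing is independent of the extension since the bump is supported in $\B^n$), and estimate Littlewood--Paley piece by piece, $\sum_j\|\psi_j\ast\nabla\rho_r\|_{L^1}\|\phi_j\ast\tilde g\|_{L^\infty}\lesssim r^{\alpha-1}\|g\|_{\Co^\alpha(\B^n)}$. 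This corrected computation is exactly the paper's proof of (iv), which treats all $\alpha<1$ uniformly; your classical oscillation argument does settle $0<\alpha<1$ more simply.
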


\begin{proof}
\ref{Item::SecHolLap::HLLem::Grad}:  By assumption $|\partial_jf(z)|\le\|f\|_{\Co^\alpha_\Oh}\dist(z,\partial\Hb^n)^{\alpha-1}$. By Cauchy's integral formula \begin{equation}\label{Eqn::SecHolLap::HLLem::CauInt}
    \partial_jg(z)=\frac1{2\pi  r}\int_0^{2\pi}e^{-i\theta}g(z+re^{i\theta}\mathbf e_j)d\theta,\quad 1\le j\le n,\quad 0<r<\dist(z,\partial\Hb^n),\quad g\in\Oh(\Hb^n).
\end{equation}

So take $r=\frac12\dist(z,\partial\Hb^n)$ and $g=\partial_kf$, we get $|\partial_{jk}f(z)|\le \frac1r\sup_{|w-z|=r}|\partial_k f(w)|\le2^{2-\alpha}\|f\|_{\Co^\alpha_\Oh}\dist(z,\partial\Hb^n)^{\alpha-2}$, for $1\le j,k\le n$, which means $\sup_{z\in\Hb^n}\dist(z,\partial\Hb^n)^{2-\alpha}|\nabla^2 f(z)|\lesssim\|f\|_{\Co^\alpha_\Oh}$. This proves \ref{Item::SecHolLap::HLLem::Grad}.
 
\medskip\noindent\ref{Item::SecHolLap::HLLem::0Char}: Now $\alpha<0$. Clearly $|f(0)|\le\dist(0,\partial\Hb^n)^\alpha\cdot \sup_z\dist(z,\partial\Hb^n)^{-\alpha}|f(z)|$. If $\sup_z\dist(z,\partial\Hb^n)^{-\alpha}|f(z)|<\infty$, then by \eqref{Eqn::SecHolLap::HLLem::CauInt} with $g=f$ and $r=\frac12\dist(z,\partial\Hb^n)$ we get $|\nabla f(z)|\le \frac1r\sup_{|w-z|=r}|f(w)|\lesssim\dist(z,\partial\Hb^n)^{\alpha}$. So $\|f\|_{\Co^\alpha_\Oh}\lesssim \sup_z\dist(z,\partial\Hb^n)^{-\alpha}|f(z)|$.

Conversely since $\Hb^n$ is a convex set, we have $f(z)=f(0)+\int_0^1z\cdot \nabla f(tz)dt$ for all $f\in\Oh(\Hb^n)$ and $z\in\Hb^n$. Note that $\dist(tz,\partial\Hb^n)\ge t\dist(z,\partial\Hb^n)+(1-t)\dist(0,\partial\Hb^n)$ for all $0\le t\le 1$. Therefore for $f\in\Co^\alpha_\Oh(\Hb^n)$,
\begin{align*}
    |f(z)|&\le|f(0)|+|z|\|f\|_{\Co^\alpha_\Oh}\int_0^1\big(t\dist(z,\partial\Hb^n)+(1-t)\dist(0,\partial\Hb^n)\big)^{\alpha-1}dt
    \\
    &\lesssim\|f\|_{\Co^\alpha_\Oh}+\|f\|_{\Co^\alpha_\Oh}\int_0^{1-\dist(z,\partial\Hb^n)}(1-t)^{\alpha-1}dt\lesssim_\alpha\dist(z,\partial\Hb^n)^\alpha\|f\|_{\Co^\alpha_\Oh}.
\end{align*}

So $\sup_z\dist(z,\partial\Hb^n)^{-\alpha}|f(z)|\lesssim\|f\|_{\Co^\alpha_\Oh} $, completing the proof of \ref{Item::SecHolLap::HLLem::0Char}.

\medskip\noindent\ref{Item::SecHolLap::HLLem::Res}: Now $0<\alpha<1$. It suffices to show $\Co^\alpha_\Oh(\Hb^n)\subset\Co^\alpha(\Hb^n;\C)$. The boundedness of the inclusion and restriction would then follow from the natural maps $\Co^\alpha(\Hb^n)\hookrightarrow L^\infty(\Hb^n)$ and $\Co^\alpha(\Hb^n)\twoheadrightarrow\Co^\alpha(\B^n)$ respectively.

Now for $z_0,z_1\in\Hb^n$, what we need is to show that $|f(z_0)-f(z_1)|\lesssim_{n,\alpha}|z_0-z_1|^\alpha\|f\|_{\Co^\alpha_\Oh(\Hb^n)}$.

Since $\Hb^n$ is convex, by its geometry we can find a point $w\in\Hb^n$ (depending on $z_0,z_1$) such that $|w-z_j|\le|z_0-z_1|$ and $\dist(tw+(1-t)z_j,\partial\Hb^n)\ge\frac1{100} t|z_0-z_1|$ for $j=0,1$ and $0\le t\le 1$. Therefore
\begin{align*}
    |f(z_0)-f(z_1)|&\le |f(z_0)-f(w)|+|f(z_1)-f(w)|\le \sum_{j=0}^1\int_0^1|z_j-w||\nabla f(tw+(1-t)z_j)|dt
    \\&\lesssim|z_0-z_1|\|f\|_{\Co^\alpha_\Oh}\int_0^1(|z_0-z_1|t)^{\alpha-1}dt\lesssim|z_0-z_1|^\alpha\|f\|_{\Co^\alpha_\Oh}.
\end{align*}

Thus $\Co^\alpha_\Oh(\Hb^n)\subset\Co^\alpha(\Hb^n;\C)$ and we finish the proof of \ref{Item::SecHolLap::HLLem::Res}.

\medskip\noindent\ref{Item::SecHolLap::HLLem::Harm}: Let $\rho\in C_c^\infty(\B^n)$ be a radial function such that $\int\rho=1$. Let $\rho_j(x):=2^{jn}\rho(2^jx)$ for $j\ge0$. Therefore, for a harmonic function $g$ in $\B^n$, by the ball average formula we have $g(x)=\rho_k\ast g(x)$ whenever $1-|x|>2^{-k}$.

Let $(\phi_j)_{j=0}^\infty$ be a dyadic resolution (see Definition \ref{Defn::Hold::DyadicResolution}) for $\R^n$. Since $\nabla\rho$ is Schwartz for every $M>0$ we have $\|\phi_j\ast \nabla\rho\|_{L^1}\lesssim_{\phi,\rho,M}2^{-Mj}$ for $j\ge0$, so by scaling, we get $\|\phi_j\ast\nabla \rho_k\|_{L^1}=2^k\|\phi_{j-k}\ast\nabla\rho\|_{L^1}\lesssim_{\phi,\rho,M}2^{k-M(j-k)}$ if $j\ge k$. Therefore $\|\phi_j\ast\nabla\rho_k\|_{L^1}\lesssim_{\phi,\rho,M}2^k\min(1,2^{-M(j-k)})$ for all $j,k\ge0$.

Let $\psi_j:=\sum_{k=j-1}^{j+1}\phi_j$ for $j\ge0$ (we use $\phi_{-1}:=0$). By \eqref{Eqn::Hold::RmkDyaSupp} we have $\psi_j\ast \phi_j=\phi_j$. Therefore
\begin{equation*}
    \|\psi_j\ast\nabla\rho_k\|_{L^1}\lesssim_M 2^k\min(1,2^{-M(j-k)}),\quad j,k\ge0.
\end{equation*}

Now for a harmonic function $g\in\Co^\alpha(\B^n)$, consider an extension $\tilde g:=E_xg\in\Co^\alpha(\R^n)$ where $E_x$ is in Lemma \ref{Lem::Hold::CommuteExt}. By Lemma \ref{Lem::Hold::HoldChar} \ref{Item::Hold::HoldChar::LPHoldChar} (if $0<\alpha<1$) and Definition \ref{Defn::Hold::NegHold} (if $\alpha\le0$) we have $\|\phi_k\ast\tilde g\|_{L^\infty}\lesssim_{\phi,E_x}\|g\|_{\Co^\alpha(\B^n)}2^{-k\alpha}$.
Thus, for any $x\in\B^n$, take $k\ge0$ to be the unique integer such that $1-2^{1-k}\le |x|<1-2^{-k}$, we get
\begin{align*}
    &|\nabla g(x)|=|\nabla\rho_k\ast g(x)|\le\|\nabla\rho_k\ast\tilde g\|_{L^\infty}\le\sum_{j=0}^\infty\|\phi_j\ast\psi_j\ast\nabla\rho_k\ast\tilde g\|_{L^\infty}=\sum_{j=0}^\infty\|\psi_j\ast\nabla\rho_k\ast\phi_j\ast\tilde g\|_{L^\infty}
    \\
    \le&\sum_{j=0}^\infty\|\psi_j\ast\nabla\rho_k\|_{L^1}\|\phi_j\ast\tilde g\|_{L^\infty(\R^n)}\lesssim_{\alpha,\phi}\sum_{j=0}^\infty2^k\min(1,2^{-(|\alpha|+1)(j-k)})2^{-k\alpha}\|\tilde g\|_{\Co^\alpha(\R^n)}\lesssim_{\alpha}2^{k(1-\alpha)}\|g\|_{\Co^\alpha(\B^n)}.
\end{align*}
We conclude that $|\nabla g(x)|\lesssim(1-|x|)^{\alpha-1}\|g\|_{\Co^\alpha(\B^n)}$ for $g\in\Co^\alpha(\B^n)$ and $x\in\B^n$, finishing the proof of \ref{Item::SecHolLap::HLLem::Harm}.
\end{proof}



We can now prove Proposition \ref{Prop::HolLap} \ref{Item::HolLap::E}. Recall that for a real analytic function $f$, we use $\Ex f$ to be the (unique) holomorphic extension to the complex domain where it is defined.

\begin{prop}\label{Prop::HolLap::PfE}The holomorphic extension
    $\Ex:\Co^\alpha(\B^n)\cap\ker\Delta\to\Co^\alpha_\Oh(\Hb^n)$ is defined and bounded for $-2<\alpha<1$.
\end{prop}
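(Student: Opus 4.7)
The plan is to define $\Ex g$ on $\Hb^n$ as the holomorphic extension of $g$ arising from its Poisson integral representation, and then to extract the blow-up rate of $\nabla\Ex g$ near $\partial\Hb^n$ by combining Cauchy's integral formula on a polydisc inside $\Hb^n$ with an oscillation bound on $\Ex g$ derived from Poisson-kernel estimates.

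To construct $\Ex g$, fix $x_0\in\B^n$ and $r<1-|x_0|$. Since $g$ is harmonic on $\overline{B(x_0,r)}$ it has the Poisson representation $g(x)=\int_{|\xi|=r}g(x_0+\xi)P_r(x-x_0,\xi)\,d\sigma(\xi)$ for $x\in B(x_0,r)$, where $P_r(w,\xi)=(r^2-|w|^2)/(|\Sp^{n-1}|r|w-\xi|^n)$. Using the holomorphic square root \eqref{Eqn::SecHolLap::Eqn|z|}, the kernel $P_r(w,\xi)$ extends holomorphically in $w$ to the cone $\{w\in\C^n:|\im w|<r-|\re w|\}$ (as $|\re w-\xi|^2-|\im w|^2>0$ there, uniformly in $|\xi|=r$), so the integral above extends $g$ holomorphically in $x$ to $\{z:|\im z|<r-|\re z-x_0|\}$. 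Letting $r\nearrow 1-|x_0|$ and taking the union over $x_0\in\B^n$ produces a holomorphic extension of $g$ to $\{z:|\im z|<1-|\re z|\}$, which strictly contains $\Hb^n$; the local extensions agree on overlaps by the identity principle and glue to a single $\Ex g\in\Oh(\Hb^n)$.

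For the gradient estimate $\sup_{z\in\Hb^n}d(z)^{1-\alpha}|\nabla\Ex g(z)|\lesssim\|g\|_{\Co^\alpha(\B^n)}$, fix $z\in\Hb^n$ and choose $r=c_n\, d(z)$ small enough that the polydisc $P_r(z)=\{\zeta\in\C^n:|\zeta_j-z_j|<r\ \forall j\}$ is relatively compact in $\Hb^n$. Cauchy's formula with subtraction of a constant gives $|\nabla\Ex g(z)|\lesssim r^{-1}\operatorname{osc}_{P_r(z)}\Ex g$, reducing matters to $\operatorname{osc}_{P_r(z)}\Ex g\lesssim r^\alpha\|g\|_{\Co^\alpha}$. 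I would write the oscillation via the Poisson representation on the intermediate sphere $|\xi|=r_0$ with $r_0:=\tfrac12(1+|\re z|)$: this sphere lies strictly inside $\B^n$, and on it the integrand $g(r_0\theta)$ is controlled by the H\"older modulus of $g$ for $0<\alpha<1$, or by the pointwise bound $|g(x)|\lesssim(1-|x|)^\alpha\|g\|_{\Co^\alpha}$ for $-2<\alpha\le 0$ (obtained by integrating Lemma \ref{Lem::SecHolLap::HLLem} \ref{Item::SecHolLap::HLLem::Harm} along the ray from $0$ to $x$, as in the proof of \ref{Item::SecHolLap::HLLem::0Char}). In the negative-$\alpha$ regime, an integration by parts in $\theta\in\Sp^{n-1}$ against suitable angular derivatives of $P_{r_0}$ trades blow-up of $g$ against smoothness of the kernel and produces convergent integrals.

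The main obstacle I anticipate is coordinating the two scales $r\lesssim d(z)$ and $r_0\approx 1-|\re z|$ so that the Poisson-kernel derivatives $\nabla^k_w P_{r_0}(\zeta-x_0,\xi)$ evaluated for $\zeta\in P_r(z)$ decay at the correct rate in both $d(z)$ and $r_0-|\re z|$; this is where the factor $4$ in the definition of $\Hb^n$ enters decisively, ensuring that $P_r(z)\Subset\Hb^n$ leaves enough room for the kernel to be holomorphically bounded. For $\alpha\in(0,1)$ this is essentially Morrey's original scheme in \cite{Analyticity}; the extension to $\alpha\in(-2,0]$ requires the integration-by-parts step just described, and the restriction $\alpha>-2$ comes from just barely avoiding divergence of the resulting integral. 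The endpoints $\alpha=1$ and $\alpha=-2$ are genuinely excluded by this method.
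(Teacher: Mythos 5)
Your construction of $\Ex g$ coincides with the paper's: both complexify the Poisson kernel through the square root \eqref{Eqn::SecHolLap::Eqn|z|} (the paper simply fixes $x_0=\re z$ and $r=\frac{1-|x_0|}2$ as in \eqref{Eqn::SecHolLap::EqnEx} instead of taking a union of local extensions). Where you diverge is the gradient bound. You go through Cauchy's formula on a polydisc of radius $\approx\dist(z,\partial\Hb^n)$ plus an oscillation estimate of $\Ex g$ obtained from kernel estimates on an intermediate sphere; the paper instead uses the identity $\nabla_z\Ex g=\Ex(\nabla_x g)$ (legitimate since $\nabla g$ is again harmonic) and applies the interior Hardy--Littlewood bound, Lemma \ref{Lem::SecHolLap::HLLem} \ref{Item::SecHolLap::HLLem::Harm}, directly to $\nabla g$, so only a sup bound of the complexified Poisson integral over the sphere of radius $\frac{1-|x|}2$ is needed, and that is immediate from $\re((iy-\theta)^2)\approx(1-|x|)^2$ on $\Hb^n$. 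Your two-scale scheme can be made to work for $\alpha\in(0,1)$ (with the kernel subtraction/cancellation step you sketch) and for $\alpha\in(-2,0)$, but note that in the negative range the integration by parts in $\theta$ is vacuous: the intermediate sphere $|\xi|=r_0$ stays at distance $\frac{1-|\re z|}2$ from $\partial\B^n$, so $|g|\lesssim(1-|\re z|)^\alpha\|g\|_{\Co^\alpha}$ there, all integrals converge trivially, and the restriction $\alpha>-2$ has nothing to do with convergence of these integrals (it comes from the range in which the ambient spaces and Lemma \ref{Lem::SecHolLap::HLLem} are set up).

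The genuine gap is at $\alpha=0$, which lies inside the asserted range $(-2,1)$. Your intermediate claim $|g(x)|\lesssim(1-|x|)^\alpha\|g\|_{\Co^\alpha}$ for $-2<\alpha\le 0$ fails at $\alpha=0$: integrating Lemma \ref{Lem::SecHolLap::HLLem} \ref{Item::SecHolLap::HLLem::Harm} along the ray gives only $|g(x)|\lesssim\|g\|_{\Co^0}\log\frac{2}{1-|x|}$, and indeed $\Co^0=\Bs^0_{\infty\infty}$ contains unbounded harmonic functions, so your route produces $|\nabla\Ex g(z)|\lesssim\dist(z,\partial\Hb^n)^{-1}\log\frac{2}{\dist(z,\partial\Hb^n)}$ rather than $\dist(z,\partial\Hb^n)^{-1}$; the H\"older-modulus argument you use for $\alpha\in(0,1)$ is also unavailable at $\alpha=0$. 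To close this you would need either an interpolation argument adapted to the subspace $\ker\Delta$, or, much simpler, the paper's device: differentiate first, i.e.\ estimate $\Ex(\nabla g)$, so that Lemma \ref{Lem::SecHolLap::HLLem} \ref{Item::SecHolLap::HLLem::Harm} applied to $\nabla g$ handles every $\alpha<1$, including $\alpha=0$, uniformly.
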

\begin{proof}Let $f\in\Co^\alpha(\B^n)$ be such that $\Delta f=0$. By Lemma \ref{Lem::SecHolLap::HLLem} \ref{Item::SecHolLap::HLLem::Harm} $f\in C^0_\loc(\B^n)$, and by Poisson's formula (See \cite[Chapter 2.2 (45)]{Evans} for example)
$$ f(x)=\frac{r^2-|x-x_0|^2}{|r\Sp^{n-1}|}\int_{\partial B(x_0,r)}\frac{f(\theta)d\sigma(\theta)}{|x-\theta|^n},\quad\text{for }x_0\in \B^n,\quad 0<r<1-|x_0|,\quad x\in B^n(x_0,r).$$
Here $d\sigma$ is the standard spherical measure. 

Using \eqref{Eqn::SecHolLap::Eqn|z|} we have holomorphic extension,
$$\Ex f(z)=\frac{r^2-(z-x_0)^2}{|r\Sp^{n-1}|}\int_{\partial B(x_0,r)}\frac{f(\theta)d\sigma(\theta)}{((z-\theta)^2)^\frac n2},\quad x_0\in\B^n,\quad r<1-|x_0|,\quad|\im z|<r-|\re z-x_0|.$$
For $z\in\Hb^n$, take $x_0=\re z$ and $r=\frac{1-|x_0|}2$, we can write
\begin{equation}\label{Eqn::SecHolLap::EqnEx}
    \Ex f(x+iy)=\frac{(\frac{1-|x|}2)^2+y^2}{|\frac{1-|x|}2\cdot\Sp^{n-1}|}\int_{\frac{1-|x|}2\Sp^{n-1}}\frac{f(x+\theta)d\sigma(\theta)}{((iy-\theta)^2)^\frac n2},\quad x+iy\in\Hb^n.
\end{equation}

Since $|y|<\frac14(1-|x|)=\frac12|\theta|$ for $\theta\in\frac{1-|x|}2\Sp^{n-1}$, we have $\re((iy-\theta)^2)\approx|\theta|^2\approx(1-|x|)^2$.

Note that $\nabla f$ is also harmonic, which gives $\nabla_z\Ex f=\Ex(\nabla_xf)$. Therefore by Lemma \ref{Lem::SecHolLap::HLLem} \ref{Item::SecHolLap::HLLem::Harm},
$$|\nabla\Ex f(x+iy)|\lesssim\frac{(1-|x|)^2}{1-|x|}\int_{\frac{1-|x|}2\Sp^{n-1}}\frac{|\nabla f(x+\theta)|d\sigma(\theta)}{(1-|x|)^n}\lesssim\sup\limits_{\partial B^n(x,\frac{1-|x|}2)}|\nabla f|\lesssim(1-|x|)^{\alpha-1}\|f\|_{\Co^\alpha(\B^n)}.$$

Since $\dist(z,\partial\Hb^n)\ge\dist(x,\partial\Hb^n)\approx 1-|x|$, we get $\sup_{z\in\Hb^n}\dist(z,\partial\Hb^n)^{1-\alpha}|\nabla\Ex f(z)|\lesssim\|f\|_{\Co^\alpha(\B^n)}$. 

Clearly $|\Ex f(0)|\lesssim\|f\|_{\Co^\alpha(\B^n)}$, so by definition of $\Co^\alpha_\Oh$-spaces we get $\|\Ex f\|_{\Co^\alpha_\Oh(\Hb^n)}\lesssim\|f\|_{\Co^\alpha(\B^n)}$, finishing the proof.
\end{proof}

\subsection{Construction and H\"older regularity for real inverse Laplacian $\Pv$}

We define $\Pv f=(\Ga\ast Ef)|_{\B^n}$ where $E$ is a concrete extension operator for $\B^n$ such that $E:\Co^\alpha(\B^n)\to\Co^\alpha(\R^n)$ is bounded linear for $-4<\alpha<1$. 
\begin{lem}[{\cite[Theorem 2.9.2]{Triebel1}}]\label{Lem::SecHolLap::ExtLem}
Let $M\in\Z_+$, let  $(a_j,b_j)_{j=1}^M\subset\R$ satisfy $b_j>0$, $1\le j\le M$ and $\sum_{j=1}^Ma_j(-b_j)^k=1$ for $-4\le k\le 4$. Let $(\tilde\chi_j)_{j=1}^M\subset C_c^\infty(\R)$ satisfies $\chi_j(t)\equiv1$ for $t$ closed to 0 and each $1\le j\le M$. Define
\begin{equation}\label{Eqn::HolLap::ExtHPOp}
    E^Hf(x',x_n):=\begin{cases}f(x',x_n)&x_n>0,\\\sum_{j=1}^M\tilde\chi_j(x_n)a_j\cdot f(x',-b_jx_n)&x_n<0.\end{cases}
\end{equation}
Then $E^H$ defines an extension operator on the half plane $\R^n_+=\{x_n>0\}$ such that $E^H:\Co^\alpha(\R^n_+)\to\Co^\alpha(\R^n)$ for all $-4<\alpha<1$.
\end{lem}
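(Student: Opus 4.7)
The plan is to recognize this as the classical Hestenes--Seeley reflection extension of odd order; the moment conditions $\sum_j a_j(-b_j)^k=1$ for $0\le k\le 4$ are precisely what one needs to accommodate the range $-4<\alpha<1$, mirroring the general pattern in \cite[Theorem 2.9.2]{Triebel1}. The proof splits naturally at $\alpha=0$, with the positive range handled by direct pointwise estimates and the non-positive range reduced via Littlewood--Paley. First I would verify that $E^H$ is well defined: for $\alpha>0$, $f\in\Co^\alpha(\R^n_+)$ is continuous up to $\partial\R^n_+$ so \eqref{Eqn::HolLap::ExtHPOp} makes pointwise sense; for $\alpha\le0$ one defines $E^Hf$ as a tempered distribution using the formula $\langle E^Hf,\varphi\rangle = \langle f,\varphi|_{\R^n_+}\rangle + \sum_j a_j\langle f,\tilde\chi_j(-\cdot/b_j)\varphi(\cdot',-\cdot/b_j)b_j^{-1}\rangle$, suitably interpreted by testing against $\varphi\in\Sc(\R^n)$.

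Next, for the qualitative matching at the boundary, a direct computation using $\sum_j\tilde\chi_j(0)a_j(-b_j)^k=1$ for $0\le k\le 4$ gives that when $f\in C^4(\overline{\R^n_+})$, the one-sided derivatives $\partial_{x_n}^k E^Hf(x',0^\pm)$ agree for $0\le k\le 4$, so $E^Hf\in C^4(\R^n)$ with local control by $C^4(\overline{\R^n_+})$. For the quantitative H\"older estimate in the range $0<\alpha<1$, pick $x,y\in\R^n$ with $x_n>0>y_n$; writing $E^Hf(y)-E^Hf(x)=\sum_j\tilde\chi_j(y_n)a_j[f(y',-b_jy_n)-f(x',x_n)]$ and using $\sum_j a_j=1$ together with $|\tilde\chi_j(y_n)-1|\lesssim|y_n|$ bounds each term by $\|f\|_{\Co^\alpha(\R^n_+)}$ times $|x-y|^\alpha$ (the distances $|(x',x_n)-(y',-b_jy_n)|$ are comparable to $|x-y|$).

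The bulk of the work, and the main obstacle, is the case $-4<\alpha\le 0$, where I would use the Littlewood--Paley characterization of Definition \ref{Defn::Hold::NegHold}: it suffices to show $\sup_{l\ge 0}2^{l\alpha}\|\phi_l\ast E^Hf\|_{L^\infty(\R^n)}\lesssim\|f\|_{\Co^\alpha(\R^n_+)}$. Picking an extension $\tilde f\in\Co^\alpha(\R^n)$ with $\|\tilde f\|_{\Co^\alpha(\R^n)}\le 2\|f\|_{\Co^\alpha(\R^n_+)}$, one decomposes $E^Hf = \tilde f\cdot\mathbf 1_{x_n>0}+\sum_j a_j\tilde\chi_j\cdot(\tilde f\circ R_j)\cdot\mathbf 1_{x_n<0}$ where $R_j(x',x_n)=(x',-b_jx_n)$ is a reflection-dilation. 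The moment conditions rewrite $E^Hf-\tilde f$ on $\{x_n<0\}$ as a combination of terms that vanish to order $5$ at $\{x_n=0\}$ when paired against a smooth test function with appropriate moments; concretely, after a Taylor expansion of $\tilde\chi_j(x_n)-\mathbf 1(x_n)$ near $x_n=0$, this reduces the estimate to bounding the Besov norm of a reflected piece, which one controls by the scale-invariant bound $\|\phi_l\ast(\tilde f\circ R_j\cdot\chi)\|_{L^\infty}\lesssim 2^{-l\alpha}\|\tilde f\|_{\Co^\alpha(\R^n)}$ using the fact that $R_j$ is a bilipschitz map and $\chi$ is a Schwartz cutoff. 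The delicate point is tracking the Littlewood--Paley interaction across the hyperplane $\{x_n=0\}$: the moments of order $\le 4$ are needed precisely to absorb the endpoint loss at $\alpha=-4$, and any fewer would force the strict inequality $\alpha>-k$ with $k<4$. Since this analysis is carried out in complete generality in \cite[Theorem 2.9.2]{Triebel1} (where the Besov spaces $\Bs^\alpha_{\infty\infty}=\Co^\alpha$ are an allowable instance), I would appeal to that reference to close the argument rather than reproduce its technicalities.
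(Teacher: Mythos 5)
The paper offers no proof of this lemma: it is stated purely as a citation of Triebel's Theorem 2.9.2, and since your argument also closes by appealing to that same theorem for the hard range, your proposal is in substance the same as the paper's. One caveat: the heuristics you sketch for $-4<\alpha\le 0$ would not stand on their own — multiplication by $\mathbf 1_{\{x_n>0\}}$ is not bounded on $\Co^\alpha$ once $\alpha\le -1$, "Taylor expanding" $\tilde\chi_j(x_n)-\mathbf 1(x_n)$ is not meaningful since the indicator is not smooth, and you use only the moments $0\le k\le 4$ while the hypothesis (and Triebel's construction for negative smoothness) also requires the negative-power conditions $\sum_j a_j(-b_j)^k=1$ for $-4\le k\le -1$ — but as you defer to the reference for precisely this range, nothing load-bearing is lost.
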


\begin{cor}
Let $(a_j,b_j)_{j=1}^9\subset\R$ satisfy $b_j>0$, $1\le j\le 9$ and $\sum_{j=1}^9a_j(-b_j)^{-k}=1$ for $-4\le k\le 4$. For $1\le j\le 9$, let $\tilde\chi_j\in C_c^\infty(\R)$ be such that $\tilde\chi_j\equiv1$ in an neighborhood of $0$.
Then 
\begin{equation}\label{Eqn::SecHolLap::ExtBallOp}
    Ef(e^{-\rho}\theta):=\begin{cases}f(e^{-\rho}\theta)&\rho>0,\\\sum_{j=1}^9a_j\tilde\chi_j(\rho)f(e^{\rho/b_j}\theta)&\rho<0,\end{cases}\quad\rho\in\R,\quad\theta\in\Sp^{n-1},
\end{equation}
defines an extension operator on $\B^n$ such that $E:\Co^\alpha(\B^n)\to\Co^\alpha(\R^n)$ is bounded linear for $-4<\alpha<1$.

Moreover its formal adjoint $E^*$ is given by
\begin{equation}\label{Eqn::SecHolLap::EqnE*}
    E^*g(x)=g(x)+\sum_{j=1}^9\frac{a_jb_j}{|x|^{n(b_j+1)}}\tilde\chi_j(b_j\log|x|)g\Big(\frac x{|x|^{b_j+1}}\Big),\quad x\in\B^n.
\end{equation}

\end{cor}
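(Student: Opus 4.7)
My plan is to recognize that, in radial polar coordinates, the ball extension $E$ is precisely the half-space extension $E^H$ of Lemma \ref{Lem::SecHolLap::ExtLem} applied in the radial variable with the sphere $\Sp^{n-1}$ as parameter. Writing $\Phi(\rho,\theta):=e^{-\rho}\theta$ for the smooth diffeomorphism $\R_\rho\times\Sp^{n-1}\to\R^n\setminus\{0\}$ (so $\rho>0$ corresponds to $\B^n\setminus\{0\}$ and $\rho<0$ to $\R^n\setminus\overline{\B^n}$), pulling back the defining formula gives
\[
(Ef)\circ\Phi(\rho,\theta)=\begin{cases}(f\circ\Phi)(\rho,\theta),&\rho>0,\\ \sum_{j=1}^9 a_j\tilde\chi_j(\rho)(f\circ\Phi)(-\rho/b_j,\theta),&\rho<0,\end{cases}
\]
which is the one-dimensional half-space operator of Lemma \ref{Lem::SecHolLap::ExtLem}, in the variable $\rho$ with $\theta$ as parameter, but with $b_j$ replaced by $\tilde b_j:=1/b_j$. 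The moment condition required by that lemma is $\sum_j a_j(-\tilde b_j)^k=1$ for $-4\le k\le 4$, which under the substitution is exactly the assumption $\sum_j a_j(-b_j)^{-k}=1$ for $-4\le k\le 4$.

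To transfer the half-space boundedness from $\R^n_+$ to the manifold half-space $\R^+_\rho\times\Sp^{n-1}$, I would cover $\Sp^{n-1}$ by finitely many smooth coordinate charts $\varphi_k\colon V_k\to W_k\subseteq\R^{n-1}$, pick a subordinate smooth partition of unity $\{\psi_k\}$ on $\Sp^{n-1}$, and in each chart note that the transplanted operator acts only in $\rho$ (with $(n-1)$-dimensional parameter $u=\varphi_k(\theta)$). Lemma \ref{Lem::Hold::TOtimesId}, applied with $T$ equal to the scalar half-space operator with parameters $\tilde b_j$, then promotes the one-variable bound of Lemma \ref{Lem::SecHolLap::ExtLem} to a parameterized bound on $\R^+_\rho\times W_k$, uniformly over $-4<\alpha<1$. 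Multiplying by the smooth cutoffs $\psi_k$ (which act continuously on $\Co^\alpha$ throughout this range), summing over $k$, and composing with the smooth diffeomorphisms $\varphi_k$ and $\Phi$ on the compact annulus $\{|x|\in[\tfrac13,3]\}$ (outside which $Ef$ agrees with $f$ or vanishes identically, since each $\tilde\chi_j\in C_c^\infty$) then yields the desired boundedness $E\colon\Co^\alpha(\B^n)\to\Co^\alpha(\R^n)$.

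The formula for $E^*$ is a direct change-of-variables computation in the pairing $\int_{\R^n}Ef\cdot g=\int_{\B^n}fg+\int_{|x|>1}Ef\cdot g$. On the exterior integral, switching to polar coordinates $x=e^{-\rho}\theta$ with $\rho<0$ produces the Jacobian $e^{-n\rho}$; in the $j$-th summand, substituting $\rho=-b_j\rho'$ flips the domain to $\rho'>0$ and introduces a factor $b_j$, while the arguments transform into $y:=e^{-\rho'}\theta$ (with $|y|<1$) and $e^{b_j\rho'}\theta=|y|^{-b_j}\cdot(y/|y|)=y/|y|^{b_j+1}$. Collecting the exponential factors and rewriting $d\rho'\,d\sigma(\theta)=|y|^{-n}\,dy$ delivers the kernel $a_jb_j|y|^{-n(b_j+1)}\tilde\chi_j(b_j\log|y|)$ as stated.

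The only real subtlety will be the case $\alpha\le 0$, where $\Co^\alpha$ is a space of distributions: one must verify that multiplication by smooth cutoffs, composition with smooth diffeomorphisms, and the tensor-extension of Lemma \ref{Lem::Hold::TOtimesId} all continue to preserve the Besov structure $\Bs^\alpha_{\infty\infty}$ uniformly across the entire range $-4<\alpha<1$. Happily, Lemma \ref{Lem::Hold::TOtimesId} is stated to cover any open range $(R_1,R_2)$ of exponents and Lemma \ref{Lem::SecHolLap::ExtLem} already delivers the half-space bound throughout $-4<\alpha<1$, so no new analytic input beyond these two lemmas is required; the geometric gluing reduces entirely to a formality about smooth changes of variables and smooth partitions of unity on the compact annular region where $Ef$ genuinely differs from $f$ or $0$.
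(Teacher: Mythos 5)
Your overall reduction is the same as the paper's: conjugate by the radial diffeomorphism $(\rho,\theta)\mapsto e^{-\rho}\theta$, observe that the hypothesis $\sum_j a_j(-b_j)^{-k}=1$ for $-4\le k\le4$ is exactly the moment condition of Lemma \ref{Lem::SecHolLap::ExtLem} after the substitution $b_j\mapsto 1/b_j$, note that only a neighborhood of $\{\rho=0\}$ matters, and compute $E^*$ by polar coordinates and the substitution $\rho=-b_jr$; the adjoint computation is correct and matches the paper. The gap is in how you handle the sphere directions when $\alpha\le0$. You treat the half-space operator as a one-variable operator in $\rho$ and invoke Lemma \ref{Lem::Hold::TOtimesId} to ``add the parameter'' $u=\varphi_k(\theta)$. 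But Lemma \ref{Lem::Hold::TOtimesId} only produces bounds on the anisotropic spaces $\Co^\alpha_\rho\Xs_u$, and for $\alpha\le0$ these spaces neither contain nor embed into the isotropic space $\Co^\alpha=\Bs^\alpha_{\infty\infty}$ in the joint variables, which is what the corollary asserts. The identification of Lemma \ref{Lem::Hold::CharMixHold} is proved (and true) only for positive exponents: for instance $f(u,\rho):=\sum_{k\ge1}2^{k}e^{i2^{k}u^1}$, constant in $\rho$, lies in $\Co^{-1}$ of the joint variables (well inside the range $-4<\alpha<1$), yet $\phi_0\ast_\rho f=f$ is not an $L^\infty$ function, so $f\notin\Co^{\alpha}_\rho L^\infty_u$ for any $\alpha$; hence the ``promotion'' step cannot even start from the hypothesis $f\in\Co^\alpha(\B^n)$, and symmetrically no mixed-norm bound on the output returns you to $\Co^\alpha(\R^n)$ (for $\alpha<0$ one has $2^{(j+k)\alpha}\ge 2^{\max(j,k)\alpha}$ in the wrong direction). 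Since the entire purpose of the nine-fold reflection is the range $\alpha\le0$ (the corollary feeds $\Pv:\Co^{\alpha-2}\to\Co^{\alpha}$), this is not a removable formality, and your closing claim that ``no new analytic input beyond these two lemmas is required'' is where the argument breaks.

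The repair is to drop Lemma \ref{Lem::Hold::TOtimesId} entirely: Lemma \ref{Lem::SecHolLap::ExtLem} is already stated on the $n$-dimensional half-space, with an operator acting only in the last coordinate, and its conclusion is an isotropic bound $\Co^\alpha(\R^n_+)\to\Co^\alpha(\R^n)$ for all $-4<\alpha<1$. So in each product chart $(u,\rho)\in W_k\times\R$ near $\{\rho=0\}$ (the only region where $Ef$ differs from $f$ or from $0$), the conjugated operator is literally $E^H$ with $b_j$ replaced by $1/b_j$, and the lemma applies directly. What one then still needs, and what your sketch leaves implicit, is the invariance of $\Bs^\alpha_{\infty\infty}$ under bounded smooth diffeomorphisms for negative $\alpha$ (the paper cites \cite[Theorem 2.10.2(i)]{Triebel1} for exactly this) together with stability under multiplication by smooth cutoffs, which is covered by Lemma \ref{Lem::Hold::Product} \ref{Item::Hold::Product::Hold1}. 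With those replacements your argument coincides with the paper's proof.
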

\begin{proof}
Clearly $(\rho,\theta)\mapsto e^{-\rho}\theta$ is a diffeomorphism from $\R\times\Sp^{n-1}$ to $\R^n\backslash\{0\}$ and maps $\{\rho>0\}$ onto $\B^n\backslash\{0\}$. By \cite[Theorem 2.10.2(i)]{Triebel1} we see that $\Co^\alpha=\Bs_{\infty\infty}^\alpha$ is preserved by bounded diffeomorphism. Therefore, by passing to coordinate covers of $\Sp^{n-1}$ if necessary (since we only need to worry about the place where $\rho$ is closed to $0\in\R$), by Lemma \ref{Lem::SecHolLap::ExtLem} with $b_j$ replacing by $\frac1{b_j}$, we know \eqref{Eqn::SecHolLap::ExtBallOp} is an extension operator for $\B^n$ and has $\Co^\alpha$-boundedness for $-4<\alpha<1$.

To see \eqref{Eqn::SecHolLap::EqnE*}, let $g:\R^n\to\R$, we have
\begin{align*}
    &\int_{\R^n}Ef(y)g(y)dt=\int_{\Sp^{n-1}} d\theta\int_\R Ef(e^{-\rho}\theta)g(e^{-\rho}\theta)e^{-n\rho}d\rho
   \\=&\int_{\Sp^{n-1}}\bigg(\int_{\R_+} f(e^{-\rho}\theta)g(e^{-\rho}\theta)e^{-n\rho}d\rho+\sum_{j=1}^9a_j\tilde\chi_j(\rho)\int_{\R_-}f(e^{\rho/b_j}\theta)g(e^{-\rho}\theta)e^{-n\rho}d\rho\bigg) d\theta
   \\=&\int_{\Sp^{n-1}} d\theta\int_{\R_+} f(e^{-r}\theta)g(e^{-r}\theta)e^{-nr}dr+\sum_{j=1}^9a_jb_j\int_{\Sp^{n-1}} d\theta\int_{\R_+}\tilde\chi_j(b_jr)f(e^{-r}\theta)g(e^{b_jr}\theta)e^{nr(b_j+1)}e^{-nr}dr
   \\=&\int_{\B^n} f(x)\bigg(g(x)+\sum_{j=1}^9\frac{a_jb_j\cdot\tilde\chi_j(b_j\log|x|)}{|x|^{n(b_j+1)}}g\Big(\frac x{|x|^{b_j+1}}\Big)\bigg)dx.
\end{align*}



This gives \eqref{Eqn::SecHolLap::EqnE*} and finishes the proof.
\end{proof}

We now give an concrete formula of $\Pv$ based on \eqref{Eqn::SecHolLap::EqnE*},

\begin{defn}\label{Defn::SecHolLap::DefofP} We define $\Pv$ be
\begin{equation}\label{Eqn::SecHolLap::FormulaForP}
    \Pv f(x):=\int_{\B^n} f(t)\bigg(\Ga(x-t)+\chi(|t|)\sum_{j=1}^9\frac{a_jb_j}{|t|^{n(b_j+1)}}\Ga\Big(x-\frac{t}{|t|^{b_j+1}}\Big)\bigg)dt.
    \end{equation}
Here $\chi\in C_c^\infty(\frac13,\frac53)$ satisfies $\chi|_{[\frac12,\frac32]}\equiv1$, and $a_j\in\R,b_j>0$ satisfy $\sum_{j=1}^9a_j(-b_j)^k=1$ for $-4\le k\le 4$.
\end{defn}

Thus we have Proposition \ref{Prop::HolLap} \ref{Item::HolLap::P} almost immediately.
\begin{prop}\label{Prop::SecHolLap::Pv}
    Let $\Pv$ defined as in \eqref{Eqn::SecHolLap::FormulaForP}, then $\Pv$ is an left inverse of Laplacian such that $\Pv:\Co^\alpha(\B^n)\to\Co^{\alpha+2}(\B^n)$ is bounded for $-4<\alpha<-1$. 
    
\end{prop}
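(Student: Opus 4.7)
The core idea is to recognize \eqref{Eqn::SecHolLap::FormulaForP} as the convolution $\Pv f = (\Ga \ast Ef)|_{\B^n}$ in disguise, where $E$ is the extension operator from \eqref{Eqn::SecHolLap::ExtBallOp}. To identify the two expressions, I would choose the auxiliary cutoffs in Lemma \ref{Lem::SecHolLap::ExtLem} as $\tilde\chi_j(\rho) := \chi(e^{\rho/b_j})$, which lie in $C_c^\infty(\R)$, equal $1$ near $\rho = 0$ because $\chi \equiv 1$ on $[1/2, 3/2]$, and satisfy the key identity $\tilde\chi_j(b_j \log|t|) = \chi(|t|)$. Applying the duality $\int Ef \cdot g = \int_{\B^n} f \cdot E^*g$ from \eqref{Eqn::SecHolLap::EqnE*} to $g(t) = \Ga(x-t)$ then recovers \eqref{Eqn::SecHolLap::FormulaForP} exactly. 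Given this identification, the left-inverse property is immediate: since $\Ga$ is the fundamental solution of $\Delta$, one has $\Delta(\Ga \ast Ef) = Ef$ in $\D'(\R^n)$, and $Ef|_{\B^n} = f$ because $E$ is an extension, so $\Delta \Pv f = f$ on $\B^n$.

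For the $\Co^\alpha \to \Co^{\alpha+2}$ estimate, the extension bound from the corollary of Lemma \ref{Lem::SecHolLap::ExtLem} gives $\|Ef\|_{\Co^\alpha(\R^n)} \lesssim \|f\|_{\Co^\alpha(\B^n)}$ for $-4 < \alpha < 1$, with $Ef$ supported in a fixed ball $B(0, R)$ independent of $f$. It then suffices to prove that for $g \in \Co^\alpha(\R^n)$ with $\supp g \subseteq B(0, R)$ we have $\|\Ga \ast g\|_{\Co^{\alpha+2}(\B^n)} \lesssim \|g\|_{\Co^\alpha(\R^n)}$. I propose to do this via the Littlewood-Paley decomposition $g = \sum_{k=0}^\infty \phi_k \ast g$: for each $k \ge 1$, the Fourier transform of $\Ga \ast (\phi_k \ast g)$ is supported in the annulus $\{2^{k-1} < |\xi| < 2^{k+1}\}$ and equals $-\widehat{\phi_k \ast g}/|\xi|^2$, so Bernstein's inequality yields $\|\Ga \ast (\phi_k \ast g)\|_{L^\infty} \lesssim 2^{-2k}\|\phi_k \ast g\|_{L^\infty} \lesssim 2^{-k(\alpha+2)}\|g\|_{\Co^\alpha}$; the low-frequency part ($k = 0$) reduces to convolving $\Ga$ with a compactly supported Schwartz-class function, which is bounded on $\B^n$ since $\Ga \in L^1_{\mathrm{loc}}$. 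Matching these bounds against the Littlewood-Paley characterization of $\Co^{\alpha+2}(\B^n)$ from Lemma \ref{Lem::Hold::HoldChar}\ref{Item::Hold::HoldChar::LPHoldChar} and Definition \ref{Defn::Hold::NegHold} gives the desired estimate.

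The main obstacle is handling the low frequencies cleanly, because the multiplier $|\xi|^{-2}$ is singular at the origin and $\Ga$ itself grows at infinity when $n \le 2$. The restriction to the bounded domain $\B^n$ is essential here: any growth at infinity in $\Ga \ast g$ is absorbed since it is polynomial of fixed degree, and such polynomials lie in every Zygmund class on $\B^n$ with uniformly bounded norm. An alternative route that sidesteps direct low-frequency analysis is to compare $\Pv f$ with the Dirichlet solver $\Pc_0$ of Lemma \ref{Lem::Hold::LapInvBdd} applied to $Ef$ on the ball $B(0, R)$; the difference $\Pv f - \Pc_0(Ef)|_{\B^n}$ is harmonic on $\B^n$ and controlled by $\|f\|_{\Co^\alpha(\B^n)}$, so Lemma \ref{Lem::Hold::LapInvBdd} delivers the bound immediately on the range $\alpha > -2$, and the Bernstein estimate above handles the remaining range $\alpha \in (-4, -2]$.
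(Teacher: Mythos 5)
Your identification of \eqref{Eqn::SecHolLap::FormulaForP} as $(\Ga\ast Ef)|_{\B^n}$ and the left-inverse argument are exactly the paper's first step (the explicit choice $\tilde\chi_j(\rho)=\chi(e^{\rho/b_j})$ is a nice detail the paper leaves implicit). Where you genuinely diverge is the boundedness: the paper, having written $\Delta(\Ga\ast Ef)|_{\B^n}=f$ with $Ef$ compactly supported, simply invokes classical interior regularity for elliptic equations (Taylor's Schauder-type estimate) to get $\Ga\ast Ef\in\Co^{\alpha+2}(\B^n)$ from $Ef\in\Co^\alpha$, and then obtains the operator bound from the Closed Graph Theorem. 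Your route is more quantitative and self-contained: the Bernstein/multiplier estimate $\|\Ga\ast\phi_k\ast g\|_{L^\infty}\lesssim 2^{-2k}\|\phi_k\ast g\|_{L^\infty}$ for $k\ge1$ gives the bound with an explicit constant and no closed-graph argument, and your fallback via the Dirichlet solver of Lemma \ref{Lem::Hold::LapInvBdd} (valid for $\alpha>-2$ only, as you note) is also legitimate provided you first get a crude bound on $\Ga\ast Ef$ in some weak norm on the larger ball so that interior estimates for the harmonic difference apply. The trade-off: the paper's proof is two lines modulo a citation; yours needs the standard "Fourier support in dyadic annuli plus $L^\infty$ bounds $2^{-k(\alpha+2)}$ implies membership in $\Bs_{\infty\infty}^{\alpha+2}$" summation lemma, which is not in the paper's toolkit but is routine.

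One loose end you should tighten: the low-frequency piece. $\phi_0\ast g$ is \emph{not} compactly supported ($\phi_0$ is only Schwartz), and "bounded on $\B^n$ since $\Ga\in L^1_\loc$" is not by itself enough when $n\le2$, where $\Ga$ grows at infinity; nor is the remark about polynomials in the Zygmund class the right fix, since the issue is convergence and size of the integral $\int\Ga(x-y)(\phi_0\ast g)(y)\,dy$ for $x\in\B^n$, not the growth of the output. The correct repair is that $g=Ef$ has fixed compact support, so $\phi_0\ast g$ decays rapidly with all seminorms controlled by $\|g\|_{\Co^\alpha(\R^n)}$ (e.g.\ pair $g$ against translates of $\phi_0$, or note $\phi_0\ast g=\phi_0\ast(\phi_0+\phi_1+\phi_2)\ast g$ and exploit the support of $g$); this makes $\Ga\ast\phi_0\ast g$ smooth with all derivatives bounded on a neighborhood of $\overline{\B^n}$ by $C\|f\|_{\Co^\alpha(\B^n)}$, after which a cutoff places it in $\Co^{\alpha+2}(\B^n)$ in the sense of Definition \ref{Defn::Hold::NegHold}. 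With that step made precise, your argument is complete on the full range $-4<\alpha<-1$.
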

\begin{proof}
By \eqref{Eqn::SecHolLap::EqnE*} we see that $\Pv f$ is of the form $\Pv f(x)=\int_{\B^n}f(t)E^*_t\Ga(x-t)dt$ for $f\in L^1(\B^n)$, where $E^*$ is the adjoint of some extension operator $E$ which is $\Co^\alpha$-bounded for $-4<\alpha<1$. Therefore $\Pv f(x)=(\Ga\ast Ef)(x)$ for $x\in\B^n$ and we get
$\Pv f=(\Ga\ast Ef)|_{\B^n}$. In particular $\Delta\Pv f=(\Delta\Ga\ast Ef)|_{\B^n}=f$ holds.

By construction, such $E$ has compact support, namely there is a bounded open set $U\subset\R^n$ (in fact we can take $U=B^n\big(0,e^{\max_j\frac1{3b_j}}\big)$) such that $\supp Ef\subset U$ has compact support for every $f\in\Co^{(-4)+}(\B^n)$. So $\Ga\ast Ef$ is a well-defined distribution.

Since $\Delta(\Ga\ast Ef)|_{\B^n}=Ef|_{\B^n}=f$, the classical interior regularity for elliptic equations (see \cite[Proposition 4.1]{TaylorPDE3} for example) shows that if $Ef\in\Co^\alpha(U)$, then $\Ga\ast Ef\in\Co^{\alpha+2}(\B^n)$. So $f\in\Co^\alpha(\B^n)$ implies $\Ga\ast Ef\in\Co^{\alpha+2}(\B^n)$. By the Closed Graph Theorem $\Pv=[f\mapsto\Ga\ast Ef]:\Co^\alpha(\B^n)\to\Co^{\alpha+2}(\B^n)$ is bounded linear.
\end{proof}



We decompose the integral in \eqref{Eqn::SecHolLap::FormulaForP} into the domains $\{t:|\re z-t|<\frac{1-|\re z|}2\}$ and $\{t:|\re z-t|>\frac{1-|\re z|}2\}$. We need the following estimate.
\begin{lem}\label{Lem::SecHolLap::AprioriBddforP}Let $a,b,\chi$ be given in Definition \ref{Defn::SecHolLap::DefofP}. There is a $C=C(n,a,b,\chi)>0$, such that for every $z\in\Hb^n$ and $t\in\B^n$ satisfying $|\re z-t|\ge\frac{1-|\re z|}2$,
\begin{equation}\label{Eqn::SecHolLap::AprioriBddforPEqn}
    \bigg|(\nabla\Ga)(z-t)+\chi(|t|)\sum_{j=1}^9\frac{a_jb_j}{|t|^{n(b_j+1)}}(\nabla\Ga)\Big(z-\frac{t}{|t|^{b_j+1}}\Big)\bigg|\le C|\re z-t|^{-n-3}(1-|t|)^4.
\end{equation}
\end{lem}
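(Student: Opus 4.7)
The key trick is the change of variables $t = r\theta$ with $r = |t|$, $\theta = t/|t|\in\Sp^{n-1}$, combined with the exponential substitution $r = e^{-\sigma}$, so that $t = e^{-\sigma}\theta$ and the reflected points are $t/|t|^{b_j+1} = e^{b_j\sigma}\theta$. Introducing $\tilde h(s) := e^{ns}(\nabla\Gamma)(z - e^s\theta)$, a direct calculation shows that the quantity inside the absolute value in \eqref{Eqn::SecHolLap::AprioriBddforPEqn} equals $e^{n\sigma}\Phi(\sigma)$, where
$$ \Phi(\sigma) := \tilde h(-\sigma) + \chi(e^{-\sigma})\sum_{j=1}^9 a_j b_j\,\tilde h(b_j\sigma). $$
On the support of $\chi$, $r \ge 1/3$, so $e^{n\sigma} = r^{-n}$ is bounded and the problem reduces to estimating $\Phi(\sigma)$. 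For $|t|$ close enough to $1$ that $\chi(e^{-\sigma}) \equiv 1$ near $\sigma$, differentiating yields
$$ \Phi^{(l)}(0) = \tilde h^{(l)}(0)\Big[(-1)^l + \sum_{j=1}^9 a_j b_j^{l+1}\Big] = 0,\qquad l = 0, 1, 2, 3, $$
after invoking the moment identities $\sum_j a_j(-b_j)^{l+1} = 1$ from Definition \ref{Defn::SecHolLap::DefofP} (which translate to $\sum_j a_j b_j^{l+1} = (-1)^{l+1}$). The integral form of Taylor's theorem then gives $|\Phi(\sigma)| \lesssim |\sigma|^4 \sup_{|s|\le C|\sigma|}|\tilde h^{(4)}(s)|$ with $C$ depending only on $\max_j b_j$.

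The second step is to bound $\tilde h^{(4)}$. Leibniz's rule expresses it as a bounded combination of $e^{(n+k)s}\partial^\nu(\nabla\Gamma)(z-e^s\theta)$ with $|\nu|\le 4$, and \eqref{Eqn::SecHolLap::DevGa1} dominates each such term by $|\re z - e^s\theta|^{1-n-|\nu|}$, hence by $|\re z - e^s\theta|^{-n-3}$ since $|\re z - e^s\theta|$ is bounded above. In the \emph{Taylor regime} $(1-|t|)\le c|\re z - t|$ for a small universal $c$, the bound $|e^s\theta - t| \lesssim |\sigma| \lesssim 1 - |t|$ renders $|\re z - e^s\theta|$ comparable to $|\re z - t|$, and combining with $|\sigma|\approx 1-|t|$ yields the desired conclusion $|\Phi(\sigma)|\lesssim (1-|t|)^4|\re z - t|^{-n-3}$. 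Outside the Taylor regime, two easier subcases suffice: (i) if $\chi(|t|) = 0$, only the main term $(\nabla\Gamma)(z-t)$ survives, $(1-|t|)^4 \gtrsim 1$, and $|(\nabla\Gamma)(z-t)|\lesssim |\re z - t|^{1-n} \lesssim |\re z - t|^{-n-3}$ (since $|\re z - t|\le 2$); (ii) if $\chi(|t|)\neq 0$ but $(1-|t|) > c|\re z - t|$, the factor $(1-|t|)^4/|\re z - t|^4 \gtrsim c^4$ accommodates a crude termwise bound, where for each reflected summand the geometric fact that $t/|t|^{b_j+1}$ lies outside $\B^n$ along the ray through $\theta$ gives $|\re z - t/|t|^{b_j+1}|\ge(|t|^{-b_j}-1)+(1-|\re z|)\gtrsim(1-|t|)+|\im z|$, which simultaneously satisfies $\ge 2|\im z|$ (allowing \eqref{Eqn::SecHolLap::DevGa1}) and dominates $|\re z - t|$ up to the constant $c$.

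The main technical obstacle is applying \eqref{Eqn::SecHolLap::DevGa1} uniformly to $(\partial^\nu\Gamma)(z - e^s\theta)$ throughout the Taylor path. The hypothesis $|\re z - t|\ge(1-|\re z|)/2$ combined with $z\in\Hb^n$ yields $|\re z - t| > 2|\im z|$, but \emph{without a uniform margin above $2$}; consequently $|\re z - e^s\theta|\ge (1-O(c))|\re z - t|$ may just fail the condition $|\re z - e^s\theta|\ge 2|\im z|$. The resolution is to revisit the Cauchy-formula proof underlying \eqref{Eqn::SecHolLap::DevGa1}, observing that the same estimate remains valid whenever $|\re w|\ge (1+\eta)|\im w|$ with constant depending only on $\eta>0$, and then to fix $c$ so that the ratio $|\re z - e^s\theta|/|\im z|$ stays bounded away from $1$ along the entire Taylor path.
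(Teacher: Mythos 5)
Your argument is correct and is essentially the paper's own proof: the same exponential--radial substitution (the paper's $\psi_{z,t}(\rho)=e^{-n\rho}(\nabla\Ga)\bigl(z-e^{-\rho}\tfrac{t}{|t|}\bigr)$ is your $\tilde h(-\rho)$), the same fourth-order Taylor cancellation from the moment identities (which the paper packages, cutoff included, as the abstract inequality \eqref{Eqn::SecHolLap::AprPEqn2}), and the same crude termwise bound in the complementary regime (the paper splits at $1-|t|\ge\tfrac12|\re z-t|$ rather than at a small constant $c$). Your extra care about the margin is warranted but minor: the paper's claim $\bigl|\re z-e^{-\rho}\tfrac{t}{|t|}\bigr|\ge|\re z-t|$ along the outward ray only holds up to a factor $\tfrac{\sqrt3}{2}$ in general, so one does need, as you note, the variant of \eqref{Eqn::SecHolLap::DevGa1} valid under $|\re w|\ge(1+\eta)|\im w|$, which follows from the same proof with constants depending on $\eta$.
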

\begin{proof}
That $|\re z-t|\ge\frac{1-|\re z|}2$ implies $|\re z-t|\ge2|\im z|$, so by \eqref{Eqn::SecHolLap::DevGa1} the left hand side of \eqref{Eqn::SecHolLap::AprioriBddforPEqn} is finite.

When $1-|t|\ge\frac12|\re z-t|$, we have $|\re z-\frac t{|t|^{b_j+1}}|\ge \frac 1{b_j}|\re z-t|$. Thus by \eqref{Eqn::SecHolLap::DevGa1},
$$\textstyle\text{LHS of }\eqref{Eqn::SecHolLap::AprioriBddforPEqn}\lesssim|\re z-t|^{1-n}+|\re z-\frac t{|t|^{b_j+1}}|^{1-n}\lesssim_{n,b} |\re z-t|^{1-n}\lesssim|\re z-t|^{-3-n}(1-|t|)^4.$$

We then focus on the case $1-|t|<\frac12|\re z-t|$.

By assumption $\sum_{j=1}^9a_jb_j(-b_j)^k=-1$ for $0\le k\le3$. Therefore, by the Taylor's expansion of deg 3,  for any $\tilde \chi\in C_c^\infty(\R)$ that equals 1 in the neighborhood of 0, there is a $C'=C_{a,b,\tilde\chi}'>0$ such that
\begin{equation}\label{Eqn::SecHolLap::AprPEqn2}
    \textstyle\big|\psi(\rho)+\tilde\chi(\rho)\sum_{j=1}^9a_jb_j\psi(-b_j\rho)\big|\le C'\|\psi\|_{C^4}\cdot\rho^4,\quad\forall\ R>0,\  \psi\in C^4[-R\max_{1\le j\le 9}b_j,R],\  \rho\in[0,R].
\end{equation}

For fixed $t$ and $z$ in this case, we define $I_t:=\big[\log|t|\max\limits_{1\le j\le 9}b_j,-\log|t|\big]$ and $\psi_{z,t}(\rho):=e^{-n\rho}\cdot(\nabla\Ga)(z-e^{-\rho}\frac t{|t|})$ for $\rho\in I_t$. Since for $\rho\in I_t$ we have $|\re z-e^{-\rho}\frac t{|t|}|\ge|\re z-t|$ and by assumption $|\re z-t|\ge2|\im z|$ holds in this case, using \eqref{Eqn::SecHolLap::DevGa1} we have $\|\psi_{z,t}\|_{C^4(I_t)}\lesssim|\re z-t|^{-n-3}$.

Thus taking $\tilde\chi(\rho):=\chi(e^{-\rho})$ and $\rho=-\log|t|(=R)$ in \eqref{Eqn::SecHolLap::AprPEqn2} with $\psi=\psi_{z,t}$ we get
$$\bigg||t|^n(\nabla\Ga)(z-t)+\chi(|t|)\sum_{j=1}^9\frac{a_jb_j}{|t|^{nb_j}}(\nabla\Ga)\Big(z-\frac{t}{|t|^{b_j+1}}\Big)\bigg|\lesssim |\re z-t|^{-n-3}(1-|t|)^4.$$

Note that $1-|t|<\frac12|\re z-t|$ implies $\frac13<|t|<1$, so by multiplying $\frac1{|t|^n}$ we get \eqref{Eqn::SecHolLap::AprioriBddforPEqn} for the case $1-|t|<\frac12|\re z-t|$. This finishes the proof.
\end{proof}

\begin{cor}\label{Cor::SecHolLap::IntofP}
If $(1-|t|)^4f(t)\in  L^1(\B^n)$, then the following integral converges locally uniformly for  $z\in\Hb^n$: $$\displaystyle\int_{\{t\in\B^n:|\re z-t|\ge\frac{1-|\re z|}2\}}f(t)\Big(\Ga(z-t)-\chi(|t|)\sum_{j=1}^9\frac{a_jb_j}{|t|^{n(b_j+1)}}\Ga\big(z-\frac{t}{|t|^{b_j+1}}\big)\Big)dt.$$
\end{cor}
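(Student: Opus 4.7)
The plan is to reduce the corollary to a version of Lemma \ref{Lem::SecHolLap::AprioriBddforP} with $\Ga$ in place of $\nabla\Ga$, and then apply the Lebesgue dominated convergence theorem using the majorant provided by the hypothesis $(1-|t|)^4 f(t) \in L^1(\B^n)$.

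First, I would prove the pointwise kernel estimate: there exists $C = C(n,a,b,\chi) > 0$ such that for every $z \in \Hb^n$ and every $t \in \B^n$ with $|\re z - t| \ge (1 - |\re z|)/2$,
\begin{equation*}
\bigg| \Ga(z-t) + \chi(|t|) \sum_{j=1}^9 \frac{a_j b_j}{|t|^{n(b_j+1)}} \Ga\Big(z - \tfrac{t}{|t|^{b_j+1}}\Big) \bigg| \le C |\re z - t|^{-n-2} (1-|t|)^4.
\end{equation*}
(For the hypothesis $(1-|t|)^4 f \in L^1$ to suffice, the sign in the bracketed kernel must match the $+$ of Definition \ref{Defn::SecHolLap::DefofP}; the $-$ in the corollary statement appears to be a typographical artifact.) The proof mimics Lemma \ref{Lem::SecHolLap::AprioriBddforP} verbatim, with the auxiliary function replaced by $\tilde\psi_{z,t}(\rho) := e^{-n\rho} \Ga(z - e^{-\rho} t/|t|)$. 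In the subcase $1 - |t| \ge |\re z - t|/2$, a trivial size estimate $|\Ga(z-t)|, |\Ga(z - t/|t|^{b_j+1})| \lesssim \max(|\re z - t|^{2-n}, 1)$ together with $(1-|t|) \gtrsim |\re z - t|$ absorbs the kernel into $|\re z - t|^{-n-2}(1-|t|)^4$. In the subcase $1 - |t| < |\re z - t|/2$, one applies \eqref{Eqn::SecHolLap::AprPEqn2} to $\psi = \tilde\psi_{z,t}$ at $\rho = -\log|t|$, using $\|\tilde\psi_{z,t}\|_{C^4(I_t)} \lesssim |\re z - t|^{-n-2}$---one power worse than the $\nabla\Ga$ case of the lemma, reflecting one less differentiation.

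With this pointwise estimate in hand, the corollary follows by dominated convergence. For any compact $K \Subset \Hb^n$, set $\delta_K := \tfrac12 \inf_{z \in K}(1 - |\re z|) > 0$; on the integration region we have $|\re z - t| \ge \delta_K$ uniformly in $z \in K$, so $|\re z - t|^{-n-2} \le \delta_K^{-n-2}$. The integrand is therefore dominated in absolute value by $C \delta_K^{-n-2}(1-|t|)^4 |f(t)|$, which lies in $L^1(\B^n)$ by hypothesis; continuity under the integral sign then yields absolute and locally uniform convergence in $z \in K$.

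I do not anticipate any serious obstacle, since the argument is a scaled-down replay of Lemma \ref{Lem::SecHolLap::AprioriBddforP}. The only step requiring minor care is the bound $\|\tilde\psi_{z,t}\|_{C^4(I_t)} \lesssim |\re z - t|^{-n-2}$; it follows from the standard estimate $|\partial^\nu\Ga(x)| \lesssim |x|^{2-n-|\nu|}$ for $1 \le |\nu| \le 4$ together with the inequality $|\re z - e^{-\rho} t/|t|| \gtrsim |\re z - t|$ for $\rho \in I_t$ already established in the proof of the lemma.
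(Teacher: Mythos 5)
Your proof is correct and follows essentially the same route as the paper: the paper also deduces an undifferentiated ($\Ga$ in place of $\nabla\Ga$) analogue of Lemma \ref{Lem::SecHolLap::AprioriBddforP} (dismissed there as ``taking an anti-derivative if necessary''), bounds the kernel on the region $|\re z-t|\ge\frac{1-|\re z|}2$ by a constant locally bounded in $z$ times $(1-|t|)^4$, and integrates against $|f|$. You merely carry out that kernel estimate explicitly (with the exponent $-n-2$) and spell out the uniformity on compacts, and your observation that the sign in the bracket should be the $+$ of Definition \ref{Defn::SecHolLap::DefofP} for the cancellation to apply is a correct reading of a typographical slip.
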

\begin{proof}By Lemma \ref{Lem::SecHolLap::AprioriBddforP} (taking an anti-derivative if necessary), for each $z\in\Hb^n$ there is a $C_z>0$ which is locally bounded in $z$, such that $\left|\Ga(z-t)-\sum_{j=1}^9\frac{a_jb_j}{|t|^{n(b_j+1)}}\Ga\big(z-\frac{t}{|t|^{b_j+1}}\big)\right|\le C_z(1-|t|)^4$ holds for $t\in\B^n\cap B^n(\re z,\frac{1-|\re z|}2)$. So the integral is bounded by $C_z\int_{\{t\in\B^n:|\re z-t|\ge\frac{1-|\re z|}2\}}(1-|t|)^4|f(t)|dt$ which converges since $[t\mapsto (1-|t|)^4f(t)]\in  L^1(\B^n)$.
\end{proof}

\subsection{Construction of the holomorphic extension $\tilde{\mathbf P}$}\label{Section::SecHolLap::DefTildeP}

We now extend $\Pv$ to the complex cone $\Hb^n$. 

In Sections \ref{Section::SecHolLap::DefTildeP} and \ref{Section::SecHolLap::BddTildeP}, we use a function\footnote{In \cite{Analyticity} he use $1-\lambda$ of our $\lambda$.} $\lambda:\B^n\times\B^n\to[0,1]$ as the following
\begin{equation}\label{Eqn::SecHolLap::DefLambda}
    \lambda(t,x):=\begin{cases}1-\frac{2|t-x|}{1-|x|},&\text{if }2|t-x|\le1-|x|\\0,&\text{if }2|t-x|\ge1-|x|\end{cases}.
\end{equation}
Immediately we have:
\begin{enumerate}[parsep=-0.3ex,label=($\Lambda$.\arabic*)]
    \item\label{Item::SecHolLap::LambdaPro1} $\lambda$ is locally Lipschitz, and the function $[(t,x+iy)\mapsto\lambda(t,x)\cdot y]:\B^n\times\Hb^n\to\R^n$ is bounded Lipschitz.
    \item\label{Item::SecHolLap::LambdaPro2} For every $t\in\B^n$ and $x+iy\in\Hb^n$, we have $4|\lambda(t,x)\cdot y|<1-|x|$ and $|(1-\lambda(t,x))y|\le\frac12|t-x|$.
\end{enumerate}

For each $z=x+iy\in\Hb^n$, let 
\begin{equation}\label{Eqn::SecHolLap::NotSzV}
    \textstyle S_z:=\{t+i\lambda(t,x) y:t\in\B^n\},\quad V(t,z):=1+iy\cdot\partial_t\lambda(t,x)=1+i\sum_{j=1}^ny^j(\partial_{t^j}\lambda)(t,x).
\end{equation}

By \ref{Item::SecHolLap::LambdaPro2} we see that $S_z\subset\Hb^n$, and $z-\zeta\in\{w\in\C^n:|\re w|>|\im w|\}$ for each $\zeta\in S_z$, so $\Ga(z-\zeta)$ is defined for $\zeta\in S_z$.

Fix $z\in\Hb^n$, by viewing $S_z$ as a set parameterized by $t$, we have a change of variable $\zeta=\zeta(t,z):=t+i\lambda(t,x)y$. Its Jacobian matrix is $\frac{\partial\zeta}{\partial t}(t,z)=1+iy\otimes\partial_t\lambda(t,x)$, so $\det\frac{\partial\zeta}{\partial t}(t,z)=1+iy\cdot\partial_t\lambda(t,x)$. We denote the ``volume form'' $V(t,z)$ as
\begin{equation*}
    V(t,x+iy):=1+iy\cdot\partial_t\lambda(t,x),\quad t\in\B^n,\quad x+iy\in\Hb^n.
\end{equation*}

In this way for an integrable function $g$ on $S_z$, we can formally write
$$\textstyle\int_{S_z}g(\zeta)d\zeta=\int_{\B^n}g(t+i\lambda(t,x)y)(1+iy\cdot \partial_t\lambda(t,x))dt.$$

\begin{defn}\label{Defn::SecHolLap::DefofTildeP}
Let $(a_j)_{j=1}^9,(b_j)_{j=1}^9,\chi$ to be as in Definition \ref{Defn::SecHolLap::DefofP}. Let $S_z$ be as in \eqref{Eqn::SecHolLap::NotSzV} for $z\in\Hb^n$. Define 
\begin{equation}\label{Eqn::SecHolLap::EqnHoloP0}
    \tilde\Pv f(z):=\int_{S_z}f(\zeta)\Ga(z-\zeta)d\zeta+\int_{\B^n}\chi(|t|)\sum_{j=1}^9\frac{a_jb_j}{|t|^{n(b_j+1)}}f(t)\Ga\Big(z-\frac{t}{|t|^{b_j+1}}\Big)dt,\quad\text{for }f\in\Co^{(-4)+}_\Oh(\Hb^n),
\end{equation}
in the sense that,
\begin{align}
    \tilde\Pv f(z)=\tilde\Pv f(x+iy)=
    &
    \label{Eqn::SecHolLap::EqnHoloP1}
    \int_{|t-x|<\frac{1-|x|}2}f(t+i\lambda(t,x)y)\Ga(x-t+i(1-\lambda(t,x))y)V(t,z)dt
    \\
    &\label{Eqn::SecHolLap::EqnHoloP2}
    +\int_{|t-x|<\frac{1-|x|}2}\sum_{j=1}^9\frac{a_jb_j\chi(|t|)}{|t|^{n(b_j+1)}}f(t)\Ga\Big(z-\frac{t}{|t|^{b_j+1}}\Big)dt\\
    &\label{Eqn::SecHolLap::EqnHoloP3}
    +\int_{t\in\B^n:|t-x|>\frac{1-|x|}2}f(t)\bigg(\Ga(z-t)-\sum_{j=1}^9\frac{a_jb_j\chi(|t|)}{|t|^{n(b_j+1)}}\Ga\Big(z-\frac{t}{|t|^{b_j+1}}\Big)\bigg)dt
    \\
    =:&\tilde\Pv_1 f(z)+\tilde\Pv_2 f(z)+\tilde\Pv_3 f(z)\label{Eqn::SecHolLap::EqnHoloPall}
\end{align}

\end{defn}

By \ref{Item::SecHolLap::LambdaPro2}, we see that \eqref{Eqn::SecHolLap::EqnHoloP1} is a classical Lebesgue integral pointwisely for $z\in\Hb^n$ and all $f\in\Oh(\Hb^n)$. The same holds for \eqref{Eqn::SecHolLap::EqnHoloP2}.

When $f\in\Co^{(-4)+}_\Oh(\Hb^n)$, by Lemma \ref{Lem::SecHolLap::HLLem} \ref{Item::SecHolLap::HLLem::0Char} we have $[t\mapsto (1-|t|)^4f(t)]\in L^\infty(\B^n;\C)$. By Corollary \ref{Cor::SecHolLap::IntofP},  \eqref{Eqn::SecHolLap::EqnHoloP3} is integrable as well.

\medskip
Note that $S_{x+i0}\equiv\B^n$. So comparing the expression \eqref{Eqn::SecHolLap::FormulaForP} and \eqref{Eqn::SecHolLap::EqnHoloP0} we see that $\tilde \Pv f(x+i0)=\Pv (f(\cdot+i0))(x)$. Therefore to prove $\tilde \Pv$ is the holomorphic extension of $\Pv$, what we need is the following:

\begin{prop}\label{Prop::SecHolLap::TildePisHolo}
Let $f\in\Co^{(-4)+}_\Oh(\Hb^n)$, then $\tilde \Pv f$ gives a holomorphic function defined on $\Hb^n$.

Moreover we have $(\tilde \Pv f)|_{\B^n}=\Pv[f|_{\B^n}]$ and $\Delta\tilde\Pv f=f$, where $\Delta=\sum_{j=1}^n\frac{\partial^2}{(\partial z^j)^2}$.
\end{prop}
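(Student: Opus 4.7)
I will decompose $\tilde\Pv f(z) = I(z) + II(z)$ following \eqref{Eqn::SecHolLap::EqnHoloP0}, where $I(z) := \int_{S_z} f(\zeta)\,\Ga(z-\zeta)\,d\zeta$ is the contour integral and $II(z)$ is the sum of image terms over the fixed domain $\B^n$. The term $II(z)$ is immediate: on the support of $\chi$ we have $|t|\geq 1/3$, hence $|t/|t|^{b_j+1}|=|t|^{-b_j}\geq 1$, and for $z=x+iy\in\Hb^n$ this yields $|\re(z-t/|t|^{b_j+1})|\geq 1-|x|>4|y|\geq|\im(z-t/|t|^{b_j+1})|$, placing the argument safely inside $\{|\re w|>|\im w|\}$ where $\Ga$ is holomorphic. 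Combining Lemma~\ref{Lem::SecHolLap::HLLem}\ref{Item::SecHolLap::HLLem::0Char} for the bound $|f(t)|\lesssim(1-|t|)^\alpha$ with locally uniform estimates on derivatives of $\Ga$ at $z-t/|t|^{b_j+1}$, I can differentiate under the integral sign to conclude $II\in\Oh(\Hb^n)$.

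The core step is showing $I\in\Oh(\Hb^n)$, for which I plan a contour deformation argument via Stokes' theorem. The key observation is that $\omega_z(\zeta):=f(\zeta)\Ga(z-\zeta)\,d\zeta^1\wedge\cdots\wedge d\zeta^n$ is a closed top-degree $(n,0)$-form in $\zeta$: both $f$ and $\Ga(z-\cdot)$ are holomorphic near $S_z$, so $\bar\partial\omega_z=0$, while $\partial\omega_z=0$ holds automatically in the full $(n,0)$ bidegree. Fix $z_0\in\Hb^n$ and consider the $(n{+}1)$-chain $C_z$ parameterized by $(t,s)\in\B^n\times[0,1]\mapsto \zeta^s(t,z):=t+i[(1-s)\lambda(t,x_0)y_0+s\lambda(t,x)y]$, interpolating between $S_{z_0}$ (at $s=0$) and $S_z$ (at $s=1$). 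Since $\lambda(t,x)\equiv 0$ for $|t|=1$, the boundary $\partial S_z=\Sp^{n-1}$ is independent of $z$ and the side boundary of $C_z$ collapses to $\Sp^{n-1}\subset\R^n$, contributing nothing to $\int_{\partial C_z}\omega_z$. Stokes then yields $I(z)=\int_{S_{z_0}}\omega_z$ for $z$ near $z_0$, and this is manifestly holomorphic in $z$ because the contour is now $z$-independent and the integrand depends holomorphically on $z$ through $\Ga(z-\zeta)$ only.

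The restriction identity $\tilde\Pv f|_{\B^n}=\Pv(f|_{\B^n})$ is automatic: for $z=x+i0$ we have $y=0$, hence $S_z=\B^n$, $V(t,z)\equiv 1$, and \eqref{Eqn::SecHolLap::EqnHoloP0} reduces literally to \eqref{Eqn::SecHolLap::FormulaForP}. For $\Delta\tilde\Pv f=f$, holomorphy of $\tilde\Pv f$ gives $\partial_{z^k}^2\tilde\Pv f=\partial_{x^k}^2\tilde\Pv f$ on the real slice, so Proposition~\ref{Prop::SecHolLap::Pv} yields $\Delta_z\tilde\Pv f|_{\B^n}=\Delta_x\Pv(f|_{\B^n})=f|_{\B^n}$. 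Both $\Delta\tilde\Pv f$ and $f$ are holomorphic on $\Hb^n$ and agree on the maximally totally real slice $\B^n$, so the identity principle forces $\Delta\tilde\Pv f\equiv f$ on $\Hb^n$.

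The main obstacle will be making the Stokes step rigorous when $\lambda$ is merely Lipschitz (not $C^1$ across $\{2|t-x|=1-|x|\}$). I plan to regularize by smooth cutoffs $\lambda_\varepsilon\to\lambda$ with uniformly bounded Lipschitz constants, apply Stokes on each smooth chain, and pass to the limit using dominated convergence; the needed uniform integrability follows from Lemma~\ref{Lem::SecHolLap::HLLem}\ref{Item::SecHolLap::HLLem::0Char} and Lemma~\ref{Lem::SecHolLap::AprioriBddforP}. A parallel geometric check is also required: the chain $\zeta^s(t,z)$ must stay in $\Hb^n$ and $z-\zeta^s(t,z)$ in the holomorphy cone $\{|\re w|>|\im w|\}$ throughout the homotopy. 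Both follow, for $z$ sufficiently close to $z_0$, from the sharpened estimate $4|\lambda(t,x) y|<1-|x|-2|t-x|$ (derived from \eqref{Eqn::SecHolLap::DefLambda} together with $4|y|<1-|x|$) applied separately at $(x_0,y_0)$ and $(x,y)$ and convexly interpolated in $s$.
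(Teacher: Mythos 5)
There are two genuine gaps. First, your splitting $\tilde\Pv f = I + II$, with $I(z)=\int_{S_z}f\,\Ga(z-\cdot)\,d\zeta$ and $II$ the reflected-kernel integral over all of $\B^n$, is not well defined for general $f\in\Co^{(-4)+}_\Oh(\Hb^n)$: by Lemma \ref{Lem::SecHolLap::HLLem} \ref{Item::SecHolLap::HLLem::0Char} one only has $|f(t)|\lesssim(1-|t|)^\alpha$ with, say, $\alpha=-3$, and on the outer region $\{|t-x|>\frac{1-|x|}2\}$ (where $\lambda=0$) each kernel $\Ga(z-t)$ and $\Ga(z-t/|t|^{b_j+1})$ is merely $O_z(1)$, so both $I$ and $II$ contain a divergent factor $\int(1-|t|)^\alpha dt$ when $\alpha\le-1$. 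The definition of $\tilde\Pv f$ only makes sense because of the cancellation between $\Ga(z-t)$ and the reflected terms encoded in Lemma \ref{Lem::SecHolLap::AprioriBddforP} and Corollary \ref{Cor::SecHolLap::IntofP}, i.e. through the grouping $\tilde\Pv_1+\tilde\Pv_2+\tilde\Pv_3$ of \eqref{Eqn::SecHolLap::EqnHoloPall}. Your two-term decomposition is legitimate only for bounded $f$; to cover $\Co^{(-4)+}_\Oh$ you need an extra approximation step, e.g. the dilations $f_\eps(z):=f((1-\eps)z)\in L^\infty\cap\Oh(\Hb^n)$, with $\tilde\Pv f_\eps\to\tilde\Pv f$ locally uniformly (the $\tilde\Pv_3$-piece via convergence in $L^1(\B^n,(1-|x|)^4dx)$), which is exactly how the paper reduces to the bounded case. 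Your proposal never performs such a reduction, so the "immediate" treatment of $II$ and the differentiation under the integral are unjustified.

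Second, the Stokes/contour-freezing argument for $I$ fails the geometric check you flag. The homotopy $\zeta^s(t,z)=t+i[(1-s)\lambda(t,x_0)y_0+s\lambda(t,x)y]$ does not keep $z-\zeta^s$ in the cone $\{|\re w|>|\im w|\}$: at $t=x$ and $0\le s<1$ one has $\re(z-\zeta^s)=0$ while $\im(z-\zeta^s)=(1-s)\bigl(y-\lambda(x,x_0)y_0\bigr)\neq0$ in general, no matter how close $z$ is to $z_0$. The point is that the singularity of $\zeta\mapsto\Ga(z-\zeta)$ sits \emph{on} the contour $S_z$ (namely $\zeta=z$ at $t=x$, where the integrand has only an integrable $|x-t|^{2-n}$-type singularity along the real directions), so any deformation to a $z$-independent contour sweeps an open neighborhood of that point into the region where the chosen branch of $\Ga$ is undefined; your sharpened estimate controls $|\im\zeta^s|$ against $1-|\re\zeta^s|$ (keeping the chain inside $\Hb^n$) but says nothing about $|\im(z-\zeta^s)|$ versus $|\re(z-\zeta^s)|$. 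The paper avoids the deformation entirely: Proposition \ref{Prop::SecHolLap::PfTildePHolo} verifies the Cauchy--Riemann equations for $z\mapsto\int_{\B^n}f\,\Ga\,V\,dt$ by differentiating under the integral and using the divergence identities \eqref{Eqn::SecHolLap::PfTildePHolo::Tmp1}--\eqref{Eqn::SecHolLap::PfTildePHolo::Tmp2} together with $\lambda=\nabla\lambda=0$ on $\Sp^{n-1}$. Your final paragraph on $(\tilde\Pv f)|_{\B^n}=\Pv[f|_{\B^n}]$ and $\Delta\tilde\Pv f=f$ via the identity principle on the totally real slice is fine and matches the paper, but it rests on the holomorphy statement whose proof, as proposed, does not go through.
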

We postpone its proof after Proposition \ref{Prop::SecHolLap::PfTildePHolo}. Essentially we can assume $f\in L^\infty(\Hb^n;\C)$ so that the two integrals in \eqref{Eqn::SecHolLap::EqnHoloP0} converge individually:
\begin{prop}\label{Prop::SecHolLap::PfTildePHolo}
    For $f\in L^\infty(\Hb^n;\C)\cap\Oh(\Hb^n)$, the function $z\in\Hb^n\mapsto\int_{S_z}f(\zeta)\Ga(z-\zeta)d\zeta$ is holomorphic. Moreover
    \begin{equation}
        \Coorvec{z^j}\int_{S_z}f(\zeta)\Ga(z-\zeta)d\zeta=\int_{\B^n}f(t+i\lambda(t,x)y)\Ga_{,j}(x-t+iy(1-\lambda(t,x)))V(t,z)dt,\quad 1\le j\le n,\quad z\in\Hb^n.
    \end{equation}
\end{prop}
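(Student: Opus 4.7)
The plan is to reduce the $z$-dependent chain $S_z$ to a fixed chain via Stokes' theorem and then differentiate under the integral. Denote $F(z):=\int_{S_z}f(\zeta)\Ga(z-\zeta)\,d\zeta$, fix $z_0\in\Hb^n$, and let $z$ lie in a sufficiently small neighborhood of $z_0$. I would consider the straight-line homotopy $z(\alpha):=(1-\alpha)z_0+\alpha z$, $\alpha\in[0,1]$, and form the Lipschitz $(n+1)$-chain parameterized by $\Theta:[0,1]\times\B^n\to\C^n$, $\Theta(\alpha,t):=\zeta(t,z(\alpha))=t+i\lambda(t,\re z(\alpha))\im z(\alpha)$. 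For each fixed $z$, the $n$-form $\omega_z:=f(\zeta)\Ga(z-\zeta)\,d\zeta^1\wedge\cdots\wedge d\zeta^n$ is $d$-closed on $\Hb^n\setminus\{z\}$, because $f$ is holomorphic in $\zeta$ and $\zeta\mapsto\Ga(z-\zeta)$ is holomorphic off its singular locus.

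Applying Stokes' theorem to the pullback $\Theta^*\omega_z$ on $([0,1]\times\B^n)\setminus U_\delta$, where $U_\delta:=\{(\alpha,t):(\alpha-1)^2+|t-x|^2<\delta^2\}$ excises the unique parameter-space preimage of $\zeta=z$ (namely $(\alpha,t)=(1,x)$), and letting $\delta\to 0$, is expected to yield $\int_{S_z}\omega_z=\int_{S_{z_0}}\omega_z$. The boundary contributions are to be controlled as follows: the $\alpha=0,1$ faces combine to give $\int_{S_z}\omega_z-\int_{S_{z_0}}\omega_z$ in the limit; the $[0,1]\times\partial\B^n$ face contributes zero since $\lambda(t,\re z(\alpha))\equiv 0$ uniformly in $\alpha$ for $t$ near $\partial\B^n$ (because $2|t-\re z(\alpha)|\ge 1-|\re z(\alpha)|$ there), so $\Theta$ factors through the $(n-1)$-dimensional $\partial\B^n$ and kills the pullback of any $n$-form on $\R^n$; and on $\partial U_\delta\cap\{\alpha<1\}$, the bi-Lipschitz estimate $|\Theta(\alpha,t)-z|\gtrsim\sqrt{(\alpha-1)^2+|t-x|^2}$ near $(1,x)$ (secured by $4|\im z|<1-|\re z|$) combined with $|\omega_z|\lesssim|\zeta-z|^{2-n}$ (and the analogous logarithmic or bounded estimate for $n=2,1$) gives a contribution of order $\delta^{2-n}\cdot\delta^n=\delta^2$ that vanishes as $\delta\to 0$.

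Once $F(z)=\int_{S_{z_0}}\omega_z$ is established on a neighborhood of $z_0$, the right-hand side integrates $\omega_z$ over the fixed Lipschitz chain $S_{z_0}$, equivalently $\int_{\B^n}f(\zeta(t,z_0))\Ga(z-\zeta(t,z_0))V(t,z_0)\,dt$. Since $\omega_z$ is holomorphic in $z$ for each fixed $\zeta\ne z$, differentiating under the integral (justified by dominated convergence with the $t$-integrable majorant $|t-x_0|^{1-n}$ derived from \eqref{Eqn::SecHolLap::DevGa1} and the estimate $|\re(z-\zeta(t,z_0))|=|\re z-t|\gtrsim|t-x_0|$ for $z$ near $z_0$) yields $\bar\partial_{z_j}F(z_0)=0$ and $\partial_{z_j}F(z_0)=\int_{S_{z_0}}f(\zeta)\Ga_{,j}(z_0-\zeta)\,d\zeta$, which in the parameterization is precisely the stated formula. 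The main obstacle is managing the point singularity of $\omega_z$ at $\zeta=z$ through the excision-and-limit step; the Lipschitz (but not $C^1$) nature of $\lambda$ across the interface $\{2|t-\re z(\alpha)|=1-|\re z(\alpha)|\}$ causes no trouble, since Stokes' theorem remains valid for Lipschitz chains and $\omega_z$ is smooth and $d$-closed away from the excised singular point.
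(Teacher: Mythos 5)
There is a genuine gap, and it sits at the heart of your strategy of ``reducing to a fixed chain.'' The form $\omega_z=f(\zeta)\Ga(z-\zeta)\,d\zeta^1\wedge\cdots\wedge d\zeta^n$ is only defined where $\Ga(z-\zeta)$ is, i.e.\ (by the extension \eqref{Eqn::SecHolLap::Eqn|z|}) on $\{\zeta:|\re(z-\zeta)|>|\im(z-\zeta)|\}$, and the chain $S_{z}$ is adapted to this condition only for the \emph{matching} value of $z$: property \ref{Item::SecHolLap::LambdaPro2} gives $|\im(z-\zeta(t,z))|\le\frac12|\re(z-\zeta(t,z))|$ away from $t=\re z$. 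If you freeze the chain at $S_{z_0}$ and vary $z$, this fails: at $\zeta=\zeta(x_0,z_0)=z_0\in S_{z_0}$ one has $z-\zeta=z-z_0$, so for $z=z_0+i\eps e_1$ the argument of $\Ga$ is purely imaginary, $(z-\zeta)\cdot(z-\zeta)<0$ lies on the branch cut, and $\int_{S_{z_0}}\omega_z$ is not even defined (for $n$ odd and $n=2$ genuinely so; this is exactly why Morrey's construction tilts the chain with $z$ in the first place). The same problem infects the Stokes step: along your homotopy $\Theta(\alpha,t)=\zeta(t,z(\alpha))$ the obstruction for the \emph{fixed} form $\omega_z$ is not just the single preimage of $\zeta=z$ at $(\alpha,t)=(1,x)$, but the whole region where $|\re z-t|\le|\im z-\lambda(t,x(\alpha))\im z(\alpha)|$; e.g.\ at $t=x$, $\alpha<1$ one has $\re(z-\Theta)=0$ while $\im(z-\Theta)=y-\lambda(x,x(\alpha))y(\alpha)\neq0$ generically (take $x=x_0$, $y\neq y_0$, so this equals $(1-\alpha)(y-y_0)$). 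So the ``bad set'' is a neighborhood of a curve running from $\alpha=0$ to $\alpha=1$, not a small ball around $(1,x)$, and excising $U_\delta$ does not make $\omega_z$ defined and closed on the remaining chain. Consequently neither the identity $F(z)=\int_{S_{z_0}}\omega_z$ on a full neighborhood of $z_0$ nor the boundary-term analysis is justified, and the subsequent differentiation under the integral (whose majorant you derive from \eqref{Eqn::SecHolLap::DevGa1}, an estimate stated for real $t$ with $|\re z-t|\ge2|\im z|$, which also fails near $t=x_0$) rests on this unproved reduction.

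The paper avoids any chain deformation precisely because of this: it keeps the chain moving with $z$, writes the integral in the fixed parameterization over $\B^n$ via $\zeta(t,z)=t+i\lambda(t,x)y$, differentiates under the integral in $x^j$ and $y^j$ (legitimate since the integrand is always evaluated where \ref{Item::SecHolLap::LambdaPro2} holds), and then uses the algebraic identities \eqref{Eqn::SecHolLap::PfTildePHolo::Tmp1}--\eqref{Eqn::SecHolLap::PfTildePHolo::Tmp2} to recognize the terms produced by the $z$-dependence of $\lambda$ as exact $t$-divergences, which integrate to zero because $\lambda$ and $\nabla\lambda$ vanish near $\partial\B^n$. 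This verifies the Cauchy--Riemann equations $\partial_{x^j}F=i\partial_{y^j}F=\int_{\B^n}f\,\Ga_{,j}V\,dt$ directly. If you want to salvage a deformation argument, you would have to design a homotopy whose image stays inside $\{|\re(z-\zeta)|>|\im(z-\zeta)|\}$ away from a small excised set around $\zeta=z$ for every nearby $z$ simultaneously, which is essentially the same bookkeeping the paper's divergence identities carry out, but without the branch-cut pitfalls handled.
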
Here we use the comma notation $\Ga_{,j}(\zeta)=(\partial_{z^j}\Ga)(\zeta)$ for $\zeta$ in the domain.
\begin{proof} Using the comma notation we can write $(\partial_{z^j}f)(\zeta)=f_{,x^j}(\zeta)=-if_{,y^j}(\zeta)=f_{,j}(\zeta)$.

In the following computation, we use abbreviation $f=f(t+i\lambda(t,x)y)$, $\Ga=\Ga(x-t+i(1-\lambda(t,x))y)$ and $V=V(t,z)=1+iy\cdot\lambda_{,t}(t,x)$. First we claim to have the following (cf. \cite[(4.12)]{Analyticity}):
\begin{gather}\label{Eqn::SecHolLap::PfTildePHolo::Tmp1}
    \sum_{k=1}^n\Coorvec{t^k}(y^kf\Ga\nabla\lambda)=\sum_{k=1}^n(y^k(f_{,k}\Ga-f\Ga_{,k})V\nabla\lambda+y^kf\Ga(\nabla\lambda)_{,t^k}).
    \\\label{Eqn::SecHolLap::PfTildePHolo::Tmp2}
    \Coorvec{t^j}(\lambda f\Ga V)-i\sum_{k=1}^n\Coorvec{t^k}(y^k\lambda\lambda_{,t^j}f\Ga)=f\Ga\lambda_{,t^j}+\lambda(f_{,j}\Ga-f\Ga_{,j})V\quad\text{for }j=1,\dots,n.
\end{gather}
Here $\nabla\lambda=(\lambda_{,x^1},\dots,\lambda_{,x^n},\lambda_{,t^1},\dots,\lambda_{,t^n})$.

Indeed by chain rule, \eqref{Eqn::SecHolLap::PfTildePHolo::Tmp1} is done by the following,
    \begin{align*}
    &\textstyle\sum_{k=1}^n\Coorvec{t^k}(y^kf\Ga\nabla\lambda)
    \\
    =&\textstyle\sum_{k=1}^ny^kf_{,k}\Ga\nabla\lambda+\sum_{k,l=1}^niy^kf_{,l}y^l\lambda_{,t^k}\Ga\nabla\lambda-\sum_{k=1}^ny^kf\Ga_{,k}\nabla\lambda+\sum_{k,l=1}^niy^k\Ga_{,l}y^l\lambda_{,t^k}\nabla\lambda+\sum_{k=1}^ny^kf\Ga\nabla\lambda_{,t^k}
    \\
    =&\textstyle\sum_{k,l=1}^ny^k(1+iy^l\lambda_{,t^l})(f_{,k}\Ga-f\Ga_{,k})\nabla\lambda+\sum_{k=1}^ny^kf\Ga\nabla\lambda_{,t^k}
    \\
    =&\textstyle\sum_{k=1}^ny^k(f_{,k}\Ga-f\Ga_{,k})V\nabla\lambda+\sum_{k=1}^ny^kf\Ga\nabla\lambda_{,t^k}.
    \end{align*}
    
Since $\partial_t\lambda$ is the component of $\nabla\lambda$, using \eqref{Eqn::SecHolLap::PfTildePHolo::Tmp1} we get \eqref{Eqn::SecHolLap::PfTildePHolo::Tmp2} by the following
\begin{align*}
    &\textstyle\Coorvec{t^j}(\lambda f\Ga V)-i\sum_{k=1}^n\Coorvec{t^k}(y^k\lambda\lambda_{,t^j}f\Ga)
    \\
    =&\textstyle\lambda_{,t^j}f\Ga V+\lambda(f_{,j}\Ga-f\Ga_{,j})V+i\sum_{k=1}^ny^k\lambda\lambda_{,t^j}(f_{,k}\Ga-f\Ga_{,k})V+i\lambda f\Ga\sum_{k=1}^ny^k\lambda_{,t^jt^k}
    \\
    &\textstyle-i\sum_{k=1}^ny^k\lambda_{,t^k}\lambda_{,t^j}f\Ga-i\lambda\sum_{k=1}^n\Coorvec{t^k}(y^kf\Ga\lambda_{,t^j})
    \\
    =&\textstyle\lambda_{,t^j}f\Ga \big(1+i\sum_{k=1}^ny^k\lambda_{,t^k}\big)+\lambda(f_{,j}\Ga-f\Ga_{,j})V-i\sum_{k=1}^ny^k\lambda_{,t^k}\lambda_{,t^j}f\Ga
    \\
    =&f\Ga\lambda_{,t^j}+\lambda(f_{,j}\Ga-f\Ga_{,j})V.
\end{align*}

Next we compute $\Coorvec{x^j}\int_{\B^n}f\Ga Vdt$ and $\Coorvec{y^j}\int_{\B^n}f\Ga Vdt$. Note that $\lambda=0$ and $\nabla\lambda=0$ near $t\in\Sp^{n-1}$, so both \eqref{Eqn::SecHolLap::PfTildePHolo::Tmp1} and \eqref{Eqn::SecHolLap::PfTildePHolo::Tmp2} have integrals zero on $t\in\B^n$. Therefore,
\begin{align*}
    \textstyle\Coorvec{x^j}\int_{\B^n}f\Ga Vdt=&\textstyle\int_{\B^n}f\Ga_{,j}V+\sum_{k=1}^n\big(if_{,k}y^k\lambda_{,x^j}\Ga V-if\Ga_{,k}y^k\lambda_{,x^j}V+if\Ga y^k\lambda_{,t^kx^k}\big)dt
    \\
    =&\textstyle\int_{\B^n}f\Ga_{,j}Vdt+i\sum_{k=1}^n\int_{\B^n}\big(y^k(f_{,k}\Ga-f\Ga_{,k})V\lambda_{,x^j}+y^kf\Ga\lambda_{,x^jt^k}\big)dt
    \\
    =&\textstyle\int_{\B^n}f\Ga_{,j}Vdt+i\sum_{k=1}^n\int_{\B^n}\Coorvec{t^k}(y^kf\Ga\lambda_{,x^j})dt&(\text{by }\eqref{Eqn::SecHolLap::PfTildePHolo::Tmp1})
    \\
    =&\textstyle\int_{\B^n}f\Ga_{,j}Vdt+i\sum_{k=1}^n\int_{\Sp^{n-1}}y^kf\Ga\lambda_{,x^j}d\sigma_k(t)
    \\
    =&\textstyle\int_{\B^n}f\Ga_{,j}Vdt.&(\nabla\lambda|_{\Sp^{n-1}}=0)
\end{align*}
\begin{align*}
    \textstyle\Coorvec{y^j}\int_{\B^n}f\Ga Vdt=&\textstyle i\int_{\B^n}\big(f_{,j}\lambda\Ga V+f\Ga_{,j}(1-\lambda)V+f\Ga\lambda_{,t^j}\big)dt&(f_{,j}=-if_{,y^j})
    \\
    =&\textstyle i\int_{\B^n}f\Ga_{,j}Vdt+i\int_{\B^n}\big(f\Ga\lambda_{,t^j}+\lambda(f_{,j}\Ga-f\Ga_{,j})V\big)dt
    \\
    =&\textstyle i\int_{\B^n}f\Ga_{,j}Vdt+i\int_{\B^n}\Coorvec{t^j}(\lambda f\Ga V)dt+\sum_{k=1}^n\int_{\B^n}\Coorvec{t^k}(y^k\lambda\lambda_{,t^j}f\Ga)dt&(\text{by }\eqref{Eqn::SecHolLap::PfTildePHolo::Tmp2})
    \\
    =&\textstyle i\int_{\B^n}f\Ga_{,j}Vdt+i\int_{\Sp^{n-1}}\lambda f\Ga Vd\sigma_j(t)+\sum_{k=1}^n\int_{\Sp^{n-1}}y^k\lambda\lambda_{,t^j}f\Ga d\sigma_k(t)
    \\=&\textstyle i\int_{\B^n}f\Ga_{,j}Vdt.&(\lambda|_{\Sp^{n-1}}=0)
\end{align*}
Here we use $d\sigma_j$ as the $j$-th outer normal direction of $\Sp^{n-1}=\partial\B^n$.

Therefore $\Coorvec{x^j}\int_{\B^n}f\Ga Vdt=i\Coorvec{y^j}\int_{\B^n}f\Ga Vdt=\int_{\B^n}f\Ga_{,j}Vdt$, finishing the proof.
\end{proof}

\begin{proof}[Proof of Proposition \ref{Prop::SecHolLap::TildePisHolo}]
When $f\in L^\infty(\Hb^n;\C)$, two integrals in \eqref{Eqn::SecHolLap::EqnHoloP0} converge individually. 

By Proposition \ref{Prop::SecHolLap::PfTildePHolo} the first integral $z\mapsto\int_{S_z}f(\zeta)\Ga(z-\zeta)d\zeta$ is holomorphic in $\Hb^n$. Clearly the second integral in \eqref{Eqn::SecHolLap::EqnHoloP0} is holomorphic since $z\mapsto\Ga(z-t/|t|^{b_j+1})$ is a holomorphic in $\Hb^n$ as well. So $\tilde \Pv f\in\Oh(\Hb^n)$ when $f\in L^\infty(\Hb^n;\C)\cap\Oh(\Hb^n)$.

For general $f\in\Co^{(-4)+}_\Oh(\Hb^n)$, by Lemma \ref{Lem::SecHolLap::HLLem} \ref{Item::SecHolLap::HLLem::0Char} we have $f|_{\B^n}\in L^1(\B^n,(1-|x|)^4dx;\C)$. Take $f_\eps(z):=f((1-\eps)z)$ for $\eps>0$, we have $f_\eps\in L^\infty(\Hb^n;\C)$ for each $\eps$, and $f_\eps|_{\B^n}\xrightarrow{\eps\to0}f|_{\B^n}$ converges in $L^1(\B^n,(1-|x|)^4dx;\C)$. Therefore in the notation \eqref{Eqn::SecHolLap::EqnHoloPall} we have $\lim_{\eps\to0}\tilde\Pv_3f_\eps(z)=\tilde \Pv_3f(z)$ locally uniformly in $z$.

Since the convergence $f_\eps\xrightarrow{\eps\to0}f$ holds automatically in $\Oh(\Hb^n)$, we see that $\lim_{\eps\to0}\tilde\Pv_1f_\eps(z)=\tilde \Pv_1f(z)$ and $\lim_{\eps\to0}\tilde\Pv_2f_\eps(z)=\tilde \Pv_2f(z)$ locally uniformly in $z$ as well.
Therefore $\tilde\Pv f_\eps(z)\xrightarrow{\eps\to0}\tilde\Pv f(z)$ locally uniformly in $z\in\Hb^n$. By Proposition \ref{Prop::SecHolLap::PfTildePHolo} $\tilde\Pv f_\eps(z)$ is holomorphic, we conclude that $\tilde\Pv f(z)$ is also holomorphic.

For $f\in\Co^{(-4)+}_\Oh(\Hb^n)$, by Lemma \ref{Lem::SecHolLap::HLLem} \ref{Item::SecHolLap::HLLem::Res}  $f|_{\B^n}\in\Co^{(-4)+}(\B^n;\C)$ so $\Pv[f|_{\B^n}]\in\Co^{(-2)+}(\B^n;\C)$ is defined. 

By \eqref{Eqn::SecHolLap::NotSzV} we have $S_{x+i0}=\B^n$ for all $x\in\B^n$. By comparing \eqref{Eqn::SecHolLap::FormulaForP} with \eqref{Eqn::SecHolLap::EqnHoloP1}, \eqref{Eqn::SecHolLap::EqnHoloP2} and \eqref{Eqn::SecHolLap::EqnHoloP3}, we get $(\tilde \Pv f)|_{\B^n}=\Pv[f|_{\B^n}]$ for $f\in\Co^{(-4)+}_\Oh(\Hb^n)$.

And since $\B^n\subset\C^n$ is a totally real submanifold with full dimension $n$, by uniqueness of the holomorphic extension we have $\tilde \Pv f=\Ex\big[\Pv[f|_{\B^n}]\big]$ for all $f\in\Co^{(-4)+}_\Oh(\Hb^n)$. Therefore $\Delta_z\tilde \Pv f=\Ex\big[\Delta_x\Pv[f|_{\B^n}]\big]=\Ex[f|_{\B^n}]=f$ holds for $f\in\Co^{(-4)+}_\Oh(\Hb^n)$ as well.
\end{proof}

\subsection{H\"older-Zygmund regularities for $\tilde{\mathbf P}$}\label{Section::SecHolLap::BddTildeP}

Recall $\tilde\Pv$ in Definition \ref{Defn::SecHolLap::DefofTildeP}. In this section we are going to prove $\tilde \Pv:\Co^\alpha_\Oh(\Hb)\to\Co^{\alpha+2}_\Oh(\Hb)$ for $-4<\alpha<-1$. By Proposition \ref{Prop::SecHolLap::TildePisHolo} we already know $\tilde\Pv:\Co^{(-4)+}_\Oh(\Hb^n)\to\Oh(\Hb^n)$ is the holomorphic extension of $\Pv$. By Definition \ref{Defn::PDE::HoloHoldSpace} and the assumption $\alpha+2<1$, to prove Proposition \ref{Prop::HolLap} \ref{Item::HolLap::TildeP} it remains to show the following:
\begin{prop}\label{Prop::SecHolLap::BddTildeP}
    Let $-4<\alpha<-1$. There is a $C=C_{n,\alpha}>0$ such that $$|\nabla_z\tilde\Pv f(z)|\le C_{n,\alpha}\|f\|_{\Co^\alpha_\Oh(\Hb^n)}\dist(z,\partial\Hb^n)^{\alpha+1},\quad f\in\Co^\alpha_\Oh(\Hb^n).$$
\end{prop}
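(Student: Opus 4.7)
The plan is to estimate $\nabla_z \tilde\Pv f(z)$ using the decomposition $\tilde\Pv f = \tilde\Pv_1 f + \tilde\Pv_2 f + \tilde\Pv_3 f$ of \eqref{Eqn::SecHolLap::EqnHoloPall} and bounding each piece separately. Fix $z = x+iy \in \Hb^n$ and write $\rho := 1-|x|$, $\delta := \dist(z,\partial\Hb^n)$; from the geometry of $\Hb^n$ one has $\delta \approx \rho - 4|y|$. Since $\alpha < 0$, Lemma \ref{Lem::SecHolLap::HLLem}\ref{Item::SecHolLap::HLLem::0Char} gives the pointwise control $|f(w)| \lesssim \|f\|_{\Co^\alpha_\Oh} \dist(w,\partial\Hb^n)^\alpha$ for every $w\in \Hb^n$, which will be the only quantitative information on $f$ that I use.

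For the far piece $\tilde\Pv_3 f$ in \eqref{Eqn::SecHolLap::EqnHoloP3}, I differentiate under the integral and use the cancellation of Lemma \ref{Lem::SecHolLap::AprioriBddforP} to majorize the kernel by $(1-|t|)^4 |x-t|^{-n-3}$. Since $-4 < \alpha < -1$, the exponent $4+\alpha \in (0,3)$ and $(1-|t|) \le \rho + |t-x|$, so after switching to polar coordinates centered at $x$ the task reduces to
\[
\int_{\rho/2}^{C} (r+\rho)^{4+\alpha} r^{-4}\,dr,
\]
which, upon splitting at $r = \rho$, is $\lesssim \rho^{\alpha+1} \lesssim \delta^{\alpha+1}$. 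The reflected piece $\tilde\Pv_2 f$ in \eqref{Eqn::SecHolLap::EqnHoloP2} is the easiest: $\chi(|t|)\ne 0$ and $|t-x|<\rho/2$ force $1-|t|\gtrsim \rho$, so $|f(t)| \lesssim \rho^\alpha$; a Taylor expansion of $|t|^{-b_j}=1+b_j(1-|t|)+O((1-|t|)^2)$ shows that $|x - t/|t|^{b_j+1}| \gtrsim \rho \gtrsim 2|y|$, so \eqref{Eqn::SecHolLap::DevGa1} yields $|\nabla\Ga(z - t/|t|^{b_j+1})| \lesssim \rho^{1-n}$, and the elementary bound $\int_{|t-x|<\rho/2}dt \lesssim \rho^n$ gives $|\nabla_z \tilde\Pv_2 f(z)| \lesssim \rho^{\alpha+1}$.

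The main piece is $\tilde\Pv_1 f$ in \eqref{Eqn::SecHolLap::EqnHoloP1}. First I extend the differentiation formula of Proposition \ref{Prop::SecHolLap::PfTildePHolo} from $L^\infty\cap\Oh$ to $\Co^\alpha_\Oh$ via the dilation approximation $f_\varepsilon(w) := f((1-\varepsilon)w)$ already used in the proof of Proposition \ref{Prop::SecHolLap::TildePisHolo}; this puts the derivative inside the integral and produces an integrand of the form $f(\zeta(t,z))\,\nabla\Ga(x-t + i(1-\lambda)y)\,V(t,z)$ with $\zeta(t,z) = t + i\lambda(t,x) y$. Property \ref{Item::SecHolLap::LambdaPro1} gives $|V(t,z)| \lesssim 1$, and by \ref{Item::SecHolLap::LambdaPro2} $|(1-\lambda(t,x))y| \le |t-x|/2$, so \eqref{Eqn::SecHolLap::DevGa1} yields $|\nabla\Ga(x-t+i(1-\lambda)y)| \lesssim |x-t|^{1-n}$. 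The crucial step is the pointwise distance bound
\[
\dist(\zeta(t,z), \partial\Hb^n) \gtrsim \delta \qquad \text{uniformly for } t \in \B^n \text{ with } |t-x|<\rho/2,
\]
which I verify by expanding
\[
1 - |t| - 4\lambda(t,x)|y| \ge (\rho - 4|y|) - |t-x|(1 - 8|y|/\rho)
\]
and performing a short case analysis on the sign of $1 - 8|y|/\rho$: when $8|y|\le\rho$ the right side is $\ge \rho/2 \gtrsim \delta$, and when $8|y|>\rho$ it is $\ge \rho-4|y|\approx\delta$. This gives $|f(\zeta)| \lesssim \delta^\alpha$, and combining all the estimates
\[
|\nabla_z \tilde\Pv_1 f(z)| \lesssim \|f\|_{\Co^\alpha_\Oh}\,\delta^\alpha \int_{|t-x|<\rho/2} |x-t|^{1-n}\,dt \lesssim \delta^\alpha \cdot \rho \lesssim \delta^{\alpha+1}.
\]

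The main obstacle is the distance lower bound for the deformed real $n$-surface $S_z$: once it is established the remaining estimates are routine polar-coordinate integrations with the pointwise bounds on $f$ and $\nabla\Ga$.
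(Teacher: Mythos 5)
Your decomposition into $\tilde\Pv_1,\tilde\Pv_2,\tilde\Pv_3$ is exactly the paper's, and your treatment of the reflected piece \eqref{Eqn::SecHolLap::EqnHoloP2} and of the far piece \eqref{Eqn::SecHolLap::EqnHoloP3} (via Lemma \ref{Lem::SecHolLap::AprioriBddforP}, the bound $1-|t|\le\rho+|t-x|$, and polar coordinates) is correct and, for the far piece, even a bit more streamlined than the paper's splitting. The justification of the differentiation formula for $f\in\Co^\alpha_\Oh$ by the dilation $f_\varepsilon$ is also fine. The problem is in the main piece $\tilde\Pv_1$.

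Your uniform lower bound $\dist(\zeta(t,z),\partial\Hb^n)\gtrsim\delta$ is true, but it is too lossy, and the final chain $\delta^\alpha\int_{|t-x|<\rho/2}|x-t|^{1-n}\,dt\lesssim\delta^\alpha\rho\lesssim\delta^{\alpha+1}$ breaks at the last inequality: since $\alpha+1<0$, $\delta^\alpha\rho\le C\delta^{\alpha+1}$ forces $\rho\le C\delta$, which fails precisely in the worst regime where $z$ approaches the conical part of $\partial\Hb^n$, i.e. $4|y|$ close to $1-|x|$ (take $x=0$, $|y|=\tfrac14-\varepsilon$: then $\rho=1$ while $\delta\approx\varepsilon$, and $\delta^\alpha\rho/\delta^{\alpha+1}\approx\varepsilon^{-1}\to\infty$). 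So as written the estimate for the dominant term does not close. The cure is to keep the $t$-dependence of the distance instead of discarding it: your own intermediate inequality $1-|t|-4\lambda(t,x)|y|\ge(\rho-4|y|)-|t-x|\bigl(1-\tfrac{8|y|}{\rho}\bigr)$ already contains the needed interpolation (equivalently, the paper uses convexity of $\Hb^n$ to get $\dist(\zeta,\partial\Hb^n)\gtrsim\lambda(t,x)\,\delta+(1-\lambda(t,x))\,\rho$). Inserting the $t$-dependent bound and integrating in polar coordinates reduces the main term to $\int_0^{\rho/2}\bigl(\delta+\tfrac{4|y|}{\rho}r\bigr)^\alpha dr$, which is $\lesssim_\alpha\delta^{\alpha+1}$ because $\alpha<-1$ (when $|y|\lesssim\rho$ is small one instead has $\delta\approx\rho$ and the uniform bound suffices). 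With that replacement your argument coincides with the paper's proof.
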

\begin{proof}
 By Proposition \ref{Prop::SecHolLap::PfTildePHolo} we have for $k=1,\dots,n$ and $z=x+iy\in\Hb^n$,
\begin{align}
    \partial_{z^k}\tilde\Pv f(z)=
    &\label{Eqn::SecHolLap::EstHoloP1}
    \int_{|t-x|<\frac{1-|x|}2}f(t+i\lambda(t,x)y)\Ga_{,k}(x-t+i(1-\lambda(t,x))y)(1+iy\cdot\partial_t\lambda(t,x))dt
    \\\label{Eqn::SecHolLap::EstHoloP2}
    &+\int_{|t-x|<\frac{1-|x|}2}f(t)\sum_{j=1}^9\frac{a_jb_j\chi(|t|)}{|t|^{n(b_j+1)}}\Ga_{,k}\Big(z-\frac t{|t|^{b_j+1}}\Big)dt
    \\
    &\label{Eqn::SecHolLap::EstHoloP3}
    +\int_{t\in\B^n:|t-x|>\frac{1-|x|}2}f(t)\bigg(\Ga_{,k}(z-t)-\sum_{j=1}^9\frac{a_jb_j\chi(|t|)}{|t|^{n(b_j+1)}}\Ga_{,k}\Big(z-\frac{t}{|t|^{b_j+1}}\Big)\bigg)dt.
\end{align}

We need to show the absolute value of \eqref{Eqn::SecHolLap::EstHoloP1}, \eqref{Eqn::SecHolLap::EstHoloP2} and \eqref{Eqn::SecHolLap::EstHoloP3} are all bounded by a constant times $\|f\|_{\Co^\alpha_\Oh}\dist(z,\partial\Hb^n)^{\alpha+1}$.
 
We first estimate \eqref{Eqn::SecHolLap::EstHoloP1}. By Lemma \ref{Lem::SecHolLap::HLLem} \ref{Item::SecHolLap::HLLem::0Char} we have $|f(z)|\lesssim\|f\|_{\Co^\alpha_\Oh}\dist(z,\partial\Hb^n)^\alpha$; by \eqref{Eqn::SecHolLap::DevGa1} and \eqref{Eqn::SecHolLap::DefLambda} we have $|F_{,k}(z-t+i(1-\lambda)y)|\lesssim|x-t|^{1-n}$; by property \ref{Item::SecHolLap::LambdaPro1} we have $|1+iy\cdot\partial_t\lambda|\lesssim1$. Therefore
\begin{equation}\label{Eqn::SecHolLap::PfEstHolo::Tmp1}
    \Big|\int_{|t-x|<\frac{1-|x|}2}f\Ga_{,k}Vdt\Big|\lesssim_{n,\alpha}\|f\|_{\Co^\alpha_\Oh}\int_{|t-x|<\frac{1-|x|}2}\dist(t+i\lambda y,\partial\Hb^n)^\alpha\cdot |x-t|^{1-n}dt 
\end{equation}


Since $t+i\lambda y$ lays in a line segment connecting $z$ and a point in $\partial B^n(x,\frac{1-|x|}2)$, by convexity of $\Hb^n$ we get $\dist(t+i\lambda(t,x)y,\partial\Hb^n)\ge\lambda(t,x)\dist(z,\partial\Hb^n)+(1-\lambda(t,x))\dist(\partial B^n(x,\frac{1-|x|}2),\partial\Hb^n)$. Since $\dist(z,\partial\Hb^n)\approx\frac{1-|x|}2-2|y|$ and $\dist\big(\partial B^n(x,\frac{1-|x|}2),\partial\Hb^n\big)\approx \frac{1-|x|}2$, by plugging in the expression of $\lambda(t,x)$ we get (recall $\alpha+1<0$)
\begin{align*}
    &\int_{|t-x|<\frac{1-|x|}2}\dist(t+i\lambda y,\partial\Hb^n)^\alpha\cdot |x-t|^{1-n}dt
    \\
    \le& \int_{|t-x|<\frac{1-|x|}2}\big(\lambda\dist(z,\partial\Hb^n)+(1-\lambda)\dist\big(\partial B^n(x,\tfrac{1-|x|}2),\partial\Hb^n\big)\big)^\alpha\cdot |x-t|^{1-n}dt 
    \\
    \lesssim&\int_{B^n(0,\frac{1-|x|}2)}\Big(\big(1-\tfrac{2|s|}{1-|x|}\big)\big(\tfrac{1-|x|}2-2|y|\big)+\tfrac{2|s|}{1-|x|}\tfrac{1-|x|}2\Big)^\alpha|s|^{1-n}ds
    \\
    \lesssim&\int_0^{\frac{1-|x|}2}\Big(\tfrac{1-|x|}2-2|y|+\tfrac{4|y|r}{1-|x|}\Big)^\alpha dr\lesssim_\alpha\Big(\tfrac{1-|x|}2-2|y|\Big)^{\alpha+1}\approx_\alpha\dist(z,\partial\Hb^n)^{\alpha+1}.
\end{align*}
Putting this in \eqref{Eqn::SecHolLap::PfEstHolo::Tmp1} we control \eqref{Eqn::SecHolLap::EstHoloP1}.

\medskip
For \eqref{Eqn::SecHolLap::EstHoloP2} and \eqref{Eqn::SecHolLap::EstHoloP3}, we use $\dist(t,\partial\Hb^n)\approx\dist(t,\partial\B^n)=1-|t|$ for $t\in\B^n$. By Lemma \ref{Lem::SecHolLap::HLLem} \ref{Item::SecHolLap::HLLem::0Char} $|f(t)|\lesssim\|f\|_{\Co^\alpha_\Oh}(1-|t|)^\alpha$.

By \eqref{Eqn::SecHolLap::DevGa1} we have $|\Ga_{,k}(z-\frac t{|t|^{b_j+1}})|\lesssim|x-\frac t{|t|^{b_j+1}}|^{1-n}\le (1-|x|)^{1-n}$ since $\frac t{|t|^{b_j+1}}\notin\B^n$ when $t\in\B^n$. So we bound \eqref{Eqn::SecHolLap::EstHoloP2} by the following
\begin{align*}
    &\Big|\int_{|t-x|<\frac{1-|x|}2}f(t)\sum_{j=1}^9\frac{a_jb_j\chi(|t|)}{|t|^{n(b_j+1)}}\Ga_{,k}\Big(z-\frac t{|t|^{b_j+1}}\Big)dt\Big|
    \lesssim\|f\|_{\Co^\alpha_\Oh}\int_{|t-x|<\frac{1-|x|}2}(1-|t|)^\alpha(1-|x|)^{1-n}dt
    \\\lesssim&\|f\|_{\Co^\alpha_\Oh}(1-|x|)^{\alpha+1}\lesssim\|f\|_{\Co^\alpha_\Oh}\dist(z,\partial\Hb^n)^{\alpha+1}.
\end{align*}

By Lemma \ref{Lem::SecHolLap::AprioriBddforP} we bound \eqref{Eqn::SecHolLap::EstHoloP3} by the following
\begin{align*}
    &\bigg|\int_{t\in\B^n:|t-x|>\frac{1-|x|}2}f(t)\bigg(\Ga_{,k}(z-t)-\sum_{j=1}^9\frac{a_jb_j\chi(|t|)}{|t|^{n(b_j+1)}}\Ga_{,k}\Big(z-\frac{t}{|t|^{b_j+1}}\Big)\bigg)dt\bigg|
    \\
    \lesssim&\|f\|_{\Co^\alpha_\Oh}\int_{t\in\B^n:|t-x|>\frac{1-|x|}2}(1-|t|)^\alpha|x-t|^{-n-3}(1-|t|)^4dt
    \\
    \lesssim&\|f\|_{\Co^\alpha_\Oh}\bigg(\int_{|t|<\frac{3|x|-1}2}+\int_{\substack{\frac{3|x|-1}2<|t|<1\\|t-x|>\frac{1-|x|}2}}\bigg)(1-|t|)^{4+\alpha}|x-t|^{-n-3}dt
    \\
    \lesssim&\|f\|_{\Co^\alpha_\Oh}\int_0^{\frac{3|x|-1}2}(1-r)^{\alpha+4}r^{n-1}dr\int_{|x|-r}^1\rho^{-n-3}\rho^{n-2}d\rho+\|f\|_{\Co^\alpha_\Oh}\int_{\frac{3|x|-1}2}^1(1-r)^{\alpha+4}dr\int_{\frac{1-|x|}2}^1\rho^{-n-3}\rho^{n-2}d\rho
    \\
    \lesssim&\|f\|_{\Co^\alpha_\Oh}\Big(\int_0^{|x|}(1-r)^{\alpha}dr+(1-|x|)^{\alpha+5}(1-|x|)^{-4}\Big)\approx\|f\|_{\Co^\alpha_\Oh}(1-|x|)^{\alpha+1}\lesssim\|f\|_{\Co^\alpha_\Oh}\dist(z,\partial\Hb)^{\alpha+1}.
\end{align*}
Here we use $\rho$ as the radial parameter of polar coordinates on $\Sp^{n-1}$ centered at $\frac x{|x|}$.

Thus \eqref{Eqn::SecHolLap::EstHoloP1}, \eqref{Eqn::SecHolLap::EstHoloP2} and \eqref{Eqn::SecHolLap::EstHoloP3} are all bounded by $\|f\|_{\Co^\alpha}\dist(z,\partial\Hb^n)^{\alpha+1}$. The proof is now complete.
\end{proof}

{\small\addcontentsline{toc}{section}{References}
\bibliographystyle{amsalpha}
\bibliography{Bibliography}}

\center{\textit{University of Wisconsin-Madison, Department of Mathematics, 480 Lincoln Dr., Madison, WI, 53706}}

\center{\textit{lyao26@wisc.edu}}

\center{MSC 2020: 58A30 (Primary), 32Q60, 53C15 and 53C12 (Secondary)}
\end{document}